\documentclass[12pt]{article}
\usepackage{fullpage}
\usepackage{amsmath,amsfonts,amsthm,amssymb,epsfig}
\usepackage[
backend=biber,style=alphabetic,maxbibnames=99
]{biblatex}
\usepackage[all]{xy}
\usepackage{enumitem}
\usepackage[usenames,dvipsnames]{xcolor}
\usepackage[bookmarks=true,colorlinks=true, linkcolor=Mulberry, citecolor=Periwinkle]{hyperref}
\usepackage{graphicx}
\usepackage{tikz-cd}
\usepackage{float}
\usepackage{tipa}
\usepackage{MnSymbol}
\newtheorem{theorem}{Theorem}[section]
\newtheorem{lemma}[theorem]{Lemma}
\newtheorem{proposition}[theorem]{Proposition}
\newtheorem{corollary}[theorem]{Corollary}

\newtheorem{definition}[theorem]{Definition}

\theoremstyle{definition}

\newtheorem{example}[theorem]{Example}
\newtheorem{remark}[theorem]{Remark}

\newcommand\bcdot{\ensuremath{%
  \mathchoice%
   {\mskip\thinmuskip\lower0.2ex\hbox{\scalebox{1.5}{$\cdot$}}\mskip\thinmuskip}}%
   {\mskip\thinmuskip\lower0.2ex\hbox{\scalebox{1.5}{$\cdot$}}\mskip\thinmuskip}%   
   {\lower0.3ex\hbox{\scalebox{1.2}{$\cdot$}}}%
   {\lower0.3ex\hbox{\scalebox{1.2}{$\cdot$}}}%
   }

\newcommand{\on}{\operatorname}
\newcommand{\mc}{\mathcal}
\newcommand{\mbb}{\mathbb}

\newcommand{\wh}{\widehat}

\newcommand{\ul}{\underline}

\makeatletter
\def\legendre@dash#1#2{\hb@xt@#1{%
  \kern-#2\p@
  \cleaders\hbox{\kern.5\p@
    \vrule\@height.2\p@\@depth.2\p@\@width\p@
    \kern.5\p@}\hfil
  \kern-#2\p@
  }}
\def\@legendre#1#2#3#4#5{\mathopen{}\left(
  \sbox\z@{$\genfrac{}{}{0pt}{#1}{#3#4}{#3#5}$}%
  \dimen@=\wd\z@
  \kern-\p@\vcenter{\box0}\kern-\dimen@\vcenter{\legendre@dash\dimen@{#2}}\kern-\p@
  \right)\mathclose{}}
\newcommand\legendre[2]{\mathchoice
  {\@legendre{0}{1}{}{#1}{#2}}
  {\@legendre{1}{.5}{\vphantom{1}}{#1}{#2}}
  {\@legendre{2}{0}{\vphantom{1}}{#1}{#2}}
  {\@legendre{3}{0}{\vphantom{1}}{#1}{#2}}
}
\def\dlegendre{\@legendre{0}{1}{}}
\def\tlegendre{\@legendre{1}{0.5}{\vphantom{1}}}
\makeatother

\begin{document}

\title{Weil-Moore anima}
\author{Dustin Clausen}

\maketitle

\tableofcontents

\section{Introduction}

The Weil group of the rational number field $\mbb{Q}$ is a locally compact Hausdorff group $W_{\mbb{Q}}$ which refines the absolute Galois group $G_\mbb{Q}$, via a surjective homomorphism
$$W_{\mbb{Q}} \twoheadrightarrow G_\mbb{Q}$$
with connected kernel.  It determines an analogous Weil group $W_K$ for an arbitrary number field $K$, by taking the pullback along the inclusion $G_K\to G_\mbb{Q}$.

This Weil group was constructed by Weil in \cite{weil}.  The motivation came from L-functions, or more specifically, from the desire to unite two different classes of L-functions: the Artin L-functions attached to complex Galois representations, and the Hecke L-functions attached to Hecke characters $\mbb{A}_K^\times/K^\times \to \mbb{C}^\times$.  These classes were known to agree on a common locus, for a deep reason: a one-dimensional representation of $G_K$, which corresponds to a homomorphism
$$G_K^{ab}\to \mbb{C}^\times,$$
can be composed with the Artin map $\mbb{A}_K^\times/K^\times\twoheadrightarrow G_K^{ab}$ to get a Hecke character.  This sets up a 1-1 correspondence between one-dimensional Galois representations and Hecke characters with finite image, and Artin reciprocity says that the corresponding Artin and Hecke L-functions are equal, even factor-by-factor in their Euler products.

Not all Artin representations are one dimensional (or built from such via direct sums and inductions), and, on the other hand, not all Hecke characters have finite image.  So we have two separate worlds, though with substantial common overlap.  Then it makes sense to look for an even larger world containing both of them.  One such is provided by the notion of complex representation of the Weil group.  The key point is that the Artin map lifts to an \emph{isomorphism}

$$W_K^{ab} \simeq \mbb{A}_K^\times/K^\times$$

\noindent for any number field $K$.  Weil was able to define Artin-style L-functions attached to Weil group representations, and he showed that Hecke L-functions exactly correspond to the case of one-dimensional representations via the above lifted Artin map.

A beautiful aspect of Weil groups is that although the motivation for them comes from analytic number theory and representation theory, their construction involves a completely different set of ideas: namely it requires, and indeed encapsulates (see \cite{tatentb}), the \emph{cohomological} approach to class field theory.  The key idea is to view the absolute Galois group $G_\mbb{Q}$ not in the naive manner, as the inverse limit of its finite quotients (indexed by number fields $K$ inside a fixed $\overline{\mbb{Q}}$, as usual), but as the inverse limit of coarser terms
$$G_{\mbb{Q}} = \varprojlim_K \on{Gal}(K^{ab}/\mbb{Q}),$$
which themselves sit in short exact sequences of groups
$$1 \to G_K^{ab} \to \on{Gal}(K^{ab}/\mbb{Q}) \to \on{Gal}(K/\mbb{Q}) \to 1$$
with finite quotient and abelian kernel.  To build the Weil group we want to replace $G_K^{ab}$ in this short exact sequence by its refinement $\mbb{A}_K^\times/K^\times$.

Here comes the miraculous ``coincidence'': short exact sequences of the desired type are classified, up to isomorphism, by group cohomology classes
$$u \in H^2(\on{Gal}(K/\mbb{Q});\mathbb{A}_K^\times/K^\times),$$
and cohomological class field theory exactly produces canonical such classes $u=u_{K/\mbb{Q}}$.  One has to take some care to put the resulting group extensions into an inverse system, but in the end it can be done, see \cite{artintate}.

Our perspective in this article is that one should not stop with the Weil group.  Other cohomological considerations, originating in work of Tate, suggest the need to further refine the Weil group.  What we will see is that, contrary to naive hypothesis, a number field should not be a ``$K(\pi,1)$'': besides the ``fundamental group'' (the Weil group), there should also be higher homotopy groups.

Formally speaking, the first step has to be saying exactly what kind of object we mean to construct.  We want something that has homotopy groups, but where these homotopy groups are not abstract groups, but rather topological groups (in fact locally compact Hausdorff groups, like $W_{\mbb{Q}}$).  Thankfully, in joint work with Peter Scholze we defined and proved some basic properties of a class of objects, called \emph{condensed anima}, which can play such a role.

\begin{remark}
I often get asked to explain the terminology ``condensed" and the terminology ``anima", which were decided on by myself and Peter Scholze.  My explanations are usually something like the following.

\begin{enumerate}
\item The idea of condensed sets is that one encodes topological structures as being built from \emph{profinite sets} via colimits.  Profinite sets tend to look like Cantor sets.

Why the name ``condensed''? If one starts with an abstract set, then instead of specifying a collection of open subsets, as is required to describe a topology in the traditional sense, one specifies which families of points are allowed to ``condense'' together to form a compact Hausdorff space.  These compact Hausdorff spaces can in turn be exploded into profinite sets, because every compact Hausdorff space is a quotient of a profinite set by a profinite equivalence relation.

\item An \emph{anima} is just a homotopy type, but viewed as an object in its natural ambient $\infty$-category rather than its 1-categorical shadow (the classical homotopy category).

Why the name ``anima''?  There is a two-fold motivation.  First, one has the following intuition described by Beilinson in \cite{beilinson}.  A set is \emph{static} in the sense that its points are stuck where they are: they cannot move around within the set.  But if we have an anima $A$, with underlying set $\pi_0A$, then a point of $A$ is free to move around within its connected component: as Beilinson says, the points of $A$ are naturally ``animated''.  Second, an anima is something like a soul.  If we have, say, a CW complex, then its associated anima is what remains of the CW complex when it is stripped of its worldly (point-set) incarnation.
\end{enumerate}

A \emph{condensed anima} is an object which has both aspects, but in an independent way.  Thus it has both a ``topological'' aspect and a ``homotopy-theoretic'' aspect, and the topology is \emph{not} collapsed to homotopy theory.  Thus for example the Eilenberg-Maclane space $K(\mbb{R},n)$ on the real numbers with its usual structure of topological abelian group exists as a condensed anima, and it is different from the ordinary anima $K(\mbb{R}^\delta,n)$ and $\ast$.  The former is its underlying ordinary anima, and the latter is its homotopy type.

\end{remark}

Our problem is therefore well-posed: we want to find a condensed anima with fundamental group $W_{\mbb{Q}}$ and nontrivial higher homotopy groups.  The main thing we want from this condensed anima is that its cohomological properties should be ``better" than those of $W_{\mbb{Q}}$ or $G_{\mbb{Q}}$ (or rather of $BW_{\mbb{Q}}=K(W_\mbb{Q},1)$ or $BG_{\mbb{Q}}=K(G_\mbb{Q},1)$, corresponding to the fact that we're talking about group cohomology).

We state the theorem now, and give more precise motivation afterwards.
 
\begin{theorem}
There exists a condensed anima $X_{\mbb{Q}}$ with the following properties:
\begin{enumerate}
\item The condensed set $\pi_0 X_{\mbb{Q}}$ is $\ast$.
\item The condensed group $\pi_1 X_{\mbb{Q}}$ is isomorphic to the Weil group $W_\mbb{Q}$ of the rational numbers.
\item For $n>1$, the condensed abelian group $\pi_n X_{\mbb{Q}}$ is compact Hausdorff.
\item For any number field $K$, if $X_K \to X_{\mbb{Q}}$ denotes the corresponding finite etale cover, then $H^i(X_K;\mbb{R}/\mbb{Z})=0$ for $i>1$.
\end{enumerate}
\end{theorem}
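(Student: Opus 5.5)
The plan is to build $X_{\mbb{Q}}$ as an inverse limit over number fields $K\subset\overline{\mbb{Q}}$, Galois over $\mbb{Q}$. This mirrors the construction of the Weil group recalled above, but replaces the idele class group $C_K=\mbb{A}_K^\times/K^\times$ by a condensed anima that improves its duality properties.

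For each Galois $K/\mbb{Q}$ we first find a connected condensed anima $Y_K$ with $\pi_1 Y_K\simeq C_K$ and higher homotopy compact Hausdorff. The natural candidate comes from Tate's cohomological work. Pontryagin duality exchanges $C_K$ with the locally compact group $\on{Hom}(C_K,\mbb{R}/\mbb{Z})$. To kill the cohomology $H^i(BC_K;\mbb{R}/\mbb{Z})$ in degrees $i\geq 2$, we take $Y_K$ to be the "infinite loop space'' attached to a condensed spectrum $E_K$. We choose $E_K$ so that the Pontryagin dual of $E_K$ is concentrated in degree $1$ (up to shift) and $\pi_1 E_K=C_K$.

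Concretely, set $E_K=\Sigma\,\mathbb{D}(\on{Hom}(C_K,\mbb{R}/\mbb{Z}))$, where $\mathbb{D}=\underline{\on{Hom}}(-,\Sigma^{-1}\mbb{R}/\mbb{Z})$ is a spectral Anderson/Pontryagin-type dual in condensed spectra. Then the mapping spectrum from $E_K$ to $\mbb{R}/\mbb{Z}$ is discrete-degree-one by construction. Its remaining homotopy groups are Pontryagin duals of homotopy of the sphere, hence compact (profinite or torus-like). We then take $Y_K=\Omega^\infty E_K$ restricted to the connected component.

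Next we glue in the Galois group. Tate's fundamental class $u_{K/\mbb{Q}}\in H^2(\on{Gal}(K/\mbb{Q});C_K)$ must be lifted to a $\on{Gal}(K/\mbb{Q})$-equivariant structure on $E_K$, together with a class classifying a fibration
$$Y_K\to X_{K/\mbb{Q}}\to B\on{Gal}(K/\mbb{Q}).$$
Since $E_K$ is built functorially from $C_K$, the Galois action exists. The extension class lives in the cohomology of $B\on{Gal}(K/\mbb{Q})$ with coefficients in the spectrum $\Sigma E_K$. We lift $u$ along the Postnikov truncation $\Sigma E_K\to\Sigma^2 HC_K$. The obstructions lie in Tate-type cohomology of finite groups with coefficients in compact groups. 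We expect to kill them using Tate–Nakayama: $\hat H^*(G;C_K)$ is the shift of $\hat H^*(G;\mbb{Z})$, i.e. $C_K$ is cohomologically like a dualizing object. This identifies the equivariant spectrum $E_K$ with something whose Tate construction is controlled, and makes the lift exist and essentially unique.

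Then we check compatibility under $K\subset L$ via the norm/transfer maps $C_L\to C_K$. Uniqueness of the lifts, modulo a $\varprojlim^1$ that we show vanishes by compactness, lets us form $X_{\mbb{Q}}=\varprojlim_K X_{K/\mbb{Q}}$.

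Properties (1)–(3) follow from the long exact sequences and the Weil group construction:
\begin{enumerate}
\item $\pi_1$ is the inverse limit of the extensions $1\to C_K\to W_{K/\mbb{Q}}\to\on{Gal}(K/\mbb{Q})\to 1$, which is $W_{\mbb{Q}}$.
\item The higher $\pi_n$ are inverse limits of compact Hausdorff groups, hence compact Hausdorff.
\end{enumerate}
For (4), the finite étale cover $X_K$ is the analogous limit over $L\supset K$. Its $\mbb{R}/\mbb{Z}$-cohomology is the colimit over $L$ of
$$H^*\bigl(\on{Gal}(L/K);\,\on{map}(E_L,\mbb{R}/\mbb{Z})\bigr).$$
Here $\on{map}(E_L,\mbb{R}/\mbb{Z})$ is concentrated in one degree with value $\on{Hom}(C_L,\mbb{R}/\mbb{Z})$. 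The higher Galois cohomology of this group should die in the colimit by Tate duality and the vanishing of $\varinjlim_L H^2(\on{Gal}(L/K);\on{Hom}(C_L,\mbb{R}/\mbb{Z}))$, which is dual to the vanishing of $\varprojlim H_2$ established through the fundamental class.

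The main obstacle is the equivariant lifting step: making the Tate class coherent at the level of condensed spectra compatibly in $K$. I would try first to define $E_K$ canonically as a Tate-duality-twisted object, e.g. a cofiber of a norm map, so that Galois equivariance and transition maps come for free.
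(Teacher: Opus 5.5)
\textbf{There is a genuine gap in the central step, and it is structural.} You set $Y_K=\Omega^\infty E_K$. You then compute the $\mbb{R}/\mbb{Z}$-cohomology of $Y_L$, and hence of $X_K$, as if it were the spectrum-level object $\on{map}(E_L,\mbb{R}/\mbb{Z})$. But the cohomology of the space $\Omega^\infty E$ is not the spectrum cohomology of $E$. The latter only sees $H\mbb{Z}\wedge E$; the former contains all cup products and unstable operations. For instance, $\Sigma^\infty S^2$ has spectrum cohomology in a single degree, yet $\Omega^\infty\Sigma^\infty S^2=QS^2$ is rationally $K(\mbb{Q},2)$, with rational cohomology $\mbb{Q}[x]$ in every even degree.

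What you have designed is a Moore \emph{spectrum}. The theorem needs an unstable Moore object, and no infinite loop space (indeed no H-space) can serve. Rationally an H-space is a product of Eilenberg--MacLane spaces. So a circle in the compact part of $C_L^\circ$ (e.g.\ from a complex place of $L$) produces $0\neq x\in H^2(Y_L;\mbb{Q})$ with all powers nonzero. Passing to $X_K$ does not help. Averaging a positive definite form shows that the image of $\on{Sym}^2H^2(Y_L;\mbb{Q})$ in $H^4$ has nonzero $\on{Gal}(L/K)$-invariants, and these survive the colimit over $L$ because the norm maps $C_{L'}^\circ\to C_L^\circ$ are surjective. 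Hence $H^4(X_K;\mbb{Q})\neq 0$, so $H^3(X_K;\mbb{R}/\mbb{Z})\cong H^4(X_K;\mbb{Z})\neq 0$. This is exactly the $\mbb{C}\mbb{P}^\infty$-type defect the theorem is meant to remove.

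Killing $x^2$ requires a nontrivial unstable $k$-invariant, namely the cup square, as in $S^2\to\mbb{C}\mbb{P}^\infty$. Being non-additive, it cannot occur in an infinite loop space. In the paper it is the first $k$-invariant, which is why $\pi_2X=(\on{Sym}^2_{\mbb{Z}}\Lambda)^\vee$ (Lemma \ref{pi2}).

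\textbf{The remaining steps are also unsupported.} The obstructions to the $\on{Gal}(K/\mbb{Q})$-equivariant lift of $u_{K/\mbb{Q}}$, and to making the lifts coherent in $K$, lie in cohomology of finite Galois groups with coefficients in the higher homotopy of $E_K$. Tate--Nakayama controls $\wh{H}^\ast(G;C_K)$ and says nothing about these. Moreover, at finite level there is no existence theorem to make the objects behave.

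The paper never works at finite level. It realizes the canonical condensed refined class formation as $BW_{\mbb{Q}}$, with $W_{\mbb{Q}}/W_{\mbb{Q}}^\circ=G_{\mbb{Q}}$. It then builds the Postnikov tower over $BW$ by Serre's method, $G_{\mbb{Q}}$-equivariantly on $X^\circ=X\times_{BW}BW^\circ$. At each stage the group to add is forced to be $H^{n+2}(X_n^\circ;\mbb{R}/\mbb{Z})^\vee$. The obstruction to choosing a $k$-invariant inducing an isomorphism on $H^{n+2}$ is a map $H\to c_W[n+3]$, with $c_W=\tau_{\geq -1}R\Gamma(BW^\circ;\mbb{R}/\mbb{Z})$ (Theorem \ref{mooreobstruct}).

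This obstruction vanishes because $\on{Ext}^{\geq 3}_{W/W^\circ}(M,c_W)=0$ (Proposition \ref{extvanishing}), and this is where class field theory enters. The functor $\tau_{\geq -1}R\Gamma(-;\mbb{R}/\mbb{Z})$ is a sheaf on $\on{fEt}_{BW}$, being Pontryagin dual to the refined class formation. So the truncated term $\on{ker}(d^2)$ in an injective resolution of $R\Gamma(BW^\circ;\mbb{Z})$ is cohomologically trivial, hence of injective dimension $\leq 1$ by the theorem of Martinez. Together with $H^2(BW^\circ;\mbb{R}/\mbb{Z})=0$ (as $W^\circ$ is abelian), this gives the Moore anima. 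Property 4 then follows from $H^{\geq 2}(X^\circ;\mbb{R}/\mbb{Z})=0$ by comparing stalks of the truncated and untruncated sheaves $R\Gamma(-;\mbb{R}/\mbb{Z})$ on $\on{fEt}_X$ (Lemma \ref{mooreisweilmoore}).
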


To motivate the crucial property 4, recall that the essence of Weil groups is the existence of canonical isomorphisms $W_K^{ab}\simeq \mbb{A}_K^\ast/K^\ast$ refining the Artin map.  Equivalently, by Pontryagin duality, we can phrase this as a cohomology calculation:
$$H^1(BW_K;\mbb{R}/\mbb{Z})= (\mbb{A}_K^\ast/K^\ast)^{\vee},$$
where we use $(-)^\vee$ to denote Pontryagin duality.

However, the higher cohomology groups $H^{>1}(BW_K;\mbb{R}/\mbb{Z})$ do not all vanish.  This can be interpreted as saying that the isomorphism $W_K^{ab}\simeq \mathbb{A}_K^\ast/K^\times$ is not ``robust".  The purpose of the $X_K$ is to fix this.

\begin{remark}
   There are Weil groups associated not just to number fields, but also function fields over finite fields, and local fields.  In non-archimedean cases (function fields and nonarchimedean local fields), the higher cohomology above does vanish by a theorem of Tate (written up by Serre in \cite{serrewt1}), so we will not be changing anything in non-archimedean cases.  In other words, for nonarchimedean $K$ we will have $X_K=BW_K$.
\end{remark}

\begin{remark}
One can think of the relationship between $X_K$ and $BW_K = K(W_K,1)$ as being analogous to the relationship between $S^n$ and $K(\mbb{Z},n)$ in algebraic topology: the spheres $S^n$ have simple cohomology (they are ``Moore spaces''), while the $K(\mbb{Z},n)$ have simple homotopy (they are ``Eilenberg-Maclane spaces'').

Serre in \cite{serrethesis} explained how one can build spheres from Eilenberg-Maclane spaces, up to homotopy equivalence, by an inductive procedure, building up the Postnikov tower of $S^n$.  We apply this same procedure to build $X_{\mbb{Q}}$ from $BW_\mbb{Q}=K(W_\mbb{Q},1)$.  However, because of the nontrivial fundamental group, it is not at all formal that such a procedure works, and we need to prove a rather delicate Ext vanishing statement (Proposition \ref{extvanishing}) to see that there are no obstructions.  The argument uses the framework of \emph{class formations} defined by Artin-Tate in \cite{artintate}.
\end{remark}

\begin{remark}
The question of how (and in what sense) to put a geometric structure on $X_{\mbb{Q}}$ is tantalizing.  Presumably, $X_{\mbb{Q}}$ should be, in some vague sense, the condensed homotopy type underlying the space of closed points in the global analog of the Fargues-Fontaine curve (whose existence has been hypothesized by Scholze), analogously to how, in \cite{farguesscholze}, $BW_{\mbb{Q}_p}$ appears (in some sense) as the homotopy type of $\on{Div}^1$, which is the space of closed points in the Fargues-Fontaine curve.

Another interesting question is that of how to calculate the higher homotopy groups of $X_\mbb{Q}$.  In principle our construction gives a method analogous to Serre's from his thesis, but we do not investigate this in detail.  For $\pi_2$, see Lemma \ref{pi2}.
\end{remark}

Our main point is that the cohomology of these $X_K$ somehow ``improves'' the usual Galois cohomology.  Notably, we have the following, which for the moment we state imprecisely:

\begin{enumerate}
\item The cohomological dimension  (with respect to discrete coefficients) of $X_K$, and its variants $X_{K,S}$ with restricted ramification at a nonempty set of places $S$, is always $2$, independently of whether $K$ has a real place or not and independently of Leopoldt's conjecture.
\item There are local variants of the $X_K$, and there are versions of global and local Poitou-Tate duality for these Weil-Moore anima in which the archimedean and non-archimedean places contribute in a uniform way. (At nonarchimedean places we have $X_{K_\nu}=BW_{K_\nu}$, so there is nothing new, but at archimedean places there is a change.)
\item A simpler version of Tate's global Euler characteristic formula holds for these $X_{K,S}$ (with $S$ finite): the Euler characteristic is always $0$, just as in the function field case.
\end{enumerate}

Let us expand on this by describing some idiosyncrasies of Galois cohomology, which we aim to ``repair'' by considering the cohomology of the $X_K$ instead.  We can frame the discussion in terms of the well-known analogy with 3-manifolds with boundary (\cite{morishita}). The main point is that this analogy is slightly broken, for reasons coming from the archimedean places.

More explicitly, consider Poitou-Tate duality (\cite{tateduality}, \cite{milneduality}), which is supposed to be the analog of Poincar\'{e} duality.  Let $K$ be a number field, let $p$ be a prime, let $S$ be a finite set of places of $K$ containing all archimedean places and all places above $p$, and let $M$ be a finite $G_{K,S}$-module of $p$-power order.  Then the continuous group cohomology $R\Gamma(BG_{K,S};M)$ is finite of $p$-power order in each degree, and its Pontryagin dual is described as follows:

$$R\Gamma(BG_{K,S};M)^\vee\simeq \widetilde{R\Gamma}_c(BG_{F,S};M^\vee(1)[3]).$$
Here
$$\widetilde{R\Gamma}_c(BG_{K,S};M) := \on{Fib}\left(R\Gamma(BG_{K,S};M)\to \oplus_{\nu\in S} \widetilde{R\Gamma}(BG_{K_\nu};M)\right),$$
where $\widetilde{R\Gamma}(BG_{K_\nu};M)$ denotes the usual continuous group cohomology if $\nu$ is nonarchimedean, and Tate cohomology (of the finite group $G_{K_\nu}$) if $\nu$ is archimedean.

This looks \emph{almost} like Poincaré duality for the interior of a compact 3-manifold with boundary, where we interpret $S$ as parametrizing the boundary components, and we interpret the Tate twist as the orientation local system.  But only ``almost", because at the archimedean places we use Tate cohomology instead of usual cohomology.  For example at complex places the Tate cohomology vanishes, so somehow we ignore those components, and at real places we just get some 2-torsion groups, but generally in infinitely many degrees, positive and negative.  On the other hand, at nonarchimedean places we use the usual cohomology, and there local Tate duality looks just like Poincaré duality for a 2-dimensional closed manifold.  Thus there is an asymmetry between the nonarchimedean and archimedean places, where the nonarchimedean places fit into the philosophy of a 2-dimensional boundary component but the archimedean places do not.

For a more elementary perspective, recall that the ``fundamental class'' for Poitou-Tate duality comes from the famous calculation of the Brauer group of a number field, namely that it is the kernel of the sum of the invariant maps from the local Brauer groups.  At nonarchimedean places the invariant map is an isomorphism $Br(F_\nu)\simeq \mbb{Q}/\mbb{Z}$, but at archimedean places it is only an injection, with image of order $\leq 2$.  Somehow, at the archimedean places the Brauer group is too small.

At the root of all this is the fact that while the absolute Galois group $G_{\mbb{Q}_p}$ of $\mbb{Q}_p$ is a big and powerful object (according to Fontaine its $p$-adic representations can encode a lot of $p$-adic Hodge theoretic information), the absolute Galois groups of $\mbb{R}$ and $\mbb{C}$ are too small to reveal any interesting information, say of Hodge-theoretic nature.

This is partially fixed by passing from Galois groups to Weil groups.  The Weil group of the local field $\mbb{C}$ is $\mbb{C}^\times$ and the Weil group of $\mathbb{R}$ is an extension of $G_\mbb{R}$ by $\mbb{C}^\times$.  So we do get something bigger, and indeed complex representations of archimedean Weil groups can be used to encode pure Hodge structures with complex coeficients, see \cite{tatentb}.  From a group cohomology perspective, Weil groups also fix the problem with the Brauer group: if you calculate $H^2(BW_\mbb{R};\mbb{Q}/\mbb{Z}(1))$ or $H^2(BW_\mbb{C};\mbb{Q}/\mbb{Z}(1))$, you get the full $\mbb{Q}/\mbb{Z}$; correspondingly, the Weil version of the global Brauer group is also the ``correct'' one for an arbitrary number field $K$, namely
$$H^2(BW_K;\mbb{Q}/\mbb{Z}(1))=\on{ker}\left(
\oplus_\nu\mbb{Q}/\mbb{Z}\overset{\Sigma}{
\rightarrow}\mbb{Q}/\mbb{Z}\right),$$
where the direct sum is over all places of $K$, as was shown by Deligne in \cite{deligne}.

However, there is still a problem with the Weil group cohomology, in higher degrees: the cohomological dimension of $W_\mbb{C}$ and $W_\mbb{R}$ is not $2$ like in the nonarchimedean case, but infinite.  For example $H^i(BW_\mbb{C};\mbb{Q}/\mbb{Z})$ gives $\mathbb{Q}/\mbb{Z}$ not just in degree 2, but in all nonnegative even degrees.  Indeed the classifying stack $BW_\mbb{C}$ has the homotopy type of $BS^1 \sim \mbb{C}\mbb{P}^\infty$.  So we still don't have the ``correct'' 2-dimensional object, which would have the same kind of Poincaré duality as the Galois group (or Weil group) of a nonarchimedean local field.

There is a straightforward fix to this, which is to take just the 2-skeleton of $\mbb{C}\mbb{P}^\infty$, or in other words the 2-sphere $S^2$.  This of course has 2-dimensional Poincaré duality.  Moreover we can make $G_\mbb{R}$ act on this by the antipodal map on the geometric 2-sphere and this gives a reasonable 2-dimensional object for $\mbb{R}$, namely $\mbb{R}\mbb{P}^2$.

In fact this passage to $\mbb{R}\mbb{P}^2$ is also natural from the perspective of Hodge theory, or more precisely Simpson's twistor theory (\cite{simpson}).  Indeed $S^2$ is $\mbb{P}^1(\mbb{C})$, and vector bundles on $\mbb{P}^1_\mbb{C}$ equipped with semilinear structure relative to the antipodal involution are at the heart of Simpson's reinterpretation of what is a Hodge structure (with $\mbb{R}$-coefficients over $\mbb{C}$, this time).  In particular, antipodal twistor structures have Chern classes living in the cohomology of this 2-skeleton of the real Weil group. Following the discussion of Fargues in \cite{farguescft}, this is analogous to how vector bundles on the Fargues-Fontaine curve have Chern classes with values in the cohomology of the $p$-adic Weil group.

Although it has the correct cohomology, the homotopy type (anima) $S^2$ of course has trivial fundamental group and certainly cannot recover the Weil group $\mbb{C}^\times$ as a topological group, so we lost something here.  But this is exactly the sort of thing condensed anima are good for.  There is an action of $W_\mbb{C}=\mbb{C}^\times$ on the unit quaternions $\mbb{H}^\times$ by multiplication, hence an action of the condensed group $W_\mbb{C}$ on the homotopy type $|\mbb{H}^\times|\sim S^3$, and when we take the quotient (in condensed anima) we get something with fundamental group $W_\mbb{C}$ but whose cohomology (with discrete coefficients) is the same as that of $S^2$ (think of the Hopf fibration).  The same construction works for the reals as well, using that $W_\mbb{R}$ can be realized as a subgroup of $\mbb{H}^\times$.

Thus we get a fairly satisfying picture at the archimedean places.  The Weil group admits a reasonable refinement: a condensed anima whose $\pi_1$ is the Weil group, and whose cohomology with discrete coefficients has two dimensional duality exactly like a non-archimedean local field.  (Actually the refinement we give in the body of this paper will be slightly different than the one mentioned above: we will modify this quotient $S^3/W_\mbb{R}$ to make the higher homotopy groups compact.)

The issue, then is to make a similar refinement for the Weil group of a number field, compatible with the above story at the archimedean places (and with the usual Weil groups at non-archimedean places, which are in no need of refinement), such that we get a reasonable version of Poitou-Tate duality.  That is what we accomplish in this article.

To finish this introduction, let's give a brief tour of some areas of number theory where Galois (or Weil group) cohomology arises, and then indicate how the refinement we suggest somehow improves the picture.  We will not go into any more details about these points in the body of the paper; we discuss them here just for motivation.

\begin{enumerate}
\item First and most immediately, there is the ``Weil-etale'' perspective on special values of zeta functions developed in \cite{lichtenbaum}, where the exact same issues as discussed above arise.  Lichtenbaum considers a Weil group version of the compactly supported cohomology of the ring of integers $\mathcal{O}_F$ of a number field $F$, with coefficents in $\mbb{Z}$, and shows that \emph{if we truncate to degrees $\leq 3$}, then this gives an object in $\on{Perf}(\mbb{Z})$, and that moreover, its base-change to $\on{Perf}(\mbb{R})$ admits an expression of the form $\on{fib}(V\overset{\Phi}{\rightarrow}V)$ for some canonical $V\in\on{Perf}(\mbb{R})$ with canonical endomorphism $\Phi$.  On the level of K-theory, it follows that the point in $K(\mbb{Z})$ defined by this cohomology promotes to the homotopy fiber of $K(\mbb{Z})\to K(\mbb{R})$, whence we get a natural element of the group $K_1(\mbb{R}) / K_1(\mbb{Z}) = \mbb{R}^\times/\mbb{Z}^\times$ of nonzero real numbers up to sign.

The result of \cite{lichtenbaum} is that this nonzero real number (up to sign) is equal to the the leading coefficient of $\zeta_F$ at $s=0$, with the order of vanishing being given by the Euler characteristic of $V$.  This is a compelling reformulation of the Dirichlet class number formula.  However, in high degrees the compactly supported cohomology with $\mbb{Z}$-coefficients is nonzero and huge, see \cite{flach}.  Thus the ad hoc truncation used above was necessary, and the picture is somewhat spoiled.  The trouble again is that the Weil group of a number field or of an archimedean local field has unwanted cohomology in degrees $>3$.  This is fixed by the object we propose, so Lichtenbaum's idea goes through directly and without truncation.

\item Similarly, turning towards $p$-adic coefficients, we can consider Kato's generalized Iwasawa main conjecture (\cite{kato}).  Let $p$ be a prime, $\Lambda$ a commutative pro-$p$-finite ring and let $M$ be a lisse $\Lambda$-sheaf on $\on{Spec}(\mbb{Z}[1/S])_{et}$, where $S$ is a finite set of prime numbers with $p\in S$.  Kato conjectured that associated to this data is a canonical \emph{zeta element}, which is a particular isomorphism
$$\zeta_M: \on{det}_{\Lambda}(R\Gamma_c(\on{Spec}(\mbb{Z}[1/S])_{et};M))\simeq \Lambda.$$
Besides natural formal properties, these zeta elements are, crucially, supposed to satisfy a more nuanced axiom which says that when $M$ comes from a motive, there is a precise relation with special values of (complex) L-functions.

Notably, here $R\Gamma_c$ is defined in the completely naive way:
$$R\Gamma_c(\on{Spec}(\mbb{Z}[1/S])_{et};M) := \on{Fib}(R\Gamma (\on{Spec}(\mbb{Z}[1/S])_{et};M)\to \oplus_{\nu \in S\cup \{\infty\}} R\Gamma(\on{Spec}(\mbb{Q}_\nu)_{et};M)).$$
We use usual etale (or Galois) cohomology everywhere, not Tate cohomology or Weil group cohomology.  We do have
$$R\Gamma_c(\on{Spec}(\mbb{Z}[1/S])_{et};M)\in \on{Perf}(\Lambda)$$
for all $p,S,\Lambda,M$ as above, which shows that the determinant in the statement is well-defined.  Moreover, Tate's global Euler characteristic formula has a reformulation which says that the above perfect complex always has Euler characteristic $0$ (a ``$K_0$'' companion to Kato's conjecture, which implicitly has to do with $K_1$, or rather $\tau_{\leq 1}K$).

It seems that here, in contrast to the the situations described above, everything behaves perfectly logically and there is no need for a refinement.  Yet, we are proposing a refinement, so we should check what it does.  Thankfully, everything makes sense: in this setting, we will show that there is a natural isomorphism
$$R\Gamma_c(\on{Spec}(\mbb{Z}[1/S])_{et};M) \overset{\sim}{\rightarrow} R\Gamma_c(X_{\mbb{Q},S};M).$$
That is, with these kinds of coefficients, compactly supported etale (or Galois) cohomology is the same as compactly supported Weil-Moore cohomology, so Kato's conjecture can equivalently be formulated in the Weil-Moore context.

Moreover, once we are in the Weil-Moore world, we gain something: by Poitou-Tate duality for Weil-Moore anima, we can also reformulate Kato's conjecture as positing trivializations
$$\on{det}_\Lambda R\Gamma(X_{\mbb{Q},S};M) \simeq \Lambda,$$
for any $M$ and $\Lambda$ as above, here using the non-compactly supported cohomology of the Weil-Moore anima. More precisely, the two conjectures are equivalent under switching $M\leftrightarrow M^\vee(1)$.

This reformulation should constitute a modest improvement.  Actually, Kato remarks that for the purposes of producing such zeta isomorphisms it is helpful to pass to the Poitou-Tate dual perspective of usual (not compactly supported) cohomology, because that is where ``special elements'' like cyclotomic units naturally live.  However, since as we mentioned above, classical Poitou-Tate duality has some strange aspects at the archimedean places, this means that Kato has to introduce an archimedean correction factor to his conjecture, and moreover the prime $p=2$ requires extra attention.  When we pass to the Weil-Moore setting, this becomes much simpler, as stated above.

\item The previous points had to do with direct uses for the \emph{cohomology} of the $X_{K,S}$.  One can say with reasonable justification that this on this linear level, $X_{K,S}$ just provides a reorganization of known aspects of Galois cohomology.  However, the fact that it arises from the non-linear object $X_{K,S}$ gives extra information, and we will now explain that this information is relevant in the Langlands program.

More precisely, we mean the Langlands program with mod $p$ or $p$-adic coefficients, for some fixed prime $p$; thus we avoid having to contend with the conjectural \emph{Langlands group} (\cite{langlands}, \cite{arthur}), which appears when working with the classical $\mathbb{C}$-coefficients due to the fact that even the Weil group $W_\mbb{Q}$ does not have nearly enough complex representations to match automorphic phenomena.  But for mod $p$ coefficients this is not a concern.

Our basic claim is that on the ``Galois side''/``spectral side'' of the Langlands correspondence for number fields, one should replace the notion of Galois representation by the notion of a \emph{Weil-Moore representation}: that is, a finite rank local system on $X_K$.  At the naive level this gives nothing new: as
$$\tau_{\leq 1}X_K = BW_K,$$
a Weil-Moore representation is the same thing as a Weil group representation.  If the coefficient ring $R$ for our representations is discrete or $p$-adic, it's also the same as a Galois representation, because $W_K/W_K^\circ = G_K$.

More precisely, the above reasoning holds when $R$ is a \emph{classical} commutative ring, i.e.\ when $R=\pi_0R$.  If $R$ has nontrivial derived structure, then the notion of Weil-Moore representation differs from the notion of Weil group representation which in turn differs from the notion of Galois group representation.

Let us be a bit more precise about this.  For a group scheme $\wh{G}$ over $\Lambda=\mbb{Z}/p^n\mbb{Z}$, we can consider, for any animated commutative $\Lambda$-algebra $R$, the anima of maps of condensed anima
$$X_K \to (B\wh{G})(R),$$
where the target is discrete (as a condensed anima, meaning it is just an ordinary anima).  This defines a formal derived stack $\mathfrak{X}_{K}=\mathfrak{X}_{K,\wh{G}}$ over $\on{Spec}(\Lambda)$ organizing ``deformations of Weil-Moore representations''.  Then these $\mathfrak{X}_{K,\wh{G}}$ provide a derived structure on the usual Galois deformation functors (\cite{mazur}), or Wang-Erickson stacks, \cite{carlwe}, which is \emph{different} than the naive derived structure considered by Galatius-Venkatesh in \cite{galatius}.

To see what this might give in practice, let's turn to the remarkable conjectures of Emerton-Gee-Hellmann and Zhu, see \cite{egh} and \cite{zhu}.  These posit a certain ``spectral decomposition'' of the compactly supported cohomology of Shimura varieties.  In this context, to give a \emph{spectral decomposition} of a vector space means to find stack of ``parameters'' and an Ind-coherent sheaf on that stack whose global sections identify with the vector space.  (This rather refined notion of what it means to give a spectral decomposition arose in the geometric Langlands program, see \cite{gaitsgoryshtukas} and \cite{locsysres}, as an extension and reinterpretation of the work of Lafforgue \cite{lafforgue}.  We refer to \cite{scholzebourbaki}, especially Remark 5.4, for a particularly cogent explanation.)

In the case of the above-mentioned conjectures, the stack of parameters is quite literally a moduli stack of L-parameters, and the Ind-coherent sheaf is directly described (conjecturally) in terms of \emph{categorical} local Langlands at the non-archimedean places!

Actually, this is a bit of an oversimplification, because one needs to somehow take care of the archimedean places as well.  We refer to \cite{egh} and upcoming papers of Emerton-Zhu and Davis-Emerton-Vilonen for more details.  But this is precisely where the Weil-Moore versions can potentially help, in making the archimedean places more directly analogous to the non-archimedean places.  As a bonus, one can formulate a more general conjecture describing the compactly supported cohomology with finite coefficients of an arbitrary adelic quotient, at least modulo standard technicalities having to do with the Hasse priniciple.

As a preliminary to this, one needs to understand categorical local Langlands at the archimedean places, with discrete coefficients, analogously to the categorical local Langlands conjectures at non-archimedean places discussed in \cite{farguesscholze}, \cite{zhu}, \cite{egh}.  In fact, in conversations with Peter Scholze several years ago we came up with a proposal for what this archimedean analogue should be: namely, following Ben-Zvi--Nadler (\cite{bznbetti}), it should simply be Betti geometric Langlands for $\mbb{P}^1_{\mbb{C}}$ at complex places, and a version of Betti geometric Langlands for the twistor line $\on{Tw}_{\mbb{R}}$ at real places.  See also \cite{scholzerll}, which is a more involved version of the same idea aimed at real local Langlands with complex coefficients instead of discrete coefficients.

The Betti geometric Langlands of Ben-Zvi--Nadler (\cite{bznbetti}) for real curves should look \emph{roughly} as follows (we will explain the subtleties later).  Let $X$ be a smooth proper curve over $\mbb{R}$ such that $X(\mbb{R})=\emptyset$; on the dual side, the same data is encoded in the closed conformal (not necessarily oriented or connected) surface
$$|X| = X(\mbb{C})/\on{Gal}(\mbb{C}/\mbb{R}).$$
This $|X|$ is equivalently the ``moduli space of closed points in $X$'' and is the analog of $\on{Div}^1$ from the Fargues-Scholze setting, \cite{farguesscholze}.

Let $G$ be a reductive group over $\mbb{R}$.  On the automorphic side, one takes sheaves (of $\Lambda$-modules) with nilpotent singular support on the \emph{real points} of the moduli stack of $G$ bundles on $X$, and on the spectral side one takes Ind-coherent sheaves with nilpotent singular support on the stack of $\wh{G}$-local systems on $|X|$.
Note that if $X$ is a curve over $\mbb{C}$ viewed as a (geometrically disconnected) curve over $\mbb{R}$, this reduces to the usual Betti geometric Langlands from \cite{bznbetti}.

If $\on{Tw}_{\mbb{R}}$ denotes the \emph{twistor line}, the nontrivial real form of $\mbb{P}^1$, then $|\on{Tw}_{\mbb{R}}|=\mbb{R}\mbb{P}^2$ and this notion of local system exactly matches the notion of a Weil-Moore representation with $\Lambda$-coefficients over $\on{Spec}(\mbb{R})$.  Thus what we are proposing is that our suggested modifications of the Galois/Weil group are exactly what is required to make Betti geometric Langlands \emph{directly} relevant to arithmetic Langlands for number fields (not just relevant by analogy).

However, it's important to note some technical subtleties in this situation.  The closed surface $|X|$ can be non-orientable, and indeed it is non-orientable for our primary example $X=\on{Tw}_{\mbb{R}}$.  This means we must use the C-group (\cite{buzzardgee}) instead of the L-group when talking about local systems for the dual group on the curve.  More precisely, the usual action of $\on{Gal}(\mbb{C}/\mbb{R})=\pi_1|X|$ on $\wh{G}$ coming from Langlands duality gives rise to a local system of reductive groups on $|X|$, allowing to define a twisted notion of $\wh{G}$-local system.  This is the twist which corresponds to the usual notion of L-parameters, but it needs to be twisted even more (or differently): there is a canonical map $\mu_2 \to Z(\wh{G})$ (see \cite{bernsteinsign}) which means we can further twist the notion of $\wh{G}$-local system by a $\mu_2$-gerbe on $|X|$, and the gerbe we take is the gerbe of square roots of the Tate twist. This extra-twisted (C-group) version is exactly the version of local system which arises from considering geometric Satake in families over $|X|$ (viewed as the space parametrizing closed points of $X$), compare \cite{farguesscholze}.
 
Even more annoyingly (though somehow also equivalently, but on the dual side), the canonical bundle $\Omega^1$ on $\on{Tw}_{\mbb{R}}$ does not admit a square root, and this precludes, at least for certain $G$ such as $G=SL_2$, the existence of a Whittaker sheaf, which is used to pin down the geometric Langlands equivalence.  In geometric Langlands over $\mbb{C}$, this is just an annoying technicality which means that geometric Langlands is not canonical (one needs to fix Whittaker data to make it canonical).  For geometric Langlands over $\mbb{R}$, however, it is a more serious issue, because when there is no Whittaker data, geometric Langlands simply cannot hold in the usual formulation.  To get around this, one can (and must) make a further twist.  We could either twist the automorphic category as described in \cite{bzsv} C.7  to get a canonical Whittaker sheaf, or else twist the spectral side to lose the structure sheaf.  The former solution is more symmetric and aesthetically pleasing, but in the global situation with number fields it is less clear what form the automorphic Whittaker twist should take, so let us explain the spectral Whittaker twist instead.

More precisely we twist the category of Ind-coherent sheaves by a certain canonical $\mu_2$-gerbe on the stack of $\wh{G}^C$-local systems.  I believe this $\mu_2$-gerbe can be described as follows (compare \cite{gaitsgoryraskin}): any reductive group admits a canonical $\mu_2$-central extension, see again \cite{bernsteinsign}, which implies that a $\wh{G}^C$-local system on $|X|$ induces a $\mu_2$-gerbe on $|X|$.  Cupping with the gerbe of square roots of the canonical bundle of $X$ and integrating over $|X|$ we get a $\mu_2$-gerbe on the stack of $\wh{G}^C$-local systems. This should be the gerbe we use to twist the notion of Ind-coherent sheaf in order to get a geometric Langlands correspondence which is independent of a choice of Whittaker sheaf.

The above subtleties with Whittaker data should also exist for the categorical local Langlands with mod $p^n$ coefficients at nonarchimedean places.  \cite{egh} avoids them by restricting to groups of adjoint type, where the Whittaker twist is trivial (so only the C-group twist remains).

Now, returning to the global situation, let $K$ be a number field and $G$ a reductive group over $K$, and let us say that our goal is to give a spectral decomposition of the compactly supported cohomology
$$R\Gamma_c(G(\mbb{A}_K)/G(K);\Lambda)$$
 of the standard adelic quotient, where we fix our coefficient ring to be $\Lambda=\mbb{Z}/p^n\mbb{Z}$ for some prime $p$ and $n\geq 1$.  Note that compactly supported cohomology is the most elementary possible variant of cohomology to consider, because it is the only one which naturally produces a \emph{discrete} $\Lambda$-module on an arbitrary locally compact Hausdorff space (in the case at hand, $G(\mbb{A}_K)/G(K)$); even better the above compactly supported cohomology gives an admissible representation of $G(\mbb{A}_K)$.  Thus the above space appears as a reasonable finite-coefficient analog of the familiar space of automorphic forms with $\mbb{C}$-coefficients.

Actually, we need to switch to a slight variant of $G(\mbb{A}_K)/G(K)$, namely
$$(BG)_c(K) := \on{Fib}((BG)(K) \to (BG)(\mbb{A}_K)).$$
In more standard adelic terms this can be described as
$$(BG)_c(K) = \bigsqcup_\xi G^\xi(K)\backslash G(\mbb{A}_K)$$
where $\xi$ runs over a set of representatives for $\on{ker}^1(K,G)$ with $G^\xi$ the corresponding pure inner twist.  Thus, passing from the standard adelic quotient to this $(BG)_c(K)$ is a rather mild modification, and if $G$ satisfies the Hasse principle it is not a modificiation at all.

We take as known the (semi-)local version of our question, namely the compactly supported cohomology
$$R\Gamma_c(G(\mbb{A}_K);\Lambda),$$
as a $G(\mbb{A}_K)$-representation, and seek to describe $R\Gamma_c((BG)_c(K);\Lambda)$ in terms of it, but over on the spectral side.

An adelic version of categorical local Langlands should conjecturally produce from $R\Gamma_c(G(\mbb{A}_K);\Lambda)$ a Whittker-twisted Ind-coherent sheaf $\mathfrak{U}_{\mbb{A}_K}$ (which one could call the \emph{Plancherel sheaf}) on the stack
$\mathfrak{X}_{\mbb{A}_K}$ of adelic Langlands parameters in the C-group sense, defined in terms of the stacks of local C-parameters as follows:
$$\mathfrak{X}_{\mbb{A}_K} := \varinjlim_S \prod_{\nu\in S}\mathfrak{X}_{K_\nu} \times \prod_{\nu\not\in S} \mathfrak{X}_{\mc{O}_{K_\nu}}.$$
We recall that at the nonarchimedean places, $X_{K_\nu}=BW_{K_\nu}$, so there is nothing novel here compared to the story decribed in \cite{egh}, but at archimedean places we use Betti geometric Langlands as described above.  Also, as in \cite{egh}, the theory is likely to become cleaner if, when $\nu$ lies above $p$, we take $\mathfrak{X}_{K_\nu}$ to be the Emerton-Gee stack (\cite{emertongee}) parametrizing etale $(\varphi,\Gamma)$-modules instead of the stack of Weil group representations.

There is a global-to-local map
$$f:\mathfrak{X}_{K} \to \mathfrak{X}_{\mbb{A}_K}$$
from the moduli stack of global L-parameters again in the Weil-Moore sense and with the C-group twist, and an expectation in line with the formalism of \cite{egh}, \cite{zhu} and \cite{locsysres}, and consistent with suggestions of Scholze concerning a hypothetical ``global $\on{Bun}_G$" (c.f.\ \cite{slh} Conjecture F in the function field case) and suggestions of \cite{bzsv} concerning the relevance of TQFT-like structures to number-theoretic Langlands, would be that we should have an isomorphism

$$R\Gamma_c((BG)_c(K);\Lambda) \simeq R\Gamma(\mathfrak{X}_K; f^!\mathfrak{U}_{\mbb{A}_K}).$$

  In other words, the Ind-coherent sheaf $f^!\mathfrak{U}_{\mbb{A}_K}$ should provide the desired refined spectral decomposition of the given space of (mod $p^n$) automorphic forms for $G$.  Note that the Plancherel sheaf $\mathfrak{U}_{\mbb{A}_K}$ is built from purely local data, so in this conjecture (as in the similar conjectures in \cite{egh}, \cite{zhu}, \cite{locsysres}) all the global information about automorphic forms comes on the dual side from applying the functor $f^!$.  An important technical remark is that the global sections on the right makes sense because the Whittaker gerbe trivializes on pullback to $\mathfrak{X}_K$, essentially because of Hasse reciprocity.  We also note that the version of $R\Gamma(\mathfrak{X}_K;-)$ in play here is the one which by construction preserves colimits, and on coherent sheaves (which are always pushed forward from some algebraic closed substack of $\mathfrak{X}_K$) is given by the usual global sections; see \cite{cautis} for relevant generalities on Ind-coherent sheaves.

It would be interesting to gain enough understanding of Betti Langlands for $\on{Tw}_{\mbb{R}}$ to see that this formulation is compatible with the one in \cite{egh} in the case of Shimura varieties.  In particular it would be nice to see the familiar ``oddness'' of the Galois representations one gets from known spaces of cohomological automorphic forms, which is put in by hand in \cite{egh}, being (at least conjecturally) a consequence of archimedean categorical local Langlands.

One can similarly translate Zhu's conjectures on the compactly-supported cohomology of moduli stacks of shtukas (\cite{zhu}) from the function field case to the number field case using these ideas.  Rather, we can translate the Galois side of Zhu's conjectural isomorphisms, without yet knowing what should go on the automorphic side.  In any case this supports Scholze's conviction (\cite{scholzeicm},\cite{scholzebourbaki}) that shtukas (and their moduli) should somehow make sense in the number field case as well.

\item Speaking of number field shtukas, they tie into another motivation for the present work.  For such a notion to exist, there needs to be an object ``$\Omega^1_{\mbb{Q}/\mbb{F}_1}$'', in some (exotic) sense a line bundle on $\on{Spec}(\mbb{Q})$, which provides the correct twists when studying behaviour of shtukas along their locus of modification, analogous to the existing object $\Omega^1_{F/\mbb{F}_q}$ in the function field case.

What can we say about this (undefined) object? Only one thing is currently known: if we pull back along $\on{Spa}(\mbb{Q}_p)\to \on{Spec}(\mbb{Q})$, then the resulting object ``$\Omega^1_{\mbb{Q}_p/\mbb{F}_1}$'' actually has a meaning, as discussed by Faltings (\cite{faltings}).  It can be analysed like this: first, if $F/\mbb{Q}_p$ is perfectoid and contains all $p$-power roots of unity, then Fontaine's calculations (\cite{fontaine}) show that the cotangent complex $\mbb{L}_{F/\mbb{Q}_p}$ (or rather its natural completion) functorially identifies with the shifted Tate twist $F(1)[1]$.  On the other hand it's reasonable to posit that $\mbb{L}_{F/\mbb{F}_1}=0$, because the analog in characteristic $p$ holds.  Combining, we see that
$$``\mbb{L}_{\mbb{Q}_p/\mbb{F}_1} = \mbb{Q}_p(1)."$$
Here, more formally, $\mbb{Q}_p(1)$ is a pro-etale line bundle on $\on{Spa}(\mbb{Q}_p)$ whose sections over any large enough pro-etale extension $F$ of $\mbb{Q}_p$ canonically identifies with the rank one $F$-module $F(1)$.  This is indeed the object giving the relevant twists in the picture of local shtukas with one leg, \cite{scholzeberkeley}.

Keeping with our cohomological theme, although we don't know in which category the global ``$\Omega^1_{\mbb{Q}/\mbb{F}_1}$'' should live, we can try to ask what its first Chern class should be, say with finite coefficients.  Again we only know what happens on base-change to $\mbb{Q}_p$.  Since etale cohomology has pro-etale descent, pro-etale vector bundles over $\mbb{Q}_p$ have Chern classes in Galois cohomology just like ordinary vector bundles.  For $\mbb{Q}_p(1)$ and with (mod $p^n$) coefficients, the answer was given by Petrov when discussing joint work with Pan: we have that
$$c_1(\mbb{Q}_p(1)) \in H^2(G_{\mbb{Q}_p};\mbb{Z}/p^n\mbb{Z}(1)) \mapsto 1 \in \mbb{Z}/p^n\mbb{Z}$$
under the invariant map $\on{inv}: H^2(G_{\mbb{Q}_p};\mbb{Z}/p^n\mbb{Z}(1))\simeq \mbb{Z}/p^n\mbb{Z}$ (the fundamental class for local Tate duality).  On the other hand one can also calculate that with mod $\ell^n$ coefficients for $\ell\neq p$, we have
$$c_1(\mbb{Q}_p(1)) = 0 \in H^2(G_{\mbb{Q}_p};\mbb{Z}/\ell^n\mbb{Z}(1)).$$
What does this mean for $``c_1(\Omega^1_{\mbb{Q}/\mbb{F}_1})"$ in (mod $p^n$) cohomology?  Its local invariant at the prime $p$ is as nontrivial as possible (generator of a cyclic group of order $p^n$), while its local invariants at primes $\ell\neq p$ are trivial.  By reciprocity, this means that its local invariant at the archimedean place must be as nontrivial as possible, for all $p$.

Again, the Brauer group of $\mbb{R}$ is too small to accommodate such a Chern class.  This means that $c_1(\Omega^1_{\mbb{Q}/\mbb{F}_1})$ cannot live in usual Galois cohomology $H^2(G_\mbb{Q};\mbb{Z}/n\mbb{Z}(1))$, because this involves the too-small Brauer group at the infinite place.  But there's also a separate issue: the base-change of $``\Omega^1_{\mbb{Q}/\mbb{F}_1}"$ to the infinite place should not be construed to be $\Omega^1_{\mbb{R}/\mbb{F}_1}$, because that object should vanish ($\mbb{R}$ behaves like a perfectoid).

Both problems are fixed if we make the above-suggested replacement of $\mbb{R}$ by the twistor line $\on{Tw}_{\mbb{R}}$.  In fact, something very pleasant happens: the first Chern class
$$c_1(\Omega^1_{\on{Tw}_{\mbb{R}}/\mbb{R}}) \in H^2(\on{Tw}_{\mbb{R}};\mbb{Z}/n\mbb{Z}(1))$$
of the cotangent bundle of $\on{Tw}_{\mbb{R}}$ maps to the generator $-1\in\mbb{Z}/p^n\mbb{Z}$ under the invariant map for all $p$ and $n$, exactly as required by reciprocity!  (Note also that since $\mbb{R}$ counts as perfectoid, at the heuristic level we have $\Omega^1_{\on{Tw}_{\mbb{R}}/\mbb{R}}=\Omega^1_{\on{Tw}_{\mbb{R}}/\mbb{F}_1}$.)  Thus, cohomologically speaking, it makes sense for $\Omega^1_{\on{Tw}_{\mbb{R}}/\mbb{R}}$ to be the base-change to the infinite place of our fictional object $\Omega^1_{\mbb{Q}/\mbb{F}_1}$.

This gives the impression that $c_1(\Omega^1_{\mbb{Q}/\mbb{F}_1})$ really does deserve to exist, and that it should live in $H^2(X_\mbb{Q};\mbb{Z}/n\mbb{Z}(1))$ where $X_\mbb{Q}$ is some kind of ``space'' replacing $\on{Spec}(\mbb{Q})$ for which Poitou-Tate duality takes the optimal form as discussed above.  In this article we at least produce $X_\mbb{Q}$ as a condensed anima.  This is not enough to make sense of $\Omega^1$, but it is enough sense to make sense of its first Chern class.
\end{enumerate}

\subsection*{Dedication}

I dedicate this article to the memory of John Tate, my grandfather and mentor.  His kindness and his excellent taste were crucial for my development, and his work continues to shine an unusually bright light on the mathematical landscape.

\subsection*{Acknowledgements}

I thank Ahmed Abbes, Johannes Ansch\"utz, Robert Burklund, Matt Emerton, Ofer Gabber, Dennis Gaitsgory, Toby Gee, Jesper Grodal, Maxim Kontsevich, Jacob Lurie, Adrien Morin, Alexander Petrov, Vincent Pilloni, Sam Raskin, and Peter Scholze for helpful remarks and conversations.

I also thank the referee for their comments, which were very helpful for improving the text.  Finally, I thank Semon Rezchikov for supplying me with several LLM-generated ``referee reports'' which revealed some typos and minor errors.

\section{Background on condensed anima}\label{condback}

In this paper we use the formalism of \emph{condensed anima}, developed by the author and Scholze.  Here we give a rapid overview of some basics of this theory.

The quickest version of the definition is the following.

\begin{definition}
Let $\on{Extr}$ denote the category of extremally disconnected compact Hausdorff spaces.  Note that $\on{Extr}$ has finite coproducts given by disjoint unions.

The $\infty$-category of \emph{condensed anima} is the $\infty$-category
$$\on{CondAn} = \mc{P}_{\Sigma}(\on{Extr})$$
freely generated by $\on{Extr}$ under sifted colimits, as formally encoded in the $\mc{P}_{\Sigma}$-construction of \cite{htt} 5.5.8.

In other words, $\on{CondAn}$ is the $\infty$-category generated by compact projective objects, such that the full subcategory of compact projective objects in $\on{CondAn}$ identifies with $\on{Extr}$.
\end{definition}

By construction, $\on{CondAn}$ is a full subcategory of presheaves of anima on $\on{Extr}$, namely it is spanned by those $X:\on{Extr}^{op} \to \on{An}$ satisfying the following two conditions:
\begin{enumerate}
\item If $I$ is a finite set and $\{S_i\}_{i\in I}$ is an $I$-indexed collection of objects in $\on{Extr}$, then
$$X(\sqcup_{i\in I} S_i) \overset{\sim}{\rightarrow} \prod_{i\in I} X(S_i).$$
\item $X$ is a small colimit of representable presheaves.
\end{enumerate}

Note that finite coproducts in $\on{Extr}$ are disjoint and universal.  It follows that $\on{CondAn}$ can also be construed as an $\infty$-category of sheaves of anima on $\on{Extr}$ with respect to a Grothendieck topology, satisfying the technical smallness condition 2.  The only thing preventing it from being an $\infty$-topos in the technical sense is that $\on{CondAn}$ is not presentable (because $\on{Extr}$ is not essentially small).  But this does not cause problems in practice, and one generally has access to the whole toolkit of an $\infty$-topos.

An important example for us is the Postnikov tower of a condensed anima $X$, or more generally the relative Postnikov tower of a map of condensed anima $X\to Y$.  Viewing a consensed anima $X$ as a presheaf of anima on $\on{Extr}$, the Postnikov tower of $X$ is perfectly naive: it is given by section-wise Postnikov trunctation (as this clearly preserves the sheaf condition).  The relative Postnikov tower of $X\to Y$ is hardly more complicated: its formation commutes with base-change, so by descent one can reduce to the case of $Y\in \on{Extr}$ and in that case the relative Postnikov tower agrees with the absolute one past the first stage.  In any case one easily sees that all Postnikov towers converge by reduction to the case of $\on{An}$.

The most basic condensed anima are in principle the objects of $\on{Extr}$ coming via the Yoneda embedding, but these are completely inexplicit (the basic example is the Stone-Cech compactification of an infinite discrete set).  The more general \emph{compact Hausdorff spaces} give a more practical starting point.  Explicitly, there is a fully faithful embedding

$$\on{CHaus} \to \on{CondAn}$$
sending $K\in \on{CHaus}$ to the presheaf
$$S \mapsto \on{Cont}(S,K)$$
on $\on{Extr}$.

For $K\in\on{CHaus}$ we can always find an $S_0\in \on{Extr}$ with a surjective continuous map $S_0 \twoheadrightarrow K$, then an $S_1\in\on{Extr}$ with a surjective continuous map $S_1 \twoheadrightarrow S_0\times_K S_0$, etc., and in the end we can find a hypercover of $K$ in $\on{CondAn}$ such that the terms lie in $\on{Extr}$.  In joint work with Scholze (see \cite{condensed}), it is shown that this kind of hypercover, though completely inexplicit, can be used to do basic calculations.  In particular:

\begin{theorem}\label{cohchaus}
Suppose $K\in\on{CHaus}\subset \on{CondAn}$.

\begin{enumerate}
\item If $A$ is a discrete abelian group, then $R\Gamma(K;A)$ (in the sense of cohomology of the ``topos" $\on{CondAn}_{/K}$) agrees with the sheaf cohomology $R\Gamma_{shf}(K;A)$ of the topological space $K$; more precisely the comparison map
$$R\Gamma_{shf}(K;A)\overset{\sim}{\rightarrow} R\Gamma_{cond}(K;A)$$
is an isomorphism, where this comparison map comes from the fact that an open cover is also a cover in the condensed Grothendieck topology.
\item If $V$ is a Frechet space over $\mbb{R}$, viewed as a condensed abelian group, then
$$H^i(K;V)=0$$
for $i>0$, and (of course) $H^0(K;V) = \on{Cont}(K,V)$.
\end{enumerate}
\end{theorem}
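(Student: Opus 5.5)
The plan is to compare both sides through a hypercover of $K$ by extremally disconnected sets, as described just above. Choose such a hypercover $S_\bullet \to K$ with each $S_n\in\on{Extr}$. Each $S_n$ is a compact projective object of $\on{CondAn}$, so evaluation at $S_n$ is exact on condensed abelian groups, and $H^i(S_n;M)=0$ for $i>0$ and any condensed abelian group $M$. Descent along the hypercover then gives
$$R\Gamma_{cond}(K;M)\simeq \lim_{\Delta}\, M(S_\bullet),$$
that is, condensed cohomology is computed by the cochain complex $n\mapsto M(S_n)$. The whole proof therefore reduces to computing these Čech-type complexes for the two kinds of coefficients.

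For part (1), take $M=A$ discrete. Then $A(S_n)=\on{Cont}(S_n,A)$ is the group of locally constant functions. On the sheaf-theoretic side, an extremally disconnected (indeed any profinite) set $S$ has $H^i_{shf}(S;A)=0$ for $i>0$: the topology has a basis of clopens, so every open cover refines to a finite disjoint clopen cover, and Čech cohomology vanishes. This is standard for profinite sets and is equivalent to $H^*_{shf}(\varprojlim S_j;A)=\varinjlim H^*(S_j;A)$. So $R\Gamma_{shf}(S_n;A)=\on{Cont}(S_n,A)$. The key step is then to show that sheaf cohomology of compact Hausdorff spaces satisfies descent along proper surjections, hence along the hypercover $S_\bullet\to K$. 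I would prove this by proper base change: for a proper surjection $f:S_0\to K$, the stalk of the augmented complex $A\to f_*A\to (f_2)_*A\to\cdots$ at a point $x\in K$ is the Čech complex of the fiber hypercover over $x$, with $R f_*$ exact on stalks by proper base change. That complex is acyclic because a hypercover of a point, evaluated in sheaf cohomology with discrete coefficients on profinite sets, is a split augmented simplicial object after passing to locally constant functions; concretely, it admits a section on the level of sets and its fibers are profinite. Once sheaf cohomology of $K$ is computed by $\lim_\Delta R\Gamma_{shf}(S_\bullet;A)=\lim_\Delta \on{Cont}(S_\bullet,A)$, it agrees with the condensed computation. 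Checking that this identification is the comparison map induced by open covers is formal.

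For part (2), take $M=V$ Fréchet. The complex to understand is
$$0\to \on{Cont}(K,V)\to \on{Cont}(S_0,V)\to\on{Cont}(S_1,V)\to\cdots,$$
and I need its exactness beyond degree $0$. First treat $V=\mbb{R}$, where the claim is that a hypercover of compact Hausdorff spaces yields an exact complex of Banach spaces $C(S_\bullet)$. The strategy is to dualize. Exactness of a complex of Banach spaces follows, via the open mapping theorem and closed range theorem, from exactness of the dual complex of Radon measure spaces $\mc{M}(S_\bullet)\to\mc{M}(K)$, together with an approximation argument. Alternatively, one can reduce to finite covers: pass to a cofiltered limit presentation of the $S_n$ through finite disjoint unions of closed subsets of $K$, and use that for finite closed covers, Čech complexes of continuous functions are exact by partition-of-unity and Tietze arguments. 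For general Fréchet $V=\varprojlim V_k$ with Banach $V_k$ and dense transitions, the Banach case, combined with topological tensor products $C(S)\hat\otimes V$ for Banach $V$ via injective tensor products, and then a Mittag-Leffler argument for the inverse limit, gives the vanishing.

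I expect the main obstacle to be the real-coefficient Banach exactness. The maps $S_n\to S_{n-1}\times_K\cdots$ are merely surjections, not split, so naive contracting homotopies do not exist. One needs a quantitative uniform bound for lifting coboundaries, for instance by using norms on each $C(S_n)$ and an averaging argument along the fibers, which are measures of total mass $1$ chosen by Michael-type selection. I would try first to construct a continuous family of probability measures on the fibers of each surjection, using the Michael selection theorem on the weak-$*$ compact convex set of probability measures, and use it to write down an explicit contracting homotopy of $C(S_\bullet;V)$.
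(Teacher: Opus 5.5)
Your part 1 is the route of the result the paper cites (Scholze's Theorem 3.2): an extremally disconnected hypercover, vanishing of sheaf cohomology on profinite sets, and descent for sheaf cohomology checked on stalks via proper base change. That reduces everything to a hypercover $T_\bullet$ of a point by profinite sets.

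Your justification of acyclicity at that point is not right. $T_\bullet$ need not have continuous sections or extra degeneracies, and a set-theoretic section says nothing about the complex of continuous cochains $A\to\on{Cont}(T_0,A)\to\on{Cont}(T_1,A)\to\cdots$. What one uses instead is that $T_\bullet$ is a cofiltered limit of hypercovers of the point by finite sets. These are contractible Kan complexes, and $\on{Cont}(-,A)$ turns the limit into a filtered colimit of acyclic complexes.

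\textbf{The genuine gap is in part 2.} It rests entirely on exactness of the Banach complex $0\to C(K,\mbb{R})\to C(S_0,\mbb{R})\to C(S_1,\mbb{R})\to\cdots$, and the route you favour cannot produce it.
\begin{enumerate}
\item Any bounded contracting homotopy has a degree-zero component $h_0$ with $h_0\circ f^\ast=\on{id}$, where $f:S_0\to K$. Then $f^\ast h_0$ is a bounded projection of $C(S_0,\mbb{R})$ onto an isometric copy of $C(K,\mbb{R})$.
\item Since $S_0$ is extremally disconnected, $C(S_0,\mbb{R})$ is an injective Banach space (Nachbin--Goodner--Kelley). A complemented subspace of it would also be injective.
\item For $K=[0,1]$ this is false, because no separable infinite-dimensional Banach space is injective.
\end{enumerate}
So the complex is exact but not boundedly split. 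In particular, no continuous family of probability measures on the fibres of $f$ exists. Correspondingly, Michael's theorem does not apply: $x\mapsto P(f^{-1}(x))$ is lower semicontinuous only when $f$ is open, and a surjection from an extremally disconnected space onto $[0,1]$ never is.

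Your other two routes do not close the gap either. Dualizing to Radon measures only restates the problem through the closed range theorem. In the finite-closed-cover route, exactness survives passage to the completed colimit only if primitives have norm bounds uniform along the system, and you do not supply such bounds.

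What is needed is that estimate directly on $S_\bullet$: for a cocycle $f\in C(S_n,\mbb{R})$ with $\|f\|\le 1$, some $g$ with $\|g\|\le C_n$ and $\|f-dg\|\le 1/2$. Iterating this then gives exact primitives. One way to get it is to localize on $K$, using that the differentials are $C(K,\mbb{R})$-linear.
\begin{enumerate}
\item The fibre $S_{\bullet,x}$ over $x\in K$ is a hypercover of a point by profinite sets. There, cocycles have primitives with norm bounds depending only on $n$: at finite level a cone gives a norm-one contraction, and one then approximates.
\item Extend such a primitive by Tietze to $g_x$ on $S_{n-1}$. Since $f-dg_x$ vanishes on $S_{n,x}$ and $S_n\to K$ is proper, $\|f-dg_x\|<1/2$ over some neighbourhood $U_x$ of $x$.
\item Glue: set $g=\sum_k\varphi_k g_{x_k}$ for a partition of unity on $K$ subordinate to finitely many $U_{x_k}$. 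Then $f-dg=\sum_k\varphi_k(f-dg_{x_k})$, which has norm $<1/2$.
\end{enumerate}
For Banach $V$, run the same argument with $V$-valued functions, which is what the paper means by ``the same proof works''. Going through injective tensor products instead would need extra justification, since they do not preserve exactness in general. Your Mittag-Leffler step from Banach to Fr\'echet coefficients matches the paper.
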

\begin{proof}
For 1, see \cite{condensed} Theorem 3.2.  For 2, in the case $V=\mbb{R}$ see \cite{condensed} Theorem 3.3.  The same proof works for any $\mbb{R}$-Banach space.  The passage to Frechet spaces follows by a standard Mittag-Leffler argument (see \cite{cscx} Lemma 4.8).
\end{proof}

\begin{remark}
Implicitly, in the above $H^i(X;-)$ is viewed as a functor from condensed abelian groups to abelian groups.  But it naturally promotes to a functor $\ul{H}^i(X;-)$ from condensed abelian groups to condensed abelian groups, where the $S$-valued points (for $S\in\on{Extr})$ are given by $H^i(S\times X;-)$.  In other words, the cohomology naturally has condensed structure.  Since $S\times K\in\on{CHaus}$ whenever $K\in\on{CHaus}$, the above results immediately allow to read off the condensed structure as well.  For example, in 1 above the cohomology $\ul{H}^i(K;A)$ is discrete (as a condensed abelian group) and in 2 the cohomology $\ul{H}^i(K;V)$ vanishes as a condensed abelian group for $i>0$ and for $i=0$ identifies with the (condensed abelian group associated to the) Frechet space of continuous functions $\on{Cont}(K;V)$.
\end{remark}

One can often extend these results to more general classes of topological spaces by writing a topological space $X$ as a union of compact Hausdorff subspaces.  For example, by this argument Claim 1 of Theorem \ref{cohchaus} directly applies to arbitrary \emph{locally} compact Hausdorff spaces as well.  Similarly, we can extend in a different direction, to objects like $BG$ (viewed as a condensed groupoid, meaning a $1$-truncated condensed anima) for a locally compact Hausdorff group $G$, by the bar resolution of $BG$.  For example:

\begin{lemma}\label{cohliegp}
Suppose $G$ is a real Lie group.

\begin{enumerate}
\item For $A$ a discrete abelian group, $H^i(BG;A)$ for $i\geq 0$ identifies with the usual cohomology of the classifying space of the Lie group $G$ with $A$-coefficients in the sense of algebraic topology.
\item If $V$ is a Frechet space with continuous $G$-action, then $\ul{R\Gamma}(BG;V)$ is calculated by the usual complex of continuous cochains from the theory of continuous group cohomology.  (If $\pi_0G$ is countable, this is a complex of Frechet spaces; in general it is a complex of products of Frechet spaces.)
\end{enumerate}
\end{lemma}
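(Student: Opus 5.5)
The plan is to compute $R\Gamma(BG;M)$ through the bar resolution. In $\on{CondAn}$, $BG$ is the geometric realization of the simplicial condensed set $[n]\mapsto G^n$. Therefore for any condensed abelian group $M$ with $G$-action there is a descent spectral sequence, or equivalently a totalization,
$$R\Gamma(BG;M)\simeq \on{Tot}\big([n]\mapsto R\Gamma(G^n;M)\big),$$
and the same holds for the condensed version $\ul{R\Gamma}$, since we may replace $G^n$ by $S\times G^n$ for $S\in\on{Extr}$. Both claims then reduce to understanding $R\Gamma(G^n;M)$ for the locally compact Hausdorff spaces $G^n$.

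For claim 2, I would show $H^i(S\times G^n;V)=0$ for $i>0$ and $H^0=\on{Cont}(S\times G^n,V)$. For $G$ $\sigma$-compact, write $G^n$ as an increasing union of compacts $K_m$, each contained in the interior of the next. Then $R\Gamma(G^n;V)=R\lim_m R\Gamma(K_m;V)$. By Theorem \ref{cohchaus}(2) each term is concentrated in degree $0$, where it is $\on{Cont}(K_m,V)$. The transition maps are restrictions between Fréchet spaces with dense image, or at any rate they satisfy Mittag-Leffler in the Fréchet sense, so $\lim^1$ vanishes, as in \cite{cscx} Lemma 4.8. Hence $R\Gamma(G^n;V)=\on{Cont}(G^n,V)$, which is again Fréchet. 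In general $G^n$ is a disjoint union of $\sigma$-compact open pieces, indexed by $(\pi_0 G)^n$ up to a compact component group, and cohomology converts this disjoint union into a product. That is why one gets products of Fréchet spaces. The totalization of a cosimplicial object concentrated in degree $0$ is its associated cochain complex, namely the continuous cochain complex $\on{Cont}(G^n,V)$ with the usual differential. This proves 2.

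For claim 1, I would use Theorem \ref{cohchaus}(1) together with its extension to locally compact Hausdorff spaces, noted in the text, to identify $R\Gamma(G^n;A)$ with the sheaf cohomology of $G^n$. A Lie group and its products are manifolds, so this agrees with singular cohomology $R\Gamma_{sing}(G^n;A)$. Then $\on{Tot}$ of $[n]\mapsto R\Gamma_{sing}(G^n;A)$ computes the singular cohomology of the fat geometric realization $\|[n]\mapsto G^n\|$. This realization is a model of the classifying space $BG$ of algebraic topology: Milnor or Segal show that the bar construction realization is a classifying space, and the point-set subtleties are harmless because Lie groups are well-pointed. The comparison requires a Mayer–Vietoris, or descent, statement for singular cochains along the skeletal filtration of the realization. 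This is standard, for instance via the spectral sequence of the simplicial space, which matches the Bousfield–Kan spectral sequence of the totalization.

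I expect the main obstacle to be the naturality and identification in the last step. One must check that the equivalences $R\Gamma_{cond}(G^n;A)\simeq R\Gamma_{sing}(G^n;A)$ are compatible with all simplicial structure maps, so that they assemble into an equivalence of cosimplicial objects rather than just termwise equivalences. I would handle this by using the functorial comparison maps from sheaf cohomology, which are natural in continuous maps, and by working throughout with functorial sheaf-theoretic models, which avoids choosing hypercovers. A secondary point is the $\lim^1$ argument in claim 2 when $\pi_0 G$ is uncountable; there I would simply take the product decomposition into $\sigma$-compact open pieces and apply the argument piecewise.
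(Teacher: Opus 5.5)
Your proposal is correct and follows essentially the same route as the paper. Both arguments use the bar resolution of $BG$ to reduce to the spaces $G^n$. For part 1 this is followed by the functorial comparison with sheaf cohomology and then with singular cohomology. For part 2 both use Theorem \ref{cohchaus}, the $\sigma$-compactness of the identity component of a Lie group, and a Mittag-Leffler argument. You supply somewhat more detail than the paper, for instance on how the coset decomposition produces products of Fr\'echet spaces.
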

\begin{proof}
For 1, this follows by the usual bar resolution and the above (functorial) comparison with sheaf cohomology, which (functorially) compares to usual cohomology.  For 2, by the same bar resolution it suffices to show that the cohomology of $G^n$ with Frechet coefficients vanishes in positive degrees for $n\geq 0$.  This follows from Theorem \ref{cohchaus}, the fact that the connected components of Lie groups are $\sigma$-compact, and another Mittag-Leffler argument.\end{proof}

\begin{remark}\label{vanest}
If $K$ is a maximal compact subgroup of $G$ and $G$ is connected, then  the famous Van Est theorem shows that the cohomology in 2 is also calculated by the relative Lie algebra cohomology of $(\on{Lie}(G),\on{Lie}(K))$ acting on the (Frechet) subspace of $C^\infty$-vectors in $V$; see \cite{hochmost}, \cite{kyed}.
\end{remark}

In \cite{condensed} it was also explained how to leverage these kinds of cohomology calculations into Ext calculations between various condensed abelian groups.  Formally, these Ext's are defined to be Hom's between shifts in $\on{D}(\on{CondAb})$, the derived $\infty$-category of condensed abelian groups (which we in turn define to be the $\infty$-category of small $\on{D}(\mbb{Z})$-valued sheaves on $\on{Extr}$).  In particular, we proved the following result (\cite{condensed} Corollary 4.9).

\begin{theorem}\label{extlchaus}
Let $A$ and $B$ be locally compact Hausdorff abelian groups, viewed as condensed abelian groups.  Then:
\begin{enumerate}
\item $\on{Hom}(A,B)$ identifies with the usual group of continuous homomorphisms $A\to B$, and more generally the internal $\ul{\on{Hom}}(A,B)$ identifies with the group of continuous homomorphisms $A \to B$ equipped with the compact-open topology, viewed as a condensed abelian group.
\item $\on{Ext}^1(A,B)$ identifies with the group of extensions of $A$ by $B$ in the exact category of locally compact Hausdorff abelian groups.
\item $\ul{\on{Ext}}^i(A,B)=0$ for all $i>1$, and if $B=\mbb{R}/\mbb{Z}$ or $B=\mbb{R}$ then also $\ul{\on{Ext}}^1(A,B)=0$.
\end{enumerate}
\end{theorem}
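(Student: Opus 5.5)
Theorem \ref{extlchaus} is cited from \cite{condensed} (Corollary 4.9), so here I only sketch how the argument goes. The plan is to reduce all three statements to cohomology computations of the kind in Theorem \ref{cohchaus}. For this I would use a functorial resolution of an arbitrary abelian group by sums of free abelian groups on products of the group with itself; the Breen--Deligne resolution is the standard choice. This gives, for any condensed abelian group $A$, a resolution whose terms are $\bigoplus_j \mbb{Z}[A^{n_{i,j}}]$. Consequently $R\ul{\on{Hom}}(\mbb{Z}[A^n],B)=\ul{R\Gamma}(A^n;B)$, and $R\ul{\on{Hom}}(A,B)$ is computed by a spectral sequence whose $E_1$ terms are the cohomologies $\ul{H}^q(A^n;B)$.

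First I would reduce to basic building blocks using the structure theorem for locally compact abelian groups. Every such $A$ has an open subgroup of the form $\mbb{R}^n\times C$ with $C$ compact, and the quotient by it is discrete. Since $R\ul{\on{Hom}}$ turns short exact sequences in either variable into fiber sequences, it suffices to treat $A$ and $B$ among the types $\mbb{R}$, $\mbb{Z}$ (and discrete groups), and compact groups. Compact groups can be further reduced, via Pontryagin duality and the cofinite-type resolutions by products of tori $(\mbb{R}/\mbb{Z})^I$ and profinite groups, to $\mbb{R}/\mbb{Z}$, $\mbb{Z}$, $\mbb{R}$, and profinite groups.

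The cases with $A$ discrete are easy: $A$ is a direct sum of copies of $\mbb{Z}$ modulo relations, giving $\on{Ext}^{\le1}$ only. For $A$ profinite and $B$ discrete one has $\mbb{Z}[A]$-style colimit arguments: $\ul{\on{Hom}}(A,B)=\varinjlim \on{Hom}(A_i,B)$, and the higher Ext vanish by comparison with the finite case plus Theorem \ref{cohchaus}.1.

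The hard cases, and the main obstacle, are $A\in\{\mbb{R},\mbb{R}/\mbb{Z}\}$ with $B\in\{\mbb{Z},\mbb{R},\mbb{R}/\mbb{Z}\}$. For $B=\mbb{R}$ I would use Theorem \ref{cohchaus}.2. This shows that $\ul{R\Gamma}(A^n;\mbb{R})$ is concentrated in degree zero and equals continuous functions. The Breen--Deligne complex then becomes a complex of function spaces, which one shows computes $\on{Hom}_{cont}(A,\mbb{R})$ in degree zero and nothing else. I would do this by comparing it with the same complex for the discrete group and using a homotopy argument: scaling by $t\in[0,1]$ for $\mbb{R}$, which retracts $\mbb{R}$ to a point compatibly with the functorial resolution.

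For $B=\mbb{Z}$ the input is Theorem \ref{cohchaus}.1. It gives $\ul{R\Gamma}(\mbb{R}^n;\mbb{Z})=\mbb{Z}$ and $\ul{R\Gamma}((\mbb{R}/\mbb{Z})^n;\mbb{Z})=$ the cohomology of a torus. From these one must verify that the Breen--Deligne spectral sequence yields $R\ul{\on{Hom}}(\mbb{R},\mbb{Z})=0$ and $R\ul{\on{Hom}}(\mbb{R}/\mbb{Z},\mbb{Z})=\mbb{Z}[-1]$. This bookkeeping, essentially recognizing that the cohomology ring of $K(\mbb{Z},1)$ behaves like the Eilenberg--MacLane cohomology, is where the real work lies. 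The case $B=\mbb{R}/\mbb{Z}$ then follows from the sequence $\mbb{Z}\to\mbb{R}\to\mbb{R}/\mbb{Z}$.

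Assembling the building-block cases gives part 3. Part 1 is the degree-zero statement, which follows by identifying maps with continuous homomorphisms. Part 2 follows by the general fact that $\on{Ext}^1$ in the abelian category of condensed abelian groups classifies extensions, together with the observation that an extension of locally compact abelian groups in condensed abelian groups is again locally compact. That last point holds because it is locally a product, by the vanishing established above.
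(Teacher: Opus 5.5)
Your overall architecture is the right one, and it is the one in the cited source: the Breen--Deligne resolution, the cohomology inputs of Theorem~\ref{cohchaus}, and d\'evissage via the structure theory of locally compact abelian groups. But the step you propose for $B=\mbb{R}$ does not work, and that step carries the real content of part 3.

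A scaling homotopy $x\mapsto tx$, $t\in[0,1]$, can only be exploited for coefficients whose cohomology is homotopy invariant, i.e.\ $R\Gamma(S\times[0,1];B)\simeq R\Gamma(S;B)$ for extremally disconnected $S$. For discrete $B$ this holds by part 1 of Theorem~\ref{cohchaus}, and then the homotopy does prove $R\ul{\on{Hom}}(\mbb{R},B)=0$. For $B=\mbb{R}$ it fails, since $C([0,1],\mbb{R})\neq\mbb{R}$. It has to fail: otherwise you would get $R\ul{\on{Hom}}(\mbb{R},\mbb{R})\simeq R\ul{\on{Hom}}(0,\mbb{R})=0$, contradicting $\on{Hom}(\mbb{R},\mbb{R})=\mbb{R}$.

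What you actually need is $R\ul{\on{Hom}}(A,\mbb{R})=0$ for all compact $A$. For $A=\mbb{R}/\mbb{Z}$ this is equivalent, via $\mbb{Z}\to\mbb{R}\to\mbb{R}/\mbb{Z}$ in the first variable, to $R\ul{\on{Hom}}(\mbb{R},\mbb{R})=\mbb{R}$. By part 2 of Theorem~\ref{cohchaus} the spectral sequence collapses to the complex of Banach spaces $\prod_j C(A^{n_{i,j}},\mbb{R})$, so an analytic input is required. One that works is the following. The chain endomorphism $\phi$ induced by $[2]\colon A\to A$ has norm $\leq 1$ for the sup norms. Moreover $\phi-2=dh+hd$ for a functorial homotopy $h$. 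Such an $h$ exists because the terms $A\mapsto\mbb{Z}[A^n]$ are projective as functors on finitely generated free abelian groups; hence $h$ is given by universal formulas and is norm-bounded. For a cocycle $x$ this gives $x=2^{-k}\phi^k x-d\bigl(\sum_{j<k}2^{-j-1}h\phi^j x\bigr)$, and letting $k\to\infty$ shows $x$ is a coboundary. Run the same argument after evaluating at any extremally disconnected $S$ to get the condensed statement.

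Relatedly, compact groups do not reduce to $\mbb{R}/\mbb{Z}$. Pontryagin duality gives $0\to C\to\prod_I\mbb{R}/\mbb{Z}\to\prod_J\mbb{R}/\mbb{Z}\to 0$ with $I,J$ infinite, and $R\ul{\on{Hom}}(-,B)$ does not formally turn infinite products into sums.
\begin{itemize}
\item For $B=\mbb{Z}$ you can pass to the limit. Cohomology of compact Hausdorff spaces with discrete coefficients commutes with cofiltered limits, so the $E_1$-pages for $\prod_I\mbb{R}/\mbb{Z}$ (or for a profinite group) are colimits of those for finite stages.
\item For $B=\mbb{R}$ this fails, since $C(\varprojlim_i K_i,\mbb{R})$ is only the completion of $\varinjlim_i C(K_i,\mbb{R})$. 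That is why the $\mbb{R}$-argument must handle every compact $A$ directly, as the norm argument does. It also covers profinite $A$ with $B=\mbb{R}$, which you omit.
\end{itemize}

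Conversely, the step you single out as the real work is easy. Once $R\ul{\on{Hom}}(\mbb{R},\mbb{Z})=0$, the sequence $\mbb{Z}\to\mbb{R}\to\mbb{R}/\mbb{Z}$ gives $R\ul{\on{Hom}}(\mbb{R}/\mbb{Z},\mbb{Z})\simeq\mbb{Z}[-1]$ with no Eilenberg--MacLane bookkeeping.

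Finally, in part 2 the extension is in general not locally a product in the topological sense. For example, multiplication by $2$ on $\prod_{\mbb{N}}\mbb{R}/\mbb{Z}$ has no local continuous sections. What is true is that the extension splits after pullback along each extremally disconnected $S\to A$. Local compactness then follows from the proper-equivalence-relation argument in the remark after Remark~\ref{openquotient}.
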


\begin{remark}\label{openquotient}
Implicit in 2 is the claim that the functor $\on{LCA} \to \on{CondAb}$ from locally compact Hausdorff abelian groups to condensed abelian groups is exact.  The preservation of kernels is obvious, and the only other claim to check is that a quotient map in $\on{LCA}$ goes to a surjective map of condensed abelian groups (or just condensed sets).  But in fact this is true for any open surjective continuous map of locally compact Hausdorff spaces $f:X\twoheadrightarrow Y$, since such a map is ``compact-covering'': for every compact subset $K\subset Y$ there is a compact subset $L\subset X$ with $f(L)=K$.

In particular, we also have the following non-abelian version: if $G$ is a locally compact Hausdorff group and $H\subset G$ is a closed subgroup, then the usual topological quotient map $G \to G/H$ of locally compact Hausdorff spaces is surjective as a map of condensed sets.  We conclude that the topological quotient $G/H$ matches the quotient in condensed sets.
\end{remark}

\begin{remark}
Similarly, the $\on{Ext}^1$ comparison above implies the following: if $A$ and $B$ are locally compact Hausdorff abelian groups and $X$ is a condensed abelian group which is an extension of $B$ by $A$, then $X$ is locally compact Hausdorff.

Again, the nonabelian generalization holds: if we have an extension of condensed groups
$$1 \to N \to G \to G/N \to 1$$
such that $N$ and $G/N$ are locally compact Hausdorff, then so is $G$.  This follows from an even-more-nonabelian result: suppose $X\to Y$ is a map of condensed sets with $Y$ locally compact Hausdorff, such that for all $S\in\on{Extr}$ with a map $S\to Y$ the pullback $S\times_YX$ is locally compact Hausdorff.  Then $X$ is locally compact Hausdorff.

To prove the above result, working locally on $Y$ in the topological sense we can assume that $Y$ is compact Hausdorff.  Choose a surjection $S_0\twoheadrightarrow Y$ with $S_0\in\on{Extr}$, so that $Y$ is the quotient of $S_0$ by the closed equivalence relation $S_0\times_Y S_0$.  Then setting $T_0=S_0\times_YX$, we get that $X$ is the quotient of $T_0$ by $T_0\times_X T_0$.  Now by assumption $T_0$ is locally compact Hausdorff; moreover by pullback we see that $T_0\times_XT_0\subset T_0\times T_0$ is represented by a closed subset and even gives a \emph{proper} equivalence relation on the locally compact Hausdorff space $T_0$.  It follows that the topological quotient is also locally compact Hausdorff and the quotient map is proper; in particular, it is also surjective as a map of condensed sets and hence a fortiori the topological quotient equals the condensed quotient, proving the claim.
\end{remark}

A corollary of the above theorem is the following ``derived Pontryagin duality''.

\begin{corollary}\label{dpd}
Let $\on{D}(\on{LCA})$ denote the full subcategory of $\on{D}(\on{CondAb})$ spanned by those $M\in \on{D}(\on{CondAb})$ such that $H_iM$ is isomorphic to the cokernel of an injective map of locally compact Hausdorff abelian groups, for all $i\in \mbb{Z}$.  View $\on{LCA}\subset \on{D}(\on{LCA})$ sitting in degree $0$.

Then:
\begin{enumerate}
\item $\on{D}(\on{LCA})$ is closed under shifts, cones, and retracts.
\item the functor
$$(-)^\vee:=\ul{R\on{Hom}}(-;\mbb{R}/\mbb{Z}): \on{D}(\on{CondAb})^{op}\to \on{D}(\on{CondAb})$$
sends $\on{D}(\on{LCA})^{op}$ inside $\on{D}(\on{LCA})$;
\item for $M\in \on{D}(\on{LCA})$ we have
$$M \overset{\sim}{\rightarrow} (M^\vee)^\vee;$$
\item If $M\in\on{LCA}$ then $M^\vee\in\on{LCA}$, and this identifies with the usual Pontryagin dual.
\end{enumerate}
\end{corollary}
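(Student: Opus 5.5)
The whole argument rests on Theorem \ref{extlchaus}, which says that for $A,B\in\on{LCA}$ the object $\ul{R\on{Hom}}(A,\mbb{R}/\mbb{Z})$ is concentrated in degree $0$ and equals the Pontryagin dual $A^\vee$. I would prove the four claims in the order 4, 1, 2, 3.

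\textbf{Claim 4.} By part 3 of Theorem \ref{extlchaus}, $\ul{\on{Ext}}^i(A,\mbb{R}/\mbb{Z})=0$ for $i\geq 1$. So $A^\vee=\ul{\on{Hom}}(A,\mbb{R}/\mbb{Z})$, which by part 1 is the group of continuous characters with the compact-open topology, i.e. the classical Pontryagin dual. This lies in $\on{LCA}$.

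\textbf{Claim 1.} Shifts are clear. For retracts, note that if $M$ is a retract of $N$, then each $H_iM$ is a retract of $H_iN$. For a presentation $H_iN=\on{coker}(A\hookrightarrow B)$, the image of the idempotent $e$ should be the cokernel of an injection. The natural candidate is $\on{coker}(A\oplus B\to B\oplus A)$ (an Eilenberg-swindle style presentation), but I expect some fiddling here. A cleaner route is to characterize the class $\mc{C}$ of such cokernels intrinsically, as the condensed abelian groups $Q$ admitting a surjection $B\twoheadrightarrow Q$ from $B\in\on{LCA}$ with kernel in $\on{LCA}$; note that the kernel is then automatically closed in $B$, since it is a subobject that is itself LCA. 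With this description, the image $eQ$ is the image of $B$ under $e\circ p$, and its kernel is $p^{-1}(\ker e)$. Showing that this kernel is LCA is the delicate point.

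For cones, use the long exact sequence. The main step is that $\mc{C}$ is closed under kernels, cokernels and extensions inside $\on{CondAb}$. Extensions should follow from $\on{Ext}^1$ computations: lift the surjections from LCA groups, and use that extensions of LCA groups are LCA, as in the remark following Theorem \ref{extlchaus}. Kernels and cokernels of maps in $\mc{C}$ should be handled by lifting a map $B/A\to B'/A'$ to the level of LCA groups. Such a lift is possible after replacing $B$ by the fibre product $B\times_{B'/A'}B'$, which is an extension of $B$ by $A'$ and hence LCA by that same remark. I expect this closure of $\mc{C}$ to be the main obstacle of the whole proof.

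\textbf{Claim 2.} First use the Postnikov/t-structure filtration: since $(-)^\vee$ is exact and compatible with the filtration, and by Claim 1 together with a convergence argument, it suffices to treat $M=Q[0]$ with $Q=B/A$ in $\mc{C}$. For unbounded $M$, I would use that $\ul{R\on{Hom}}(-,\mbb{R}/\mbb{Z})$ sends colimits to limits. The homotopy groups of $M^\vee$ are then computed degreewise, because each $H_iM$ contributes only to degrees $-i$ and $-i-1$, so the limit is degreewise eventually constant. Now $Q^\vee=\on{fib}(B^\vee\to A^\vee)$ by Claim 4. Its $H_0$ is the kernel $(B/A)^\vee$, which is LCA. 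Its $H_{-1}$ is $\on{coker}(B^\vee\to A^\vee)$, a cokernel of a map of LCA groups. This map need not be injective, but it can be rewritten as $A^\vee/\on{im}$, and the image is the quotient $B^\vee/\ker$, which is LCA. So $H_{-1}$ is the cokernel of the injection $B^\vee/\ker\hookrightarrow A^\vee$, and hence lies in $\mc{C}$.

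\textbf{Claim 3.} The evaluation map $M\to (M^\vee)^\vee$ is natural and exact. By dévissage as in Claim 2, it suffices to check it on $M=A[0]$ with $A\in\on{LCA}$. Indeed, $Q=B/A$ is the cofibre of $A\to B$, and for unbounded $M$ one passes to the limit using the degreewise stabilization already noted. For $A\in\on{LCA}$ the map is an isomorphism by Claim 4 and classical Pontryagin duality.
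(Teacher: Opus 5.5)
Your plan is essentially the paper's proof. For part 1 you show that the class $\mc{C}$ of cokernels of injections of LCA groups is an abelian subcategory of $\on{CondAb}$ closed under extensions. For parts 2--3 you reduce to an LCA group in degree $0$ via Theorem \ref{extlchaus}, d\'evissage, and degreewise stabilization along the Postnikov tower and cotower.

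Closure of $\mc{C}$ under kernels and cokernels follows correctly from two ingredients: your fibre-product trick for lifting a map $B/A\to B'/A'$, and the fact that an LCA group modulo a closed subgroup is LCA (Remark \ref{openquotient}). This also settles retracts, because the image of an idempotent $e$ is $\ker(1-e)$. So there is no separate ``delicate point'' for retracts.

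Two things need repair.

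First, your side remark that the kernel of $B\twoheadrightarrow Q$ is automatically closed in $B$ is false. For irrational $\alpha$, the map $\mbb{Z}\to\mbb{R}/\mbb{Z}$, $n\mapsto n\alpha$, is injective with dense image, and this discrete $\mbb{Z}$ is exactly the kernel of $\mbb{R}/\mbb{Z}\to\on{coker}$. If the remark were true, $\mc{C}$ would collapse to $\on{LCA}$, and this same example shows $\on{LCA}$ is not closed under cokernels in $\on{CondAb}$. Luckily nothing you do afterwards uses the remark.

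Second, the extension step does not go through with $\on{Ext}^1$ alone. Take $0\to Q'\to Q\to Q''\to 0$ with $Q'=B'/A'$ and $Q''=B''/A''$.
\begin{enumerate}
\item The pullback $Q\times_{Q''}B''$ is an extension of $B''$ by $Q'$.
\item To get an LCA group surjecting onto it, you lift its class from $\on{Ext}^1(B'',Q')$ to $\on{Ext}^1(B'',B')$. The obstruction to this lift lies in $\on{Ext}^2(B'',A')$, which vanishes by part 3 of Theorem \ref{extlchaus}. This is why the paper invokes $\on{Ext}^i$ for both $i=1$ and $i=2$.
\item Once lifted, part 2 of Theorem \ref{extlchaus} represents the class by an LCA extension $E$ of $B''$ by $B'$.
\item $E$ surjects onto $Q$ with kernel an extension of $A''$ by $A'$. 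That kernel is LCA by the remark following Theorem \ref{extlchaus}, so $Q\in\mc{C}$.
\end{enumerate}
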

\begin{proof}
For part 1, it suffices to see that the full subactegory of $\on{CondAb}$ spanned by the quotients of $\on{LCA}$ groups is an abelian subcategory of $\on{CondAb}$ closed under extensions.  But this is formal from the claim about $\on{Ext}^i$ for $i=1,2$ in Theorem REF, plus the fact that $\on{LCA}\to \on{CondAb}$ is exact, fully faithful, and closed under kernels.

Let $M\in \on{D}(\on{LCA})$. For parts 2 and 3, we need to show that $M^\vee\in \on{D}(\on{LCA})$ and that $M\overset{\sim}{\rightarrow} (M^\vee)^\vee$. First assume $M$ is concentrated in degree $0$ and corresponds to a locally compact abelian group.  Then by Theorem \ref{extlchaus}, $M^\vee$ identifies with the Pontryagin dual of $M$ in degree $0$.  Thus the claims hold by usual Pontryagin duality.  By devissage, claims 2 and 3 therefore also hold for $M$ concentrated in only finitely  many degrees.  Then because of the t-exactness on $\on{LCA}$ there is no issue in passing to the limit/colimit along the Postnikov tower/cotower to deduce the claim for arbitrary $M$.
\end{proof}

Importantly, this corollary automatically passes to a more general situation.  Suppose $G$ is a condensed group.  We can consider the equivariant derived category
$$\on{D}_G(\on{CondAb})$$
of derived condensed abelian groups with $G$-action.  Just as $\on{D}(\on{CondAb})$ can be viewed as the $\infty$-category of small $\on{D}(\mbb{Z})$-valued sheaves on $\on{Extr}$, this $\on{D}_G(\on{CondAb})$ can be viewed as the $\infty$-category of small $\on{D}(\mbb{Z})$-valued sheaves on the slice category $\on{Extr}_{/BG}$ (inside $\on{CondAn}_{/BG}$, where $BG$ is the usual ``delooping'' of $G$ in $\on{CondAn}$).  The base-point of $BG$ induces a conservative forgetful functor
$$\on{D}_G(\on{CondAb}) \to \on{D}(\on{CondAb}),$$
and it follows formally from slice topos nonsense that this functor commutes with internal Hom.  In other words, if $M,N \in \on{D}_G(\on{CondAb})$, then the internal Hom $\ul{R\on{Hom}}(M,N)$
in $\on{D}(\on{CondAb})$ acquires a canonical $G$-action and serves as the internal Hom in $\on{D}_G(\on{CondAb})$ as well.  We deduce the following:

\begin{corollary}\label{equidpd}
Let $G$ be a condensed group.  Consider $\mbb{R}/\mbb{Z}$ as a condensed $G$-module with trivial action, and the resulting functor
$$(-)^\vee:= \ul{R\on{Hom}}(-;\mbb{R}/\mbb{Z}): \on{D}_G(\on{CondAb})^{op}\to \on{D}_G(\on{CondAb}).$$
Let $\on{D}_G(\on{LCA})$ denote the full subcategory of $\on{D}_G(\on{CondAb})$ spanned by those $M$ such that the underling condensed group $H_iM$ is a quotient of objects in $\on{LCA}$ for all $i\in \mbb{Z}$.

\begin{enumerate}
\item For all $M\in \on{D}_G(\on{LCA})$, we have $M\overset{\sim}{\rightarrow} (M^\vee)^\vee$ and $M^\vee\in \on{D}_G(\on{LCA})$;
\item For $M,N\in\on{D}_G(\on{LCA})$ we have $\on{RHom}_G(M,N) \simeq \on{RHom}_G(N^\vee,M^\vee)$ induced by $(-)^\vee$.
\end{enumerate}
\end{corollary}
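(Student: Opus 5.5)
The plan is to reduce both claims to Corollary \ref{dpd}, using the fact noted just above that the forgetful functor $\on{D}_G(\on{CondAb})\to\on{D}(\on{CondAb})$ is conservative and commutes with internal Hom.

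For part 1, let $M\in\on{D}_G(\on{LCA})$. The dual $M^\vee=\ul{R\on{Hom}}(M,\mbb{R}/\mbb{Z})$ formed in $\on{D}_G(\on{CondAb})$ has, as underlying object, the dual of the underlying object of $M$ formed in $\on{D}(\on{CondAb})$. Here $\mbb{R}/\mbb{Z}$ carries the trivial action, so its underlying object is just $\mbb{R}/\mbb{Z}$.

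\begin{enumerate}
\item The condition defining $\on{D}_G(\on{LCA})$ only concerns the homology of the underlying object. So the underlying object of $M$ lies in $\on{D}(\on{LCA})$, and part 2 of Corollary \ref{dpd} gives $M^\vee\in\on{D}_G(\on{LCA})$.
\item The biduality map $M\to (M^\vee)^\vee$ is the equivariant evaluation map. Its image under the forgetful functor is the non-equivariant biduality map, because forgetting commutes with internal Hom and hence with the evaluation pairing.
\item That non-equivariant map is an isomorphism by part 3 of Corollary \ref{dpd}. By conservativity, the equivariant map is an isomorphism too.
\end{enumerate}

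For part 2, use the internal Hom adjunction in $\on{D}_G(\on{CondAb})$:
$$\on{RHom}_G(N^\vee,M^\vee)\simeq \on{RHom}_G(N^\vee\otimes M,\mbb{R}/\mbb{Z})\simeq \on{RHom}_G(M,(N^\vee)^\vee).$$
Combined with $N\simeq (N^\vee)^\vee$ from part 1, this identifies the right side with $\on{RHom}_G(M,N)$.

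It remains to check that this composite equals the map induced by applying $(-)^\vee$. That is a formal compatibility of the functoriality of $\ul{R\on{Hom}}(-,\mbb{R}/\mbb{Z})$ with the evaluation map. Equivalently, $(-)^\vee$ is a contravariant self-adjoint functor, and its unit, the biduality map, is an isomorphism on $\on{D}_G(\on{LCA})$. A contravariant functor that is self-adjoint on the right with invertible unit on a full subcategory it preserves is fully faithful there. This gives the claimed isomorphism.

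The only step needing care is the "slice topos nonsense" that forgetting commutes with internal Hom, and also with the biduality map rather than merely objectwise. I would check this by writing $\on{D}_G(\on{CondAb})$ as sheaves on $\on{Extr}_{/BG}$. Restriction along the étale map $\ast\to BG$ has both adjoints and satisfies the projection formula, so it commutes with internal Hom and is compatible with evaluation maps. Everything else is formal.
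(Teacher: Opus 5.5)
Your proposal is correct and follows essentially the same route as the paper. Part 1 reduces to Corollary \ref{dpd} because the forgetful functor $\on{D}_G(\on{CondAb})\to\on{D}(\on{CondAb})$ is conservative and commutes with internal Hom. Part 2 then follows because $(-)^\vee$ is an anti-equivalence of $\on{D}_G(\on{LCA})$; your self-adjointness argument, with the biduality map as unit, just spells out why the induced map on $\on{RHom}_G$ is an isomorphism.
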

\begin{proof}
By the fact that the forgetful functor $\on{D}_G(\on{CondAb})\to\on{D}(\on{CondAb})$ is conservative and preserves internal Hom, part 1 follows directly from Corollary \ref{dpd}.  Since by part 1, $(-)^\vee$ is an (anti)-equivalence of categories from $\on{D}_G(\on{LCA})$ to itself, part 2 follows immediately.
\end{proof}

Besides this result, another useful technique for calculating in $\on{D}_G(\on{CondAb})$ is the following. Suppose $M$ is a condensed abelian group with $G$-action.  Then there is a $G$-equivariant natural injection
$$M \hookrightarrow \ul{C}(G;M)$$
where we use $\ul{C}$ to denote the internal Hom in condensed sets, defined by $m\mapsto (g\mapsto g\cdot m)$.  Note that $\ul{C}(G;M)$ is the co-free $G$-module on the condensed abelian group $M$ (viewed without $G$-action).  If $\ul{H}^i(G;M)=0$ for all $i>0$ then it is also co-free in the derived sense, and hence 
$$\on{RHom}_G(N,\ul{C}(G;M)) = \on{RHom}(N,M)$$
for any condensed $G$-module $N$.  For example, this kind of argument gives the following.

\begin{lemma}\label{extagainstrvs}
Suppose $G$ is a compact Hausdorff group.  Then for any locally compact Hausdorff $G$-module $M$ and any finite-dimensional real vector space $V$ with continuous $G$-action, we have
$$\on{Ext}_G^i(M,V)=0$$
and 
$$\on{Ext}_G^i(V,M)=0$$
for all $i>0$.

Here we use $\on{Ext}_G^i$ to denote Hom's between shifts in $\on{D}_G(\on{CondAb})$.
\end{lemma}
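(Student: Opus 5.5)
The plan is to prove the first vanishing statement, $\on{Ext}^i_G(M,V)=0$, directly by an averaging argument, and then deduce the second from the first using equivariant derived Pontryagin duality.

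\textbf{Step 1: $V$ is an equivariant retract of a co-free module.} Consider the $G$-equivariant injection
$$\iota: V\hookrightarrow \ul{C}(G;V),\qquad v\mapsto (g\mapsto g\cdot v).$$
Since $G$ is compact, it has a normalized Haar measure $dg$. Define
$$r:\ul{C}(G;V)\to V,\qquad f\mapsto \int_G g^{-1}\cdot f(g)\,dg.$$
On $S$-valued points, for $S\in\on{Extr}$, $r$ sends a continuous map $S\times G\to V$ to the continuous map $S\to V$ obtained by integrating in the $G$-variable. Since $V$ is finite-dimensional and the action is continuous, this integral is continuous in the $S$-variable, so $r$ is a map of condensed abelian groups. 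Two checks are needed:
\begin{itemize}
\item $r\circ\iota=\on{id}$, because $g^{-1}\cdot(g\cdot v)=v$ and $dg$ is normalized;
\item $r$ is $G$-equivariant, by a change of variables using the invariance of Haar measure.
\end{itemize}
Hence $V$ is a retract of $\ul{C}(G;V)$ in $\on{D}_G(\on{CondAb})$.

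\textbf{Step 2: the co-free module is derived co-free.} We need $\ul{H}^i(G;V)=0$ for $i>0$, meaning the cohomology of the compact Hausdorff space $G$ with coefficients in $V$. This holds by Theorem~\ref{cohchaus}(2), since $V$ is a Fréchet (indeed finite-dimensional) space. It holds in the condensed sense as well, because $S\times G$ is again compact Hausdorff. By the discussion preceding this lemma, it follows that
$$\on{RHom}_G(M,\ul{C}(G;V))\simeq \on{RHom}(M,V).$$
Since $V\simeq\mbb{R}^n$ as a condensed abelian group, Theorem~\ref{extlchaus}(3) gives $\on{Ext}^i(M,V)=0$ for $i>0$. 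As $\on{Ext}^i_G(M,V)$ is a retract of $\on{Ext}^i_G(M,\ul{C}(G;V))$, it also vanishes for $i>0$.

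\textbf{Step 3: the second statement by duality.} Both $M$ and $V$ lie in $\on{D}_G(\on{LCA})$. By Corollary~\ref{equidpd}(2),
$$\on{RHom}_G(V,M)\simeq \on{RHom}_G(M^\vee,V^\vee).$$
By Theorem~\ref{extlchaus} and Corollary~\ref{dpd}(4), both duals are concentrated in degree $0$:
\begin{itemize}
\item $M^\vee$ is the usual Pontryagin dual, a locally compact Hausdorff $G$-module;
\item $V^\vee\simeq \on{Hom}(V,\mbb{R})$, via the standard identification of the Pontryagin dual of $\mbb{R}^n$ with $\mbb{R}^n$, is again a finite-dimensional real vector space with continuous (contragredient) $G$-action.
\end{itemize}
Applying Steps 1 and 2 to the pair $(M^\vee,V^\vee)$ gives $\on{Ext}^i_G(V,M)=0$ for $i>0$.

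\textbf{Main obstacle.} The only delicate point is Step 1: verifying that the Haar-averaging retraction $r$ is a genuine map of condensed $G$-modules, that is, that it is natural in $S$ and continuous in the $S$-variable. I expect this to follow from the uniform continuity of continuous functions on the compact space $S\times G$.
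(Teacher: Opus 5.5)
Your proposal is correct and is essentially the paper's proof. You split $V\hookrightarrow \ul{C}(G;V)$ by Haar averaging, use Theorem~\ref{cohchaus} to see that the co-free module is derived co-free so that everything reduces to the non-equivariant vanishing of Theorem~\ref{extlchaus}, and deduce the second claim from the first via equivariant derived Pontryagin duality (Corollary~\ref{equidpd}); the paper asserts the splitting without detail, so your explicit retraction $f\mapsto\int_G g^{-1}f(g)\,dg$, with its equivariance and continuity checks, just fills in routine steps.
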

\begin{proof}
By derived Pontryagin duality (\ref{equidpd}), it suffices to prove the first claim.  Integration against a Haar measure shows that
$$V\hookrightarrow \ul{C}(G;V)$$
is a \emph{split} injection of $G$-modules.  Thus we reduce to proving Ext vanishing against $\ul{C}(G;V)$.  The higher cohomology $\ul{H}^i(G;V)$ vanishes by Theorem \ref{cohchaus}, so this reduces to the Ext vanishing from $M$ to $V$ in $\on{D}(\on{CondAb})$, which is given by Theorem \ref{extlchaus}.
\end{proof}

\begin{lemma}\label{compareexts}
Suppose $G$ is a profinite group, and $M,N$ are discrete abelian groups with $G$-action.  The natural map
$$\on{Ext}^i_{\on{Mod}_G}(M,N) \to \on{Ext}^i_G(M,N)$$
is an isomorphism, where on the source we have the Ext groups defined using the abelian category of discrete abelian groups with $G$-action, and in the target we have the group of maps $M \to N[i]$ in $\on{D}_G(\on{CondAb})$.
\end{lemma}

\begin{proof}
The claim is clear for $i=0$.  Choosing an injective resolution of $N$, it thus suffices to show that $\on{Ext}_G^i(M,N)=0$ for $i>0$ when $N$ is injective.  By injectivity, the map
$$N \hookrightarrow \ul{C}(G;N)$$
is split injective (note that the target is discrete).  Thus it suffices to show that
$$\on{Ext}^i_G(M,\ul{C}(G;N))=0.$$
By Theorem \ref{cohchaus} there is no higher cohomology on $G$ with $N$-coefficients, so it suffices to show $\on{Ext}^i(M,N)=0$ for $i>0$ where these Ext are calculated in $\on{D}(\on{CondAb})$.  A special case of Theorem \ref{extlchaus} shows that those are the same as in usual abelian groups; thus it suffices to show that $N$ is an injective abelian group.  But this is clear using $N=
\cup_H N^H$, the union being over open subgroups of $G$: each $H$-fixed point $N^H$ is obviously injective as $N$ is injective as a $G$-module, but on the other hand the class of injective abelian groups is closed under filtered colimits (e.g.\ because it matches the class of divisible abelian groups).
\end{proof}

Another corollary of derived Pontryagin duality is the following fact about vanishing of higher derived limits.

\begin{corollary}\label{rlim}
Suppose $(A_i)_{i\in I}$ is a filtered inverse system of compact Hausdorff abelian groups, viewed as sitting in degree $0$ inside of $\on{D}(\on{CondAb})$.  Then
$$H_n \varprojlim A_i=0$$
for $n\neq 0$.
\end{corollary}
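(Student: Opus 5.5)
Since the statement is introduced as a consequence of derived Pontryagin duality, the plan is to dualize the limit into a filtered colimit of discrete groups and use exactness of filtered colimits. Let $D_i := A_i^\vee$. By Corollary \ref{dpd} (part 4), each $D_i$ is the usual Pontryagin dual of $A_i$, which is a discrete abelian group. The inverse system $(A_i)$ therefore dualizes to a filtered direct system $(D_i)$ of discrete abelian groups.

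Form the colimit $D := \varinjlim D_i$ in $\on{D}(\on{CondAb})$. Filtered colimits in $\on{CondAb}$ are exact, since they are computed sectionwise on $\on{Extr}$ and filtered colimits of abelian groups are exact. Hence $D$ is concentrated in degree $0$, and it is the discrete abelian group $\varinjlim D_i$.

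Next, the functor $(-)^\vee = \ul{R\on{Hom}}(-,\mbb{R}/\mbb{Z})$ sends colimits to limits, simply because it is an internal Hom out of its argument. So
$$D^\vee = \ul{R\on{Hom}}(\varinjlim D_i,\mbb{R}/\mbb{Z}) \simeq \varprojlim \ul{R\on{Hom}}(D_i,\mbb{R}/\mbb{Z}) = \varprojlim D_i^\vee \simeq \varprojlim A_i.$$
The last identification uses part 3 of Corollary \ref{dpd}, $A_i \simeq (A_i^\vee)^\vee$, applied naturally in $i$, so that the whole diagram is identified.

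Since $D$ is a discrete abelian group, it lies in $\on{LCA}$. By Corollary \ref{dpd} part 4, $D^\vee$ is then concentrated in degree $0$, where it is the compact group $\varprojlim A_i$. One could also see this directly from Theorem \ref{extlchaus}(3), which gives $\ul{\on{Ext}}^{\geq 1}(D,\mbb{R}/\mbb{Z})=0$. In either case $H_n\varprojlim A_i = 0$ for $n\neq 0$.

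The only step needing care is naturality of the double-duality isomorphism as a map of diagrams rather than objectwise. This is not a real obstacle: the biduality map $M\to (M^\vee)^\vee$ is a natural transformation of functors on $\on{D}(\on{CondAb})$, so it identifies the diagram $(A_i)$ with $((D_i)^\vee)$ coherently, and the limits therefore agree. Everything else is formal.
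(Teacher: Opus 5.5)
Your proposal is correct and is essentially the paper's own argument. The paper likewise dualizes to a filtered colimit of discrete groups, notes that this colimit stays in degree $0$, and applies $(-)^\vee$ to conclude. You have simply spelled out the details the paper leaves implicit: $(-)^\vee$ turns colimits into limits, the double-duality identification is natural in $i$, and a discrete group has Pontryagin dual concentrated in degree $0$.
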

\begin{proof}
This follows by applying $(-)^\vee$ to the obvious fact that a filtered colimit of (discrete, in this case) objects in degree $0$ still lives in degree $0$.
\end{proof}

We also have the following nonabelian analog, which must be proved by different means.

\begin{lemma}\label{nonabrlim}
The full subcategory of $\on{CondAn}$ spanned by the condensed anima of the form $BG$ for some compact Hausdorff group $G$ is closed under filtered inverse limits.
\end{lemma}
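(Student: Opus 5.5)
Given a filtered system $i\mapsto BG_i$ with each $G_i$ compact Hausdorff, set $X=\varprojlim_i BG_i$ in $\on{CondAn}$. The plan is to show that $X$ is connected, that $\pi_1X=\varprojlim G_i=:G$, and that $\pi_nX=0$ for $n\geq 2$; then $X\simeq BG$, and $G$ is compact Hausdorff as a closed subgroup of $\prod_i G_i$. Limits in $\on{CondAn}$ are computed sectionwise on $\on{Extr}$, so for $S\in\on{Extr}$ we have $X(S)=\varprojlim_i BG_i(S)$. Moreover $BG_i(S)$ is the anima of $G_i$-torsors over $S$, because $S$ is projective: a $G_i$-torsor over $S$ in the condensed topology is trivial, being split by a surjection $T\twoheadrightarrow S$ which admits a section. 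Hence $BG_i(S)\simeq B\on{Cont}(S,G_i)$ is a connected $1$-type, and the question becomes the behaviour of a filtered limit of connected $1$-types $B H_i$, where $H_i=\on{Cont}(S,G_i)$.

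\emph{Homotopy groups of the limit.} I would first reduce to a cofinal sequence, or else run a Milnor-type argument in general using the filtered-limit spectral sequence for $\varprojlim^s\pi_t$. In the sequential case the limit $\varprojlim BH_i$ has the following invariants:
\begin{itemize}
\item $\pi_1=\varprojlim H_i=\on{Cont}(S,G)$ and $\pi_n=0$ for $n\geq 2$, provided the basepoint issue is controlled;
\item $\pi_0$ is the nonabelian $\varprojlim^1 H_i$, the set of orbits of $\prod H_i$ acting on $\prod H_i$ by twisted conjugation.
\end{itemize}
So the key point is that this $\varprojlim^1$ vanishes, or at least vanishes locally in $S$, which suffices because we only need $\pi_0X=\ast$ as a condensed set, i.e. after passing to covers. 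Sectionwise it need not vanish (for instance $\varprojlim^1$ of $\mbb{Z}$-like towers), so the local statement is the one to prove.

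\emph{Main obstacle: $\pi_0$ and higher $\varprojlim^s$ for general filtered $I$.} The approach is to use compactness. An element of $\varprojlim^1$ is a $G$-type torsor datum, namely a compatible family of $G_i$-torsors over $S$, equivalently a torsor for the pro-system. I would show it is trivial after pulling back to a cover $T\to S$, as follows:
\begin{itemize}
\item Form the space of compatible trivializations, $P=\varprojlim_i P_i$, where $P_i\to S$ is the given $G_i$-torsor.
\item Each $P_i$ is compact Hausdorff and surjects onto $S$, and the transition maps are surjective on fibres, since torsors are equivariant for maps of groups.
\item A cofiltered limit of nonempty compact Hausdorff spaces is nonempty, so $P\to S$ is a surjection of compact Hausdorff spaces, and $P$ is itself compact Hausdorff.
\item Choose $T\in\on{Extr}$ surjecting onto $P$; this gives the required local trivialization.
\end{itemize}
The same argument with $S$ replaced by the fibre products that appear in a Čech nerve handles the higher-dimensional obstructions. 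Concretely, $X\to\ast$ is an effective epimorphism with loop object
$$\varprojlim \Omega BG_i=\varprojlim G_i=G,$$
and loops commute with limits. Effective epimorphism plus this loop identification gives $X\simeq BG$ for the pointed connected object.

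Thus the genuine content is the compactness argument, which shows that the limit of the $P_i$ is still a surjection onto $S$, i.e. that $\varprojlim BG_i$ is connected as a condensed anima; everything else is formal. The alternative of working through $\varprojlim^s$ spectral sequences for uncountable $I$ is exactly what the compactness argument is designed to avoid.
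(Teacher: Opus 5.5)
\textbf{The gap is the basepoint step you defer.} Your argument is fine once the diagram consists of compact groups and continuous homomorphisms. Compatible $G_i$-torsors $P_i$ over $S$ assemble to a $G$-torsor $\varprojlim_i P_i$, which is fibrewise nonempty because filtered limits of nonempty compact Hausdorff spaces are nonempty; this is essentially the last step of the paper's proof. The trouble is the phrase ``provided the basepoint issue is controlled''. In the lemma, the transition maps $f_{ji}\colon X_j\to X_i$ between objects $X_i\simeq BG_i$ are arbitrary unpointed maps of condensed anima, organized into an $\infty$-functor. They are not given as $B\phi_{ji}$ for a strictly compatible system of homomorphisms. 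This is the generality in which the lemma is used: for the gerbes in $\mc{W}_C=\varprojlim_N\mc{W}_{C_N}$ in Proposition \ref{groupoidisgroup}, and along Postnikov towers in Lemma \ref{invlimchaus}.

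Until you have a point of $X=\varprojlim_i X_i$, none of $G=\varprojlim G_i$, $\varprojlim^1$, the maps $P_j\to P_i$, or $\Omega X$ is defined. Such a point means basepoints $x_i\in X_i(\ast)$ together with paths $\gamma_{ji}\colon f_{ji}(x_j)\simeq x_i$ that are coherent over all triples $k\geq j\geq i$. Likewise, since $\ast$ is projective, your claim that $X\to\ast$ is an effective epimorphism is exactly the claim $X(\ast)\neq\emptyset$. Your compactness argument starts from ``an element of $\varprojlim^1$'' and so never addresses this. The point is not formal. Each $X_i(\ast)$ is nonempty and connected, so along a cofinal sequence you could choose compatible basepoints inductively. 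But a general filtered poset need not have a cofinal sequence: for example, the poset of finite subsets of an uncountable set, which arises when writing a compact group as a limit of compact Lie groups (Theorem \ref{proclie}). For uncountable cofinality, nonemptiness of such a limit needs a real argument.

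\textbf{The missing idea is a second use of compactness, applied to paths.} Choose the $x_i$ arbitrarily.
\begin{enumerate}
\item The condensed set $T_{ji}$ of paths $f_{ji}(x_j)\simeq x_i$ is a $\pi_1(X_i,x_i)$-torsor, hence compact Hausdorff.
\item The given homotopies $f_{ji}\circ f_{kj}\simeq f_{ki}$ induce continuous maps $T_{kj}\times T_{ji}\to T_{ki}$.
\item For each triple, the families $(\gamma_{ji})\in\prod_{j\geq i}T_{ji}$ compatible under this map form a closed subset.
\item Any finitely many of these conditions can be met at once. Take an upper bound $i_0$ of the indices involved, choose paths $\gamma_i\colon f_{i_0i}(x_{i_0})\simeq x_i$, and for $i\geq i'$ let $\gamma_{ii'}$ be the unique path compatible with $\gamma_i$ and $\gamma_{i'}$.
\end{enumerate}
The finite intersection property then gives a point of $X$. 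After that the diagram promotes to one of compact Hausdorff groups, and your torsor argument finishes the proof.

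Two minor corrections. First, for $S\in\on{Extr}$ every cover splits, so ``locally in $S$'' and ``sectionwise'' agree. Your $P\to S$ has a section, so you in fact get sectionwise vanishing, contrary to your remark. Second, the maps $P_j\to P_i$ need not be surjective on fibres unless $G_j\to G_i$ is surjective; you never use this, so it does no harm.
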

\begin{proof}
Suppose $(X_i)_{i\in I}$ is our filtered inverse system of condensed anima.  By \cite{htt} 5.3.1.18 it is no loss of generality to assume the indexing ($\infty$-)category $I$ is a poset.

Let us first show that there exists a map from $\ast$ to the inverse limit, or equivalently, there exist compatible basepoints in all the terms in our limit diagram.  To start with, fix arbitrarily a basepoint $x_i \in X_i$ for all $i\in I$.  For any $j\geq i$ in $I$, let $f_{ji}:X_j \to X_i$ denote the transition map in the inverse system.  Then the condensed set of homotopies $\gamma: f_{ji}(x_j)\simeq x_i$, denote it $T_{ji}$, is a torsor for $\pi_1(X_i,x_i)$, hence lies in $\on{CHaus}\subset\on{CondAn}$.  Moreover there are natural continuous maps
$$\alpha_{kji}: T_{kj} \times T_{ji} \to T_{ki}$$
for $k\geq j\geq i$ induced by the given homotopy $f_{ji}\circ f_{kj} \sim f_{ki}$.

For $a\geq b\geq c$ in $I$, consider the subset
$$T_{abc} \subset \prod_{j\geq i \in I} T_{ji}$$ consisting of those $(\gamma_{ji})$ which are compatible under $\alpha_{abc}$.  (This is just a condition on three of the coordinates of the product.)  Then this is a closed subspace of a compact Hausdorff space.  Moreover, for any finite collection of such triples $a,b,c$, there is an $i_0\in I$ bigger than all of the $a,b,c$ which occur; then for any $i\leq i_0$ we can choose a $\gamma_i: f_{i_0i}(x_{i_0})\sim x_i$, and then for $i\geq i'$ both $\leq i_0$ we can uniquely choose a $\gamma_{ii'}$ compatible with $\gamma_i$ and $\gamma_{i'}$, and then such $\gamma_{ii'}$ will automatically be compatible under all $\alpha_{abc}$ with $a,b,c\leq i_0$.  In this way we see that the intersection of any finite number of these $T_{abc}$ is nonempty. Thus by the finite intersection property for closed subspaces of compact Hausdorff spaces, we conclude that $\cap T_{abc}\neq \emptyset$.  But a point in $\cap T_{abc}$ in particular gives a point in $\varprojlim X_i$, by construction.

Via these compatible basepoints, it follows that our diagram promotes to diagram of compact Hausdorff groups.  Thus it suffices to see that if $(G_i)_{i\in I}$ is an inverse system of compact Hausdorff groups with inverse limit $G$, then
$$BG\overset{\sim}{\rightarrow} \varprojlim_i BG_i,$$
or in other words a compatible family of $G_i$-torsors is the same as a $G$-torsor.  Given such a compatible family of $G_i$-torsors (over a profinite set) we can pass to the inverse limit, and as a filtered inverse limit of nonempty compact Hausdorff spaces is nonempty, we get a $G$-torsor, and this produces the required inverse to the above map.\end{proof}

One last thing worth mentioning about condensed anima is that unlike in many toposes, the terminal object $\ast$ actually does behave like a point.  More precisely, the functor
$$X\mapsto X(\ast)$$
from $\on{CondAn}$ to $\on{An}$ preserves all limits and all colimits, and (hence) all Postnikov towers.  In particular, if $X$ is a condensed anima with $\tau_{\leq 0}X=\ast$, then there is only one homotopy class of maps $\ast \to X$, and in particular the homotopy groups of $X$ are well-defined (as condensed groups) up to isomorphism in the usual way.  Also, again for $X$ satisfying $\tau_{\leq 0}X=\ast$, if we fix such a basepoint $x:\ast \to X$, yielding the condensed group $G=\pi_1(X,x)$, then we get a natural isomorphism
$$\tau_{\leq 1}X \simeq BG.$$
It follows that ``coefficient systems on $X$ in degree $0$'', that is, $0$-truncated abelian group objects in $\on{CondAn}_{/X}$, are identified with $0$-truncated abelian group objects in $\on{CondAn}_{/BG}$, which in turn are the same as condensed abelian groups with $G$-action.

\section{``Almost compact'' condensed anima}

Weil groups are not usually compact, but they are ``almost compact'' in the following technical sense.

\begin{definition}\label{acomp}
Say that a locally compact Hausdorff group $G$ is \emph{almost compact} if it admits a compact normal subgroup $K$ such that either:
\begin{enumerate}
\item $G=K$;
\item $G/K\simeq (\mbb{Z},+)$; or
\item $G/K\simeq (\mbb{R},+)$.
\end{enumerate}
For brevity, by an ``almost compact group'' we will mean an almost compact locally compact Hausdorff group.
\end{definition}

\begin{lemma}\label{maxcompact}
Let $G$ be an almost compact group.  Then $K$ as in Definition \ref{acomp} is unique and functorial (i.e.\ a continuous homomorphism $G\to G'$ of almost compact groups restricts to $K\to K'$).  Indeed $K$ is the largest compact subgroup of $G$.
\end{lemma}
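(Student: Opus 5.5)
It suffices to show that $K$ is the largest compact subgroup of $G$, i.e.\ that every compact subgroup $C\subset G$ is contained in $K$. Uniqueness and functoriality then follow at once. Uniqueness holds because any $K'$ satisfying Definition \ref{acomp} is a compact subgroup, hence $K'\subset K$, and symmetrically $K\subset K'$. For functoriality, let $f:G\to G'$ be a continuous homomorphism of almost compact groups. Then $f(K)$ is a compact subgroup of $G'$, being the continuous image of a compact group, so $f(K)\subset K'$.

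For maximality, I will take a compact subgroup $C\subset G$ and look at its image under the quotient map $q:G\to G/K$. This image $q(C)$ is a compact subgroup of $G/K$, and there are three cases.
\begin{enumerate}
\item If $G=K$, then $C\subset K$ trivially.
\item If $G/K\simeq\mbb{Z}$, a compact subset of the discrete group $\mbb{Z}$ is finite, and the only finite subgroup of $\mbb{Z}$ is $0$.
\item If $G/K\simeq\mbb{R}$, a compact subgroup of $\mbb{R}$ is bounded. Any nonzero element $x$ would generate the unbounded subgroup $x\mbb{Z}$, so the subgroup must be $0$.
\end{enumerate}
In every case $q(C)=0$, i.e.\ $C\subset K$.

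There is no real obstacle in this argument. The only point requiring care is that the isomorphisms $G/K\simeq\mbb{Z}$ or $\mbb{R}$ are isomorphisms of topological groups, so that compactness of $q(C)$ can be read off in $\mbb{Z}$ or $\mbb{R}$ with their usual topologies. This is implicit in Definition \ref{acomp}, where $G/K$ carries the quotient topology, which agrees with the condensed quotient by Remark \ref{openquotient}.
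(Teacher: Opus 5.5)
Your proposal is correct and follows the paper's argument: you show that $K$ contains every compact subgroup, because a compact subgroup of $\mbb{Z}$ or $\mbb{R}$ is trivial. Uniqueness and functoriality then follow formally, exactly as you say.
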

\begin{proof}
By a ``largest'' compact subgroup, we mean a compact subgroup which contains all other compact subgroups.  Then this follows from the fact that every compact subgroup of $\mbb{Z}$ or $\mbb{R}$ is trivial.
\end{proof}

\begin{lemma}\label{acompstructure}
Let $G$ be an almost compact group, with largest compact subgroup $K$.

\begin{enumerate}
\item There exists an action of $G/K$ on $K$ such that $G$ is isomorphic to the semidirect product $G/K \ltimes K$.
\item If $G/K\simeq \mbb{R}$, then there exists an isomorphism of $G$ with the direct product $\mbb{R} \times K$.
\end{enumerate}
\end{lemma}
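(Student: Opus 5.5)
Both parts will be proved by producing a continuous splitting of the quotient map $q:G\to G/K$, so that $G\cong G/K\ltimes K$ with $G/K$ acting on $K$ by conjugation through the section. The cases $G=K$ and $G/K\simeq\mbb{Z}$ are immediate. If $G=K$ there is nothing to prove. If $G/K\simeq\mbb{Z}$, I will choose any $g\in G$ with $q(g)=1$; the homomorphism $\mbb{Z}\to G$, $n\mapsto g^n$, is automatically continuous and splits $q$. Then the map $\mbb{Z}\ltimes K\to G$, $(n,k)\mapsto g^nk$, is a continuous bijective homomorphism. Its inverse is continuous because $q$ is continuous and $\mbb{Z}$ is discrete, so $K$ is open in $G$.

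The real content is the case $G/K\simeq\mbb{R}$, where I need a continuous one-parameter subgroup $s:\mbb{R}\to G$ with $q\circ s=\on{id}$. Once such an $s$ exists, $(t,k)\mapsto s(t)k$ is a continuous bijective homomorphism $\mbb{R}\ltimes K\to G$. It is a homeomorphism because the inverse $g\mapsto (q(g),s(q(g))^{-1}g)$ is visibly continuous. So part 1 reduces to constructing $s$.

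To construct $s$, I plan to use the structure theory of locally compact groups. First, $G$ is an extension of $\mbb{R}$ by a compact group, so $G/G^\circ$ is compact-by-discrete; in particular $G$ is almost connected. I would invoke the Gleason–Yamabe/Iwasawa structure theorem: for every neighbourhood of $1$ there is a compact normal subgroup $N\subset K$ inside it with $G/N$ a Lie group. For each such $N$, $G/N$ is a Lie group with $(G/N)/(K/N)\simeq\mbb{R}$. Choosing an element of $\on{Lie}(G/N)$ mapping to $1\in\on{Lie}(\mbb{R})$ gives a one-parameter subgroup $s_N$ splitting the projection. The set $S_N$ of such splittings is a torsor-like closed subset of the compact-open space of homomorphisms. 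I expect to be able to fit these together over the inverse system $G=\varprojlim G/N$ by a compactness argument analogous to Lemma \ref{nonabrlim}. Concretely, I would parametrize splittings by their value $s(1)$, which ranges over a closed subset of the compact coset $q^{-1}(1)$. Nonemptiness at each finite stage together with the finite intersection property would then give a splitting for $G$. Continuity of the limit $s$ follows because it is continuous into each $G/N$ and $G=\varprojlim G/N$ topologically. The main obstacle is this step. For the fixed value $s(1)$ I need a uniform choice of square roots, $2^n$-th roots, and so on, compatible in $N$; to make that honest I would parametrize splittings by the restriction $s|_{[0,1]}$ in the compact-open topology rather than by $s(1)$ alone. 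Compactness of that space then needs an equicontinuity argument, which I would get from the Lie case by choosing $s_N$ in a bounded subset of $\on{Lie}(G/N)$. An alternative I would try first is the Iwasawa decomposition $G^\circ=\mbb{R}^n\times K'$ for connected locally compact groups with compact $G^\circ/\text{(max compact)}$, which directly yields a closed vector subgroup mapping onto $\mbb{R}$.

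For part 2, I start from $G=\mbb{R}\ltimes K$ with $\mbb{R}$ acting on $K$ by continuous automorphisms $\phi_t$. I will show this action can be replaced by the trivial one. The image of $\mbb{R}$ in $\on{Out}(K)$ should be trivial, since the connected group $\mbb{R}$ acts on the compact group $K$. The Iwasawa-type statement that a connected group acting on a compact group acts by inner automorphisms up to the identity component gives $\phi_t=\on{conj}(k_t)$ with $t\mapsto k_t$ a one-parameter subgroup of $K$. I would again obtain this first in the Lie quotients, where it holds because $\on{Out}$ of a compact Lie group is discrete, and then pass to the limit as above. Then $t\mapsto s(t)k_t^{-1}$ is a new splitting which centralizes $K$, giving $G\cong\mbb{R}\times K$.
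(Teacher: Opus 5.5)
Your handling of the cases $G=K$ and $G/K\simeq\mbb{Z}$ is fine, as is the reduction of part 1 to a continuous homomorphic section $s:\mbb{R}\to G$ of $q$. Your plan for part 2 also matches the paper: Iwasawa's theorem that $\on{Out}(K)$ is totally disconnected gives $\phi_t=\on{conj}(k_t)$, and then you replace $s(t)$ by $s(t)k_t^{-1}$. The paper obtains both the section and the lift of $\mbb{R}\to\on{Inn}(K)=K/Z(K)$ to $K$ by citing the projectivity of $\mbb{R}$ from \cite{hofmann}.

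The gap is your main argument for that projectivity. At a Lie level $G/N$, the continuous sections are $t\mapsto\exp(tX)$ with $X$ in the affine subspace $A_N\subset\on{Lie}(G/N)$ lying over $1\in\on{Lie}(\mbb{R})$. This is an affine space modelled on $\on{Lie}(K/N)$, so it is noncompact as soon as $\dim K/N>0$. In Lemma \ref{nonabrlim} one intersects torsors under compact groups; here there is nothing compact.
\begin{itemize}
\item $s(1)$ does not determine $s$: in $\mbb{R}\times\mbb{R}/\mbb{Z}$ every section $t\mapsto(t,nt)$ with $n\in\mbb{Z}$ has $s(1)=(1,0)$.
\item Tychonoff compactness of $\prod_t q^{-1}(t)$ only yields abstract sections, and pointwise limits of continuous sections can be discontinuous.
\item Choosing $s_N$ in a bounded subset $B_N\subset A_N$ gives an inverse system only if the surjections $A_{N'}\to A_N$ carry $B_{N'}$ into $B_N$. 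Nothing in your setup arranges this.
\end{itemize}
The same defect affects part 2, where you obtain the one-parameter subgroup $t\mapsto k_t$ by ``passing to the limit as above''.

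There are two ways to close the gap.

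First, you can replace compactness by the Artinian property of finite-dimensional affine spaces. The $A_N$ are nonempty and the maps $A_{N'}\to A_N$ are surjective and affine. Any downward-directed family of nonempty affine subspaces of a finite-dimensional space has a least element, so the Mittag-Leffler criterion for inverse limits of sets (Bourbaki, \emph{Th\'eorie des ensembles}, Ch.~III, \S 7) gives a compatible family $(X_N)$. That family defines a continuous $s$ into $G=\varprojlim G/N$. This matters because the index set can be uncountable, e.g.\ for $K=(\mbb{R}/\mbb{Z})^J$, so lifting step by step along a chain is not available.

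Second, your Iwasawa route works once made precise:
\begin{itemize}
\item $q(G^\circ)$ is a connected subgroup of $\mbb{R}$. It cannot be $0$, since $\mbb{R}$ is not a quotient of the totally disconnected group $G/G^\circ$.
\item Hence $G=G^\circ K$, so $G$ is almost connected. ``Compact-by-discrete'' alone would not give this.
\item $C=G^\circ\cap K$ is the maximal compact subgroup of $G^\circ$. Iwasawa's theorem gives $G^\circ=C\cdot H$ with $H\cong\mbb{R}$ a closed one-parameter subgroup; there is one factor because $C\backslash G^\circ\cong\mbb{R}$.
\item $q(H)=q(G^\circ)=\mbb{R}$, so $q|_H$ is a surjective continuous homomorphism $\mbb{R}\to\mbb{R}$. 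It is therefore an isomorphism, and its inverse is the section.
\end{itemize}

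With part 1 in this form, the lift needed in part 2 follows by applying part 1 to $K\times_{K/Z(K)}\mbb{R}$, an extension of $\mbb{R}$ by the compact group $Z(K)$. This is exactly the second use of projectivity of $\mbb{R}$ in the paper.
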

\begin{proof}
By \cite{hofmann}, continuous homomorphisms from $\mbb{R}$ lift along quotient maps (by closed subgroups) in the category of locally compact Hausdorff groups.  The same is obviously true for $(\mbb{Z},+)$, this being the free group on one generator.  This means that $G \to G/N$ admits a section, which as usual identifies $G$ with the semidirect product of the resulting conjugation action of $G/N$ on $N$.

For the second claim, start from an isomorphism $G\simeq G/K\ltimes K$ as in 1, and view the action of $G/K$ on $K$ as a continuous homomorphism $G/K \to \on{Aut}(K)$ where the target (the group of continuous automorpishms of $K$) has the compact-open topology.  There is a continuous homomorphism $c:K\to \on{Aut}(K)$ recording the inner automorphisms; let $\on{Inn}(K)$ denote the (closed, as $K$ is compact) image of this map, so that alternatively we have
$$\on{Inn}(K) = K/Z(K)$$
where $Z(K)=\on{ker}(c)$ is the center of $K$ (a closed subgroup).  Then $\on{Inn}(K)$ is a closed normal subgroup of $\on{Aut}(K)$, and (this being the main point) a theorem of Iwasawa (\cite{iwasawa} Theorem 1) says that the quotient $\on{Out}(K) = \on{Aut}(K)/\on{Inn}(K)$ is totally disconnected.

Since $G/K\simeq \mbb{R}$ is connected, it follows that our homomorphism $G/K\to \on{Aut}(K)$ lands in $\on{Inn}(K)$.  Since again $\mbb{R}$ is projective in locally compact Hausdorff groups by \cite{hofmann}, it follows that we can lift $G/K \to \on{Aut}(K)$ along $c$ to a homomorphism $G/K \to K$.

This translates into the desired statement.  Let us explain this in an abstract way, without formulas, by passing to condensed anima.  An action of a condensed group $H$ on another condensed group $H'$ induces an action of $H$ on the condensed anima $BH'$, and on general grounds the quotient $(BH')/H$ identifies with $B( H\ltimes H')$.  The fact that the action lifts along $c:H'\to \on{Aut}(H')$ implies that the induced action on $BH'$ is homotopic to the constant action, hence the quotient $(BH')/H$ identifies with the quotient of the constant action, which is $BH' \times BH$.  Thus we get an isomorphism $H\ltimes H' \simeq H'\times H$.
\end{proof}

The following permanence property of almost compact groups is useful.

\begin{lemma}\label{acomppermanence}
Let $G$ be an almost compact group.  Then any closed subgroup $H\subset G$ is almost compact, and for every closed normal subgroup $N\subset G$, the quotient $G/N$ is almost compact.
\end{lemma}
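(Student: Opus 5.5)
We split into the two claims and in each case take $K\subset G$ to be the largest compact subgroup (Lemma \ref{maxcompact}), writing $q:G\to G/K$ for the quotient map, so $G/K\in\{0,\mbb{Z},\mbb{R}\}$.

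\emph{Closed subgroups.} Let $H\subset G$ be closed; it is locally compact Hausdorff. Put $K_H:=H\cap K$. This is compact and normal in $H$, and $H/K_H$ embeds as an abstract group into $G/K$ via $q$. The image $q(H)$ is a subgroup of $\{0\}$, $\mbb{Z}$ or $\mbb{R}$. We first check that $q(H)$ is closed. If $H_n\in H$ with $q(h_n)\to t$, then working in the compact set $q^{-1}([t-1,t+1])$ (compact because $q$ is proper: $K$ is compact), a subnet of $h_n$ converges to some $h\in H$, since $H$ is closed, and $q(h)=t$. So $q(H)$ is closed; more precisely, $q|_H:H\to q(H)$ is a proper continuous surjective homomorphism. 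Hence it is a quotient map, and $H/K_H\simeq q(H)$ topologically. Closed subgroups of $\mbb{Z}$ are $0$ or $\simeq\mbb{Z}$. Closed subgroups of $\mbb{R}$ are $0$, $\simeq\mbb{Z}$, or $\mbb{R}$. In each case $H$ is almost compact.

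\emph{Quotients.} Let $N\subset G$ be closed normal. Then $G/N$ is locally compact Hausdorff, and the image $\bar K$ of $K$ in $G/N$ is compact and normal. Since $G\to G/N$ is open, the induced map $G/K\to (G/N)/\bar K\simeq G/(KN)$ is a continuous open surjection. So $(G/N)/\bar K$ is the quotient of $G/K$ by the closed subgroup $q(N)$; this is closed by the properness argument above applied to $KN$, which is closed since $K$ is compact. The quotients of $\mbb{Z}$ by closed subgroups are $\mbb{Z}$ or finite cyclic. The quotients of $\mbb{R}$ are $\mbb{R}$, the circle $\mbb{R}/c\mbb{Z}$, or $0$.

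In the compact cases (finite or circle) we still have to show that $G/N$ is compact, i.e.\ that an extension of a compact group by a compact group is compact. This holds because the quotient map $G/N\to (G/N)/\bar K$ is proper, $\bar K$ being compact, so the preimage of a compact space is compact. We then enlarge $\bar K$ to all of $G/N$, which is allowed by case 1 of the definition. In the remaining cases $\bar K$ itself serves as the compact normal subgroup.

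\emph{Main obstacle.} The only step requiring care is the topological identification in each case: that $H/K_H$ and $(G/N)/\bar K$ are \emph{homeomorphic}, not merely abstractly isomorphic, to the standard groups. I handle this uniformly through properness of quotient maps by compact subgroups, rather than appealing to open mapping theorems, which would need $\sigma$-compactness hypotheses.
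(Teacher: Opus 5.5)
Your proof is correct and follows essentially the same route as the paper: properness of $G\to G/K$ makes $q(H)$ closed and $q|_H$ a quotient map onto a closed subgroup of $0$, $\mbb{Z}$ or $\mbb{R}$. For quotients, both you and the paper identify $(G/N)/\bar K\simeq G/(KN)$ with a quotient of $G/K$ and then use that an extension of a compact group by a compact group is compact. The differences are cosmetic: the paper proves properness of $G\to G/K$ from the compact-covering property of open surjections of locally compact Hausdorff spaces, while you cite it as the standard fact for quotients by compact subgroups, and you check closedness of $q(H)$ by a net argument rather than by closedness of proper maps.
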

\begin{proof}
Let $K$ be the largest compact subgroup of $G$.  Since $\pi: G \to G/K$ is an open surjective map of locally compact Hausdorff spaces, it is compact-covering, i.e.\ for any compact subset $C \subset G/K$ there is a compact subset $C'\subset G$ such that $\pi(C') = C$.  We deduce that $\pi^{-1}C = C'\cdot K$ is also compact, so that $\pi$ is proper.  It follows that $\pi$ is closed.  Thus $\pi(H)\subset G/K$ is closed.  As a closed subgroup of either $0$, $(\mbb{Z},+)$, or $\mbb{R}$, we get that $\pi(H)$ is also isomorphic to either $0$, $(\mbb{Z},+)$ or $\mbb{R}$.  Moreover, as $\pi$ is proper, so is $\pi|_H: H \to \pi(H)$, and therefore $\pi|_H$ is a quotient map.  Thus $H\cap K \subset H$ is a compact normal subgroup for which $H/H\cap K = \pi(H)$ is isomorphic to either $0$, $\mbb{Z}$, or $\mbb{R}$, and hence $H$ is almost compact, as claimed.

Now suppose $N$ is a closed normal subgroup.  The image of $K$ in $G/N$ is a compact normal subgroup, and the resulting quotient $G/NK$ is also the quotient of $G/K$ by a closed normal subgroup, hence is isomorphic to either $0$, $\mbb{Z}$, $\mbb{R}$, $\mbb{R}/\mbb{Z}$, or $\mbb{Z}/n\mbb{Z}$ for some positive integer $n$.  In the first 3 cases we're done, and in the last two cases $G/N$ is an extension of a compact group by a compact group, hence compact, so we're also done.
\end{proof}

Our refinements of Weil groups will be condensed anima with fundamental group the Weil group, and with compact Hausdorff higher homotopy.  We now give some preliminaries on the cohomology of such condensed anima, starting from the case of compact fundamental group.  From here on, when discussing cohomology we don't distinguish notationally between the condensed group $\ul{H}^i(X;A)$ and the underlying abstract group $H^i(X;A)$.  Rather, we will always just write $H^i(X;A)$, and make clear with words whether we mean to consider the condensed structure or not.

\begin{lemma}\label{cohchausx}
Let $X$ be a condensed anima equipped with an isomorphism $\tau_{\leq 1}X \simeq BG$ for some compact Hausdorff group $G$ and such that $\pi_iX$ is compact Hausdorff for all $i\geq 2$. Then:
\begin{enumerate}
\item For all $i\geq 0$, the functor $H^i(X;-)$, from condensed abelian groups with $G$-action to condensed abelian groups, commutes with filtered colimits.
\item For every Frechet space $V$ over $\mbb{R}$ with continuous $G$-action, $H^i(X;V)=0$ for all $i>0$ and $H^0(X;V)=V^G$ (a closed subspace of $V$, hence also Frechet).
\item For every discrete abelian group $D$ with continuous $G$-action, $H^i(X;D)$ is discrete for all $i\geq 0$.
\end{enumerate}
\end{lemma}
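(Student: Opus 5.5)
The plan is to reduce everything to compact Hausdorff spaces, where Theorem \ref{cohchaus} applies. Concretely, we will resolve $X$ by a simplicial object all of whose terms are compact Hausdorff spaces (or disjoint unions of products of compact Hausdorff groups), and then compute cohomology through the associated descent spectral sequence or totalization.

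First we treat $X=BG$. The bar construction presents $BG$ as the geometric realization of the simplicial condensed set $[n]\mapsto G^n$, and each $G^n$ is compact Hausdorff. A coefficient system $M$ on $BG$ is a condensed $G$-module. Hence $R\Gamma(BG;M)$ is the totalization of the cosimplicial object $R\Gamma(G^n;M)$, and this holds condensedly as well, after replacing $G^n$ by $S\times G^n$.

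For part 2 we take $M=V$ Frechet. Theorem \ref{cohchaus}(2) kills the higher cohomology of each $S\times G^n$, so $R\Gamma(BG;V)$ is the complex of continuous cochains $\on{Cont}(G^n,V)$. Averaging against Haar measure gives a contracting homotopy in positive degrees, which is the classical vanishing of continuous cohomology of compact groups. Here we use that $V$ is complete and locally convex, so that Bochner-type integrals make sense.

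For part 3 we take $M=D$ discrete. By Theorem \ref{cohchaus}(1) each $\ul H^j(G^n;D)$ is discrete, so every term of the descent spectral sequence is discrete. Since discrete condensed groups are closed under kernels, cokernels and extensions, the abutment is discrete. There is a convergence subtlety, but in each fixed total degree only finitely many terms contribute.

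For part 1 we need that $R\Gamma(K;-)$ commutes with filtered colimits for $K$ compact Hausdorff. This holds because such $K$ is qcqs in the condensed topos; concretely, $K$ admits a hypercover by objects of $\on{Extr}$, and on those sections commute with filtered colimits. Passing through the totalization requires commuting a filtered colimit past a limit, which is fine because in each fixed degree $i$ only a finite truncation $\on{Tot}_{\leq i+1}$ matters.

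For general $X$, we induct up the Postnikov tower. The map $\tau_{\leq n}X\to\tau_{\leq n-1}X$ has fibres $K(\pi_n X,n)$, and the base-changed fibres over $S\in\on{Extr}$ are $S\times K(A,n)$ with $A$ compact Hausdorff abelian. Since $K(A,n)$ is again a bar construction whose terms are powers of $A$, it is a simplicial compact Hausdorff space. Iterating, $\tau_{\leq n}X$ is the realization of a (multi)simplicial object with compact Hausdorff terms, and the argument above applies verbatim to each. The one extra point is the vanishing in part 2: we need the fibres $K(A,n)$ to have no higher cohomology with coefficients in a real vector space, and this follows by the same Haar-averaging applied to $A^m$. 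The Leray spectral sequence then shows $R\Gamma(\tau_{\leq n}X;V)=R\Gamma(BG;V)=V^G$.

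To pass to $X$ itself, Postnikov convergence gives $X=\varprojlim\tau_{\leq n}X$. Because $\pi_{n+1}$ only affects degrees $\ge n+1$, we get $H^i(X;-)=H^i(\tau_{\leq n}X;-)$ for $n>i$.

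The main obstacle is making the multisimplicial compact-Hausdorff resolution of $\tau_{\leq n}X$ precise when the action of $G$ on the $\pi_n$ and the $k$-invariants are nontrivial. The first thing to try is to work relatively: pull back along $S\to BG$ to reduce to $S\times_{BG}X$, check the claims fibrewise using only the $K(A,n)$ fibrations, and then descend along the bar resolution.
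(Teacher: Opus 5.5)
Your treatment of $BG$ matches the paper: bar resolution, Haar averaging on continuous cochains, discreteness of $\ul{H}^j(G^n;D)$, and only finitely many cosimplicial levels mattering in each degree. The route you relegate to a fallback at the end is exactly the paper's argument for general $X$. One inducts up the Postnikov tower starting from $\tau_{\leq 1}X=BG$, using the fibre sequence $K(\pi_nX,n)\to X\to\tau_{\leq n-1}X$ and the formula $R\Gamma(X;M)\simeq R\Gamma(\tau_{\leq n-1}X;R\Gamma(K(\pi_nX,n);M))$. Here the inner pushforward is computed on the fibres $S\times K(A,n)$ over $S\in\on{Extr}$. This handles all three parts at once, not only part 2. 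By the $K(A,n)$ case, the pushforward of $V$ is $V$, the pushforward of a discrete $D$ has discrete cohomology sheaves, and pushforward commutes with filtered colimits. Since the pushforward is coconnective, only finitely many Leray terms contribute in each degree. No separate descent along $X\to BG$ is needed, because $BG$ is the base of the induction.

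You should commit to that route and drop the global model. The step ``iterating, $\tau_{\leq n}X$ is the realization of a multisimplicial object with compact Hausdorff terms, and the argument applies verbatim'' fails as stated. Gluing the bar models of $\tau_{\leq n-1}X$ and $K(A,n)$ into a twisted-product model requires the $k$-invariant to be a continuous cocycle on the bar model, and in general it is not one. For example, the generator of $H^3(BSU(2);\mbb{R}/\mbb{Z})\cong H^4(BSU(2);\mbb{Z})\cong\mbb{Z}$ is not represented by any continuous cocycle. Indeed, $\on{Cont}(SU(2)^\bullet,\mbb{R}/\mbb{Z})$ is acyclic in positive degrees: lift to $\mbb{R}$ using simple connectivity of $SU(2)^m$, average against Haar measure, and note that integer-valued cochains on a connected group are constant. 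Refining the model, say by a hypercover with profinite terms, can represent such a class, but then the terms are no longer groups and the Haar averaging you need for part 2 is unavailable.

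Relatedly, for the fibres, ``the same Haar averaging applied to $A^m$'' is not yet an argument. The clean route, as in the paper, is induction via $K(A,n+1)=BK(A,n)$. Its bar terms are $K(A,n)^m=K(A^m,n)$, so the cosimplicial object computing $R\Gamma(K(A,n+1);V)$ is constant with value $V$. Parts 1 and 3 propagate along this induction in the same way.
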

\begin{proof}
First consider the case $X=BG$.  The bar resolution shows that the cohomology of $X$ is the (derived) inverse limit of the cohomologies of the $G^n$, but in any range of degrees it's just a finite inverse limit.  To prove 1, since filtered colimits are exact it therefore suffices to prove it for the $G^n$.  But these   $G^n$ are compact Hausdorff spaces and hence can be resolved by profinite sets, for which the commutation is clear on general grounds (the Grothendieck topology defining condensed anima is finitary).  Claim 2 (again for $X=BG$) follows from a Haar measure argument exactly as in the proof of \ref{extagainstrvs}.  For 3, again the bar resolution reduces us to showing the analogous claim for $H^i(S;D)$ when $S$ is compact Hausdorff --- or just profinite, by a further resolution.  (We freely use that discrete abelian groups form a Serre subcategory of condensed abelian groups.) But by \ref{cohchaus} $H^i(S;D)=0$ for $i>0$ when $S$ is profinite, and when $i=0$ it's the internal Hom from $S$ to $D$, which is discrete.

Next consider the case $X=K(A,n)$ for $n\geq 1$, where $A$ is a compact Hausdorff abelian group.  The case $n=1$ has already been handled, so we can proceed inductively.  Again we have a bar resolution of $K(A,n+1)= BK(A,n)$ which shows fairly directly that 1,2, and 3 for $K(A,n)$ imply 1,2,  and 3 for $K(A,n+1)$.

Now look at the general case.  By looking degree-wise we can assume $X$ is truncated, and then we proceed by induction on $n$ for which $X$ is $n$-truncated.  The case $n=1$ is already handled, so suppose $n>1$, $X$ is $n$-truncated, and we know the claim for $\tau_{\leq n-1}X$.  Consider the fiber sequence
$$K(\pi_nX,n) \to X \to \tau_{\leq n-1}X.$$
This implies
$$R\Gamma(X;A) = R\Gamma(\tau_{\leq n-1}X;R\Gamma(K(\pi_nX,n),A))$$
for any condensed abelian group $A$ with $G$-action.  Thus 1 for $X$ follows from 1 for $K(\pi_nX,n)$ and 1 for $\tau_{\leq n-1}X$, and similarly for 2 and 3.\end{proof}

Now we extend this slightly, requiring only that $G$ be \emph{almost} compact.

\begin{proposition}\label{cohachaus}
Suppose $X$ is a condensed anima equipped with $\tau_{\leq 1}X\simeq BG$ for some almost compact group $G$ and such that $\pi_iX$ is compact Hausdorff for $i>1$.  Let $K\subset G$ denote the largest compact subgroup (Lemma \ref{maxcompact}).  Then:
\begin{enumerate}
\item For all $i\geq 0$, the functor $H^i(X;-)$, from condensed abelian groups with $G$-action to condensed abelian groups, commutes with filtered colimits.
\item Let $V$ be a Frechet space over $\mbb{R}$ with continuous $G$-action.
\begin{enumerate}
\item If $G$ is compact we have $R\Gamma(X;V) \simeq V^G[0]$.
\item If $G/K\simeq \mbb{Z}$, we have
$$R\Gamma(X;V) \simeq [V^K \overset{1-\gamma}{\longrightarrow} V^K],$$
where $\gamma$ is any chosen generator of $G/K$ ($\simeq \mbb{Z}$).
\item If $G/K\simeq \mbb{R}$, we have
$$R\Gamma(X;V) \simeq [(V^K)_\infty \overset{\xi}{\longrightarrow}(V^K)_\infty]$$
where $(V^K)_\infty$ is the Frechet space of smooth vectors in the Frechet $G/K$-representation $V^K$ and $\xi$ is any chosen generator of $\on{Lie}(G/K)\simeq \mbb{R}$ (acting on $(V^K)_\infty$ by differentiation).
\end{enumerate}

\item If $D$ is a discrete abelian group with $G$-action, then $H^i(X;D)$ is discrete for all $i\geq 0$.
\end{enumerate}

\end{proposition}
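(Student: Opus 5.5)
The plan is to reduce to Lemma \ref{cohchausx} by fibering $X$ over $B(G/K)$. Using $\tau_{\leq 1}X\simeq BG$ and the quotient map $G\to G/K$, we get a map $X\to B(G/K)$. Its fiber $Y$ satisfies $\tau_{\leq 1}Y\simeq BK$ and $\pi_iY=\pi_iX$ for $i\geq 2$, so Lemma \ref{cohchausx} applies to $Y$. By Lemma \ref{acompstructure}, $G\simeq G/K\ltimes K$. Hence for a condensed $G$-module $A$ we have
$$R\Gamma(X;A)\simeq R\Gamma(B(G/K);R\Gamma(Y;A)),$$
where $R\Gamma(Y;A)$ carries its residual $G/K$-action. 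This uses the same ``composite of fibrations'' device as in the proof of Lemma \ref{cohchausx}.

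Next we compute the outer cohomology over $B(G/K)$ with $G/K\in\{0,\mbb{Z},\mbb{R}\}$.
\begin{enumerate}
\item For $G/K=0$ there is nothing to do, and the statement is Lemma \ref{cohchausx}.
\item For $G/K=\mbb{Z}$, the standard two-term resolution of $\mbb{Z}$ over $\mbb{Z}[\mbb{Z}]$ gives $R\Gamma(B\mbb{Z};M)\simeq[M\xrightarrow{1-\gamma}M]$ for any $M\in\on{D}_{\mbb{Z}}(\on{CondAb})$. This description is functorial and condensed.
\item For $G/K=\mbb{R}$, the bar resolution gives $R\Gamma(B\mbb{R};M)$ as a totalization of $R\Gamma(\mbb{R}^n;M)$. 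Locally compact $\mbb{R}^n$ is a countable union of compacts, and each $R\Gamma(\mbb{R}^n;-)$ is a countable Mittag-Leffler limit, so it has bounded amplitude and its cohomology commutes with filtered colimits in each degree.
\end{enumerate}

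For part 1, Lemma \ref{cohchausx}(1) gives commutation with filtered colimits for $Y$. The outer functors in each case commute with filtered colimits in each fixed degree. For $\mbb{R}$ this holds because only finitely many cosimplicial stages matter in a given degree, and cohomology of $\mbb{R}^n$ with coefficients $\varinjlim$ reduces via $\mbb{R}^n\simeq$ pt: the cohomology of $\mbb{R}^n$ with arbitrary coefficients agrees with that of a point. Indeed $\mbb{R}^n$ is contractible as a condensed anima up to the needed degree, namely $R\Gamma(\mbb{R}^n;A)=\ul{C}(\mbb{R}^n,A)$ in degree zero in cases of interest. I expect this step to be somewhat delicate. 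If it fails, I would instead use the filtration of $\mbb{R}$ by compact intervals together with Theorem \ref{cohchaus}.

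For part 3, Lemma \ref{cohchausx}(3) says $R\Gamma(Y;D)$ has discrete cohomology. The $\mbb{Z}$ case then follows since discrete groups form a Serre subcategory. In the $\mbb{R}$ case the action of $\mbb{R}$ on discrete groups is trivial, since $\mbb{R}$ is connected. Homotopy invariance (Theorem \ref{cohchaus}(1), using that $B\mbb{R}$ has contractible homotopy type) then gives $R\Gamma(B\mbb{R};M)=M$ for discrete $M$, so the result is again discrete.

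For part 2, Lemma \ref{cohchausx}(2) gives $R\Gamma(Y;V)=V^K[0]$, which is Frechet with a $G/K$-action. In the $\mbb{Z}$ case, (b) follows immediately from the two-term complex above. In the $\mbb{R}$ case, Lemma \ref{cohliegp}(2) computes $R\Gamma(B\mbb{R};V^K)$ by continuous cochains. Van Est (Remark \ref{vanest}) with trivial maximal compact subgroup then identifies this with the Lie algebra cohomology of $\mbb{R}$ on the smooth vectors, namely $[(V^K)_\infty\xrightarrow{\xi}(V^K)_\infty]$.

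I expect the main obstacle to be making this identification condensed rather than merely on underlying groups. To handle it, I would run the Van Est comparison with $S$-parametrized coefficients $\ul{C}(S,V)$ for $S$ extremally disconnected, using that Frechet-valued continuous functions on profinite $S$ remain Frechet.
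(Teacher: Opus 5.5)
Your overall structure matches the paper's proof: the fiber sequence $Y\to X\to B(G/K)$, Lemma \ref{cohchausx} for $Y$, the two-term complex in the $\mbb{Z}$-case, Van Est for 2(c), and contractibility of $\mbb{R}$ with discrete coefficients for part 3. The gap is in part 1 when $G/K\simeq\mbb{R}$.

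Your argument works term by term on the bar resolution. It therefore needs each $R\Gamma(\mbb{R}^n;-)$ to commute with filtered colimits of arbitrary condensed coefficients, and that is false. Both of your justifications fail:
\begin{enumerate}
\item $\mbb{R}^n$ is not contractible as a condensed anima. For example, $H^0(\mbb{R};\mbb{R})=\ul{C}(\mbb{R},\mbb{R})\neq\mbb{R}$. Homotopy invariance holds only for discrete coefficients, and the Mittag-Leffler argument is specific to Fr\'echet coefficients.
\item The fallback via compact intervals fails because a countable inverse limit does not commute with filtered colimits. Concretely, $\ul{\mbb{R}}=\varinjlim_k[-k,k]$, and maps out of the compact $[-k,k]$ commute with direct sums. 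Hence $H^0(\mbb{R};\bigoplus_{i\in\mbb{N}}\mbb{R})=\varprojlim_k\bigoplus_i C([-k,k],\mbb{R})$. This contains $\sum_i\phi_ie_i$ for bump functions $\phi_i$ supported near $i$, which does not lie in $\bigoplus_i C(\mbb{R},\mbb{R})$.
\end{enumerate}
So the individual cosimplicial terms genuinely fail to commute with filtered colimits. The commutation for $B\mbb{R}$ only holds after totalization, and your argument gives no way to see that.

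The missing idea is to avoid the noncompact space $\mbb{R}$ altogether. Use the fiber sequence $B\mbb{Z}\to B\mbb{R}\to B(\mbb{R}/\mbb{Z})$, which is exact because $\mbb{R}\to\mbb{R}/\mbb{Z}$ is surjective on condensed sets (Remark \ref{openquotient}). This gives
$$R\Gamma(B\mbb{R};M)\simeq R\Gamma\bigl(B(\mbb{R}/\mbb{Z});[M\xrightarrow{1-\gamma}M]\bigr).$$
The inner functor visibly commutes with filtered colimits. The outer one does by Lemma \ref{cohchausx}(1) applied to the compact group $\mbb{R}/\mbb{Z}$; the inner complex is bounded, so degreewise commutation suffices. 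This is exactly how the paper handles the $\mbb{R}$-case. With this replacement, the rest of your proposal goes through as in the paper.
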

\begin{proof}
If $G$ is compact this follows from Lemma \ref{cohchausx}.  Otherwise, $G/K$ is either $\mbb{R}$ or $\mbb{Z}$.  Consider the fiber sequence
$$F \to X \to B(G/K).$$
Then Lemma \ref{cohchausx} applies to $F$.  To deduce 1 for $X$ from 1 for $F$ it suffices to show that cohomology on $B\mbb{Z}$ and $B\mbb{R}$ commutes with filtered colimits.  The $\mbb{Z}$-case is clear because
$$R\Gamma(B\mbb{Z},M) = [M \overset{1-\gamma}{\longrightarrow} M].$$
For the $\mbb{R}$-case, consider the fiber sequence
$$B\mbb{Z} \to B\mbb{R} \to B\mbb{R}/\mbb{Z}.$$
This reduces us to the $\mbb{Z}$-case and a case covered by Lemma \ref{cohchausx}, finishing the proof of 1.

For 2 we proceed similarly.  It suffices to show that if $V$ is a Frechet space with $G/K$-action, then $R\Gamma(B(G/K);V) = [V \overset{1-\gamma}{\longrightarrow} V]$
in the $\mbb{Z}$-case, and $R\Gamma(B(G/K);V) = [V_\infty\overset{\xi}{\longrightarrow} V_\infty]$ in the $\mbb{R}$-case.  The $\mbb{Z}$-case is clear since it holds for any coefficient system, and for the $\mbb{R}$-case it follows from Van Est's comparison with Lie algebra cohomology, see Remark \ref{vanest}.

Finally, for 3, again by the compact case it suffices to show that $H^i(B(G/K);D)$ is discrete for all $i\geq 0$ and all discrete $D$.  In the $\mbb{Z}$-case this is obvious because
$$R\Gamma(B(G/K);D) = [D \overset{1-\gamma}{\longrightarrow}D].$$
In the $\mbb{R}$-case we claim more precisely that $H^i(B(G/N);D)=0$ for $i>0$ and $H^0(B(G/K);D)=D$.  Because $G/K\simeq \mbb{R}$ is contractible, this follows from Lemma \ref{cohliegp}.
\end{proof}

Starting from Proposition \ref{cohachaus} we can also analyze the cohomology of many other coefficient systems.  Because of derived Pontryagin duality, the coefficient system $\mbb{R}/\mbb{Z}$ is of particular importance.  To analyze it in terms of coefficient systems to which the above applies, consider the short exact sequence
$$0 \to \mbb{Z} \to \mbb{R} \to \mbb{R}/\mbb{Z} \to 0$$
with trivial $\pi_1X$-action.  Obviously, $\mbb{Z}$ is discrete and $\mbb{R}$ is Frechet.  Then Proposition \ref{cohachaus} together with the long exact sequence in cohomology implies the following:

\begin{corollary}\label{cohrmodz}
Let $X$ be a condensed anima such that $\tau_{\leq 1}X=BG$ for some  almost compact $G$ and $\pi_iX$ is compact Hausdorff for all $i>1$.  Then we have:

\begin{enumerate}
\item $H^0(X;\mbb{R}/\mbb{Z})=\mbb{R}/\mbb{Z}$;
\item $H^1(X;\mbb{R}/\mbb{Z})= \on{Hom}_{cont}(G,\mbb{R}/\mbb{Z})$ (with the compact-open topology; this is a locally compact Hausdorff abelian group);
\item for $i\geq 2$, the condensed abelian group $H^i(X;\mbb{R}/\mbb{Z})$ identifies with $H^{i+1}(X;\mbb{Z})$ and is discrete.
\end{enumerate}
\end{corollary}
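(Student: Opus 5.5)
The plan is to compute everything from the long exact sequence in (condensed) cohomology associated to $0\to\mbb{Z}\to\mbb{R}\to\mbb{R}/\mbb{Z}\to 0$ with trivial $G$-action. The input is Proposition \ref{cohachaus}: part 3 for the discrete coefficient $\mbb{Z}$, and part 2 for the Frechet coefficient $\mbb{R}$. Part 3 gives that all $H^i(X;\mbb{Z})$ are discrete.

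First I compute $R\Gamma(X;\mbb{R})$ case by case, using part 2 with $V=\mbb{R}$ and trivial action, so that $V^K=\mbb{R}=(V^K)_\infty$.
\begin{enumerate}
\item If $G$ is compact, $R\Gamma(X;\mbb{R})=\mbb{R}[0]$.
\item If $G/K\simeq\mbb{Z}$, the differential $1-\gamma$ is $0$, so $R\Gamma(X;\mbb{R})=\mbb{R}\oplus\mbb{R}[-1]$.
\item If $G/K\simeq\mbb{R}$, the derivation $\xi$ acts by $0$ on constants, giving the same answer $\mbb{R}\oplus\mbb{R}[-1]$.
\end{enumerate}
In every case $H^i(X;\mbb{R})=0$ for $i\geq 2$.

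Claim 3 then follows immediately. For $i\geq 2$ the long exact sequence contains
$$H^i(X;\mbb{R})\to H^i(X;\mbb{R}/\mbb{Z})\to H^{i+1}(X;\mbb{Z})\to H^{i+1}(X;\mbb{R}),$$
in which both outer terms vanish. So $H^i(X;\mbb{R}/\mbb{Z})\simeq H^{i+1}(X;\mbb{Z})$ as condensed abelian groups, and the right side is discrete by Proposition \ref{cohachaus}(3).

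For claims 1 and 2, I avoid the long exact sequence in low degrees. There the connecting map into $H^2(X;\mbb{Z})$ is awkward, and $H^1(X;\mbb{R})$ alone does not see the torsion in $\on{Hom}(G,\mbb{R}/\mbb{Z})$. Instead I use that $X\to\tau_{\leq 1}X=BG$ has $1$-connected fibers. Hence, for any condensed abelian group $A$ with trivial action, the pullback $H^i(BG;A)\to H^i(X;A)$ is an isomorphism for $i\leq 1$. This can be seen from the Leray-type identity $R\Gamma(X;A)=R\Gamma(BG;R\Gamma(F;A))$ used in the proof of Lemma \ref{cohchausx}, since $\tau_{\leq 1}R\Gamma(F;A)=A$. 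The computation is then on $BG$:
\begin{enumerate}
\item $H^0(BG;\mbb{R}/\mbb{Z})=\mbb{R}/\mbb{Z}$, because $BG$ is connected.
\item $H^1(BG;A)$ with trivial action is the internal condensed Hom $\ul{\on{Hom}}(G,A)$ of group homomorphisms, either via the bar resolution or because $H^1$ classifies $A$-torsors over $BG$.
\end{enumerate}
For $A=\mbb{R}/\mbb{Z}$, homomorphisms $G\to\mbb{R}/\mbb{Z}$ factor through $G^{ab}$ and then through its maximal Hausdorff quotient. Theorem \ref{extlchaus}(1) then identifies the result with $\on{Hom}_{cont}(G,\mbb{R}/\mbb{Z})$ in the compact-open topology, i.e.\ the Pontryagin dual of the maximal Hausdorff abelian quotient of $G$, which is locally compact Hausdorff.

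As a consistency check, the same identification gives $H^1(X;\mbb{Z})=\on{Hom}(G,\mbb{Z})$ and $H^1(X;\mbb{R})=\on{Hom}(G,\mbb{R})$. These match the computation above, being $0$ or $\mbb{Z}\subset\mbb{R}$ via $G\to G/K$. In particular $H^1(X;\mbb{Z})\to H^1(X;\mbb{R})$ is injective, so the long exact sequence also yields $H^0(X;\mbb{R}/\mbb{Z})=\mbb{R}/\mbb{Z}$.

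The main point needing care is claim 2: justifying that $\ul{H}^1(BG;\mbb{R}/\mbb{Z})$ is the condensed internal Hom for a nonabelian $G$. This means passing through the abelianization and its Hausdorff quotient, and checking that the condensed structure agrees with the compact-open topology. I would first try to reduce it to Theorem \ref{extlchaus}(1) applied to the maximal abelian Hausdorff quotient of $G$.
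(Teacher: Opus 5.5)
Your proposal is correct and follows the paper's proof. Claim 3 comes, exactly as in the paper, from the long exact sequence for $\mbb{Z}\to\mbb{R}\to\mbb{R}/\mbb{Z}$, with Proposition \ref{cohachaus}(2) giving $H^i(X;\mbb{R})=0$ for $i\geq 2$ and part 3 giving discreteness; for claims 1 and 2 the paper only says ``immediate'', and your reduction to $\tau_{\leq 1}X=BG$ (maps into the $1$-truncated $K(A,i)$, $i\leq 1$, factor through $BG$) is the natural way to fill that in.
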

\begin{proof}
Claims 1 and 2 are immediate.  For Claim  3, by Proposition \ref{cohachaus} part 2 applied to $V=\mbb{R}$ with trivial $G$-action, we get in particular that $H^i(X;\mbb{R})=0$ for all $i\geq 2$.  Thus 3 follows from the long exact sequence associated to $\mbb{Z} \to \mbb{R} \to \mbb{R}/\mbb{Z}$.
\end{proof}

We can also describe the cohomology of $X$ with coefficients in any locally compact Hausdorff group $A$ (with trivial $\pi_1X$-action) in terms of the above answer for $A=\mbb{R}/\mbb{Z}$.  This is rather formal.

\begin{lemma}\label{cohlchauscoeff}
Let $X$ be a condensed anima, let $A$ be a locally compact Hausdorff abelian group, and let $A^\vee$ denote its Pontryagin dual.  The natural map
$$R\Gamma(X;A)\overset{\sim}{\longrightarrow} \ul{R\on{Hom}}(A^\vee, R\Gamma(X;\mbb{R}/\mbb{Z}))$$
in $\on{D}(\on{CondAb})$ is an isomorphism.
\end{lemma}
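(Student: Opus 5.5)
We will prove the statement by identifying both sides with a single mapping object in $\on{D}(\on{CondAb})$ and then invoking derived Pontryagin duality, Corollary \ref{dpd}. Write $\mbb{Z}[X]\in\on{D}(\on{CondAb})$ for the condensed chains on $X$, i.e.\ the free condensed abelian group functor applied to $X$ (the left adjoint to the forgetful functor to condensed anima, extended to the derived setting). For any $M\in\on{D}(\on{CondAb})$, regarded as a constant coefficient system, we have tautologically
$$R\Gamma(X;M)\simeq \ul{R\on{Hom}}(\mbb{Z}[X],M),$$
with the condensed structure on the left given as in the remark following Theorem \ref{cohchaus}: on $S$-points both sides compute $R\Gamma(S\times X;M)$. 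This is just the adjunction, applied after taking products with each $S\in\on{Extr}$.

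With this in hand, the map in the statement is obtained by currying:
$$\ul{R\on{Hom}}(A^\vee,\ul{R\on{Hom}}(\mbb{Z}[X],\mbb{R}/\mbb{Z}))\simeq \ul{R\on{Hom}}(\mbb{Z}[X],\ul{R\on{Hom}}(A^\vee,\mbb{R}/\mbb{Z}))=\ul{R\on{Hom}}(\mbb{Z}[X],(A^\vee)^\vee).$$
This uses the tensor–Hom adjunction together with the symmetry of $\otimes$. Under these identifications, the natural map $R\Gamma(X;A)\to\ul{R\on{Hom}}(A^\vee,R\Gamma(X;\mbb{R}/\mbb{Z}))$ becomes the map induced on $\ul{R\on{Hom}}(\mbb{Z}[X],-)$ by the biduality map $A\to (A^\vee)^\vee$. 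Here $(-)^\vee$ is the derived internal $\ul{R\on{Hom}}(-,\mbb{R}/\mbb{Z})$. It is worth checking that the map in the statement is indeed this one: it is defined via the evaluation pairing $A\otimes A^\vee\to\mbb{R}/\mbb{Z}$, which is exactly the adjoint of biduality.

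It therefore suffices to show that $A\to (A^\vee)^\vee$ is an isomorphism in $\on{D}(\on{CondAb})$, where $A^\vee$ is the usual Pontryagin dual. By Corollary \ref{dpd} part 4, the derived dual $\ul{R\on{Hom}}(A,\mbb{R}/\mbb{Z})$ is concentrated in degree $0$ and equals the classical Pontryagin dual, so the two meanings of $A^\vee$ agree. By part 3, $A\overset{\sim}{\to}(A^\vee)^\vee$ since $A\in\on{LCA}\subset\on{D}(\on{LCA})$. Applying $\ul{R\on{Hom}}(\mbb{Z}[X],-)$ to this isomorphism finishes the proof.

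The only step requiring care is the first: making precise that $R\Gamma(X;-)$, with its condensed enhancement, is corepresented internally by $\mbb{Z}[X]$, and that the map in the statement matches the curried biduality map. No finiteness hypotheses on $X$ are needed, since everything reduces to formal adjunctions plus the degree-$0$ input from Corollary \ref{dpd}.
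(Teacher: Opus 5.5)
Your proposal is correct and is essentially the paper's argument. The paper likewise uses a formal adjunction to rewrite $\ul{R\on{Hom}}(A^\vee,R\Gamma(X;\mbb{R}/\mbb{Z}))$ as $R\Gamma(X;\ul{R\on{Hom}}(A^\vee,\mbb{R}/\mbb{Z}))$, and then concludes from the biduality $A\simeq (A^\vee)^\vee$ of Corollary \ref{dpd}; your corepresentation of $R\Gamma(X;-)$ by $\mbb{Z}[X]$ simply makes that adjunction step explicit.
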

\begin{proof}
For formal reasons of adjunction we have
$$\ul{R\on{Hom}}(A^\vee,R\Gamma(X;\mbb{R}/\mbb{Z}))\simeq R\Gamma(X;\ul{R\on{Hom}}(A^\vee;\mbb{R}/\mbb{Z})).$$
The natural map in question arises from $A \to \ul{R\on{Hom}}(A^\vee;\mbb{R}/\mbb{Z})$, and it is an isomorphism as a special case of derived Pontryagin duality, \ref{dpd}.
\end{proof}

We will also want to study a generalization of this where the locally compact Hausdorff group $A$ has nontrivial $G$-action, where again we assume $X$ comes with $\tau_{\leq 1}X\simeq BG$.  The usual trick from algebraic topology of passing to the universal cover yields, in this context, the following:

$$R\Gamma(X;A) \simeq R\on{Hom}_G(A^\vee; R\Gamma(\widetilde{X};\mbb{R}/\mbb{Z})),$$
where $\widetilde{X}=X\times_{BG}\ast$.  Indeed, we have $X=\widetilde{X}/G$, and then above isomorphism follows from descent. Thus it suffices to understand the $\mbb{R}/\mbb{Z}$-cohomology of $\widetilde{X}$ as a $G$-object.

Our main case of interest is that where $A$ is compact.  Then there is a slight simplification, because the $G$-action on $A$ necessarily factors through the quotient of $G$ by its identity component $G^\circ$.  This means we don't have to go all the way to the universal cover; we can get away with the following.

\begin{definition}\label{weirdunivcov}
For a condensed anima $X$ equipped with a map $X\to BG$ for some locally compact Hausdorff group $G$, define
$$X^\circ = X\times_{BG}BG^\circ,$$
where $G^\circ$ denotes the connected component of the identity in $G$.
\end{definition}

We recall that $G^\circ$ is a closed normal subgroup of $G$, and $G/G^\circ$ is a totally disconnected locally compact Hausdorff group.  Note that there is a natural action of $G/G^\circ$ on $BG^\circ$ (as an unpointed condensed anima), and
$$(BG^\circ)/(G/G^\circ) = BG.$$
This follows by descent, since $BG^\circ = \on{fib}(BG\to B(G/G^\circ))$.  For the same reason, with notation as above we have a natural $G/G^\circ$ action on $X^\circ$ and
$$X^\circ/(G/G^\circ)=X.$$
In this way, the data of the map of condensed anima $X \to BG$ is equivalent to the data of the map of $G/G^\circ$-equivariant condensed anima $X^\circ \to BG^\circ$.

\begin{lemma}\label{cohchauscoeff}
Let $X$ be a condensed anima equipped with a map $f:X\to BG$ for some locally compact Hausdorff group $G$.  Suppose given a compact abelian group $A$ with $G$-action, which we can also view as an abelian group object over $BG$.  Then:
\begin{enumerate}
\item The $G$-action on $A$ factors through to a $G/G^\circ$-action, thereby inducing a $G/G^\circ$-action on the (discrete) Pontryagin dual $A^\vee$ as well;
\item We have $R\Gamma(X;f^\ast A) \overset{\sim}{\rightarrow} \on{RHom}_{G/G^\circ}(A^\vee,R\Gamma(X^\circ;\mbb{R}/\mbb{Z})).$
\end{enumerate}
\end{lemma}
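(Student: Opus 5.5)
For part 1, the plan is to reduce to the structure of automorphisms of compact abelian groups. A continuous action of $G$ on the compact abelian group $A$ is the same thing as a continuous homomorphism $G\to \on{Aut}(A)$, where $\on{Aut}(A)$ carries the compact-open topology. Pontryagin duality identifies $\on{Aut}(A)$ with $\on{Aut}(A^\vee)$, and since $A^\vee$ is discrete, the compact-open topology on the latter group is the topology of pointwise convergence on a discrete set.

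The key claim is that $\on{Aut}(A)$ is totally disconnected. Concretely, for each $\chi\in A^\vee$, the orbit map $G\to A^\vee$, $g\mapsto \chi\circ g^{-1}$, is continuous into a discrete set. It is therefore constant on the connected set $G^\circ$. Hence $G^\circ$ fixes every character, and because characters separate points of $A$, it acts trivially on $A$. This gives the factorization through $G/G^\circ$, and with it the induced discrete $G/G^\circ$-module $A^\vee$.

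For part 2, I would first reduce to $BG^\circ$-level data by descent along $X^\circ\to X$. Using $X=X^\circ/(G/G^\circ)$, global sections on $X$ of a sheaf pulled back from $BG$ become $G/G^\circ$-equivariant global sections on $X^\circ$:
$$R\Gamma(X;f^\ast A)\simeq R\Gamma\big(B(G/G^\circ); R\Gamma(X^\circ;A)\big)=\on{RHom}_{G/G^\circ}\big(\mbb{Z},R\Gamma(X^\circ;A)\big).$$
Here $R\Gamma(X^\circ;A)$ is viewed as an object of $\on{D}_{G/G^\circ}(\on{CondAb})$. By part 1, the pullback of $A$ to $X^\circ$ (equivalently, to $BG^\circ$) is the constant sheaf with value $A$, and it carries $G/G^\circ$-equivariant structure through the action on $A$.

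Next I would apply Lemma \ref{cohlchauscoeff} to $X^\circ$ in its equivariant form. We have $R\Gamma(X^\circ;A)\simeq \ul{R\on{Hom}}(A^\vee,R\Gamma(X^\circ;\mbb{R}/\mbb{Z}))$. This isomorphism is natural, so it is $G/G^\circ$-equivariant: it arises from the equivariant map $A\to \ul{R\on{Hom}}(A^\vee,\mbb{R}/\mbb{Z})$, which is an isomorphism by Corollary \ref{equidpd}, followed by the adjunction that moves $\ul{R\on{Hom}}(A^\vee,-)$ out of $R\Gamma(X^\circ;-)$. Combining the two displays gives
$$\on{RHom}_{G/G^\circ}\big(\mbb{Z},\ul{R\on{Hom}}(A^\vee,R\Gamma(X^\circ;\mbb{R}/\mbb{Z}))\big)=\on{RHom}_{G/G^\circ}\big(A^\vee,R\Gamma(X^\circ;\mbb{R}/\mbb{Z})\big).$$
This last step uses that internal Hom in $\on{D}_{G/G^\circ}(\on{CondAb})$ is computed by the underlying internal Hom, as explained before Corollary \ref{equidpd}.

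The main obstacle is making the equivariance bookkeeping rigorous. The delicate point is the identification of sheaves on $X$ with $G/G^\circ$-equivariant sheaves on $X^\circ$, carried out compatibly with the adjunction above. I would handle it by working throughout in the slice topos over $B(G/G^\circ)$. There, $X\to B(G/G^\circ)$ has fiber $X^\circ$, and $R\Gamma(X;-)$ factors as pushforward to $B(G/G^\circ)$ followed by $R\Gamma(B(G/G^\circ);-)$. The pushforward of $f^\ast A$ is then $\ul{R\on{Hom}}_{/B(G/G^\circ)}(A^\vee,\pi_\ast\mbb{R}/\mbb{Z})$ by the same adjunction, now performed internally in that slice.
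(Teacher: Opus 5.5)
Your proposal is correct and follows the paper's proof. Part 1 is the paper's appeal to total disconnectedness of $\on{Aut}(A)$, which your orbit-map argument into the discrete dual $A^\vee$ proves directly (continuity follows from the exponential law since $A$ is compact). Part 2 is exactly the paper's ``descent from Lemma \ref{cohlchauscoeff}'': you take $G/G^\circ$-homotopy fixed points of the equivariant form of that lemma, with the bookkeeping handled, as you suggest, by the fact that pullback along slice maps commutes with internal Hom.
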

\begin{proof}
Part 1 follows from the fact that the automorphism group of a compact Hausdorff abelian group (with the compact-open topology) is totally disconnected.  Part 2 then follows by descent from Lemma \ref{cohlchauscoeff}.
\end{proof}

Using this, we have the following interpretation of the usual procedure of building anima step-by-step via their Postnikov tower.

\begin{lemma}\label{postchaus}
Let $G$ be a locally compact Hausdorff group.  For $n\geq 1$, let $\mathcal{X}_n(BG)$ denote the $\infty$-groupoid of $n$-truncated condensed anima $X$ equipped with an isomorphism $\tau_{\leq 1}X\simeq BG$ such that $\pi_iX$ is compact Hausdorff for all $i\geq 2$.  Note that $\tau_{\leq n}$ induces a natural map
$$\mathcal{X}_{n+1}(BG) \to \mathcal{X}_n(BG).$$

For $x\in \mathcal{X}_n(BG)$ corresponding to $X\to BG$, the $\infty$-groupoid of lifts of $x$ to $\mathcal{X}_{n+1}(BG)$ identifies with the $\infty$-groupoid of pairs $(A,\kappa)$ where $A$ is a compact Hausdorff abelian group with continuous $G/G^\circ$-action and
$$\kappa: X^\circ \to K(A,n+2)$$
is a $G/G^\circ$-equivariant map of condensed anima, recalling that $X^\circ := X\times_{BG} BG^\circ$.  More precisely, this identification is induced by sending a pair $(A,\kappa)$ as above to the condensed anima $Y = \on{fib}(\kappa)/(G/G^\circ)$, equipped with its tautological map to $X^\circ/(G/G^\circ) = X$, which induces the required $\tau_{\leq n}Y \simeq X$.
\end{lemma}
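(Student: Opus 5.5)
The plan is to reduce to the standard relative Postnikov description and then descend along $X^\circ \to X$. First, for a map $Y\to X$ that exhibits $X\simeq\tau_{\leq n}Y$ with $Y$ $(n+1)$-truncated, the fiber of $Y\to X$ is, locally on $X$, an Eilenberg--MacLane object. Concretely, the general $\infty$-topos theory of the relative Postnikov tower (valid in $\on{CondAn}$ as noted in Section \ref{condback}) gives the following. Since $X$ is connected with $\tau_{\leq 1}X\simeq BG$ and $n\geq 1$, such $Y$ are classified by pairs $(\mc{A},k)$. Here $\mc{A}=\pi_{n+1}Y$ is a $0$-truncated abelian group object over $X$, equivalently over $BG$, i.e.\ a condensed abelian group with $G$-action. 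The invariant $k$ is a section of the relative $K(\mc{A},n+2)\to X$; the relevant point is that $X\to X\times_{B\on{Aut}}K(\mc{A},n+2)$ classifies $Y$ as its fiber. Equivalently, $\infty$-connective $(n+1)$-gerbes banded by $\mc{A}$ are classified by $H^{n+2}(X;\mc{A})$ in the groupoid sense. I would state this classification at the level of $\infty$-groupoids: the groupoid of lifts is the groupoid of pairs (a $G$-module $\mc{A}$, a map $X\to K_{BG}(\mc{A},n+2)$ over $BG$). The condition that $\pi_{n+1}$ be compact Hausdorff translates to $\mc{A}$ being a compact Hausdorff abelian group with continuous $G$-action.

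Second, use Lemma \ref{cohchauscoeff} part 1: the $G$-action on the compact group $A$ factors through $G/G^\circ$, because $\on{Aut}(A)$ is totally disconnected. Hence the groupoid of compact $G$-modules is equivalent to the groupoid of compact $G/G^\circ$-modules. More precisely, a $G$-action on $A$ factoring through $G/G^\circ$ is a property, not extra structure, since $G\to G/G^\circ$ is an epimorphism of condensed groups. So $f^\ast A$ is pulled back from $B(G/G^\circ)$, and its pullback to $BG^\circ$ is constant. Therefore, over $X^\circ=X\times_{BG}BG^\circ$, the relative Eilenberg--MacLane object $K_{BG}(A,n+2)\times_{BG}X^\circ$ is the constant $X^\circ\times K(A,n+2)$. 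This constancy is equivariant for the residual $G/G^\circ$-action.

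Third, descent: $X\simeq X^\circ/(G/G^\circ)$, and more generally $\on{CondAn}_{/X}\simeq$ ($G/G^\circ$-equivariant objects over $X^\circ$). Under this equivalence, sections of $K_{BG}(A,n+2)\times_{BG}X\to X$ correspond to $G/G^\circ$-equivariant maps $X^\circ\to K(A,n+2)$. The fiber of such a $\kappa$ is the equivariant object over $X^\circ$ corresponding to $Y$, so $Y=\on{fib}(\kappa)/(G/G^\circ)$, which matches the description in the statement. Finally, I would check that $\tau_{\leq n}Y\simeq X$ and $\tau_{\leq 1}Y\simeq BG$. Truncation commutes with the quotient and base change, and $n\geq 1$ ensures $\pi_1$ is unaffected.

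The main obstacle I expect is the first step. It requires making the classification of $(n+1)$-stage Postnikov extensions with nonconstant band rigorous as an equivalence of $\infty$-groupoids, including the automorphisms of $A$, in the non-presentable setting of $\on{CondAn}$. I would handle this by working over $\on{Extr}$-points and the slice $\on{CondAn}_{/BG}$, citing the $\infty$-topos argument of \cite{htt} 7.2.2 (the classification of gerbes / Postnikov stages by Eilenberg--MacLane objects). The smallness issue is harmless, since all constructions are small colimits and limits.
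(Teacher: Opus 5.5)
Your proposal is correct and follows essentially the same route as the paper. Both arguments use the same three ingredients: the classification of $(n+1)$-gerbes by Eilenberg--MacLane objects (\cite{htt} 7.2.2), the total disconnectedness of $\on{Aut}(A)$ to make the band pulled back from $B(G/G^\circ)$, and descent along $X^\circ\to X$. The only difference is the order of steps: you classify $Y\to X$ with a twisted band over $X$ and then pull back to $X^\circ$, whereas the paper first descends to the $G/G^\circ$-equivariant gerbe $Y^\circ\to X^\circ$, shows its band is the constant $A$, and only then applies the classification.
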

\begin{proof}
We described how to go from $(A,\kappa)$ to a lift $Y\to BG$ of $x$ to $\mc{X}_{n+1}(BG)$.  To go the other direction, we use the Postnikov tower of $Y\to BG$ in condensed anima over $BG$ (see \cite{htt} 6.5 and 7.2), or equivalently by descent, the Postnikov tower of $Y^\circ \to BG^\circ$ in $G/G^\circ$-equivariant condensed anima over $BG^\circ$.

The $G/G^\circ$-equivariant map $Y^\circ\to \tau_{\leq n}Y^\circ =X^\circ$ is an $n+1$-gerbe.  Thus there is a unique $G/G^\circ$-equivariant abelian group object $\mc{A}$ in (relatively $0$-truncated) condensed anima over $X^\circ$ such that $Y^\circ \to X^\circ$ promotes to an $n+1$-gerbe banded by $\mc{A}$.  A relatively $0$-truncated anima over $X^\circ$ uniquely descends to $\tau_{\leq 1}X^\circ = BG^\circ$, so we can equivalently treat $\mc{A}$ as a $G/G^\circ$-equivariant abelian group object over $BG^\circ$.  Next we claim it is uniquely pulled back from a $G/G^\circ$-equivariant abelian group object over $\ast$, i.e.\ a condensed abelian group $A$ with $G/G^\circ$-action, and that this condensed abelian group $A$ is compact Hausdorff.

For this, note that on pullback along any point $x$ of $X$ (unique up to homotopy, as $\pi_0X=\ast$ and $\ast$ is projective in condensed anima), $\mathcal{A}$ gives $\pi_{n+1}(X,x)$, which is compact Hausdorff by assumption.  A $G/G^\circ$-equivariant abelian group object over $\tau_{\leq 1}X = BG^\circ$ is the same as an abelian group object over $(BG^\circ)/(G/G^\circ)\simeq BG$.  Thus $\mathcal{A}$ corresponds to a compact Hausdorff group $A$ with continuous $G$-action.  Since the automorphism group of a compact abelian Hausdorff group is totally disconnected, the action must be trivial on $G^\circ$.  Therefore, the $G$-action descends (uniquely) to a $G/G^\circ$-action, and this proves the claim (that $\mathcal{A}$ is uniquely pulled back from a $G/G^\circ$-equivariant compact Hausdorff abelian group $A$).

We deduce that $Y^\circ \to X^\circ$ has the structure of an $(n+1)$-gerbe banded by this $A$.  Thus by \cite{htt} Theorem 7.2.2.6 it is uniquely classified by a map
$$X^\circ \to K(A,n+2).$$
The universal $(n+1)$-gerbe banded by $A$ corresponds to the canonical base-point of $K(A,n+2)$, so this translates to the desired statement.
\end{proof}

To finish this section, we discuss phenomena related to filtered inverse limits of these ``almost compact'' condensed anima, again starting with the compact case.  For motivation, we recall the following well-known fact (for which we provide a proof since we couldn't find one in the literature):

\begin{lemma}
The functor from the pro-category of compact Lie groups to the category of compact Hausdorff groups, given by sending a pro-system to its inverse limit, is an equivalence.
\end{lemma}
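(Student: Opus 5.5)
To show the inverse-limit functor $\varprojlim:\on{Pro}(\text{compact Lie groups})\to\text{compact Hausdorff groups}$ is an equivalence, I will check essential surjectivity and full faithfulness separately.

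\emph{Essential surjectivity.} Let $G$ be a compact Hausdorff group. By Peter--Weyl, the finite-dimensional unitary representations of $G$ separate points. For each finite set $F$ of such representations, let $N_F$ be the kernel of $\bigoplus_{\rho\in F}\rho$. Then $N_F$ is a closed normal subgroup, and $G/N_F$ embeds as a closed subgroup of a unitary group. By Cartan's closed subgroup theorem, $G/N_F$ is therefore a compact Lie group. The $N_F$ form a filtered system with $\bigcap_F N_F=\{1\}$. The natural map $G\to\varprojlim_F G/N_F$ is then injective and has dense image. Its source is compact, so the image is also closed, and hence the map is a continuous bijection of compact Hausdorff spaces, i.e.\ a homeomorphism.

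\emph{Full faithfulness.} Let $(G_i)$ and $(H_j)$ be pro-systems of compact Lie groups. Then
\[\on{Hom}_{\on{Pro}}\big((G_i),(H_j)\big)=\varprojlim_j\varinjlim_i\on{Hom}(G_i,H_j),\]
while the target of the functor gives $\on{Hom}(\varprojlim G_i,\varprojlim H_j)=\varprojlim_j\on{Hom}(\varprojlim_i G_i,H_j)$. It therefore suffices to prove that for a compact Lie group $H$ the map
\[\varinjlim_i\on{Hom}(G_i,H)\to\on{Hom}(G,H),\qquad G=\varprojlim G_i,\]
is bijective.

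The key input is the ``no small subgroups'' property of $H$: there is an identity neighbourhood $U\subset H$ containing no nontrivial subgroup. This is the step I expect to be the main obstacle, since it is the one place where the Lie structure is used.

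\emph{Surjectivity.} Let $f:G\to H$ be continuous. The kernels $K_i=\ker(G\to G_i)$ form a filtered family of compact subgroups with trivial intersection. By compactness, some $K_i$ lies in the open set $f^{-1}(U)$. Then $f(K_i)$ is a subgroup contained in $U$, so it is trivial. Hence $f$ factors through $G/K_i$.

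We must also see that $G/K_i$ maps isomorphically onto its image in $G_i$, a closed subgroup $G_i'$. For this I would first replace the system by the images $G_i'$, which are again compact Lie groups; this does not change the pro-object, by the standard Mittag-Leffler/image argument. One way to justify this: by the descending chain condition on closed subgroups of a compact Lie group (dimension and number of components drop), the images of $G_k\to G_i$ for $k\ge i$ stabilize. Since the limit of nonempty compact fibres is nonempty, the stable image is exactly the image of $G$.

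With images arranged to be surjective, the factorization $G/K_i\cong G_i\to H$ gives a preimage of $f$ in $\on{Hom}(G_i,H)$.

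\emph{Injectivity.} Two maps $G_i\to H$ that agree after composing with the surjection $G\to G_i$ are equal. Hence they already agree in the colimit.
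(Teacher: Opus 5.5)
Your proof is correct. The essential surjectivity (Peter--Weyl, plus the fact that closed subgroups of Lie groups are Lie) is the same as in the paper. So is the reduction of full faithfulness to bijectivity of $\varinjlim_i\on{Hom}(G_i,H)\to\on{Hom}(G,H)$. Your replacement of $G_i$ by the stable image $G_i'$ is exactly the paper's step 2.

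The genuine difference is in showing that a given $f\colon G\to H$ factors through some $G_i$.

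\emph{The paper's route.} It first shows that the kernels $N_i=\ker(G\to G_i)$ are cofinal among all closed normal $N\subset G$ with $G/N$ Lie. This uses a descending-chain argument on closed normal subgroups of the Lie group $G/N$: first the Lie algebras stabilize, then the component groups. It also uses that filtered limits of surjections of compact spaces are surjective, to transport $\bigcap_i N_i=\{1\}$ down to $G/N$. It then applies this to $N=\ker f$, which qualifies because $\on{im}(f)$ is a closed subgroup of $H$.

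\emph{Your route.} You bypass the cofinality step entirely. The no-small-subgroups property of $H$, together with compactness of $G$, gives $K_i\subset\ker f$ directly.

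\emph{Comparison.} Your argument is shorter and needs only a local property of the target. Both arguments work for arbitrary Lie $H$, not just compact ones. The paper's argument gives an extra by-product: an explicit pro-isomorphism between any presentation $(G_i)$ and the canonical system $(G/N)_{N}$ of all Lie quotients of $G$. That is the form in which the paper later works, for instance in defining $\pi_1(K)_{tors}$ as a limit over all Lie quotients. Of course, this identification also follows formally once the equivalence is established by your argument.
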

\begin{proof}
Let $G$ be a compact Hausdorff group.  By the Peter-Weyl theorem, $L^2(G)$ is the Hilbert space direct sum of finite-dimensional unitary representations of $G$.  It follows that $G$ maps by an injective continuous homomorphism to a product of unitary groups.  Thus $G$ is isomorphic to a closed subgroup of a product $\prod_{i\in I}H_i$ of compact Lie groups.  For a finite subset $I_0\subset I$, let $G_{I_0}$ be the image of $G$ under the projection $\prod_{i\in I}H_i \to \prod_{i\in I_0}H_i$.  Then $G$ is the filtered inverse limit of the $G_{I_0}$, and each $G_{I_0}$ is a compact Lie group because a closed subgroup of a Lie group is a Lie group.

Thus the functor from the pro-category of compact Lie groups to compact Hausdorff groups is essentially surjective.  To show fully faithfulness, we need to see that if $(G_i)_{i\in I}$ is a pro-system of compact Lie groups (indexed by a filtered poset $I$) with inverse limit $G$, then for any compact Lie group $H$ we have
$$\varinjlim_{i\in I}\on{Hom}_{cont}(G_i,H)\overset{\sim}{\rightarrow}\on{Hom}_{cont}(G,H).$$
In fact, we will see that this holds for any Lie group $H$.  Let $\mc{N}$ denote the poset of closed normal subgroups $N$ of $G$ for which $G/N$ is a Lie group.  Note that $\mc{N}$ is closed under pairwaise intersection, as $G/(N\cap N')\hookrightarrow G/N\times G/N'$ and a closed subgroup of a Lie group is a Lie group.  It is also nonempty, because for any $i\in I$ we have $\on{ker}(G\to G_i)\in\mc{N}$.   Thus $N\in\mc{N} \mapsto G/N$ gives a pro-system of compact Lie groups.  Then we claim that $(G_i)_{i\in I}$ is pro-isomorphic to $(G/N)_{N\in\mc{N}}$.  Admitting this for now, to complete the proof it then suffices to show that
$$\varinjlim_{N\in\mc{N}}\on{Hom}_{cont}(G/N,H)\overset{\sim}{\rightarrow} \on{Hom}_{cont}(G,H).$$
Now the map is clearly injective.  To show surjectivity, let $f:G\to H$ be a continuous homomorphism.  Then $N=\on{ker}(f)$ is a closed normal subgroup such that $G/N\simeq\on{im}(f)$ is a closed subgroup of $H$, hence a Lie group, and moreover $f$ factors through $G/N$, as required.

To prove the claim that $(G_i)_{i\in I}$ is pro-isomorphic to $(G/N)_{N\in\mc{N}}$, it suffices to combine the following two facts:
\begin{enumerate}
\item The map $I\to \mc{N}$ sending $i\in I$ to $N_i:=\on{ker}(G\to G_i)$ is cofinal (so that the pro-system $(G/N_i)_{i\in I}$ maps isomorphically to the pro-system $(G/N)_{N\in\mc{N}}$);
\item The map of inverse systems $(G/N_i)_{i\in I} \to (G_i)_{i\in I}$ induced by the evident termwise inclusions is a pro-isomorphism.
\end{enumerate}
For 1, we need to see that for all $N\in\mc{N}$, there is an $i\in I$ with $N_i\subset N$.  More generally we claim this holds for any filtered system $(N_i)_{i\in I}$ of closed normal subgroups of a compact Hausdorff group $G$ such that $\cap_i N_i=\{1\}$.  First suppose $G$ is a Lie group and $N=\{1\}$.  We need to show $N_i=\{1\}$ for some $i$.  Note that $(\on{Lie}(N_i))_{i\in I}$ gives a filtered system of subspaces of the finite-dimensional $\on{Lie}(G)$, hence it must stabilize, say at $i_0\in I$, so $\on{Lie}(N_i)=\on{Lie}(N_{i_0})$ for all $i\geq i_0$.  As $N_i$ is a closed subgroup of $N_{i_0}$ and these are Lie groups, this implies that $(N_i)^\circ=(N_{i_0})^\circ$ for all $i\geq i_0$.  Then again the $\pi_0(N_i)$, being finite, must also stabilize after some point, and we deduce that the $N_i$ themselves stabilize after some point.  Since $\{1\}=\cap_i N_i$, we find that $N_i=\{1\}$ after the stabilization point, as desired.  Returning to the general case, let $N'_i$ be the image of $N_i$ in $G/N$.  A filtered inverse limit of surjective maps of compact Hausdorff spaces is surjective, so $\cap_i N_i\twoheadrightarrow \cap_i N'_i$, hence $\cap_i N'_i=\{1\}$ and we conclude from the Lie case that $N'_i=\{1\}$ for some $i$, which says $N_i\subset N$ for some $i$, as required for 1.

For 2, we need to show that for all $i_0\in I$, there is an $i\geq i_0$ such that $G_i\to G_{i_0}$ lands inside $G/N_{i_0} = \on{im}(G\to G_{i_0})$.  Let $H_i=\on{im}(G_i\to G_{i_0})$.  As in the proof of 1 above we see that the $H_i$ stabilize and that $\on{im}(G\to G_{i_0})=\cap_i H_i$.  The claim follows.
\end{proof}

We would like to extend this to ``compact anima'' with trivial $\pi_0$.

\begin{lemma}\label{invlimchaus}
Let $(X_i)_{i\in I}$ be a filtered inverse system of condensed anima, each of which satisfies $\pi_0(X_i)=\ast$ and $\pi_n(X_i)\in\on{CHaus}$ for all $n\geq 1$.  Then:
\begin{enumerate}
\item $X=\varprojlim_i X_i$ satisfies the same conditions ($\pi_0X=\ast$, $\pi_nX\in\on{CHaus}$).
\item We have $\pi_n X\overset{\sim}{\rightarrow} \varprojlim_i \pi_n(X_i)$ for all $n\geq 0$ (all basepoints of $X$ are homotopic by 1).
\item Suppose $Y$ is a condensed anima such that $\pi_n(Y,y)$ is a Lie group for all $n\geq 1$ and all $y:\ast\to Y$, and such that $Y$ is $d$-truncated for some $d<\infty$.  Then 
$$\varinjlim_i \on{Map}(X_i,Y)\overset{\sim}{\rightarrow}\on{Map}(X,Y).$$
\end{enumerate}

\end{lemma}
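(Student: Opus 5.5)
We prove 1 and 2 together by induction up the Postnikov tower, using Lemma \ref{nonabrlim} and Corollary \ref{rlim} as the two basic inputs. Since limits commute with Postnikov truncation only up to a lim-one issue, we first treat the truncated case. Set $X_i^{(n)}=\tau_{\leq n}X_i$, so that $\tau_{\leq n}$ is applied levelwise. We claim that $X^{(n)}:=\varprojlim_i X_i^{(n)}$ is $n$-truncated, has $\pi_0=\ast$, and has $\pi_k X^{(n)}=\varprojlim_i\pi_k X_i$ compact Hausdorff for $k\leq n$.

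For $n=1$ this is exactly Lemma \ref{nonabrlim}, together with its proof: the proof produces compatible basepoints and identifies the limit with $B(\varprojlim_i \pi_1X_i)$, and a filtered limit of compact Hausdorff groups is compact Hausdorff. For the inductive step, we use the compatible basepoints already obtained and write $X_i^{(n+1)}\to X_i^{(n)}$ as the fiber of a map to a relative Eilenberg--MacLane object $K(\pi_{n+1}X_i,n+2)$ over $X_i^{(n)}$. Passing to the limit, $X^{(n+1)}\to X^{(n)}$ has fiber over the basepoint equal to $\varprojlim_i K(\pi_{n+1}X_i,n+1)$. By Corollary \ref{rlim} (no higher $\lim$ for compact abelian groups, applied via Dold--Kan), this fiber is $K(\varprojlim_i\pi_{n+1}X_i,\,n+1)$. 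The long exact sequence then gives the claim, including $\pi_0=\ast$, since the fiber is connected for $n\geq1$.

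Finally, $X=\varprojlim_i X_i=\varprojlim_n X^{(n)}$, because limits commute with limits and Postnikov towers converge. The tower $X^{(n)}$ has the stated homotopy, with $n$-connected transition fibers, so $\tau_{\leq n}X=X^{(n)}$. This proves 1 and 2.

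For 3 we induct on the Postnikov tower of $Y$, whose length is at most $d$. The base case is $Y$ discrete, i.e.\ $0$-truncated. Since $\pi_0 X=\ast$ and $X$ is connected, maps $X\to Y$ factor through $\pi_0$, and both sides equal $Y(\ast)$.

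For the inductive step, we use the fiber sequence $Y\to\tau_{\leq m-1}Y$ with fiber a relative $K(\pi_m,m)$. Maps from $X_i$ or from $X$ into $Y$ over a fixed map to $\tau_{\leq m-1}Y$ are governed by cohomology with local coefficients in a Lie group $L=\pi_mY$ for $m\geq2$, or by homomorphisms and conjugacy data for $m=1$. So the statement reduces to two facts:
\begin{enumerate}
\item for $m=1$, $\varinjlim_i\on{Hom}(\pi_1X_i,L)\simeq\on{Hom}(\pi_1X,L)$ with $L$ a Lie group, together with the analogous statement on centralizers;
\item for a Lie abelian group $L$ with action, $\varinjlim_i H^k(X_i;L)\simeq H^k(X;L)$.
\end{enumerate}
Fact 1 is the preceding lemma on compact Lie groups; its proof works for any target Lie group $H$, using the pro-isomorphism to $(G/N)_{N\in\mc N}$.

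For fact 2, a Lie abelian group $L$ is an extension of a discrete group by $\mathbb{R}^a\times(\mathbb{R}/\mathbb{Z})^b$. For vector-space coefficients the cohomology vanishes in positive degrees by Lemma \ref{cohchausx}, part 2. This reduces fact 2 to the cases of discrete and torus coefficients. Torus coefficients fit into the sequence $\mathbb{Z}\to\mathbb{R}\to\mathbb{R}/\mathbb{Z}$, so they too reduce to discrete coefficients.

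\textbf{Main obstacle.} The step I expect to be hardest is continuity of discrete-coefficient cohomology: $\varinjlim_i H^k(X_i;D)\simeq H^k(X;D)$, with the local system pulled back from a finite stage. I would prove it by Pontryagin duality, in the form of Lemma \ref{cohchauscoeff}. This identifies $R\Gamma$ with $\on{RHom}(D^\vee,R\Gamma(-;\mathbb{R}/\mathbb{Z}))$, and $D^\vee$ is compact. I would then show that $R\Gamma(X;\mathbb{R}/\mathbb{Z})$ is dual to a chain object that is the limit of compact ones. Equivalently, by induction through $K(A,n)$ and bar resolutions as in Lemma \ref{cohchausx}, it suffices to check that $H^k(K(\varprojlim A_i,n);D)=\varinjlim H^k(K(A_i,n);D)$ and $H^k(\varprojlim S_i;D)=\varinjlim H^k(S_i;D)$ for compact Hausdorff $S_i$. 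The latter is the standard continuity of sheaf cohomology for cofiltered limits of compact Hausdorff spaces, via Theorem \ref{cohchaus}, part 1.

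Finally, the $d$-truncatedness of $Y$ guarantees that only finitely many cohomological degrees enter. This is what lets the filtered colimits commute with the finite limits in the induction.
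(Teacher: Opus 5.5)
Your argument is essentially the paper's proof. Parts 1 and 2 go by Postnikov induction from Lemma \ref{nonabrlim} and Corollary \ref{rlim}. Part 3 goes by Postnikov induction on the truncated $Y$, reducing to the case $Y=BH$ (handled by the pro-compact-Lie lemma) and to continuity of cohomology with abelian Lie coefficients, which splits into discrete coefficients (bar resolutions plus continuity of sheaf cohomology on compact Hausdorff spaces) and vector-space coefficients (Lemma \ref{cohchausx}).

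The one point you leave implicit is that vector-space coefficients need more than vanishing in positive degrees: you also need the degree-zero comparison $V^{\pi_1X}=\bigcup_i V^{\pi_1X_i}$. The paper deduces this from the same stabilization facts about inverse limits of compact groups that it uses for $BH$.
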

\begin{proof}
Parts 1 and 2 follow from induction on the Postnikov tower using Lemmas \ref{rlim} and \ref{nonabrlim}.  For 3, by 1 we have that maps to a $0$-truncated $Y$ are the same for $X$ and any $X_i$, and are the same as maps from a point.  Thus pulling back along any $y:\ast \to Y$ we can assume $\pi_0Y=\ast$.  Again by induction on the Postnikov tower of $Y$ we reduce to the case $Y=BH$ for a Lie group $H$ plus the claim that cohomology with abelian Lie group coefficients on $X$ is the colimit of cohomology on the $X_i$ (for a coefficient system coming from some $X_{i_0}$).  The case $Y=BH$ follows from the classical remarks about inverse limits of compact Lie groups made above.  For the claim about cohomology, suppose $N$ is an abelian Lie group with $\pi_1X_{i_0}$-action for some $i_0\in I$; we want that
$$\varinjlim_{i>i_0}H^n(X_i;N)\overset{\sim}{\rightarrow} H^n(X;N)$$
for all $n\in\mbb{Z}$.
We have the natural short exact sequence
$$0 \to N^\circ \to N \to N/N^\circ \to 0$$
which means we can reduce to two separate cases: the discrete case and the connected case.

First suppose $N$ is discrete.  Arguing using bar resolutions and the Postnikov tower of the $X_i$ as in the proof of Lemma \ref{cohchausx} we reduce to showing that $N$-cohomology on compact Hausdorff spaces sends inverse limits of such spaces to filtered colimits; but this follows by comparison with sheaf cohomology, \ref{cohchaus}.

Now suppose $N$ is connected.  Then we have a natural short exact sequence
$$0 \to \Lambda_N \to \on{Lie}(N) \to N \to 0$$
with $\Lambda_N$ discrete, so we reduce to the case where $N$ is a finite dimensional $\mbb{R}$-vector space.  But in that case by Lemma \ref{cohchausx} we reduce to the claim that
$$N^{\pi_1X} = \cup_i N^{\pi_1X_i},$$
which again follows from the facts recalled above about inverse limits of compact Hausdorff groups.
\end{proof}

\begin{theorem}\label{proclie}
Let $\on{CLieAn}_{<\infty}$ denote the full subcategory of $\on{CondAn}$ spanned by the $Y$ such that:
\begin{enumerate}
\item $\pi_0Y=\ast$;
\item $\pi_nY$ is a compact Lie group for all $n\geq 1$;
\item $Y$ is $d$-truncated for some $d<\infty$.
\end{enumerate}
Then the functor
$$\on{Pro}(\on{CLieAn}_{<\infty})\to \on{CondAn}$$
sending a pro-object in $\on{CLieAn}_{<\infty}$ to its inverse limit in $\on{CondAn}$ is fully faithful, and the essential image consists of those condensed anima $X$ such that $\pi_0X=\ast$ and $\pi_nX$ is compact Hausdorff for all $n\geq 1$.
\end{theorem}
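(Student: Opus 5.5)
Fully faithfulness and the essential image are handled separately; both lean on Lemma \ref{invlimchaus}.

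\emph{Fully faithfulness.} Let $(X_i)_{i\in I}$ and $(Y_j)_{j\in J}$ be pro-objects in $\on{CLieAn}_{<\infty}$. By definition,
$$\on{Map}_{\on{Pro}}((X_i),(Y_j)) = \varprojlim_j \varinjlim_i \on{Map}(X_i,Y_j).$$
Maps into an inverse limit are the inverse limit of the maps, so
$$\on{Map}(\varprojlim_i X_i,\varprojlim_j Y_j)=\varprojlim_j \on{Map}(\varprojlim_i X_i, Y_j).$$
It therefore suffices to show that $\varinjlim_i\on{Map}(X_i,Y_j)\to\on{Map}(\varprojlim_iX_i,Y_j)$ is an equivalence for each fixed $j$. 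This is exactly Lemma \ref{invlimchaus} part 3: compact Lie groups are Lie groups and $Y_j$ is truncated, and the $X_i$ satisfy the hypotheses ($\pi_0=\ast$ and compact Hausdorff homotopy groups).

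\emph{Essential image, necessity.} Inverse limits of objects in $\on{CLieAn}_{<\infty}$ have $\pi_0=\ast$ and compact Hausdorff $\pi_n$ by Lemma \ref{invlimchaus} part 1.

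\emph{Essential image, sufficiency.} Let $X$ satisfy $\pi_0X=\ast$ and $\pi_nX\in\on{CHaus}$. Postnikov towers converge, so $X=\varprojlim_d\tau_{\leq d}X$, and it suffices to show that each $\tau_{\leq d}X$ is a filtered limit of objects of $\on{CLieAn}_{<\infty}$. The essential image is closed under countable, hence filtered, limits of such: by fully faithfulness the tower $\tau_{\leq d}X$ lifts to a tower of pro-objects, and a limit of pro-objects is again a pro-object whose realization is the limit.

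Now argue by induction on $d$.
\begin{itemize}
\item For $d=1$ we have $X=BG$ with $G$ compact Hausdorff. By the pro-Lie lemma, $G=\varprojlim G/N$ over $N\in\mc{N}$, and then $BG=\varprojlim BG/N$ by the argument in Lemma \ref{nonabrlim} (compatible torsors glue).
\item For the inductive step, suppose $X'=\tau_{\leq d}X$ is written as $\varprojlim_i X'_i$ with $X'_i\in\on{CLieAn}_{<\infty}$. Let $A=\pi_{d+1}X$, a compact abelian group with $G/G^\circ$-action. By Lemma \ref{postchaus}, $\tau_{\leq d+1}X$ is classified by an equivariant $k$-invariant $\kappa: X'^\circ\to K(A,d+2)$. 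By Lemma \ref{cohchauscoeff}, $\kappa$ is a class in $H^{d+2}(X';A)$.
\item Write $A=\varprojlim A/B$ with $A/B$ compact Lie. A $G$-stable cofinal family of such $B$ exists: the action factors through the totally disconnected compact group, so one can average over finite quotients. Then
$$H^{d+2}(X';A)\to\varprojlim H^{d+2}(X';A/B),$$
using derived Pontryagin duality and Corollary \ref{rlim} to control $\varprojlim^1$.
\item Each image class in $H^{d+2}(X';A/B)$ descends, by Lemma \ref{invlimchaus} part 3, to some $X'_i$ on which the $\pi_1$-action on $A/B$ is defined. This yields $Y_{i,B}=\on{fib}$ over $X'_i$ lying in $\on{CLieAn}_{<\infty}$, and these form a filtered system whose limit is $\tau_{\leq d+1}X$.
\end{itemize}

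The main obstacle is this last step: making the choices $(i,B)$ into a genuine filtered system rather than merely a compatible family up to homotopy. I would handle it by working directly in the pro-category. Define the pro-object as the comma-category-indexed system of all pairs consisting of an object $Y\in\on{CLieAn}_{<\infty}$ and a map $X\to Y$. Show this category is cofiltered using the descent of maps and of finite limits, since $\on{CLieAn}_{<\infty}$ is closed under finite limits up to the $\pi_0$ condition. Then check, via Lemma \ref{invlimchaus} part 2 and the construction above, that its limit recovers $X$ on homotopy groups.
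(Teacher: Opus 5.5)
Up to your last bullet this is the paper's proof. That covers full faithfulness and necessity from Lemma \ref{invlimchaus}, the reduction to truncated $X$, and the pro-Lie base case. It also covers writing $A=\varprojlim A/B$ with $A/B$ compact Lie and action through a finite stage, and descending the pushed-forward $k$-invariants to some $X'_i$ by Lemma \ref{invlimchaus}(3).

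The paper then simply asserts (``varying over $j$'') that $X$ is the limit of the resulting extensions. So your coherence worry is legitimate, and using the comma category $\mc{I}=(X\downarrow\on{CLieAn}_{<\infty})$ instead is a sound idea. $\mc{I}$ is indeed cofiltered: finite limits of objects of $\on{CLieAn}_{<\infty}$ are truncated with compact Lie $\pi_n$ at every basepoint, and you pass to the component receiving $X$.

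\textbf{The gap is in your final check.} Put $L:=\varprojlim_{\mc{I}}Y$. Lemma \ref{invlimchaus}(2) gives $\pi_nL=\varprojlim_{\mc{I}}\pi_nY$, and the objects $X\to Y_{i,B}$ of $\mc{I}$ show that $\pi_nX\to\pi_nL$ is injective. Surjectivity, however, needs that for every object $X\to Y$ of $\mc{I}$, the intersection $\bigcap_{Y'\to Y}\on{im}(\pi_nY'\to\pi_nY)$ over morphisms in $\mc{I}$ equals $\on{im}(\pi_nX\to\pi_nY)$. That amounts to a cofinality statement: every $X\to Y$ must factor through objects like the $Y_{i,B}$. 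Knowing only that some $Y_{i,B}$ lie in $\mc{I}$ does not give this, and ``the construction above'' does not supply it. As written, the check does not go through.

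\textbf{A cleaner repair}, which makes the $Y_{i,B}$ unnecessary, runs as follows.
\begin{enumerate}
\item By the co-Yoneda fact from the proof of Proposition \ref{profcomp}, together with Lemma \ref{invlimchaus}(3), the map $r:X\to L$ induces $\on{Map}(L,Z)\simeq\on{Map}(X,Z)$ for all $Z\in\on{CLieAn}_{<\infty}$.
\item Apply this to the $X'_i$ and use the inductive hypothesis $X'=\varprojlim X'_i$. This gives $s':L\to X'$ such that $s'r$ is the truncation map.
\item Apply it to $K(A/B,m)_{hQ}$ and $BQ$, where $Q$ is a finite group through which $G$ acts on $A/B$. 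Then pass to $A=\varprojlim A/B$ using Corollary \ref{rlim}. This gives $r^*:R\Gamma(L;A)\simeq R\Gamma(X;A)$, with coefficients pulled back along $s'$.
\item The class $s'^*\kappa$ therefore vanishes, since $r^*$ kills it. So $s'$ lifts to $s:L\to X$. Using $H^{d+1}(L;A)\simeq H^{d+1}(X;A)$, normalize $s$ so that $sr\simeq\on{id}_X$.
\item Finally $rs\simeq\on{id}_L$. Indeed $r^*:\on{Map}(L,L)\to\on{Map}(X,L)$ is an equivalence (write $L$ as the limit of the $Y$), and it sends both maps to $r$.
\end{enumerate}
Hence $X\simeq L$ lies in the essential image.
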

\begin{proof}
Fully faithfulness is immediate from Lemma \ref{invlimchaus}, as is the fact that every object $X$ in the essential image satisfies the stated properties.  Thus we need to show that any $X$ satisfying the stated properties lies in the essential image.  The essential image is closed under filtered inverse limits by the fully faithfulness, so we can reduce to the case of $d$-truncated $X$, which in turn we will handle by induction.  When $d=1$ this follows from the fact (recalled above) that every compact Hausdorff group is a filtered inverse limit of compact Lie groups.  Assume now $X$ is $d+1$-truncated and that $\tau_{\leq d}X = \varprojlim_i C_i$ with $C_i\in \on{CLieAn}_{<\infty}$.  By Lemma \ref{invlimchaus} we can assume each $C_i$ is $d$-truncated, and if $G$ denotes the fundamental group of $X$ and $G_i$ for $i\in I$ denotes the fundamental group of $X_i$, then
$$G = \varprojlim_i G_i.$$
Now, let
$$(\tau_{\leq d}X)^\circ \to K(A,n+2)$$
denote the k-invariant for $X$, from Lemma \ref{postchaus}, so this is a $G/G^\circ$-equivariant map with $A$ a compact Hausdorff abelian group with $G/G^\circ$-action.  The Pontryagin dual $A^\vee$ is a discrete abelian group with $G/G^\circ$-action, hence it can be written as a filtered union
$$A^\vee = \cup_j D_j$$
where each $D_j$ is a finitely generated abelian group with continuous $G/G^\circ$-action that factors through to a $G_i/G_i^\circ$-action for some $i$ (depending on $j$).  Applying Pontryagin duality, we get that
$$A = \varprojlim_j A_j$$
where $A_j$ is a compact abelian Lie group with $G_i/G_i^\circ$-action.  For each $j$, the composition
$$(\tau_{\leq d}X)^\circ \to K(A,n+2) \to K(A_j,n+2),$$
which is equivalently an $(n+2)$-cocycle on $X$ for the twisted coefficient system determined by $A_j$, factors through a $G_{i'}/G_{i'}^\circ$-equivariant map $C_{i'}^\circ \to K(A_j,n+2)$ for some $i'>i$ by Lemma \ref{invlimchaus}.  Varying over $j$, we deduce that $X$ is an inverse limit over extensions of $C_{i'}$ by $K(A_j,n+1)$, finishing the proof.
\end{proof}

One advantage of reducing to this sort of ``finite type'' situation is that we get a clearer handle on discrete coefficient systems.

\begin{lemma}\label{lietoan}
Suppose $C\in \on{CondAn}$ is such that $\pi_0C=\ast$ and $\pi_nC$ is a Lie group for all $n\geq 1$.  Then there is an initial anima $|C|$ equipped with a map
$$C\to |C|.$$

It is uniquely characterized by the following properties:
\begin{enumerate}
\item $\pi_0|C|=\ast$;
\item $\pi_1C \to \pi_1|C|$ identifies $\pi_1|C|$ with the component group of the Lie group $\pi_1C$;
\item for all discrete abelian groups $M$ with $\pi_1|C|$-action, the map
$$H^i(|C|;M) \to H^i(C;M)$$
is an isomorphism for all $i\geq 0$.
\end{enumerate}

We moreover have that discrete coefficient systems over $C$ (that is, discrete anima with an action of the condensed $\Omega C$) are the same as over $|C|$.

\end{lemma}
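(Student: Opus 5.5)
The plan is to construct $|C|$ by replacing each homotopy group of $C$ with its component group. I would do this in the style of a ``shape'' functor and proceed by induction on the Postnikov tower. The base case is $C=BG$ with $G$ a Lie group. There I set $|BG| := |G|_{\mathrm{cl}}$, the classical classifying anima of the Lie group $G$ in the sense of algebraic topology. The map $BG\to |BG|$ is obtained by applying the bar construction levelwise: each $G^n$ maps to its underlying homotopy type $|G^n|$. That map exists because a manifold is locally contractible, so the condensed set $G^n$ maps to its shape as computed by sheaf cohomology.

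Property 2 for $BG$ is $\pi_1|BG|=\pi_0G$. Property 3 with trivial coefficients is Lemma~\ref{cohliegp}(1). For a coefficient module $M$ with $\pi_0G$-action, I would pass to the finite-or-discrete cover $BG^\circ\to BG$ with fiber $\pi_0G$. Then I use descent together with Lemma~\ref{cohliegp}(1) applied to $G^\circ$, where the coefficients become constant. Alternatively, the bar resolution plus Theorem~\ref{cohchaus}(1) on each $G^n$ handles general $M$ directly.

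For the inductive step, consider the fiber sequence $K(\pi_nC,n)\to C\to\tau_{\le n-1}C$. The fiber $K(A,n)$, with $A$ an abelian Lie group, is handled by the same bar argument. I would define $|C|$ as the corresponding extension of $|\tau_{\le n-1}C|$. To construct it, I classify the fibration via Lemma~\ref{postchaus}-style $k$-invariants $C^\circ$-equivariantly. The key input is that the $k$-invariant with coefficients in $\pi_0A$, together with the $\pi_0$-twisted discrete parts, lives in discrete cohomology. By the inductive hypothesis (property 3), that cohomology is the same on $\tau_{\le n-1}C$ and on $|\tau_{\le n-1}C|$, so the $k$-invariant transfers. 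For the connected part $A^\circ=\mathrm{Lie}(A^\circ)/\Lambda$, its homotopy type is $K(\Lambda,n+1)$, so it contributes a discrete $\pi_{n+1}$ rather than $\pi_n$. This is where the bookkeeping is delicate, because $|C|$ is not a termwise $\pi_0$ of homotopy groups. I would therefore avoid building $|C|$ by hand and instead define it abstractly.

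The abstract definition is $|C| := \varinjlim$ over the tower, or more cleanly as the left adjoint to the inclusion of discrete anima (constant condensed anima) into condensed anima over the relevant range. Concretely, $|C|$ represents $\mathrm{Map}(C,Y)$ for $Y$ a discrete anima. Existence and properties 1--3 then reduce to showing that maps $C\to Y$ with $Y$ a truncated discrete anima are computed by $|C|$. By Postnikov induction on $Y$, this reduces to cohomology with discrete twisted coefficients, which is exactly property 3. Uniqueness follows by Whitehead: two candidates satisfying 1--3 have isomorphic $\pi_1$ and isomorphic twisted cohomology, so they are equivalent as anima. The final claim, that discrete coefficient systems over $C$ and over $|C|$ agree, follows from the representing property applied to the discrete target $Y=\mathrm{An}$-valued classifiers, truncated appropriately.

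The main obstacle is verifying property 3 for twisted discrete coefficients in the base case and in the fiber case. Both rest on showing that the identity component contributes cohomology only through its classical homotopy type, via Theorem~\ref{cohchaus}(1) applied to the manifolds $G^n$.
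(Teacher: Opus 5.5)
There is a genuine gap: the proposal never constructs $|C|$ once $C$ has homotopy above degree $1$. Your base case ($BG$ and $K(A,n)$, via the levelwise bar construction and the shapes of the manifolds $G^n$) is fine. The trouble is the ``abstract definition'' of $|C|$ as the value of a left adjoint to $\on{An}\to\on{CondAn}$, which is circular. No such left adjoint exists: the inclusion does not preserve infinite products, since the product in $\on{CondAn}$ of countably many discrete two-point sets is the Cantor set, which is not discrete. For objects like $B\mbb{Z}_p$ or $K(\mbb{Z}_p,2)$ there is genuinely only a pro-anima, and no initial anima at all. So corepresentability of $Y\mapsto\on{Map}(C,Y)$ on discrete $Y$ is exactly the content of the lemma.

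Your Postnikov induction on $Y$ proves only the easy direction. It shows that a map $C\to Z$ to an anima \emph{already known} to satisfy 1--3 is initial, which is the first sentence of the paper's proof. Invoking property 3 before any $Z$ exists is circular. Note also that this step uses no hypothesis on $\pi_{\geq 2}C$, so it would equally ``prove'' the false statement for $K(\mbb{Z}_p,2)$.

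Your abandoned by-hand induction has a real flaw too, not just messy bookkeeping. The $k$-invariant lives in $H^{n+1}(\tau_{\le n-1}C;A)$ with $A$ a (twisted) Lie group, and keeping only its $\pi_0A$-component loses essential information. For instance, let $C=\on{fib}(BSU(2)\to K(\mbb{R}/\mbb{Z},3))$ for a generator of $H^3(BSU(2);\mbb{R}/\mbb{Z})\simeq H^4(BSU(2);\mbb{Z})\simeq\mbb{Z}$. The $k$-invariant has zero $\pi_0$-component. Yet $|C|$ is the fiber of a generator $\mbb{H}\mbb{P}^\infty\to K(\mbb{Z},4)$, which has $\pi_3=0$, unlike $\mbb{H}\mbb{P}^\infty\times K(\mbb{Z},3)$.

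The idea you are missing is a setting in which the shape is automatically an honest anima. The paper lifts $C$ (uniquely) to the $\infty$-topos of sheaves on manifolds with the open cover topology. Since manifolds are locally contractible, the constant-sheaf functor there has a left adjoint. That left adjoint produces $|C|$, properties 1--3, and the statement about discrete coefficient systems. The lift itself is built by Postnikov induction, transporting the \emph{full} $k$-invariants. This works because cohomology with abelian Lie group coefficients agrees for sheaves on manifolds and for condensed anima, by devissage to discrete and real-vector-space coefficients plus Theorem~\ref{cohchaus}.

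Alternatively, you could repair your own induction. Compose the whole $k$-invariant with $K(A,n+1)\to|K(A,n+1)|$, factor through $|\tau_{\le n-1}C|$ by the inductive universal property, and define $|C|$ as the fiber. You would then have to verify property 3 by a Serre spectral sequence comparison using $\Omega|K(A,n+1)|\simeq|K(A,n)|$, with the twisting by $\pi_1C$ handled. That argument is what the proposal would need to supply.
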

\begin{proof}
It is clear from a Postnikov tower argument that any $C\to |C|$ satisfying 1-3 will be the initial anima with a map from $C$.  Thus it suffices to show existence of $C\to |C|$ satisfying 1,2,3 and the ``moreover".

Let us show that any such $C$ lifts (uniquely!) to the $\infty$-topos of sheaves on real topological manifolds with respect to the open cover topology.  This easily implies the desired claim using that manifolds are locally contractible, c.f.\ \cite{clausen} Section 5.  For the lifting claim, working inductively on the Postnikov tower, it suffices to show that cohomology with abelian Lie group coefficient systems is the same for a sheaf on manifolds as for its underlying condensed anima.  But this is follows from Lemma \ref{cohchaus} (and its more obvious analog for sheaves on manifolds), using the usual devissage to reduce to discrete coefficients and real vector space coefficients.\end{proof}

 \begin{remark}\label{prodiscrete}
 Combining, we deduce that if $X$ is a condensed anima with $\pi_0X=\ast$ and $\pi_nX$ compact Hausdorff for $n\geq 1$, then we get a pro-system of anima $(A_i)_{i\in I}$ with a map
$$X \to \varprojlim_i A_i$$
which is pro-universal with respect to maps to discrete anima; moreover we can choose the $A_i$ such that each $\pi_1 A_i$ is a quotient of $\pi_1X$ by an open normal subgroup (in particular, $\pi_1A_i$ is finite) and each higher $\pi_nA_i$ is a finitely generated abelian group.  The same conclusion applies if $\pi_1X$ is only almost compact with quotient $\mbb{R}$, as the extra $\mbb{R}$ does not affect maps to anima, c.f.\ the proof of Lemma \ref{cohachaus}.

Namely, we can take $A_i = |C_i|$ where $X=\varprojlim_i C_i$ with $C_i\in\on{CLieAn}_{<\infty}$ as in Theorem \ref{proclie}.
\end{remark}

\begin{remark}\label{homotopyofanima}
If $X,Y$ both satisfy the hypotheses of Lemma \ref{lietoan} ($\pi_0=\ast$ and $\pi_n$ is Lie for $n\geq 1$), then so does $X\times Y$, and
$$|X\times Y|\overset{\sim}{\rightarrow} |X|\times |Y|.$$
This follows easily from the proof of Lemma \ref{lietoan}, compare with the discussion in \cite{clausen} Section 5.

It follows from this that if $G$ is a group object satisfying the hypothesis in Lemma \ref{lietoan} acting on $X$, then
$$|X/G|=|X|/|G|.$$
Fixing a basepoint of $X$ and letting $G=\pi_1X$, we deduce
$$|X| = |\tau_{\geq 2}X|/|G|,$$
so that applying $|\cdot|$ to the first stage of the Postnikov tower of $X$ still yields a fiber sequence
$$|\tau_{\geq 2}X| \to |X| \to |BG|.$$
This reduces the study of $|X|$ to that of $|BG|$ and that of $X$ with $\pi_1X=\{1\}$.

This $|BG|$ is just the underlying homotopy type of the classical classifying space of the Lie group $G$ from algebraic topology.  As for $|X|$ when $X$ satisfies $\pi_1X=\{1\}$, it can be understood in terms of the Postnikov tower, based on the following: if $A$ is an abelian Lie group, then there is a canonical exact sequence
$$ 0 \to \Lambda_A \to \on{Lie}(A) \to A \to A/A^\circ \to 0$$
with $\Lambda_A$ a free $\mbb{Z}$-module of finite rank, $A/A^\circ$ a discrete abelian group, and $\on{Lie}(A)$ a finite-dimensional $\mbb{R}$-vector space.  Moreover
$$0\to \on{Lie}(A)/\Lambda_A \to A\to A/A^\circ \to  0$$
is (noncanonically) split.  Comparing cohomology using Serre spectral sequences, it follows that we have a canonical fiber sequence
$$K(\Lambda_A,n+1) \to |K(A,n)| \to K(A/A^\circ,n)$$
and that noncanonically $|K(A,n)|\simeq K(\Lambda_A,n+1) \times K(A/A^\circ,n)$.  Returning to general $X$ such that $\tau_{\leq 1}X=\ast$ and $\pi_iX$ is a Lie group for all $i\geq 2$, then using the Postnikov tower (which still yields fiber sequences on $|\cdot|$ by a similar argument to the above) we deduce natural short exact sequences
$$0 \to \pi_{n+1}X/(\pi_{n+1}X)^\circ \to \pi_{n+1}|X| \to \Lambda_{\pi_nX} \to 0$$
for $n\geq 1$ which are noncanonically split.
\end{remark}

Continuing the theme, we can furthermore pass to the profinite completion.

\begin{proposition}\label{profcomp}
Let $X\in\on{CondAn}$ with $\pi_0X=\ast$ (for simplicity).  Then $X$ admits a profinite completion, namely a map
$$X\to \wh{X}$$
in $\on{CondAn}$ such that:
\begin{enumerate}
\item $\pi_0\wh{X}=\ast$, and $\pi_n\wh{X}$ is a profinite group for all $n\geq 1$;
\item If $Y\in\on{CondAn}$ is such that $\pi_0Y=\ast$ and $\pi_nY$ is a profinite group for all $n\geq 1$, then
$$\on{Map}(\wh{X},Y)\overset{\sim}{\rightarrow}\on{Map}(X,Y).$$
\end{enumerate}
\end{proposition}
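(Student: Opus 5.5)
Since the statement puts no hypothesis on $\pi_nX$ for $n\geq 1$, the plan is to build $\wh{X}$ as a limit over all maps from $X$ to profinite targets, rather than via Theorem \ref{proclie}. Let $\mc{C}$ be the full subcategory of $\on{CondAn}$ spanned by the $\pi$-finite discrete anima $F$, meaning $\pi_0F=\ast$, each $\pi_nF$ is finite, and $F$ is truncated. Set
$$\wh{X} := \varprojlim_{(F,\,X\to F)\in (\mc{C}_{X/})^{op}} F,$$
the limit over the comma category of maps from $X$ into objects of $\mc{C}$. There are two things to check. First, this indexing category is cofiltered, at least after passing to an essentially small cofinal subcategory. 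Second, the two properties in the statement hold.

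For cofilteredness, $\mc{C}$ is closed under finite limits of connected objects, in the sense that pullbacks $F\times_{F''}F'$ are $\pi$-finite, though possibly disconnected. Given two maps out of $X$, I would replace such a fiber product by the connected component containing the image of $X$. This component is well defined because $\pi_0X=\ast$ and $\ast$ is projective, so $X$ has a homotopy-unique point. The same device handles equalizers of two parallel maps under $X$. For smallness, note that $\mc{C}$ is essentially small, and each mapping anima $\on{Map}(X,F)$ is small because $X$ is a small colimit of representables.

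For property 1, the key point is that $\wh{X}$ is a cofiltered limit of objects whose homotopy groups are finite, hence profinite and compact Hausdorff. Lemma \ref{invlimchaus}, parts 1 and 2, then gives $\pi_0\wh{X}=\ast$ and $\pi_n\wh{X}=\varprojlim \pi_nF$. The latter is a cofiltered limit of finite groups, so it is profinite.

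For property 2, I first reduce to $\pi$-finite $Y$. Any $Y$ as in the statement has a Postnikov tower with limit $Y$, and it is enough to treat each truncation $\tau_{\leq d}Y$. Since $\pi_1Y$ is profinite and the higher $\pi_n$ are profinite modules, I would write $\tau_{\leq d}Y$ as a cofiltered limit of $\pi$-finite anima. This follows from Theorem \ref{proclie}, because a profinite $Y$ is pro-(compact Lie) with finite quotients; concretely, the compact Lie groups that appear can be chosen finite, as in Remark \ref{prodiscrete}. Both sides of the map in property 2 convert limits in $Y$ into limits, so this reduces to $Y=F$ $\pi$-finite. In that case $\on{Map}(\wh{X},F)=\varinjlim \on{Map}(F_i,F)$ by Lemma \ref{invlimchaus}, part 3, since $F$ is truncated with finite (hence Lie) homotopy groups. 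By the usual cofinality argument for the limit defining a pro-completion, this colimit equals $\on{Map}(X,F)$.

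The step I expect to be the main obstacle is the reduction of a general profinite $Y$ to the $\pi$-finite case. There I need that $Y=\varprojlim F_j$ with $F_j$ $\pi$-finite, and I have to check that the construction in Theorem \ref{proclie} can be run with finite quotients throughout. That is plausible, because a profinite group has a basis of open normal subgroups, and the Pontryagin dual of a profinite $A$ is torsion, so it is a union of finite submodules $D_j$, each stable under an open subgroup.
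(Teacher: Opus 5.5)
Your proposal is correct and follows essentially the same route as the paper. Both define $\wh{X}$ as the cofiltered limit over connected $\pi$-finite anima under $X$, use Lemma \ref{invlimchaus} to get property 1 and $\on{Map}(\wh{X},B)\simeq\varinjlim\on{Map}(A,B)$ for $\pi$-finite $B$, identify this colimit with $\on{Map}(X,B)$ by a formal cofinality argument, and handle general profinite $Y$ by writing it as a limit of connected $\pi$-finite anima via the proof of Theorem \ref{proclie}. The paper simply asserts that last point, and your check that the construction runs with finite quotients is exactly what is needed: Lie quotients of profinite groups are finite, and the finitely generated submodules of the torsion dual $A^\vee$ are finite, with action factoring through a finite quotient (an open subgroup acts trivially, which is what you need rather than mere stability).
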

\begin{proof}
The $\infty$-category of all connected $\pi$-finite anima $A$ equipped with a map $X\to A$ is essentially small and co-filtered (indeed, if we take all $\pi$-finite anima $A$ with a map $X\to A$, then pullbacks imply this is co-filtered; but the connected $A$ form a cofinal sub-collection, as we can replace $f:X\to A$ by $f:X\to\on{im}(f)$).  Then we define $\wh{X}$ to be the resulting filtered inverse limit
$$\wh{X} := \varprojlim_{X\to A} A.$$
By Lemma \ref{invlimchaus}, if $B$ is any connected $\pi$-finite anima we have
$$\varinjlim_{X\to A} \on{Map}(A,B)\overset{\sim}{\rightarrow} \on{Map}(\wh{X},B).$$
On the other hand the composition
$$\varinjlim_{X\to A}\on{Map}(A,B) \to \on{Map}(X,B)$$
is also an isomorphism, as an instance of the following general categorical fact: if $\mc{C}$ is an $\infty$-category and $F:\mc{C}\to\on{An}$ is a functor, then for all $B\in\mc{C}$ the natural map
$$\varinjlim_{A\in\mc{C},\xi\in F(A)}\on{Map}(A,B)\to F(B)$$
is an isomorphism.  (Writing $F$ as a colimit of functors of the form $\on{Map}(A_0,-)$, we reduce to that case, when the indexing category for the colimit has terminal object given by $A=A_0$ and $\xi=\on{id}$.)  Combining, we deduce that
$$\on{Map}(\wh{X},B)\overset{\sim}{\rightarrow}\on{Map}(X,B)$$
for any connected $\pi$-finite anima $B$.  The same necessarily holds when $B$ is any inverse limit of connected $\pi$-finite anima.  But the proof of Theorem \ref{proclie} shows that any $Y$ as in 2 is an inverse limit of connected $\pi$-finite anima.  Thus we deduce that 2 holds for $X\to\wh{X}$.  By Lemma \ref{invlimchaus} we also have that $\wh{X}$ satisfies 1, so we conclude the proof.
\end{proof}

In particular, if $X$ satisfies $\pi_0X=\ast$ and $\pi_nX\in\on{CHaus}$ for $n\geq 1$, then if we write
$$X = \varprojlim_i C_i$$
with $C_i\in \on{CLieAn}_{<\infty}$ (Theorem \ref{proclie}), then
$$\wh{X} = \varprojlim_i \wh{|C_i|}$$
where $\wh{|C_i|}$ is the profinite completion of the usual anima $|C_i|$.  Since $\pi_1|C_i|$ is finite and $\pi_n|C_i|$ is finitely generated for all $i\geq 2$, we have that $\pi_1\wh{|C_i|}=\pi_1|C_i|$ and
$$\pi_n\wh{|C_i|} = (\pi_n|C_i|)\otimes\wh{\mbb{Z}}$$
for $n\geq 2$.

We can view an object $C\in\on{CondAn}$ with $\pi_0C=\ast$ and $\pi_nC$ a compact Lie group for all $n\geq 1$ as some kind of differential refinement of its homotopy type $|C|$ and ask, what kinds of more standard geometric structures on a connected topological space will give rise to such a refined homotopy type?

More precisely, we recall (c.f.\ \cite{scholzeanalytic} Lemma 11.9) that if $X$ is, e.g.\ a topological space homotopy equivalent to a CW-complex, then its associated condensed set $\ul{X}$ admits an initial map to an anima $|X|$, and this anima is the usual homotopy type of $X$.  One can ask what structure on a connected $X$ will naturally produce a $C$ as above and a map
$$\ul{X} \to C $$
which induces an isomorphism on $|\cdot|$.

\begin{example}\label{riemannian}
Let us consider the simplest nontrivial example, that of an $n$-sphere $S^n$ with $n\geq 3$ odd, viewed as a condensed set. The homotopy groups of the anima $
|S^n|$ are $\ast$ in degrees $<n$, $\mbb{Z}$ in degree $n$, and finite in degrees $>n$.  Comparing with Remark \ref{homotopyofanima}, we deduce that our desired $C$ should have the same homotopy groups as $|S^n|$ except in degrees $n-1$ and $n$, where we should have $\pi_{n-1}C = \mbb{R}/\mbb{Z}$ and $\pi_nC=0$.

I claim we can build a canonical such $C$ plus a map $S^n \to C$ inducing an isomorphism on $|\cdot|$ given any choice of smooth structure and Riemannian metric on $S^n$ (or really, just the associated volume form).  Let $\on{or}$ denote the orientation local system on $S^n$ (it is trivial, but never mind that).  As is well-known, the Riemannian metric gives rise to a canonical volume form, which is a global section of the sheaf
$$\Omega^n\otimes\on{or}$$
on $S^n$.  This sheaf canonically maps to the shifted twisted de Rham complex $\Omega^\bullet\otimes\on{or}[n]$, and even to
$$\on{Fib}(\Omega^\bullet\otimes\on{or}[n] \to \Omega^0\otimes\on{or}[n]).$$
Let use denote by $\mbb{R}^\delta$ (resp.\ $\mbb{R}$) the sheaf of locally constant (resp.\ continuous) real-valued functions on $S^n$; these sheaves are represented in condensed sets by the indicated condensed abelian groups.  Then up to canonical quasi-isomorphism, $\Omega^\bullet\otimes\on{or}[n]$ identifies with $\mbb{R}^\delta\otimes\on{or}[n]$ (by the Poincar\'{e} Lemma), and on the other hand $\Omega^0\otimes\on{or}[n]$ maps to $\mbb{R}\otimes\on{or}[n]$ because smooth functions yield continuous functions.     Combining with our global section of $\Omega^n\otimes \on{or}$, we deduce a global section of $\mbb{R}^\delta\otimes \on{or}[n]$ together with a trivialization of the induced global section of $\mbb{R}\otimes\on{or}[n]$.

Thus, passing to condensed anima, we have a section of the family of Eilenberg-Maclane spaces $K(\mbb{R}^\delta,n)$ over $S^n$ twisted by the orientation sheaf, together with a nullhomotopy of the induced section of $K(\mbb{R},n)$.  This gives rise to the desired data.  Indeed, if we forget the twists for simplicity, then we get
$$S^n \to K(\mbb{R}^\delta,n) \to K(\mbb{R},n)$$
with null composition; as $\mbb{R}^\delta$ is a discrete abelian group, $K(\mbb{R}^\delta,n)$ is a discrete anima, so the first map uniquely factors through $|S^n|$, and then we can set
$$C = \on{fib}(|S^n| \to K(\mbb{R},n)),$$
which receives a map from $S^n$ by the nullhomotopy of $S^n\to K(\mbb{R},n)$.

If $G$ is a compact Lie group acting freely on $S^n$ by isometries (or, just preserving the volume form), then since the above was canonical we get an induced
$$S^n/G \to C/G$$
which provides a solution to our problem for $X=S^n/G$ as well.  For example, when $n=3$ we can take
$$G = \on{Pin}^-(2) = U(1)\cup j\cdot U(1)\subset\mbb{H}^\times$$
(which is the norm-1 subgroup of $W_{\mbb{R}}$) acting by left multiplication on the unit quaterions $S^3$ with its canonical invariant metric and in this way we get a canonical ``compact homotopy type'' for $X=\mbb{R}\mbb{P}^2$.
\end{example}

\section{Moore anima}\label{mooresection}

We set ourselves the following problem: for an almost compact group $G$ (Definition \ref{acomp}), we want to modify the condensed anima $BG$ by adding higher homotopy in order to simplify the cohomology.  More precisely, we make the following definition.

\begin{definition}\label{moore}
Let $G$ be a locally compact Hausdorff group.  A \emph{Moore anima} for $G$ is a condensed anima $X$ equipped with a map $t: X \to BG$ satisfying the following properties:
\begin{enumerate}
\item The map $t$ induces an isomorphism $\tau_{\leq 1}X \simeq BG$.
\item For all $i\geq 2$, the homotopy (condensed) abelian group $\pi_i X$ is compact Hausdorff.
\item We have
$$H^i(X^\circ;\mbb{R}/\mbb{Z})=0$$
as a condensed abelian group for all $i\geq 2$, where (see Definition \ref{weirdunivcov}) $X^\circ=X\times_{BG}BG^\circ$.
\end{enumerate}
\end{definition}

\begin{remark}
Our main case of interest is when $G$ is almost compact; then the condensed abelian group $H^i(X^\circ;\mbb{R}/\mbb{Z})$ is discrete for $i\geq 2$ by Corollary \ref{cohrmodz}, so in 3 above, vanishing in the condensed sense and vanishing in the ordinary sense are equivalent.  This discreteness is also what motivates the requirement that the $\pi_iX$ be compact Hausdorff, as we will eventually see that the homotopy groups are determined as Pontryagin dual to such cohomology groups, e.g.\
$$\pi_2 X = H^3(BG^\circ;\mbb{R}/\mbb{Z})^\vee.$$
\end{remark}

\begin{example}
Suppose $G$ is totally disconnected.  Then $BG^\circ=\ast$, and it follows that $BG$ itself (or, more precisely, the identity  map $BG \to BG$) is already a Moore anima for $G$ in this sense.  In fact the $\infty$-groupoid of Moore anima for $G$ is contractible, as we will see from the general analysis.  Thus the theory in this section is trivial in the totally disconnected case.
\end{example}

The method for producing Moore anima (or more precisely, for analyzing the $\infty$-groupoid of Moore anima) will be via the familiar procedure of \emph{obstruction theory} from homotopy theory, based on the Postnikov tower.  In this we follow \cite{smith} on equivariant Moore spaces, but transported to the condensed context.

We start with a truncated variant of Definition \ref{moore}.

\begin{definition}\label{tmoore}
Let $G$ be a locally compact Hausdorff group and $n\geq 1$.  An \emph{$n$-stage Moore anima} for $G$ is a condensed anima $X$ equipped with a map $t: X \to BG$ satisfying the following properties:
\begin{enumerate}
\item $X$ is $n$-truncated: $X\overset{\sim}{\rightarrow} \tau_{\leq n}X$.
\item The map $t$ induces an isomorphism $\tau_{\leq 1}X \simeq BG$.
\item For all $i\geq 2$, the homotopy (condensed) abelian group $\pi_i X$ is compact Hausdorff.
\item Letting $X^\circ = X\times_{BG}BG^\circ$, we have $H^i(X^\circ;\mbb{R}/\mbb{Z})=0$ when $n+1\geq i>1$.
\end{enumerate}
\end{definition}

Formally, we can allow $n=\infty$: an $\infty$-stage Moore anima for $G$ is just a Moore anima for $G$ in the sense of Definition \ref{moore}.

\begin{remark}
Naively, it would seem more natural to take the weaker version of 4 where you only ask cohomology vanishing up to degree $n$, not $n+1$.  But the above form of the definition makes the obstruction theory much simpler.
\end{remark}

\begin{proposition}\label{moorevstmoore}
Let $G$ be a locally compact Hausdorff group.

\begin{enumerate}
\item Let $1 \leq n <\infty$, and let $Y\to BG$ be an $n+1$-stage Moore anima for $G$.  Then $X:=\tau_{\leq n}Y \to BG$ is an $n$-stage Moore anima for $G$.
\item Let $t:X \to BG$ be a map of condensed anima.  Then $t$ is a Moore anima for $G$ if and only if for all $1\leq n<\infty$, the induced map $\tau_{\leq n}X \to BG$ is an $n$-stage Moore anima for G.
\end{enumerate}
\end{proposition}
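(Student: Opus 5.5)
The key observation is that passing from $Y$ to $\tau_{\leq n}Y$ does not change $\mbb{R}/\mbb{Z}$-cohomology of the $(-)^\circ$-covers in degrees $\leq n+1$. We prove this first, and both parts follow.

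Let $Y\to BG$ satisfy conditions 2 and 3 of Definition \ref{tmoore}, and set $X=\tau_{\leq n}Y$. Since $n\geq 1$, the truncation map $Y\to X$ is compatible with the maps to $BG$, and base-changing along $BG^\circ\to BG$ gives $Y^\circ\to X^\circ$. This map has fiber $F=\tau_{\geq n+1}Y$, which is $n$-connected with compact Hausdorff homotopy groups. We use the descent formula
$$R\Gamma(Y^\circ;\mbb{R}/\mbb{Z})\simeq R\Gamma(X^\circ;R\Gamma(F;\mbb{R}/\mbb{Z})),$$
in the fiberwise sense, as in the proof of Lemma \ref{cohchausx}.

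We need $H^i(F;\mbb{R}/\mbb{Z})$ to vanish for $0<i\leq n+1$ (the degree-$0$ term being $\mbb{R}/\mbb{Z}$). We plan to prove this by induction up the Postnikov tower of $F$, reducing to the statement that for a compact Hausdorff abelian group $A$ and $m\geq n+1\geq 2$,
$$H^i(K(A,m);\mbb{R}/\mbb{Z})=0 \quad \text{for } 0<i<m+1.$$
For this we would use the bar construction: $K(A,m)=BK(A,m-1)$, and the base case is $H^i(BA;\mbb{R}/\mbb{Z})$. By Corollary \ref{cohrmodz}, $H^1(BA;\mbb{R}/\mbb{Z})=A^\vee$, and in degrees $\geq 2$ this group is $H^{i+1}(BA;\mbb{Z})$. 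Rather than computing these groups exactly, we only need the connectivity estimate for the $m$-connected ($m\geq 2$) case. By the bar spectral sequence, a simply connected $K(A,m)$ with $m\geq 2$ has $\widetilde{H}^i=0$ for $i<m$ and $H^m=A^\vee$, which is discrete. What we actually need is vanishing through degree $n+1\leq m$, but $H^{m}(K(A,m);\mbb{R}/\mbb{Z})=A^\vee$ is nonzero when $m=n+1$.

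So the comparison must be made more carefully, and the main obstacle is getting the degree count right. The Serre spectral sequence gives the exact sequence
$$0\to H^{n+1}(X^\circ)\to H^{n+1}(Y^\circ)\to H^{n+1}(F)^{G^\circ}\to H^{n+2}(X^\circ)\to H^{n+2}(Y^\circ),$$
where $H^{n+1}(F)=(\pi_{n+1}Y)^\vee$ and all coefficients are $\mbb{R}/\mbb{Z}$. Below degree $n+1$ the two cohomologies agree.

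For part 1, $Y$ is an $(n+1)$-stage Moore anima, so $H^i(Y^\circ)=0$ for $2\leq i\leq n+2$. Then the exact sequence gives $H^i(X^\circ)=0$ for $2\leq i\leq n+1$, which is exactly condition 4 for $X$. Conditions 1–3 for $X$ are inherited from $Y$ directly.

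For part 2, the forward direction applies the same argument with $Y$ replaced by $X$ and $X$ by $\tau_{\leq n}X$; there the vanishing up to degree $n+1$ is what is inherited. For the converse, the degree-$i$ cohomology of $X^\circ$ agrees with that of $(\tau_{\leq n}X)^\circ$ once $n\geq i$, because the fiber $\tau_{>n}X$ is $n$-connected. This uses convergence of the Postnikov tower together with the fiber estimate above. Hence the vanishing of $H^i(X^\circ;\mbb{R}/\mbb{Z})$ for $i\geq 2$ follows from condition 4 for $\tau_{\leq n}X$ with $n$ large, and conditions 1 and 2 are levelwise. The delicate point throughout is checking that the fiber has no $\mbb{R}/\mbb{Z}$-cohomology below degree $n+1$ in the condensed setting. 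We would verify this via Proposition \ref{cohachaus} and Corollary \ref{cohrmodz} applied to $K(A,m)$: the $\mbb{R}$-coefficient cohomology vanishes in positive degrees, so $\mbb{R}/\mbb{Z}$-cohomology in degree $i$ is the shifted $\mbb{Z}$-cohomology in degree $i+1$, and this vanishes for $0<i+1\leq m$ by Hurewicz-type connectivity.
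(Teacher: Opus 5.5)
Your final argument is correct, but delete the false start. The fiber $F$ of $Y^\circ\to X^\circ$ does have nonzero $H^{n+1}(F;\mbb{R}/\mbb{Z})\simeq(\pi_{n+1}Y)^\vee$, and you only ever need vanishing in degrees $1,\dots,n$. That vanishing follows at once from $\tau_{\leq n}F=\ast$, for any coefficients, since a map to the $i$-truncated $K(M,i)$ factors through $\tau_{\leq i}F$. Your detour through $\mbb{Z}$-cohomology also works, but at $i=n$ it needs $\on{Hom}(A,\mbb{Z})=0$ for compact $A$, not just connectivity.

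Your route differs from the paper's in which fibration you run the Serre spectral sequence on:
\begin{itemize}
\item \emph{Your route.} You use $F\to Y^\circ\to X^\circ$ over the base $X^\circ$, and only the $n$-connectedness of the fiber. This gives that $H^i(X^\circ)\to H^i(Y^\circ)$ is an isomorphism for $i\leq n$ and injective for $i=n+1$.
\item \emph{The paper's route.} It handles $i\leq n$ by the truncation argument alone. For $i=n+1$ it uses the $k$-invariant fiber sequence $Y^\circ\to X^\circ\to K(A,n+2)$ from Lemma \ref{postchaus}. It runs the spectral sequence over the $(n+1)$-connected base $K(A,n+2)$, so the only possibly nonzero term in total degree $n+1$ is $H^{n+1}(Y^\circ)=0$.
\end{itemize}

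What your version buys: it is just the general fact that an $(n+1)$-connected map is injective on $H^{n+1}$, for any coefficients.
\begin{itemize}
\item It does not need the band to be untwisted over $X^\circ$, which the paper gets from Lemma \ref{postchaus} via total disconnectedness of $\on{Aut}(A)$.
\item It does not use anything about the $E_2^{0,n+1}$ term.
\item It applies verbatim to the forward direction of part 2, where $X^\circ\to(\tau_{\leq n}X)^\circ$ has a fiber that is not a single Eilenberg--MacLane object.
\end{itemize}

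What the paper's version buys: it reuses the $k$-invariant machinery already needed for the obstruction theory, and it avoids edge-map bookkeeping. The converse in part 2 is the same truncation argument in both.
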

\begin{proof}
For 1, by construction $X$ is $n$-truncated, $\tau_{\leq 1}X \overset{\sim}{\rightarrow} BG$, and $\pi_iX$ is compact Hausdorff for all $i\geq 2$.  Note that, since $BG$ and $BG^\circ$ are 1-truncated, we have
$$\tau_{\leq n}(Y^\circ) = X^\circ.$$
Since $K(\mbb{R}/\mbb{Z},i)$ is $i$-truncated, it follows immediately from the corresponding fact for $Y^\circ$ that $H^i(X^\circ;\mbb{R}/\mbb{Z})=0$ for $n\geq i>1$.  The only nontrivial point is to show that $H^{n+1}(X^\circ;\mbb{R}/\mbb{Z})=0$.  For this, note by Lemma \ref{postchaus} that there is a fiber sequence
$$Y^\circ \to X^\circ \to K(A,n+2)$$
where $A$ is a compact Hausdorff abelian group (isomorphic to $\pi_{n+1}Y^\circ$).  Consider the associated Serre spectral sequence for cohomology with $\mbb{R}/\mbb{Z}$-coefficients.  The terms on the $E_2$-page which contribute to $H^{n+1}(X^\circ;\mbb{R}/\mbb{Z})$ are the
$$H^i(K(A,n+2);H^j(Y^\circ;\mbb{R}/\mbb{Z}))$$
for $i+j=n+1$.  Since $\tau_{\leq n+1}K(A,n+2)=\ast$, the degree-$i$ cohomology of $K(A,n+2)$ with any coefficients vanishes for $0<i\leq n+1$, so the only possibly nonzero term is $H^0(K(A,n+2);H^{n+1}(Y^\circ;\mbb{R}/\mbb{Z}))$. But this is also $0$ because $H^{n+1}(Y^\circ;\mbb{R}/\mbb{Z})=0$ by the hypothesis on $Y$.

For 2, the implication that $X\to BG$ is a Moore anima $\Rightarrow$  $\tau_{\leq n}X \to BG$ is an $n$-stage Moore anima follows exactly as in the proof of 1 above, and the converse follows from the fact that $H^i(X^\circ;\mbb{R}/\mbb{Z}) = H^i(\tau_{\leq n}X^\circ;\mbb{R}/\mbb{Z})$ for large enough $n$ (namely $n\geq i$).
\end{proof}

We can reinterpret this in light of the following definition.

\begin{definition}
Let $G$ be a locally compact Hausdorff group.  Let $\mathcal{M}(BG)$ denote the $\infty$-groupoid (anima) of Moore anima for $G$, and for $n\geq 1$, let $\mathcal{M}_n(BG)$ denote the $\infty$-groupoid of $n$-stage Moore anima for $G$.
\end{definition}

\begin{corollary}\label{moorecaninduct}
Let $G$ be a locally compact Hausdorff group.  Then we have
$$\mathcal{M}(BG) = \varprojlim_n \mathcal{M}_n(BG),$$
where the implicit maps are induced by Postnikov truncation.
\end{corollary}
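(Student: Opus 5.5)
The plan is to deduce the corollary from Proposition \ref{moorevstmoore} together with the fact that Postnikov towers converge in $\on{CondAn}$ (recalled in Section \ref{condback}). By part 1 of Proposition \ref{moorevstmoore}, $\tau_{\leq n}$ sends $\mathcal{M}_{n+1}(BG)$ into $\mathcal{M}_n(BG)$, so the tower $\cdots\to\mathcal{M}_{n+1}(BG)\to\mathcal{M}_n(BG)\to\cdots$ is defined. By part 2, $X\mapsto(\tau_{\leq n}X)_n$ sends $\mathcal{M}(BG)$ into each $\mathcal{M}_n(BG)$ compatibly, and this gives the comparison map $\mathcal{M}(BG)\to\varprojlim_n\mathcal{M}_n(BG)$.

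First I will embed everything in a larger statement that involves no cohomological conditions. Let $\mathcal{C}$ be the $\infty$-groupoid of condensed anima $X\to BG$ over $BG$, and let $\mathcal{C}_{\leq n}$ be the $\infty$-groupoid of $n$-truncated ones. The claim is that $\tau_{\leq n}$ induces an equivalence from the full subgroupoid of Postnikov-complete objects of $\mathcal{C}$ to $\varprojlim_n\mathcal{C}_{\leq n}$. The inverse sends a compatible system $(X_n)$ to $\varprojlim_n X_n$, with the map to $BG$ induced from that of $X_1$. That this is an inverse comes down to two standard facts, both checkable sectionwise on $\on{Extr}$ because Postnikov truncation is computed sectionwise there.
\begin{enumerate}
\item $\tau_{\leq n}\varprojlim_m X_m\simeq X_n$ for a tower of $m$-truncated $X_m$ with $\tau_{\leq m}X_{m+1}\simeq X_m$. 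This holds in $\on{An}$, hence sectionwise.
\item $X\simeq\varprojlim_n\tau_{\leq n}X$, which is the convergence of Postnikov towers.
\end{enumerate}
Since $BG$ is $1$-truncated, working over $BG$ adds nothing: a map to $BG$ is the same as a map from $\tau_{\leq 1}$. Every condensed anima is Postnikov complete by the remark in Section \ref{condback}, so in fact $\mathcal{C}\simeq\varprojlim_n\mathcal{C}_{\leq n}$.

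Next I will restrict this equivalence to full subgroupoids. $\mathcal{M}(BG)\subset\mathcal{C}$ and $\mathcal{M}_n(BG)\subset\mathcal{C}_{\leq n}$ are full, defined by properties that are invariant under isomorphism. Inside $\varprojlim_n\mathcal{C}_{\leq n}$, the subgroupoid $\varprojlim_n\mathcal{M}_n(BG)$ is the full subgroupoid of systems $(X_n)$ with every $X_n$ in $\mathcal{M}_n(BG)$. A limit of full inclusions of $\infty$-groupoids is the full inclusion cut out by the termwise conditions. So it remains to check that $X\in\mathcal{C}$ is a Moore anima if and only if each $\tau_{\leq n}X$ is an $n$-stage Moore anima. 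This is exactly part 2 of Proposition \ref{moorevstmoore}.

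The only step requiring any care is convergence of Postnikov towers for condensed anima, which the paper already asserts. A possible subtlety is size, since $\on{CondAn}$ is not presentable, but limits of $\infty$-groupoids of objects cause no trouble here. I expect no genuine obstacle.
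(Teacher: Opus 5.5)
Your proof is correct and follows the same route as the paper. Proposition \ref{moorevstmoore} part 1 and the forward direction of part 2 make the tower and the comparison map well defined, and Postnikov completeness of condensed anima together with the backward direction of part 2 gives the equivalence; you simply spell out the identification with compatible towers of truncations and the restriction to full subgroupoids, which the paper leaves implicit.
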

\begin{proof}
The maps are well-defined by Proposition \ref{moorevstmoore} part 1 and the forward direction of Proposition \ref{moorevstmoore} part 2.  That we get an isomorphism results from the Postnikov completeness of condensed anima and the backward direction of Proposition \ref{moorevstmoore} part 2.
\end{proof}

This reveals the strategy for analyzing $\mathcal{M}(BG)$: first study $\mathcal{M}_1(BG)$, then study the homotopy fibers of each map $\mathcal{M}_{n+1}(BG)\to \mc{M}_n(BG)$.

\begin{lemma}\label{moore1}
Let $G$ be a locally compact Hausdorff group.  Then the following conditions are equivalent:
\begin{enumerate}
\item $\mathcal{M}_1(BG)\neq\emptyset$;
\item $\mathcal{M}_1(BG)\simeq \ast$;
\item $\on{id}:BG \to BG$ is a 1-stage Moore anima for $G$;
\item $H^2(BG^\circ;\mbb{R}/\mbb{Z})=0$.
\end{enumerate}
If $G$ is almost compact, these conditions are furthermore equivalent to the following: $\pi_1(K^\circ)_{tors}=0$, where $K^\circ \subset G^\circ$ denotes the maximal compact subgroup of $G^\circ$, and see Lemma \ref{compactliegrouplemma} for the meaning of $\pi_1(K^\circ)_{tors}$.
\end{lemma}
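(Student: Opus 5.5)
The core observation is that a $1$-stage Moore anima is forced to be $BG$ itself. If $t:X\to BG$ is a $1$-stage Moore anima, then $X$ is $1$-truncated and $\tau_{\leq 1}X\simeq BG$ via $t$, so $t$ is an isomorphism. Hence $\mathcal{M}_1(BG)$ is either empty or the contractible groupoid of objects isomorphic over $BG$ to $\on{id}:BG\to BG$. The automorphism space of $\on{id}_{BG}$ in the slice over $BG$ is contractible, since $\on{id}$ is terminal in $\on{CondAn}_{/BG}$. This gives $1\Leftrightarrow 2\Leftrightarrow 3$ ($2\Rightarrow 1$ trivially, $1\Rightarrow 3$ by the observation, $3\Rightarrow 2$ by terminality).

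For $3\Leftrightarrow 4$, I check the conditions of Definition \ref{tmoore} with $n=1$ and $X=BG$. Conditions 1--3 are automatic; there are no $\pi_i$ for $i\geq 2$. Here $X^\circ=BG\times_{BG}BG^\circ=BG^\circ$, and condition 4 asks exactly that $H^2(BG^\circ;\mbb{R}/\mbb{Z})=0$ (condensed vanishing, which is the content of 4 as stated).

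For the almost compact refinement, I first note that $G^\circ$ is almost compact by Lemma \ref{acomppermanence}, and connected. Its maximal compact $K^\circ$ is connected, and by Lemma \ref{acompstructure} either $G^\circ=K^\circ$ or $G^\circ\simeq \mbb{R}\times K^\circ$; the $\mbb{Z}$ case cannot occur for connected $G^\circ$. In the $\mbb{R}$ case, $B\mbb{R}$ has trivial $\mbb{R}/\mbb{Z}$-cohomology in positive degrees. This follows from Proposition \ref{cohachaus} part 2 for $\mbb{R}$ (the complex $[\mbb{R}\overset{0}{\to}\mbb{R}]$), together with Lemma \ref{cohliegp} for $\mbb{Z}$ and the sequence $\mbb{Z}\to\mbb{R}\to\mbb{R}/\mbb{Z}$; one should be careful here, but degree $2$ certainly works out through $H^3(B\mbb{R};\mbb{Z})=0$. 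A Künneth/Serre argument for $B\mbb{R}\times BK^\circ$ then reduces us to $H^2(BK^\circ;\mbb{R}/\mbb{Z})$. By Corollary \ref{cohrmodz} this equals $H^3(BK^\circ;\mbb{Z})$, which is discrete.

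The main step is to identify $H^3(BK^\circ;\mbb{Z})$ for a compact connected group with something whose vanishing is equivalent to $\pi_1(K^\circ)_{tors}=0$, presumably via Lemma \ref{compactliegrouplemma}. I would first reduce to compact connected Lie groups using Theorem \ref{proclie} and Lemma \ref{invlimchaus}, which give $H^3(BK^\circ;\mbb{Z})=\varinjlim H^3(BK_i;\mbb{Z})$ (via the proof's discrete-coefficient colimit statement). For a compact connected Lie group $K$, $BK$ is simply connected with $\pi_2(BK)=\pi_1K$ finitely generated. Hurewicz gives $H_2(BK)=\pi_1K$, and $H_1=0$, so the universal coefficient theorem gives $H^3(BK;\mbb{Z})\supset \on{Ext}(H_2,\mbb{Z})=\widehat{(\pi_1K)_{tors}}$ plus $\on{Hom}(H_3(BK),\mbb{Z})$. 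The latter vanishes because rational cohomology of $BK$ is concentrated in even degrees. Thus $H^3(BK;\mbb{Z})\cong (\pi_1K)_{tors}^\vee$, and passing to the limit expresses $H^2(BK^\circ;\mbb{R}/\mbb{Z})$ as the Pontryagin dual of whatever $\pi_1(K^\circ)_{tors}$ means in the pro-setting of Lemma \ref{compactliegrouplemma}. I expect matching that definition, and handling the transition maps in the colimit, to be the main obstacle.
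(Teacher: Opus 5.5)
Your argument for $1\Leftrightarrow 2\Leftrightarrow 3\Leftrightarrow 4$ is the paper's: a $1$-stage Moore anima is forced to be $\on{id}_{BG}$, and axiom 4 for $\on{id}$, with $X^\circ=BG^\circ$, is literally condition 4. Your plan for the almost compact case is also the paper's: $G^\circ=K^\circ$ or $G^\circ\simeq\mbb{R}\times K^\circ$ by Lemma \ref{acompstructure}, then reduce to $H^3(BK^\circ;\mbb{Z})$ and identify it with $(\pi_1(K^\circ)_{tors})^\vee$.

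One intermediate claim is wrong, though. $B\mbb{R}$ does \emph{not} have trivial $\mbb{R}/\mbb{Z}$-cohomology in positive degrees: by Corollary \ref{cohrmodz}, $H^1(B\mbb{R};\mbb{R}/\mbb{Z})=\on{Hom}_{cont}(\mbb{R},\mbb{R}/\mbb{Z})\cong\mbb{R}$. Your degree-$2$ reduction survives, but for a different reason. First, $H^2(B\mbb{R};\mbb{R}/\mbb{Z})=0$. Second, $H^q(BK^\circ;\mbb{R}/\mbb{Z})$ is discrete for $q\geq 1$, so $H^p(B\mbb{R};H^q(BK^\circ;\mbb{R}/\mbb{Z}))=0$ for $p,q\geq 1$ by Lemma \ref{cohliegp}. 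Together these kill every term other than $H^2(BK^\circ;\mbb{R}/\mbb{Z})$ in total degree $2$ of the Serre spectral sequence for $BK^\circ\to BG^\circ\to B\mbb{R}$. The paper avoids this bookkeeping. It applies Corollary \ref{cohrmodz} to the almost compact group $G^\circ$ first, giving $H^2(BG^\circ;\mbb{R}/\mbb{Z})=H^3(BG^\circ;\mbb{Z})$. After that the $\mbb{R}$-factor is invisible, because $B\mbb{R}$ is acyclic for discrete coefficients, and one gets $H^3(BK^\circ;\mbb{Z})$ directly.

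What you call the main obstacle is not one. It is exactly part 1 of Lemma \ref{compactliegrouplemma}, which the statement points you to. That lemma gives a functorial isomorphism $H^3(BK;\mbb{Z})\simeq(\pi_1(K)_{tors})^\vee$ for every connected compact Hausdorff $K$, with $\pi_1(K)_{tors}$ defined there as a limit over Lie quotients. Since a Pontryagin dual vanishes if and only if the group does, this finishes the proof, and the paper simply cites it.

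If you do want to reprove it, your Lie case is correct. Killing $\on{Hom}(H_3(BK),\mbb{Z})$ via evenness of $H^\ast(BK;\mbb{Q})$ is a lighter substitute for the paper's proof that $H_3(BK)=0$; the paper needs the stronger statement for the torsion-freeness of $H^4(BK;\mbb{Z})$. The limit step is routine:
\begin{itemize}
\item Use the presentation $K=\varprojlim_N K/N$ over all $N$ with $K/N$ Lie, which is the one used to define $\pi_1(K)_{tors}$.
\item Hurewicz and universal coefficients are natural in $K$, so they are compatible with the transition maps.
\item Pontryagin duality converts $\varprojlim_N\pi_1(K/N)_{tors}$ into $\varinjlim_N(\pi_1(K/N)_{tors})^\vee$.
\end{itemize}
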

\begin{proof}
Axioms 1 and 2 in Definition \ref{tmoore} imply that if $t:X\to BG$ is a 1-stage Moore anima for $G$, then $t$ is an isomorphism.  This uniquely determines $t$ and proves that 1, 2 and 3 are equivalent.  To see that 3 is equivalent to 4, note that the only nontrivial condition is axiom 4 in Definition \ref{tmoore}, which for $t=\on{id}:BG\to BG$ exactly says 4.  Finally, suppose $G$ is almost compact.  Then $G^\circ$ is also almost compact (being a closed subgroup of $G$, Lemma \ref{acomppermanence}).  But it is also connected, hence either $K^\circ=G^\circ$ or $G^\circ = \mbb{R}\times K^\circ$ by Lemma \ref{acompstructure}, and in either case we have
$$H^2(BG^\circ;\mbb{R}/\mbb{Z})=H^3(BG^\circ;\mbb{Z}) = H^3(BK^\circ;\mbb{Z}),$$
and $K^\circ$ is connected.  Thus the conclusion follows from Lemma \ref{compactliegrouplemma}.
\end{proof}

\begin{lemma}\label{compactliegrouplemma}
Let $K$ be a connected compact Hausdorff group.  Define a profinite abelian group $\pi_1(K)_{tors}$ as follows: Let $N$ run over all closed normal subgroups of $K$ such that $K/N$ is a Lie group, and set
$$\pi_1(K)_{tors} = \varprojlim_N \pi_1(K/N)_{tors},$$
where $\pi_1(K/N)$ stands for the usual fundamental group of the compact connected Lie group $K/N$ (or equivalently the quotient of the co-weight lattice by the co-root lattice), and $(-)_{tors}$ means the torsion subgroup.

Then:
\begin{enumerate}
\item We have a canonical functorial isomorphism $$H^3(BK;\mbb{Z})\simeq (\pi_1(K)_{tors})^\vee,$$
where $(-)^\vee$ means Pontryagin duality.
\item $H^4(BK;\mbb{Z})$ is torsionfree.
\end{enumerate}
\end{lemma}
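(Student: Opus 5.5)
The plan is to reduce to compact connected Lie groups and then pass to the limit. By the Peter--Weyl lemma proved above, $K=\varprojlim_N K/N$ is a filtered inverse limit of compact connected Lie groups. Applying Lemma \ref{invlimchaus} part 3 (via its cohomology claim with discrete coefficients) to the system $BK=\varprojlim BK/N$, we get
$$H^i(BK;\mbb{Z})=\varinjlim_N H^i(B(K/N);\mbb{Z}).$$
Pontryagin duality turns $\varinjlim$ of discrete groups into $\varprojlim$ of the duals. So both claims reduce to the Lie case, provided the Lie-case isomorphism in 1 is functorial in $K$. Claim 2 passes to the limit because a filtered colimit of torsionfree groups is torsionfree.

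For $K$ a compact connected Lie group, I would use Lemma \ref{cohliegp}: $H^\ast(BK;\mbb{Z})$ is the classical cohomology of the classifying space. Since $BK$ is simply connected with $\pi_2(BK)=\pi_1(K)$, Hurewicz gives $H_2(BK;\mbb{Z})=\pi_1(K)$, and universal coefficients give
$$H^3(BK;\mbb{Z})\supset \on{Ext}(H_2(BK),\mbb{Z})=\on{Ext}(\pi_1K,\mbb{Z})=(\pi_1(K)_{tors})^\vee.$$
The remaining summand is $\on{Hom}(H_3(BK),\mbb{Z})$, and it vanishes because $H^3(BK;\mbb{Q})=0$: rational cohomology of $BK$ is the Weyl-invariant polynomials on the torus, which sit in even degrees. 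The resulting map is natural, since the universal coefficient sequence and Hurewicz are natural. This gives 1.

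For 2 in the Lie case, I would show $H^4(BK;\mbb{Z})_{tors}\cong \on{Ext}(H_3(BK),\mbb{Z})$ vanishes, that is, $H_3(BK;\mbb{Z})$ is torsionfree (hence zero, being rationally zero). To do this, pass to a finite cover $K'=T'\times \widetilde{K}_{ss}\to K$, with $\widetilde{K}_{ss}$ simply connected and $T'$ a torus. For simply connected compact $H$, $BH$ is 3-connected with $\pi_4=\pi_3H$ free, so $H_3(BH)=0$ and $H_4(BH)$ is free. For the torus factor, $H^\ast(BT')$ is a polynomial ring on degree-2 classes, so it is torsionfree. Künneth then shows $BK'$ has torsionfree integral cohomology in degrees $\le 4$.

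To get back from $K'$ to $K$, use the fibration $BZ\to BK'\to BK$ with $Z$ a finite central subgroup. This is the step I expect to be the main obstacle, because the fibration introduces torsion from $H^\ast(BZ)$. I would instead argue directly with the Postnikov-type fibration $K(\pi_1K,2)\to BK\to B\widetilde{K}$. Here $\widetilde{K}$ is the universal cover in a suitable sense: write $K=(T\times \widetilde{K}_{ss})/Z$ and consider $BK\to B(K/T_0)$ with $T_0$ the central torus.

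Concretely, I would compute $H_3(BK;\mbb{Z})$ via the Serre spectral sequence of the fibration $B\widetilde{K}_{ss}\to BK\to B(K/\widetilde{K}_{ss})$. The base $K/\widetilde{K}_{ss}$ is a torus, so the base cohomology is polynomial and torsionfree. The fiber is 3-connected with $H_4$ free. In total degree 3 the only contributions come from the base, which is torsionfree (and odd-degree zero). Hence $H_3(BK)=0$ and $H_4(BK)$ is an extension of free groups, so it is free. This gives 2 in the Lie case; the limit argument above then finishes.
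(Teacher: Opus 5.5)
Your reduction to compact connected Lie groups and your proof of claim 1 (Hurewicz, universal coefficients, and rational vanishing to kill $\on{Hom}(H_3(BK),\mbb{Z})$) are fine and essentially the paper's argument.

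\textbf{The gap.} The problem is the Lie-case proof that $H_3(BK;\mbb{Z})$ is torsionfree, which is all claim 2 needs. The fibration $B\widetilde{K}_{ss}\to BK\to B(K/\widetilde{K}_{ss})$ is not available, because $\widetilde{K}_{ss}$ is not a subgroup of $K=(T\times\widetilde{K}_{ss})/Z$. Its image is the derived group $[K,K]=\widetilde{K}_{ss}/(Z\cap\widetilde{K}_{ss})$, which is in general not simply connected. The honest fibration $B[K,K]\to BK\to B(K/[K,K])$ does have as base the classifying space of a torus. However, its fiber is only $1$-connected, with $H_2(B[K,K])=\pi_1([K,K])$ finite and often nonzero, so your degree-$3$ count breaks down. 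For semisimple $K$ such as $SO(3)$ or $PU(n)$ the base is a point and your argument gives no information. Yet these are exactly the groups with $\pi_1(K)_{tors}\neq 0$, where torsion in $H_3(BK)$ is the real issue. Conversely, the fibration that genuinely has a $3$-connected fiber is the $2$-connected cover $B\widetilde{K}\simeq B\widetilde{K}_{ss}\to BK\to K(\pi_1K,2)$. Your ``Postnikov-type fibration'' is written backwards here: there is no map $BK\to B\widetilde{K}$. The base of this fibration is not $B$ of a torus, and its integral homology has torsion as soon as $\pi_1K$ does.

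\textbf{Two repairs.} Either of the following closes the step.
\begin{enumerate}
\item Along your line, use $B\widetilde{K}\to BK\to K(\pi_1K,2)$ together with the classical computation $H_3(K(A,2);\mbb{Z})=0$ for finitely generated abelian $A$. To prove it, check $A=\mbb{Z}$ and $A=\mbb{Z}/n$, then use K\"unneth. Since the fiber is $3$-connected and the base is simply connected, the Serre spectral sequence gives $H_3(BK)\cong H_3(K(\pi_1K,2);\mbb{Z})=0$.
\item The paper instead uses $K/T\to BT\to BK$. Here $H_\ast(K/T;\mbb{Z})$ is free and concentrated in even degrees, and $H_3(BT)=0$. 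So the only differential that can leave $E_{3,0}=H_3(BK)$ is $d^3$ into $E_{0,2}=H_2(K/T)$, and it must be injective. Hence $H_3(BK)$ embeds in a free group, and with your rational vanishing $H_3(BK)=0$.
\end{enumerate}
Your further assertion that $H_4(BK)$ is free is not needed: claim 2 only uses $\on{Ext}(H_3(BK),\mbb{Z})=0$.
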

\begin{proof}

Invoking Lemma \ref{invlimchaus}, we reduce to showing that if $K$ is a connected compact Lie group, then there is a natural isomorphism $H^3(BK;\mbb{Z})\simeq (\pi_1(K)_{tors})^\vee$
and that
$H^4(BK;\mbb{Z})$
is torsionfree.

We recall that the integral homology of $BK$ in low degres is as follows:
$$H_0(BK)=\mbb{Z}, H_1(BK)=0, H_2(BK)=\pi_1K, H_3(BK)=0.$$
Indeed, up to degree 2 this follows from Hurewicz, and to get the answer in degree 3 one can use the Serre spectral sequence for
$$K/T \to BT \to BK$$
where $T\subset K$ is a maximal torus, plus the fact that the homology of $K/T$ is torsionfree and lives in even degrees (e.g.\ by the Bruhat decomposition).  From the universal coefficient theorem we deduce that $H^3(BK; \mbb{Z})= \on{Ext}(\pi_1K,\mbb{Z})=((\pi_1K)_{tors})^\vee$, and also that $H^4(BK;\mbb{Z})= \on{Hom}(H_4(BK);\mbb{Z})$, proving the claims.
\end{proof}

In particular, we deduce the following, which suffices for the application to Weil groups.  We remark that the hypothetical Langlands group from \cite{arthur} will also satisfy that $\pi_1(K^\circ)_{tors}=0$, so the same conclusion would apply to it as well.

\begin{proposition}\label{moore1ab}
Let $G$ be an almost compact group, and suppose that $G^\circ$ is abelian.  Then $\mc{M}_1(BG)=\ast$.
\end{proposition}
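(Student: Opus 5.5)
By Lemma \ref{moore1}, it suffices to show $\pi_1(K^\circ)_{tors}=0$, where $K^\circ$ is the maximal compact subgroup of $G^\circ$. Equivalently, it suffices to show $H^2(BG^\circ;\mbb{R}/\mbb{Z})=0$.

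First I would note that $K^\circ$ is abelian and connected. It is a closed subgroup of the abelian group $G^\circ$, hence abelian. For connectedness: by Lemma \ref{acompstructure}, either $G^\circ=K^\circ$, or $G^\circ\simeq\mbb{R}\times K^\circ$ as topological spaces, and in the second case $K^\circ$ is a retract of a connected space, so it is connected. This is the same reduction already used in the proof of Lemma \ref{moore1}.

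Next I would compute $\pi_1(K^\circ)_{tors}$ from the definition in Lemma \ref{compactliegrouplemma}. The group is the inverse limit of $\pi_1(K^\circ/N)_{tors}$ over closed normal $N$ with Lie quotient. Each $K^\circ/N$ is a connected compact abelian Lie group, hence a torus $(\mbb{R}/\mbb{Z})^r$. Its fundamental group is $\mbb{Z}^r$, which is torsion-free, so every term of the inverse system vanishes and the limit is $0$. Equivalently, in co-weight/co-root terms, a torus has trivial co-root lattice and a free co-weight lattice.

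Then Lemma \ref{moore1} gives $\mc{M}_1(BG)\simeq\ast$. As a cross-check that avoids the profinite bookkeeping, I could argue directly. By Lemma \ref{compactliegrouplemma}, $H^2(BG^\circ;\mbb{R}/\mbb{Z})=H^3(BK^\circ;\mbb{Z})$. By Lemma \ref{invlimchaus}, this is the colimit of the groups $H^3(BT;\mbb{Z})$ over torus quotients $T$ of $K^\circ$. For a torus, $BT\simeq(\mbb{CP}^\infty)^r$ has cohomology concentrated in even degrees, so each $H^3(BT;\mbb{Z})$ is zero.

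There is no real obstacle here. The only point needing care is the connectedness of $K^\circ$, which ensures that the Lie quotients are tori rather than tori times finite groups; that is handled by the first step.
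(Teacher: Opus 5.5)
Your proposal is correct and is essentially the paper's argument: reduce via Lemma \ref{moore1} to $\pi_1(K^\circ)_{tors}=0$, then note that every Lie quotient of a connected compact abelian group is a torus, so every term of the defining inverse limit vanishes (the paper phrases this through Pontryagin duality with a torsion-free discrete group). One small citation point in your cross-check: the identification $H^2(BG^\circ;\mbb{R}/\mbb{Z})=H^3(BK^\circ;\mbb{Z})$ comes from the proof of Lemma \ref{moore1}, not from Lemma \ref{compactliegrouplemma}.
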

\begin{proof}
By the above, it suffices to show that if $K$ is a connected compact abelian group, then $\pi_1(K)_{tors}=0$.  But such a $K$ is Pontryagin dual to a torsionfree discrete group, hence $K$ is a filtered inverse limit of finite products of copies of $\mbb{R}/\mbb{Z}$, giving the claim.
\end{proof}

Having analyzed $\mc{M}_1(BG)$, we now turn to the homotopy fibers of
$$\mc{M}_{n+1}(BG) \to \mc{M}_n(BG)$$
for $n\geq 1$.  Recall the anima $\mc{X}_n(BG)$ from Lemma \ref{postchaus}: they classify the $n$-truncated condensed anima $X\to BG$ such that $\tau_{\leq 1}X\overset{\sim}{\rightarrow}BG$ and $\pi_iX$ is compact Hausdorff for $i\geq 2$.  By definition, $\mc{M}_n(BG)$ is the full subanima of $\mc{X}_n(BG)$ spanned by those $X\to BG$ such that $H^i(X^\circ;\mbb{R}/\mbb{Z})=0$ for $2\leq i\leq n+1$.

\begin{lemma}\label{mooreinduct}
Let $G$ be a locally compact Hausdorff group, and let $n\geq 1$.  Suppose given $t\in \mc{M}_n(BG)$, corresponding to an $n$-stage Moore anima $t:X\to BG$.  Let $X^\circ= X\times_{BG} BG^\circ$.  Then the anima of lifts of $t$ to $\mc{M}_{n+1}(BG)$ identifies with the anima of pairs $(A,\kappa)$ where $A$ is a compact Hausdorff abelian group with continuous $G/G^\circ$-action and $\kappa$ is a $G/G^\circ$-equiviariant map of condensed anima
$$\kappa: X^\circ \to K(A,n+2)$$
such that the following condition is satisfied: the natural homomorphism
$$A^\vee \simeq H^{n+2}(K(A,n+2);\mbb{R}/\mbb{Z}) \overset{\kappa^\ast}{\longrightarrow} H^{n+2}(X^\circ;\mbb{R}/\mbb{Z})$$
is an isomorphism (of condensed abelian groups).

More precisely, this identification sends $(A,\kappa)$ to $\on{fib}(\kappa)/(G/G^\circ)$ equipped with its natural identification $\tau_{\leq n}\on{fib}(\kappa)/(G/G^\circ) \simeq X^\circ/(G/G^\circ) = X$ over $BG$.
\end{lemma}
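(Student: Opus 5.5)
The plan is to reduce everything to Lemma \ref{postchaus} and then identify which lifts satisfy the extra cohomological condition. By Lemma \ref{postchaus}, the anima of lifts of $t$ from $\mc{X}_n(BG)$ to $\mc{X}_{n+1}(BG)$ is equivalent to the anima of pairs $(A,\kappa)$, with $A$ compact Hausdorff and carrying a $G/G^\circ$-action, and $\kappa: X^\circ\to K(A,n+2)$ a $G/G^\circ$-equivariant map. The correspondence sends $(A,\kappa)$ to $Y=\on{fib}(\kappa)/(G/G^\circ)$. Since $\mc{M}_{n+1}(BG)\subset \mc{X}_{n+1}(BG)$ is a full subanima, the anima of lifts of $t$ to $\mc{M}_{n+1}(BG)$ is the full subanima of those pairs whose $Y$ is an $(n+1)$-stage Moore anima. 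It therefore suffices to show that $Y$ is an $(n+1)$-stage Moore anima exactly when $\kappa^\ast: A^\vee\to H^{n+2}(X^\circ;\mbb{R}/\mbb{Z})$ is an isomorphism.

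Axioms 1–3 of Definition \ref{tmoore} hold automatically for $Y$. Also $Y^\circ=\on{fib}(\kappa)$, because forming $(-)^\circ$ commutes with the quotient by $G/G^\circ$. So the only remaining condition is $H^i(Y^\circ;\mbb{R}/\mbb{Z})=0$ for $2\le i\le n+2$. I will compute these groups from the fiber sequence
$$K(A,n+1)\to Y^\circ\to X^\circ,$$
using the Serre spectral sequence with $\mbb{R}/\mbb{Z}$-coefficients (equivalently, the fibration $Y^\circ\to X^\circ\to K(A,n+2)$).

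The fiber cohomology $H^j(K(A,n+1);\mbb{R}/\mbb{Z})$ vanishes for $0<j<n+1$, since $K(A,n+1)$ is $n$-connected. It is also zero for $j=n+2$. To see this, I will argue as follows. By Corollary \ref{cohrmodz}-type reasoning, which here amounts to a bar/Postnikov computation together with Theorem \ref{extlchaus}, we have $H^{n+1}(K(A,n+1);\mbb{R}/\mbb{Z})=\on{Hom}(A,\mbb{R}/\mbb{Z})=A^\vee$. The next group $H^{n+2}(K(A,n+1);\mbb{R}/\mbb{Z})$ is computed as $\on{Ext}^1(A,\mbb{R}/\mbb{Z})$ plus possibly the contribution from the stable range, and this vanishes by Theorem \ref{extlchaus} part 3. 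This is the point where the normalization "$n+1$" in axiom 4 pays off.

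Hence in total degrees $\le n+2$ the spectral sequence has only two rows: $H^\ast(X^\circ)$ in fiber degree $0$, and $H^\ast(X^\circ;A^\vee)$ in fiber degree $n+1$. Here $A^\vee$ is a local system that is constant on $X^\circ$, because the action descends to $G/G^\circ$ and we are working equivariantly over $BG^\circ$. This gives a Gysin-type exact sequence
$$0\to H^{n+1}(X^\circ)\to H^{n+1}(Y^\circ)\to A^\vee\xrightarrow{d} H^{n+2}(X^\circ)\to H^{n+2}(Y^\circ)\to H^1(X^\circ;A^\vee)\oplus(\ldots).$$
Using the hypothesis $H^i(X^\circ;\mbb{R}/\mbb{Z})=0$ for $2\le i\le n+1$, I get $H^i(Y^\circ)=0$ for $2\le i\le n$ immediately. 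Next, $H^{n+1}(Y^\circ)=\ker d$, and $H^{n+2}(Y^\circ)$ surjects onto $\on{coker} d$, with the further term being $H^1(X^\circ;A^\vee)$. The transgression $d$ is identified with $\kappa^\ast$, by naturality of the transgression applied to the map of fibrations covering $\kappa$, comparing with the path fibration over $K(A,n+2)$.

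The main obstacle is the extra term $E_2^{1,n+1}=H^1(X^\circ;A^\vee)$ in total degree $n+2$. Since $\tau_{\le1}X^\circ=BG^\circ$ with $G^\circ$ connected and $A^\vee$ discrete, I would first try to show $H^1(BG^\circ;A^\vee)=\on{Hom}_{cont}(G^\circ,A^\vee)=0$ by connectedness. But if $n+1$ is small relative to $1$, for instance $n=1$, one must be careful, and I would also double-check the possible $E_2^{0,n+2}$ contribution. With these terms vanishing, $H^{n+1}(Y^\circ)=H^{n+2}(Y^\circ)=0$ if and only if $\kappa^\ast$ is injective and surjective. I would verify that this holds as condensed groups, using that the sequence is one of condensed abelian groups.
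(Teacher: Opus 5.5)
Your proposal is correct and follows essentially the same route as the paper: reduce to Lemma \ref{postchaus}, run the Serre spectral sequence for $K(A,n+1)\to Y^\circ\to X^\circ$, kill $E_2^{1,n+1}=\on{Hom}(G^\circ,A^\vee)$ by connectedness and $E_2^{0,n+2}=\on{Ext}^1(A,\mbb{R}/\mbb{Z})$ by Theorem \ref{extlchaus}, and identify the transgression $d_{n+2}$ with $\kappa^\ast$ by comparing with the path fibration over $K(A,n+2)$. Your hesitation about small $n$ is unnecessary: $H^1(X^\circ;A^\vee)$ depends only on $\tau_{\leq 1}X^\circ=BG^\circ$, so the connectedness argument works uniformly for all $n\geq 1$, and you had already disposed of the $E_2^{0,n+2}$ term.
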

\begin{proof}
By Lemma \ref{postchaus}, the anima of lifts of $t$ to $\mc{X}_{n+1}(BG)$ identifies with the anima of pairs $(A,\kappa)$ without the condition on $\kappa^\ast$.  Thus it suffices to show that for such $(A,\kappa)$, the above $\kappa^\ast$ is an isomorphism if and only if $H^i(Y^\circ;\mbb{R}/\mbb{Z})=0$ for all $2\leq i\leq n+2$, where $Y^\circ=\on{fib}(\kappa)$.

For this, consider the Serre spectral sequence for cohomology with $\mbb{R}/\mbb{Z}$-coefficients associated to the fiber sequence
$$K(A,n+1) \to Y^\circ \to X^\circ.$$
With standard indexing, the $E_2$-page is made up of the condensed abelian groups
$$E_2^{i,j}=H^i(X^\circ; H^j(K(A,n+1);\mbb{R}/\mbb{Z})),$$
with the $(i,j)$-term contributing to $H^{i+j}(Y^\circ;\mbb{R}/\mbb{Z})$ at the end of the spectral sequence.

When $j=0$, we have $H^0(K(A,n+1);\mbb{R}/\mbb{Z})=\mbb{R}/\mbb{Z}$, so by assumption on $X$ we have $E_2^{i,0}=0$ for $2\leq i\leq n+1$.  On the other hand for $0<j<n+1$ we have $E_2^{i,j}=0$ for all $i$ because $\tau_{\leq n}K(A,n+1)=\ast$.  In the edge case $j=n+1$, we have
$$E_2^{0,n+1}=H^{n+1}(K(A,n+1);\mbb{R}/\mbb{Z})=A^\vee$$
and
$$E_2^{1,n+1} = H^1(X^\circ;A^\vee) = \on{Hom}(G^\circ,A^\vee)=0$$
because $G^\circ$ is connected and $A^\vee$ is discrete.  Furthermore,
$$E_2^{0,n+2} = H^{n+2}(K(A,n+1);\mbb{R}/\mbb{Z}) = \on{Ext}^1(A;\mbb{R}/\mbb{Z})=0,$$
where the comparison with Ext follows because we are in the stable range, and the Ext vanishing follows from Proposition \ref{extlchaus}.

It follows that:
\begin{enumerate}
\item for $2\leq p<n+1$, all terms contributing to $H^p(Y^\circ;\mbb{R}/\mbb{Z})$ are $0$;
\item the only possibly nonzero term contributing to $H^{n+1}(Y^\circ;\mbb{R}/\mbb{Z})$ is $E_2^{0,n+1} = A^\vee$;
\item the only possibly nonzero term contributing to $H^{n+2}(Y^\circ;\mbb{R}/\mbb{Z})$ is $E_2^{n+2,0} = H^{n+2}(X^\circ;\mbb{R}/\mbb{Z})$;
\item the only possibly nonzero differential in this range is
$$d_{n+2}: E_2^{0,n+1} \to E_2^{n+2,0}.$$
\end{enumerate}

We deduce that $H^{n+1}(Y^\circ;\mbb{R}/\mbb{Z})= \on{ker}(d_{n+2})$ and $H^{n+2}(Y^\circ;\mbb{R}/\mbb{Z})=\on{coker}(d_{n+2})$, so that $Y\in \mc{M}_{n+1}(BG)$ if and only if $d_{n+2}$ is an isomorphism.

This differential is transgressive, so we can understand it explicitly. Note that our fiber sequence $K(A,n+1) \to Y^\circ \to X^\circ$ is the base-change via $\kappa$ of the fiber sequence $K(A,n+1) \to \ast \to K(A,n+2)$.   There the analogous transgressive differential in the Serre spectral sequence is necessarily an isomorphism because $\ast$ has vanishing cohomology in positive degrees.  By functoriality of the Serre spectral sequence, we deduce that $d_{n+2}$ is an isomorphism if and only if $\kappa^\ast$ is an isomorphism, proving our claim.
\end{proof}

In particular, $A\simeq \pi_{n+1}(Y^\circ)\simeq \pi_{n+1}(Y)$, which is the ``homotopy group to add'' in going from our $n$-stage Moore anima to the $n+1$-stage, is canonically determined by the given $n$-stage Moore anima $X$: namely, as a $G/G^\circ$-equivariant compact Hausdorff group it identifies with the Pontryagin dual to $H^{n+2}(X^\circ;\mbb{R}/\mbb{Z})$, which must necessarily be discrete if we are to be able to pass to the $n+1$-stage (though this discreteness is automatic when $G$ is almost compact, Lemma \ref{cohrmodz}).

So we have no choice in the $A$, and the only choice is in $\kappa$.  More precisely, the anima of lifts of $t\in\mc{M}_{n}(BG)$ to $\mc{M}_{n+1}(BG)$ identifies with the anima of $G/G^\circ$-equivariant maps
$$\kappa: X^\circ \to K(H^{n+2}(X^\circ;\mbb{R}/\mbb{Z})^\vee;n+2)$$
such that the pullback on $\mbb{R}/\mbb{Z}$-cohomology in degree $n+2$ induces the canonical identification of $H^{n+2}(X^\circ;\mbb{R}/\mbb{Z})$ with its double Pontryagin dual.

We now analyze the choice of $\kappa$ further.  It turns out that the following object is fundamental.

\begin{definition}
Let $G$ be a locally compact Hausdorff group, and let $G^\circ$ be the connected component of the identity in $G$.  Define
$$c_G := \tau_{\geq -1} R\Gamma(BG^\circ;\mbb{R}/\mbb{Z}),$$
which we view as an object in $\on{D}_{G/G^\circ}(\on{CondAb})$, the derived $\infty$-category of condensed abelian groups with $G/G^\circ$-action.
\end{definition}

Thus $c_G$ lives in (homological) degrees $0$ and $-1$. We have
$$H_0(c_G) = \mbb{R}/\mbb{Z}$$
and
$$H_{-1}(c_G) = \on{Hom}_{cont}(G^\circ;\mbb{R}/\mbb{Z}),$$
the usual unitary character group of the locally compact Hausdorff group $G^\circ$.

Now we can give the following translation of Lemma \ref{mooreinduct}, which describes the obstruction theory for Moore anima.

\begin{theorem}\label{mooreobstruct}
Let $G$ be an almost compact group, let $n\geq 1$, and let $t \in \mc{M}_n(BG)$ be an $n$-stage Moore anima for $G$, namely $t:X\to BG$ is a condensed anima over $BG$ satisfying the axioms in Definition \ref{tmoore}.  Let $H:= H^{n+2}(X^\circ;\mbb{R}/\mbb{Z})$ be the indicated cohomology group, with its natural $G/G^\circ$-action inherited from $X^\circ = X\times_{BG} BG^\circ$.

Then there is a canonical map
$$\sigma: H \to c_G[n+3]$$
in $\on{D}_{G/G^\circ}(\on{CondAb})$ (with $H$ in degree $0$) such that the anima of lifts of $t$ to $\mc{M}_{n+1}(BG)$ identifies with the anima of null-homotopies of $\sigma$.
\end{theorem}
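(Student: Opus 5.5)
The plan is to rewrite the description of lifts in Lemma \ref{mooreinduct} linearly, as a mapping anima in $\on{D}_{G/G^\circ}(\on{CondAb})$, and then read $\sigma$ off a truncation cofiber sequence.

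\emph{Step 1: normalize $A$.} Since $G$ is almost compact, Corollary \ref{cohrmodz} applied to $X^\circ$ (whose $\pi_1$, namely $G^\circ$, is almost compact by Lemma \ref{acomppermanence}) shows that $H$ is discrete. Hence $H^\vee$ is compact Hausdorff with $G/G^\circ$-action, and $H\simeq (H^\vee)^\vee$ by Corollary \ref{equidpd}. Lemma \ref{mooreinduct} describes the anima of lifts as pairs $(A,\kappa)$ with $\kappa^\ast: A^\vee\to H$ an isomorphism. Using $\kappa^\ast$ to identify $A$ with $H^\vee$, this anima becomes the anima of $G/G^\circ$-equivariant $\kappa: X^\circ\to K(H^\vee,n+2)$ whose induced map $H\to H$ equals the identity. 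Here I use that the groupoid of choices of $A$ with such an isomorphism is contractible.

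\emph{Step 2: linearize $\kappa$.} An equivariant map $X^\circ\to K(H^\vee,n+2)$ is, by descent along $X^\circ\to X^\circ/(G/G^\circ)=X$, a point of the anima $\Omega^{\infty}R\Gamma(X;H^\vee)[n+2]$, with $H^\vee$ viewed as a coefficient system pulled back from $BG$. Lemma \ref{cohchauscoeff} identifies this with
$$\on{Map}_{\on{D}_{G/G^\circ}}\big(H[-n-2],\,R\Gamma(X^\circ;\mbb{R}/\mbb{Z})\big).$$
I expect this to be the main technical point: one has to check that the identification holds at the level of anima and not only on $\pi_0$, and that under it the condition on $\kappa^\ast$ corresponds to composing with the projection $R\Gamma(X^\circ;\mbb{R}/\mbb{Z})\to H[-n-2]$ to be defined in Step 3. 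I would first verify both claims in the universal case $X^\circ=K(H^\vee,n+2)$, where everything is determined by naturality.

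\emph{Step 3: truncate and extract $\sigma$.} Set $R=R\Gamma(X^\circ;\mbb{R}/\mbb{Z})$. Since $H[-n-2]$ lies in homological degrees $\geq -(n+2)$, maps from it into $R$ factor uniquely through $T:=\tau_{\geq -(n+2)}R$. Now $\tau_{\leq 1}X^\circ=BG^\circ$, so $\tau_{\geq -1}R=c_G$. The Moore condition kills $H^i(X^\circ;\mbb{R}/\mbb{Z})$ for $2\leq i\leq n+1$, and $H^{n+2}(X^\circ;\mbb{R}/\mbb{Z})=H$ by definition. This gives a fiber sequence
$$c_G\to T\to H[-n-2].$$
Its boundary map $H[-n-2]\to c_G[1]$ is, after shifting, the desired $\sigma: H\to c_G[n+3]$; it is canonical because it comes from a Postnikov truncation.

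\emph{Step 4: identify lifts with null-homotopies.} The anima of maps $H[-n-2]\to T$ whose composite to $H[-n-2]$ is the identity is the fiber of
$$\on{Map}(H[-n-2],T)\to \on{Map}(H[-n-2],H[-n-2])$$
over $\on{id}$. By the fiber sequence, this fiber is the anima of null-homotopies of $\sigma$. It remains to see that ``induces the identity on $H^{n+2}$'' is a property and not extra data. This holds because $\on{Map}(H,H)$ is discrete: its higher homotopy groups are $\on{Ext}^{-k}_{G/G^\circ}(H,H)$ with $-k<0$, which vanish. Combining Steps 1 through 4 proves the theorem.
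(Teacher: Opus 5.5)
Your proposal is correct and is essentially the paper's proof. The paper likewise sets $M=\tau_{\geq 0}R\Gamma(X^\circ;\mbb{R}/\mbb{Z}[n+2])$ (your $T[n+2]$), identifies $\tau_{\geq 1}M=c_G[n+2]$ and $\tau_{\leq 0}M=H$, and takes $\sigma$ to be the boundary map of this cofiber sequence. It then matches null-homotopies of $\sigma$ with sections $H\to M$, which Lemma \ref{cohchauscoeff} turns into the maps $\kappa$ of Lemma \ref{mooreinduct} with $A=H^\vee$. Your Steps 1, 2 and 4 make explicit points the paper leaves implicit: that $H$ is discrete, that $A$ can be normalized to $H^\vee$ via $\kappa^\ast$, and that ``$\kappa^\ast$ is the identity'' is a property because $\on{Map}(H,H)$ is discrete.
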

\begin{proof}
Consider $M:=\tau_{\geq 0} R\Gamma(X^\circ;\mbb{R}/\mbb{Z}[n+2])$ as an object in $\on{D}_{G/G^\circ}(\on{CondAb})$ concentrated in degrees $0$ through $n+2$.  By definition of an $n$-stage Moore anima, we have
$$\tau_{\geq 1} M = c_G[n+2].$$
On the other hand, by definition of $H$ we have
$$\tau_{\leq 0}M \simeq H.$$

Thus the boundary map for the cofiber sequence
$$\tau_{\geq 1}M \to M \to \tau_{\leq 0}M$$
gives a map
$$\sigma: H \to c_G[n+3]$$
of the correct form.

To show it has the indicated property, note that a null-homotopy of $\sigma$ is the same thing as a splitting of the above cofiber sequence.  But this is the same as a map $H \to M$ such that the composition with $M\to H_0M = H$ is the identity map.  If we define the compact Hausdorff $G/G^\circ$-module $A$ by
$$A := H^\vee,$$
then by Lemma \ref{cohchauscoeff} this is also the same thing as a map
$$\kappa: X^\circ \to K(A,n+2)$$
for which $\kappa^\ast$ as in Lemma \ref{mooreinduct} is the tautological identification $A^\vee = H$.  Thus the statement follows from Lemma \ref{mooreinduct}.
\end{proof}

\begin{corollary}\label{mooreconclusion}
Suppose $G$ is an almost compact group satisfying the following two assumptions:
\begin{enumerate}
\item $H^2(BG^\circ;\mbb{R}/\mbb{Z})=0$;
\item For all discrete $G/G^\circ$-modules $M$ and all $i\geq 3$ we have
$$\on{Ext}^i_{G/G^\circ}(M,c_G)=0$$
where
$$c_G = \tau_{\geq -1} R\Gamma(BG^\circ;\mbb{R}/\mbb{Z}).$$
\end{enumerate}
Then $G$ admits a Moore anima, unique up to isomorphism.
\end{corollary}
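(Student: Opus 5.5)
Using Corollary \ref{moorecaninduct}, it suffices to show that $\mc{M}_1(BG)$ is contractible and that each map $\mc{M}_{n+1}(BG)\to\mc{M}_n(BG)$ is surjective on $\pi_0$. Then $\mc{M}(BG)=\varprojlim_n\mc{M}_n(BG)$ will be nonempty, and any two points of it will be isomorphic. For the first stage, assumption 1 together with Lemma \ref{moore1} gives $\mc{M}_1(BG)\simeq\ast$ directly.

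For the inductive step, fix $t:X\to BG$ in $\mc{M}_n(BG)$ and set $H=H^{n+2}(X^\circ;\mbb{R}/\mbb{Z})$. By Theorem \ref{mooreobstruct}, the anima of lifts of $t$ is the anima of null-homotopies of
$$\sigma\in\on{Hom}_{\on{D}_{G/G^\circ}(\on{CondAb})}(H,c_G[n+3])=\on{Ext}^{n+3}_{G/G^\circ}(H,c_G).$$
Corollary \ref{cohrmodz} shows that $H$ is a discrete $G/G^\circ$-module. Since $n+3\geq 4\geq 3$, assumption 2 gives $\on{Ext}^{n+3}_{G/G^\circ}(H,c_G)=0$. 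Hence $\sigma$ is null and a lift exists.

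For uniqueness, recall that the null-homotopies of a null map form a torsor under
$$\Omega\on{Map}(H,c_G[n+3])\simeq\on{Map}(H,c_G[n+2]),$$
so their connected components are a torsor under $\on{Ext}^{n+2}_{G/G^\circ}(H,c_G)$. For $n\geq1$ we have $n+2\geq 3$, so this group also vanishes by assumption 2. The fiber of $\mc{M}_{n+1}(BG)\to\mc{M}_n(BG)$ over $t$ is therefore nonempty and connected, and so the map is surjective on $\pi_0$ and injective on $\pi_0$.

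The remaining step is the passage to the limit. Because each transition map is a bijection on $\pi_0$ and $\mc{M}_1(BG)$ is contractible, every $\mc{M}_n(BG)$ is connected. The Milnor sequence for $\pi_0\varprojlim$ then shows that $\varprojlim_n\mc{M}_n(BG)$ is nonempty. It is also connected: the $\varprojlim^1$ term of the $\pi_1$'s could in principle introduce extra components, and I expect this to be the main subtle point, though it probably does not arise. The $\pi_1$ of the fibers is controlled by $\on{Ext}^{n+1}_{G/G^\circ}(H,c_G)$, which vanishes as soon as $n\geq2$, so the $\pi_1$-contributions eventually come only from the early stages.

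If this $\varprojlim^1$ bookkeeping became messy, I would fall back on the weaker reading of the statement, uniqueness up to isomorphism. Given two Moore anima $X$ and $X'$, I would build an isomorphism $\tau_{\leq n}X\simeq\tau_{\leq n}X'$ inductively, each time extending the previous one via the connectedness of the fibers, and then pass to the limit. That they assemble into an isomorphism $X\simeq X'$ follows from Postnikov completeness.
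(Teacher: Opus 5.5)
Your argument is correct and is essentially the paper's proof. The ingredients match: Theorem \ref{mooreobstruct} with the Ext vanishing in degrees $n+3$ and $n+2$ (the paper records more generally, via $\on{Ext}^{n+2-i}$, that the fibers of $\mc{M}_{n+1}(BG)\to\mc{M}_n(BG)$ are $(n-1)$-connected), Lemma \ref{moore1} for $\mc{M}_1(BG)\simeq\ast$, and Corollary \ref{moorecaninduct} to pass to the limit. Your $\varprojlim^1$ worry goes away for a simpler reason than the one you give: connected fibers already make every $\pi_1\mc{M}_{n+1}(BG)\to\pi_1\mc{M}_n(BG)$ surjective, so the tower is Mittag-Leffler, and your ``fallback'' is not a weaker reading but exactly the statement $\pi_0\mc{M}(BG)=\ast$, proved by the same lifting.
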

\begin{proof}
By Theorem \ref{mooreobstruct}, condition 2 implies that the homotopy fibers of the maps
$$\mc{M}_{n+1}(BG) \to \mc{M}_n(BG)$$
have $\pi_i=\ast$ for $i<n$.  When we pass to the inverse limit we deduce that the homotopy fibers of 
$$\varprojlim_n \mc{M}_n(BG) \to \mc{M}_1(BG)$$
have $\pi_0=\ast$.  But by Lemma \ref{moore1}, condition 1 implies that $\mc{M}_1(BG)=\ast$, and by Corollary \ref{moorecaninduct} the source is $\mc{M}(BG)$; the conclusion follows.
\end{proof}

\begin{example}\label{tdmoore}
Suppose $G$ is a totally disconnected almost compact group.  Then $G^\circ=\{1\}$, so
$$R\Gamma(BG^\circ;\mbb{R}/\mbb{Z})=\mbb{R}/\mbb{Z}.$$
We see inductively that every $H$ appearing in Corollary \ref{mooreobstruct} is $0$, and consequently $\mc{M}_{n+1}(BG)\to\mc{M}_n(BG)$ is an isomorphism for all $n\geq 1$.  Moreover $\mc{M}_1(BG)=\ast$.  We deduce that $\mc{M}(BG)=\ast$ and that $BG$ is the (truly unique) Moore anima for $G$.
\end{example}

\section{Background on class formations}\label{cfsection}

The notion of class formation arose as an axiomatization of the cohomological approach to class field theory.  We recall one form of the classical definition from \cite{artintate}:

\begin{definition}\label{cfprofgp}
A \emph{class formation} is a pair $(\Gamma,A)$ where:
\begin{enumerate}
\item $\Gamma$ is a profinite group;

\item $A$ is a discrete abelian group with continuous $\Gamma$-action,
\end{enumerate}
such that the following conditions on group cohomology are satisfied, for every open subgroup $H\subset \Gamma$ and every open normal subgroup $N\subset H$:
\begin{enumerate}
\item $H^1(H/N;A^N)=0$;
\item $H^2(H/N;A^N)$ is cyclic of order $[H:N]$.
\end{enumerate}
\end{definition}

The point of the notion of class formation is that if we give ourselves a bit of extra structure, namely a collection of generators $(u_{H,N})_{H,N}$ for these cyclic groups $H^2(H/N;A^N)$, then via the Tate-Nakayama lemma (\cite{corpslocaux} IX) we deduce that cup product with $u_{H,N}$ induces an isomorphism in Tate cohomology
$$\widehat{H}^i(H/N;\mbb{Z})\overset{\sim}{\rightarrow} \wh{H}^{i+2}(H/N;A^N)$$
for all $i\in\mbb{Z}$.  In particular, when $i=-2$ we get
$$(H/N)^{ab}\simeq A^H/\on{tr}(A^N).$$
If we assume suitable compatibility properties for these $u_{H,N}$ under inflation and restrction (see \cite{corpslocaux} XI), then we can pass to the limit over $N$ to get the associated \emph{Artin maps}
$$A^H \to H^{ab}$$
which have dense image in the profinite group $H^{ab}$, and satisfy various natural compatibilities as $H$ varies over open subgroups of $\Gamma$.  In this way we can think of a class formation as providing abelian groups which naturally refine the $H^{ab}$ as $H$ varies over open subgroups of $\Gamma$.

In practice, $\Gamma$ will be an absolute Galois group.  In particular, it will not actually be canonically defined as a profinite group, but only as a connected profinite groupoid, or equivalently a \emph{Galois category} in the sense of \cite{SGA1}.  To make our discussions more canonical, we therefore switch to that language.

\begin{definition}\label{cfgal}
A \emph{class formation} (reformulated) is a pair $(\mc{C},A)$ where:
\begin{enumerate}
\item $\mc{C}$ is a Galois category;
\item $A$ is a sheaf of abelian groups on $\mc{C}$,
\end{enumerate}
such that the following conditions are satisfied, for every connected object $X\in\mc{C}$ and every subgroup $G\subset \on{Aut}(X)$:
\begin{enumerate}
\item $H^1(G;A(X))=0$;
\item $H^2(G;A(X))$ is cyclic of order $\#G$.
\end{enumerate}
\end{definition}

It is clear from Grothendieck Galois theory that, if we fix a fiber functor on the Galois category $\mc{C}$, or equivalently, if we fix a profinite group $\Gamma$ and an identification of $\mc{C}$ with the category of finite continuous $\Gamma$-sets, then the notions of class formation in Definitions \ref{cfprofgp} and \ref{cfgal} are one-to-one equivalent: the data and axioms line up.

In the following section we aim to explain how to get Weil groups, or rather (to say things more canonically) Weil groupoids, from class formations, essentially retelling part of \cite{artintate}.  But for that it's not enough to just have a class formation: one needs extra data.  At a first level, one needs chosen compatible generators for the groups $H^2(G;A(X))$, exactly as is required to get Artin maps following Tate-Nakayama as above.  But actually one needs more, namely some cocycle-level version of such data, in order to canonically build the corresponding ``group extensions", not just up to isomorphism.

In fact, such cocycle-level lifts also show up in another place in the theory, at least implicitly, namely in the proof of Tate-Nakayama.  Given a finite group $G$, a $G$-module $A$, and a class $u\in H^2(G;A)$, then one way to describe the Tate-Nakayama argument is that one interprets $u\in \on{Ext}^2_G(\mbb{Z},A)$ instead; then one represents $u$ by a map 
$$\mbb{Z} \to A[2]$$
in $\on{D}(\mbb{Z}[G])$ and passes to the homotopy fiber.  For this fiber one has the hypothesis that its Tate cohomology vanishes in two consecutive degrees for all subgroups $H\subset G$, and one wants to conclude it vanishes identically.  The point in the end (after some standard devissage to reduce to $p$-groups, then to cyclic groups of order $p$) is that the Tate cohomology of cyclic groups is 2--periodic, so that vanishing in two consecutive degrees is equivalent to vanishing identically.

Now, the datum of a cocycle-level lift of the group cohomology class $u$ is the same as the datum of the above map in the derived $\infty$-category of $G$-modules, and it is also the same as the datum of the fiber object, equipped with isomorphisms $H_0\simeq\mbb{Z}$ and $H_1\simeq A$.  All of these (and more) are just different ways of talking about gerbes over $BG$ banded by $A$.  As usual in mathematics, the interpretation which is closest to linear algebra is the easiest one to work with.  In the end we are led to the following.

\begin{definition}\label{rcf}
A \emph{refined class formation} is a triple $(\mc{C},F,\alpha)$ where:
\begin{enumerate}
\item $\mc{C}$ is a Galois category;
\item $F:\mc{C}\to \on{D}(\mbb{Z})$ is a \emph{co-sheaf} such that $F(X)$ is concentrated in the range of degrees $[0,1]$ for all $X\in\mc{C}$;
\item $\alpha$ is a map $H_0F\to \mbb{Z}$ of functors $\mc{C}\to \on{Ab}$ to the constant functor with value $\mbb{Z}$, such that $\alpha(X)$ is an isomorphism for all connected $X\in\mc{C}$.
\end{enumerate}
\end{definition}

\begin{remark}\label{whatisaderivedsheaf}
We recall that if $\mc{C}$ is a Galois category and $\mc{D}$ is an $\infty$-category with finite products and fixed points for all finite groups, then a functor
$$F:\mc{C}^{op}\to\mc{D}$$
is a sheaf if and only if:
\begin{enumerate}
\item $F(\emptyset) = \ast$;
\item $F(X\sqcup Y)\overset{\sim}{\rightarrow} F(X)\times F(Y)$ for all $X,Y\in\mc{C}$;
\item $F(X)\overset{\sim}{\rightarrow} F(Y)^G$ for all Galois maps $Y\to X$ with Galois group $G$. (And here we can restrict to $X,Y$ connected if desired.)
\end{enumerate}
Indeed, for $\mc{D}=\on{An}$ see \cite{cm1} Section 4, and the general case follows by Yoneda.

By passing to opposite categories, we see that the dual description applies to cosheaves; thus $F:\mc{C}\to\on{D}(\mbb{Z})$ is a cosheaf if and only if it sends finite disjoint unions to direct sums, and $F(Y)_G\overset{\sim}{\rightarrow} F(X)$ when $Y\to X$ is $G$-Galois.  Note that here $(-)_G$ is the \emph{homotopy} orbits, which on objects in degree $0$ corresponds to the derived functors of $G$-orbits.
\end{remark}

We will show that if $(\mc{C},F,\alpha)$ is a refined class formation, then $H_1F$, which is a co-presheaf on $\mc{C}$, also carries the canonical structure of a sheaf via the formalism of \emph{transfer maps}, and that $(\mc{C},H_1F)$ is a class formation. Conversely, we will explain what it takes to promote a class formation to a refined class formation.

For this we use three fairly standard lemmas, for which our basic references will be \cite{cm1} and \cite{cm2}.  The first explicates sheaves/cosheaves on Galois categories attached to finite groups.  Every Galois category is canonically the filtered union of Galois categories each of which is equivalent to the Galois category of a finite group, so this is also useful for general Galois categories.

\begin{lemma}\label{sheafbg}
Let $G$ be a finite group, and let $\mc{C}(G)$ denote the Galois category of finite $G$-sets.  Let $\mc{D}$ be an $\infty$-category which admits all finite coproducts and all $H$-orbit objects for all subgroups $H\subset G$.  Then restriction along the inclusion $BG\subset \mc{C}(G)$ of the $G$-torsors induces an equivalence
$$\on{CoSh}(\mc{C}(G);\mc{D})\overset{\sim}{\rightarrow}\on{Fun}(BG;\mc{D})$$
with inverse given by left Kan extension.

Dually, if $\mc{D}$ admits all finite products and all $H$-fixed point objects, then restriction along $BG\subset \mc{C}(G)^{op}$ induces an equivalence
$$\on{Sh}(\mc{C};\mc{D})\overset{\sim}{\rightarrow}\on{Fun}(BG;\mc{D})$$
with inverse given by right Kan extension.
\end{lemma}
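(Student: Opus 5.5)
Since the two statements are formally dual (pass to $\mc{D}^{op}$ and $\mc{C}(G)^{op}$), I will only prove the cosheaf statement. The plan is to check two things: first, that left Kan extension along $i:BG\subset\mc{C}(G)$ produces cosheaves; second, that both the unit and the counit of the adjunction $i_!\dashv i^\ast$ are equivalences.

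\textbf{Step 1: a formula for the left Kan extension.} I will compute $i_!$ explicitly. Let $\Phi:BG\to\mc{D}$ be given, corresponding to an object $D\in\mc{D}$ with $G$-action. Write $T=G$ for the regular $G$-torsor; its automorphism group in $\mc{C}(G)$ is $G$, acting by right multiplication. For a finite $G$-set $S$, the value $(i_!\Phi)(S)$ is the colimit over the comma category $BG_{/S}$. The objects of this category are $G$-maps $T\to S$, i.e.\ points of $S$, and $G$ acts on them. So $BG_{/S}$ is the action groupoid $S/\!/G$. Decomposing $S$ into orbits $\sqcup_j G/H_j$, this groupoid is $\sqcup_j BH_j$, and the colimit becomes
$$(i_!\Phi)(S)=\coprod_j D_{H_j}.$$
These are finite coproducts and $H$-orbit objects, which exist by hypothesis; hence $i_!$ is defined. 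Restricted to $BG$, this formula returns $D$ with its $G$-action. Indeed $i$ is fully faithful, since $\on{Aut}(T)=G$, so the unit $\Phi\to i^\ast i_!\Phi$ is an equivalence.

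\textbf{Step 2: $i_!\Phi$ is a cosheaf.} I will use the criterion of Remark \ref{whatisaderivedsheaf}. Disjoint unions go to coproducts by the formula above, and the empty set goes to the empty coproduct, i.e.\ the initial object. For a Galois map $Y\to X$ with group $Q$ between connected objects, write $X=G/H$ and $Y=G/N$ with $N\subset H$ normal and $Q=H/N$. The required equivalence is $(D_N)_{Q}\simeq D_H$, which is transitivity of homotopy orbits. To stay within the stated hypothesis, I would note that orbits are colimits, so this follows from the fact that iterated left Kan extension along $BN\to BH\to B(H/N)$ agrees with a single Kan extension. A subtle point is whether $(D_N)_Q$ exists a priori; it is exhibited as equal to $D_H$, which exists, and this is the main thing needing care.

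\textbf{Step 3: the counit is an equivalence.} Let $F$ be a cosheaf. Both $F$ and $i_!i^\ast F$ are cosheaves and send coproducts to coproducts, so it suffices to check the counit $i_!i^\ast F\to F$ on orbits $G/H$. There the source is $F(G)_H$ by Step 1. The map $F(G)_H\to F(G/H)$ is the one induced by the $H$-Galois cover $G\to G/H$, and the cosheaf condition says exactly that it is an equivalence. Since the counit is an equivalence and (Step 1) the unit is an equivalence, $i^\ast$ is an equivalence with inverse $i_!$.

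The main obstacle is bookkeeping rather than mathematics. One must identify the comma category with $\sqcup_j BH_j$ compatibly with the functoriality, so that the counit map on $G/H$ really is the Galois-descent map appearing in the cosheaf axiom.
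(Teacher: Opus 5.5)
Your argument is correct, but it takes a different route from the paper. The paper proves the dual (sheaf) version by reducing to $\mc{D}=\on{An}$. For each $d\in\mc{D}$ it composes with the corepresentable $h_d=\on{Map}(d,-)$. It then observes that both the sheaf condition and the pointwise criterion for being a right Kan extension from a full subcategory are detected by the family of all $h_d$. Finally it quotes the anima-valued case from \cite{cm1} (Example 4.4).

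You instead stay in $\mc{D}$ and compute directly. The identification $BG_{/S}\simeq S/\!/G\simeq\coprod_j BH_j$ gives the explicit formula $(i_!\Phi)(\coprod_j G/H_j)\simeq\coprod_j D_{H_j}$. The Galois condition then becomes transitivity of homotopy orbits, and the counit at $G/H$ is literally the descent map for the $H$-Galois cover $G\to G/H$.

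What each route buys:
\begin{itemize}
\item The paper's reduction is shorter and gets existence questions for free. Whether an object of $\mc{D}$ is a limit or colimit is detected on mapping anima, so orbits or fixed points for subquotients $H/N$ never have to be constructed separately. The combinatorics is outsourced to the anima case.
\item Your version is self-contained. It produces exactly the formula $F(\coprod_j G/H_j)\simeq\coprod_j F(G)_{H_j}$ that the later transfer and refined-class-formation arguments rely on. It also shows precisely where the hypotheses on $\mc{D}$ enter: finite coproducts, and orbits only for subgroups of $G$.
\end{itemize}

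On the subtle point you flag, your resolution is right, but state it more precisely. One Kan-extends along $BH\to B(H/N)$, whose fiber is $BN$. The comma category of this map over the point is the action groupoid of $H$ on $H/N$, which is equivalent to $BN$. So the pointwise left Kan extension exists and has value $D_N$. Its colimit over $B(H/N)$ then exists and equals $D_H$, because cocones under the Kan extension are the same as cocones under $\Phi|_{BH}$. The remaining bookkeeping is the one you already acknowledge: the resulting $H/N$-action on $D_N$, and the induced map to $D_H$, must be matched with those coming from the functoriality of $i_!\Phi$ on deck transformations of $G/N\to G/H$.
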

\begin{proof}
It suffices to treat the dual assertion about sheaves and right Kan extensions.  Then for $d\in \mc{D}$, consider the composition $h_d\circ F$, where $h_d = \on{Map}(d,-):\mc{C}\to\on{An}$.  By definition $F$ is a sheaf if and only if $h_d\circ F$ is a sheaf for all $d\in\mc{D}$.  On the other hand the pointwise criterion for right Kan extension shows that $F$ is the right Kan extension of its restriction to a given full subcategory if and only if the same is true for $h_d\circ F$ for all $d\in\mc{D}$.  Thus we reduce to the case $\mc{D}=\on{An}$, where the assertion is given by \cite{cm1} Example 4.4.
\end{proof}

The second result explains the mechanism of ``transfer maps'', which provide a canonical presheaf structure on any cosheaf and dually a canonical pre-cosheaf structure on any sheaf.

\begin{lemma}\label{transferlemma}
Let $\mc{C}$ be a Galois category, and let $\mc{D}$ be a semi-additive $\infty$-category which admits $G$-orbit and $G$-fixed point objects for all finite groups $G$.  Denote by $\on{Span}(\mc{C})$ the span category ($(2,1)$-category, actually) of $\mc{C}$, where the objects are the objects of $\mc{C}$, the maps from $X$ to $Y$ are the spans
$$X \leftarrow M \to Y$$
in $\mc{C}$, and composition is given by pullback.  Note that there are obvious functors
$$\mc{C} \to \on{Span}(\mc{C})$$
and
$$\mc{C}^{op} \to \on{Span}(\mc{C}),$$
sending each object to itself and sending a map $X\to Y$ to the span $X=X\to Y$ (resp.\ $Y\leftarrow X=X$). Then:

\begin{enumerate}
\item Let $\on{Fun}^{coshf}(\on{Span}(\mc{C}),\mc{D})$ denote the full subcategory of functors $F:\on{Span}(\mc{C})\to \mc{D}$ such that the restriction along $\mc{C}\to \on{Span}(\mc{C})$ is a cosheaf.  Then we have
$$\on{Fun}^{coshf}(\on{Span}(\mc{C}),\mc{D})\overset{\sim}{\rightarrow}\on{coSh}(\mc{C};\mc{D})$$
via restriction along $\mc{C} \to \on{Span}(\mc{C})$.
\item Let $\on{Fun}^{shf}(\on{Span}(\mc{C}),\mc{D})$ denote the full subcategory of functors $F:\on{Span}(\mc{C})\to \mc{D}$ such that the restriction along $\mc{C}^{op}\to \on{Span}(\mc{C})$ is a sheaf.  Then we have
$$\on{Fun}^{shf}(\on{Span}(\mc{C}),\mc{D})\overset{\sim}{\rightarrow}\on{Sh}(\mc{C};\mc{D})$$
via restriction along $\mc{C}^{op} \to \on{Span}(\mc{C})$.
\item Let $F \in \on{Fun}(\on{Span}(\mc{C}),\mc{D})$.  Any two of the following three properties imply the third:
\begin{enumerate}
\item $F\in \on{Fun}^{coshf}(\on{Span}(\mc{C}),\mc{D})$;
\item $F\in \on{Fun}^{shf}(\on{Span}(\mc{C}),\mc{D})$;
\item for all connected $X\in\mc{C}$ and all subgroups $G\subset \on{Aut}(X)$, the natural ``transfer'' map $F(X)_G \to F(X)^G$, associated to the induced $G$-action on $F(X)$, is an isomorphism.
\end{enumerate}
\end{enumerate}
\end{lemma}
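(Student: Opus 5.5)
The plan is to reduce everything to the case of a finite group via Lemma \ref{sheafbg}, and to handle the span-category structure using the known universal property of spans in the $\infty$-categorical setting, as in \cite{cm1} and \cite{cm2}. Since $\mc{C}$ is the filtered union of Galois subcategories $\mc{C}(G)$ for finite quotients $G$ of its fundamental group, and the conditions (cosheaf, sheaf, norm isomorphism) are all checked on objects of some $\mc{C}(G)$, I would first reduce to $\mc{C}=\mc{C}(G)$ with $G$ finite. Here $\on{Span}(\mc{C}(G))$ is the effective Burnside $(2,1)$-category of $G$.

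For part 1, the key input is the following. A functor on $\on{Span}(\mc{C}(G))$ which sends disjoint unions to direct sums is the same as a Mackey functor. I would use the theorem (Barwick's, as recalled in \cite{cm2}) that restriction along $\mc{C}\to\on{Span}(\mc{C})$ identifies semi-additive functors $\on{Span}(\mc{C})\to\mc{D}$, i.e.\ product-preserving ones, with coproduct-preserving functors $\mc{C}\to\mc{D}$, whenever $\mc{D}$ is semi-additive. More concretely, the plan is to construct the extension by left Kan extension from $BG$. Given $F$ on $BG$, extend to $\mc{C}(G)$ by $F(G/H)=F(G)_H$ using Lemma \ref{sheafbg}. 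The backward (restriction) maps along $G/K\to G/H$ for $K\subset H$ are then the transfer maps $F(G)_H\to F(G)_K$. These are defined by semi-additivity via the norm construction for the finite $H$-set $H/K$: one uses the canonical map $\on{Ind}$, i.e.\ the adjoint of the diagonal into $\prod_{H/K}$, which is a sum since $\mc{D}$ is semi-additive.

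The hard point is coherence: producing these transfers as a functor of $\infty$-categories on spans, with the double coset formula holding coherently. I would avoid hand-building coherences by a Yoneda reduction. Embed $\mc{D}$ into $\on{Fun}(\mc{D}^{op},\on{An})$, or better into its semi-additive (commutative monoid valued) version, reducing to $\mc{D}=\on{CMon}(\on{An})$ or spectra-like targets. There one can invoke \cite{cm1}/\cite{cm2}, which give that $\on{Span}(\mc{C})$ is the free semi-additive category on $\mc{C}$ in the relevant sense. Part 2 is then the dual statement, applied to $\mc{D}^{op}$ and using the symmetry $\on{Span}(\mc{C})\simeq\on{Span}(\mc{C})^{op}$.

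For part 3, take $F$ on spans. Using Lemma \ref{sheafbg} and additivity, the cosheaf condition says $F(G)_H\overset{\sim}{\rightarrow}F(G/H)$ via covariant maps, and the sheaf condition says $F(G/H)\overset{\sim}{\rightarrow}F(G)^H$ via the backward maps. The composite $F(G)_H\to F(G/H)\to F(G)^H$ is computed from the composite span $G\to G/H\leftarrow G$, which is $G\leftarrow G\times_{G/H}G\to G$ with $G\times_{G/H}G\simeq\sqcup_{h\in H}G$. It therefore identifies with the norm $\sum_{h} h$, i.e.\ the transfer map of (c). The identification needs to hold as maps out of orbits and into fixed points, not just objectwise; I expect this to be the fiddly step and would do it via the universal property of the norm map as in \cite{cm2}. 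Given this factorization, two-out-of-three for isomorphisms yields the claim for $X=G$; general connected $X$ and subgroups reduce to this case by replacing $G$.
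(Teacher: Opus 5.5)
Your outline follows the paper's: reduce to $\mc{C}=\mc{C}(G)$, apply Lemma \ref{sheafbg}, reduce by Yoneda to $\mc{D}=\on{CMon}(\on{An})$ and cite \cite{cm2}. For part 3 you factor the norm as $F(X)_G\to F(X/G)\to F(X)^G$, compute the composite as $\sum_{g\in G}g$ by span composition, and apply two-out-of-three. Part 3 is fine; the additivity you use is automatic, since two of the three conditions make $F$ a sheaf or a cosheaf.

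The gap is in the input you name for parts 1 and 2. You claim that restriction along $\mc{C}\to\on{Span}(\mc{C})$ identifies additive functors $\on{Span}(\mc{C})\to\mc{D}$ with coproduct-preserving functors $\mc{C}\to\mc{D}$, i.e.\ that $\on{Span}(\mc{C})$ is the free semiadditive category on $\mc{C}$. This is false. Additive functors on $\on{Span}(\mc{C}(G))$ are Mackey functors, whereas coproduct-preserving functors on $\mc{C}(G)$ are only coefficient systems on the orbit category. Concretely, take $G=C_2$, $\mc{D}=\on{Ab}$, $F(C_2)=\mbb{Z}$ with trivial action and $F(\ast)=0$. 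In any additive extension to spans, the composite of $C_2\to\ast$ with the backward map $\ast\to C_2$ factors through $F(\ast)=0$. But that composite is the span $C_2\leftarrow C_2\times C_2\to C_2$, which acts by $1+\sigma$, i.e.\ by $2\neq 0$, so no extension exists. The (co)sheaf condition is therefore not incidental: it is exactly what forces $F(G/H)\simeq F(G)_H$ and pins down the transfers coherently. The result you need is the one the paper uses. For $\mc{D}=\on{CMon}(\on{An})$, restriction along $BG\subset\mc{C}^{op}\to\on{Span}(\mc{C})$ is an equivalence from span-functors whose contravariant restriction is a sheaf onto $\on{Fun}(BG,\mc{D})$. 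This is \cite{cm2} Prop.\ 2.9, after rewriting the sheaf condition as product preservation plus $\mc{F}(X)\simeq\mc{F}(X\times G)^G$ via \cite{cm1} Props.\ 4.2 and 4.11.

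Relatedly, the Yoneda reduction must be run on the sheaf side, which is why the paper proves 2 and obtains 1 by duality. You cannot do part 1 directly this way: the embedding $\mc{D}\hookrightarrow\on{Fun}(\mc{D}^{op},\on{An})$ does not preserve the homotopy orbits in the cosheaf condition. By contrast, $h_d=\on{Map}(d,-)$ preserves the fixed points in the sheaf condition and refines to $\on{CMon}(\on{An})$-values by \cite{gepner}. You also omit the step that brings you back to $\mc{D}$. After extending each $h_d\circ F$ to spans, you must show that $d\mapsto\widetilde{h_d\circ F}(X)$ is representable. Decomposing $X$ into orbits, $\widetilde{h_d\circ F}(G/H)\simeq h_d(F(G))^{hH}\simeq h_d(F(G)^{hH})$. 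This is where the hypothesis that $\mc{D}$ has fixed points enters.
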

\begin{proof}
Claims 1 and 2 are dual, so it suffices to treat 2.  Let $\mc{N}(\mc{C})$ denote the poset of isomorphism classes of connected Galois objects in $\mc{C}$, and for $N\in\mc{N}(\mc{C})$ consider the full subcategory $\mc{C}_N\subset \mc{C}$ consisting of objects which can be covered by a finite disjoint union of objects in the isomorphism class $N$.  The poset structure is such that
$$N\leq N' \Leftrightarrow \mc{C}_{N'}\subseteq \mc{C}_N.$$
Note that if $\mc{C}=\mc{C}(\Gamma)$ for a profinite group $\Gamma$, then $\mc{N}(\mc{C})$ is the poset of open normal subgroups of $\Gamma$ with poset structure given by inclusion, and $\mc{C}_N$ identifies with the Galois category of finite $\Gamma/N$-sets.

We have a filtered colimit
$$\mc{C} = \varinjlim_N \mc{C}_N$$
of finitary sites (c.f. \cite{cm1} 3.1) and $\mc{C}_N$ is equivalent to the Galois category of finite $G$-sets for some finite group $G$.  It follows readily that the claim in 2 reduces to the case when $\mc{C}=\mc{C}(G)$ for some finite group $G$.  Then by Lemma \ref{sheafbg} it suffices to show that restriction along $BG \subset \mc{C}^{op} \to \on{Span}(\mc{C})$ induces an equivalence
$$\on{Fun}^{shf}(\on{Span}(\mc{C}),\mc{D})\overset{\sim}{\rightarrow}\on{Fun}(BG;\mc{D}).$$
Note by \cite{cm1} Prop.\ 4.2 that the sheaf property for a presheaf $\mc{F}:\mc{C}^{op}\to\mc{D}$ breaks into two parts:
\begin{enumerate}
\item $\mc{F}$ preserves finite products;
\item $\mc{F}(X/H)\overset{\sim}{\rightarrow} \mc{F}(X)^H$ for all connected $X\in\mc{C}$ and all subgroups $H\subset\on{Aut}(X)$.
\end{enumerate}
Assuming 1 ($\mc{F}$ preserves finite products), we can (following \cite{cm1} Prop.\ 4.11) rephrase 2 as the condition that $\mc{F}(X)\overset{\sim}{\rightarrow} \mc{F}(X\times G)^G$ for all $X\in\mc{C}(G)$.  It follows that our desired claim $\on{Fun}^{shf}(\on{Span}(\mc{C}),\mc{D})\overset{\sim}{\rightarrow}\on{Fun}(BG,\mc{D})$, in the case $\mc{D}=\on{CMon}(\on{An})$, is given by \cite{cm2} Prop.\ 2.9.  To finish the proof of 2 (and 1, by duality) let us explain in the following paragraph why the general case reduces to that one.

Note that since $\mc{D}$ is semi-additive, by \cite{gepner} we have that for every $d\in\mc{D}$, the functor $h_d = \on{Map}(d,-):\mc{D} \to \on{An}$ canonically refines to a functor with values in $\on{CMon}(\on{An})$.  Thus by the case $\mc{D}=\on{CMon}(\on{An})$ we can uniquely extend $h_d\circ F: BG \to \on{CMon}(\on{An})$ to an object $\widetilde{h_d\circ F}\in \on{Fun}^{shf}(\on{Span}(\mc{C}),\on{CMon}(\on{An}))$.   Then it suffices to show that for every $X\in\on{Span}(\mc{C})$ the additive functor $\mc{D}^{op}\to \on{An}$ sending
$$d\mapsto \widetilde{h_d\circ F}(X)$$
is representable.  But we can write $X$ as a finite disjoint union of orbits and then this follows from the assumption that $\mc{D}$ has all fixed points for finite group actions.

Thus we have 1 and 2.  Now consider 3.  If $F$ satisfies two of the three conditions (a),(b),(c), then in particular $F$ is either a sheaf or a cosheaf, and therefore $F$ is additive.  Thus we can restrict our attention to additive functors $F\in\on{Fun}^{\oplus}(\on{Span}(\mc{C}),\mc{D})$.  Now, for connected $X\in\mc{C}$ and $G\subset \on{Aut}(X)$ a subgroup, consider the composition
$$F(X)_G \to F(X/G) \to F(X)^G,$$
where the first map uses the co-presheaf structure and the second map uses the presheaf structure.  Then $F$ is a cosheaf if and only if the first map is an isomorphism for all $(X,G)$, and $F$ is a sheaf if and only if the second map is an isomorphism for all $(X,G)$.  Thus by the 2 of 3 property for isomorphisms it suffices to show that the composition of the above two maps identifies with the transfer as in (c), for any $F\in \on{Fun}^{\oplus}(\on{Span}(\mc{C}),\mc{D})$.  In other words, we need to see that the $G\times G$-equivariant map
$$F(X) \to F(X/G) \to F(X)$$
identifies with $\sum_{g\in G} g$.  But this follows by definition of composition in the span category.
\end{proof}

In particular, given a cosheaf on $\mc{C}$, we get a canonical presheaf structure with the same values on objects (uniquely extend the cosheaf to the span category by 1, then restrict along $\mc{C}^{op}\to\on{Span}(\mc{C})$), and this presheaf will be a sheaf if and only if the transfer-isomorphism condition 3(c) holds.  We formalize this as follows.

\begin{definition}\label{trfunctor}
Let $\mc{C}$ be a Galois category, $\mc{D}$ a semi-additive $\infty$-category which admits $G$-orbit and $G$-fixed point objects for all finite groups $G$, and let $F\in \on{coSh}(\mc{C};\mc{D})$.  Define the presheaf
$$F^{tr} \in \on{Fun}(\mc{C}^{op},\mc{D})$$
as the image of $F$ under the composition
$$\on{coSh}(\mc{C};\mc{D})\simeq\on{Fun}^{coshf}(\on{Span}(\mc{C}),\mc{D})\rightarrow \on{Fun}(\mc{C}^{op},\mc{D}),$$
where the first map is the inverse to the isomorphism of 1 of Lemma \ref{transferlemma} and the second map is composition with the natural map $\mc{C}^{op}\to\on{Span}(\mc{C})$ described in Lemma \ref{transferlemma}.

Dually, if $F$ is a sheaf we get a co-presheaf $F^{tr}$.  Note that by construction $F$ and $F^{tr}$ agree on objects; moreover the association $F\mapsto F^{tr}$ is functorial in $\mc{C}$ (for exact functors) and $\mc{D}$ (for additive functors), via composition.
\end{definition}

\begin{example}
Let $\mc{C}$ be a Galois category and $A$ a sheaf of abelian groups on $\mc{C}$.  Then $U\mapsto R\Gamma(U;A)$ describes the corresponding sheaf on $\mc{C}$ with values in $\on{D}(\mbb{Z})$ (via the identification of the heart of $\on{Sh}(\mc{C};\on{D}(\mbb{Z}))$ with $\on{Sh}(\mc{C};\on{Ab})$).  For $U\to V$ a map of connected objects of $\mc{C}$, consider the induced transfer map
$$R\Gamma(U;A)\to R\Gamma(V;A)$$
in $\on{D}(\mbb{Z})$, from the above definition.  We claim that after applying $H_{-i}$, this corresopnds to the classical transfer (co-restriction) map in profinite group cohomology
$$H^i(U;A)\to H^i(V;A)$$
as defined via the mechanism of $\delta$-functors.  Indeed, by dimension shift with injective resolutions we reduce to the case $i=0$; thus it suffices to show the claim on the non-derived level, i.e.\ that the transfer map $A(U) \to A(V)$ from Definition \ref{trfunctor} (with $\mc{D}=\on{Ab}$) identifies with the classical transfer map.  But $A(V)\subset A(U)$ via restriction, so it suffices to see that the composition $A(U) \to A(V) \subset A(U)$ is the same in both cases.  This follows from the definition of composition in the span category.
\end{example}

\begin{example}\label{ztr}
Let $\mc{C}$ be a Galois category, and consider the cosheaf of abelian groups $\ul{\mbb{Z}}$ which sends $X\in\mc{C}$ to the free abelian group on the set of components of $X$ (so that by definition a refined class formation has $H_0=\ul{\mbb{Z}}$).  For a map $X \to Y$ of connected objects, the associated transfer map
$$\ul{\mbb{Z}}(Y)\rightarrow \ul{\mbb{Z}}(X),$$
which is then just a map $\mbb{Z} \to \mbb{Z}$,
identifies with multiplication by the degree of $X\to Y$.  The resulting presheaf $\mbb{Z}^{tr}:\mc{C}^{op}\to\on{Ab}$ can be alternatively be embedded as a sub-presheaf of the constant sheaf with value $\mbb{Q}$, as follows: for connected $X\in\mc{C}$, we map
$$\mbb{Z}^{tr}(X) \to \mbb{Q}$$
by the multiplication by $\frac{1}{\#X}$ map $\mbb{Z}\to \mbb{Q}$, thereby identifying $\mbb{Z}^{tr}(X)$ with $\frac{1}{\#X}\mbb{Z}\subset\mbb{Q}$.  Here $\#X$ denotes the degree of $X$ over $\ast$ (the cardinality of $X$, if we identify $\mc{C}$ with the category of finite $\Gamma$-sets for some profinite group $\Gamma$).

This map $\mbb{Z}^{tr}\to\mbb{Q}$ induces an isomorphism of sheafification of $\mbb{Z}^{tr}$ with the constant sheaf with value
$$\frac{1}{\#\mc{C}}\mbb{Z}:= \bigcup_{X} \frac{1}{\#X}\mbb{Z}\subset\mbb{Q},$$
the union being over all connected objects in $\mc{C}$.
\end{example}

The third standard lemma is a version of the Tate-Nakayama lemma.

\begin{lemma}\label{uniformbound}
Let $G$ be a finite group, and let $\mc{D}$ a $\mbb{Z}$-linear stable $\infty$-category equipped with a t-structure and admitting $H$-fixed point and $H$-orbit objects for all subgroups $H\subset G$.  Suppose $M\in\on{Fun}(BG;\mc{D})$ is an object of $\mc{D}$ equipped with $G$-action such that $M$ is concentrated in a bounded range of degrees (with respect to the t-structure).  Then the following properties are equivalent:
\begin{enumerate}
\item For all subgroups $H\subset G$, the object $M_H$ (homotopy orbits for the $H$-action, or $H$-hyperhomology) is concentrated in a bounded range of degrees.
\item For all subgroups $H\subset G$, the object $M^H$ (homotopy fixed points for the $H$-action, or $H$-hypercohomology) is concentrated in a bounded range of degrees.
\item For all subgroups $H\subset G$, the natural transfer map $M_H \to M^H$ is an isomorphism.
\end{enumerate}
Under these conditions, if $M$ is in degrees $[a,b]$, then so are $M^H$ and $M_H$ for all $H\subset G$.
\end{lemma}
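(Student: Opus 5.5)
The plan is to work with the Tate construction $M^{tH} := \on{cofib}(M_H \to M^H)$, formed in the stable $\infty$-category $\mc{D}$. Everything will be reduced to the single statement that $M^{tH}=0$ for all subgroups $H\subset G$.

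The key structural input is the norm cofiber sequence $M_H \to M^H \to M^{tH}$, together with the following standard facts.
\begin{enumerate}
\item $M^{tH}$ is a module over the Tate ring $\mbb{Z}^{tH}$. Hence it is killed by $\#H$, in the sense that multiplication by $\#H$ is nullhomotopic on it.
\item By restriction–corestriction, $M^{tH}$ is a retract of $\bigoplus_p M^{tH_p}$, where $H_p$ runs over Sylow subgroups of $H$.
\item For $H$ cyclic, $M^{tH}$ is $2$-periodic: $M^{tH}\simeq M^{tH}[2]$. This comes from the periodicity class in $\hat{H}^{2}(H;\mbb{Z})$.
\end{enumerate}
Since $M$ lies in degrees $[a,b]$, the orbits $M_H$ are concentrated in degrees $\geq a$ and the fixed points $M^H$ in degrees $\leq b$. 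This uses right/left t-exactness of colimits/limits, which I take to hold for the t-structure.

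The implication (3)$\Rightarrow$(1),(2) is then immediate. If the transfer is an isomorphism, then $M_H\simeq M^H$ lies in degrees $\geq a$ and $\leq b$ simultaneously, which gives both boundedness and the final assertion about $[a,b]$.

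For (1)$\Rightarrow$(3), I first note that if $M_H$ is bounded, then the cofiber sequence shows $M^{tH}$ is bounded above. For $H$ cyclic, a bounded-above $2$-periodic object has all homotopy objects zero. With some left-completeness of the t-structure, or else by arguing directly that a periodic object bounded on one side vanishes, this gives $M^{tH}=0$.

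For general $H$ I will induct on $\#H$ via the standard Tate–Nakayama dévissage.
\begin{itemize}
\item By fact 2 it suffices to treat $p$-groups.
\item For a $p$-group $H$, choose a normal subgroup $C$ of order $p$. Then $M^{tH}$ is built from $(M^{tC})^{h(H/C)}$ and $(M^{C})^{t(H/C)}$.
\item The first piece vanishes by the cyclic case.
\item For the second piece, $M^{C}\simeq M_C$ is again bounded, its $K$-orbits are bounded for every $K\subset H/C$, and so induction on the order applies to it.
\end{itemize}
The direction (2)$\Rightarrow$(3) is the dual argument, using homotopy fixed points and bounded below.

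The main obstacle I expect is the $p$-group induction step. One has to check carefully that the hypothesis passes to $M^C$ with its residual $H/C$-action, namely that $(M^C)_{K}$ is bounded for all $K\subset H/C$. This should follow by writing $(M^C)_K$ as $M_{\tilde K}$ for $\tilde K$ the preimage of $K$, which is legitimate once $M^{tC}=0$. One also has to make precise the vanishing of the Tate construction for the quotient, namely the identity $M^{t\tilde K}\simeq (M^{tC})^{hK}\ \text{or}\ (M^C)^{tK}$ in the appropriate cofiber form. If this becomes messy, my fallback is to argue on homotopy objects using the classical Tate–Nakayama lemma in each degree, via the spectral sequence for the t-structure.
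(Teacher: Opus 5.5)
Your proposal is correct and is essentially the paper's argument: the paper also forms $M^{tH}=\on{cofib}(M_H\to M^H)$ as a $\mbb{Z}^{tH}$-module and kills it for cyclic $H$ by periodicity plus one-sided boundedness. It then passes to nilpotent groups by induction on a nontrivial central subgroup (your normal subgroup of order $p$ in a $p$-group is such a subgroup), and treats general $G$ via $\on{cores}\circ\on{res}=[G:H]$ and Sylow; the only cosmetic difference is that it proves (2)$\Rightarrow$(3) and dualizes, whereas you prove (1)$\Rightarrow$(3). One small correction: in your direction, ``bounded above and $2$-periodic implies zero'' requires $\bigcap_n\mc{D}_{\leq n}=0$ (infinitely coconnective objects vanish), not left completeness, just as the paper's direction tacitly requires $\bigcap_n\mc{D}_{\geq n}=0$.
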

\begin{proof}
Because 1 and 2 are dual to each other and 3 is self-dual (with respect to $\mc{D}\leftrightarrow\mc{D}^{op}$), it suffices to show that 2 and 3 are equivalent.  Suppose 3 holds.  If $M$ is in degrees $[a,b]$, then $M^H$ is in degrees $\leq b$ and $M_H$ is in degrees $\geq a$.  As $M_H\simeq M^H$ by hypothesis, this implies $M^H$ is concentrated in degrees $[a,b]$, i.e.\ condition 2 holds (moreover we get the final claim).  For the converse 2 $\Rightarrow 3$, we follow the standard Tate-Nakayama argument.

We want to show 2 $\Rightarrow$ 3.  Replacing $\mc{D}$ by $\on{Ind}(\mc{D}_0)$, where $\mc{D}_0$ is the smallest stable subcategory of $\mc{D}$ containing all $M^H$ for $H\subset G$ and closed under t-truncation, we can assume that $\mc{D}$ is presentable.  It follows by the multiplicative transfer machinery in \cite{cm2} that
$$M^{tH}:= \on{cofib}(M_H \to M^H)$$
is a module over $\mbb{Z}^{tH}\in \on{CAlg}(\on{D}(\mbb{Z}))$.  In fact the assignment $G/H \mapsto M^{tH}$ organizes into a functor on the span category of finite $G$-sets as in the previous lemma, and even in that context it is a module over $G/H\mapsto \mbb{Z}^{tH}$ which is a ring in a natural way.

Now, first suppose $H$ is cyclic.  Then the standard resolution implies that $\mbb{Z}^{tH}$ is periodic.  But assuming 2, $M^{tH}$ vanishes in low enough degrees; therefore, being a module over a periodic ring spectrum, it must be $0$.  This proves that 2 $\Rightarrow$ 3 when $G$ is cyclic.  By the standard induction with a nontrivial central subgroup, we deduce from this that 2 $\Rightarrow$ 3 also holds if $G$ is nilpotent.  Next, note that for a general finite group $G$ and an object $M\in\mc{D}$ with $G$-action and subgroup $H\subset G$, there are maps $M^{tG} \to M^{tH}$ and $M^{tH} \to M^{tG}$ such that the composition $M^{tG} \to M^{tG}$ is multiplication by $\#[G:H]$.  This follows from the multiplicative span category formalism referred to above, using the corresponding (obvious) fact for $G/H \mapsto \mbb{Z}^H$.

In total, we deduce that $M^{tG}$ is killed by multiplication by $[G:H]$ for all nilpotent subgroups $H\subset G$.  By the Sylow theorems there is no prime number which divides all such indices, and it follows that $M^{tG}=0$.  Replacing $G$ by a subgroup we deduce the claim.
\end{proof}

Combining these lemmas we get the following interesting principle, which is closely related to the duality formalism explained by Tate in \cite{galcoh} Appendix 1.

\begin{proposition}\label{trissheaf}
Let $\mc{C}$ be a Galois category and $\mc{D}$ a $\mbb{Z}$-linear stable $\infty$-category equipped with a t-structure and admitting $H$-fixed point and $H$-orbit objects for all finite groups $H$.  Suppose $\mc{F}:\mc{C}^{op}\to\mc{D}$ is a sheaf such that $\mc{F}(X)$ is t-bounded for all $X\in\mc{C}$.  Then the co-presheaf $\mc{F}^{tr}:\mc{C}\to\mc{D}$ is a cosheaf.  Similarly, if $F:\mc{C}\to\mc{D}$ is a cosheaf which takes t-bounded values, then $F^{tr}$ is a sheaf.  Moreover, in such situations we have $\mc{F}=(\mc{F}^{tr})^{tr}$ and $F=(F^{tr})^{tr}$, i.e.\ $(-)^{tr}$ is involutive.
\end{proposition}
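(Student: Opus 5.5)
The plan is to reduce to finite groups and apply Lemma \ref{transferlemma} part 3 together with Lemma \ref{uniformbound}. Take first the sheaf $\mc{F}$. By Lemma \ref{transferlemma} part 2 it extends uniquely to a functor $\widetilde{\mc{F}}\in\on{Fun}^{shf}(\on{Span}(\mc{C}),\mc{D})$, and $\mc{F}^{tr}$ is the restriction of $\widetilde{\mc{F}}$ along $\mc{C}\to\on{Span}(\mc{C})$. By part 3 of the same lemma, $\mc{F}^{tr}$ is a cosheaf exactly when, for every connected $X\in\mc{C}$ and every subgroup $G\subset\on{Aut}(X)$, the transfer map $\mc{F}(X)_G\to\mc{F}(X)^G$ is an isomorphism. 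So the whole task is to check this transfer condition.

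To check it, fix such $X$ and $G$ and set $M=\mc{F}(X)\in\on{Fun}(BG;\mc{D})$. By hypothesis $M$ is t-bounded. For each subgroup $H\subset G$, the map $X\to X/H$ is $H$-Galois with $X$ connected, so the sheaf condition (Remark \ref{whatisaderivedsheaf}) gives $M^H\simeq\mc{F}(X/H)$. This is again t-bounded by hypothesis, so condition 2 of Lemma \ref{uniformbound} holds. That lemma then gives that $M_H\to M^H$ is an isomorphism for all $H\subset G$, in particular for $H=G$, which is exactly what we need.

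One point needs checking here: the transfer map of Lemma \ref{uniformbound} must agree with the one in Lemma \ref{transferlemma} 3(c). Both are the norm map $\sum_{g\in G}g$, as the proof of Lemma \ref{transferlemma} identifies. I expect this compatibility, rather than any computation, to be the only real subtlety. The cosheaf case is dual: $F(X)_H\simeq F(X/H)$ is t-bounded, so condition 1 of Lemma \ref{uniformbound} applies, and we conclude by Lemma \ref{transferlemma} parts 1 and 3.

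For involutivity, once the transfer condition holds the extension $\widetilde{\mc{F}}$ lies in both $\on{Fun}^{shf}$ and $\on{Fun}^{coshf}$. Now $\mc{F}^{tr}$ is its restriction to $\mc{C}$. By the uniqueness in Lemma \ref{transferlemma} part 1, the span-extension of the cosheaf $\mc{F}^{tr}$ is $\widetilde{\mc{F}}$ itself. Restricting back along $\mc{C}^{op}\to\on{Span}(\mc{C})$ therefore returns $\mc{F}$, so $(\mc{F}^{tr})^{tr}=\mc{F}$. The same argument gives $(F^{tr})^{tr}=F$.
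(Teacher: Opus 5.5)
Your proposal is correct and matches the paper's proof: you reduce to the transfer condition via Lemma \ref{transferlemma}(3), verify it with Lemma \ref{uniformbound} using that $\mc{F}(X)^H\simeq\mc{F}(X/H)$ is t-bounded (and argue dually for cosheaves), and obtain involutivity from the uniqueness of span-extensions. You also check explicitly that the two transfer maps agree (both are $\sum_{g\in G}g$); the paper leaves that compatibility implicit.
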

\begin{proof}
By Lemma \ref{transferlemma}, we have that $\mc{F}^{tr}$ is a cosheaf if and only if for all $X\in\mc{C}$ and all $H\subset \on{Aut}(X)$, the transfer map
$$\mc{F}(X)_H\to \mc{F}(X)^H$$
is an isomorphism. By Lemma \ref{uniformbound}, this holds if the $\mc{F}(X)^H = F(X/H)$ are t-bounded, but they are by assumption.  The argument that $F^{tr}$ is a sheaf if $F$ is a sectionwise t-bounded cosheaf is the same.  The involutivity follows immediately from the definition of $(-)^{tr}$ (Definition \ref{trfunctor}).
\end{proof}

Now we will finally explain the relationship between class formations and refined class formations.

\begin{theorem}\label{cfvsrcf}
For a Galois category $\mc{C}$, recall the associated subgroup $\frac{1}{\#\mc{C}}\mbb{Z}\subset\mbb{Q}$ from Example \ref{ztr}.

Giving a refined class formation $(\mc{C},F,\alpha)$ is equivalent to giving a class formation $(\mc{C},A)$ together with a map
$$\kappa:\frac{1}{\#\mc{C}}\mbb{Z} \to R\Gamma(\mc{C};A[2])$$
in $\on{D}(\mbb{Z})$ which, on $H_0$, induces an isomorphism
$$\left(\frac{1}{\#\mc{C}}\mbb{Z}\right)/\mbb{Z}\simeq H^2(\mc{C},A).$$

More precisely, given the latter data $(\mc{C},A,\kappa)$ we can build a refined class formation as follows: by adjunction we can interpret $\kappa$ as a map from the constant sheaf $\ul{\frac{1}{\#\mc{C}}\mbb{Z}}$ to $A[2]$ in the derived category of abelian sheaves on $\mc{C}$; if $\mc{F}$ denotes the $\on{D}(\mbb{Z})$-valued sheaf represented by the fiber of this map, then $F:= \mc{F}^{tr}$ (see Definition \ref{trfunctor}) satisfies that for $X\in\mc{C}$ connected the map
$$H_0(F(X)) = H_0(\mc{F}(X)) \to H_0(\ul{\frac{1}{\#\mc{C}}\mbb{Z}}(X))\overset{\cdot \#X}{\rightarrow} \mbb{Q}$$
induces an isomorphism $\alpha: H_0(F(X))\simeq \mbb{Z}$ such that
$$(\mc{C},F,\alpha)$$
is a refined class formation, and this describes an equivalence from the 2-groupoid of data $(\mc{C},A,\kappa)$ as above to the 2-groupoid of refined class formations.  (We explicitly describe the inverse equivalence in the proof.)
\end{theorem}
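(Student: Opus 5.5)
The plan is to build both directions of the equivalence through the involution $(-)^{tr}$ of Proposition \ref{trissheaf}. This reduces the question to a correspondence between certain $\on{D}(\mbb{Z})$-valued \emph{sheaves} $\mc{F}$ and the data $(A,\kappa)$. Since all objects involved are t-bounded (in degrees $[0,1]$ homologically), Proposition \ref{trissheaf} says that cosheaves $F$ with values in degrees $[0,1]$ correspond bijectively to sheaves $\mc{F}=F^{tr}$ with t-bounded values. By the final clause of Lemma \ref{uniformbound}, these $\mc{F}$ again take values in degrees $[0,1]$: for connected $X$ and $H\subset\on{Aut}(X)$, the value $\mc{F}(X/H)=\mc{F}(X)^H\simeq \mc{F}(X)_H$. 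So a refined class formation is the same as a sheaf $\mc{F}$ with values in degrees $[0,1]$ on connected objects, together with $\alpha$ on $H_0$ transported through $tr$.

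Starting from $(\mc{C},F,\alpha)$, set $\mc{F}=F^{tr}$ and consider the fiber sequence of sheaves $H_1\mc{F}[1]\to\mc{F}\to H_0\mc{F}$ (sheafify the homology presheaves). Since $F$ is a cosheaf, $H_1F$ has a transfer/sheaf structure. Concretely, on a connected Galois $Y\to X$ with group $G$, the equality $\mc{F}(X)=\mc{F}(Y)^G$ gives a descent spectral sequence. In it, $H_0$ restricts via $\mbb{Z}^{tr}$, i.e.\ by degrees (Example \ref{ztr}), so the sheafified $H_0\mc{F}$ is the constant sheaf $\frac{1}{\#\mc{C}}\mbb{Z}$, with $\alpha$ identifying it via multiplication by $\#X$. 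I define $A$ to be the sheaf $H_1\mc{F}$ and $\kappa$ to be the boundary map $\ul{\frac1{\#\mc{C}}\mbb{Z}}\to A[2]$, adjointly a map into $R\Gamma(\mc{C};A[2])$.

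The class formation axioms then come from the requirement that $\mc{F}(X)^G$ stays in degrees $[0,1]$ with $H_0=\frac{1}{\#X}\mbb{Z}$ while $\mc{F}(Y)$ has $H_0=\frac1{\#Y}\mbb{Z}$. The hypercohomology spectral sequence for $\mc{F}(Y)^G$ has $E_2$ entries $H^p(G;\frac1{\#Y}\mbb{Z})$ and $H^p(G;A(Y))$. The restriction $\frac1{\#X}\mbb{Z}\hookrightarrow\frac1{\#Y}\mbb{Z}$ has cokernel $\mbb{Z}/\#G$, and $H^1(G;\mbb{Z})=0$. Vanishing of $H_{-1}$ then forces $H^1(G;A(Y))=0$. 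The surjection $H^0(G;\frac1{\#Y}\mbb{Z})\to H^2(G;A(Y))$ via $d_2$, with kernel exactly $\frac1{\#X}\mbb{Z}$ (from $H_0$), forces $H^2(G;A(Y))\cong\mbb{Z}/\#G$. Passing to the colimit over $Y$ gives the condition on $\kappa$ on $H_0$.

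For the converse, given $(A,\kappa)$, let $\mc{F}=\on{fib}(\ul{\frac1{\#\mc{C}}\mbb{Z}}\to A[2])$. I need to check that $\mc{F}(X)$ lies in degrees $[0,1]$ for connected $X$. The class formation axioms give $H^1=0$ and $H^2$ cyclic of order $\#G$ on each finite layer, and $\kappa$ matches generators compatibly. The same spectral sequence then shows $H_0\mc{F}(X)=\frac1{\#X}\mbb{Z}$, $H_1=A(X)$, and that the negative degrees vanish in degrees $-1,-2$. The remaining negative degrees $H^{p}(G;A)$ for $p\ge3$ must also be killed, and this is where Tate–Nakayama is needed. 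I would apply Lemma \ref{uniformbound}, or argue directly: the fiber object over $\mbb{Z}[G]$ has Tate cohomology vanishing in two consecutive degrees for all subgroups, hence identically. Equivalently, the transfer $\mc{F}(Y)_H\to\mc{F}(Y)^H$ is an isomorphism, so everything is bounded. This boundedness step, which establishes uniform cohomological dimension from two degrees of vanishing, is the main obstacle. Once it is done, Proposition \ref{trissheaf} makes $F=\mc{F}^{tr}$ a cosheaf and $\alpha$ is as described.

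Finally, the two constructions are mutually inverse as functors of 2-groupoids. This holds because $(-)^{tr}$ is involutive and because a sheaf $\mc{F}$ in degrees $[0,1]$ is equivalently its Postnikov extension data $(H_0,H_1,k\text{-invariant})$ with $H_0$ pinned down by $\alpha$. The automorphisms on each side match, being the automorphisms of $A$ compatible with $\kappa$ and the homotopies.
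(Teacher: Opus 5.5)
You follow the paper's architecture: pass through the involution $(-)^{tr}$ of Proposition \ref{trissheaf}, read $A$ and $\kappa$ off the sheafified Postnikov tower of $F^{tr}$, and invert by taking the fiber of $\kappa$.

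\emph{Forward direction.} You extract the class formation axioms from the hypercohomology spectral sequence of $\mc{F}(X)=\mc{F}(Y)^{hG}$. This is legitimate here, because $\mc{F}(Y)=F(Y)$ is already known to lie in degrees $[0,1]$. The paper instead uses Lemma \ref{uniformbound} to get vanishing Tate cohomology of $F(X)$, whence $\wh{H}^i(G;\mbb{Z})\cong\wh{H}^{i+2}(G;A(X))$. Both routes work, but correct one attribution. The term $H^1(G;A(Y))=E_\infty^{1,1}$ contributes to $H_0\mc{F}(X)$, not to $H_{-1}$. It vanishes because it is the kernel of the edge map $H_0\mc{F}(X)\to H_0\mc{F}(Y)$, and that map is the inclusion $\frac{1}{\#X}\mbb{Z}\subset\frac{1}{\#Y}\mbb{Z}$. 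The $H_0$-condition on $\kappa$ comes most directly from the long exact sequence of $F(\ast)\to R\Gamma(\mc{C};\frac{1}{\#\mc{C}}\mbb{Z})\to R\Gamma(\mc{C};A[2])$ together with $H_{-1}F(\ast)=0$; no colimit over $Y$ is needed.

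\emph{Converse: the genuine gap.} It sits exactly at the step you flag as the main obstacle. Here $\mc{F}(X)=\on{fib}(R\Gamma(X;\frac{1}{\#\mc{C}}\mbb{Z})\to R\Gamma(X;A[2]))$ is computed by continuous cohomology of the profinite group of $X$.
\begin{itemize}
\item Its presheaf homology in degree $-2$ already involves $H^3(X;A)$ and $\ker(H^2(X;\frac{1}{\#\mc{C}}\mbb{Z})\to H^4(X;A))$. The axioms do not control these without Tate--Nakayama, so your claim that degree $-2$ vanishes is unjustified.
\item More seriously, no finite-group spectral sequence with $E_2$-terms $H^p(G;A(Y))$ computes $\mc{F}(X)$ unless you already know $\mc{F}(Y)$ is the two-term object.
\item Your sentence ``equivalently, the transfer $\mc{F}(Y)_H\to\mc{F}(Y)^H$ is an isomorphism, so everything is bounded'' is therefore circular. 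Lemma \ref{uniformbound} and the two-consecutive-degrees form of Tate--Nakayama apply only to bounded objects. The bounded $G$-object you actually control is the presheaf-level truncation $\tau_{\geq 0}\mc{F}(X)$, not $\mc{F}(X)$.
\end{itemize}

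The missing bridge, which is what the paper does, runs as follows.
\begin{enumerate}
\item Compute $H_1\mc{F}(X)=A(X)$, $H_0\mc{F}(X)=\frac{1}{\#X}\mbb{Z}$ and $H_{-1}\mc{F}(X)=0$. These follow from $H^1(X;A)=0$ and from the fact that $\ker(H^2(\mc{C};A)\to H^2(X;A))$ is the subgroup of order $\#X$.
\item For $G\subset\on{Aut}(X)$, take the $k$-invariant $u_{G,X}\in H^2(G;A(X))$ of $\tau_{\geq 0}\mc{F}(X)$. Inflation to $H^2(X/G;A)$ is injective since $H^1(X;A)=0$, and through it $u_{G,X}$ is identified with a generator.
\item Tate--Nakayama then kills the Tate cohomology of $\tau_{\geq 0}\mc{F}(X)$ for all subgroups. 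Lemma \ref{uniformbound} puts $(\tau_{\geq 0}\mc{F}(X))^{hG}$ in degrees $\geq 0$. Comparing with $\mc{F}(X/G)=\mc{F}(X)^{hG}$, it agrees with $\tau_{\geq 0}\mc{F}(X/G)$.
\item Hence the presheaf $\tau_{\geq 0}\mc{F}$ is a sheaf. The map of sheaves $\tau_{\geq 0}\mc{F}\to\mc{F}$ is an isomorphism on sheafified homology, hence an isomorphism.
\end{enumerate}
Only at this point may you invoke Proposition \ref{trissheaf} to get the cosheaf $F=\mc{F}^{tr}$. Then $(H_0F)^{tr}=\mbb{Z}^{tr}$ forces $H_0F=\mbb{Z}$, which gives $\alpha$.
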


\begin{proof}
Suppose $(\mc{C},F,\alpha)$ is a refined class formation; we aim to produce the data $(A,\kappa)$ of a class formation on $\mc{C}$ with a $\kappa$ as in the statement.  By Proposition \ref{trissheaf}, $F^{tr}:\mc{C}^{op}\to\on{D}(\mbb{Z})$ is a sheaf.  Its top nonvanishing homology group $A:=H_1(F^{tr})$ is therefore a sheaf of abelian groups.  Now we show that $(\mc{C},A)$ is a class formation, i.e.\ that it satisfies the cohomology axioms 1 and 2 from Definition \ref{cfgal}.  Let $X\in\mc{C}$ be connected and $G\subset\on{Aut}(X)$ a subgroup.  Since $F(X)_H=F(X/H)$ lives in degrees $[0,1]$ for all subgroups $H\subset G$, we deduce from Lemma \ref{uniformbound} that $F(X)_G\overset{\sim}{\rightarrow}F(X)^G$, or in other words the Tate (hyper)cohomology of $F(X)$ vanishes.  As $F(X)$ lives in degrees $[0,1]$ with $H_1F(X)=A(X)$ and $H_0F(X)=\mbb{Z}$, we deduce that the Tate cohomology of $\mbb{Z}$ and $A(X)$ differ by a shift of 2.  The conclusion follows because $\wh{H}^0(G,\mbb{Z})=\mbb{Z}/\# G\mbb{Z}$ and $\wh{H}^{-1}(G,\mbb{Z})=0$.

Next we show that one also gets a natural map
$$\kappa:\frac{1}{\#\mc{C}}\mbb{Z} \to R\Gamma(\mc{C};A[2])$$
from the refined class formation $(\mc{C},F,\alpha)$.  For this, consider the sheafified Postnikov tower of the derived sheaf $F^{tr}$.  On the presheaf level we have $H_1F^{tr}=A$ which is already a sheaf, whereas $H_0F^{tr}=\mbb{Z}^{tr}$ because the formation of $(-)^{tr}$ commutes with additive functors.  By Example \ref{ztr} we can identify the sheafification of this with the constant sheaf on $\frac{1}{\#\mc{C}}\mbb{Z}$.  Thus, by adjunction, the Postnikov tower of $F^{tr}$ as a derived sheaf is classified by a map $\kappa:\frac{1}{\#\mc{C}}\mbb{Z} \to R\Gamma(\mc{C};A[2])$ of the desired form.

Next we should check that $\kappa$ has the announced property that on $H_0$ it induces $\left(\frac{1}{\#\mc{C}}\mbb{Z}\right)/\mbb{Z}\simeq H^2(\mc{C},A)$.  By construction there is a fiber sequence
$$F(\ast) \to R\Gamma(\mc{C};\frac{1}{\#\mc{C}}\mbb{Z})\to R\Gamma(\mc{C};A[2])$$
where the second map is induced by $\kappa$.  As $H_0F(\ast)=\mbb{Z}$ maps to $H_0(\mc{C};\frac{1}{\#\mc{C}}\mbb{Z})=\frac{1}{\#\mc{C}}\mbb{Z}$ via inclusion (by construction), and $H_{-1}F(\ast)=0$ by hypothesis, we deduce the claim from the long exact sequence in homology.

Now we explain the converse: supposing given a class formation $(\mc{C},A)$ and a map 
$$\kappa:\frac{1}{\#\mc{C}}\mbb{Z} \to R\Gamma(\mc{C};A[2])$$
inducing an isomorphism $\left(\frac{1}{\#\mc{C}}\mbb{Z}\right)/\mbb{Z}\simeq H^2(\mc{C},A)$, let us build a refined class formation.  The given map $\kappa$ classifies the (sheafified) Postnikov tower of a $\on{D}(\mbb{Z})$ sheaf $\mc{F}$ on $\mc{C}$ with sheafified $H_0$ fixed as the constant sheaf $\frac{1}{\#\mc{C}}\mbb{Z}$, with $H_1$ fixed as $A$, and all other sheafified homology groups $0$.  More precisely, we have a fiber sequence
$$\mc{F} \to R\Gamma(-;\frac{1}{\#\mc{C}}\mbb{Z}) \to R\Gamma(-;A[2])$$
in $\on{Fun}(\mc{C}^{op};\on{D}(\mbb{Z}))$.  We claim:
\begin{enumerate}
\item $\mc{F} \to R\Gamma(-;\frac{1}{\#\mc{C}}\mbb{Z})$ is injective on presheaf-level $H_0$ and identifies $H_0\mc{F}$ with $\mbb{Z}^{tr}$ (as a sub-presheaf of the constant sheaf on $\frac{1}{\#\mc{C}}\mbb{Z}$ via the association of Example \ref{ztr});
\item $H_i\mc{F}=0$ on the presheaf level for all $i\neq 0,1$.
\end{enumerate}
Assuming this, we deduce from Proposition \ref{trissheaf} that $F:=\mc{F}^{tr}$ is a cosheaf living in degrees $[0,1]$.  Moreover its $H_0$ is a cosheaf of abelian groups such that applying $(-)^{tr}$ gives $\mbb{Z}^{tr}$, and the only possibility for that is the constant cosheaf $\mbb{Z}$.  Thus we get a refined class formation.  Moreover, the two procedures (going from refined class formations to class formations plus the extra map, and going backwards) are inverse because $(-)^{tr}$ is involutive, see Proposition \ref{trissheaf}.

Thus it suffices to show claims 1 and 2 above.  For this, we note the following simple consequence of the class formation axioms: for any connected $X\in\mc{C}$, we have:
\begin{enumerate}
\item $H^1(X;A)=0$;
\item the restriction map
$$H^2(\mc{C};A)\to H^2(X;A)$$
is surjective, and its kernel is the unique subgroup of order $\#X$ in $H^2(\mc{C};A)$.
\end{enumerate}
Granting this, from the long exact sequence associated to $\mc{F}(X) \to R\Gamma(X;\frac{1}{\#\mc{C}}\mbb{Z}) \to R\Gamma(X;A[2])$ we deduce that $H_0\mc{F}(X)$ identifies with the kernel of the map from $\frac{1}{\#\mc{C}}\mbb{Z}$ to the quotient of $\left(\frac{1}{\#\mc{C}}\mbb{Z}\right)/\mbb{Z}$ by its $\# X$-torsion subgroup $\frac{1}{\#X}\mbb{Z}/\mbb{Z}$, which gives desired claim 1 above.

The remaining claim is that $\mc{F}$ lives in degrees $[0,1]$ also on the presheaf level, or equivalently, that the presheaf level $\tau_{\geq 0}\mc{F}(-)$ is a sheaf.  Thus we need to see that if $X\in{C}$ is connected and $G\subset\on{Aut}(X)$ is a subgroup, then
$$\tau_{\geq 0}\mc{F}(X/G) \overset{\sim}{\rightarrow} \left(\tau_{\geq 0}\mc{F}(X)\right)^G.$$
Comparing with the known result $\mc{F}(X/G)\overset{\sim}{\rightarrow} \mc{F}(X)^G$, we see that it suffices to show that $\left(\tau_{\geq 0}\mc{F}(X)\right)^G$ lives in degrees $\geq 0$.  For that, by Lemma \ref{uniformbound}, it suffices to show that the $G$-Tate hypercohomology of $\tau_{\geq 0}\mc{F}(X)$ vanishes, for all $X\in\mc{C}$ connected and all subgroups $G\subset \on{Aut}(X)$.

However, note that by the above analysis, the class $u_{G,X}\in H^2(G;A(X))$ which classifies Postnikov tower of the $G$-object $\tau_{\geq 0}\mc{F}(X)$ is uniquely characterized as follows: its inflation
$$\on{infl}(u_{G,X}) \in H^2(X/G;A)$$
is equal to the image of $\frac{1}{\#G}\mbb{Z}$ via the composition
$$\frac{1}{\#\mc{C}}\mbb{Z} \to H^2(\mc{C};A) \to H^2(X/G;A)$$
where the first map is induced by our given data and the second map is restriction.

In particular, $u_{G,X}$ generates the cyclic group $H^2(G;A(X))$ of order $\#G$.  By the class field axioms it follows that the Tate hypercohomology we're interested vanishes in two consecutive degrees (for all $G$), therefore it vanishes in all degrees by Tate-Nakayama, finishing the proof.
\end{proof}

\begin{corollary}\label{howtopromote}
Let $(\mc{C},A)$ be a class formation.  Denote by $\mc{G}$ the 2-groupoid of lifts of $(\mc{C},A)$ to a refined class formation.  Then for the homotopy sets of $\mc{G}$ we have the following:
\begin{enumerate}
\item $\pi_0\mc{G}=\on{InjHom}(H^2(\mc{C};A),\mbb{Q}/\mbb{Z})$;
\item $\pi_1\mc{G}\simeq \on{Ext}(\frac{1}{\#\mc{C}}\mbb{Z},\Gamma(\mc{C};A))$;
\item $\pi_2\mc{G}\simeq \on{Hom}(\frac{1}{\#\mc{C}}\mbb{Z},\Gamma(\mc{C};A))$.
\end{enumerate}
Here $\on{InjHom},\on{Ext},\on{Hom}$ stand for respectively the sets of injective homomorphisms, isomorphism classes of extensions, and homomorphisms, all in the category of abelian groups.
\end{corollary}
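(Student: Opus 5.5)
By Theorem \ref{cfvsrcf}, $\mc{G}$ is equivalent to the anima of maps
$$\kappa:\tfrac{1}{\#\mc{C}}\mbb{Z}\to R\Gamma(\mc{C};A[2])$$
in $\on{D}(\mbb{Z})$ whose induced map on $H_0$ is an isomorphism $(\frac{1}{\#\mc{C}}\mbb{Z})/\mbb{Z}\simeq H^2(\mc{C};A)$. Note that the latter is a property of $\kappa$, not extra structure. So $\mc{G}$ is a union of path components of the mapping anima $\on{Map}_{\on{D}(\mbb{Z})}(\frac{1}{\#\mc{C}}\mbb{Z},R\Gamma(\mc{C};A)[2])$. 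Its higher homotopy groups at any basepoint are therefore
$$\pi_k\mc{G}=\on{Ext}^{2-k}\bigl(\tfrac{1}{\#\mc{C}}\mbb{Z},R\Gamma(\mc{C};A)\bigr),\qquad k=1,2,$$
computed in $\on{D}(\mbb{Z})$. I will compute these groups, and $\pi_0$, by truncating $R\Gamma(\mc{C};A)$.

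\emph{Step 1.} I will use the class formation axioms to get $H^1(\mc{C};A)=0$, by passing to the colimit over Galois objects of $H^1(G;A(X))=0$ via inflation. Since $\frac{1}{\#\mc{C}}\mbb{Z}$ is torsionfree of rank one, it has projective dimension $\le 1$ over $\mbb{Z}$.

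\emph{Step 2.} For $\pi_2$ and $\pi_1$, these two facts give
$$\on{Ext}^0\bigl(\tfrac{1}{\#\mc{C}}\mbb{Z},R\Gamma\bigr)=\on{Hom}\bigl(\tfrac{1}{\#\mc{C}}\mbb{Z},\Gamma(\mc{C};A)\bigr)$$
and
$$\on{Ext}^1\bigl(\tfrac{1}{\#\mc{C}}\mbb{Z},R\Gamma\bigr)=\on{Ext}^1\bigl(\tfrac{1}{\#\mc{C}}\mbb{Z},H^0\bigr)\oplus\on{Hom}\bigl(\tfrac{1}{\#\mc{C}}\mbb{Z},H^1\bigr).$$
The second summand vanishes, which yields items 3 and 2.

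\emph{Step 3.} For $\pi_0$, I will use the Postnikov fiber sequence $\tau_{\le 1}R\Gamma\to R\Gamma\to\tau_{\ge 2}R\Gamma$ (cohomological indexing), where $\tau_{\le 1}R\Gamma=H^0[0]$ because $H^1=0$. Applying $\on{Hom}(\frac{1}{\#\mc{C}}\mbb{Z},-[2])$, the obstruction to lifting a map to $\tau_{\ge2}$ lies in $\on{Ext}^3(\frac{1}{\#\mc{C}}\mbb{Z},H^0)=0$. The ambiguity of a lift comes from $\on{Ext}^2(\frac{1}{\#\mc{C}}\mbb{Z},H^0)=0$, together with the action of $\on{Ext}^1$ from the previous term, which only affects higher homotopy. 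Hence
$$\pi_0\on{Map}=\on{Hom}\bigl(\tfrac{1}{\#\mc{C}}\mbb{Z},\tau_{\ge 2}R\Gamma[2]\bigr).$$
Since $\frac{1}{\#\mc{C}}\mbb{Z}$ has projective dimension $\le1$, this group fits in an exact sequence with $\on{Ext}^1(\frac{1}{\#\mc{C}}\mbb{Z},H^3(\mc{C};A))$ and $\on{Hom}(\frac{1}{\#\mc{C}}\mbb{Z},H^2(\mc{C};A))$.

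This is the main obstacle, because $H^3$ need not vanish. I would avoid it as follows. The condition on $\kappa$ only involves $H_0$, and maps out of $\mbb{Z}$ into $R\Gamma[2]$ see only $H^2$. So I will instead use the cofiber sequence $\mbb{Z}\to\frac{1}{\#\mc{C}}\mbb{Z}\to Q:=(\frac{1}{\#\mc{C}}\mbb{Z})/\mbb{Z}$, with $Q$ torsion. The required condition is that the composite $\mbb{Z}\to R\Gamma[2]$ is zero on $H_0$, together with an injectivity condition on the induced map $Q\to H^2$. I will need to check whether this reformulation actually simplifies the $H^3$ term.

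\emph{Step 4.} I expect $\pi_0$ to reduce, via the given isomorphism $Q\simeq H^2(\mc{C};A)$, to a choice of isomorphism $H^2(\mc{C};A)\simeq Q$. Equivalently, since $Q\subset\mbb{Q}/\mbb{Z}$ and $H^2$ is abstractly isomorphic to $Q$, this is an injective homomorphism $H^2(\mc{C};A)\hookrightarrow\mbb{Q}/\mbb{Z}$: every such injection has image exactly $Q$, because the $n$-torsion of $H^2$ has order $n$ for each $n$ dividing the degrees. Concretely, I will show that a map $\kappa$ is determined up to homotopy by its effect on $H_0$, with existence of a lift of any prescribed $H_0$-map. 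For this I plan to argue via $\on{Hom}(\frac{1}{\#\mc{C}}\mbb{Z},-)$ applied to the distinguished triangle above, with the delicate point being that any obstruction class in $\on{Ext}^1(\frac{1}{\#\mc{C}}\mbb{Z},H^3)$ can be killed. If it cannot be avoided, I would try to show that $H^3(\mc{C};A)$ is uniquely divisible or zero, using Tate–Nakayama periodicity, since $\wh{H}^3(G;A(X))\simeq\wh{H}^1(G;\mbb{Z})=0$ and passing to the colimit gives $H^3(\mc{C};A)=0$. That would make all obstructions vanish and give item 1 directly.
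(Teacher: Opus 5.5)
Your strategy is the paper's: $\mc{G}$ is a union of components of $\on{Map}_{\on{D}(\mbb{Z})}(P,R\Gamma(\mc{C};A)[2])$, where $P=\frac{1}{\#\mc{C}}\mbb{Z}$. The homotopy groups are computed using that $\mbb{Z}$ has global dimension $1$ and that $H^1(\mc{C};A)=0$. Then $\on{Iso}(P/\mbb{Z},H^2(\mc{C};A))$ is identified with $\on{InjHom}(H^2(\mc{C};A),\mbb{Q}/\mbb{Z})$. Your computations of $\pi_1$, $\pi_2$ and this final identification agree with the paper's.

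The one thing to correct is Step 3: the ``main obstacle'' there comes from an indexing slip.
\begin{itemize}
\item The object $\tau_{\ge 2}R\Gamma(\mc{C};A)[2]$ is concentrated in cohomological degrees $\ge 0$, with $H^3(\mc{C};A)$ sitting in degree $1$.
\item So $H^3$ contributes $\on{Hom}(P,H^3[-1])=\on{Ext}^{-1}(P,H^3)=0$, not $\on{Ext}^1(P,H^3)$.
\item Since $P$ sits in degree $0$, the t-structure alone gives $\on{Hom}(P,\tau_{\ge 2}R\Gamma[2])=\on{Hom}(P,H^2(\mc{C};A))$.
\end{itemize}
Equivalently, $\pi_0\on{Map}=\prod_j\on{Ext}^{2-j}(P,H^j(\mc{C};A))$, and the only $\on{Ext}^1$ term is $\on{Ext}^1(P,H^1(\mc{C};A))=0$. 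This is exactly the paper's one-line justification. You in fact used the correct indexing in Step 2, where $H^1$ appears through $\on{Hom}$ and $H^0$ through $\on{Ext}^1$.

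So the detour through $\mbb{Z}\to P\to P/\mbb{Z}$ is unnecessary, and so is the appeal to $H^3(\mc{C};A)=0$. That vanishing is in fact true by Tate--Nakayama, since $H^3(G;A(X))\simeq\wh{H}^1(G;\mbb{Z})=0$, so your fallback would also have closed the argument. But there is no obstruction to kill.
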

\begin{proof}
By Theorem \ref{cfvsrcf}, $\mc{G}$ is a union of components of $$\mc{G}' = \on{Map}_{\on{D}(\mbb{Z})}(\frac{1}{\#\mc{C}}\mbb{Z},R\Gamma(\mc{C};A[2])),$$
namely it is the full sub-2-groupoid on those maps which are surjective on $H_0$ with kernel equal to $\mbb{Z}$. Because $\on{Ext}^i=0$ for $i>1$ in the category of abelian groups, and $H^1(\mc{C};A)=0$, we get that
$$\pi_0\mc{G}' = \on{Hom}(\frac{1}{\#\mc{C}}\mbb{Z},H^2(\mc{C};A)), \pi_1\mc{G}' = \on{Ext}(\frac{1}{\#\mc{C}}\mbb{Z},\Gamma(\mc{C};A)), \pi_2\mc{G}'=\on{Hom}(\frac{1}{\#\mc{C}}\mbb{Z},\Gamma(\mc{C};A)).$$
To get the homotopy groups of $\mc{G}$ we just impose the condition on $\pi_0$ that the homomorphism $\frac{1}{\#\mc{C}}\mbb{Z}\to H^2(\mc{C};A)$ be surjective with kernel $\mbb{Z}$.  Thus
$$\pi_0\mc{G} = \on{Iso}(\left(\frac{1}{\#\mc{C}}\mbb{Z}\right)/\mbb{Z},H^2(\mc{C};A)).$$
Given such an isomorphism we obviously get an injective homomorphism
$$H^2(\mc{C};A)\to\mbb{Q}/\mbb{Z}$$
by taking the inverse; conversely, using that
$$H^2(\mc{C};A) = \cup_X H^2(G;A(X))$$
via the inflation maps, where $X$ runs over all Galois objects in $\mc{C}$ with group $G$, we deduce that the image any injection $H^2(\mc{C};A)\hookrightarrow\mbb{Q}/\mbb{Z}$ must be $\left(\frac{1}{\#\mc{C}}\mbb{Z}\right)/\mbb{Z}$, thereby establishing that 
$$\on{Iso}(\left(\frac{1}{\#\mc{C}}\mbb{Z}\right)/\mbb{Z},H^2(\mc{C};A))\simeq \on{InjHom}(H^2(\mc{C};A),\mbb{Q}/\mbb{Z})$$
and hence finishing the proof.
\end{proof}

Note that the $\pi_0$-level datum here, namely the datum of an isomorphism $H^2(\mc{C};A)\simeq\frac{1}{\#\mc{C}}\mbb{Z}/\mbb{Z}$, exactly corresponds to the usual data of chosen compatible generators for the $H^2(G;A(X))$ in the class formation, or equivalently compatible systems of $\on{inv}$ maps as in \cite{corpslocaux} XI.  In particular, any class formation equipped with such a system of canonical classes (or inv maps) promotes, uniquely up to isomorphism, to a refined class formation.

\begin{example}\label{newrcf}
Suppose $(\mc{C},F,\alpha)$ is a refined class formation, with corresponding class formation $(\mc{C},A)$ plus map
$$\frac{1}{\#\mc{C}}\mbb{Z} \overset{\kappa}{\rightarrow} R\Gamma(\mc{C};A[2]),$$
from Theorem \ref{cfvsrcf}.

We have the following canonical ways to build new refined class formations from $(\mc{C},F,\alpha)$, and their interpretations via Theorem \ref{cfvsrcf}:
\begin{enumerate}
\item For any connected object $X\in\mc{C}$, by restriction we get a refined class formation
$$(\mc{C}_{/X},F|_{\mc{C}_{/X}},\alpha|_{\mc{C}_{/X}}).$$
(Note that if $\mc{C}$ corresponds to a profinite group $\Gamma$, then $\mc{C}_{/X}$ corresponds to an open subgroup $H\subset\Gamma$ up to conjugation.)

This corresponds to the class formation $(\mc{C}_{/X},A|_{\mc{C}_{/X}})$ together with the map
$$\kappa':\frac{1}{\#\mc{C}_{/X}}\mbb{Z}\to R\Gamma(\mc{C}_{/X};A|_{\mc{C}_{/X}}[2])$$
given as follows:
$$\kappa'(\alpha) = \on{res}(\kappa\left(\frac{1}{\#X}\alpha\right)),$$
where $\on{res}:R\Gamma(\mc{C};A[2])\rightarrow R\Gamma(\mc{C}_{/X};A|_{\mc{C}_{/X}}[2])$ is the usual restriction map. 

\item For any full subcategory $\mc{C}'\subset\mc{C}$ closed under finite limits and colimits (which is then a Galois category in its own right), we get a refined class formation
$$(\mc{C}',F|_{\mc{C}'},\alpha|_{\mc{C}'}).$$
(Note that if $\mc{C}$ corresponds to a profinite group $\Gamma$, then $\mc{C}'$ corresponds to a quotient $\Gamma\twoheadrightarrow\Gamma'$.)

This corresponds to the class formation $(\mc{C}',A|_{\mc{C}'})$ together with the map
$$\kappa': \frac{1}{\#\mc{C}'}\mbb{Z} \to R\Gamma(\mc{C}';A|_{\mc{C}'}[2])$$
which is uniquely determined as follows: the inflation map
$$\on{infl}: R\Gamma(\mc{C}';A|_{\mc{C}'}[2])\to R\Gamma(\mc{C};A[2])$$
is an isomorphism on $\pi_1$ and $\pi_2$ and injective on $\pi_0$, hence on maps from any abelian group in degree $0$ it gives a fully faithful functor of 2-groupoids, so it suffices to characterize $\on{infl}\circ \kappa'$, and this is given by
$$\on{infl}\circ\kappa' = \kappa|_{\frac{1}{\#\mc{C}'}\mbb{Z}}$$
using that $\frac{1}{\#\mc{C}'}\mbb{Z}\subset\frac{1}{\#\mc{C}}\mbb{Z}$.

\item Suppose given a subsheaf $K\subset A$ of abelian groups which is \emph{cohomologically trivial}, i.e.\ $H^i(X;K)=0$ for all $X\in\mc{C}$ connected and all $i>0$, or equivalently (see \cite{galcoh}, appendix of Tate) the $G$-Tate cohomology of $K(X)$ vanishes for all connected $X\in\mc{C}$ and all subgroups $G\subset\on{Aut}(X)$.

Then the quotient $A/K$ on the presheaf level is a sheaf and $(\mc{C},A/K)$ is a class formation.  The composition
$$\frac{1}{\#\mc{C}}\mbb{Z} \overset{\kappa}{\rightarrow} R\Gamma(\mc{C};A[2]) \to R\Gamma(\mc{C};A/K[2])$$
corresponds to the refined class formation given by $(\mc{C},F',\alpha')$ where $F'$ is defined by

$$F'(X) := \on{cofib}(K^{tr}[1]\to F)$$
where the map $K^{tr}[1] \to F$ is uniquely characterized on $H_1$ where it is given by applying $(-)^{tr}$ to the inclusion $K\to A$, and $\alpha'$ is uniquely induced by $\alpha$ via the map $F\to F'$ which is an isomorphism on $H_0$.  
\end{enumerate} 
\end{example}

\begin{remark}
We can summarize points 1 and 2 as follows: a (refined) class formation on a profinite group canonically induces (refined) class formations on any open subgroup or quotient group.  Note that, in fact, the data of a (refined) class formation on a profinite group is \emph{equivalent} to the data of compatible (refined) class formations on each of its finite quotients; this is obvious from the definitions.
\end{remark}

For the class formations $(\mc{C},A)$ arising in number theory, $A$ will not just be a sheaf of abstract abelian groups, rather it will be a sheaf of topological (or condensed) abelian groups.  This extra structure is important to remember for discussing the Weil groups.  We have the following obvious condensed analog of Definition \ref{rcf}  .

\begin{definition}\label{crcf}
A \emph{condensed refined class formation} is a triple $(\mc{C},F,\alpha)$ consisting of a Galois category $\mc{C}$, a cosheaf
$$F:\mc{C} \to \on{D}(\on{CondAb})$$
which is sectionwise concentrated in degrees $[0,1]$, and a natural transformation
$$\alpha:H_0F\to \mbb{Z}$$
such that for all connected $X\in\mc{C}$, the map $\alpha:H_0F(X)\overset{\sim}{\rightarrow}\mbb{Z}$ is an isomorphism.
\end{definition}

Note that if $(\mc{C},F,\alpha)$ is a condensed refined class formation, then we get an underlying refined class formation, together with the structure of condensed sheaf on the ``class group'' sheaf $A$, produced by the formalism of transfers exactly as in the non-condensed case.

Conversely:

\begin{lemma}\label{condautomatic}
Suppose given:
\begin{enumerate}
\item a refined class formation, corresponding to data $(\mc{C},A,\kappa)$ as in Theorem \ref{cfvsrcf};
\item a promotion $A^{cond}$ of $A$ to a sheaf of condensed abelian groups.
\end{enumerate}

Assume that $A^{cond}(X)$ is almost compact (Definition \ref{acomp}) for all connected $X\in\mc{C}$.  Then there is a unique condensed refined class formation inducing the data 1,2 above via the obvious forgetful functor.
\end{lemma}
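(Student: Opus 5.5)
Following the proof of Theorem \ref{cfvsrcf}, we redo the construction in condensed abelian groups. By the condensed analogue of that theorem (same proof, with $\on{D}(\on{CondAb})$ in place of $\on{D}(\mbb{Z})$, using Lemma \ref{transferlemma}, Lemma \ref{uniformbound} and Proposition \ref{trissheaf}, which only need a stable $\mbb{Z}$-linear $\infty$-category with t-structure), a condensed refined class formation amounts to a condensed sheaf $A^{cond}$ together with a map
$$\kappa^{cond}:\frac{1}{\#\mc{C}}\mbb{Z}\to R\Gamma(\mc{C};A^{cond}[2])$$
in $\on{D}(\on{CondAb})$ (the source discrete), where $R\Gamma(\mc{C};-)$ is computed in $\on{D}(\on{CondAb})$-valued sheaves on $\mc{C}$. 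The remaining class formation axioms and the nondegeneracy of $\kappa$ are checked on underlying abstract groups. So the lemma reduces to the following claim: evaluation at $\ast$ (the forgetful functor $\on{D}(\on{CondAb})\to\on{D}(\mbb{Z})$, $M\mapsto M(\ast)$) induces an equivalence of mapping 2-groupoids
$$\on{Map}_{\on{D}(\on{CondAb})}\Bigl(\tfrac{1}{\#\mc{C}}\mbb{Z},R\Gamma(\mc{C};A^{cond}[2])\Bigr)\overset{\sim}{\rightarrow}\on{Map}_{\on{D}(\mbb{Z})}\Bigl(\tfrac{1}{\#\mc{C}}\mbb{Z},R\Gamma(\mc{C};A[2])\Bigr).$$
The functor $M\mapsto M(\ast)$ commutes with limits, so $R\Gamma(\mc{C};A^{cond})(\ast)=R\Gamma(\mc{C};A)$.

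Write $\frac{1}{\#\mc{C}}\mbb{Z}$ as a filtered colimit of copies of $\mbb{Z}$. Maps out of $\mbb{Z}$ are simply evaluation at $\ast$. Both sides are therefore $\lim$ over this sequential system of the $\tau_{\geq 0}$ of the global sections, and the comparison is an equivalence. Concretely, we compare through the Milnor sequence with $\varprojlim$ and $\varprojlim^1$ of $H^1$ and $H^2$ of $\mc{C}$ with coefficients in $A$, together with $\Gamma$. The homotopy groups are computed from
$$\ul{H}^i(\mc{C};A^{cond}),$$
the condensed cohomology groups, by $\on{Ext}^j_{\on{CondAb}}(\mbb{Z},-)$, which is just evaluation at $\ast$ and is exact. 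Moreover $H^i(\mc{C};A^{cond})(\ast)=H^i(\mc{C};A)$. Hence the reduction is formal, and one sees directly that the homotopy groups of both mapping spaces match $\pi_0,\pi_1,\pi_2$ as in Corollary \ref{howtopromote}.

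The main obstacle is ensuring the condensed analogue of Theorem \ref{cfvsrcf} genuinely holds. Specifically, we need that the fiber $\mc{F}$ is presheaf-level concentrated in degrees $[0,1]$ in $\on{D}(\on{CondAb})$, and that sheafified $H_0$ behaves as before. The Tate–Nakayama step (Lemma \ref{uniformbound}) is categorical, so we only need the input that the condensed Tate cohomology of $A^{cond}(X)$ for finite groups is controlled by the abstract one. This is where almost compactness enters. The groups $\ul{H}^i(G;A^{cond}(X))$ for finite $G$ are computed by the standard complex of finite products of $A^{cond}(X)$. Its cohomology consists of quotients of closed subgroups in $\on{LCA}$, and since these groups are torsion for $i>0$, they coincide with discrete groups with the expected values. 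The condition that $A^{cond}(X)$ is almost compact (hence its subquotients are Hausdorff, by Lemma \ref{acomppermanence}) guarantees that the cycles and boundaries are closed, so the vanishing and cyclicity of $H^1,H^2$ transfer to the condensed setting. Uniqueness then follows from full faithfulness of the comparison above.
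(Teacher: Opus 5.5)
Your formal reduction is correct and is essentially the paper's first step. Since $\frac{1}{\#\mc{C}}\mbb{Z}$ is discrete, maps from it into any object of $\on{D}(\on{CondAb})$ are computed after evaluating at $\ast$, by the adjunction between the discrete functor and evaluation at $\ast$. So $\kappa$ lifts uniquely to $\kappa^{cond}$, and this gives uniqueness. The two proofs check the same thing from opposite sides:
\begin{itemize}
\item the paper lifts the presheaf-level map $\mbb{Z}^{tr}\to A[2]$ to $\mbb{Z}^{tr}\to A^{cond}[2]$ and shows that its fiber is a sheaf;
\item you take the fiber of the sheaf-level map and must show that it is presheaf-level in degrees $[0,1]$.
\end{itemize}

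The gap is in the one non-formal step, which is exactly where almost compactness has to be used. You say the boundaries in the standard cochain complex are closed ``by Lemma \ref{acomppermanence}''. That lemma only says that closed subgroups, and quotients by closed normal subgroups, of an almost compact group are almost compact. It says nothing about images of continuous homomorphisms, and it does not even apply to the cochain groups $A^{cond}(X)^{G^n}$, which are not almost compact. Such images can fail to be closed. On the almost compact group $\mbb{R}\times(\mbb{R}/\mbb{Z})^2$, the endomorphism $(t,x,y)\mapsto(0,t,\alpha t)$ with $\alpha$ irrational has dense, non-closed image. If a boundary subgroup is not closed, the condensed cohomology group is a non-Hausdorff quotient. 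It is then not determined by its underlying abstract group, so $H^1=0$ and $H^2\simeq\mbb{Z}/\#G$ do not transfer to the condensed level. Your other justification, that these groups are torsion and hence discrete, is also false: $\prod_{\mbb{N}}\mbb{Z}/2$ is compact and torsion but not discrete. What you actually need is Hausdorffness together with finiteness of the underlying group.

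The missing idea is a devissage that uses the finite group action. Write
$$0\to K(X)\to A^{cond}(X)\to C(X)\to 0,$$
where $K(X)$ is the maximal compact subgroup and $C(X)\in\{0,\mbb{Z},\mbb{R}\}$, with $G$ acting on $C(X)$ by $\pm 1$. Then:
\begin{itemize}
\item $H^i(G;K(X))$ is compact Hausdorff, because compact Hausdorff abelian groups form an abelian subcategory of $\on{CondAb}$ closed under extensions;
\item $H^i(G;C(X))$ is finite discrete for $i>0$;
\item the cokernel of $A^{cond}(X)^G\to C(X)^G$ is finite, because norms hit $\#G\cdot C(X)^G$.
\end{itemize}
The long exact sequence then shows that $H^i(G;A^{cond}(X))$ is compact Hausdorff for all $i>0$. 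Hence it is determined by its finite underlying group, and $H^1=0$ and cyclicity of $H^2$ hold as condensed statements. That is precisely the input needed, either for your condensed rerun of Theorem \ref{cfvsrcf} or for the paper's direct verification that $\on{fib}(\mbb{Z}^{tr}\to A^{cond}[2])$ satisfies the sheaf condition.
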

\begin{proof}
View the datum $\kappa$ as a map
$$\mbb{Z}^{tr} \to A[2]$$
in $\on{Fun}(\mc{C}^{op},\on{D}(\mbb{Z}))$ (this classifies the Postnikov tower of $F^{tr}$ viewed as a presheaf).  Because $\mbb{Z}$ is discrete, by the datum 2 this is equivalent to giving a map
$$\mbb{Z}^{tr} \to A^{cond}[2]$$
in $\on{Fun}(\mc{C}^{op},\on{D}(\on{CondAb}))$.  Let $\mathcal{F}$ denote the fiber of this map.  Then it suffices to show that $\mc{F}$ is a sheaf (as then by Proposition \ref{trissheaf} $\mc{F}^{tr}$ will be a cosheaf and hence will give the required condensed refined class formation).  Thus, let $X\in\mc{C}$ be connected and let $G\subset\on{Aut}(X)$ be a subgroup, with $Y:=X/G$.  Consider the map
$$\mc{F}(Y) \to \mc{F}(X)^G;$$
we need to show it's an isomorphism.  Note that we know this to be true on underlying objects of $\on{D}(\mbb{Z})$, i.e.\ after applying $(-)(\ast)$.

Let $K(X)$ denote the largest compact subgroup of $A^{cond}(X)$, so that we have a short exact sequence of $G$-objects of $\on{CondAb}$

$$0 \to K(X) \to A^{cond}(X) \to C(X)\to 0$$

\noindent where $C(X)$ is either $0$, $\mbb{Z}$, or $\mbb{R}$ with some undetermined action of $G$.  The induced homomorphism $A^{cond}(X)^G \to C(X)^G$ (of locally compact Hausdorff abelian groups, with target isomorphic to either $0$,$\mbb{Z}$, or $\mbb{R}$) has image containing $\#G\cdot C(X)^G$; it follows that the cokernel is a finite (discrete) abelian group.  Then from the long exact sequence in cohomology and the fact that compact Hausdorff abelian groups are an abelian subcategory of $\on{CondAb}$ closed under extensions, we deduce that $H^i(G;A^{cond}(X))$ is compact Hausdorff for all $i>0$.

In particular, we deduce that $H^1(G;A^{cond}(X))=0$ as a condensed group and $H^2(G;A^{cond}(X))$ is cyclic of order $\#G$ as a condensed group, from the corresponding facts about the underlying groups.  Then using the long exact sequence in $G$-cohomology associated to
$$A^{cond}(X)[1] \to \mc{F}(X) \to \mbb{Z}[0]$$
we see that $H_0(\mc{F}(X)^G)\simeq \mbb{Z}$ as a condensed abelian group and so our claim in degree $0$ follows from the claim on underlying abelian groups.  Similarly we deduce $H_i(\mc{F}(X)^G)$ is compact Hausdorff for $i<0$ hence vanishes by the corresponding fact on underlying abelian groups.  All that remains is the claim in degree $1$, but that amounts to the assumption that $A^{cond}$ is a sheaf.
\end{proof}

Now let us discuss the usual class formations from number theory, paying special attention to the question of how canonically we can promote them to refined class formations.

\begin{example}\label{exrcf}
\begin{enumerate}
\item Let us start with the most important example, the class formation for the rational number field $\mbb{Q}$.  The Galois category is $\mc{C}_\mbb{Q}$, the finite etale site of $\on{Spec}(\mbb{Q})$.  The sheaf of condensed abelian groups $A$ on $\mc{C}_\mbb{Q}$ is provided by the idele class groups, so 
$$A(K) = \mbb{A}_K^\times /K^\times$$
for $K$ a number field.  Note that $A(K)$ is almost compact: the usual norm map
$$N:A(K) \to \mbb{R}_{>0}$$
is surjective with compact kernel $A(K)^{1}$.  The class field axioms, plus the sheaf axiom for $A$, are satisfied by global class field theory for number fields.  Thus, by Lemmas \ref{howtopromote} and \ref{condautomatic}, the 2-groupoid of lifts of $(\mc{C}_\mbb{Q},A)$ to a condensed refined class formation has:
\begin{enumerate}
\item $\pi_0 = \on{Inj}(H^2(\mc{C}_\mbb{Q};A),\mbb{Q}/\mbb{Z})$;
\item $\pi_1 = \on{Ext}(\mbb{Q},\mbb{A}^\times/\mbb{Q}^\times)=0$;
\item $\pi_2 = \on{Hom}(\mbb{Q},\mbb{A}^\times/\mbb{Q}^\times)=\mbb{R}_{>0}$.
\end{enumerate}
Again by global class field theory, there is a canonical point in $\pi_0$, corresponding to the usual isomorphism
$$\on{inv}: H^2(\mc{C}_{\mbb{Q}};A)\simeq \mbb{Q}/\mbb{Z},$$
or, equivalently, the usual system of fundamental classes, or again equivalently (see \cite{tatentb}), the usual system of Artin maps. (Actually there are two canonical choices, differing by sign; let us say that we take the convention that uniformizers go to arithmetic Frobenius elements via the associated Artin maps.)

Thus, any two refined (condensed) class formations, each inducing the usual global class field theory for number fields, are isomorphic by an isomorphism which is unique up to homotopy; but that homotopy is not unique: it has an $\mbb{R}_{>0}$-ambiguity.

We can get rid of this ambiguity as follows: we replace $A$ by its subsheaf $A^1$ (the kernel of the norm map).  Note that
$$\mathbb{R}_{>0}\oplus A^1\overset{\sim}{\longrightarrow} A$$
via the map given by the inclusion on the second factor and the map induced by the inclusion of the positive reals into the archimedean factor of $\mbb{A}_{\mbb{Q}}^\times/\mbb{Q}^\times$ on the first factor.  Repeating the above analysis for the class formation $(\mc{C}_{\mbb{Q}},A^1)$ we find that $\pi_1=\pi_2=0$ so there is a completely unique condensed refined class formation $(\mc{C}_{\mbb{Q}},F^1,\alpha)$ inducing $(\mc{C}_{\mbb{Q}},A^1)$ and the usual system of Artin maps for number fields.  Then setting
$$F = \mbb{R}_{>0}[1]\oplus F^1,$$
we find that 
$$(\mc{C}_{\mbb{Q}},F,\alpha)$$
gives a \emph{canonical} condensed refined class formation refining the usual class formation and Artin maps of class field theory for number fields.

\item Let $K$ be a number field.  Then applying Example \ref{newrcf} to $\on{Spec}(K)\in\mc{C}_{\mbb{Q}}$, we deduce that the restriction of $(\mc{C}_{\mbb{Q}}, F,\alpha)$ to $\mc{C}_K = (\mc{C}_{\mbb{Q}})_{/\on{Spec}(K)}$ gives a canonical condensed refined class formation (refining class field theory over $K$).

\item More generally, let $K$ be a number field and $S$ a set of places of $K$, which we will eventually assume to be nonempty.  Define a finite extension $L/K$ to be \emph{unramified outside $S$} if for all places $\nu$ of $K$ not in $S$ and all places $\mu$ of $L$ above $\nu$, the extension of local fields $L_{\mu}/K_\nu$ is:
\begin{enumerate}
\item unramified in the usual sense, if $\nu$ is nonarchimedean;
\item trivial, if $\nu$ is archimedean.
\end{enumerate}
Then the full subcategory $\mc{C}_{K,S}\subset \mc{C}_K$ of finite etale $K$-algebras which are products of field extensions $L/K$ unramified outside $S$ is a Galois category.  (If $S$ contains all archimedean places, then this is the finite etale site of $\on{Spec}(\mc{O}_{K,S})$, and the corresponding Galois group is the traditional Galois group $G_{K,S}$ with restricted ramification.)

Consider the sheaf $U$ on $\mc{C}_{K,S}$ given by
$$U(L) = \prod_{\mu\not\in S} (L_{\mu}^\times)^1$$
for any finite extension $L/K$ unramified outside $S$, where we abusively let $S$ also denote the set of places of $L$ above $S$, and $(L_{\mu}^\times)^1\subset L_{\mu}^\times$ denotes the maximal compact subgroup (group of elements of norm 1).  Then $U$ is cohomologically trivial.  Moreover, if $S\neq\emptyset$, then the map
$$U(L) \subset \mbb{A}_L^\times\to A(L)=\mbb{A}_L^\times/L^\times$$
is injective for all $L$ as above. Thus, by Example \ref{newrcf} we deduce that when $S\neq \emptyset$,

$$(\mc{C}_{K,S},A/U)$$
is a class formation, and
$$(\mc{C}_{K,S},\on{cofib}(U[1]\to F),\alpha)$$
is a canonical condensed refined class formation refining it.

Note that although the Galois category $\mc{C}_{K,S}$ does not change if we add or subtract complex places from $S$, the class formation does change, and so will the associated Weil group.

More explicitly, the class groups in this class formation are as follows: for $L/K$ unramified outside $S$, we have
$$A_{L,S} = L^\times\backslash\mathbb{A}_L^\times/\prod_{\mu\not\in S}(L_\mu^\times)^1.$$
Suppose $S$ is finite and contains all archimedean places, so that $L/K$ corresponds to the finite etale $\mc{O}_{K,S}$-algebra $\mc{O}_{L,S}$.  By the usual adelic descent, we can reinterpret the above class group as being the ``compactly supported Picard group'' of $\mc{O}_{L,S}$, that is the abelian group of line bundles on $\on{Spec}(\mc{O}_{L,S})$ equipped with a trivialization on base-change to $\sqcup_{\mu\in S} \on{Spec}(L_\mu)$.  (Such objects have no automorphisms, so this $\on{Pic}_c(\mc{O}_{L,S})$ is legitimately an abelian group.)  This interpretation gives us an exact sequence
$$0 \to \mc{O}_{L,S}^\times \to \prod_{\mu\in S} L_\mu^\times \to A_{L,S} \to \on{Pic}(\mc{O}_{L,S}) \to 0$$
which is often convenient for analyzing $A_{L,S}$.

\item In the above examples with number fields, we managed to find \emph{canonical} refined class formations refining the usual class formations.  But now we consider the ostensibly much simpler setting of \emph{finite fields}, where, strangely, there is seemingly no canonical choice.

Let $p$ be a prime with associated finite field $\mbb{F}_p$, and let $\mc{C}_{\mbb{F}_p}$ denote the Galois category of finite etale $\mbb{F}_p$-schemes.  Then
$$(\mc{C}_{\mbb{F}_p},\mbb{Z})$$
is a class formation, where we mean the constant sheaf on $\mbb{Z}$.  Moreover, there is a unique isomorphism
$$\on{inv}:H^2(\mc{C}_{\mbb{F}_p};\mbb{Z})\simeq \mbb{Q}/\mbb{Z}$$
such that the induced Artin map
$$\mbb{Z}\to G_{\mbb{F}_p}^{ab}$$
sends $1$ to the element given by the (arithmetic) Frobenius automorphism $x\mapsto x^p$.  By \ref{howtopromote} the 2-groupoid of lifts to a refined class formation, inducing the above $\on{inv}$, has:
\begin{enumerate}
\item $\pi_0=\ast$;
\item $\pi_1\simeq\on{Ext}(\mbb{Q},\mbb{Z})=\wh{\mbb{Z}}/\mbb{Z}$;
\item $\pi_2\simeq\on{Hom}(\mbb{Q},\mbb{Z})=0.$
\end{enumerate}
Thus  any two choices of refined class formation are isomorphic, but not uniquely: there is a $\wh{\mbb{Z}}/\mbb{Z}$-ambiguity in the choice of isomorphism (but no higher ambiguity).

Explicitly, suppose we choose an algebraic closure $\overline{\mbb{F}_p}/\mbb{F}_p$.  This determines an equivalence of $\mc{C}_{\mbb{F}_p}$ with the Galois category of finite $\on{Gal}(\overline{\mbb{F}_p}/\mbb{F}_p)$-sets, or an isomorphism of the Galois groupoid of $\mbb{F}_p$ with $B\on{Gal}(\overline{\mbb{F}_p}/\mbb{F}_p)$.  But we also have a canonical homomorphism $\mbb{Z} \to \on{Gal}(\overline{\mbb{F}_p}/\mbb{F}_p)$ given by the powers of Frobenius, which gives an associated map of groupoids
$$B\mbb{Z} \to \mc{G}_{\mbb{F}_p}.$$
Under the equivalence of refined class formations with Weil groupoids described in the following section, this gives rise to a refined class formation which refines $(\mc{C}_{\mbb{F}_p},\mbb{Z})$.

From the perspective of the definition of a refined class formation, this same situation can be described as follows.  Suppose again that we fix an $\overline{\mbb{F}_p}$, and let $\varphi$ denote Frobenius.  Then we get a refined class formation given by a cosheaf $F$ on the finite etale site of $\on{Spec}(\mbb{F}_p)$ defined as follows: we send $X\in \on{fEt}_{\mbb{F}_p}$ to the object in $\on{D}(\mbb{Z})$ represented by the complex (in homological degrees $1$ and $0$) given by
$$\mbb{Z}[X(\overline{\mbb{F}_p})] \overset{\varphi - 1}{\longrightarrow} \mbb{Z}[X(\overline{\mbb{F}_p})],$$
with the augmentation $\alpha:F\to\mbb{Z}$ being induced by the obvious augmentation on the $0^{th}$ term of the above complex.  The isomorphism $H_1F\simeq \mbb{Z}^{tr}$ (denoting by $\mbb{Z}$ the constant sheaf) is given by the diagonal inclusion of $\mbb{Z}$ in the first term in the above complex.

An isomorphism between two algebraic closures induces an isomorphism of refined class formations, inducing the identity on underlying class formations; this map from isomorphisms of algebraic closures to isomorphisms of refined class formations is surjective, and two isomorphisms induce (canonically) equivalent isomorphisms of refined class formations if and only if they differ by a power of Frobenius.  This explains the $\wh{\mbb{Z}}/\mbb{Z}$-ambiguity.

It is unclear to the author whether one can expect a canonical choice of refined class formation in this setting, or equivalently, if one can expect a canonical choice of algebraic closure of $\mbb{F}_p$ ``up to powers of Frobenius".  (It is also unclear to the author how to make the word ``canonical" precise, so as to turn this into a well-defined mathematical question.)
\item Suppose now $k$ is a general finite field, of characteristic $p$.  Then $k/\mbb{F}_p$ in a unique manner.  Thus if we fix a refined class formation for $\mbb{F}_p$ (e.g.\ by fixing an algebraic closure, as above), then from Example \ref{newrcf} we get in a canonical way an induced refined class formation for $k$ (even if we don't embed $k$ into our choice of $\overline{\mbb{F}_p}$), simply by restriction on the level of cosheaves.
\item Now let $K$ be a nonarchimedean local field, with residue field $k$, a finite field.  By local class field theory, there is a canonical condensed class formation given by $(\mc{C}_K,\mbb{G}_m)$ where $\mc{C}_K$ is the Galois category of finite etale $K$-schemes and $\mbb{G}_m$ is defined as usual by
$$\mbb{G}_m(L) = L^\times$$
for a finite separable extension $L/K$.  Moreover, there is a unique isomorphism
$$\on{inv}:H^2(\mc{C}_K;\mbb{G}_m)\simeq\mbb{Q}/\mbb{Z}$$
such that the associated Artin map
$$K^\times \to G_K^{ab}$$
sends every uniformizer to some Galois element inducing (arithmetic) Frobenius on each unramified extension.  The 2-groupoid of lifts to a condensed refined class formation inducing the above $\on{inv}$ has:
\begin{enumerate}
\item $\pi_0=\ast$;
\item $\pi_1=\on{Ext}(\mbb{Q},K^\times)=\wh{\mbb{Z}}/\mbb{Z}$;
\item $\pi_2=\on{Hom}(\mbb{Q},K^\times)=0$.
\end{enumerate}
 Note that this is exactly the same ambiguity as in the finite field case.  This can be explained as follows.  Suppose we choose a refined class formation inducing $(\mc{C}_K,\mbb{G}_m,\on{inv})$.  Note that we have a fully faithful embedding $\mc{C}_k\subset \mc{C}_K$ giving the unramified extensions, and the norm-1 subgroup $\mbb{G}_m^1$ is cohomologically trivial on $\mc{C}_k$, with quotient $\mbb{G}_m/\mbb{G}_m^1\simeq\mbb{Z}$ via the normalized valuation (uniformizer $\mapsto 1$).  Applying Example \ref{newrcf}, we get an associated refined class formation on $\mc{C}_k$ with underlying sheaf $\mbb{Z}$.  This induces an equivalence of 2-groupoids between the refinements of class field theory for $K$ and for $k$.  In other words, if we fix our refined class formation for $k$ as above, we canonically get an induced refined class formation for $K$ as well.
 
 This is more clear from the perspective of Weil groupoids: a choice of Weil groupoid $\mc{W}_k\to \mc{G}_k$ for $k$ canonically induces one for $K$ by taking $\mc{W}_K:= \mc{W}_k\times_{\mc{G}_k}\mc{G}_K$.  But we can also see it from the perspective of refined class formations.  To take the most basic example of $K=\mbb{Q}_p$, suppose we fix an $\overline{\mbb{F}_p}$ as in the discussion of finite fields.  Then we can write down a refined class formation for $\mbb{Q}_p$ as follows:  It sends a finite extension $K$ of $\mbb{Q}_p$ to the two-term complex of abelian groups
 $$(K\otimes_{\mbb{Z}_p}W(\overline{\mbb{F}_p}))^\times \overset{\varphi -1}{\longrightarrow} (K\otimes_{\mbb{Z}_p}W(\overline{\mbb{F}_p}))^\times,$$
 with functoriality given by norm maps.  The augmentation to $\mbb{Z}$ is given by taking $p$-adic valuation on the term in degree $0$ (for $K=\mbb{Q}_p$, then in general by composing with the norm functoriality).  The isomorphism with $\mbb{G}_m^{tr}$ in degree $1$ is induced by the obvious inclusion $K^\times\subset(K\otimes_{\mbb{Z}_p}W(\overline{\mbb{F}_p}))^\times$.

 Note that if we fix the refined class formations on finite fields via the above-discussed procedure of first fixing them for $\mbb{F}_p$ then inducing them on any finite field, then fix them on nonarchimedean local fields by compatibility with the finite field case using unramified extensions as above, then we automatically get compatibility under extensions of nonarchimedean local fields; this is again most clear from the Weil groupoid perspective.

Yet another perspective on refinements of local class field theory for non-archimedean local fields (say, of characteristic zero for simplicity) is as follows.  Recall from Theorem \ref{cfvsrcf} that refining $(\mc{C}_K,\mbb{G}_m,\on{inv})$ is equivalent to giving a map in $\on{D}(\mbb{Z})$
 $$\mbb{Q} \to R\Gamma(\mc{C}_K;\mbb{G}_m[2])$$
 which induces $\on{inv}$ in degree $0$.  This data can be reinterpreted as that of an pro-etale $\widetilde{\mbb{G}_m}$-gerbe over $\on{Spec}(K)$ where  $\widetilde{\mbb{G}_m}= \ul{\on{Hom}}(\mbb{Q},\mbb{G}_m)$, or equivalently (see \cite{tannakian}) as that of an $K$-linear $\mbb{Q}$-graded Tannakian category $\mc{T}$, such that $\mc{T}$ is pro-etale locally isomorphic to the $\mbb{Q}$-graded Tannakian category of $\mbb{Q}$-graded vector spaces.  In these terms, if we fix an algebraic closure of $k$ with induced maximal unramified extension $K^{nr}$ of $K$, then the $\mbb{Q}$-graded Tannakian category corresponding to the above-described refined class formation is the usual isocrystal-type category, also known as the local Kottwitz category, c.f.\ \cite{kottwitz}, \cite{iakovenko}: for $K=\mbb{Q}_p$ it is the $\otimes$-category of finite dimensional $W(\overline{\mbb{F}_p})[\frac{1}{p}]$-vector spaces equipped with a $\varphi$-semilinear automorphism $\sigma$, with the $\mbb{Q}$-grading coming from the Dieudonne-Manin classification: for $\lambda=\frac{r}{s}\in\mbb{Q}$ written as a reduced fraction with $s>0$, the $\lambda$-graded piece is the full subcategory of objects with $\sigma^s=p^r$.

\item Now consider archimedean local fields.  Again, it suffices to consider $K=\mbb{R}$, by passing to finite extensions.  The class formation from local class field theory for $K=\mbb{R}$ can be described exactly as for nonarchimedean local fields: $\mc{C}_\mbb{R}$ is the Galois category of finite etale $\mbb{R}$-schemes, and the sheaf on $\mc{C}_\mbb{R}$ is given by $\mbb{G}_m$.  Here there is a unique
$$\on{inv}: H^2(\mc{C}_\mbb{R};\mbb{G}_m)\simeq \left(\frac{1}{2}\mbb{Z}\right)/\mbb{Z},$$
as there is a unique isomorphism between any two groups of order two.  The 2-groupoid of refinements of this class formation has:
\begin{enumerate}
\item $\pi_0=\ast$;
\item $\pi_1=\on{Ext}(\frac{1}{2}\mbb{Z};\mbb{R}^\times)=0$;
\item $\pi_2=\on{Hom}(\frac{1}{2}\mbb{Z};\mbb{R}^\times)=\mbb{R}^\times$.
\end{enumerate}
Again using the trick of passing to $\mbb{G}_m^1$ using $\mbb{R}_{>0}\oplus \mbb{G}_m^1= \mbb{G}_m$ as in the case $F=\mbb{Q}$ above, we can reduce the $\mbb{R}^\times$-ambiguity to a $\pm 1$-ambiguity.  Thus we get a 2-groupoid of refinements of class field theory over $\mbb{R}$ which is equivalent to $B^2\pm 1$: any two objects are isomorphic via an isomorphism which is unique up to homotopy, but that homotopy between isomorphisms is only unique up to sign.

Another way to understand this ambiguity is as follows.  Suppose we fix a choice of nontrivial division algebra $\mbb{H}$ over $\mbb{R}$.  Then we get an induced Weil groupoid (hence refined class formation, as we will see in the next section) as follows: the objects are the pairs $(\mathbf{C},\alpha)$ where $\mathbf{C}$ is an algebraic closure of $\mbb{R}$ and $\alpha$ is a Morita equivalence $\mbb{H}\otimes_{\mbb{R}}\mathbf{C}\sim \mathbf{C}$ (i.e.\ a $\mathbf{C}$-linear equivalence of categories of modules); morphisms are obvious.  Clearly this groupoid only depends on the Morita class of $\mathbb{H}$, and via this we see that giving a refined class formation for $\mbb{R}$ is equivalent to giving a representative for the non-trivial Morita class over $\mbb{R}$.  To account for the reduction to $\pm 1$, we can incorporate Hermitian data in the categories.

Again there is a (closely related) Tannakian perspective: the data of a refinement of class field theory over $\mbb{R}$ is equivalent to the data of a $\frac{1}{2}\mbb{Z}$-graded Tannakian category over $\mbb{R}$ which is locally but not globally isomorphic to $\frac{1}{2}\mbb{Z}$-graded vector spaces.  As in the non-archimedean case, there is an isocrystal-style description of such a Tannakian category, provided we fix an algebraic closure $\mbb{C}$ of $\mbb{R}$.  Namely it is the category of $\frac{1}{2}\mbb{Z}$-graded $\mbb{C}$-vector spaces $V$ equipped with a semi-linear automorphism $\sigma$ such that $\sigma^2 = (-1)^{2\lambda}$ in graded degree $\lambda$.  Again this is also known as the local Kottwitz category for $\mbb{R}$, see \cite{kottwitz}, \cite{iakovenko}.

\item Finally, there is the case of function fields $K$ over finite fields (and their ``restricted ramification" variants).  This is not our focus so we do not go into details, but the situation is basically the same as that of nonarchimedean local fields: the ambiguity in choice of refined class formation for $K$ reduces to that for its finite field of constants $k$.
\end{enumerate}

\end{example}

One thing missing from the above discussion of examples is the quite nontrivial question of local-global compatibility in the number field case (in the function field case it is trivial). To address this we need to explain the notion of a morphism of refined class formations.

\begin{definition}\label{maprcf}
Let $(\mc{C},F,\alpha)$ and $(\mc{C}',F',\alpha')$ be two refined class formations (condensed or not: the discussion is the same in either case).  A morphism 
$$f:(\mc{C},F,\alpha)\to (\mc{C}',F',\alpha')$$
consists of the following data:
\begin{enumerate}
\item An exact functor $f^{-1}:\mc{C}'\to\mc{C}$;
\item A natural transformation $f_\natural: f_\ast F \to F'$;
\end{enumerate}
subject to the condition that $\alpha'\circ f_\natural$ should agree with the map induced on $f_\ast$ by $\alpha:F\to\mbb{Z}$.

Here $f_\ast G := G\circ f^{-1}$ for any functor $G$ from $\mc{C}$ to some target $\infty$-category; note in particular that $f_\ast$ sends (co)sheaves to (co)sheaves, and sends constant functors to constant functors.  (It also commutes with the $(-)^{tr}$ operation from Definition \ref{trfunctor}, since $f^{-1}$, being exact, also induces an additive functor on span categories.)
\end{definition}

\begin{remark}
An obvious elaboration of this definition makes refined class formations into a $(2,1)$-category.
\end{remark}

Recall from Theorem \ref{cfvsrcf} that a refined class formation $(\mc{C},F,\alpha)$ can be equivalently encoded as the data $(\mc{C},A,\kappa)$ of a class formation plus a map
$$\kappa: \frac{1}{\#\mc{C}}\mbb{Z} \to R\Gamma(\mc{C};A[2]),$$
and similarly $(\mc{C}',F',\alpha')$ can be encoded as $(\mc{C}',A',\kappa')$.  We can analyze what it means, in these terms, to give a map of refined class formations, by passing to $(-)^{tr}$ and considering the sheafified Postnikov tower, as in the proof of Theorem \ref{cfvsrcf}.  For simplicity, we will restrict to the situation when the first datum $f^{-1}$ in Definition \ref{maprcf} Galois-corresponds to an \emph{injective} homomorphism $f:\Gamma\to\Gamma'$ of profinite groups.  In that case, we can analyze the possibilities for
$$f_\natural:f_\ast F\to F'$$
as follows.  First, applying $(-)^{tr}$ we see it is equivalent to give
$$f_\natural:f_\ast (F^{tr}) \to (F')^{tr},$$
a map of $\on{D}(\mbb{Z})$-valued sheaves.  Recall that $F^{tr}$ has just two nonvanishing sheafified homology groups: in degree $0$, given by $\frac{1}{\#\mc{C}}\mbb{Z}$, and in degree $1$, given by $A$; and similarly for $(F')^{tr}$.  Because of our simplifying assumption on $f^{-1}$, the functor $f_\ast$ is t-exact with respect to the natural t-structure on $\on{D}(\mbb{Z})$-valued sheaves; thus $f_\ast(F^{tr})$ likewise only has two nonvanishing sheafified homology groups, with $H_0 = f_\ast (\frac{1}{\#\mc{C}}\mbb{Z})$ and $H_1=f_\ast A$.  Thus, giving a map $f_\natural$ as above is equivalent to giving maps of sheaves of abelian groups
$$f_\ast \frac{1}{\#\mc{C}}\mbb{Z}\to \frac{1}{\#\mc{C}'}\mbb{Z}$$
and
$$f_\ast A \to A',$$
together with the secondary data of a homotopy between the two different maps $f_\ast \frac{1}{\#\mc{C}}\mbb{Z} \to A'[2]$, one coming from $f_\ast (\frac{1}{\#\mc{C}}\mbb{Z})\to \frac{1}{\#\mc{C}'}\mbb{Z}$ composed with $\kappa'$ and the other from $f_\ast \kappa$ composed with $f_\ast A \to A'$.

The map $f_\ast\frac{1}{\#\mc{C}}\mbb{Z} \to \frac{1}{\#\mc{C}'}\mbb{Z}$ is actually determined by the condition $\alpha'\circ f_\natural = f_\ast \alpha$ in Definition \ref{maprcf}, so really we only need specify $f_\ast A\to A'$ and the homotopy.

Unwinding, we arrive at the following conclusion: to give a map $f:(\mc{C},A,\kappa) \to (\mc{C}',A',\kappa')$ with underlying exact functor $f^{-1}:\mc{C}'\to \mc{C}$ assumed to Galois-correspond to an injective homomorphism of profinite groups amounts to giving the following:

\begin{enumerate}

\item A map $f_\natural : f_\ast A \to A'$ of sheaves on $\mc{C}'$;
\item A homotopy between two specific maps
$$\varphi,\varphi':\frac{1}{\#\mc{C}}\mbb{Z} \to \varprojlim_{(x,U)} R\Gamma(\mc{C}'_{/U};A'[2]).$$
which are canonically defined in terms of the given data.

 Here $(x,U)\mapsto U$ is the filtered inverse system of connected objects of $\mc{C}'$ parametrized by pairs $(x,U)$ with $U$ a connected object of $\mc{C}'$ and $x:\ast \to f^{-1}U$ is a morphism,  and the transition maps in $\varprojlim_{(x,U)} R\Gamma(\mc{C}'_{/U};A'[2])$ are given by transfer maps.

The map $\varphi'$ is gotten as follows: note that $\kappa':\frac{1}{\#\mc{C}'}\mbb{Z} \to R\Gamma(\mc{C}';A'[2])$ naturally extends to a map of filtered inverse systems parametrized by $(x,U)$, where the source is $\frac{1}{\#\mc{C}'_{/U}}\mbb{Z}$ with transition maps the inclusions and the target is $R\Gamma(\mc{C}'_{/U};A'[2])$ with transition maps the transfer maps; then we pass to the inverse limits and restrict the source to $\frac{1}{\#\mc{C}}\mbb{Z}$ to get
$$\varphi':\frac{1}{\#\mc{C}}\mbb{Z} \to \varprojlim_{(x,U)} R\Gamma(\mc{C}'_{/U};A'[2]).$$

The map $\varphi$ is the composition
$$\frac{1}{\#\mc{C}}\mbb{Z} \to R\Gamma(\mc{C};A[2])\simeq R\Gamma(\mc{C}';f_\ast A[2])\to\varprojlim_{(x,U)}R\Gamma(\mc{C}'_{/U};A'[2]),$$
where the last map is induced by $f_\natural$.
\end{enumerate}

\begin{remark}
Exactly as in \ref{condautomatic}, morphisms of condensed refined class formations are the same as morphisms of underlying abstract refined class formations together with a promotion of $f_\natural$ to a map of sheaves of condensed abelian groups.
\end{remark}

From this discussion we conclude the following.

\begin{corollary}
Suppose given two refined class formations, encoded in data $(\mc{C},A,\kappa)$ and $(\mc{C}',A',\kappa')$ as in Theorem \ref{cfvsrcf}.  Suppose also given the following data:
\begin{enumerate}
\item an exact functor $f^{-1}:\mc{C}'\to\mc{C}$, assumed to Galois-correspond to an injective homomorphism of profinite groups;
\item a map of sheaves $f_\natural:f_\ast A \to A'$.
\end{enumerate}
Finally, suppose the following, with notations as above:
\begin{enumerate}
\item $R^1\varprojlim_{(x,U)} A'(U)=0$;
\item $R^2\varprojlim_{(x,U)} A'(U)=0.$
\end{enumerate}
Then there exists a morphism of refined class formations $(\mc{C},A,\kappa) \to (\mc{C}',A',\kappa')$ inducing the given data if and only if for the associated invariant maps
$$\on{inv}:H^2(\mc{C};A)\hookrightarrow\mbb{Q}/\mbb{Z}, \text{   }\on{inv}':H^2(\mc{C}';A')\hookrightarrow\mbb{Q}/\mbb{Z},$$
we have that $\on{inv}$ is equal to the composition
$$H^2(\mc{C};A) = H^2(\mc{C}';f_\ast A)\overset{f_\natural}{\longrightarrow}  H^2(\mc{C}';A')\overset{\on{inv}'}{\longrightarrow}\mbb{Q}/\mbb{Z}.$$

If this condition is satisfied, then the groupoid of maps of refined class formations inducing the given data has:
\begin{enumerate}
\item $\pi_0\simeq \on{Ext}\left(\frac{1}{\#\mc{C}}\mbb{Z},\varprojlim_{(x,U)}A'(U)\right);$

\item $\pi_1 \simeq \on{Hom}\left(\frac{1}{\#\mc{C}}\mbb{Z}, \varprojlim_{(x,U)} A'(U)\right)$.
\end{enumerate}
\end{corollary}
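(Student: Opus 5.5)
By the preceding discussion, a morphism inducing the given $f^{-1}$ and $f_\natural$ is exactly a homotopy $\varphi\simeq\varphi'$ between the two maps
$$\varphi,\varphi':\frac{1}{\#\mc{C}}\mbb{Z}\to L:=\varprojlim_{(x,U)} R\Gamma(\mc{C}'_{/U};A'[2]).$$
So the groupoid of such morphisms is the path space $\on{Map}_{\on{D}(\mbb{Z})}(\frac{1}{\#\mc{C}}\mbb{Z},L)$ between $\varphi$ and $\varphi'$. It is nonempty if and only if $\varphi$ and $\varphi'$ lie in the same component. When nonempty, it is a torsor whose $\pi_0$ and $\pi_1$ are $\pi_1$ and $\pi_2$ of that mapping space at any basepoint. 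The plan is therefore to compute the homology of $L$ in degrees $0,1,2$ (homologically, with $A'[2]$ placing $A'$ in degree $2$).

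First I would analyse $L$ using the Milnor-type spectral sequence $R^s\varprojlim_{(x,U)} H^t(\mc{C}'_{/U};A')$. Each $U$ is connected, so the class formation axioms give $H^1(\mc{C}'_{/U};A')=0$. The term $H^0$ is $A'(U)$, and $H^2(\mc{C}'_{/U};A')$ is identified via $\on{inv}'$ with $\frac{1}{\#\mc{C}'_{/U}}\mbb{Z}/\mbb{Z}$; under this identification, transfers become the inclusions, as in Example \ref{ztr}.

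The system $(x,U)$ is countable in the cases of interest, or at least filtered, and the $H^2$ terms form an inverse system along injections. I would then argue as follows:
\begin{enumerate}
\item Using $R^1\varprojlim A'(U)=R^2\varprojlim A'(U)=0$, the homology is
$$H_2L=\varprojlim A'(U),\qquad H_1L=0,$$
up to an $R^1\varprojlim$ of the $H^2$-terms. I expect that term to vanish because the transition maps are injections stabilising onto $\frac{1}{\#\mc{C}}\mbb{Z}/\mbb{Z}$; at worst it lands in degree $-1$, where it is harmless.
\item The group $H_0L$ is $\varprojlim H^2(\mc{C}'_{/U};A')$, which contains $\frac{1}{\#\mc{C}}\mbb{Z}/\mbb{Z}$, namely the image of $H^2(\mc{C};A)$ under the maps $f_\natural\circ\on{res}$.
\end{enumerate}

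Next I would use that $\on{Ext}^{\geq 2}$ vanishes over $\mbb{Z}$. Writing $P=\varprojlim A'(U)$, this gives
$$\pi_0\on{Map}=\on{Hom}\bigl(\tfrac{1}{\#\mc{C}}\mbb{Z},H_0L\bigr)\oplus\on{Ext}\bigl(\tfrac{1}{\#\mc{C}}\mbb{Z},H_1L\bigr)=\on{Hom}\bigl(\tfrac{1}{\#\mc{C}}\mbb{Z},H_0L\bigr),$$
and
$$\pi_1\on{Map}=\on{Ext}\bigl(\tfrac{1}{\#\mc{C}}\mbb{Z},P\bigr),\qquad \pi_2\on{Map}=\on{Hom}\bigl(\tfrac{1}{\#\mc{C}}\mbb{Z},P\bigr).$$
These are exactly the claimed $\pi_0$ and $\pi_1$ of the path space.

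It remains to prove the existence criterion, which says that $\varphi$ and $\varphi'$ induce the same map on $H_0$. The map $\varphi$ on $H_0$ is $\kappa$ on $H_0$, which is the inverse of $\on{inv}$ composed with the quotient, followed by $f_\natural$ and projection to the limit. The map $\varphi'$ on $H_0$ is the inverse of $\on{inv}'$ on each $U$, which is compatible because $\on{inv}'$ is compatible with restriction, with transfer, and with division by degree, as in Example \ref{newrcf}. Equality of these two maps of $\frac{1}{\#\mc{C}}\mbb{Z}$ into the inverse limit of subgroups of $\mbb{Q}/\mbb{Z}$ is equivalent to the stated condition $\on{inv}=\on{inv}'\circ f_\natural$. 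To see this, compose with $\on{inv}'_U$ for each $U$ and use that the system is injective into $\mbb{Q}/\mbb{Z}$.

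The main obstacle is this last bookkeeping: correctly normalising $\on{inv}'_U$ along the transfer-indexed system (the factor $\#U$ appearing in Example \ref{newrcf}) so that the comparison on each $U$ reduces exactly to the global condition, rather than to that condition up to degree factors. I would handle it by checking the statement first for $U=\ast$ and then deducing the general case from the formula $\kappa'_U(\alpha)=\on{res}\,\kappa'(\alpha/\#U)$.
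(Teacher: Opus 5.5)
Your proposal is correct and is essentially the argument the paper leaves implicit; the paper just says the corollary follows from the preceding discussion. The steps are: identify the groupoid with the path space from $\varphi$ to $\varphi'$ in $\on{Map}(\frac{1}{\#\mc{C}}\mbb{Z},L)$; compute $H_2L=\varprojlim A'(U)$, $H_1L=0$ and $H_0L\hookrightarrow\varprojlim H^2(\mc{C}'_{/U};A')$ from the $R^1,R^2$ hypotheses and $H^1=0$; then use that $\mbb{Z}$ is hereditary.

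Two small corrections do not affect the argument. First, $H_0L$ is in general only a subgroup of $\varprojlim H^2$, because a $d_3$ into $R^3\varprojlim A'(U)$ may survive; injectivity is all you need. Second, your normalisation worry disappears because the limit injects into the $U=\ast$ term. There the $H_0$-components of $\varphi$ and $\varphi'$ are literally $f_\natural\circ\kappa$ and $\kappa'|_{\frac{1}{\#\mc{C}}\mbb{Z}}$, so, since $\kappa$ surjects onto $H^2(\mc{C};A)$, equality after applying $\on{inv}'$ is exactly the condition $\on{inv}'\circ f_\natural=\on{inv}$.
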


Let us now apply this to the question of local-global compatibility.  Fix the canonical refined class formation $(\mc{C}_\mbb{Q},F_\mbb{Q},\alpha_\mbb{Q})$ for global class field theory for number fields form Example \ref{exrcf}.  Fix also some choice of refined class formation for $\mbb{F}_p$ (e.g.\ by choosing an $\overline{\mbb{F}_p}$) and use this to get an induced refined class formation $(\mc{C}_{\mbb{Q}_p},F_{\mbb{Q}_p},\alpha_{\mbb{Q}_p})$ for $\mbb{Q}_p$ as in Example \ref{exrcf}.  We have the obvious exact functor
$$f^{-1}:\mc{C}_\mbb{Q} \to \mc{C}_{\mbb{Q}_p}$$
from the finite etale site of $\mbb{Q}$ to that of $\mbb{Q}_p$, induced by base-change, and we have the obvious map of sheaves of condensed abelian groups
$$f_\natural: f_\ast A_{\mbb{Q}_p}\to A_\mbb{Q}$$
given by the inclusion of the $p$-adic components of the ideles followed by projection to the idele class group.  The pro-system of $(x,U)$ as above identifies with the pro-system of pairs $(\nu,\on{Spec}(K))$ where $K$ is a number field and $\nu$ is a place of $K$ such that $K_\nu\simeq\mbb{Q}_p$ (such an isomorphism being unique if it exists).

Classical local-global compatibility in cohomological class field theory shows that the global $\on{inv}$ restricts to the local $\on{inv}$ under this map $f_\natural$.  Moreover we have the required $R^i\on{lim}$ vanishing because idele class groups are almost compact with non-compact part equal to $\mbb{R}_{>0}$, and the vanishing is automatic in the compact case by Lemma \ref{rlim}.  Thus we can apply the above analysis to conclude that the groupoid of maps of condensed refined class formations $(\mc{C}_{\mbb{Q}_p},F_{\mbb{Q}_p},\alpha_{\mbb{Q}_p})\to (\mc{C}_{\mbb{Q}},F_\mbb{Q},\alpha_{\mbb{Q}})$ inducing the above $(f^{-1},f_\natural)$ on underlying class formations has:
\begin{enumerate}
\item $\pi_0 \simeq \on{Ext}(\mbb{Q},\varprojlim_{(\nu,K)} \mbb{A}_K^\times / F^\times) = \ast$;
\item $\pi_1 \simeq \on{Hom}(\mbb{Q},\varprojlim_{(\nu,K)} \mbb{A}_K^\times / K^\times)$.
\end{enumerate}
Thus there is a map of refined class formations witnessing local-global compatibility, and such a map is unique up to homotopy, but such a homotopy is highly non-unique (coming from the connected components of the identity in the idele class groups).

Note that the uniquness up to homotopy of such a map is at least slightly surprising, given that, as we saw, the local refined class formations by themselves have $\wh{\mbb{Z}}/\mbb{Z}$ as automorphisms (inducing the identity on underlying class formations).  But this analysis shows that these automorphisms preserve any choice of local-to-global map, up to homotopy.

Similarly, after fixing a condensed refined class formation $(\mc{C}_{\mbb{R}},F_{\mbb{R}},\alpha_{\mbb{R}})$ we can analyze the groupoid of maps
$$(\mc{C}_{\mbb{R}},F_{\mbb{R}},\alpha_{\mbb{R}}) \to (\mc{C}_{\mbb{Q}},F_{\mbb{Q}},\alpha_{\mbb{Q}}).$$
inducing the obvious map of underlying class formations exactly as in the $p$-adic case.  It has
\begin{enumerate}
\item $\pi_0 \simeq \on{Ext}(\frac{1}{2}\mbb{Z},\varprojlim_{(\nu,K)}\mbb{A}_K^\times/K^\times)=\ast$;
\item $\pi_1\simeq \on{Hom}(\frac{1}{2}\mbb{Z},\varprojlim_{(\nu,K)}\mbb{A}_K^\times/K^\times).$
\end{enumerate}
Here the limit is taken along norm maps and runs over number fields $K$ equipped with a real place $\nu$.  Thus, just as in the case of $p$-adic local-global compatibility, there is a choice which is unique up to a highly non-unique homotopy.
\begin{remark}\label{inducedlocalglobal}
For each prime $p$, let us fix such a map of condensed refined class formations
$$f_p: (\mc{C}_{\mbb{Q}_p},F_{\mbb{Q}_p},\alpha_{\mbb{Q}_p}) \to (\mc{C}_\mbb{Q},F_\mbb{Q},
\alpha_\mbb{Q})$$
given by a map $(f_p)_\natural: (f_p)_\ast F_{\mbb{Q}_p} \to F_{\mbb{Q}}$ in $\on{Fun}(\mc{C}_\mbb{Q},\on{D}(\on{CondAb}))$, and similarly let us fix
$$f_\infty : (\mc{C}_{\mbb{R}},F_{\mbb{R}},\alpha_{\mbb{R}}) \to (\mc{C}_\mbb{Q},F_\mbb{Q},
\alpha_\mbb{Q}).$$

Then for any number field $K$ and any place $\nu$ of $K$, we get an induced map of condensed refined class formations
$$f_\nu: (\mc{C}_{K_\nu},F_{K_\nu},\alpha_{F_\nu}) \to (\mc{C}_K,F_K,
\alpha_K)$$
simply by restriction, and there are the evident compatibilities between these local-global maps as $K$ varies, again just by restriction.  Moreover, there is another compatibility, that we can produce ``for free": if $p$ is a prime, we have on the one hand the refined class formation
$$(\mc{C}_{\mbb{Z}_{(p)}},F_{\mbb{Z}_{(p)}},\alpha_{\mbb{Z}_{(p)}})$$
for number fields unramified over $p$, and on the other hand the refined class formation
$$(\mc{C}_{\mbb{F}_{p}},F_{\mbb{F}_{p}},\alpha_{\mbb{F}_{p}})$$
for $\mbb{F}_p$, both discused in Example \ref{exrcf}.  Then using our fixed $f_p$ we can make a map of refined class formations
$$(\mc{C}_{\mbb{F}_{p}},F_{\mbb{F}_{p}},\alpha_{\mbb{F}_{p}})\to (\mc{C}_{\mbb{Z}_{(p)}},F_{\mbb{Z}_{(p)}},\alpha_{\mbb{Z}_{(p)}}):$$
namely, we restrict $(f_p)_\natural$ from $\mc{C}_{\mbb{Q}}$ to $\mc{C}_{\mbb{Z}_{(p)}}$ then take the cone of the natural map from the norm 1 subgroup of the local unit group shifted by 1.  From this construction we tautologically get a commutative square of (condensed) refined class formations comparing this map to $f_p$.  There is a similar situation for any nonarchimedean place of any number field, again obtained by restriction.
\end{remark}

\begin{remark}\label{globalkottwitz}
We take all the data fixed as above: refined class formations for both local and global fields, together with the local-global maps.   Taking the direct sum over all places of $\mbb{Q}$, we deduce a map of $\on{D}(\on{CondAb})$-valued cosheaves
$$\bigoplus_\nu (f_\nu)_\ast F_{\mbb{Q}_\nu} \to F_\mbb{Q}.$$
Note that the source satisfies that on $H_1$ it gives the co-presheaf sending a number field $K$ to $\oplus_\mu K_\mu^\times$ (with morphisms given by the transfer/norm maps).  On $H_1$, the above map factors as
$$\oplus_\mu K_\mu^\times\hookrightarrow \mbb{A}_K^\times \twoheadrightarrow \mbb{A}_K^\times/K^\times.$$
Pushing out via the first inclusion, we produce from $\bigoplus_\nu (f_\nu)_\ast F_{\mbb{Q}_\nu}$ a new co-presheaf $F_{\mbb{A}}$ with $H_0(K)=\oplus_{\nu}\mbb{Z}$ and $H_1$ given by the idele groups (with norm map functoriality), such that we can uniquely extend the above map of cosheaves to a map
$$F_{\mbb{A}} \to F_{\mbb{Q}}$$
of co-presheaves inducing the above factorization on $H_1$.  In fact $F_{\mbb{A}}$ is also a cosheaf, because the quotient $K\mapsto \mathbb{A}_K^\times/\oplus_\mu K_\mu^\times$ is easily seen to be cohomologically trivial.  Thus the fiber $F_{\on{Kott}}$ of this map $F_\mbb{A}\to F_{\mbb{Q}}$ is a cosheaf, and we can apply $(-)^{tr}$ to it (Defintion \ref{trfunctor}) to get a $\on{D}(\mbb{Z})$-valued sheaf $(F_{\on{Kott}})^{tr}$ on $\mc{C}_\mbb{Q}$ with the following sheafified homology groups:
$$H^{shf}_0(F_{\on{Kott}}^{tr})(K) = \on{ker}\left(\bigoplus_{\mu\in S_f(K)}\mbb{Q}\oplus \bigoplus_{\mu\in S_{\infty}(K)}\frac{1}{2}\mbb{Z}\to \mbb{Q}\right)$$
$$H_1(F_{\on{Kott}}^{tr}) = \mbb{G}_m.$$

Here $K$ is a number field and $S_f(K)$ (resp.\ $S_\infty(K)$) is the set of non-archimedean (resp.\ archimedean) places of $K$.  Note that $H^{shf}_0(F_{\on{Kott}}^{tr})$ is a sheaf of torsionfree abelian groups on $\mc{C}_{\mbb{Q}}$, hence there is a canonically determined pro-torus over $\mbb{Q}$ with character group $H^{shf}_0(F_{\on{Kott}}^{tr})$, call it $\mathbb{D}_\mbb{Q}$.  The datum of a sheaf $F^{tr}_{Kott}$ equipped with the above identification of homology groups is equivalent to the datum of a pro-etale $\mbb{D}_{\mbb{Q}}$-gerbe over $\on{Spec}(\mbb{Q})$.  This matches the gerbe constructed by Kottwitz in \cite{kottwitz}, by direct comparison.

We already saw above that the datum of the local Kottwitz gerbes is equivalent to the datum of refined local class formations (or equivalently local Weil groupoids); now we see that the datum of global Kottwitz gerbes does \emph{not} correspond directly to that of refined global class formations (or global Weil groupoids): rather, the data of global Kottwitz gerbes plus compatibility with the local Kottwitz gerbes is equivalent to the data of global refined class formation plus compatibility with the local refined class formations.  In the above presentation, the local data is contained in $F_{\mbb{A}}$, the global Kottwitz data together with compatibility is encoded in the map $F_{Kott} \to F_{\mbb{A}}$, the global Weil data together with compatibility is encoded in the map $F_{\mbb{A}} \to F_{\mbb{Q}}$, and the equivalence between the Kottwitz and Weil data in total is an instance of the axiom of a stable $\infty$-category that cofiber and fiber sequences are the same thing.  Of course in some sense this all goes back to Nakayama-Tate, \cite{tatetori}.  For a more explicit cocycle-based discussion of the same situation, see also \cite{taylorcocycles}.
\end{remark}

\begin{remark}
It causes some philosophical discomfort that the above data appears to not be canonical.  It's also annoying that the ``construction'' of this data is so indirect.  Here is a clean statement which would automatically give all of the above structure.  Let $\on{NRing}$ denote the full subcategory of locally compact Hausdorff commutative rings spanned by the rings commonly used in number theory: the global and local fields, their rings of ($S$-)integers, and the finite fields.  Then all of the above-discussed structure (refined class field theory for all number-theoretic rings, plus local-global compatibility) amounts to the following data:
\begin{enumerate}
\item A functor $F:\on{NRing}^{op} \to \on{D}(\mbb{Z})$ with values concentrated in degrees $[0,1
]$;
\item An isomorphism $H_0F\simeq\mbb{Z}$ to the constant functor with value $\mbb{Z}$;
\item An isomorphism $H_1F(R)\simeq C_R$ to the usual class group assigned to $R$, such that the functoriality in $R$ gives the ``usual" maps between class groups.
\end{enumerate}
This functor $F$ should satisfy the property that it gives a cosheaf on the finite etale site of any number ring, and that its restriction to such a finite etale site recovers ``the'' refined class formations as discussed above.

Wouldn't it be nice if there were some direct construction of such a functor $F$?  The article \cite{clausenartin} goes part of the way towards this, by showing that $R\mapsto K_1(\on{lc}_R)$ (the algebraic $K_1$-group of the category of locally compact Hausdorff $R$-modules) serves as the appropriate $H_1F$ (see also \cite{braunling}).  However $K_0(\on{lc}_R)$ is far from being constant with value $\mbb{Z}$, so a serious modification of this idea is required to realize the above hope.

Possibly this is too much to expect, even just for finite fields.  But for number fields there \emph{is} a posteriori a canonical such functor by Example \ref{exrcf}, so it makes sense to ask to describe it directly, without resorting to cohomological analysis.  This is surely a difficult problem, but one could start just with cyclotomic fields where special features may be helpful.
\end{remark}

\begin{remark}
For those who prefer sheaves to cosheaves, it may make sense to remember the condensed structure on $F$ and pass to the Pontryagin dual.  Then you're looking for a derived sheaf with $H_0=\mbb{R}/\mbb{Z}$ and $H_{-1}$ the Pontryagin dual of the class group.  Note that this derived sheaf is almost discrete.  Actually, in the number field setting, if we use the compact form of the class groups and take the fiber of the map from $\mbb{R}$ in degree $0$, we see that the data we're looking for is just a derived sheaf with $\mbb{Z}$ in degree $1$ and the (discrete) dual to the norm-1 idele class group in degree $-1$: entirely discrete, but it completely recovers global class field theory.
\end{remark}

\section{From refined class formations to Weil groupoids}

Now we turn to the problem of assigning Weil groupoids to refined class formations, essentially redoing the analysis in \cite{artintate} from a slightly different perspective.  First let's discuss a more basic instance of the same idea: associating Galois groupoids to Galois categories.  We know from Grothendieck Galois theory that that the $2$-category of Galois categories (objects: Galois categories; 1-morphisms: exact functors; 2-morphisms: natural transformations) is a $(2,1)$-category which identifies with the opposite of the $(2,1)$-category whose objects are the profinite groups $G$, whose morphisms are the continuous homomorphisms $G\to G'$, and whose set of $2$-morphisms
$$\varphi_0\simeq \varphi_1: G\to G'$$
between two such homomorphisms is the set of $g'\in G'$ such that $\varphi_1 = g'\varphi_0 (g')^{-1}$.  This same $2$-category can also be realized as the full subcategory of $\on{CondAn}$ spanned by the objects of the form $BG$ where $G$ is a profinite group.

Our first order of business is to give a more direct description of this fully faithful embedding.  This is based on the following standard definition.

\begin{definition}
A map of condensed anima $f:X \to Y$ is \emph{finite etale} if, locally on $Y$ in the condensed Grothendieck topology, it is isomorphic to a map of the form $I\times Y \to Y$ where $I$ is a finite set.
\end{definition}

\begin{remark}
It follows from descent theory that finite etale maps $X\to Y$ in $\on{CondAn}$ are classified by maps
$$Y \to \on{fSet}^\simeq \simeq \bigsqcup_{n\geq 0} \ast/\Sigma_n$$
in $\on{CondAn}$.
\end{remark}

\begin{example}
\begin{enumerate}
\item Suppose $S$ is an extremally disconnected profinite set.  By definition every cover of $S$ in the condensed Grothendieck topology is refined by a finite disjoint union decomposition of $S$.  It follows that the finite etale maps $X\to S$ are exactly the finite disjoint unions of clopen subsets of $S$.
\item Suppose $S$ is a compact Hausdorff space.  By non-abelian cohomological descent applied to a hypercover by extremally disconnected profinite sets, we deduce that maps $S \to BG$ in $\on{CondAn}$ for any group $G$ 
are the same as global sections $\Gamma(S;BG)$ in the sense of non-abelian sheaf cohomology of the topological space $S$.  It follows that every finite etale cover of $S$ is locally trivial in the open cover topology of $S$, and hence that finite etale maps to $S$ correspond to finite-sheeted covering spaces in the usual sense from point-set topology.
\item Suppose $G$ is a locally compact Hausdroff group.  Then finite etale maps to $BG$ identify with finite sets with continuous $G$-action.  This is obvious by descent along $\ast\to BG$.
\item For a general condensed anima $S$, every finite etale map $X\to S$ uniquely descends to the Postnikov truncation $\tau_{\leq 1}S$.
\end{enumerate}
\end{example}

For $S\in\on{CondAn}$, the full sub-$\infty$-category
$$\on{fEt}_S\subset \on{CondAn}_{/S}$$
is a $1$-category which admits all finite limits and colimits.  (The above inclusion preserves finite limits; for colimits in general one has to apply the relative Postnikov truncation $\tau_{\leq 1}$ to the colimit in $\on{CondAn}_{/S}$, but for quotients by finite groups acting faithfully this last truncation is unnecessary.)  If $T\to S$ is a map, then pullback induces a functor
$$\on{fEt}_S\to \on{fEt}_T$$
which is exact, i.e.\ preserves all finite limits and colimits.   This category $\on{fEt}_{S}$ satisfies all the same exactness properties as the category of finite sets, if these are expressed in terms of interactions between finite limits and finite colimits. All of this is general topos-theory nonsense, using descent and the fact that base-change functors preserve finite limits, all colimits, and relative Postnikov truncations (which can be built in terms of finite limits and geometric realizations).

The only thing preventing $\on{fEt}_S$ from being a Galois category is the potential lack of conservative fiber functor.  In other words, $\on{fEt}_S$ is the more general kind of Galois-type category, which corresponds to a profinite groupoid not necessarily having a unique isomorphism class of objects.  We will not need to go into the details of this; we simply make the following defintion.

\begin{definition}\label{galtocond}
Suppose $\mc{C}$ is a small category admitting all finite limits and colimits.  Define a condensed anima $\ul{\mc{C}}$ by sending an extremally disconnected $S$ to the groupoid of exact functors
$$\mc{C} \to \on{fEt}_S.$$
\end{definition}

\begin{remark}
By descent, we actually get a description of the whole functor of points of $\ul{\mc{C}}$: for an arbitrary $S\in\on{CondAn}$ we have
$$\ul{\mc{C}}(S) = \on{Fun}^{ex}(\mc{C},\on{fEt}_S).$$
\end{remark}

\begin{proposition}
The above functor restricted to those $\mc{C}$ which are Galois categories,
$$\mc{C} \mapsto \ul{\mc{C}}: \on{GalCat}^{op} \to \on{CondAn},$$
has the following properties:
\begin{enumerate}
\item It is fully faithful.
\item For a profinite group $\Gamma$, its value on the Galois category of finite continuous $\Gamma$-sets identifies with $B\Gamma$.
\item On the essential image of this functor (which is the full subcategory of $\on{CondAn}$ spanned by the above $B\Gamma$) the inverse functor is given by sending $S\mapsto \on{fEt}_{/S}$.
\end{enumerate}
\end{proposition}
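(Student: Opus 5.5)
Every Galois category is equivalent to the category $\mc{C}(\Gamma)$ of finite continuous $\Gamma$-sets for some profinite $\Gamma$, so I will prove part 2 first and deduce parts 1 and 3 from it. Write $\mc{C}=\mc{C}(\Gamma)$. Forgetting the $\Gamma$-action gives a canonical fiber functor on $\mc{C}$. This yields a point $\ast\to\ul{\mc{C}}$, since exact functors $\mc{C}\to\on{fSet}=\on{fEt}_\ast$ are exactly the fiber functors. Its automorphism condensed group is $\Gamma$: on $S$-points, automorphisms of the constant functor $\mc{C}\to\on{fEt}_S$ are locally constant maps $S\to\on{Aut}(\omega)=\Gamma$, which are the same as continuous maps $S\to\Gamma$ because $\Gamma$ is profinite. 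We therefore get a comparison map $B\Gamma\to\ul{\mc{C}}$. To see it is an isomorphism, I will check that $\ul{\mc{C}}$ is a sheaf and that every $S$-point is locally (on extremally disconnected $S$) isomorphic to the base point. Then $\ul{\mc{C}}$ is connected with $\pi_1=\Gamma$ and is $1$-truncated, since its values are groupoids.

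The key local triviality step is as follows. Let $\Phi:\mc{C}\to\on{fEt}_S$ be exact with $S$ extremally disconnected, so that $\on{fEt}_S$ is the category of finite families of clopen decompositions, i.e.\ locally constant finite-set-valued sheaves on $S$. I will write $\mc{C}=\varinjlim_N\mc{C}(\Gamma/N)$ over open normal $N$. For each $N$, the exact functor on $\mc{C}(\Gamma/N)$ is determined by $\Phi(\Gamma/N)$, which is a $\Gamma/N$-torsor over $S$ by exactness, since $\Gamma/N\times\Gamma/N\simeq\sqcup_{\Gamma/N}\Gamma/N$ is preserved. A torsor under a finite group over extremally disconnected $S$ is trivial after a finite clopen decomposition. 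The torsors are compatible as $N$ shrinks, so they assemble into a $\Gamma$-torsor $T=\varprojlim_N\Phi(\Gamma/N)\to S$ of profinite sets. A surjection of profinite sets onto extremally disconnected $S$ has a section (projectivity), so $T$ is trivial over all of $S$. Trivializing $T$ identifies $\Phi$ with the constant fiber functor. This proves part 2, and more precisely shows $\ul{\mc{C}}(S)$ is the groupoid of $\Gamma$-torsors over $S$. This is the main obstacle: the key point is using extremal disconnectedness to split the inverse limit torsor globally, rather than only at each finite level, so no $\varprojlim^1$-type issue appears.

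For parts 1 and 3, I will compute $\on{fEt}_{B\Gamma}$. By the example above it is the category of finite continuous $\Gamma$-sets, i.e.\ $\mc{C}$ again, and the unit map $\mc{C}\to\on{fEt}_{\ul{\mc{C}}}$ (sending $X$ to the tautological finite étale map classified by evaluation at $X$) is this identification. Full faithfulness then amounts to this claim: for Galois categories $\mc{C},\mc{C}'$, the map from exact functors $\mc{C}'\to\mc{C}$ to $\on{Map}(\ul{\mc{C}},\ul{\mc{C}'})$ is an equivalence. By the definition of $\ul{\mc{C}'}$ via the remark on the functor of points, $\on{Map}(\ul{\mc{C}},\ul{\mc{C}'})=\ul{\mc{C}'}(\ul{\mc{C}})=\on{Fun}^{ex}(\mc{C}',\on{fEt}_{\ul{\mc{C}}})=\on{Fun}^{ex}(\mc{C}',\mc{C})$. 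It remains to check that this chain agrees with the functor's action on morphisms, which is a Yoneda-style unwinding. This gives part 1, and part 3 follows because $S\mapsto\on{fEt}_S$ composed with $\mc{C}\mapsto\ul{\mc{C}}$ is naturally the identity.
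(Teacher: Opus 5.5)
Your proof is correct. For parts 1 and 3 it is the paper's argument. The identity $\on{Map}(\ul{\mc{C}},\ul{\mc{C}'})=\on{Fun}^{ex}(\mc{C}',\on{fEt}_{\ul{\mc{C}}})$ is exactly the adjunction between $\mc{C}\mapsto\ul{\mc{C}}$ and $S\mapsto\on{fEt}_S$ that the paper invokes. So in both arguments everything reduces to the comparison map $B\Gamma\to\ul{\mc{C}_\Gamma}$ being an isomorphism.

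The routes differ in how that comparison is proved.
\begin{itemize}
\item The paper notes that both $\Gamma\mapsto B\Gamma$ and $\Gamma\mapsto\ul{\mc{C}_\Gamma}$ commute with filtered inverse limits. For $B\Gamma$ this is Lemma \ref{nonabrlim}, a compactness argument about limits of torsors carried out inside $\on{CondAn}$. This reduces the claim to finite $\Gamma$, where it is classical Galois theory.
\item You stay with profinite $\Gamma$ and evaluate on an extremally disconnected $S$. An exact functor $\Phi$ restricts strictly compatibly to the full subcategories $\mc{C}_{\Gamma/N}$. So the finite-level torsors $\Phi(\Gamma/N)$ form an honest inverse system of profinite sets over $S$. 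You then split the limit torsor $T=\varprojlim_N\Phi(\Gamma/N)$ in one step by projectivity of $S$, and never form limits of condensed groupoids.
\end{itemize}
Your version is more self-contained and makes the role of extremal disconnectedness explicit. The paper's is more modular: it reuses Lemma \ref{nonabrlim}, which holds for arbitrary compact Hausdorff groups and is needed again later, e.g.\ in Proposition \ref{groupoidisgroup}.

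Two small points need fixing.
\begin{itemize}
\item Splitting $T\to S$ requires it to be surjective. This uses that a filtered limit of nonempty finite sets is nonempty. That is the compactness input of Lemma \ref{nonabrlim} reappearing in your argument, and you should state it rather than absorb it into ``they assemble into a $\Gamma$-torsor''.
\item Automorphisms of the constant fiber functor over $S$ are the maps $S\to\Gamma$ that become locally constant after projection to every $\Gamma/N$, i.e.\ the continuous maps. Literally locally constant maps $S\to\Gamma$ can form a strictly smaller set. For example, with $S=\beta\mbb{N}$ and $\Gamma$ infinite, a continuous map can have infinite image, while a locally constant map on a compact space cannot. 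So that sentence is false as written, although your conclusion $\on{Aut}=\Gamma(S)$ is right.
\end{itemize}
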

\begin{proof}
It is tautological from descent that the functor $\mc{C} \mapsto \ul{\mc{C}}$ admits the left adjoint
$$S\mapsto \on{fEt}_S.$$
In particular, for a small category with finite limits and colimits $\mc{C}$ we have an obvious exact comparison functor
$$\mc{C} \to \on{fEt}_{\ul{\mc{C}}}$$
and for a condensed anima $X$ we have an obvious map
$$X \to \ul{\on{fEt}_X}.$$

Consider now the case of the Galois cateogry $\mc{C}_\Gamma$ of fintie continuous $\Gamma$-sets for a profinite group $\Gamma$.  As discussed above we have $\on{fEt}_{B\Gamma}\simeq \mc{C}_\Gamma$, which gives a natural comparison functor
$$B\Gamma \to \ul{\mc{C}_\Gamma},$$
and to prove all the claims it suffices to show this is an isomorphism for all $\Gamma$.
Note that both functors $\Gamma \mapsto B\Gamma$ and $\Gamma \mapsto \ul{\mc{C}_\Gamma}$ commute with filtered inverse limits.  For the latter this is clear, and for the former see Lemma \ref{nonabrlim}.  Thus we can reduce to the case where $\Gamma$ is a finite group, where we need to see that giving a $\Gamma$-torsor over a profinite set $S$ is the same as giving an exact functor from finite $\Gamma$-sets to $\on{fEt}_{/S}$.  This is standard Galois theory.
\end{proof}

Thus we have our direct description of the fully faithful contravariant embedding of Galois categories into condensed anima.  Now we add refined class formations into the picture.  To do this, it will be convenient to ``de-linearize" refined class formations using the following.

\begin{lemma}\label{gerbe}
Consider the $\infty$-category $\mc{A}$ of pairs $(M,\alpha)$ where $M\in\on{D}(\on{CondAb})$ lives in degrees $[0,1]$ and $\alpha:H_0(M)\simeq\mbb{Z}$.  Then:
\begin{enumerate}
\item There is a fully faithful embedding $\mc{A} \to \on{CondAn}$, sending
$$(M,\alpha) \mapsto (\Omega^\infty M)\times_{\mbb{Z}} \{1\},$$
where the map $\Omega^\infty M\to\mbb{Z}$ is induced by $\alpha.$
\item The essential image of this functor consists of the \emph{abelian gerbes}: namely, those condensed anima isomorphic to $BA$ for some condensed abelian group $A$.
\item The inverse to this functor sends
$$X\mapsto (\tau_{\leq 1}(\mbb{Z}[X]),\epsilon);$$
for $X$ an abelian gerbe, where $\epsilon:\tau_{\leq 1}\mbb{Z}[X] \to \mbb{Z}$ is induced by $X\to \ast$.  More generally, for $X\in\on{CondAn}$ arbitrary and $(M,\alpha)\in\mc{A}$, we have that maps
$$(\tau_{\leq 1}(\mbb{Z}[X]),\epsilon) \to (M,\alpha)$$
(in the obvious sense) are in natural equivalence with maps
$$X \to (\Omega^\infty M)\times_{\mbb{Z}} \{1\}.$$
\item For any $X\in\on{CondAn}$ with $\pi_0X=\ast$, there is an initial abelian gerbe $X^{ab}$ with a map $X\to X^{ab}$, namely that given by
$$X^{ab} = (\tau_{\leq 1}\mbb{Z}[X])\times_{\mbb{Z}}\{1\}.$$
\end{enumerate}
\end{lemma}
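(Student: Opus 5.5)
The plan is to prove part 3 first, since it is the universal property from which 1, 2 and 4 follow more or less formally. Fix $(M,\alpha)\in\mc{A}$ and $X\in\on{CondAn}$. The free--forgetful adjunction between $\on{CondAn}$ and $\on{D}(\on{CondAb})_{\geq 0}$ (namely $\mbb{Z}[-]\dashv\Omega^\infty$) identifies maps $\mbb{Z}[X]\to M$ with maps $X\to\Omega^\infty M$. Because $M$ is $1$-truncated, maps $\mbb{Z}[X]\to M$ are the same as maps $\tau_{\leq 1}\mbb{Z}[X]\to M$. Next I would take the fiber on both sides over the given data on $H_0$. On the linear side, a map $(\tau_{\leq1}\mbb{Z}[X],\epsilon)\to(M,\alpha)$ is a map $f$ together with a homotopy $\alpha\circ H_0(f)\simeq\epsilon$. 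Since $\mbb{Z}$ is discrete, this homotopy is just a condition: it is the fiber of $\on{Map}(\tau_{\leq1}\mbb{Z}[X],M)\to\on{Map}(\mbb{Z}[X],\mbb{Z})$ over $\epsilon$. On the nonlinear side this is the fiber of $\on{Map}(X,\Omega^\infty M)\to\on{Map}(X,\mbb{Z})$ over the constant map $1$, and that fiber is $\on{Map}(X,\Omega^\infty M\times_{\mbb{Z}}\{1\})$. Naturality of the adjunction in the target matches the two fibers, which gives 3.

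For 1 and 2, set $G(M,\alpha)=\Omega^\infty M\times_{\mbb{Z}}\{1\}$. I would first identify $G(M,\alpha)$ as an abelian gerbe. Writing $A=H_1M$, the fiber sequence $A[1]\to M\to\mbb{Z}$ gives the fiber sequence $BA\to\Omega^\infty M\to\mbb{Z}$, so each component over $n\in\mbb{Z}$ is a $BA$-torsor. The component over $0$ is $BA$ itself. The component over $1$ is nonempty as a condensed anima, because the map $M\to\mbb{Z}$ splits: $\on{Ext}^2(\mbb{Z},A)=0$, since $\mbb{Z}$ is projective in $\on{CondAb}$. Hence $G(M,\alpha)\simeq BA$, although not canonically.

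Conversely, take $X=BA$. I would compute $\tau_{\leq1}\mbb{Z}[BA]$ via the Hurewicz theorem in condensed anima, which holds sectionwise on $\on{Extr}$ because condensed anima are a full subcategory of presheaves closed under the relevant operations. This gives $H_0=\mbb{Z}$ and $H_1=A$, the abelianization of $\pi_1=A$. With this in hand, the unit $BA\to G(\tau_{\leq1}\mbb{Z}[BA],\epsilon)\simeq BA$ is a map of abelian gerbes inducing the identity on $\pi_1$, hence an equivalence.

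Fully faithfulness then follows from 3 applied to $X=G(M',\alpha')$, provided the counit $(\tau_{\leq1}\mbb{Z}[G(M,\alpha)],\epsilon)\to(M,\alpha)$ is an equivalence. I would check that it is an isomorphism on $H_0$ (both are $\mbb{Z}$, compatibly with the augmentations) and on $H_1$ (both are $A$, by the Hurewicz computation just made). This proves 1 and 2, and identifies the inverse functor as in 3.

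Part 4 is then formal. For $X$ with $\pi_0X=\ast$, the pair $(\tau_{\leq1}\mbb{Z}[X],\epsilon)$ lies in $\mc{A}$, because $H_0\mbb{Z}[X]=\mbb{Z}[\pi_0X]=\mbb{Z}$; this is the only place the hypothesis is used. Part 3 then shows that $X\to X^{ab}$ is initial among maps to objects in the essential image. I expect the main obstacle to be the counit computation, specifically justifying the condensed Hurewicz identification $H_1\mbb{Z}[BA]\simeq A$ as condensed abelian groups rather than only on underlying groups. The approach is to prove it sectionwise on $\on{Extr}$ after sheafification, using that $\mbb{Z}[-]$ and the truncations commute with the relevant colimits.
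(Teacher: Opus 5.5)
Your proposal is correct and follows essentially the same route as the paper: you prove the universal property in 3 via the $\mbb{Z}[-]\dashv\Omega^\infty$ adjunction, identify $\Omega^\infty M\times_{\mbb{Z}}\{1\}$ with $BH_1(M)$, place $\tau_{\leq 1}\mbb{Z}[BA]$ in $\mc{A}$ via Hurewicz, check the unit and counit on $\pi_1$ and on $H_0,H_1$, and then deduce 4 formally. The differences are cosmetic. You obtain the basepoint by splitting $M\to\mbb{Z}$ using $\on{Ext}^2(\mbb{Z},A)=0$, whereas the paper uses $\pi_0=\ast$ together with the fact that covers of $\ast$ split in $\on{CondAn}$; both rest on the projectivity of $\ast$. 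Also, in the full-faithfulness step you should apply 3 with $X=G(M,\alpha)$, not $G(M',\alpha')$.
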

\begin{proof}
In fact this works in any $\infty$-topos: one just has to say ``locally isomorphic to some $BA$'' as the definition of abelian gerbe in 2.  (In $\on{CondAn}$ every cover of $\ast$ splits so locally implies globally.)

Let's first prove the last claim in 3.  By adjunction, maps $X\to \Omega^\infty M$ are the same as maps $\mbb{Z}[X] \to M$.  It is clear that the condition that the first map land in the $\alpha=1$-component is the same as the condition that the second map intertwine $\epsilon$ and $\alpha$.  This gives the last claim in 3.

If $(M,\alpha)\in\mc{A}$, then clearly
$$\pi_0 (\Omega^\infty M)\times_{\mbb{Z}}\{1\} = \ast$$
and after any choice of basepoint $x\in (\Omega^\infty M)\times_{\mbb{Z}}\{1\}$ we get, by subtracting $x$,
$$\pi_1(\Omega^\infty M\times_{\mbb{Z}}\{1\},x)\simeq \pi_1(\Omega^\infty M,0) = H_1(M)\in \on{CondAb},$$
and thus $\Omega^\infty\times_{\mbb{Z}}\{1\}\simeq BH_1(M)$ is an abelian gerbe as claimed.  (The only thing that changes in a general $\infty$-topos is that it's not true that $\pi_0=\ast$ implies the existence of a basepoint: it only gives basepoints locally.)

Conversely,
$$BA \to \ast$$
induces the isomorphism 
$$\epsilon: H_0(\mbb{Z}[BA])\overset{\sim}{
\rightarrow} \mbb{Z}$$
as $\pi_0 BA=\ast$, whereas
$$H_1(\mbb{Z}[BA])\simeq A^{ab}=A$$
so that $(\tau_{\leq 1}\mbb{Z}[BA],\epsilon)\in\mc{A}$.  To finish the proofs of 1 and 2 we just need to see that the unit and counit are isomorphisms, i.e.\ that
$$BA\overset{\sim}{\rightarrow}(\tau_{\leq 1}\mbb{Z}[BA])\times_{\mbb{Z}}\{1\}$$
and
$$\tau_{\leq 1}\mbb{Z}[(\Omega^\infty M)\times_{\mbb{Z}}\{1\})]\overset{\sim}{\rightarrow} M,$$
but this is clear by checking on $\pi_0$ and $\pi_1$ in the first case and $H_0$ and $H_1$ in the second case.  Finally, 4 follows immediately from 1,2,3.\end{proof}

From this we get the following.

\begin{proposition}\label{rcfgerbe}
For a Galois category $\mc{C}$, the data $(F,\alpha)$ of a condensed refined class formation on $\mc{C}$ is equivalent to the datum of a functor
$$\mc{G}:\mc{C}_{conn}\to\on{CondAn}$$
where $\mc{C}_{conn}\subset\mc{C}$ is the full subcategory of connected objects, satisfying the following properties:
\begin{enumerate}
\item $\mc{G}$ lands in the full subcategory of abelian gerbes;
\item For all $X\in\mc{C}_{conn}$ and all subgroups $G\subset\on{Aut}(X)$, the map in $\on{CondAn}$
$$\mc{G}(X)_G \to \mc{G}(X/G)$$
induces an isomorphism on abelianization
$$(\mc{G}(X)_G)^{ab} \overset{\sim}{\rightarrow} \mc{G}(X/G).$$
\item For all $X\in\mc{C}_{conn}$ and all subgroups $G\subset\on{Aut}(X)$, the transfer map of condensed abelian groups
$$H_1\mc{G}(X/G)\overset{\sim}{\rightarrow} \left(H_1\mc{G}(X)\right)^G$$
(defined by passing through Lemma \ref{gerbe} using 2) is an isomorphism.
\end{enumerate}
In other words, $\mc{G}$ is the datum of a cosheaf of abelian gerbes on $\mc{C}_{conn}$ satisfying the extra condition about the transfer map being an isomorphism.

Moreover, given a condensed refined class formation $C$ encoded in such data, and another $C'$ similarly encoded, then giving a map of condensed refined class formations $C \to C'$ (Definition \ref{maprcf}) is the same as giving an exact functor $f^{-1}:\mc{C}'\to\mc{C}$ plus a natural transformation
$$f_\natural: f_\ast\mc{G} \to \mc{G}'.$$
\end{proposition}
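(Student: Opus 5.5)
The plan is to transport everything through Lemma \ref{gerbe}, which identifies the $\infty$-category $\mc{A}$ of pairs $(M,\alpha)$, with $M\in\on{D}(\on{CondAb})$ in degrees $[0,1]$ and $\alpha:H_0M\simeq\mbb{Z}$, with the full subcategory of abelian gerbes in $\on{CondAn}$. Given $(F,\alpha)$, first restrict $F$ to $\mc{C}_{conn}$. For connected $X$ the pair $(F(X),\alpha(X))$ lies in $\mc{A}$, so applying the embedding $(M,\alpha)\mapsto(\Omega^\infty M)\times_{\mbb{Z}}\{1\}$ objectwise defines $\mc{G}$. Conversely, given $\mc{G}$, define $F$ on $\mc{C}_{conn}$ by $X\mapsto\tau_{\leq1}\mbb{Z}[\mc{G}(X)]$ with augmentation $\epsilon$. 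Then extend $F$ to all of $\mc{C}$ by finite direct sums, and $\alpha$ by sums of augmentations. By Remark \ref{whatisaderivedsheaf}, a cosheaf on $\mc{C}$ is the same as an additive functor whose values on connected objects satisfy the Galois orbit condition. So it suffices to identify, on $\mc{C}_{conn}$, the condition that $F$ is a cosheaf together with $H_0F\simeq\mbb{Z}$ with conditions 1–3.

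\textbf{Translating condition 2.} Let $X$ be connected and $G\subset\on{Aut}(X)$. The cosheaf condition says $F(X)_G\overset{\sim}{\rightarrow}F(X/G)$, where $(-)_G$ denotes homotopy orbits. Since $\mbb{Z}[-]$ commutes with colimits, $\mbb{Z}[\mc{G}(X)_G]=\mbb{Z}[\mc{G}(X)]_G$. By Lemma \ref{gerbe} part 4, $(\mc{G}(X)_G)^{ab}$ corresponds to $\tau_{\leq1}\mbb{Z}[\mc{G}(X)_G]$. (Here $\pi_0$ of a quotient of a connected object by a group is still $\ast$.) Hence condition 2 is equivalent to
$$\tau_{\leq1}\bigl(F(X)_G\bigr)\overset{\sim}{\rightarrow}F(X/G).$$
This is weaker than the cosheaf condition, because the homotopy orbits $F(X)_G$ may have homology in degrees $\geq2$.

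\textbf{Main obstacle: removing the truncation.} This is where condition 3 enters, through Lemma \ref{uniformbound}. Assume condition 3, and take the transfer to be the map defined via Lemma \ref{transferlemma} on the span-extension of the co-presheaf $F$. Then the transfer is an isomorphism on $H_1$, and on $H_0$ it is the transfer of the constant cosheaf $\mbb{Z}$, i.e.\ multiplication by degrees as in Example \ref{ztr}. Following the argument in the proof of Theorem \ref{cfvsrcf}, these two facts give vanishing of Tate cohomology in two consecutive degrees for every subgroup $H\subset G$. Tate–Nakayama, in the form of Lemma \ref{uniformbound} applied in $\on{D}(\on{CondAb})$, then gives $F(X)_H\simeq F(X)^H$ for all $H$. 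In particular $F(X)_G$ is bounded and lies in degrees $[0,1]$, so the truncation in condition 2 is harmless and $F$ is a cosheaf.

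Conversely, if $F$ is a cosheaf with values in degrees $[0,1]$, then Proposition \ref{trissheaf} shows that $F^{tr}$ is a sheaf. This yields condition 3 on $H_1$, and condition 2 follows from the display above. The delicate point is to check that the transfer in condition 3, built only from the co-presheaf of gerbes, agrees with the span-category transfer of Lemma \ref{transferlemma}. I would verify this by noting that both are determined on $H_1$ by the composite $F(X)\to F(X/G)\to F(X)$ being $\sum_{g\in G} g$.

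\textbf{Morphisms.} By Definition \ref{maprcf}, a morphism consists of $f^{-1}$ together with $f_\natural:f_\ast F\to F'$ compatible with the augmentations. By the mapping-space statement in Lemma \ref{gerbe} part 3, maps in $\mc{A}$ correspond exactly to maps of the associated gerbes. Applying this objectwise and naturally on connected objects identifies such $f_\natural$ with natural transformations $f_\ast\mc{G}\to\mc{G}'$. The restriction to $\mc{C}'_{conn}$ loses nothing, because $f^{-1}$ is exact, so $f_\ast F$ is determined by its values on connected objects of $\mc{C}'$.
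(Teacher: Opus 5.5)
Your proposal follows essentially the same route as the paper. You transport through Lemma \ref{gerbe}, identify condition 2 with $\tau_{\leq 1}(F(X)_G)\simeq F(X/G)$, and then remove the truncation by noting that the composite $F(X)_G\to F(X/G)\to F(X)^G$ is the norm map: for every subgroup it is injective on $H_0$, an isomorphism on $H_1$, and trivially surjective on $H_2$, so Tate--Nakayama applies.

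The only slip is in your check that the two transfers agree. Knowing $H_1$ of the composite $F(X)\to F(X/G)\to F(X)$ does not determine $H_1F(X/G)\to H_1F(X)$, because $H_1F(X)\to H_1F(X/G)$ is generally not surjective (its cokernel surjects onto $G^{ab}$). The agreement instead follows at once from the uniqueness of span extensions in part 1 of Lemma \ref{transferlemma}, applied in the additive category $\on{D}(\on{CondAb})_{[0,1]}$ where the transfer of condition 3 is defined.
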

\begin{proof}
As we reviewed in Remark \ref{whatisaderivedsheaf}, the datum of a cosheaf $F:\mc{C} \to \on{D}(\on{CondAb})$ is the same as the datum of a functor (its restriction to connected objects)
$$F:\mc{C}_{conn} \to \on{D}(\on{CondAb})$$
such that $F(X)/G\overset{\sim}{\rightarrow} F(X/G)$ for all $(X,G)$ as in 2.  But by definition, when $F$ is a refined class formation this restricted $F$ together with its $\alpha$ lands inside the full subcategory denoted $\mc{A}$ in Lemma \ref{gerbe}, and so from Lemma \ref{gerbe} we get the the data of $(F,\alpha)$ is the same as the data of $\mc{G}$.  But we have to translate the conditions carefully: under Lemma \ref{gerbe}, the cosheaf condition on $\mc{G}$ corresponds to saying that $F$ is a cosheaf with values in $\on{D}(\on{CondAb})_{[0,1]}$, but the actual condition is that $F$ be a cosheaf with values in $\on{D}(\on{CondAb})$ which happens to take values in $\on{D}(\on{CondAb})_{[0,1]}$.  We already know from the transfer formalism that this stronger condition on $F$ implies the condition on the transfer maps being isomorphisms in 2.  What remains is to show the converse, i.e.\ that if
$$F:\mc{C}\to \on{D}(\on{CondAb})$$
is a copresheaf taking values in degrees $[0,1]$ such that for all $(X,G)$ as in 2, the map
$$F(X)_G \to F(X/G)$$
induces
$$\tau_{\leq 1}(F(X)_G)\overset{\sim}{\rightarrow} F(X/G)$$
and that the transfer map
$$H_1(X/G) \to (H_1F(X))^G$$
(defined via the additive category $\on{D}(\on{CondAb})_{[0,1]}$) is an isomorphism, we need to see that $F(X)_G$ already lives in degrees $[0,1]$ for all $(X,G)$.

For this, consider the diagram
$$F(X)_G \to F(X/G) \overset{tr}{\rightarrow} F(X)^G$$
where the composition is the transfer map for the $G$-action on $F(X)$.  The assumptions imply that the composition is injective on $H_0$, an isomorphism on $H_1$, and surjective on $H_2$, and this is also so for any subgroup $H\subset G$.  Thus, by Tate-Nakayama (applied to $S$-valued points, for all extremally disconnected $S$), the composition is an isomorphism.  A fortiori $F(X)_G$ lives in degrees $[0,1]$ as desired.
\end{proof}
 
Note that if $S\in\on{CondAn}$, then we have a tautological cosheaf of condensed anima on $\on{fEt}_S$, namely the one sending
$$(T\to S) \mapsto T.$$
We denote this cosheaf by $\tau_S$.  Then we make the following definition expressing how to ``realize'' refined class formations in condensed anima.

\begin{definition}\label{rcftocond}
Let $C=(\mc{C},F,\alpha)$ be a condensed refined class formation, and suppose $\mc{G}:\mc{C}_{conn}\to\on{CondAn}$ is the corresponding cosheaf of abelian gerbes, as in Proposition \ref{rcfgerbe}.  Define a condensed anima $\mathcal{W}_C$ by setting, for $S$ extremally disconnected profinite, $\mathcal{W}_C(S)$ to be the anima of pairs $(f^{-1},f_\natural)$ where:
\begin{enumerate}
\item $f^{-1}$ is an exact functor $\mc{C} \to \on{fEt}_S$;
\item $f_\natural$ is a natural transformation $f_\ast \tau_S \to \mc{G}$, i.e.\ a family of maps of condensed anima
$$f^{-1}(U)\to \mc{G}(U)$$
natural in $U\in\mc{C}_{conn}$.
\end{enumerate}
\end{definition}

\begin{remark}
Again, just as in the discussion of Galois categories above, it follows from descent that $\mathcal{W}_C(S)$ admits the same description for arbitrary $S\in\on{CondAn}$.
\end{remark}

\begin{remark}
Evidently,  this condensed anima $\mathcal{W}_C$ is 1-truncated (i.e.\ it is a condensed groupoid) and comes with a natural map
$$\mc{W}_C \to \ul{\mc{C}}$$
to the condensed groupoid $\ul{\mc{C}}$ from Definition \ref{galtocond}.
\end{remark}

\begin{remark}
If we wish to stick to the original perspective of $(F,\alpha)$ and avoid abelian gerbes, then via Lemma \ref{gerbe} we can rephrase the datum $f_\natural$ as follows: we give a map in $\on{D}(\on{CondAn})$
$$\mbb{Z}[f^{-1}(U)]\to F(U)$$
natural in $U$ which intertwines the augmentation $\epsilon$ on $\mbb{Z}[f^{-1}U]$ (induced by $f^{-1}U\to \ast$) with $\alpha:F\to\mbb{Z}$.  Here we can take $U\in\mc{C}_{conn}$ or $U$ arbitrary; it doesn't matter by the fact the everything is a cosheaf, sending finite disjoint unions to direct sums.
\end{remark}

To analyze this condensed groupoid $\mc{W}_C$, recall that $\mc{C}$ can canonically be written as a filtered colimit of Galois subcategories, each of which is equivalent to the Galois category associated to a finite group (or, canonically, a connected finite groupoid), namely
$$\mc{C} = \cup_N \mc{C}_N$$
where the indexing poset consists of all isomorphism classes of connected Galois objects of $\mc{C}$.  From Example \ref{newrcf}, we can restrict any refined class formation to these subcategories $\mc{C}_N$, and the datum of a refined class formation on $\mc{C}$ is equivalent to the data of compatible refined class formations on each $\mc{C}_N$.  We deduce the following:

\begin{lemma}\label{rcfinverselimit}
Let $C$ be a condensed refined class formation, and for $N$ indexing a finite ``quotient'' $\mc{C}_N\subset \mc{C}$ as above, let $C_N$ denote the condensed refined class formation obtained by restricting the data to $\mc{C}_N$. Then we have
$$\mc{W}_C = \varprojlim_N \mc{W}_{C_N}$$
in $\on{CondAn}$.
\end{lemma}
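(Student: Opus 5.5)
The plan is to compare functors of points. It suffices to evaluate both sides on an extremally disconnected $S$, since limits in $\on{CondAn}$ are computed sectionwise. On the right we get $\varprojlim_N \mc{W}_{C_N}(S)$, because limits of presheaves are pointwise and $\on{CondAn}$ is closed under limits in presheaves. So we must show that the anima of pairs $(f^{-1},f_\natural)$ for $C$ is the inverse limit over $N$ of the anima of such pairs for $C_N$.

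For this, first note that $\mc{C}=\varinjlim_N\mc{C}_N$ is a filtered colimit of categories along exact fully faithful inclusions. Exactness of a functor $\mc{C}\to\on{fEt}_S$ only involves finite limit and colimit diagrams, and each such diagram lies in some $\mc{C}_N$ with the limit or colimit computed there. Hence
$$\on{Fun}^{ex}(\mc{C},\on{fEt}_S)\simeq\varprojlim_N\on{Fun}^{ex}(\mc{C}_N,\on{fEt}_S).$$
This is the same argument that gives $\ul{\mc{C}}=\varprojlim_N\ul{\mc{C}_N}$ used in the Galois-category proposition.

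Next, the connected objects satisfy $\mc{C}_{conn}=\bigcup_N(\mc{C}_N)_{conn}$. The restriction of $\mc{G}$ to $(\mc{C}_N)_{conn}$ is the cosheaf of abelian gerbes attached to $C_N$, by Example \ref{newrcf}(2) translated through Proposition \ref{rcfgerbe}. Natural transformations out of a functor on a filtered union of full subcategories form the limit of the natural transformations on each piece. So we get
$$\on{Nat}(f_\ast\tau_S,\mc{G})\simeq\varprojlim_N\on{Nat}\bigl((f_N)_\ast\tau_S,\mc{G}|_{(\mc{C}_N)_{conn}}\bigr).$$
Here we use that $\on{Fun}(\varinjlim_N \mc{D}_N,\mc{E})=\varprojlim_N\on{Fun}(\mc{D}_N,\mc{E})$ in $\on{Cat}_\infty$, and that a filtered union of full subcategories is the colimit in $\on{Cat}_\infty$.

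Finally, we assemble these two equivalences. The anima of pairs $(f^{-1},f_\natural)$ is the total space of a fibration over $\on{Fun}^{ex}(\mc{C},\on{fEt}_S)^\simeq$ whose fibers are the $\on{Nat}$ spaces. Limits commute with such fibrations, by writing the pairs as a mapping space in a suitable lax slice or Grothendieck construction. Combining the two limit descriptions gives the claim, naturally in $S$.

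The main obstacle is this bookkeeping step: checking that the identification of the restricted cosheaf with the one attached to $C_N$ is compatible with the transition maps, so that the pairs form an honest inverse system. I would handle it by defining $\mc{W}$ for the single pair $(\mc{C},\mc{G})$ and observing that $\mc{W}_{C_N}$ is literally the same construction applied to the restriction of $\mc{G}$.
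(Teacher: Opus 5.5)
Your argument is correct and is the same as the paper's, which just says the claim is immediate from $\mc{C}=\bigcup_N\mc{C}_N$; you have unpacked this by checking that exact functors to $\on{fEt}_S$, and natural transformations into $\mc{G}$, on a filtered union of full subcategories closed under finite limits and colimits are the limits of the corresponding data on the pieces. The compatibility you flag as the main obstacle is automatic: the gerbe $\mc{G}$ of Proposition \ref{rcfgerbe} is built objectwise from $F$, so the gerbe for $C_N$ is literally $\mc{G}|_{(\mc{C}_N)_{conn}}$.
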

\begin{proof}
This is immediate from $\mc{C}=\cup_N \mc{C}_N$.
\end{proof}

In this way the study of $C$ reduces to the case where the Galois category $\mc{C}$ corresponds to a connected finite groupoid, and to the study of filtered inverse limits.  We start with the first.

In fact, the situation is quite clear in the abstract: a condensed refined class formation $C$ on $\mc{C}\simeq \mc{C}_\Gamma$ for a finite group $\Gamma$ is the same as a certain kind of abelian gerbe over $B\Gamma$, or to say things more canonically, an abelian gerbe over $\ul{\mc{C}}$.  Then the definitions are set up so that this equivalence is precisely implemented by $C\mapsto (\mc{W}_C\to \ul{\mc{C}})$.  In other words:

\begin{lemma}
Let $\mc{C}$ be a Galois category equivalent to finite $\Gamma$-sets for some finite group $\Gamma$, and suppose given a refined condensed class formation $C$ on $\mc{C}$.  Then the natural map of condensed groupoids
$$\mc{W}_C \to \ul{\mc{C}}$$
is an abelian gerbe, and the refined class formation is in turn recovered from this gerbe by 
$$F(U) = \tau_{\leq 1}\mbb{Z}[U\times_{\ul{\mc{C}}}\mc{W}_C]$$
for $U\in \on{fEt}_{\ul{\mc{C}}}=\mc{C}$, equipped with the obvious augmentation $\alpha$ induced by $U\times_{\ul{\mc{C}}}\mc{W}_C\to\ast$.

Thus, via this procedure, refined condensed class formations for $\mc{C}$ are equivalent to certain abelian gerbes over $\ul{\mc{C}}$ (namely those satisfying the conditions in Proposition \ref{rcfgerbe}).
\end{lemma}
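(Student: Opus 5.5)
The plan is to reduce to the case $\mc{C}=\mc{C}_\Gamma$ with a chosen fiber functor, so that $\ul{\mc{C}}\simeq B\Gamma$ and $\mc{C}$ is generated under finite colimits by the single $\Gamma$-torsor $P=\Gamma$ (with $\on{Aut}(P)=\Gamma$ acting on the right). I will first compute the fiber of $\mc{W}_C\to\ul{\mc{C}}$ over the basepoint $\ast\to B\Gamma$. A point of $\mc{W}_C(S)$ lying over the trivial torsor is the datum of a natural transformation $f_\natural:f_\ast\tau_S\to\mc{G}$ on $\mc{C}_{conn}$, where now $f^{-1}(U)=U\times S$ is constant. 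Since $\mc{G}$ is a cosheaf on $\mc{C}_{conn}$, and every connected $U$ is a quotient $P/H$, the natural transformation should be determined by its value on $P$. That value is a $\Gamma$-equivariant map $\Gamma\times S\to\mc{G}(P)$, equivalently a map $S\to\mc{G}(P)$. Compatibility with the quotients $P\to P/H$ is then automatic, because $\mc{G}(P/H)$ receives the map from $\mc{G}(P)_H$. The subtle point is that the cosheaf condition in Proposition \ref{rcfgerbe} is only an equivalence after abelianization, $(\mc{G}(P)_H)^{ab}\simeq\mc{G}(P/H)$, rather than $\mc{G}(P)_H\simeq\mc{G}(P/H)$. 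So I must check that a map out of $f_\ast\tau_S$ factors uniquely through abelianization.

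This is where I would pass to the linear form via Lemma \ref{gerbe}. Maps $f^{-1}(U)\to\mc{G}(U)$ correspond to augmented maps $\mbb{Z}[U\times S]\to F(U)$, and both sides are cosheaves into $\on{D}(\on{CondAb})$: the source is the free cosheaf $U\mapsto\mbb{Z}[U]\otimes\mbb{Z}[S]$. By Lemma \ref{sheafbg}, cosheaves on $\mc{C}(\Gamma)$ are left Kan extended from $B\Gamma$, so maps of cosheaves are $\Gamma$-equivariant maps $\mbb{Z}[\Gamma]\otimes\mbb{Z}[S]\to F(P)$, i.e.\ maps $\mbb{Z}[S]\to F(P)$, augmented. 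Lemma \ref{gerbe}(3) identifies these with maps $S\to\mc{G}(P)$. Hence the fiber is $\mc{G}(P)$, an abelian gerbe $BA(P)$, where $A=H_1F$. The same argument with $S$ replaced by an arbitrary base, using the descent remark after Definition \ref{rcftocond}, shows more generally that $U\times_{\ul{\mc{C}}}\mc{W}_C\simeq\mc{G}(U)$ for connected $U$. This is consistent because $U\times_{B\Gamma}\mc{W}_C=(\Gamma/H\times\mc{G}(P))/\Gamma=\mc{G}(P)_H$.

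Thus the map $\mc{W}_C\to B\Gamma$ has fiber $BA(P)$ with $\pi_0=\ast$ locally on the base, which makes it an abelian gerbe. For the recovery formula I take $\tau_{\leq1}\mbb{Z}[U\times_{\ul{\mc{C}}}\mc{W}_C]=\tau_{\leq1}\mbb{Z}[\mc{G}(P)_H]=\tau_{\leq1}(F(P)_H)$, using the unit isomorphism of Lemma \ref{gerbe} and the fact that $\mbb{Z}[-]$ commutes with colimits. The cosheaf property of $F$ then gives $F(P)_H\simeq F(P/H)=F(U)$, and $F(U)$ already lies in degrees $[0,1]$, so the truncation is harmless. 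The augmentations match by construction.

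For the converse, starting from an abelian gerbe $W\to B\Gamma$ satisfying the conditions of Proposition \ref{rcfgerbe}, I define $F$ by the displayed formula and check that $\mc{W}_C\simeq W$ by the same fiber computation. The main obstacle I expect is the middle step: justifying carefully that naturality over all of $\mc{C}_{conn}$ reduces to equivariance at $P$, despite the abelianization in the cosheaf condition. The linearization through Lemma \ref{gerbe} together with Lemma \ref{sheafbg} is how I would handle it.
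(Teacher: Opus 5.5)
Your proof is essentially the paper's. You fix $\mc{C}\simeq\mc{C}_\Gamma$ and use that the relevant cosheaf is left Kan extended from $B\Gamma$ (Lemma \ref{sheafbg}). This identifies an $S$-point of $\mc{W}_C$ with a $\Gamma$-torsor $\widetilde{S}\to S$ plus a $\Gamma$-equivariant map $\widetilde{S}\to X:=\mc{G}(P)$, so that $\mc{W}_C\simeq X/\Gamma$ over $B\Gamma$. Your derivation of the recovery formula via $\tau_{\leq 1}\mbb{Z}[\mc{G}(P)_H]\simeq\tau_{\leq 1}(F(P)_H)\simeq F(P/H)$ is, if anything, more explicit than the paper's.

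The abelianization worry that sends you through Lemma \ref{gerbe} is unfounded, though. Only the \emph{source} $f_\ast\tau_S$ needs to be left Kan extended from $B\Gamma$, and it is, because quotients by free finite group actions in $\on{fEt}_S$ are already quotients in $\on{CondAn}$. The universal property of left Kan extension then computes natural transformations into \emph{any} functor $\mc{G}$, regardless of how $\mc{G}$ behaves on colimits; this is the paper's ``by the same token''. The linearized route is correct but buys nothing, since there too only the source matters.

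One sentence is wrong and should be deleted: the claim that $U\times_{\ul{\mc{C}}}\mc{W}_C\simeq\mc{G}(U)$ for connected $U$. For $U=P/H$ one has $U\times_{\ul{\mc{C}}}\mc{W}_C\simeq\mc{G}(P)_H$, as you say in the very next line. But condition 2 of Proposition \ref{rcfgerbe} gives $\mc{G}(U)\simeq(\mc{G}(P)_H)^{ab}$, and these differ in general. Taking $U=\ast$, your claim would say $\mc{W}_C\simeq\mc{G}(\ast)=B(A(\ast))$, i.e.\ that the Weil group $W$ is abelian. That is impossible whenever $\Gamma$ is nonabelian, since $W\twoheadrightarrow\Gamma$.

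Your recovery computation does not actually use this claim; there it is the truncation $\tau_{\leq 1}$ that implements the abelianization. So the argument stands once the sentence is removed.
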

\begin{proof}
Choose the isomorphism $\mc{C}=\mc{C}_\Gamma$.  Then the cosheaf of abelian gerbes on $\mc{C}$ corresponding to $(F,\alpha)$ via Proposition \ref{rcfgerbe} is left Kan extended from its value on $\Gamma$ as a $\Gamma$-set, and the datum is just an abelian gerbe $X\in \on{CondAb}$ equipped with a $\Gamma$-action.  By the same token an $S$-valued point of $\mc{W}_C$ amounts to a $\Gamma$-torsor $\widetilde{S}\to S$ together with a $\Gamma$-equivariant map $\widetilde{S} \to X$.  This exactly means that $\ul{C} = X/\Gamma$ mapping to $\ul{\mc{C}} = B\Gamma$, giving the claims.
\end{proof}

\begin{remark}\label{groupextension}
In more pedestrian terms, if we choose a basepoint of $\mc{W}_C$, which induces a basepoint of $\ul{\mc{C}}$ hence an isomorphism $\ul{\mc{C}}\simeq B\Gamma$ for a uniquely determined finite group $\Gamma$, then we get $\mc{W}_C\simeq BW$ where $W$ is a condensed group which is an extension of $\Gamma$ with abelian kernel $A$:

$$1 \to A \to W \to \Gamma \to 1.$$
The condensed abelian group $A$ with its induced conjugation action of $\Gamma$ describes the condensed class formation underlying $C$, and the extra datum
$$\kappa: \frac{1}{\#\Gamma}\mbb{Z} \to R\Gamma(B\Gamma;A[2])$$
is described as follows: its value on $\frac{1}{\#\Gamma}$ classifies the given abelian extension $A \to W \to \Gamma$.
\end{remark}

\begin{remark}\label{groupextensiontransition}
In this situation, if $\mc{C}'$ is a Galois full subcategory of $\mc{C}$ and we take the induced refined condensed class formation $C'$ on $\mc{C}'$ as in Example \ref{newrcf} given just by restriction, then we get the induced commutative square of condensed anima
$$\xymatrix{
\mc{W}_C\ar[r]\ar[d] & \mc{W}_{C'}\ar[d] \\
\ul{\mc{C}}\ar[r] & \ul{\mc{C}'}.\\ }$$

\noindent By Proposition \ref{rcfgerbe}, the abelian gerbe $\mc{W}_{C'}\to\ul{\mc{C}'}$ is determined as the abelianization of the composite map $\mc{W}_C \to \ul{\mc{C}'}$.

Note that the topos-theoretic notion of abelianization commutes with base change.  Thus we get the following more concrete description of the situation. If $1 \to A \to W \to \Gamma \to 1$ is the group extension corresponding to a choice of basepoint of $\mc{W}_C$, and $\Gamma\to\Gamma'$ is the quotient of $\Gamma$ describing $\mc{C}'\subset\mc{C}$, then the right-hand map corresponds to
$$W' \to \Gamma',$$
where $W' = W/[N,N]$ with $N=\on{ker}(W\twoheadrightarrow \Gamma')$, and $[N,N]$ denotes the commutator subgroup of $N$.  In particular the kernel of $W'\to \Gamma'$, which describes the band of the gerbe, is $N^{ab}$.  On the other hand from the class formation perspective we see that the band is also $A^K$, where $K=\on{ker}(\Gamma\to\Gamma')$ acts on $A:=\on{ker}(W\twoheadrightarrow\Gamma)$ by restricting the action of $\Gamma$ by conjugation.  The isomorphism $N^{ab}\simeq A^K$ relating these descriptions is induced by the transfer.
\end{remark}

Now we turn to the problem of controlling the inverse limits in 
$$\mc{W}_C = \varprojlim_N \mc{W}_{C_N}$$
from Lemma \ref{rcfinverselimit}.

\begin{proposition}\label{groupoidisgroup}
Let $C$ be a condensed refined class formation with underlying condensed class formation $(\mc{C},A)$.  Suppose that
for all connected objects $U\in\mc{C}$,
$$\on{ker}(A(U) \to A(\ast))$$
is compact Hausdorff, where the map is the transfer map associated to $U\to \ast$ (corresponding to the natural covariant functoriality of $H_1F$ via $A^{tr}=H_1F$).

Then there exists a condensed group $W$ such that
$$\mc{W}_C\simeq BW.$$
Moreover $W$ is an extension of $A(\ast)$ by a compact Hausdorff group, hence (Remark \ref{extlchaus}) is locally compact Hausdorff provided $A(\ast)$ is.
\end{proposition}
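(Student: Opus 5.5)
We plan to use Lemma \ref{rcfinverselimit}, which writes $\mc{W}_C=\varprojlim_N \mc{W}_{C_N}$, and to reduce everything to Lemma \ref{nonabrlim}, the statement that filtered limits of classifying stacks of compact Hausdorff groups are again of that form. For each finite quotient $\mc{C}_N$, Remark \ref{groupextension} identifies $\mc{W}_{C_N}$ with $BW_N$, where $W_N$ is an extension of a finite group $\Gamma_N$ by $A(X_N)$, with $X_N$ the Galois object. Composing $\mc{W}_{C_N}\to\ul{\mc{C}_N}\to\ast$ and abelianizing gives a map $\mc{W}_{C_N}\to BA(\ast)$. By Remark \ref{groupextensiontransition}, applied with $\mc{C}'$ the trivial Galois category, this abelianization is the gerbe attached to $A(\ast)$; concretely it is $W_N\to W_N^{ab}\simeq A(\ast)$, and the isomorphism is the transfer. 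These maps are compatible in $N$, so we get
$$\mc{W}_C=\varprojlim_N \mc{W}_{C_N}\to BA(\ast).$$
We then study the fibre $\mc{K}_N:=\on{fib}(\mc{W}_{C_N}\to BA(\ast))$. Since $\pi_0\mc{W}_{C_N}=\ast$ and $W_N\to A(\ast)$ is surjective, $\mc{K}_N\simeq BK_N$, where $K_N=\on{ker}(W_N\to A(\ast))$. The group $K_N$ is an extension of $\Gamma_N$ by $\on{ker}(A(X_N)\to A(\ast))$: the composite $A(X_N)\subset W_N\to W_N^{ab}=A(\ast)$ is the norm/transfer, and $W_N\to A(\ast)$ restricted to $A(X_N)$ has the same image as $W_N$ up to finite index. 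To make this precise, we will check that $W_N/A(X_N)=\Gamma_N$ maps onto $A(\ast)/\on{im}(\on{tr})=H^0_T$, which is finite, so that $K_N\to\Gamma_N$ has kernel $\on{ker}(\on{tr})$. By hypothesis this kernel is compact Hausdorff, so $K_N$ is an extension of a finite group by a compact Hausdorff group, hence compact Hausdorff by the remark following Remark \ref{openquotient}.

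Limits commute with fibres, so
$$\on{fib}(\mc{W}_C\to BA(\ast))=\varprojlim_N BK_N,$$
which is $BK$ for a compact Hausdorff group $K$ by Lemma \ref{nonabrlim}. In particular this fibre is connected and nonempty. Since $BA(\ast)$ is connected with $\pi_0=\ast$, the long exact sequence gives $\pi_0\mc{W}_C=\ast$. The point $\ast$ is projective, so $\mc{W}_C$ has a basepoint, and because $\mc{W}_C$ is $1$-truncated we get $\mc{W}_C\simeq BW$ with $W=\pi_1$. Surjectivity of $\pi_1$ onto $A(\ast)$ follows from $\pi_0$ of the fibre being trivial. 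This gives the extension
$$1\to K\to W\to A(\ast)\to 1,$$
and local compactness follows from the remark after Remark \ref{openquotient}.

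The main obstacle is the middle step. We must verify that the maps $\mc{W}_{C_N}\to BA(\ast)$ are genuinely compatible as $N$ varies, which comes from the functoriality in Remark \ref{groupextensiontransition}. We must also identify the kernel $K_N$ correctly in terms of $\on{ker}(A(X_N)\to A(\ast))$, including the claim that the cokernel of the transfer is the finite group $\wh{H}^0$ of order $\#\Gamma_N$; this is where the class formation axioms enter. Everything else is formal, given Lemmas \ref{nonabrlim} and \ref{rcfinverselimit}.
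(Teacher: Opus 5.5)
Your proposal is correct and follows essentially the same route as the paper's proof: identify the abelianization of each $\mc{W}_{C_N}$ with the constant gerbe $BA(\ast)$ via the transfer, show the finite-level kernels $K_N$ are compact as extensions of a finite group by $\on{ker}(\on{tr})$, and apply Lemma \ref{nonabrlim} to the inverse system of fibres over $BA(\ast)$. One small slip: $K_N$ maps onto $[\Gamma_N,\Gamma_N]$, not onto all of $\Gamma_N$; this is harmless, because all you need is that $K_N\cap A(X_N)=\on{ker}(\on{tr})$ is compact and that the image of $K_N$ in $\Gamma_N$ is finite, so the order of $A(\ast)/\on{im}(\on{tr})$ plays no role.
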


\begin{proof}
Let $N$ index the Galois full subcategories $\mc{C}_N\subset\mc{C}$ such that $\mc{C}_N$ is isomorphic to finite $\Gamma_N$-sets for some finite group $\Gamma_N$.  As in Remark \ref{groupextension}, $\mc{W}_{C_N}$ gives us, after a choice of basepoint, a condensed group extension
$$1 \to A_N \to W_N \to \Gamma_N \to 1.$$
By the special case of Remark \ref{groupextensiontransition} where $\Gamma'=1$, we have $W_N^{ab}\simeq A_N^{\Gamma_N}=A(\ast)$, and the homomorphism $W_N^{ab}\to \Gamma_N^{ab}$ is surjective with kernel $\on{tr}(A_N)\subset A(\ast)$ under this isomorphism.  From the hypothesis that $\on{ker}(\on{tr})$ is compact we deduce that $\on{ker}(W_N\to W_N^{ab}\simeq A(\ast))$ is an extension of a finite group by a compact group, hence is compact.

Thus the abelianization of $\mc{W}_{C_N}$ is independent of $N$, namely it is the trivial gerbe with band $A(\ast)$ where here $\ast$ means the terminal object of $\mc{C}$.  Moreover, the inverse system formed by the $\mc{W}_{C_N}$ maps to the constant inverse system with value $B(A(\ast))$, and the fiber of this map is an inverse system of gerbes with \emph{compact Hausdorff} band.  Recalling that $\mc{W}_C=\varprojlim_N\mc{W}_{C_N}$, we deduce from Lemma \ref{nonabrlim} a fiber sequence
$$BK \to \mc{W}_C \to B(A(\ast))$$
for some compact Hausdorff group $K$.  Thus $\mc{W}_C=BW$ with $W$ an extension of $A(\ast)$ by $K$, finishing the proof.
\end{proof}

Let us now refine this to an equivalence of categories, showing that refined class formations (of a certain type) are equivalent to ``Weil groupoids''; compare \cite{tatentb}, \cite{dasgupta} Section 5.

\begin{theorem}
The functor sending a condensed refined class formation $C$ to the map of condensed anima
$$\mc{W}_C \to \ul{\mc{C}}$$
from Definition \ref{rcftocond} induces an equivalence between the following two $(2,1)$-categories:
\begin{enumerate}
\item The full subcategory of condensed refined class formations spanned by those whose underlying condensed class formation $(\mc{C},A)$ satisfies the following: for all connected $U\in\mc{C}$, we have:
\begin{enumerate}
\item $A(U)$ is locally compact Hausdorff;
\item $\on{ker}(A(U)\to A(\ast))$ is compact.
\end{enumerate}

\item The full subcategory of $\on{Fun}(\Delta^1,\on{CondAn})$ spanned by those maps of the form $B\varphi: BW\to B\Gamma$ where $\varphi:W\to \Gamma$ is a homomorphism of locally compact Hausdorff groups such that:
\begin{enumerate}
\item $\Gamma$ is profinite and $\varphi:W\to \Gamma$ has dense image;
\item $W\overset{\sim}{\rightarrow}\varprojlim_N W/W^c_N$ where $N$ runs over all open normal subgroups of $\Gamma$; here, for $H\subset \Gamma$ an open subgroup, we let $W_H = \varphi^{-1}H$ and let $W_H^c = \overline{[W_H,W_H]}$ be the closure of the commutator subgroup of $W_H$.  Moreover we require that the transition maps in this inverse limit have compact kernel.
\item For any open subgroup $H\subset \Gamma$ and any open normal subgroup $N\subset H$, the transfer map induces an isomorphism
$$W_H/W_H^c \overset{\sim}{\rightarrow} (W_N/W_N^c)^{H/N}.$$
of locally compact Hausdorff abelian groups.
\end{enumerate}
\end{enumerate}
The functor backwards sends $X\to Y$ in $\on{Fun}(\Delta^1;\on{CondAn})$ satisfying the above conditions to the refined condensed class formation
$$(\on{fEt}_Y,\left(\tau_{\geq -1}R\Gamma(-\times_YX;\mbb{R}/\mbb{Z})\right)^\vee,\epsilon),$$
where $(-)^\vee$ stands for internal Hom to $\mathbb{R}/\mbb{Z}$ and the augmentation $\epsilon$ to $\mbb{Z}$ is induced by $-\times_YX\to\ast$.  In particular, the underlying class formation assigns $H\mapsto \varphi^{-1}H/\overline{[\varphi^{-1}H,\varphi^{-1}H]}$ equipped with the sheaf structure given by the transfer maps.
\end{theorem}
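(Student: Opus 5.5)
The plan is to construct both functors at finite level and then pass to the limit over the finite ``quotients'' $\mc{C}_N\subset\mc{C}$, using Lemma \ref{rcfinverselimit} on one side and condition (b) on the other.

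\textbf{From class formations to Weil groups.} Start with $C$ in the first category.

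First, Proposition \ref{groupoidisgroup} gives $\mc{W}_C\simeq BW$, with $W$ locally compact Hausdorff. Choosing a basepoint, which exists since $\pi_0=\ast$ and $\ast$ is projective, turns the map $\mc{W}_C\to\ul{\mc{C}}$ into $B\varphi$ for some $\varphi:W\to\Gamma$ with $\Gamma$ profinite.

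Next, write $\mc{W}_C=\varprojlim_N \mc{W}_{C_N}$. Each $\mc{W}_{C_N}$ is $BW_N$ for an extension $1\to A_N\to W_N\to\Gamma_N\to 1$ as in Remark \ref{groupextension}, and each $W_N\to\Gamma_N$ is surjective. Moreover the transition maps $W_{N'}\to W_N$ are surjective with compact kernel, by the compactness argument in the proof of Proposition \ref{groupoidisgroup} together with Remark \ref{groupextensiontransition}. Surjectivity at each finite level gives the density in (a).

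For (b), the key identification is that $\ker(W\to W_N)$ equals $W^c_N$. Remark \ref{groupextensiontransition} identifies $W_N$ with $W_{N'}/\overline{[K,K]}$, where $K$ is the preimage of $N$ in $W_{N'}$. I would pass this to the limit using the following two facts:
\begin{enumerate}
\item The closure of the commutator subgroup commutes with an inverse limit whose transition maps are surjective with compact kernel.
\item Such a limit is computed termwise, by the Mittag-Leffler-type argument in Lemma \ref{nonabrlim}.
\end{enumerate}
Condition (c) is then the sheaf axiom for $A=H_1F^{tr}$: here $W_H/W^c_H\simeq A(U_H)$ by the case $\Gamma'=1$ of Remark \ref{groupextensiontransition}, and the sheaf structure is given by transfer.

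\textbf{From Weil groups to class formations.} Start with $B\varphi:BW\to B\Gamma$ in the second category.

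For each open normal $N$, condition (b) shows that $B(W/W^c_N)\to B(\Gamma/N)$ is an abelian gerbe. Its band is $W_N/W_N^c$, which is locally compact Hausdorff. The associated cosheaf of abelian gerbes $U\mapsto (U\times_{B\Gamma}BW)^{ab}$ satisfies the three conditions of Proposition \ref{rcfgerbe}:
\begin{enumerate}
\item condition 2 holds because abelianization is computed by $\tau_{\leq 1}\mbb{Z}[-]$ (Lemma \ref{gerbe}), which commutes with colimits;
\item condition 3 is exactly (c).
\end{enumerate}
The formula $\left(\tau_{\geq -1}R\Gamma(-\times_YX;\mbb{R}/\mbb{Z})\right)^\vee$ agrees with $\tau_{\leq 1}\mbb{Z}[-\times_YX]$. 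This follows from derived Pontryagin duality (Corollary \ref{dpd}), applied termwise, because the homology is $\mbb{Z}$ in degree $0$ and a locally compact Hausdorff group in degree $1$. Condition 1(b) on compactness of kernels follows from the compact-kernel hypothesis in (b), since $W_U^{ab}\to W^{ab}$ factors through quotients of $W$ with compact kernels.

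\textbf{Inverse equivalences and fully faithfulness.} The two constructions are inverse at each finite level $N$ by the preceding unnumbered lemma identifying refined class formations on $\mc{C}_N$ with abelian gerbes over $\ul{\mc{C}_N}$. They are inverse globally by Lemma \ref{rcfinverselimit} combined with (b), which says $BW=\varprojlim_N B(W/W^c_N)$ via Lemma \ref{nonabrlim}. For fully faithfulness, a morphism on either side is the data of $f^{-1}$ plus a compatible family of maps of gerbes at each finite level, by the final clause of Proposition \ref{rcfgerbe}. Both sides are limits over $N$ of these finite-level data, so the finite-level equivalence propagates.

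\textbf{Main obstacle.} I expect the hardest step to be the topological identification $\ker(W\to W_{C_N})=\overline{[W_N,W_N]}$, together with the claim that the limit over $N$ commutes with taking closures of commutators. My first attempt would be to use compactness of the transition kernels to reduce to compact groups, where images of closed subgroups are closed and inverse limits of nonempty compact sets are nonempty.
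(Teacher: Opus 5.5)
Your forward direction (from $C$ to $B\varphi$) is essentially the paper's argument. It uses Proposition \ref{groupoidisgroup}, the finite-level Weil groups of the $\mc{W}_{C_N}$ with compact transition kernels, and identifies $\ker(W\to W_N)$ with $\overline{[\varphi^{-1}N,\varphi^{-1}N]}$ by comparing finite-level abelianizations and passing to the limit.

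The gap is in the backward direction. You take $\mc{G}(U)=(U\times_{B\Gamma}BW)^{ab}$, the topos-theoretic abelianization of Lemma \ref{gerbe}. Then $H_1\mc{G}(U)=H_1(\mbb{Z}[BW_U])$ is the condensed quotient of $W_U$ by its \emph{abstract} commutator subgroup. Its underlying group is the abstract abelianization, since evaluation at $\ast$ commutes with $\mbb{Z}[-]$ and with homology. Two of your steps need this quotient to be locally compact Hausdorff, i.e.\ need $[W_U,W_U]$ to be closed: the identification of your cosheaf with $\left(\tau_{\geq -1}R\Gamma(-\times_YX;\mbb{R}/\mbb{Z})\right)^\vee$, and the claim that condition 3 of Proposition \ref{rcfgerbe} ``is exactly (c)''.

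Nothing in (a)--(c) gives this, and it is false in general. Take $W=\Gamma=\wh{F}$ a free profinite group of countably infinite rank, with $\varphi=\on{id}$. Conditions (a) and (b) are clear. Condition (c) holds because $\wh{F}$ has strict cohomological dimension $2$, so it underlies a class formation of profinite type (Brumer; compare the remark following Proposition \ref{ztype}). But $[\wh{F},\wh{F}]$ is not closed, since $\wh{F}$ surjects onto $\prod_n G_n$ for finite groups $G_n$ of unbounded commutator width. For this $W$ your backward construction has an $H_1$ that is not locally compact Hausdorff. So it does not land in the first category and cannot invert the forward functor.

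The paper avoids this by taking the stated Pontryagin-dual formula as the definition. The group $H_{-1}$ of $\tau_{\geq -1}R\Gamma(BW_U;\mbb{R}/\mbb{Z})$ is $\on{Hom}_{cont}(W_U,\mbb{R}/\mbb{Z})$, which only sees $W_U/W_U^c$. So after dualizing, $H_1F(U)=W_U/W_U^c$ automatically, and the transfer condition in Proposition \ref{rcfgerbe} is literally (c).

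Once you make this switch, your argument for condition 2 of Proposition \ref{rcfgerbe} (``abelianization commutes with colimits'') no longer applies as stated, because the gerbes are now $B(W_U/W_U^c)$. For connected $U$ and $G\subset\on{Aut}(U)$, condition 2 now says that the finite-level group $W_{U/G}/W_U^c$, an extension of $G$ by $W_U/W_U^c$, has closed commutator subgroup. That commutator subgroup is a finite union of cosets of $I_G\cdot(W_U/W_U^c)$. So this becomes a finite-level check rather than a formality; it is automatic, for instance, when the class groups are compact.
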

\begin{proof}
First suppose given a $C=(\mc{C},F,\alpha)$ as in 1.  By Proposition \ref{groupoidisgroup} we have that $\mc{W}_C\to\ul{\mc{C}}$ is of the form $BW\to B\Gamma$ where $W$ is a locally compact Hausdorff group which is an extension of $A(\ast)$ by a compact Hausdorff group and $\Gamma$ is a profinite group.

Suppose $\Gamma'\subset \Gamma$ is an open subgroup, and let $C'$ denote the restriction of $C$ to the corresponding slice category $\mc{C}'$ in $\mc{C}$ as in Example \ref{newrcf}.  It follows from the definitions (specifically the cosheaf property) that $$\mc{W}_{C'} = \mc{W}_C\times_{\ul{\mc{C}}}\ul{\mc{C}'}.$$
On the other hand by Proposition \ref{groupoidisgroup} we have $\mc{W}_{C'} = BW'$, and it follows that $W'= \varphi^{-1}\Gamma'\subset W$ and $W/W'\simeq \Gamma/\Gamma'$, which means that $\varphi$ has dense image.

By Lemma \ref{rcfinverselimit} we have the inverse limit of locally compact Hausdorff groups
$$W = \varprojlim_N W_N$$
where the transition maps are quotient maps by compact normal subgroups (Remark \ref{groupextensiontransition}).  Here $W_N$ is such that $\mc{W}_{C_N} = BW_N$; by Remark \ref{groupextension} it fits into an extension
$$1\to A_N \to W_N \to \Gamma/N \to 1$$
where $A_N = A(\Gamma/N)$.  Since $A_N$ is abelian and $W_N$ is Hausdorff, the kernel of the surjection $$W\to W_N$$ contains $\overline{[\varphi^{-1}N,\varphi^{-1}N]}$.  To see the opposite containment, we can pass to the open subgroup $N$ as in the previous paragraph to reduce to showing $W/\overline{[W,W]}\overset{\sim}{\rightarrow} A(\ast)$. But on the finite levels we know that $W_N^{ab}\overset{\sim}{\rightarrow} A(\ast)$ by Remark \ref{groupextensiontransition} (here without needing to take closure of the commutator subgroup in forming the abelianization), and then we get the desired claim by passing to the limit.  This proves property (b).

Finally, via the above identifications, property (c) is a translation of the fact that $A=H_1F^{tr}$ is a sheaf.

Next suppose given $\varphi:W\to \Gamma$ is as in 2, and let's check that
$$(\on{fEt}_{B\Gamma},\left(\tau_{\geq -1}(R\Gamma((-)\times_YX;\mbb{R}/\mbb{Z})\right)^\vee,\epsilon)$$
is a condensed refined class formation as claimed.  We have that $\on{fEt}_{B\Gamma}$ is a Galois category by \ref{galtocond}.  For $\Gamma'\subset \Gamma$ an open subgroup, corresponding to $B\Gamma' \to B\Gamma$ in $\on{fEt}_{B\Gamma}$, we have
$$H^0(B\Gamma'\times_{B\Gamma}BW;\mbb{R}/\mbb{Z})=\mbb{R}/\mbb{Z}$$
via $\epsilon$: this follows because $B\Gamma'\times_{B\Gamma}BW = BW'$ with $W'=\varphi^{-1}\Gamma'$ due to the hypothesis that $\varphi$ has dense image.  Moreover, for the same reason
$$H^1(B\Gamma'\times_{B\Gamma}BW;\mbb{R}/\mbb{Z})\simeq \on{Hom}_{cont}(W';\mbb{R}/\mbb{Z}).$$
When we apply Pontryagin duality, we therefore get something with $H_0 = \mbb{Z}$ and $$H_{1}(B\Gamma') = W'/\overline{[W',W']}.$$
Then it follows from Propositon \ref{rcfgerbe} that we have a refined class formation.  Unraveling the above, we see that these two procedures (going from 1 to 2 and going from 2 to 1) are mutually inverse via natural transformations, finishing the proof.
\end{proof}

Under stronger hypotheses, which are satisfied in practice for the class formations from number theory (though \emph{not} for their finite level analogs), we have an even nicer situation: $\ul{\mc{C}}$ is determined by $\mc{W}_C$.

\begin{corollary}\label{weildeterminesrcf}
The functor sending a condensed refined class formation $C = (\mc{C},F,\alpha)$ to the condensed anima $\mc{W}_C$
from Definition \ref{rcftocond} induces an equivalence between the following two $(2,1)$-categories:
\begin{enumerate}
\item The full subcategory of condensed refined class formations spanned by those $(\mc{C},F,\alpha)$ such that for all connected $U\in\mc{C}$, we have:
\begin{enumerate}
\item $H_1F(U)$ is locally compact Hausdorff;
\item $\on{ker}(H_1F(U)\to H_1F(\ast))$ is compact;
\item Every open subgroup of $H_1F(U)$ of finite index contains $\on{im}(H_1F(V)\to H_1F(U))$ for some connected $V\in\mc{C}$ mapping to $U$. (This property is the ``existence theorem'' for the underlying class formation.)
\end{enumerate}

\item The full subcategory of $\on{CondAn}$ spanned by those objects of the form $BW$ where $W$ is a locally compact Hausdorff group such that:
\begin{enumerate}
\item $W\overset{\sim}{\rightarrow}\varprojlim_N W/N^c$ where $N$ runs over all open normal subgroups of $W$ of finite index and $N^c$ is the closure of the commutator subgroup of $N$, and the transition maps in this inverse system have compact kernel;
\item For any open subgroup $H\subset W$ of finite index and any open normal subgroup $N\subset H$ of finite index, the transfer map induces an isomorphism of locally compact Hausdorff groups
$$H/H^c\overset{\sim}{\rightarrow} (N/N^c)^{H/N}.$$

\end{enumerate}
\end{enumerate}
\end{corollary}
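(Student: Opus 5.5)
The plan is to deduce this from the preceding Theorem by showing that, under the extra hypotheses, the profinite group $\Gamma$ (equivalently the Galois category $\mc{C}$) can be recovered functorially from $W$ alone. Concretely, given $BW$ as in 2, I will set $\Gamma := \varprojlim_N W/N$, with $N$ running over open normal subgroups of finite index in $W$. This is the profinite completion of $W$ in the ``finite continuous quotients'' sense. Let $\varphi: W\to\Gamma$ be the natural map; it has dense image. The open subgroups of $\Gamma$ then correspond bijectively to the open finite-index subgroups of $W$ via $H\mapsto \varphi^{-1}H$. With this dictionary, conditions 2(a) and 2(b) of the Corollary become verbatim conditions 2(b) and 2(c) of the preceding Theorem. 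So $BW\to B\Gamma$ lies in the Theorem's category 2, and we obtain a condensed refined class formation on $\mc{C}=\on{fEt}_{B\Gamma}$. The assignment $W\mapsto \wh{W}$ is functorial for continuous homomorphisms, so this gives a functor from category 2 of the Corollary to category 2 of the Theorem, right inverse to forgetting $B\Gamma$.

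The remaining task is to show that this is an equivalence onto exactly the image of the refined class formations satisfying (a),(b),(c). First, (a) and (b) of 1 coincide with the Theorem's hypotheses 1(a),(b), since $H_1F=A^{tr}$. The key step is to match (c) with the condition that the given $\Gamma$ is the full finite-quotient completion of $W$. In other words, the map $\varphi:W\to\Gamma$ should induce a bijection between open finite-index subgroups of $W$ and open subgroups of $\Gamma$.

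Suppose (c) holds. Let $H\subset W$ be open of finite index. Its normal core $N$ is open of finite index. Set $U$ to be the connected object corresponding to the closure of $\varphi(N)$, so $W_U=\varphi^{-1}\overline{\varphi(N)}\supseteq N$ has finite index. Then $N$ is the preimage of an open finite-index subgroup of $W_U/W_U^c\simeq H_1F(U)$, using Hausdorffness and the finite quotient $W_U/N$; abelianizing reduces to this case after replacing $N$ by the kernel to the abelian part, via a solvable-tower induction on $W_U/N$. By (c), $N$ contains the image of $H_1F(V)$ for some $V\to U$. The transfer/norm image of $H_1F(V)$ in $H_1F(U)$ corresponds to $\varphi^{-1}(\Gamma_V)$ in $W_U/W_U^c$, by the class formation identification of Remark \ref{groupextensiontransition} with $\Gamma'=$ trivial quotient. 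Hence $N\supseteq\varphi^{-1}(\Gamma_V)$, so $N$ comes from $\Gamma$. Conversely, if every finite-index open subgroup of $W$ is pulled back from $\Gamma$, then for an open finite-index $B\subset H_1F(U)=W_U/W_U^c$ its preimage in $W_U$ is some $\varphi^{-1}\Gamma_V$. The same norm-image identification then gives $B\supseteq\on{im}(H_1F(V))$, which is (c).

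Full faithfulness then follows. Morphisms in the Theorem's category 2 are squares $BW\to BW'$ over $B\Gamma\to B\Gamma'$. When $\Gamma=\wh{W}$, the bottom map and its 2-cells are uniquely determined by the top, by the universal property of $\wh{(-)}$ applied in $\on{CondAn}$; here one uses that $\on{Map}(BW,B\Gamma')\simeq\on{Map}(B\wh{W},B\Gamma')$ for profinite $\Gamma'$, which follows from Proposition \ref{profcomp}-style reasoning or directly from Lemma \ref{nonabrlim}. I expect the main obstacle to be the passage from the abelian statement (c) to arbitrary, possibly non-normal, finite-index subgroups via the solvable induction, and making the norm-image/preimage identification precise at the level of closures of commutators.
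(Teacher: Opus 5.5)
Your overall reduction to the preceding Theorem is the right one and matches the paper. You take $\Gamma=\widehat W$ and translate 2(a),(b) of the Corollary into 2(b),(c) of the Theorem. You recover 1(c) from the fact that every open finite-index subgroup of $W$ is pulled back from $\Gamma$. You handle morphisms by noting that maps $BW\to B\Gamma'$ with $\Gamma'$ profinite factor uniquely through $B\widehat W$. All of that is fine.

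The gap is in the key implication 1(c) $\Rightarrow$ $\Gamma=\widehat W$. Your claim that the normal core $N$ is the preimage of an open subgroup of $W_U/W_U^c$ is equivalent to $W_U/N$ being abelian. Nothing you cite (Hausdorffness, finiteness of $W_U/N$) implies that. Your fallback, induction along the derived series of $W_U/N$, only terminates if $W_U/N$ is solvable. Once the series reaches a nontrivial perfect group, the abelian quotient is trivial and 1(c) produces nothing new. In fact the ingredients you use are dense image of $\varphi$, $H_1F(U)=W_U/W_U^c$ with covariant maps induced by inclusion, and 1(c). All of these are satisfied by $\Gamma=1$ and $W=A_5$ (discrete), yet $\Gamma\neq\widehat W$ there.

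What rules this out is condition 2(b) of the preceding Theorem, which holds for the $W$ attached to any refined class formation satisfying 1(a),(b). It says $W\simeq\varprojlim_M W/W_M^c$ with $W_M=\varphi^{-1}M$. So $W$ carries the inverse-limit topology, and every open subgroup of $W$ contains $W_M^c=\ker(W\to W/W_M^c)$ for some open normal $M\subset\Gamma$.

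The paper's argument uses exactly this:
\begin{itemize}
\item For $H\subset W$ open of finite index, pick such an $M$.
\item Then $H\cap W_M\supseteq W_M^c$, so $H\cap W_M$ is the preimage of an open finite-index subgroup of $W_M/W_M^c=H_1F(U_M)$.
\item Applying 1(c), after passing to a Galois refinement, gives $H\cap W_M\supseteq W_{M'}$ for a smaller open normal $M'$, whence $H\supseteq\varphi^{-1}M'$.
\end{itemize}

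In your setup the same input also closes the gap directly. Set $\Gamma_U=\overline{\varphi(N)}$ and shrink $M$ so that $M\subset\Gamma_U$. Then $\varphi(N)M$ is an open, hence closed, subgroup of $\Gamma_U$ containing the dense subgroup $\varphi(N)$, so it equals $\Gamma_U$. Hence $W_U=NW_M$, and $W_U/N$ is a quotient of $W_M/W_M^c$, which is abelian. No induction is needed.
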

\begin{proof}
Suppose the condensed refined class formation satisfies conditions 1(a)(b)(c).  Then we claim that the associated homomorphism $\varphi:W\to\Gamma$ from the previous theorem identifies $\Gamma$ with the profinite completion of $W$.  Because $W\to \Gamma$ has dense image, the map $\wh{W}\to\Gamma$ is surjective, so we need only see that the kernel is trivial, which amounts to the claim that every open subgroup of $W$ of finite index contains $\varphi^{-1}\Gamma'$ for some open subgroup $\Gamma'\subset \Gamma$.

However, every open subgroup of $W$ contains $(\varphi^{-1}N)^c$ for some open normal subgroup $N$ of $\Gamma$ by 2(b) in the previous theorem, and thus corresponds to an open subgroup of the quotient $W/(\varphi^{-1}N)^c$.  We can intersect with $\varphi^{-1}N/(\varphi^{-1}N)^c = H_1F(U)$ to get an open subgroup there, of finite index if our original subgroup of $W$ is, which by hypothesis 1(b) contains the image of $H_1F(U')$ for some $U'$ corresponding to some smaller normal subgroup $N'\subset N$.  It follows that our open subgroup contains $\varphi^{-1}N'$, as required.

Conversely, suppose we have $W$ satisfying 2(a) and 2(b).  Setting $\Gamma=\widehat{W}$ we get a homomorphism $\varphi:W \to \Gamma$ which satisfies the properties 2(a),(b),(c) of the previous theorem, and therefore corresponds to a condensed refined class formation, for which we need only check the existence theorem.  But an open subgroup of $H_1F(U) = H/H^c$ can be viewed as an open subgroup of $W/H^c$ when $H\subset W$ is the open subgroup corresponding to $U$, and for this open subgroup the property 1(c) is tautological.
\end{proof}

Thus nice enough refined class formations can equivalently be viewed as certain condensed groupoids.  This applies to all of the number-theoretic examples of class formations discussed above in Example \ref{exrcf}, and the corresponding condensed anima is $BW$ where $W$ is ``the'' Weil group.

In these number theoretic examples, we have a dichotomy: the underlying condensed class formations $(\mc{C},A)$ have either one or the other of the following properties.

\begin{enumerate}
\item For all connected $U\in\mc{C}$, the condensed abelian group $A(U)$ is an extension of $\mbb{Z}$ by a profinite abelian group (called the \emph{$\mbb{Z}$-case} in \cite{tatentb});
\item For all connected $U\in\mc{C}$, the condensed abelian group $A(U)$ is an extension of $\mbb{R}$ by a compact Hausdorff group (called the \emph{$\mbb{R}$-case} in \cite{tatentb}).
\end{enumerate}

The $\mbb{Z}$-case occurs in all non-archimedean settings (finite fields, nonarchimedean local fields, and function fields), while the $\mbb{R}$-case occurs in all archimedean or mixed settings  (archimedean local fields and number fields).  It is formally convenient to also allow a third simpler case, the \emph{compact} case, when $A(U)$ is compact Hausdorff for all $U$.  Thus we end up with the following trichotomy.

\begin{definition}\label{zrcompact}
Let $C$ be a condensed refined class formation which satisfies 1(a)(b)(c) from Corollary \ref{weildeterminesrcf}.  We say that:
\begin{enumerate}
\item $C$ is of $\mbb{R}$-type if $H_1F(U)$ is an extension of $\mbb{R}$ by a compact Hausdorff group for all connected $U\in\mc{C}$;
\item $C$ is of $\mbb{Z}$-type if $H_1F(U)$ is an extension of $\mbb{Z}$ by a profinite group for all connected $U\in\mc{C}$;
\item $C$ is of compact type if $H_1F(U)$ is compact Hausdorff for all connected $U\in\mc{C}$.
\end{enumerate}
For brevity, if we say that $C$ is a \emph{refined class formation of $\mbb{R}$-type, resp.\ $\mbb{Z}$-type, resp.\ compact type}, we mean that $C$ is a condensed refined class formation satisfying 1(a)(b)(c) from Corollary \ref{weildeterminesrcf} as well as 1 resp.\ 2 resp.\ 3 just above.
\end{definition}

To finish this section we analyze the $\mbb{Z}$-case; this amounts to a reprise of a special case of a theorem of Brumer relating class formations to the condition of strict cohomological dimension 2 for a profinite group, \cite{brumer} Theorem 1.

\begin{proposition}\label{ztype}
The equivalence in Corollary \ref{weildeterminesrcf} restricts to an equivalence between the following two $(2,1)$-categories:

\begin{enumerate}
\item The category of refined class formations of $\mbb{Z}$-type;
\item The full subcategory of $\on{CondAn}$ spanned by those condensed anima of the form $BW$, where $W$ is of the following type: take a profinite group $\Gamma$ of \emph{strict cohomological dimension $2$} and a surjective homomorphism $\Gamma\twoheadrightarrow \wh{\mbb{Z}}$ and set
$$W= \Gamma\times_{\wh{\mbb{Z}}}\mbb{Z}.$$
\end{enumerate}
Moreover, in this case we have $H^i(U;\mbb{R}/\mbb{Z})=0$ for all $i\geq 2$ and all $U\in\on{fEt}_{BW}$, so that the associated condensed refined class formation is given by $F=R\Gamma(-;\mbb{R}/\mbb{Z})^\vee$ without any truncation needed.
\end{proposition}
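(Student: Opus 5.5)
The plan is to pass through Corollary \ref{weildeterminesrcf}, which already gives an equivalence between nice refined class formations and certain $BW$. So it suffices to show two things. First, a refined class formation satisfying 1(a)(b)(c) is of $\mbb{Z}$-type if and only if its Weil group $W$ has the form $\Gamma\times_{\wh{\mbb{Z}}}\mbb{Z}$ with $\Gamma=\wh{W}$ of strict cohomological dimension $2$. Second, the cohomology vanishing holds in that situation.

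\textbf{From $\mbb{Z}$-type to the fiber product description.} Suppose $C$ is of $\mbb{Z}$-type. Then $A(\ast)=W/\overline{[W,W]}$ is an extension of $\mbb{Z}$ by a profinite group, and $W$ is an extension of $A(\ast)$ by a compact group (Proposition \ref{groupoidisgroup}). Hence $W$ is almost compact with maximal compact subgroup $K$ and $W/K\simeq\mbb{Z}$.
\begin{enumerate}
\item Since $W=\varprojlim W/N^c$ with each $W/N^c$ an extension of a finite group by an extension of $\mbb{Z}$ by a profinite group, $W$ is totally disconnected. Therefore $K$ is profinite.
\item The map $W\to\Gamma=\wh{W}$ is injective on $K$ and identifies $W$ with $\Gamma\times_{\wh{\mbb{Z}}}\mbb{Z}$, where $\Gamma\to\wh{\mbb{Z}}$ is the completion of $W\to W/K\simeq\mbb{Z}$. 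Concretely, $\Gamma$ is an extension of $\wh{\mbb{Z}}$ by $K$.
\item Discrete cohomology of $BW$ agrees with that of $B\Gamma$ for torsion coefficients, via the Hochschild--Serre sequence for $K$ and the comparison $H^\ast(B\mbb{Z};M)\simeq H^\ast(B\wh{\mbb{Z}};M)$ for torsion discrete $M$.
\item For strict cohomological dimension, I would follow Brumer \cite{brumer}. The class formation axioms with $H^2(U;A)\simeq\frac{1}{\#U}\mbb{Z}/\mbb{Z}$ and Tate--Nakayama give $\on{scd}_p\Gamma\le 2$ for every $p$. Equality holds because $H^2(\Gamma;\mbb{Z})\supset \on{Hom}(\Gamma,\mbb{Q}/\mbb{Z})$, which is nonzero since $\Gamma\twoheadrightarrow\wh{\mbb{Z}}$.
\end{enumerate}

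\textbf{From the fiber product description to $\mbb{Z}$-type.} Conversely, given $\Gamma$ with $\on{scd}=2$ and $W=\Gamma\times_{\wh{\mbb{Z}}}\mbb{Z}$, I would verify 2(a)(b) of Corollary \ref{weildeterminesrcf}. Condition 2(a) is formal from $W$ being an extension of $\mbb{Z}$ by a profinite group.

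Condition 2(b), the transfer isomorphism for abelianizations, is the main obstacle. My plan is to compute via $\mbb{R}/\mbb{Z}$-cohomology of $BW_H$:
\begin{enumerate}
\item Use the sequence $0\to\mbb{Z}\to\mbb{R}\to\mbb{R}/\mbb{Z}\to0$.
\item Apply Proposition \ref{cohachaus} with $G=W_H$, $K$ profinite, $G/K\simeq\mbb{Z}$: then $R\Gamma(BW_H;\mbb{R})=[\mbb{R}\xrightarrow{0}\mbb{R}]$, concentrated in degrees $0,1$.
\item Hence $H^i(BW_H;\mbb{R}/\mbb{Z})\simeq H^{i+1}(BW_H;\mbb{Z})$ for $i\ge2$.
\item Discrete integral cohomology of $BW_H$ in degrees $\ge 3$ is torsion and agrees with $H^{\ast}(\Gamma_H;\mbb{Z})$, which vanishes in degrees $\ge3$ by strict cohomological dimension $2$ (Brumer's formulation via $\mbb{Q}/\mbb{Z}$, $\mbb{Z}$ coefficients).
\end{enumerate}
This yields the "moreover" clause: $H^i(U;\mbb{R}/\mbb{Z})=0$ for $i\ge2$. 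Consequently the cosheaf $U\mapsto R\Gamma(U;\mbb{R}/\mbb{Z})^\vee$ lives in degrees $[0,1]$ with no truncation. By Proposition \ref{trissheaf} and Lemma \ref{transferlemma}, its $H_1=W_U/\overline{[W_U,W_U]}$ has transfer maps satisfying the sheaf condition, which is exactly 2(b).

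The delicate point I would check carefully is the comparison between $BW_H$ and $B\Gamma_H$ for discrete torsion coefficients. The natural route is the fiber sequence $BK\to BW_H\to B\mbb{Z}$ against $BK\to B\Gamma_H\to B\wh{\mbb{Z}}$. For $\mbb{Z}$-coefficients in degree $\ge3$, one needs the rational part to vanish, which step 2 above supplies. Finally, the $\mbb{Z}$-type property of the resulting class formation is immediate, because $W_U^{ab}$ is an extension of $\mbb{Z}$ by a profinite group.
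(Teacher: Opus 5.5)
\textbf{Verdict.} Your converse direction and the ``moreover'' clause are sound. Computing $H^{\geq 2}(BW_H;\mbb{R}/\mbb{Z})$ via $\mbb{Z}\to\mbb{R}\to\mbb{R}/\mbb{Z}$, together with the torsion comparison of $W_H$ with $\Gamma_H$, is equivalent to the paper's Hochschild--Serre argument over $\Gamma^1=\ker(\Gamma\to\wh{\mbb{Z}})$. Reading off 2(b) from the untruncated cosheaf via Proposition \ref{trissheaf} is also what the paper does.

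\textbf{The gap is step 4 of the forward direction.} You claim that the class formation axioms plus Tate--Nakayama give $\on{scd}_p\Gamma\le 2$. This is false for class formations in general. Global class field theory makes $G_{\mbb{Q}}$ with the idele class groups a class formation, yet $G_{\mbb{Q}}$ contains complex conjugation, so $\on{cd}_2 G_{\mbb{Q}}=\infty$. Your step never uses the $\mbb{Z}$-type hypothesis, and this is exactly where it must enter. Brumer's theorem is about class formations with profinite class groups (this is how the paper invokes it). So it cannot be quoted until your $\mbb{Z}$-type formation has been converted into one of that kind.

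\textbf{The missing conversion.} Transfers are injective on the $\mbb{Z}$-quotients of the class groups. Hence pushing out $F$ along $H_1F(U)\to\widehat{H_1F(U)}$ changes it by a term that is $\wh{\mbb{Z}}/\mbb{Z}[1]$ on connected objects, with isomorphisms as structure maps. This term is uniquely divisible and is itself a cosheaf. So you get a refined class formation with profinite class groups whose Weil group is $\Gamma$ itself. Dually, $\tau_{\geq -1}R\Gamma(-;\mbb{Q}/\mbb{Z})$ is then a sheaf on $\on{fEt}_{B\Gamma}$. Its degree $-1$ part, $H\mapsto \on{Hom}(H,\mbb{Q}/\mbb{Z})$, has zero stalks. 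So the truncation equals $R\Gamma(-;\mbb{Q}/\mbb{Z})$, which is precisely the vanishing $H^i(H;\mbb{Q}/\mbb{Z})=0$ for $i\geq 2$.

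\textbf{An alternative that stays with $W$.} $F^\vee=\tau_{\geq -1}R\Gamma(-;\mbb{R}/\mbb{Z})$ is a sheaf on $\on{fEt}_{BW}$, and $H^{\geq 2}(BW_U;\mbb{R})=0$. Therefore $U\mapsto \tau_{\leq -3}R\Gamma(BW_U;\mbb{Z})$ is a cohomologically bounded-below sheaf. By your torsion comparison its stalks are $\varinjlim_U H^{j-1}(\Gamma_U;\mbb{Q}/\mbb{Z})=0$, so it vanishes.

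\textbf{A smaller point in step 2.} You assert without argument that $K\to\Gamma$ is injective. This is true, but it needs the existence theorem. The existence theorem forces the inverse limit of the class groups along transfers to be trivial, because an extension of $\mbb{Z}$ by a profinite group is residually finite.
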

\begin{proof}
Suppose given a $W$ as in 2, corresponding to $\Gamma\twoheadrightarrow\wh{\mbb{Z}}$.  We recall the following equivalent form of the strict cohomological dimension $2$ hypothesis, see \cite{galcoh}: for every open subgroup $H\subset \Gamma$, we have
$$H^i(BH;\mbb{Q}/\mbb{Z})=0$$
for all $i\geq 2$.  (Actually, this characterizes strict cohomological dimension $\leq 2$, but if scd is $\leq 1$ then $\Gamma=\{1\}$ so there are no surjections to $\wh{\mbb{Z}}$.)  Moreover we can replace $\mbb{Q}/\mbb{Z}$ by $\mbb{R}/\mbb{Z}$ here because uniquely divisible groups have no higher cohomology.  This also passes to arbitrary closed subgroups by taking filtered colimits.

Now, consider the extension
$$1\to \Gamma^1\to\Gamma\to\wh{\mbb{Z}}\to 1.$$
In particular this induces an action of $\wh{\mbb{Z}}$ on $B\Gamma^1$ and hence on cohomology, and letting $\gamma\in\wh{\mbb{Z}}$ denote the generator we deduce that:
\begin{enumerate}
\item $H^i(B\Gamma^1;\mbb{R}/\mbb{Z})=0$ for $i\geq 2$
\item $H^1(B\Gamma^1;\mbb{R}/\mbb{Z})_{\gamma-1}=H^1(B\Gamma^1;\mbb{Q}/\mbb{Z})_{\gamma-1}=H^2(B\Gamma;\mbb{Q}/\mbb{Z})=0$;
\end{enumerate}

Thus for $W$, which fits into the extension
$$ 1\to \Gamma^1 \to W \to \mbb{Z} \to 1,$$
we have $H^i(BW;\mbb{R}/\mbb{Z})=0$ for $i\geq 2$.  Every open subgroup of $W$ of finite index is of the same form as $W$ so we can repeat this argument there to get the same vanishing. This already proves the last claim.  Moreover, the corresponding class formation has values on connected $U\in\on{fEt}_{BW}$ equal to the abelianization in locally compact Hausdorff groups of the corresponding open subgroup $H\subset W$ of finite index.  But since $W\twoheadrightarrow\mbb{Z}$ we deduce that $H\twoheadrightarrow N\mbb{Z}\simeq\mbb{Z}$ for some $N>0$ and hence the same for its abelianization, which shows that we are in situation 1.

Now let us go from 1 to 2.  From the fact that $H_1F(U)\to H_1F(\ast)$ has compact kernel, we deduce that the induced map from the $\mbb{Z}$-quotient of the former to the $\mbb{Z}$-quotient of the latter is injective, hence an isomorphism on rationalization.  Thus, if we push out $F$ via $H_1F$ mapping to its profinite completion, we get a new copresheaf $F'$ with $H_0\simeq\mbb{Z}$ and profinite $H_1$ and having a map from $F$ where the cofiber has uniquely divisible values, isomorphic to $\wh{\mbb{Z}}/\mbb{Z}[1]$ on connected objects, and where the copresheaf structure maps associated to maps between connected objects are isomorphisms.  This cofiber is thus a cosheaf, and therefore so is $F'$.  Thus we have produced a new refined class formation $F'$ for which $H_1F'(U)$ is the profinite completion of $H_1F(U)$ for all connected $U$.

Thus the underlying profinite group $\Gamma$ of our Galois category carries a condensed refined class formation with profinite $H_1$, and still satisfying the ``existence theorem'' axiom.  Let us show that this implies that $\Gamma$ has strict cohomological dimension 2 (or rather $\leq 2$).  Since the $H_1$ are profinite, from Theorem \ref{groupoidisgroup} we see that the corresponding Weil group is also profinite hence identifies with $\Gamma$ by Corollary \ref{weildeterminesrcf}.  Thus, by going backwards to refined class formations we see that the $\on{D}(\mbb{Z})$-valued sheaf
$$R\Gamma(-;\mbb{R}/\mbb{Z})$$
on $\on{fEt}_{B\Gamma}$ satisfies that its truncation $\tau_{\geq -1}$ is also a $\on{D}(\mbb{Z})$-valued sheaf.  We deduce that $\tau_{\geq -1}R\Gamma(-;\mbb{Q}/\mbb{Z})$ is likewise a sheaf.  Clearly its sheafified homology groups vanish outside degrees $-1$ and $0$ and gives $\mbb{Q}/\mbb{Z}$ in degree $0$.  Thus if we can show it vanishes in degree $-1$ as well, we will deduce that
$$\tau_{\geq -1}R\Gamma(-;\mbb{Q}/\mbb{Z}) = R\Gamma(-;\mbb{Q}/\mbb{Z})$$
and hence a fortiori we will have the required vanishing to show that $\Gamma$ has strict cohomological dimension 2.

However in degree -1 we get the presheaf sending $H\subset \Gamma$ to $\on{Hom}_{cont}(H;\mbb{Q}/\mbb{Z})$.  Since $H$ is profinite any such homomorphism is killed by passing to an open subgroup, showing we get vanishing in thet colimit as desired.

Thus we have seen that $\Gamma$ has strict cohomological dimension two.  Moreover $\Gamma\twoheadrightarrow \Gamma/\overline{[\Gamma,\Gamma]}\simeq \wh{H_1F(\ast)}\twoheadrightarrow \wh{\mbb{Z}}$ gives a surjective homomorphism.  Thus we get a $G=\Gamma\times_{\wh{\mbb{Z}}}\mbb{Z}$ as in 2 with an obvious map $W\to G$ where $W$ is the Weil group corresponding to our class formation.  Under the equivalence with condensed refined class formations this corresponds to a map of condensed refined class formations which by inspection is an isomorphism, thus completing the argument that 1 gives 2 and finishing the proof.
\end{proof}

\begin{remark}
Contained in the above analysis was the following: refined class formations of \emph{profinite} type (i.e.\ of compact type and such that the class group $A(U)$ is profinite for all $U$) are in 1-1 correspondence with profinite groupoids $B\Gamma$ such that $\Gamma$ is a profinite group of strict cohomological dimension $\leq 2$.  As mentioned, modulo language this is a theorem of Brumer, \cite{brumer}.
\end{remark}

\section{Weil-Moore anima}

We finished the previous section with a result (Proposition \ref{ztype}) showing that in the $\mbb{Z}$-case, refined class formations correspond to profinite groups of strict cohomological dimension 2 equipped with a surjection to $\wh{\mbb{Z}}$.  Part of the reason this is so nice is that we get the refined class formation directly from the $\mbb{R}/\mbb{Z}$-cohomology (of finite etale covers) without any truncation, the key point being that this cohomology automatically vanishes outside cohomological degrees $0$ and $1$.

This fails drastically in the $\mbb{R}$-case.

\begin{example}
Consider the refined class formation for the local field $\mbb{C}$: the Galois category is trivial, and the corresponding Weil groupoid is $B\mbb{C}^\times$.  The $\mbb{R}/\mbb{Z}$-cohomology of $B\mbb{C}^\times$, in degrees $\geq 2$, matches the $\mbb{Z}$-cohomology up to a shift by Lemma \ref{cohrmodz}, which matches the usual $\mbb{Z}$-cohomology of $BS^1\sim \mbb{C}\mbb{P}^\infty$ in algebraic topology by Lemma \ref{cohliegp}.  Thus we have nonvanishing of $\mbb{R}/\mbb{Z}$-cohomology in all odd degrees.
\end{example}

We view this as a defect: our perspective in this paper is that all reasonable class formations should arise via the \emph{untruncated} cohomology of some object; this should be the ultimate explanation for why the (co)sheaf property holds on the full derived level even though the refined class formation only lives in two degrees.  In the $\mbb{Z}$-cases this object is the Weil groupoid, but in the other cases we need to add higher homotopy groups.  We will use the formalism for doing this from Section \ref{mooresection}, leading to the following definition.

\begin{definition}
Let $C$ be a refined class formation which is either of $\mbb{Z}$-type, of $\mbb{R}$-type, or of compact type (see Definition \ref{zrcompact}), with associated Weil groupoid $\mc{W}_C\simeq BW$ from Corollary \ref{weildeterminesrcf}.  A \emph{Weil-Moore} anima for $C$ is a condensed anima $X$ equipped with an isomorphism
$$t:\tau_{\leq 1}X\simeq \mc{W}_C$$
such that $(X,t)$ is a Moore anima in the sense of Definition \ref{moore}, i.e. (to recall):
\begin{enumerate}
\item $\pi_iX$ is compact Hausdorff for all $i\geq 2$;
\item $H^i(X^\circ;\mbb{R}/\mbb{Z})=0$ for all $i\geq 2$, where $X^\circ = X\times_{BW}BW^\circ$ with $W^\circ$ the connected component of the identity in $W$.
\end{enumerate}
\end{definition}

We now show that if $(X,t)$ is a Weil-Moore anima, then the condensed anima $X$ by itself canonically recovers the class formation via untruncated cohomology, as desired.

\begin{lemma}\label{mooreisweilmoore}
Let $C$ be a refined class formation which is either of $\mbb{R}$-type, of $\mbb{Z}$-type, or of compact type, and suppose given a Weil-Moore anima $(X,t)$ for $C$.

Then:
\begin{enumerate}
\item for every finite etale $Y\to X$, we have $H^i(Y;\mbb{R}/\mbb{Z})=0$ for all $i\geq 2$;
\item the refined class formation $C$ is recovered as
$$C=(\on{fEt}_X,R\Gamma(-;\mbb{R}/\mbb{Z})^\vee,\epsilon)$$
where $\epsilon$ is induced by $(-)\to \ast$.
\end{enumerate}
\end{lemma}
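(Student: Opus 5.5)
The plan is to compare the cohomology of a finite etale cover $Y\to X$ with that of $Y^\circ$ via the finite group $W_H/W^\circ$-type descent, after first reducing to connected $Y$.

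\emph{Step 1: reduce to a single cover.} Finite etale covers of $X$ descend uniquely to $\tau_{\leq 1}X\simeq BW$, so a connected finite etale $Y\to X$ has the form $Y=X\times_{BW}BH$ for an open subgroup $H\subset W$ of finite index. Then $\tau_{\leq 1}Y\simeq BH$ and $\pi_iY=\pi_iX$ is compact Hausdorff for $i\geq 2$. Since $W^\circ$ is connected, it is contained in $H$, so $H^\circ=W^\circ$ and $Y^\circ=Y\times_{BH}BW^\circ\simeq X^\circ$. Hence $(Y,\on{id})$ is again a Moore anima for $H$, and it suffices to prove that $H^i(X;\mbb{R}/\mbb{Z})=0$ for $i\geq 2$ whenever $X$ is a Moore anima for a Weil group $W$ of one of the three types.

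\emph{Step 2: descent from $X^\circ$.} We have $X=X^\circ/(W/W^\circ)$, so
$$R\Gamma(X;\mbb{R}/\mbb{Z})\simeq R\Gamma(B(W/W^\circ);R\Gamma(X^\circ;\mbb{R}/\mbb{Z})).$$
By the Moore hypothesis, $R\Gamma(X^\circ;\mbb{R}/\mbb{Z})=c_W=\tau_{\geq -1}R\Gamma(BW^\circ;\mbb{R}/\mbb{Z})$. The same formula with $X$ replaced by $BW$ shows that $R\Gamma(X;\mbb{R}/\mbb{Z})\simeq\tau_{\geq -1}$ of $R\Gamma(BW;\mbb{R}/\mbb{Z})$ computed through the spectral sequence for $BW^\circ\to BW\to B(W/W^\circ)$ with its higher rows removed. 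The only remaining terms are $H^p(W/W^\circ;\mbb{R}/\mbb{Z})$ and $H^p(W/W^\circ;\on{Hom}(W^\circ,\mbb{R}/\mbb{Z}))$.

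\emph{Step 3: identify this with the truncated Weil cohomology.} In the $\mbb{Z}$ and compact cases, $W/W^\circ$ is totally disconnected and almost compact. In the $\mbb{R}$ case $W^\circ\supset\mbb{R}$, so $W/W^\circ$ is profinite. In the $\mbb{Z}$-case we could instead invoke Proposition \ref{ztype} directly. A cleaner uniform route is the class formation itself: the refined class formation gives the cosheaf $F=(\tau_{\geq -1}R\Gamma(-\times_{B\Gamma}BW;\mbb{R}/\mbb{Z}))^\vee$. What needs to be shown is that the sheaf $U\mapsto R\Gamma(U\times_{BW}X;\mbb{R}/\mbb{Z})$ on $\on{fEt}_X$ agrees with $\tau_{\geq -1}$ of the Weil cohomology. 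I would prove this as follows. By Steps 1 and 2, $U\mapsto R\Gamma(U\times X;\mbb{R}/\mbb{Z})$ is a genuine $\on{D}(\on{CondAb})$-valued sheaf on $\on{fEt}_X$, because cohomology satisfies descent. There is a natural map from it to $R\Gamma(U\times BW;\mbb{R}/\mbb{Z})$, and after applying $\tau_{\geq -1}$ this map is an isomorphism, since $X\to BW$ is $2$-connected. The real content is the vanishing of $H^{\geq 2}$ on $X$ and its covers.

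\emph{Step 4: the vanishing, which is the main obstacle.} Write $\mc{F}(U)=R\Gamma(U\times X;\mbb{R}/\mbb{Z})$ and let $M=\mc{F}(X')$ for a Galois cover $X'$ with group $G$. The task is to show that $\tau_{\leq -2}\mc{F}=0$. I would use Tate--Nakayama, Lemma \ref{uniformbound}. The truncation $\tau_{\geq -1}\mc{F}$ is the Pontryagin dual of the refined class formation, which is a sheaf, and its values are t-bounded, so its $G$-Tate cohomology vanishes. The cofiber $\tau_{\leq -2}\mc{F}$ is then also a sheaf. Its values are discrete by Corollary \ref{cohrmodz}, and its sheafified homology is computed on $X^\circ$ stalks, which are zero by the Moore hypothesis (condition 2). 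Because filtered colimits over covers compute $R\Gamma(X^\circ)$ up to the $W/W^\circ$-descent, a sheaf with vanishing stalks and a compatible bounded-below structure should vanish. For the profinite quotient $W/W^\circ$ one argues via Lemma \ref{compareexts} and continuity (Proposition \ref{cohachaus}(1)). I expect this stalk argument, especially controlling unbounded degrees, to be the delicate point. Part 2 then follows immediately: once $H^{\geq 2}$ vanishes, $R\Gamma(-;\mbb{R}/\mbb{Z})=\tau_{\geq -1}R\Gamma(-;\mbb{R}/\mbb{Z})$, and the previous theorem together with derived Pontryagin duality (Corollary \ref{dpd}) identifies the dual of this cosheaf with $C$.
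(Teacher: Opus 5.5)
Your skeleton for compact type is the paper's. $\mc F$ is a sheaf by descent. $\tau_{\geq -1}\mc F$ is a sheaf because, via the $2$-connected map $X\to BW$, it is the Pontryagin dual of the cosheaf of $C$. So the cofiber $\tau_{\leq -2}\mc F$ is a sheaf concentrated in cohomological degrees $\geq 2$, and such a sheaf vanishes as soon as its stalks do. Tate--Nakayama is not needed here. Since this sheaf is bounded below, there is also no issue with unbounded degrees. In the $\mbb{Z}$-type, your fallback is exactly the paper's argument: Step 2 with $W^\circ=1$ gives $R\Gamma(X;\mbb{R}/\mbb{Z})\simeq R\Gamma(BW;\mbb{R}/\mbb{Z})$, and then Proposition \ref{ztype} applies.

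The gap is the step you pass over: that the stalks of $\mc F$ are $R\Gamma(X^\circ;\mbb{R}/\mbb{Z})$. This is where the Moore hypothesis does its work, and it needs a genuine continuity statement.

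\emph{Compact type.} The identification holds because $W^\circ$ is the intersection of the open subgroups $H$, so $X^\circ\simeq\varprojlim_H X\times_{BW}BH$. Lemma \ref{invlimchaus}, which applies to inverse limits of condensed anima with compact Hausdorff homotopy, then gives $R\Gamma(X^\circ;\mbb{R}/\mbb{Z})\simeq\varinjlim_H R\Gamma(X\times_{BW}BH;\mbb{R}/\mbb{Z})$. Lemma \ref{compareexts} plays no role.

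\emph{$\mbb{Z}$-type.} Your ``uniform route'' is not uniform, because here the identification is false. For $W=\mbb{Z}$ one has $X^\circ=\ast$, yet the stalk of $H^1$ is $\varinjlim_n\on{Hom}(n\mbb{Z},\mbb{R}/\mbb{Z})\cong\mbb{R}/\mbb{Q}\neq 0$. So in this case you really must use Proposition \ref{ztype}.

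\emph{$\mbb{R}$-type.} Lemma \ref{invlimchaus} does not apply, since $\pi_1$ is not compact, and you give no substitute. The paper handles this case by reduction. It builds a compact-type refined class formation $C'$ with $\mc W_{C'}\to\mc W_C\to B\mbb{R}$ and pulls $X$ back to a Weil--Moore anima $X'$ for $C'$. It then uses that the $\mbb{R}/\mbb{Z}$-cohomology of a cover $Y$ is the $B\mbb{R}$-cohomology of that of $Y'$, which is discrete in positive degrees.

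If you want to keep your uniform stalk argument in the $\mbb{R}$-type, you have to supply the identification yourself. For instance:
\begin{enumerate}
\item Fiber each $X\times_{BW}BH$ over $B(W/K)\simeq B\mbb{R}$, where $K$ is the maximal compact subgroup.
\item The fibers have compact homotopy, and their limit is the fiber of $X^\circ\to B\mbb{R}$ (use $\bigcap_H (H\cap K)=K^\circ$), so Lemma \ref{invlimchaus} applies to them.
\item Use that $R\Gamma(B\mbb{R};-)$ commutes with filtered colimits of coconnective coefficients, which follows from Proposition \ref{cohachaus} by truncation.
\end{enumerate}
That would avoid choosing $C'$, but it has to be argued rather than asserted.
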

\begin{proof}
First we show that 1 implies 2.  By Corollary \ref{weildeterminesrcf} the class formation $C$ is recovered from $\mc{W}_C$ in a similar way but by taking cohomology truncated to degrees $\geq -1$.  This truncated cohomology is the same for $\mc{W}_C$ as for $X$ because of the datum $t$.  On the other hand by 1 the truncation is irrelevant for $X$, so this gives 2.

Now let us prove 1.  First suppose we are in the $\mbb{Z}$-type case.  Then the associated Weil group $W$ is totally disconnected, so from Example \ref{tdmoore} we have that every Moore anima $X$ for $W$ satisfies $X\overset{\sim}{\rightarrow} BW$.  Thus 1 follows from Proposition \ref{ztype}.

Next suppose we are in the compact type case.  Then by the basic theory of compact Hausdorff groups, $W^\circ$ is the intersection of all open subgroups $H$ of $W$.  It follows that
$$X^\circ = \varprojlim_H X_H$$
where $X^\circ=X\times_{BW}BW^\circ$ and $X_H=X\times_{BW}BH$. On the level of $\mbb{R}/\mbb{Z}$ cohomology, this means that the cohomology of $X^\circ$ is the filtered colimit of the cohomologies of the $X_H$ (Lemma \ref{invlimchaus}).  It follows that the stalk of the sheaf
$$(Y\to X)\mapsto R\Gamma(Y;\mbb{R}/\mbb{Z})$$
on the Galois category $\on{fEt}_X$ identifies with $R\Gamma(X^\circ;\mbb{R}/\mbb{Z})$.  (Let us work with the underlying $\on{D}(\mbb{Z})$-valued sheaf here: the cohomology is discrete in positive degrees anyway, by Lemma \ref{cohrmodz}.) Similarly the stalk of the truncation $\tau_{\geq -1}R\Gamma(-;\mbb{R}/\mbb{Z})$ is given by $\tau_{\geq -1}\Gamma(X^\circ;\mbb{R}/\mbb{Z})$.  The Moore anima hypothesis therefore yields that the map from truncated cohomology to untruncated cohomology is an isomorphism on stalks.  However, both of these are (coconnective) sheaves: the untruncated cohomology by definition, and the truncated one because it is Pontryagin dual to the cosheaf which gives our refined class formation $C$, see Corollary \ref{weildeterminesrcf}.  Thus the map from truncated to untruncated cohomology is an isomorphism, proving 1.

Finally, suppose we are in the $\mbb{R}$-type case.  Let $F:\mc{C}\to\on{D}(\on{CondAb})$ be the cosheaf giving the refined class formation $C$.  The axioms imply that for any connected $U\in\mc{C}$, the map
$$H_1F(U) \to H_1F(\ast)$$
induces an isomorphism on the $\mbb{R}$-quotients of these two groups.  Thus we can identify all the $\mbb{R}$-quotients with a single $V\simeq\mbb{R}$.  After a choice of nullhomotopy of an obstruction map
$$\frac{1}{\#\mc{C}}\mbb{Z} \to R\Gamma(\mc{C};V[2])\simeq V[2],$$
we can make a refined class formation of compact type $C'$ on the same Galois category, with $F'\to F$ inducing on $H_1$ the inclusion of the maximal compact subgroup.  On the level of Weil groupoids, this gives a fiber sequence
$$\mc{W}_{C'} \to \mc{W}_C \to B\mbb{R}.$$
Given our Weil-Moore anima $X\to \mc{W}_C$, we can therefore pull it back to $X'\to\mc{W}_{C'}$, and this will be a Weil-Moore anima for $\mc{W}_{C'}$ (indeed, we get a similar fiber sequence for $(X')^\circ$ and $X^\circ$, then we can use that $BV\to\ast$ induces an isomorphism on cohomology with discrete coefficients).  By the compact-type case just established, this shows that for $Y\to X$ finite etale, the pullback $Y'\to X'$ has the required vanishing in $\mbb{R}/\mbb{Z}$-cohomology.  But the $\mbb{R}/\mbb{Z}$-cohomology of $Y$ is the cohomology over $BV$ of the $\mbb{R}/\mbb{Z}$-cohomology of $Y'$, which is discrete in positive degrees by Lemma \ref{cohrmodz}, so we successfully reduce to the compact-type case.
\end{proof}

Now we recall that in Section \ref{mooresection}, we built an obstruction theory for constructing Moore anima.  The takeaway was the following, Corollary \ref{mooreconclusion}: suppose $G$ is an almost compact group satisfying the following two assumptions:
\begin{enumerate}
\item $H^2(BG^\circ;\mbb{R}/\mbb{Z})=0$;
\item For all discrete $G/G^\circ$-modules $M$ and all $i\geq 3$ we have
$$\on{Ext}^i_{G/G^\circ}(M,c_G)=0$$
where
$$c_G = \tau_{\geq -1} R\Gamma(BG^\circ;\mbb{R}/\mbb{Z}).$$
\end{enumerate}
Then $G$ admits a Moore anima, unique up to isomorphism.

This motivates the following, which is our main technical observation.

\begin{proposition}\label{extvanishing}
Let $C$ be a refined class formation which is either of $\mbb{Z}$-type, of $\mbb{R}$-type, or of compact type, with associated Weil groupoid $\mc{W}_C\simeq BW$.  If $C$ is of $\mbb{Z}$-type, assume furthermore the following technical hypothesis: the kernel $W^1$ of the corresponding surjective homomorphism
$$W \twoheadrightarrow \mbb{Z},$$
which is a profinite group (see Proposition \ref{ztype}), has cohomological dimension $\leq 1$ with respect to torsion coefficients.

Then for any discrete $W/W^\circ$-module $M$ and any $i\geq 3$, we have
$$\on{Ext}^i_{W/W^\circ}(M,c_W)=0.$$
\end{proposition}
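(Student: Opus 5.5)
The plan is to split according to the three types in Definition \ref{zrcompact}. Throughout, write $\Gamma:=W/W^\circ$. We use the fiber sequence
$$(W^\circ)^\vee[-1]\to c_W\to \mbb{R}/\mbb{Z},\qquad (W^\circ)^\vee:=\on{Hom}_{cont}(W^\circ,\mbb{R}/\mbb{Z}),$$
but only to organize the computation. Vanishing of $\on{Ext}^{\geq 3}$ will not hold term by term. The point is that the connecting map is governed by the class formation, exactly as in Tate--Nakayama.

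\textbf{$\mbb{Z}$-type.} Here $W$ is totally disconnected, so $\Gamma=W$ and $c_W=\mbb{R}/\mbb{Z}$. By Proposition \ref{ztype}, $W=W^1\rtimes\mbb{Z}$ with $W^1$ profinite, and descent along $BW^1\to BW\to B\mbb{Z}$ gives
$$\on{RHom}_W(M,\mbb{R}/\mbb{Z})=\on{fib}\bigl(1-\gamma\text{ on }\on{RHom}_{W^1}(M,\mbb{R}/\mbb{Z})\bigr).$$
This raises the top degree by at most one, so it suffices to show $\on{Ext}^i_{W^1}(M,\mbb{R}/\mbb{Z})=0$ for $i\geq 2$.
\begin{enumerate}
\item Write $M$ as a filtered colimit of finitely generated submodules and use Lemma \ref{compareexts} to pass to classical Ext.
\item Split $\mbb{R}/\mbb{Z}=\mbb{Q}/\mbb{Z}\oplus(\text{uniquely divisible})$. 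The uniquely divisible part has no higher cohomology.
\item Since $\mbb{Q}/\mbb{Z}$ is divisible, $\on{Ext}^i_{W^1}(M,\mbb{Q}/\mbb{Z})=H^i(W^1;\on{Hom}(M,\mbb{Q}/\mbb{Z}))$ for finitely generated $M$. This is cohomology with coefficients in an inverse limit of torsion modules, which vanishes in degrees $\geq 2$ by the hypothesis $\on{cd}W^1\leq 1$ together with Corollary \ref{rlim}, which controls the $\varprojlim^1$ term.
\end{enumerate}

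\textbf{Compact type.} This is the core case. The key input is that $c_W$ is the stalk of the sheaf $\mc{F}^\vee:=\tau_{\geq -1}R\Gamma(-;\mbb{R}/\mbb{Z})$ on $\on{fEt}_{BW}$. Indeed, $BW^\circ=\varprojlim_H BH$ over open $H\subset W$, and cohomology commutes with this limit by Lemma \ref{invlimchaus}. By Corollary \ref{weildeterminesrcf}, $\mc{F}^\vee$ is the Pontryagin dual of the refined class formation cosheaf $F$, so it is a genuine $\on{D}$-valued sheaf living in degrees $[-1,0]$.

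The first consequence is that for $M=\mbb{Z}[\Gamma/H]$ we get $\on{RHom}_\Gamma(M,c_W)=\mc{F}^\vee(H)$, which vanishes in positive cohomological degrees. The hard part is passing from these permutation modules to arbitrary discrete $M$, since permutation resolutions are infinite. My plan is as follows.
\begin{enumerate}
\item Reduce by filtered colimits to $M$ finitely generated; Proposition \ref{cohachaus} part 1 lets $\on{Ext}$ commute appropriately.
\item Write $M$ as an extension of a torsion-free module by a finite module. Both pieces become finite after restricting to an open subgroup, so by a transfer/corestriction argument it suffices to treat $M$ finite or $M=\mbb{Z}[\Gamma/H]$.
\item For finite $M$ killed by $n$, rewrite $\on{Ext}^i_\Gamma(M,c_W)$ as $\on{Ext}^{i-1}_\Gamma(M,c_W\otimes^L\mbb{Z}/n)$. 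Then prove that $\tau_{\geq -1}R\Gamma(-;\mbb{Z}/n)$ behaves like a cohomologically trivial sheaf: its Tate cohomology vanishes for all finite quotients. This follows from Lemma \ref{uniformbound} and Proposition \ref{trissheaf}, using the 2-periodicity argument already carried out in Theorem \ref{cfvsrcf}.
\item Conclude from this cohomological triviality that Ext into it vanishes in degrees $\geq 2$, giving the bound $i\geq 3$ for $c_W$.
\end{enumerate}
I expect step 3 to be the main obstacle. If it resists, a fallback is to dualize via Corollary \ref{equidpd} to $\on{Ext}^i_\Gamma(c_W^\vee,M^\vee)$, where $c_W^\vee$ is the ``stalk'' of the class formation cosheaf. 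One would then argue with the Tate--Nakayama cup-product isomorphism, as in the proof of Lemma \ref{condautomatic}.

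\textbf{$\mbb{R}$-type.} Reduce to the compact case exactly as at the end of the proof of Lemma \ref{mooreisweilmoore}. Choose a refined class formation $C'$ of compact type with $\mc{W}_{C'}\to\mc{W}_C\to B\mbb{R}$. Then $W'\subset W$ has the same $\Gamma$, and $W^\circ\simeq\mbb{R}\times W'^\circ$ by Lemma \ref{acompstructure}. Hence $c_W$ and $c_{W'}$ differ by the term $\mbb{R}^\vee[-1]\simeq\mbb{R}[-1]$, which is a finite-dimensional real vector space with $\Gamma$-action. $\on{Ext}^{>0}$ from discrete modules into such a term vanishes: the proof of Lemma \ref{extagainstrvs} gives this (a Haar-measure splitting on finite quotients plus a colimit), and Theorem \ref{extlchaus} gives vanishing of higher Ext against vector spaces. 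So the $\mbb{R}$-type statement follows from the compact-type one.
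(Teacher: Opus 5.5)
The $\mbb{R}$-type reduction matches the paper. Identifying $c_W$ as the stalk of the sheaf $\tau_{\geq -1}R\Gamma(-;\mbb{R}/\mbb{Z})$ is also the right input. However, the compact case has a genuine gap, and it sits in your step 4, not step 3.

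Tate-acyclicity of the values of that sheaf, or of their reductions mod $n$, is automatic for any sheaf with bounded values (Lemma \ref{uniformbound}, as in Proposition \ref{trissheaf}). It is only a reformulation of your permutation-module computation. The real difficulty is passing from this to $\on{Ext}$-vanishing against \emph{arbitrary} discrete $M$, which you assert without argument. That passage needs an actual theorem: a cohomologically trivial discrete module over a profinite group has injective dimension $\leq 1$ (Mart\'{\i}nez, the injective counterpart of Nakayama--Rim). The theorem must also be applied to a suitable \emph{module}. The cohomology groups of $c_W$ or of $c_W\otimes^L\mbb{Z}/n$ (e.g.\ $\mbb{R}/\mbb{Z}$, or $\mbb{Z}/n$ with trivial action) are not cohomologically trivial. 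Over $\mbb{Z}/n[\Gamma]$ you would additionally need a flatness hypothesis: already for $\Gamma=1$, the module $\mbb{Z}/p$ over $\mbb{Z}/p^2$ has infinite injective dimension.

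The paper's mechanism runs as follows. Replace $c_W$ by the discrete $d_W=\tau_{\geq -2}R\Gamma(BW^\circ;\mbb{Z})$, which is allowed by the cofiber sequence $d_W\to\mbb{R}\to c_W$ and Lemma \ref{extagainstrvs}. Using Lemma \ref{compareexts}, represent $d_W$ by $[I^0\to I^1\to\ker(d^2)]$ with the $I^j$ injective. The sheaf property then says exactly that the single module $\ker(d^2)$ is cohomologically trivial. Mart\'{\i}nez gives it a two-term injective resolution, so $\on{Ext}^{\geq 4}_{\Gamma}(M,d_W)=0$ for all $M$ at once.

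This also makes your steps 1--2 unnecessary, and as written they fail. Proposition \ref{cohachaus}(1) is about filtered colimits in the coefficients, whereas $\on{Ext}_\Gamma(\varinjlim M_j,-)$ becomes a derived limit. The transfer argument only shows that $\on{Ext}^i_\Gamma(M,c_W)$ is killed by $[\Gamma:U]$, not that it vanishes.

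In the $\mbb{Z}$-type case your route (finitely generated $M$ plus $\on{cd}(W^1)\leq 1$) is a viable and more elementary alternative to the paper's. The paper instead shows $\ker(d^2)$ is already injective because its fixed points are divisible. Your version needs three repairs:
\begin{enumerate}
\item $\mbb{R}/\mbb{Z}$ is not discrete, so Lemma \ref{compareexts} does not apply, and $\mbb{R}/\mbb{Z}\cong\mbb{Q}/\mbb{Z}\oplus V$ is not a condensed splitting. Use instead $\on{Ext}^i_{W^1}(M,\mbb{R}/\mbb{Z})\cong\on{Ext}^{i+1}_{W^1}(M,\mbb{Z})$ for $i\geq 1$, from Lemma \ref{extagainstrvs}, together with $\on{Ext}^{\geq 1}_{W^1}(M,\mbb{Q})=0$ for finitely generated $M$.
\item For finitely generated $M$, $\on{Hom}(M,\mbb{Q}/\mbb{Z})$ is a discrete torsion module, not an inverse limit, so $\on{cd}(W^1)\leq 1$ applies directly.
\item The extension to arbitrary $M$ should go through Baer's criterion applied to the second cosyzygy of $\mbb{Z}$. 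Corollary \ref{rlim} does not help, since the groups $\on{Ext}^1_{W^1}(M_j,\mbb{R}/\mbb{Z})$ are discrete and generally non-compact.
\end{enumerate}
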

\begin{proof}
First let's consider the case where $C$ is of compact type, so that $W$ is a compact Hausdorff group.

Note that $c_W$ is quite close to being discrete: it is $\mbb{R}/\mbb{Z}$ in degree $0$ and Pontryagin dual to the (abelianization of the) compact $W^\circ$ in degree $-1$.  It will be technically convenient to instead work with the fully-discrete (Lemma \ref{cohchaus})
$$d_W := \tau_{\geq -2}R\Gamma(BW^\circ;\mbb{Z}).$$
Note by Lemma \ref{cohrmodz} that there is a natural cofiber sequence
$$R\Gamma(BW^\circ;\mbb{Z}) \to \mbb{R} \to R\Gamma(BW^\circ;\mbb{R}/\mbb{Z}),$$
and we can truncate this to get a cofiber sequence
$$d_W \to \mbb{R} \to c_W.$$

Translating from $c_W$ to $d_W$, we claim it suffices to prove
$$\on{Ext}_{W/W^\circ}^i(M,d_W)=0$$
for all $i\geq 4$ and all discrete $M$ instead.  Indeed, we have
$$\on{Ext}^i_{W/W^\circ}(M;\mbb{R})=0$$
for $i>0$ by Lemma \ref{extagainstrvs}.

Note that $W/W^\circ$ is profinite, since $W$ is compact Hausdorff.  Since $R\Gamma(BW^\circ;\mbb{Z})\in\on{D}_{W/W^\circ}(\on{CondAb})$ is discrete, using Lemma \ref{compareexts} we can represent it by a complex
$$[I^0 \overset{d^0}{\rightarrow} I^1 \overset{d^1}{\rightarrow} I^2 \overset{d^2}{\rightarrow} I^3 \to \dots]$$
of injective discrete $W/W^\circ$-modules.  Thus the truncation $d_W$ is represented by
$$[I^0 \overset{d^0}{\rightarrow} I^1 \overset{d^1}{\rightarrow} \on{ker}(d^2)].$$

Now, let us agree to identify every discrete $W/W^\circ$-module $I$ with its corresponding sheaf of abelian groups on $\on{fEt}_{BW}$.  Thus, if $H\subset W$ is an open subgroup, corresponding to the finite etale $BH\to BW$, then
$$I(BH) = I^H,$$
the $H$-fixed point subgroup of $I$.
For $X\to BW$ finite etale, it follows that
$$[I^0(X) \to I^1(X) \to I^2(X) \to \ldots]$$
represents $R\Gamma(X;\mbb{Z})$, and
$$[(I^0)(X) \to (I^1)(X) \to \on{ker}(d^2)(X)]$$
represents $\tau_{\geq -2} R\Gamma(X;\mbb{Z})$.  On the other hand, exactly as above we have the fiber sequence
$$\tau_{\geq -2} R\Gamma(X;\mbb{Z}) \to \mbb{R} \to \tau_{\geq -1} R\Gamma(X;\mbb{R}/\mbb{Z}).$$
The third term $\tau_{\geq -1}R\Gamma(-;\mbb{R}/\mbb{Z})$ gives a sheaf on $\on{fEt}_{BW}$ with coefficients in $\on{D}(\on{CondAb})$ because it is Pontryagin dual to our refined class formation.  The middle term obviously does as well, by vanishing of higher cohomology of profinite groups with $\mbb{R}$-coefficients.  Thus the first term also gives a sheaf.  Since injective abelian group sheaves sitting in degree $0$ are also sheaves with values in $\on{D}(\mbb{Z})$ (classically, this is the fact that injective sheaves have vanishing higher cohomology), we deduce from this the following conclusion: $\on{ker}(d^2)[0]$ is a sheaf with values in $\on{D}(\mbb{Z})$.  In classical terms, this means that $\on{ker}(d^2)$ is \emph{cohomologically trivial} $W/W^\circ$-module.  It therefore follows by the main result of \cite{martinez} that it admits an injective resolution with just two terms:
$$0 \to \on{ker}(d^2) \to J^0 \to J^1 \to 0.$$
We can splice this on to our complex representing $d_W$ to get a complex of injectives
$$[I^0 \to I^1 \to J^0 \to J^1]$$
which represents $d_W$.  The required Ext vanishing follows.

Now let us handle the $\mbb{R}$-case.  As in the proof of Lemma \ref{mooreisweilmoore}, we can make a compact-type refined class formation $C'$ such that the associated Weil group $W'$ satisfies
$$1 \to W' \to W \to \mbb{R} \to 1.$$
It follows that $W/W^\circ \simeq W'/(W')^\circ$ and
$$c_W \simeq c_{W'}\oplus \mbb{R}[-1].$$
As above we already showed Ext vanishing with value $\mbb{R}$ in all positive degrees, this means that the claim for $C$ follows from the claim for $C'$, proven above.

Finally, consider the $\mbb{Z}$-case.  We first note that the compact-type case treated above admits a refinement when applied to the refined class formation corresponding to the profinite group $\Gamma=W^1$. Namely, from the long exact sequence associated to $\cdot p:\mbb{Z} \to \mbb{Z}$ for all prime numbers $p$ we deduce from our extra assumption the following: $H^2(BH;\mbb{Z})$ is a divisible abelian group for all open subgroups $H\subset \Gamma$.   Then $\on{ker}(d^2)^H$ is an extension of divisible groups, hence is also divisible. So not only is $\on{ker}(d^2)$ cohomologically trivial, but its $H$-fixed points are divisible for all $H$.  Thus again by \cite{martinez} we deduce that $\on{ker}(d^2)$ is on the nose injective, with no need of further resolution.  Therefore we do one better in our Ext vanishing result for $\Gamma$.  Returning to our Weil group $W$, note that since $W$ is totally disconnected we have $W^\circ=\{1\}$ and $c_W=\mbb{R}/\mbb{Z}$, and similarly for $W^1$.  On the other hand
$$\on{RHom}_W(M,\mbb{R}/\mbb{Z})\simeq R\Gamma(B\mbb{Z};\on{RHom}_{W^1}(M,\mbb{R}/\mbb{Z})),$$
so as $B\mbb{Z}$-cohomology makes degrees go down at most by one, using that we saved one degree above we get the conclusion.
\end{proof}

\begin{remark}
The extra assumption in the $\mbb{Z}$-type case, that $W^1$ have cohomological dimension 1 with respect to torsion coefficients, is certainly satisfied in all number theoretic examples.  Here is one way of verifying it.  Note that it is equivalent to ask that $H^i(BH;\mbb{F}_p)=0$ for all $i>1$ all open subgroups $H\subset W^1$ and all primes $p$.  We know this vanishing for $i>2$ from Proposition \ref{ztype}, so it's just about showing $H^2(BH;\mbb{F}_p)=0$ for all $p$ (which is exactly what we used in the above proof anyway).  Every open subgroup $H\subset W^1$ arises from some open subgroup of finite index $H'\subset W$ by passing ``up the unramified tower'', formed by the preimages $H'_N$ of $N\mbb{Z}\subset\mbb{Z}$ under $H' \twoheadrightarrow \mbb{Z}$.  On the other hand the $\mbb{F}_p$-cohomology of $BH'$ is calculated by
$$\on{RHom}_{\mbb{Z}}(F(BH'),\mbb{F}_p)$$
where $F$ is the cosheaf giving our refined class formation, by Proposition \ref{ztype}.  We deduce the following: $H^2(BH;\mbb{F}_p)$ is the Pontryagin dual of the inverse limit

$$\varprojlim_N H_1F(BH'_N)[p]$$

\noindent of the $p$-torsion in the corresponding ``class groups'' along the ``norm maps''.  For example, in the case of nonarchimedean local fields, we find that we need to check that the inverse limit of $\mu_p(F_N)$ under the norm maps gives $0$ as $F_N$ runs through the unramified extensions of any nonarchimedean field $F$.  This is obvious as extensions of degree $p$ induce the $0$ map, and there are always more and more of those.
\end{remark}

Finally we can prove the main theorem.

\begin{theorem}
Let $C$ be a condensed refined class formation which is either of $\mbb{Z}$-type, of $\mbb{R}$-type, or of compact type.  Then there exists a Weil-Moore anima $(X,t)$ for $C$, and moreover such a Weil-Moore anima is unique up to isomorphism.

In the $\mbb{Z}$-type case and the profinite case it is completely unique (contractible space of choices).
\end{theorem}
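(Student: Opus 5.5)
The plan is to apply Corollary \ref{mooreconclusion} to the Weil group $W$, where $\mc{W}_C\simeq BW$ by Corollary \ref{weildeterminesrcf}. Since a Weil-Moore anima for $C$ is by definition a Moore anima for $W$ (the datum $t$ being the identification $\tau_{\leq 1}X\simeq BW$), it suffices to verify the two hypotheses of that corollary. First, $W$ is almost compact. In the compact case this is immediate. In the $\mbb{R}$-type case, $W$ is an extension of $H_1F(\ast)$ by a compact group (Proposition \ref{groupoidisgroup}), and $H_1F(\ast)$ is an extension of $\mbb{R}$ by a compact group, so the preimage of the compact part is a compact normal subgroup with quotient $\mbb{R}$. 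In the $\mbb{Z}$-type case, Proposition \ref{ztype} gives $W\twoheadrightarrow\mbb{Z}$ with profinite kernel.

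For hypothesis 1, $H^2(BW^\circ;\mbb{R}/\mbb{Z})=0$, I would use Lemma \ref{moore1}, which reduces it to $\pi_1(K^\circ)_{tors}=0$, and then Proposition \ref{moore1ab}, which requires $W^\circ$ to be abelian. To see that $W^\circ$ is abelian, note that for every open normal subgroup $N\subset W$ of finite index, $W^\circ\subset N$, so the image of $W^\circ$ in $W/N^c$ lies in the abelian group $N/N^c$. By condition 2(a) of Corollary \ref{weildeterminesrcf}, $W=\varprojlim_N W/N^c$, and commutators of $W^\circ$ die in every term, hence vanish. (In the $\mbb{Z}$-type case $W^\circ$ is trivial anyway.) Hypothesis 2 is exactly Proposition \ref{extvanishing}. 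In the $\mbb{Z}$-type case that proposition carries the technical hypothesis that $W^1$ has cohomological dimension $\leq 1$ with torsion coefficients, and this is the step I expect to be the main obstacle. Two routes are available. One is to sidestep the obstruction theory entirely: $W$ is totally disconnected, so Example \ref{tdmoore} shows directly that $BW$ is the unique Moore anima with contractible space of choices. The other is to verify the hypothesis via the argument in the remark following Proposition \ref{extvanishing}. The first route is cleaner and gives the stronger statement, so I would use it, and it also handles existence in that case.

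For the sharper uniqueness statements, the $\mbb{Z}$-type case is handled above. In the profinite case, $W^\circ=1$ again, so Example \ref{tdmoore} gives contractibility. In the general compact and $\mbb{R}$ cases, the proof of Corollary \ref{mooreconclusion} gives only that $\pi_0$ of the fibers vanishes, so we get uniqueness up to isomorphism, as stated. Finally I would note that existence and uniqueness for $\mc{M}(BW)$ transfer verbatim to Weil-Moore anima, since the definitions coincide once $t$ is fixed.
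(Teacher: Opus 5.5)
Your proposal is correct and follows the paper's proof. The $\mbb{Z}$-type and profinite cases go through Example \ref{tdmoore}, and the compact and $\mbb{R}$-type cases go through Corollary \ref{mooreconclusion}, Proposition \ref{extvanishing}, and Proposition \ref{moore1ab}. Your check that $W$ is almost compact, and your argument that $W^\circ$ is abelian (its commutators die in each abelian $N/N^c$, and $W\simeq\varprojlim_N W/N^c$), fill in details that the paper leaves implicit as ``by construction.''
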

\begin{proof}
Let $\mc{W}_C\simeq BW$ denote the associated Weil groupoid (Corollary \ref{weildeterminesrcf}).  By definition, a Weil-Moore anima for $C$ is the same thing as a Moore anima for $BW$.  In the $\mbb{Z}$-type and profinite cases, $W$ is totally disconnected, so the space of such Moore anima is contractible by Example \ref{tdmoore}.  In the $\mbb{R}$-type and compact-type cases, because of Corollary \ref{mooreconclusion} and Proposition \ref{extvanishing}, we need only show $H^2(BW^\circ;\mbb{R}/\mbb{Z})=0$.  But $W^\circ$ is abelian by construction, so this follows from Lemma \ref{moore1ab}.
\end{proof}

\begin{remark}\label{vanishingchauscoh}
Let $X$ be a Weil-Moore anima as above and $A$ a compact abelian group with $W=\pi_1X$-action.  By Lemma \ref{cohchauscoeff} we have
$$R\Gamma(X;A) \simeq \on{RHom}_{W/W^\circ}(A^\vee;c_W).$$
Thus Proposition \ref{extvanishing} translates to the following fundamental fact: suppose $X$ is a Weil-Moore anima whose underlying class formation is of $\mbb{Z}$-type, $\mbb{R}$-type, or compact-type, and suppose in the $\mbb{Z}$-type case that it satisfies the extra hypothesis in Proposition \ref{extvanishing}.  Then 
$$H^i(X;A)=0$$
for all compact Hausdorff coefficient systems $A$ on $X$ and all $i>2$.
\end{remark}

We get Weil-Moore anima attached to any of the number theoretic class formations discussed in Example \ref{exrcf}.  Most prominent for us are the following.

\begin{example}\label{wmex}
\begin{enumerate}
\item Fix a number field $K$ and a nonempty set of places $S$, and consider the condensed refined class formation $C_{K,S} = (\mc{C}_{K,S},F_{K,S},\alpha_{K,S})$ which was canonically produced in Example \ref{exrcf}, whose underlying Galois category consists of the finite extensions $L/K$ unramified outside $S$ and whose class group takes the value
$$A(L) = L^\times \backslash \mathbb{A}_L^\times / \prod_{\nu\not\in S} (L_\nu)^{\times, 1}.$$
(This can also be interpreted as the ``compactly supported Picard group of $\on{Spec}(\mc{O}_{K,S})$'', at least when $S$ is finite and contains all archimedean places.)  Then associated to $C_{K,S}$ is a Weil-Moore anima $X_{K,S}$, uniquely specified up to isomorphism.
\item For a nonarchimedean local field $K$, after a small choice (of algebraic closure of its residue field), we got a canonical refined class formation $C_K$ where $\mc{C}_K$ is the Galois category corresponding to finite separable extensions of $K$, and where the class group is the multiplicative group.  Then we get a Weil-Moore anima $X_K\simeq BW_K$, this time canonically (given the non-canonical choice of refined class formation).
\item For an archimedean local field $\mbb{R}$ or $\mbb{C}$, we got a pretty much canonical refined class formation of the same sort as in the nonarchimedean case.  Thus we get Weil-Moore anima $X_{\mbb{R}}$ and $X_{\mbb{C}}$, uniquely specified up to isomorphism.  In contrast to the nonarchimdean case, these are different from $\mc{W}_{\mbb{R}}\simeq BW_{\mbb{R}}$ and $\mc{W}_{\mbb{C}}\simeq BW_{\mbb{C}}$ respectively, but they can still be made quite explicit.

For motivation, consider the simpler $X_{\mbb{C}}$.  The Weil groupoid is $\mc{W}_{\mbb{C}}=B\mbb{C}^\times$.  When we follow the obstruction theory for adding higher homotopy groups to build the Weil-Moore anima, we find it mimics the procedure of building $S^2$ from $K(\mbb{Z},2)$ from Serre's thesis.  This suggests the following construction of $X_{\mbb{C}}$, and even $X_{\mbb{R}}$ (which recovers $X_{\mbb{C}}$ by pullback).

Recall that to make our refined class formation for $\mbb{R}$, or equivalently our Weil grouopid $\mc{W}_{\mbb{R}}$, we needed to ``choose $\mbb{H}$'', but only up to Morita equivalence (more precisely, as an object in the Morita 2-groupoid).  Namely, our preferred model for $\mc{W}_{\mbb{R}}$
was as the condensed groupoid of pairs consisting of a choice of algebraic closure $\mathbf{C}$ of $\mbb{R}$ and a Morita equivalence $\mathbb{H}\otimes_{\mathbb{R}}\mathbf{C}\sim \mathbf{C}$.

To proceed towards constructing $X_{\mbb{R}}$, we start by fixing $\mbb{H}$ more honestly, as a division algebra.  A Morita equivalence as above is implemented by an $\mbb{H}\otimes_{\mbb{R}}\mathbf{C}$-module $M$ whose underlying $\mbb{H}$-module will be free of rank one.  This describes a map
$$\mc{W}_{\mbb{R}} \to B\mbb{H}^\times,$$
which after choice of basepoint implements the usual identification of $W_{\mbb{R}}$ with a subgroup of $\mbb{H}^\times$ (say, $W_{\mbb{R}}=\mbb{C}^\times\cup j\cdot \mbb{C}^\times$).

The fiber of the above map identifies with $\mbb{R}\mbb{P}^2$, which more precisely arises as condensed set of closed points in the Brauer-Severi variety $\on{Tw}_{\mbb{R}}$ attached to $\mbb{H}$.  This corresponds to the description
$$\mbb{R}\mbb{P}^2 = \mbb{H}^\times/W_\mbb{R}$$
as a quotient.  If we pass to the norm-1 version of everything, then $\mbb{H}^\times$ gets replaced by the unit quaternion group $S^3$, and $W_\mbb{R}$ gets replaced by $S^1\cup j\cdot S^1$ acting on it by isometries, but $\mbb{R}\mbb{P}^2$ stays the same.  Then we explained in Example \ref{riemannian} how to build a condensed anima $C$ with $\pi_0C=\ast$, $\pi_1C = U(1)\cup j\cdot U(1)$, $\pi_2C=\mbb{R}/\mbb{Z}$, and $\pi_nC$ finite for $n\geq 3$ together with a map
$$\mbb{R}\mbb{P}^2 \to C$$
which induces an equivalence on categories of discrete coefficient systems (and their cohomology).  We get $X_\mbb{R}$ by adding back in the $\mbb{R}_{>0}$-factor:

$$X_\mbb{R} = C\times B\mbb{R}_{>0}.$$

\end{enumerate}
\end{example}

We would also like to have the ``obvious'' maps between these different Weil-Moore anima.  Unfortunately, here the situation is quite a bit less satisfactory than for the Weil groupoids (or refined class formations), which was already slightly unsatisfactory.  Namely we will see that the space of possible maps (inducing a given map of Weil groupoids) is nonempty, but possibly non-connected.  For example, it's quite possible that there are non-homotopic local-to-global maps $X_{\mbb{Q}_p} \to X_{\mathbb{Q}}$.

Of course, in principle this leads to further issues if one ever wants a commutative diagram involving choices of such maps.  Nonetheless, with some extra work we will manage to produce the most relevant commutative squares.  Furthermore, the non-uniqueness up to homotopy always goes away on profinite completion, see Remark \ref{okonprof}.

\begin{proposition}
Let $X$ and $X'$ be two Weil-Moore anima, with underlying condensed refined class formations $C$ and $C'$ respectively.  If $C$ is of $\mbb{Z}$-type, assume it satisfies the same extra technical hypothesis as in Proposition \ref{extvanishing}.  Suppose given a map $C\to C'$, or equivalently (Corollary \ref{weildeterminesrcf}) a map
$$f_1: \tau_{\leq 1}X \to \tau_{\leq 1}X'.$$

Then there exists a map $f:X\to X'$ inducing $f_1$, and more precisely the set of homotopy classes of such $f$ inducing $f_1$ is a torsor for the group  
$$H^2(X;\pi_2X'),$$
where $\pi_2X'$ is the twisted coefficient system over $X$ gotten by pulling the standard $\pi_1X'$ action on $\pi_2X'$ back via $f_1$.
\end{proposition}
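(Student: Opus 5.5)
Throughout, fix the map $X\to\tau_{\leq 1}X\to\tau_{\leq 1}X'$ induced by $f_1$. The plan is to lift this map one Postnikov stage of $X'$ at a time, using obstruction theory for the relative Postnikov tower of $X'\to\tau_{\leq 1}X'$. Since $X'$ is the limit of its Postnikov tower, a map $X\to X'$ inducing $f_1$ is the same as a compatible family of maps $X\to\tau_{\leq n}X'$ for $n\geq 1$, all lying over $f_1$.

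For $n\geq 1$, the map $\tau_{\leq n+1}X'\to\tau_{\leq n}X'$ is an $(n+1)$-gerbe banded by $\pi_{n+1}X'$. Here $\pi_{n+1}X'$ is a compact Hausdorff abelian group with $\pi_1X'$-action. By \cite{htt} 7.2.2.6, used exactly as in Lemma \ref{postchaus}, this gerbe is classified by a $k$-invariant $\tau_{\leq n}X'\to K(\pi_{n+1}X',n+2)$ over $B\pi_1X'$.

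Now suppose a map $g_n:X\to\tau_{\leq n}X'$ over $f_1$ has been constructed. Then:
\begin{enumerate}
\item the obstruction to lifting $g_n$ to $\tau_{\leq n+1}X'$ is the pulled-back class in $H^{n+2}(X;\pi_{n+1}X')$, with twisted compact Hausdorff coefficients pulled back along $f_1$;
\item when a lift exists, the set of homotopy classes of lifts is a torsor under $H^{n+1}(X;\pi_{n+1}X')$;
\item the automorphisms of a chosen lift are governed by $H^{n}(X;\pi_{n+1}X')$.
\end{enumerate}

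The key input is Remark \ref{vanishingchauscoh}, which is where the extra $\mbb{Z}$-type hypothesis on $C$ gets used. It gives $H^i(X;A)=0$ for all compact Hausdorff coefficient systems $A$ on $X$ and all $i>2$. Since $n+2\geq 3$, every obstruction vanishes, so each $g_n$ lifts and a full map $f$ exists. For $n\geq 2$ we have $n+1\geq 3$, so lifts from $\tau_{\leq n}X'$ to $\tau_{\leq n+1}X'$ are unique up to homotopy. The only ambiguity in homotopy classes therefore comes from the stage $n=1$, where the lifts $X\to\tau_{\leq 2}X'$ form a torsor under $H^2(X;\pi_2X')$.

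It remains to pass to the limit over $n$. The homotopy classes of maps into $X'=\varprojlim\tau_{\leq n}X'$ fit into a Milnor-type exact sequence
\[
\ast\to{\varprojlim}^1_n\,\pi_1\on{Map}(X,\tau_{\leq n}X')\to\pi_0\on{Map}(X,X')\to\varprojlim_n\pi_0\on{Map}(X,\tau_{\leq n}X')\to\ast .
\]
To kill the ${\varprojlim}^1$ term I would show that the maps $\pi_1\on{Map}(X,\tau_{\leq n+1}X')\to\pi_1\on{Map}(X,\tau_{\leq n}X')$ are surjective for $n\geq 2$. The obstruction to surjectivity lies in $H^{n+1}(X;\pi_{n+1}X')$, which vanishes for $n\geq 2$, so this should follow from the same vanishing. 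Granting that, $\pi_0$ of the relevant mapping space identifies with homotopy classes of maps to $\tau_{\leq 3}X'$ over $f_1$, which in turn identifies with homotopy classes of maps to $\tau_{\leq 2}X'$ over $f_1$. That gives the claimed $H^2(X;\pi_2X')$-torsor.

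I expect the main obstacle to be making the obstruction theory rigorous in the condensed setting with twisted coefficient systems. One has to check that the torsor action at the first stage is free. This uses $H^1(X;\pi_2X')$ acting on $\pi_1$ of the mapping space to $\tau_{\leq 1}X'$, and one should verify that no quotient by $\pi_1$-ambiguity arises when working over the fixed $f_1$. The fibration sequence of mapping spaces over the fixed $f_1$ should handle this.
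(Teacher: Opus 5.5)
Your proposal is correct and takes the same route as the paper: lift $f_1$ stage by stage up the Postnikov tower of $X'$, and use Remark \ref{vanishingchauscoh} to kill every existence obstruction in $H^{n+2}(X;\pi_{n+1}X')$ and every higher-stage ambiguity in $H^{n+1}(X;\pi_{n+1}X')$ for $n\geq 2$, so that only the $H^2(X;\pi_2X')$-torsor from the first stage remains. Your Mittag-Leffler argument on the tower of $\pi_1$'s, and your point that one should work with the space of lifts over the fixed $f_1$ (so that no quotient by automorphisms of $f_1$ arises), spell out what the paper compresses into ``by the same token.''
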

\begin{proof}
This is standard obstruction theory.  We interpret $f_1$ equivalently as a map
$$X\to \tau_{\leq 1}X'.$$
The second stage of the Postnikov tower for $X'$ is classified by a k-invariant in $H^3(\tau_{\leq 1}X';\pi_2X')$, or more honestly by a cocyle-level object, a higher gerbe.  Lifting our given map $X\to \tau_{\leq 1}X'$ to $\tau_{\leq 2}X'$ is the same as splitting the pullback of this gerbe, which is the same as trivializing the corresponding cocycle on pullback to $X$.  This can be done if and only if the corresponding class in
$$H^3(X;\pi_2X')$$
vanishes, in which case the choices up to homotopy are a  torsor for $H^3(X;\pi_2X')$.  But the group $H^3(X;\pi_2X')$ vanishes by Remark \ref{vanishingchauscoh}.  By the same token, the obstructions to both existence and uniqueness up to homotopy vanish for higher terms in the Postnikov tower.
\end{proof}

Thus, for example, if $F$ is a number field and $\nu$ is a place of $F$, we can make a local-global map of Weil-Moore anima
$$X_{F_\nu} \to X_F$$
inducing ``the'' (unique up to homotopy) map of refined class formations discussed following Lemma \ref{maprcf}.  Unfortunately it is probably not unique up to homotopy, and because of this, we need some extra work to produce certain relevant commutative diagrams.

\begin{lemma}
Suppose given Weil-Moore anima $X,Y,Y'$, all of which satisfy the conditions of the previous result.  Suppose also given a map $g:Y\to Y'$ and a map $\tau_{\leq 1}X\to\tau_{\leq 1}Y$, giving in particular the sequence of maps
$$\tau_{\leq 1}X \to \tau_{\leq 1}Y \overset{\tau_{\leq 1}g}{\longrightarrow} \tau_{\leq 1} Y'.$$
Assume that $\pi_2g:\pi_2Y\to\pi_2Y'$ is surjective.

Then the map (induced by composing with $g$) from the set of lifts of $\tau_{\leq 1}X \to \tau_{\leq 1}Y$ to a map $X\to Y$, towards the set of lifts of $\tau_{\leq 1}X \to \tau_{\leq 1}Y'$ to a map $X \to Y'$, is surjective.

\end{lemma}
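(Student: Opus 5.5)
By the previous proposition, the lifts of $\tau_{\leq 1}X\to\tau_{\leq 1}Y$ to maps $X\to Y$, taken up to homotopy, form a torsor $T$ under $H^2(X;\pi_2Y)$. Likewise the lifts to $X\to Y'$ form a torsor $T'$ under $H^2(X;\pi_2Y')$. Here both coefficient systems are pulled back along the given map on $\tau_{\leq 1}$. The plan has three steps: first show that composing with $g$ gives a map $g_\ast:T\to T'$ that is equivariant along the homomorphism $\pi_2g_\ast:H^2(X;\pi_2Y)\to H^2(X;\pi_2Y')$; second show that this homomorphism is surjective; third conclude. The third step is formal, since an equivariant map between nonempty torsors, along a surjective homomorphism of groups, is surjective. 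Nonemptiness of $T$ comes from the previous proposition.

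For equivariance, I would use the description from the proof of the previous proposition. Since $H^3(X;\pi_2Y)=0$ and $H^3(X;\pi_2Y')=0$, and the higher obstruction groups also vanish, the lifts up to homotopy are already classified at the second Postnikov stage. The action of $H^2(X;\pi_2Y)$ is through difference classes of lifts to $\tau_{\leq 2}Y$, and these are sections of the $K(\pi_2Y,2)$-fibration $\tau_{\leq 2}Y\to\tau_{\leq 1}Y$. The map $\tau_{\leq 2}g$ covers $\tau_{\leq 1}g$ and is equivariant for the fiberwise action through $\pi_2g:K(\pi_2Y,2)\to K(\pi_2Y',2)$. So it sends difference classes to their images under $\pi_2g$. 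This is naturality of obstruction theory for maps of Postnikov stages, and I expect it to be routine.

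The main step is surjectivity of $H^2(X;\pi_2Y)\to H^2(X;\pi_2Y')$. Let $K=\ker(\pi_2g)$. Because $\pi_2g$ is a continuous surjection of compact Hausdorff abelian groups, equivariant for $\pi_1Y$, the sequence
$$0\to K\to\pi_2Y\to\pi_2Y'\to 0$$
is a short exact sequence of compact Hausdorff coefficient systems on $X$. It is exact as condensed abelian groups by Remark \ref{openquotient}, since a surjection of compact groups is open. The long exact sequence in cohomology on $X$ then shows that the cokernel of our map injects into $H^3(X;K)$. This group vanishes by Remark \ref{vanishingchauscoh}, applied to $X$ and the compact coefficient system $K$. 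For this step I need $X$ to satisfy the hypotheses of Proposition \ref{extvanishing}, which is part of the assumption that $X$ satisfies the conditions of the previous result. I also need to confirm that $K$ is compact Hausdorff, which holds because it is a closed subgroup. Finally, the $\pi_1$-action on $K$ must factor through $W/W^\circ$ so that Lemma \ref{cohchauscoeff} applies, and this follows as in part 1 of that lemma.
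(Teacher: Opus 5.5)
Your proposal is correct and follows the paper's argument. Functoriality of the obstruction theory gives a map of torsors that is equivariant along $H^2(X;\pi_2Y)\to H^2(X;\pi_2Y')$, and this homomorphism is surjective by the long exact sequence for $0\to K\to\pi_2Y\to\pi_2Y'\to 0$ together with $H^3(X;K)=0$ from Remark \ref{vanishingchauscoh}; you simply spell out some details the paper leaves implicit (exactness as condensed groups, nonemptiness of the torsors, reduction to the second Postnikov stage).
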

\begin{proof}
By functoriality of the obstruction theory in the previous result, the map in question is induced by a map of torsors compatible with the homomorphism
$$H^2(X;\pi_2Y) \to H^2(X;\pi_2Y')$$
induced by $\pi_2g$. Thus it suffices to see that this homomorphism is surjective.  But the kernel of the surjective map $\pi_2Y\to \pi_2Y'$ is also compact Hausdorff, so this follows form the long exact sequence in cohomology and the same vanishing result (Remark \ref{vanishingchauscoh}) we used above.
\end{proof}

Now is perhaps the right time to discuss the $\pi_2$ of Weil-Moore anima, as an excuse to see how to verify the hypotheses of the previous lemma in practice.

\begin{lemma}\label{pi2}
Let $X$ be a Weil-Moore anima with underlying condensed class formation $(\mc{C},A)$.  Then we have
$$\pi_2X \simeq \left(\on{Sym}^2_{\mbb{Z}}\Lambda\right)^\vee,$$
where $\Lambda$ is the torsionfree discrete $\mbb{Z}$-module gotten as follows: consider the subgroup presheaf
$$K^\circ \subset A$$
given by the maximal compact subgroup of the component group of the class group, and give it a \emph{co-presheaf} structure using the transfer maps on $A$.  We thus get a presheaf $(K^\circ)^\vee$ via Pontryagin duality, and $\Lambda$ is the stalk of this presheaf (on $\mc{C}$).  From this description wee see a $\wh{\pi_1X} = \wh{W}$-action and hence a $W$-action on $\Lambda$, thereby also giving a $W$-action on $\left(\on{Sym}^2_{\mbb{Z}}\Lambda\right)^\vee$, and this agrees with the usual action of $\pi_1X$ on $\pi_2X$ via the above isomorphism.

Moreover, if $X\to X'$ is a map of Weil-Moore anima such that:
\begin{enumerate}
\item for all $U'\in\on{fEt}_{X'}$ connected the pullback $U=U'\times_{X'}X$ is also connected, or equivalently the map $\pi_1X \to \pi_1X'$ is surjective on profinite completion;
\item for all $U'\in \on{fEt}_{X'}$ the induced map on class groups
$$A(U) \to A'(U')$$
is surjective on maximal compact subgroups;
\end{enumerate}
then $\pi_2X \to \pi_2 X'$ is surjective.
\end{lemma}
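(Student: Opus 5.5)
The plan is to read $\pi_2X$ off the first step of the obstruction theory of Section \ref{mooresection}, and then compute the relevant cohomology of a connected compact abelian group.

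\textbf{Step 1: reduce to a cohomology group.} Apply Lemma \ref{mooreinduct} with $n=1$ to the $1$-stage Moore anima $\tau_{\leq 1}X=BW$. It identifies
$$\pi_2X\simeq H^3(BW^\circ;\mbb{R}/\mbb{Z})^\vee,$$
$W/W^\circ$-equivariantly, and hence $W$-equivariantly. By Corollary \ref{cohrmodz}, $H^3(BW^\circ;\mbb{R}/\mbb{Z})=H^4(BW^\circ;\mbb{Z})$. By Lemma \ref{acompstructure} we have $W^\circ\simeq K^\circ_W$ or $W^\circ\simeq\mbb{R}\times K^\circ_W$, where $K^\circ_W$ is connected, compact and abelian. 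Since $B\mbb{R}$ is invisible to discrete coefficients (Lemma \ref{cohliegp}), this reduces to computing $H^4(BK^\circ_W;\mbb{Z})$.

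\textbf{Step 2: compute $H^4(BK;\mbb{Z})$ for $K$ connected compact abelian.} As in Proposition \ref{moore1ab}, write $K=\varprojlim T_i$ with each $T_i$ a torus and $K^\vee=\varinjlim T_i^\vee$ torsionfree. For a torus, classical topology gives $H^*(BT;\mbb{Z})=\on{Sym}^*(T^\vee)$ with $T^\vee$ in degree $2$, so $H^4(BT;\mbb{Z})=\on{Sym}^2T^\vee$, naturally in $T$. Lemma \ref{invlimchaus} (discrete coefficients) lets us pass to the colimit, giving
$$H^4(BK;\mbb{Z})=\varinjlim\on{Sym}^2T_i^\vee=\on{Sym}^2(K^\vee).$$
This yields $\pi_2X\simeq(\on{Sym}^2\Lambda)^\vee$ with $\Lambda:=(K^\circ_W)^\vee$, and the $\pi_1$-action is the one induced by conjugation on $W^\circ$.

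\textbf{Step 3: identify $\Lambda$ with the stalk in the statement.} This is the step I expect to be the main obstacle. By Corollary \ref{weildeterminesrcf}, $W=\varprojlim_N W/N^c$ along maps with compact kernel, and $N/N^c=A(U)$ with transition maps given by the transfers. I would show that the identity component of such a limit is the limit of the identity components. The key input is that for an inverse system of almost compact groups with compact kernels, the identity components map onto each other's images, because a filtered limit of compact connected groups is connected. Applying this to the subgroups $N$ gives
$$K^\circ_W=\varprojlim_U K^\circ(U),$$
taken along the transfer maps. Pontryagin duality then gives $\Lambda=\varinjlim_U K^\circ(U)^\vee$, which is the stalk of $(K^\circ)^\vee$. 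The action of $\wh{W}$ comes from the action on the system $U\mapsto K^\circ(U)$ by automorphisms of the Galois category. One checks that on $W$ this agrees with conjugation on $W^\circ$, using naturality of the Step 1 isomorphism in the pair $(W,W^\circ)$.

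\textbf{Step 4: surjectivity for a map $X\to X'$.} Hypothesis 1 makes the pullback a cofinal map of index systems, so Step 3 produces a map of stalks $\Lambda'\to\Lambda$. Hypothesis 2 says that each $K^\circ(U)\to K'^\circ(U')$ is a surjection of compact groups, so its dual map on characters is injective. A filtered colimit of injections is injective, so $\Lambda'\hookrightarrow\Lambda$. Since both modules are torsionfree, hence flat, $\on{Sym}^2\Lambda'\to\on{Sym}^2\Lambda$ is still injective, as can be checked after rationalizing. Pontryagin duality then turns this injection into the surjection $\pi_2X\to\pi_2X'$. Compatibility of the identifications with the map $X\to X'$ follows from the functoriality of Steps 1 to 3.
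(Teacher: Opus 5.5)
Your Steps 1--3, and the $\on{Sym}^2$/Pontryagin-duality ending of Step 4, follow the paper's argument. The chain is $\pi_2X\simeq H^3(BW^\circ;\mbb{R}/\mbb{Z})^\vee\simeq H^4(BK^\circ_W;\mbb{Z})^\vee$ from Lemma \ref{mooreinduct}, then $W^\circ=\varprojlim_N (W_N)^\circ$ with $(W_N)^\circ$ the identity component of the class group and transition maps the transfers, then $H^4(BT;\mbb{Z})=\on{Sym}^2T^\vee$ extended by filtered colimits. For the limit of identity components, the paper's justification is that $W\to W_N$ is a quotient by a compact normal subgroup, hence proper. Such a quotient carries $G^\circ$ onto $(G/K)^\circ$, and this is the precise form of your ``identity components map onto each other''.

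There is a gap in the first sentence of Step 4. Hypothesis 1 does not make the pulled-back covers $U'\times_{X'}X$ cofinal in $\on{fEt}_X$. They are cofinal only if $\wh{W}\to\wh{W'}$ is also injective, i.e.\ an isomorphism. That fails where the lemma is actually used: for $X_{\mbb{Q},S\cup\{p\}}\to X_{\mbb{Q},S}$, only covers unramified at $p$ arise by pullback.

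So ``a filtered colimit of injections is injective'' only gives $\Lambda'\hookrightarrow\varinjlim_{U'}K^\circ(U'\times_{X'}X)^\vee$, and you still need this to inject into $\Lambda$. The missing fact is that each structure map $K^\circ(U)^\vee\to\Lambda$ is injective, i.e.\ that $K^\circ_W\to K^\circ(U)$ is surjective. This follows from the fact quoted above: $W_H\to A(U)$ is a quotient by a compact normal subgroup and $W_H^\circ=W^\circ$.

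The same fact is needed a second time. Hypothesis 2 concerns the full maximal compact subgroups $M(U)\to M'(U')$, and you need it to get surjectivity on $K^\circ(U)=M(U)^\circ$. With these two additions your argument is complete.

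The paper avoids the stalk bookkeeping by proving directly that $W\to W'$ is surjective on maximal compact subgroups. At each finite level this is a map of group extensions that is surjective on the quotient terms (hypothesis 1) and on the kernel terms (hypothesis 2), hence surjective in the middle, and compactness passes this to the limit. It then takes identity components and dualizes to get $\Lambda'\hookrightarrow\Lambda$.
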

\begin{proof}
Recall from Lemma \ref{mooreinduct} that if $X$ is the Moore anima of a general locally compact Hausdorff group $G$, then 

$$\pi_2X = \left(H^3(BG^\circ;\mbb{R}/\mbb{Z})\right)^\vee.$$

For the fundamental group $G\simeq \pi_1X$ of a Weil-Moore anima, we have

$$G = \varprojlim_N G_N$$
where $G_N$ are the Weil groups of the ``finite quotients'' of $\mc{C}$; moreover the maps $G \to G_N$ are given as the quotient by a compact normal subgroup, hence are open and proper.  It follows that the induced map
$$G^\circ \to (G_N)^\circ$$
is also such a quotient, and that
$$G^\circ \simeq \varprojlim_N (G_N)^\circ.$$ 

On the other hand $G_N$ is an extension of a finite group by the class group corresponding to $N$, so $(G_N)^\circ$ is the same as the connected component of the class group, with functoriality induced by the transfer map.  We get the same relation

$$K^\circ \simeq \varprojlim_N K_N^\circ$$
on the level of maximal compact subgroups of component groups (indeed, at most we are ignoring an $\mbb{R}$-factor), and hence
$$H^3(BG^\circ;\mbb{R}/\mbb{Z})\simeq H^3(BK^\circ;\mbb{R}/\mbb{Z})\simeq H^4(BK^\circ;\mbb{Z})\simeq \varinjlim_N H^4(BK_N^\circ;\mbb{Z}).$$
But for a real torus $T$ we have $H^4(BT;\mbb{Z})=\on{Sym}^2_{\mbb{Z}}(H^2(BT;\mbb{Z})) = \on{Sym}^2_{\mbb{Z}}(T^\vee),$
and we deduce the same for an arbitrary connected compact abelian group $T$ by passing to filtered colimits.  The desired claim
$$\pi_2 X \simeq H^3(BG^\circ;\mbb{R}/\mbb{Z})^\vee\simeq \left(\on{Sym}^2_{\mbb{Z}}\Lambda\right)^\vee$$
follows.

For the second claim, let us first show that 1 and 2 together imply (actually, are equivalent to) the statement that the induced homomorphism
$$G \to G'$$
on fundamental groups is surjective on maximal compact subgroups.  Again by compactness arguments and reduction to the finite level version of the Weil groups, this follows from the fact that a map of group extensions which is surjective on the quotient terms and on the kernel terms is surjective in the middle.  We deduce that $G\to G'$ is also surjective on connected components of maximal compact subgroups.

By Pontryagin duality we get $\Lambda' \hookrightarrow \Lambda$.  Applying $\on{Sym}^2_{\mbb{Z}}$ preserves inclusions of torsionfree abelian groups (one can use filtered colimits to reduce to the case of finite free abelian groups), so applying $\on{Sym}^2_{\mbb{Z}}$ then Pontryagin duality again we deduce $\pi_2X \twoheadrightarrow \pi_2X'$ as desired.
\end{proof}

\begin{example}\label{compatiblewm}
Let us apply this to the problem of choosing Weil-Moore anima in number-theoretic situations.  First, we fix the condensed refined class formations and maps between them as discussed in Section \ref{cfsection}.  This gives the $\tau_{\leq 1}$-level of the Weil-Moore data we're trying to construct.

We start with $K=\mbb{Q}$ and $S$ a finite set of places of $\mbb{Q}$ containing $\infty$ and such that if it contains a prime then it contains all smaller primes.  For every such $S$, we arbitrarily choose a Weil-Moore anima $X_{\mbb{Q},S}$ lifting the relevant condensed refined class formation (Example \ref{exrcf}).  Moreover, if $S$ is of this form and $p$ is the smallest prime number \emph{not} in $S$, then we arbitrarily choose a map of Weil-Moore anima
$$X_{\mbb{Q},S\cup\{p\}} \to X_{\mbb{Q},S}$$
lifting the given map of condensed refined class formations.  Furthermore, there is a map of Weil-Moore anima
$$X_{\mbb{F}_{p}} \to X_{\mbb{Q},S},$$
this time unique up to homotopy because $X_{\mbb{F}_{p}}=B\mbb{Z}$ has cohomological dimension 1, inducing the map of condensed refined class formations given by the unramified at $p$ condition.  Then, using the above results, we choose a lift of the composite
$$X_{\mbb{Q}_{p}} \to X_{\mbb{F}_{p}}\to X_{\mbb{Q},S}$$
along
$$X_{\mbb{Q},S\cup\{p\}} \to X_{\mbb{Q},S},$$
getting an important commutative diagram
$$\xymatrix{
X_{\mbb{Q}_{p}}\ar[r]\ar[d] & X_{\mbb{Q},S\cup\{p\}}\ar[d] \\
X_{\mbb{F}_{p}}\ar[r] & X_{\mbb{Q},S}.}$$
We can also continue to lift $X_{\mbb{Q}_{p}}\to X_{\mbb{Q},S\cup\{p\}}$ along further $X_{\mbb{Q},S\cup\{p,p'\}}\to X_{\mbb{Q},S\cup\{p\}}$ of the above form, for the same reason.

With all the above data fixed, we set
$$X_\mbb{Q} := \varprojlim_S X_{\mbb{Q},S}$$
as our Weil-Moore anima for $\mbb{Q}$.  It is indeed easy to see that this is a Weil-Moore anima for $\mbb{Q}$, using that:
\begin{enumerate}
\item we have an inverse limit of Galois groups and an inverse limit of class groups, giving an inverse limit of Weil groups;
\item we can use Lemma \ref{invlimchaus} to see that the higher homotopy doesn't mess with the $\pi_1$ or $\pi_0$ of the limit and to control the cohomology of the inverse limit (to verify the Weil-Moore axioms).
\end{enumerate}
The above data in particular gives $X_{\mbb{Q}_p} \to X_{\mbb{Q}}$ for all $p$, and of course we can also fix a Weil-Moore anima $X_{\mbb{R}}$ for $\mbb{R}$ as in Example \ref{wmex} and a local-global map $X_{\mbb{R}}\to X_{\mbb{Q}}$.

For an arbitrary number field $K$, if we choose $S$ as above to contain all primes of $\mbb{Q}$ which ramify in $K$, then by pullback along
$$X_{\mbb{Q},S} \to BG_{\mbb{Q},S} \leftarrow BG_{K,S}$$
(which is canonical) we obtain analogous data for $K$, in a natural way (given the fixed data for $\mbb{Q}$).

It's possible that system of data might be sufficient in practice, but it is nonetheless a bit annoying that we have not allowed arbitrary finite sets of places containing $\infty$, but only those of the form ``$\infty$ together with all primes $\leq N$'' (or in the case of number fields $K$, only those finite sets of places arising as preimages of such constrained finite sets of places of $\mbb{Q}$, and additionally required to contain all ramified primes).   Perhaps a more refined analysis could yield further data.  Of course, if one fixes a number field and a finite set of places and doesn't worry about any compatibility as this data varies, then one can just fix choices of the Weil-Moore anima and local-global maps.
\end{example}

\begin{remark}\label{okonprof}
Although two choices of Weil-Moore maps need not be homotopic, they will always be homotopic on profinite completion (for which see Proposition \ref{profcomp}).  To show this, note that if $X$ is a Weil-Moore anima, then the map

$$\wh{X} \to \wh{\tau_{\leq 1}X}$$
is an isomorphism on $\tau_{\leq 2}$.  Indeed, it is obviously an isomorphism on $\tau_{\leq 1}$, so by the Postnikov tower it suffices to show that if $A$ is a finite abelian group with $\pi_1X$-action, then
$$H^2(\tau_{\leq 1}X;A)\overset{\sim}{\rightarrow} H^2(X;A).$$
Choosing a basepoint of $X$ we have $\tau_{\leq 1}X=BW$ and $\tau_{\leq 2}X = K(\pi_2X,2)/W$, so it suffices to show that $\on{Hom}(\pi_2X,A)=0$, which follows from the fact that $\pi_2X$ is connected (Lemma \ref{pi2}).

Now, given another Weil-Moore anima $Y$ and a map
$$Y\to \tau_{\leq 1}X$$
(or equivalently a map of the underlying condensed refined class formations), by the above this induces
$$Y\to \tau_{\leq 2}\wh{X}$$
Then the obstructions to existence and uniqueness-up-to-homotopy of further lifts along the Postnikov tower of $\wh{X}$ lie in $$H^i(Y;\pi_{\ast}\wh{X})$$
for $i\geq 3$, which vanish by Remark \ref{vanishingchauscoh}.
\end{remark}

\section{The Weil-Moore analogs of some theorems of Tate}

We will now prove the analogs of some fundamental theorems of Tate in Galois cohomology (\cite{tateduality}) for these new objects, the Weil-Moore anima.  Our focus is on the cohomology of Weil-Moore anima with discrete coefficients, often restricted to be $p$-power torsion for some fixed prime $p$.  The first theorem expresses that Weil-Moore anima have strict cohomological dimension 2 with discrete coefficients, modulo some technicalities in the $\mbb{Z}$-cases.

\begin{theorem}\label{strictcohdim}
Let $X$ be a Weil-Moore anima with underlying class formation either of $\mbb{Z}$-type, $\mbb{R}$-type, or compact type, and let $W=\pi_1X$ denote ``the" associated Weil group, with profinite completion $\wh{W}$ (the Galois group).  If $X$ is of $\mbb{Z}$-type, assume the same additional technical assumption from Proposition \ref{extvanishing} is satisfied. Then for any discrete group $M$ with $W$-action, we have:
\begin{enumerate}
\item $H^i(X;M)=0$ for $i>3$;
\item If either $M$ comes from a $\wh{W}$-action or $M$ is torsion, then $H^3(X;M)=0$.
\end{enumerate}
\end{theorem}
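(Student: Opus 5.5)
We reduce discrete coefficients to compact coefficients by Pontryagin duality, and then use the vanishing of Remark \ref{vanishingchauscoh}: $H^i(X;A)=0$ for $i>2$ whenever $A$ is a compact Hausdorff coefficient system on $X$.

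The basic tool is the short exact sequence of condensed $W$-modules
$$0\to M\to \ul{\on{Hom}}(M^\vee_{disc},\mbb{R})\to \ldots$$
More simply, we work with the sequence $0\to\mbb{Z}\to\mbb{R}\to\mbb{R}/\mbb{Z}\to 0$ tensored against $M$. For $M$ torsion-free, this gives $0\to M\to M\otimes\mbb{R}\to M\otimes\mbb{R}/\mbb{Z}\to0$. Here $M\otimes\mbb{R}/\mbb{Z}=\ul{\on{Hom}}(M^\ast,\mbb{R}/\mbb{Z})$ is the Pontryagin dual of the discrete group $\on{Hom}(M,\mbb{Z})$ only when $M$ is finitely generated. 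We therefore first use Proposition \ref{cohachaus}(1) to reduce to finitely generated $M$, since cohomology commutes with filtered colimits. Every discrete $M$ is a filtered colimit of finitely generated submodules on which $W$ acts through a discrete quotient; the action is continuous, so stabilizers of elements are open.

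For finitely generated $M$, write $0\to M_{tors}\to M\to M/M_{tors}\to0$, so it suffices to treat finite $M$ and finitely generated free $M$.

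Finite $M$ is compact, so $H^i(X;M)=0$ for $i>2$ by Remark \ref{vanishingchauscoh}. Combined with the colimit reduction, this already gives statement 2 for torsion $M$.

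For $L$ free of finite rank, the sequence $0\to L\to L\otimes\mbb{R}\to L\otimes\mbb{R}/\mbb{Z}\to 0$ applies. The last term is a compact torus with $W$-action, so its cohomology vanishes above degree $2$. For the middle term we use Proposition \ref{cohachaus}(2). Since $L\otimes \mbb{R}$ is a finite-dimensional representation, $R\Gamma(X;L\otimes\mbb{R})$ is concentrated in degrees $\le 1$ in all three cases: compact, $\mbb{Z}$-type, and $\mbb{R}$-type. The long exact sequence then gives $H^i(X;L)=0$ for $i>3$ and a surjection $H^1(X;L\otimes\mbb{R})\twoheadrightarrow \ldots$. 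More precisely, $H^3(X;L)=\on{coker}(H^2(X;L\otimes\mbb{R})\to H^2(X;L\otimes\mbb{R}/\mbb{Z}))=H^2(X;L\otimes\mbb{R}/\mbb{Z})$. Extensions by finite modules preserve the bound, so statement 1 follows.

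For statement 2 with $M$ coming from a $\wh W$-action, the reduction gives finitely generated $L$ on which $W$ acts through a finite quotient. Restriction-corestriction along the finite étale cover $X_H\to X$, where $H$ is the kernel of the action, shows that $H^3(X;L)$ is killed by $[W:H]$ after passing to $X_H$. So it suffices to show $H^3(X_H;\mbb{Z}^r)=0$ in the trivial-action case, together with a transfer argument. Here we need care, since transfer only shows the group is torsion. The torsion part is then handled via $0\to L\to L\to L/n\to0$: $H^3(X;L)[n]$ is a quotient of $H^2(X;L/n)$, and $H^3(X;L/n)=0$. Hence $H^3(X;L)$ is divisible; being torsion, it needs an extra argument. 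Alternatively, for trivial action we have $H^3(X;\mbb{Z})=H^2(X;\mbb{R}/\mbb{Z})$ by Corollary \ref{cohrmodz}, and this vanishes by Lemma \ref{mooreisweilmoore}(1) applied to $X_H$. For $\mbb{Z}$-type the same vanishing comes from Proposition \ref{ztype}. To descend from $X_H$ to $X$ for induced-type lattices, we use that $H^3(X;-)$ is right exact on discrete modules: there is no $H^4$, by statement 1. Every finitely generated $\wh W$-lattice is a quotient of a permutation module $\mbb{Z}[W/H]^r$, and $H^3(X;\mbb{Z}[W/H])=H^3(X_H;\mbb{Z})=0$. Right exactness then gives $H^3(X;L)=0$.

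The main obstacle is this last step. It requires checking that permutation modules are genuinely induced in the condensed setting (Shapiro's lemma for the finite étale cover $X_H\to X$), and that $H^4$ vanishes for the kernel of the surjection, which is statement 1 again.
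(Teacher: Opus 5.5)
Your argument works whenever $M$ comes from a $\wh W$-action, and this covers the compact and $\mbb{R}$-type cases entirely: there $W^\circ$ acts trivially on any discrete module and $W/W^\circ=\wh W$. In that situation your ingredients are the paper's:
\begin{itemize}
\item the reduction to modules that are finitely generated as abelian groups;
\item the sequence $0\to L\to L\otimes\mbb{R}\to L\otimes\mbb{R}/\mbb{Z}\to 0$ for the bound in degrees $>3$;
\item surjecting from permutation modules $\mbb{Z}[W/H]$, using $H^3(X_H;\mbb{Z})\simeq H^2(X_H;\mbb{R}/\mbb{Z})=0$ and the vanishing of $H^4$ on the kernel.
\end{itemize}
The restriction--corestriction and divisibility detour can be dropped.

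The gap is the $\mbb{Z}$-type case for modules not coming from $\wh W$. This is exactly the case that the ``$M$ torsion'' alternative in statement 2 exists for, since in the other types it adds nothing.

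Your colimit reduction implicitly assumes that a finitely generated $W$-submodule is finitely generated as an abelian group. That requires open subgroups of $W$ to have finite index, which fails here. In the $\mbb{Z}$-type case, $W^1=\on{ker}(W\to\mbb{Z})$ is open of infinite index. For example, $\mbb{F}_p[W/W^1]\cong\mbb{F}_p[t^{\pm 1}]$ is a torsion discrete $W$-module with no nonzero finite $W$-stable submodule, since such a submodule would be a nonzero finite-dimensional ideal of $\mbb{F}_p[t^{\pm1}]$. So it is not a filtered colimit of finite (compact) modules, and Remark \ref{vanishingchauscoh} never reaches it. Similarly, $\mbb{Z}[W/W^1]$ escapes your proof of statement 1.

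The missing step is the one the paper uses. In this case $X=BW$, and
\[
R\Gamma(BW;M)\simeq R\Gamma\bigl(B\mbb{Z};R\Gamma(BW^1;M)\bigr).
\]
The technical hypothesis says the profinite group $W^1$ has cohomological dimension $\le 1$ for torsion discrete coefficients, hence $\le 2$ for all discrete coefficients. Passing through $B\mbb{Z}$ raises degrees by at most one. This gives $H^i(X;M)=0$ for $i>2$ when $M$ is torsion, and for $i>3$ in general.
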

\begin{proof}
We recall from Remark \ref{vanishingchauscoh} that the cohomology with \emph{compact Hausdorff} coefficients vanishes in degrees $\geq 3$.  Now we must move to discrete coefficients.

Suppose $M$ comes from a $\wh{W}$-action; thus we need to show cohomology vanishing in degrees $>2$.  By Lemma \ref{cohachaus} cohomology on $X$ commutes with filtered colimits, so we reduce to the case where $M$ is a finitely generated discrete $\wh{W}$-module; in particular $M$ is finitely generated as an abelian group.  Consider the exact sequence
$$0 \to M_{tors} \to M \to M\otimes\mbb{R} \to M\otimes(\mbb{R}/\mbb{Z}) \to 0.$$
We have cohomology vanishing in degrees $>2$ for $M\otimes(\mbb{R}/\mbb{Z})$ and $M_{tors}$ because these are compact, and we have cohomology vanishing in degrees $>1$ for $M\otimes\mbb{R}$ by Lemma \ref{cohachaus}.  We deduce cohomology vanishing in degrees $>3$ for $M$.  Now we need to improve this to get cohomology vanishing for degree $3$ as well.  Since $M$ is finitely generated, it is a quotient of a finite direct sum of copies of $\mbb{Z}[W/H]$ for some open subgroup $H\subset W$ of finite index.  By cohomology vanishing for the kernel in degree 4, we deduce that the vanishing claim for $M$ is implied by that for $\mbb{Z}[W/H]$.  Thus we need
$$H^3(Y;\mbb{Z})=0$$
for all connected finite etale $Y\to X$ (corresponding to $H\subset W$ via Galois theory).  But by Lemma \ref{cohrmodz} this is equivalent to $H^2(Y;\mbb{R}/\mbb{Z})=0$, which holds because that cohomology group is Pontryagin dual to $H_2$ of our refined class formation hence vanishes by definition. 

If $X$ is of compact type or $\mbb{R}$-type, then every discrete abelian group $M$ with continuous $W$-action comes from a $\wh{W}$-action, as $\wh{W}=W/W^\circ$.  Thus we are done in those cases.  In the $\mbb{Z}$-case, we have a short exact sequence
$$1 \to W^1 \to W \to \mbb{Z} \to 1$$
where $W^1$ is a profinite group of cohomological dimension $\leq 1$ with torsion discrete coefficients (hence $\leq 2$ with arbitrary discrete coefficients), so we conclude because in any case $\mbb{Z}$ has cohomological dimension 1.
\end{proof}

\begin{remark}\label{leopoldt}
The analogous result to 2 for Galois cohomology with restricted ramification (with odd-primary coefficients, or for number fields with no real places, these being ``obvious" necessary restrictions)  is conjectured to be true, but is currently unknown, and in fact it turns out to be equivalent to Leopoldt's conjecture.  See \cite{nsw} for a thorough discussion.  In the Weil-Moore setting, we avoid Leopoldt's conjecture.
\end{remark}

Having discussed cohomological dimension properties, we now turn to finiteness properties, in the setting of $p$-primary torsion coefficient systems. 

\begin{lemma}\label{checkperfmodp}
Let $p$ be a prime, $G$ a group object in anima, and $M\in \on{D}(\mbb{S}[G])$ such that $M$ is $p$-complete and bounded below.  The following properties are equivalent:
\begin{enumerate}
\item $M\in\on{Perf}(\mbb{S}[G]_{\wh{p}})$.
\item $M\otimes_{\mbb{S}[G]}\mbb{F}_p[\pi_0G]\in \on{Perf}(\mbb{F}_p[\pi_0 G])$.
\end{enumerate}
\end{lemma}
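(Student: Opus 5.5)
The implication 1 $\Rightarrow$ 2 is formal. Base change along the ring map $\mbb{S}[G]_{\wh{p}}\to\mbb{F}_p[\pi_0G]$ sends perfect modules to perfect modules, since it is exact and sends the generator to the generator. Because $\mbb{F}_p[\pi_0G]$ is $p$-complete, $M\otimes_{\mbb{S}[G]}\mbb{F}_p[\pi_0G]$ agrees with the base change of $M$ regarded as an $\mbb{S}[G]_{\wh{p}}$-module. So all the work is in 2 $\Rightarrow$ 1, and I would prove it by the standard inductive cell-attachment argument, in the spirit of ``a bounded below module whose reduction is perfect is perfect'' (Nakayama for connective ring spectra).

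Set $R=\mbb{S}[G]_{\wh{p}}$ and $k=\mbb{F}_p[\pi_0G]$. Then $R$ is connective, since $\mbb{S}[G]$ is connective, and $\pi_0R=\mbb{Z}_p$-completion of $\mbb{Z}[\pi_0G]$. The first step is a Nakayama statement: if $N$ is a bounded below $p$-complete $R$-module with $N\otimes_Rk=0$, then $N=0$. To prove it, suppose $\pi_nN$ is the bottom homotopy group. Then $\pi_n(N\otimes_R k)=\pi_nN\otimes_{\pi_0R}k=\pi_nN/p$. This quotient vanishes, and since $\pi_nN$ is derived $p$-complete (because $N$ is $p$-complete and bounded below), we get $\pi_nN=0$. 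Here I use that the kernel of $\pi_0R\to k$ is generated by $p$ together with the augmentation-type relations coming from $\pi_0(\mbb{S}[G])\to\mbb{Z}[\pi_0G]$. I expect this identification of $\pi_0$, and the fact that $\pi_0R\to\mbb{F}_p[\pi_0G]$ is just reduction mod $p$, to be the only place needing care.

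The second step is to replace the hypothesis with a tractable one. I would first reduce to a minimal-cell situation: choose a map $F\to M$ from a finite free $R$-module lifting generators of the bottom homology of the perfect $k$-module $M\otimes_Rk$. More precisely, I would use induction on the Tor-amplitude/cell length of $M\otimes_Rk$. Since $M\otimes_R k$ is perfect over $k$, it has a finite cell structure. Because $\pi_*(M\otimes_Rk)$ in the bottom degree $n$ equals $\pi_nM/p$, which is finitely generated over $\pi_0k$ (as a perfect complex's bottom group over the regular-enough ring $k$? not needed; it suffices that it is a finitely generated module), I would lift finitely many generators to a map $R^{m}[n]\to M$. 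This map is surjective on $\pi_n$ mod $p$, hence surjective on $\pi_n$ by the complete Nakayama argument of step one. Its cofiber $M'$ is again bounded below, $p$-complete and connected one degree higher, and $M'\otimes_Rk$ is perfect by two-out-of-three. The main obstacle is termination of this induction: the connectivity goes up, but I must show that $M'\otimes_R k$ becomes zero after finitely many steps. For this I would use that a perfect $k$-module has bounded Tor-amplitude. Once the connectivity of $M'$ exceeds the Tor-amplitude bound of the original reduction, I cannot immediately conclude vanishing, since the new reductions grow in tor-amplitude by at most one per step. So instead I would arrange the lifts minimally: choose the generators lifting a basis of a minimal cell in a finite cell presentation of $M\otimes_Rk$, so that the cell count of $M'\otimes_Rk$ strictly decreases. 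When it reaches zero, step one gives $M'=0$, and $M$ is built from finitely many free cells, hence perfect.

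I expect the main obstacle to be making this ``minimal cell'' choice rigorous over the possibly non-local ring $k$, since generators of $\pi_nM/p$ need not form a cell of a minimal resolution. If that fails, the fallback is Lurie's criterion: a module over a connective ring is perfect iff it is almost perfect (each $\pi_i$ finitely presented in the appropriate sense, bounded below) and of finite Tor-amplitude. I would verify both properties by reducing to $k$ using step one and derived $p$-completeness.
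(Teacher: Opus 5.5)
Your implication $1\Rightarrow 2$ is correct, and so are the Nakayama step and the step that lifts generators of $\pi_nM/p$ to a map $R^m[n]\to M$ surjective on $\pi_n$. The aside about ``augmentation-type relations'' is spurious: $\pi_0\mbb{S}[G]=\mbb{Z}[\pi_0G]$, and the kernel of $\pi_0R\to k$ is exactly $p\pi_0R$, which is what your formula $\pi_n(N\otimes_Rk)=\pi_nN/p$ uses.

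The gap is the termination, and it is not where you locate it. The false step is ``since $M\otimes_Rk$ is perfect over $k$, it has a finite cell structure''. A perfect $k$-module is only a \emph{retract} of a finite free complex, i.e.\ a bounded complex of finitely generated projectives. The obstruction to a finite free model is its class in $K_0(k)/\mbb{Z}\cdot[k]$.

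For example, take $G=\mbb{Z}/\ell$ with $\ell\neq p$, and let $M$ be the summand of $R$ cut out by the idempotent $e=\frac{1}{\ell}\sum_{g}g\in\pi_0R$. Then $M$ is perfect, $p$-complete and connective. But $M\otimes_Rk=ke$ has $\mbb{F}_p$-Euler characteristic $1$, while every finite free $k$-complex has Euler characteristic divisible by $\ell=\dim_{\mbb{F}_p}k$. So $M$ is not a finite free cell complex over $R$. Hence no procedure that only attaches free cells $R^m[n]$ can ever reach $0$, however the generators are chosen.

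Conversely, locality of $k$ is a red herring. Suppose $M\otimes_Rk$ has a finite free model $F_c\to\cdots\to F_n$ concentrated in degrees $[n,c]$. Lift the inclusion of the bottom term $F_n[n]\to M\otimes_Rk$ to $R^{m_n}[n]\to M$; this is possible since $\pi_nM\to\pi_nM/p$ is onto, and the lift is surjective on $\pi_n$ by your Nakayama argument. The cofiber $M'$ is $(n+1)$-connective with $M'\otimes_Rk\simeq\sigma_{\geq n+1}F_\bullet$ (the brutal truncation). So the length drops, and after finitely many steps your Nakayama lemma kills the remainder. No minimality is needed.

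So the missing idea is how to handle projective, non-free pieces. There are two standard fixes.
\begin{itemize}
\item[(a)] Replace $M$ by $M\oplus M[1]$, of which $M$ is a retract. Its reduction has class $0$ in $K_0(k)$. So adding contractible complexes $Q\overset{=}{\to}Q$ to a projective model makes all terms free, and the argument above applies.
\item[(b)] Lift projectives. Since $\pi_0R$ is $p$-adically complete, idempotent matrices over $k$ lift to idempotent matrices over $\pi_0R$. Finitely generated projective $\pi_0R$-modules then lift to projective $R$-modules. The bottom-term argument runs verbatim with projective cells $\widetilde{P}_n[n]\to M$, following a projective model of $M\otimes_Rk$ in degrees $[n,c]$.
\end{itemize}

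Your fallback via Lurie's criterion is not a formal reduction to $k$ either. Almost perfectness does follow from iterating your cell-attachment step. But finite Tor amplitude over $R$ must be tested against all discrete $\pi_0R$-modules, including $p$-torsionfree ones such as $\pi_0R[1/p]$, about which $M\otimes_Rk$ says nothing directly. That requires a further completeness argument you have not given.

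For comparison, the paper simply observes $\mbb{F}_p[\pi_0G]=\pi_0(\mbb{S}[G]_{\wh{p}})/p$ and cites a general lemma for bounded below $p$-complete modules over $p$-complete connective ring spectra (\cite{clausen} Lemma 13.16). Your outline, completed via (a) or (b), would be a direct proof of that special case.
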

\begin{proof}
Note that $\mbb{F}_p[\pi_0G] = \pi_0(\mbb{S}[G]_{\wh{p}})/p$.  Thus this is a specialization of a general fact about $p$-complete bounded above modules over $p$-complete connective ring spectra, see \cite{clausen} Lemma 13.16.
\end{proof}

\begin{lemma}\label{reducefiniteness}
Let $p$ be a prime, $G$ a finite group, and $N\in\on{D}(\mbb{F}_p[G])$.  The following properties are equivalent:
\begin{enumerate}
\item $N\in\on{Perf}(\mbb{F}_p[G])$.
\item $N^H \in \on{Perf}(\mbb{F}_p)$ for all subgroups $H\subset G$.
\item $N_H \in \on{Perf}(\mbb{F}_p)$ for all subgroups $H\subset G$.
\end{enumerate}
\end{lemma}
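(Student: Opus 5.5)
The plan is to prove the cycle of implications $1\Rightarrow 2$, $1\Rightarrow 3$, $2\Rightarrow 1$, and to get $3\Rightarrow 1$ by duality.

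\textbf{Step 1: $1\Rightarrow 2$ and $1\Rightarrow 3$.} These are the easy directions. The class of $N$ for which $N^H\in\on{Perf}(\mbb{F}_p)$ is a thick subcategory of $\on{D}(\mbb{F}_p[G])$, since $(-)^H$ is exact. So it suffices to check the generator $N=\mbb{F}_p[G]$. For that module we have $\mbb{F}_p[G]^H\simeq \mbb{F}_p[G/H]$, because $\mbb{F}_p[G]$ is induced from the trivial group as an $H$-module and so has no higher cohomology. The same argument applies to $(-)_H$.

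\textbf{Step 2: reduce $3\Rightarrow 1$ to $2\Rightarrow 1$.} Apply $\mbb{F}_p$-linear duality $N\mapsto N^\ast=\on{RHom}_{\mbb{F}_p}(N,\mbb{F}_p)$. It satisfies $(N_H)^\ast\simeq (N^\ast)^H$, and it preserves $\on{Perf}(\mbb{F}_p[G])$, since $\mbb{F}_p[G]$ is a Frobenius algebra and the $\mbb{F}_p$-dual of $\mbb{F}_p[G]$ is again $\mbb{F}_p[G]$. One point needs care: dualizing twice does not recover $N$ unless $N$ has finite-dimensional homology. Hypothesis 3 with $H=1$ gives exactly that $N$ has finite total dimension, so $N\simeq N^{\ast\ast}$ and the reduction is valid.

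\textbf{Step 3: $2\Rightarrow 1$.} This is the real content. With $H=1$, hypothesis 2 says $N$ is bounded with finite-dimensional total homology. Then $N_G\to N^G$ has cofiber the Tate construction $N^{tG}$, and $N$ lies in $\on{Perf}(\mbb{F}_p[G])$ if and only if $N^{tG}$ vanishes together with its analogues for subgroups. In fact it is enough to use a Sylow $p$-subgroup $P$: restriction to $P$ detects perfectness, because $\mbb{F}_p[G]$ is a summand of $\on{Ind}_P^G\on{Res}_P$ when $[G:P]$ is prime to $p$.

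For a $p$-group $P$, $\mbb{F}_p[P]$ is local with residue field $\mbb{F}_p$. A bounded module with finite-dimensional homology is perfect if and only if $\mbb{F}_p\otimes_{\mbb{F}_p[P]}N=N_P$ is perfect over $\mbb{F}_p$, by the local criterion (Nakayama applied to a minimal resolution). So the goal becomes: hypothesis 2 for the subgroups of $P$ implies $N_P$ is bounded.

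Here I would proceed as in Lemma \ref{uniformbound}. Since $N$ is bounded, $N^H$ sits in degrees $\leq b$ and $N_H$ in degrees $\geq a$. If every $N^H$ is bounded, Lemma \ref{uniformbound} (applied to $\on{D}(\mbb{F}_p)$ with its standard t-structure) gives $N_H\simeq N^H$ for all $H$. In particular $N_P\simeq N^P$, which is perfect by hypothesis 2, and hence $N$ is perfect.

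\textbf{Main obstacle.} The delicate point is the hypothesis of Lemma \ref{uniformbound}. That lemma asks for boundedness of $N^H$, whereas hypothesis 2 gives perfectness of $N^H$, which is stronger, so the lemma applies. I expect the real care to go into checking that the Tate-vanishing-to-perfectness step is valid over $\mbb{F}_p$ rather than $\mbb{Z}$. The difficulty is that periodicity of $\mbb{F}_p^{tC}$ for cyclic groups $C$ still drives the argument. This is immediate for $C$ cyclic of order $p$, and for order prime to $p$ it is vacuous.
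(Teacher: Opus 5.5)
Your argument is correct and is essentially the paper's proof. Lemma \ref{uniformbound} applies because $N=N^{\{1\}}$ is bounded, and it identifies $N_H\simeq N^H$; the retraction $N\to\on{Ind}_P^G\on{Res}_P N\to N$, with composite $[G:P]$, reduces you to a Sylow $p$-subgroup; and for a $p$-group, perfectness is detected by $N_P=\mbb{F}_p\otimes_{\mbb{F}_p[P]}N$. The only differences are minor: the paper skips your duality detour in Step 2 by noting that Lemma \ref{uniformbound} makes conditions 2 and 3 literally the same, and your closing worry about periodicity over $\mbb{F}_p$ is moot because that lemma is used as a black box in the $\mbb{Z}$-linear category $\on{D}(\mbb{F}_p)$.
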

\begin{proof}
It's clear that 1 $\Rightarrow$ 2,3.  On the other hand, 2 $\Leftrightarrow$ 3 because under either 2 or 3, Lemma \ref{uniformbound} implies that $N_H\simeq N^H$ for all $H$, so that 2 and 3 are literally the same.  Thus it suffices to show that 3 implies 1, i.e.\ if $N_H\in \on{Perf}(\mbb{F}_p)$ for all $H\subset G$, then $N\in \on{Perf}(\mbb{F}_p[G])$.  Let $G_p$ be a $p$-Sylow subgroup.  The unit and counit for the adjunctions between restriction and induction, which agrees with co-induction, give maps
$$N \to \on{Ind}\on{Res}N \to N$$
with composition $\cdot [G:G_p]$, hence an isomorphism.  Thus $N$ is a summand of $\on{Ind}\on{Res}N$.  In this manner we reduce to the case when $G$ is a $p$-group.  But in that case the augmentation
$$\mbb{F}_p[G] \to \mbb{F}_p$$
is a nilpotent thickening and hence base-change along it, which corresponds to $(-)_G$, detects perfect complexes (see again \cite{clausen} Lemma 13.16, though this is well-known).\end{proof}

\begin{proposition}\label{pfinite}
Let $X$ be a Weil-Moore anima with underlying class formation either of $\mbb{Z}$-type, $\mbb{R}$-type, or compact type, and set $W=\pi_1X$.  For a prime $p$, the following properties are equivalent:
\begin{enumerate}
\item If $A$ denotes the sheaf of abelian groups on $\on{fEt}_X$ describing the class formation associated to $X$, then for all $Y\to X$ in $\on{fEt}_X$ connected, both $A(Y)[p]$ and $A(Y)/pA(Y)$ are finite abelian groups; or equivalently, the derived $A(Y)/p$ is finite in each degree.
\item For all $Y\to X$ finite etale connected, $R\Gamma(Y;\mbb{F}_p)\in\on{Perf}(\mbb{F}_p)$.
\item Let $R$ be a $p$-power torsion $E_1$-ring and let $M\in \on{Perf}(R)$ with $\Omega X$-action.  If we are in the $\mbb{Z}$-case, assume that $M$ descends to $B\wh{W}$.  Then $R\Gamma(X;M)\in \on{Perf}(R)$.
\end{enumerate}
\end{proposition}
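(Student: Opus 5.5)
The plan is to prove $1\Leftrightarrow 2$ directly from the identification of the class formation with untruncated $\mbb{R}/\mbb{Z}$-cohomology, and then obtain $2\Leftrightarrow 3$ via Lemmas \ref{checkperfmodp} and \ref{reducefiniteness}.

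\emph{$1\Leftrightarrow 2$.} By Lemma \ref{mooreisweilmoore}, for connected finite etale $Y\to X$ we have $R\Gamma(Y;\mbb{R}/\mbb{Z})\simeq F(Y)^\vee$. Here $F(Y)$ lives in degrees $[0,1]$, with $H_0=\mbb{Z}$ and $H_1=A(Y)$. Next take the cofiber sequence $\mbb{F}_p\to\mbb{R}/\mbb{Z}\overset{p}{\to}\mbb{R}/\mbb{Z}$. This gives
$$R\Gamma(Y;\mbb{F}_p)\simeq \on{fib}(p:F(Y)^\vee\to F(Y)^\vee)\simeq (F(Y)/p)^\vee[-1],$$
using derived Pontryagin duality (Corollary \ref{dpd}); note that $\mbb{F}_p$ is both discrete and compact.

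The complex $F(Y)/p$ has homology $\mbb{Z}/p$, $A(Y)/p$ and $A(Y)[p]$ in degrees $0,1,2$. It is an extension of $\mbb{Z}/p$ by the derived $A(Y)/p[1]$, so it is finite in each degree exactly when condition 1 holds. Since $R\Gamma(Y;\mbb{F}_p)$ is discrete by Proposition \ref{cohachaus}, its total finiteness is equivalent to finiteness of these homology groups. This gives $1\Leftrightarrow 2$.

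\emph{$3\Rightarrow 2$.} Apply 3 to $R=\mbb{F}_p$ and $M=\mbb{F}_p[W/H]$, using $R\Gamma(X;\mbb{F}_p[W/H])=R\Gamma(Y;\mbb{F}_p)$. In the $\mbb{Z}$-case this $M$ descends to $B\wh{W}$, as required.

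\emph{$2\Rightarrow 3$.} First reduce to the universal case $M=\mbb{S}[\Omega X]$-type coefficients. Concretely, write $R\Gamma(X;M)\simeq \on{Hom}$-type expression $R\Gamma(X;M)=\mc{L}\otimes_{\mbb{S}[G]}M$ or its dual. Here $\mc{L}$ is the $p$-complete chains on the pro-finite cover, in the sense of Remark \ref{prodiscrete}, so that discrete $p$-torsion coefficients only see $\wh{X}$.

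Concretely, use Remark \ref{prodiscrete} to replace $X$ by the pro-anima $\wh X$ for $p$-torsion coefficients. Then pick a finite quotient $G$ of $\wh W$ through which $M$'s action on $\pi_*M\otimes_R\pi_0R/$(nilradical) factors, and consider $N:=R\Gamma(\widetilde X_G;\mbb{F}_p)\in\on{D}(\mbb{F}_p[G])$, where $\widetilde X_G$ is the corresponding $G$-Galois cover. By 2 and descent, $N^H=R\Gamma(\widetilde X_G/H;\mbb{F}_p)$ is perfect for all $H\subset G$. Lemma \ref{reducefiniteness} then gives $N\in\on{Perf}(\mbb{F}_p[G])$. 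Lemma \ref{checkperfmodp} upgrades this to perfectness of the $p$-complete chains over $\mbb{S}[G]_{\wh p}$, or over the relevant group ring of $\Omega$ of the profinite truncation.

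Finally, $R\Gamma(X;M)$ is then obtained from $M$ by a finite process (tensoring a perfect $\mbb{S}[G]_{\wh p}$-module with $M$ over the group ring), and so is perfect over $R$. Bounded-belowness needed in Lemma \ref{checkperfmodp} comes from Theorem \ref{strictcohdim}.

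The main obstacle is this last step. One has to make precise that cohomology on $X$ with $p$-torsion local systems is corepresented by a single $p$-complete module over $\mbb{S}[\pi_1]$ of a finite stage, despite the higher homotopy and the inverse limit. In the $\mbb{Z}$-case one must also use the descent hypothesis to pass from $W$ to $\wh W$. I would handle this by filtering $M$ via a finite nilpotent filtration whose graded pieces are $\mbb{F}_p$-modules with action factoring through some $G$, which reduces to the $\mbb{F}_p[G]$ statement above.
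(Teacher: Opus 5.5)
Your arguments for $1\Leftrightarrow 2$ and $3\Rightarrow 2$ are the same as the paper's. There is one harmless slip: dualizing $F(Y)\overset{p}{\to}F(Y)\to F(Y)/p$ gives $R\Gamma(Y;\mbb{F}_p)\simeq (F(Y)/p)^\vee$ with no shift.

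The gap is in $2\Rightarrow 3$. Your formula $R\Gamma(X;M)\simeq \mc{L}\otimes_{\mbb{S}[G]}M$, with $G$ a finite quotient of $W$, presupposes that $M$ is an $\mbb{S}[G]$-module. That is, it assumes the action $\Omega X\to\on{Aut}_R(M)$ factors through a finite quotient of $\pi_0\Omega X=W$. It need not. The target $\on{Aut}_R(M)$ has higher homotopy (e.g.\ $\pi_1\on{Aut}_R(M)=\on{Ext}^{-1}_R(M,M)$), and the action can be nontrivial on the higher homotopy of $X$.

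For instance, discrete coefficient systems on $X_{\mbb{C}}$ are those on $S^2$, and here $\wh{W}=1$. A nonzero class in $H^2(S^2;\mbb{F}_p)$ gives a non-split extension $\mc{M}$ of $\mbb{F}_p$ by $\mbb{F}_p[1]$. Its action factors through no finite quotient of $W$. Moreover $R\Gamma(X_{\mbb{C}};\mc{M})$ has total dimension $2$, not the $4$ predicted by $M\otimes R\Gamma(S^2;\mbb{F}_p)$.

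Your proposed repair is a nilpotent filtration of $M$ with $\mbb{F}_p$-module graded pieces. This does not close the gap, because it discards $R$-linearity. An $R$-module with a finite filtration whose graded pieces are perfect over $\mbb{F}_p$ need not be perfect over $R$; already $\mbb{F}_p$ over $R=\mbb{Z}/p^2$ is not. So this route can give finiteness of homology, but never membership in $\on{Perf}(R)$.

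The missing idea is to keep the higher homotopy of the action and absorb it into the ring.

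\begin{itemize}
\item By Remark \ref{prodiscrete}, the classifying map of $M$ factors through a map $X\to A$. Here $A$ is a discrete anima with $\pi_0A=\ast$ and $\pi_1A=W/N$ finite, but with $\pi_nA$ finitely generated and possibly nonzero for $n\geq 2$. In the $\mbb{Z}$-case you must first descend to $B\wh{W}$; this is where that hypothesis is used.
\item Then $M$ is an $R$-$\mbb{S}[\Omega A]_{\wh{p}}$-bimodule. Since $R\Gamma(X;-)$ commutes with colimits of discrete coefficient systems (finite cohomological dimension plus commutation with filtered colimits), one gets
\[
R\Gamma(X;M)\simeq M\otimes_{\mbb{S}[\Omega A]_{\wh{p}}}R\Gamma(X;\mbb{S}[\Omega A]_{\wh{p}}).
\]
So it suffices to show $R\Gamma(X;\mbb{S}[\Omega A]_{\wh{p}})\in\on{Perf}(\mbb{S}[\Omega A]_{\wh{p}})$.
\item Lemma \ref{checkperfmodp} is stated for an arbitrary group object in anima, so it applies to $\Omega A$. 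It reduces the question to base change along $\mbb{F}_p[\pi_0\Omega A]=\mbb{F}_p[W/N]$, i.e.\ to $R\Gamma(X;\mbb{F}_p[W/N])\in\on{Perf}(\mbb{F}_p[W/N])$.
\item At that point your use of Lemma \ref{reducefiniteness} together with condition 2 finishes the argument, exactly as in the paper.
\end{itemize}
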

\begin{proof}
Suppose 1 holds, i.e.\ $A(Y)/p$ is finite for all $Y$.  Then if $F$ denotes the cosheaf on $\on{fEt}_X$ giving the condensed refined class formation, then as $F(Y)$ and $A(Y)[1]$ only differ by $\mbb{Z}$ in degree $0$, this implies (indeed is equivalent to) the statement that $F(Y)/p$ is finite.  However by definition
$$F(Y) = R\Gamma(Y;\mbb{R}/\mbb{Z})^\vee$$
so that
$$F(Y)/p = R\Gamma(Y;\mbb{F}_p)^\vee$$
By Pontryagin duality we therefore see that 1 $\Leftrightarrow$ 2.  Evidently, 2 is the special case of 3 where $R=\mbb{F}_p$ and $M=\mbb{F}_p[W/H]$ as $H$ runs through open subgroups of $W=\pi_1X$.  Thus to finish proving 1 $\Leftrightarrow$ 2 $\Leftrightarrow$ 3 it suffices to show that 2 implies 3.

The coefficient system $M$ is encoded by a condensed group homomorphism
$$\Omega X \to \ul{\on{Aut}}_R(M).$$
Since $M$ is a perfect $R$-module, the target is discrete.  Thus by Remark \ref{prodiscrete} there is a discrete anima $A$ with a map $X\to A$ such that $\pi_0A=\ast$ and $\pi_1A=W/N$ is the quotient of $\pi_1X=W$ by an open normal subgroup of finite index, and such that $M$ descends to $A$.  Thus we get
$$\Omega X \to \Omega A \to \on{Aut}_R(M).$$
The second map corresponds to an $E_1$-ring homomorphism
$$\mbb{S}[\Omega A]_{\wh{p}} \to \on{End}_R(M),$$
making $M$ into an $R$-$\mbb{S}[\Omega A]_{\wh{p}}$-bimodule.  By commutation of $R\Gamma(X;-)$ with colimits of discrete coefficient systems (a consequence of the finite cohomological dimension plus the commutation of cohomology with filtered colimits), we deduce
$$R\Gamma(X;M) \simeq M\otimes_{\mbb{S}[\Omega A]_{\wh{p}}}R\Gamma(X;\mbb{S}[\Omega A]_{\wh{p}}).$$
Thus we reduce to showing that $R\Gamma(X;\mbb{S}[\Omega A]_{\wh{p}})\in \on{Perf}(\mbb{S}[\Omega A]_{\wh{p}})$.  By Lemma \ref{checkperfmodp} we can check this after base-change to $\mbb{F}_p[W/N]$, and again by commutation of $R\Gamma(X;-)$ with colimits of discrete coefficient systems this reduces to showing
$$R\Gamma(X;\mbb{F}_p[W/N])\in\on{Perf}(\mbb{F}_p[W/N]).$$
Then by Lemma \ref{reducefiniteness} this amounts to 2 for every finite etale $Y\to X$ covered by the Galois cover corresponding to $N$, as desired. 
\end{proof}

We discuss the examples from number theory, to see when this $p$-finiteness holds.  Note that in verifying the $p$-finiteness for the class groups as in 1, we are free to work modulo finitely generated abelian groups and uniquely $p$-divisible groups, since those clearly have the desired mod $p$ finiteness property.

\begin{example}

\begin{enumerate}
\item Suppose $K$ is a nonarchimedean local field of characteristic $0$.  Modulo finitely generated abelian groups, the class group $K^\times$ is isomorphic (via exp and log) to $\mc{O}_K$, and $\mc{O}_K$ is isomorphic to a finite direct sum of copies of $\mbb{Z}_\ell$ where $\ell$ is the residue characteristic, hence has the required mod $p$ finiteness no matter what $\ell$ is.  More generally we can easily see that the same conclusion holds as long as the characteristic of $K$ is different from $p$.  

Thus, for a nonarchimedean local field $K$, the corresponding Weil-Moore anima $X_K$ has the $p$-finiteness property whenever $K$ has characteristic $\neq p$.  (This is of course just a restatement of the usual finiteness of Galois cohomology in these cases.)
\item Suppose $K$ is an archimedean local field, so $K=\mbb{R}$ or $\mbb{C}$.  In this case the mod $p$ finiteness for $K^\times$ is obvious.  Thus for the corresponding Weil-Moore anima $X_K$, we have the above $p$-finiteness property.
\item Suppose $K$ is a global field and $S$ is a nonempty finite set of places of $F$.  To verify the $p$-finiteness conditions for $X=X_{K,S}$, we have to investigate the mod $p$ finiteness of the class group
$$A_{K,S}:= K^\times\backslash \mathbb{A}_K^\times/\prod_{\nu\not\in S} (K_\nu^\times)^1.$$
Using the exact sequence in Example \ref{exrcf}, we see that modulo finitely generated abelian groups and uniquely divisible groups, $A_{K,S}$ identifies with $\prod_{\nu\in S}K^\times_\nu$. Thus from the local case we conclude the desired mod $p$ finiteness for $A_{K,S}$, and hence the $p$-finiteness property of $X_{K,S}$, as long as $K$ has characteristic $\neq p$.
\end{enumerate}
\end{example}

In applying condition 3, it is a bit annoying to have to assume the extra hypothesis that $M$ comes from a $\wh{W}$-action in the $\mbb{Z}$-type cases.  We can explain exactly when this extra hypothesis is necessary, as follows.

\begin{lemma}\label{whenignorewhat}
Let $X$ be  Weil-Moore anima of $\mbb{Z}$-type, so $X=BW$ with
$$W\simeq G\times_{\wh{\mbb{Z}}}\mbb{Z}$$
with $G$ a profinite group of strict cohomologial dimension 2 equipped with a surjective homomorphism $G\to\wh{\mbb{Z}}$ (see Proposition \ref{ztype}).  Let
$$W^1 = \on{ker}(W\to\mbb{Z}) = \on{ker}(G\to\wh{\mbb{Z}}).$$
Then the following conditions are equivalent.

\begin{enumerate}
\item For every open subgroup $I\subset W^1$, we have
$$R\Gamma(BI;\mbb{F}_p)\in\on{Perf}(\mbb{F}_p).$$
In other words, $BW^1$ satisfies the $p$-finiteness properties of Proposition \ref{pfinite}.
\item For every $p$-power torsion $E_1$-ring $R$ and every $M\in\on{Perf}(R)$ with $W$-action, we have
$$R\Gamma(X;M)\in \on{Perf}(R).$$
That is, 3 in Proposition \ref{pfinite} holds for $X,R,M$ without the assumption that $M$ descends to $\wh{X}$.

\end{enumerate}
\end{lemma}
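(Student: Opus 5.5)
Since $X=BW$, for a perfect $R$-module $M$ with $W$-action the Hochschild--Serre decomposition along $1\to W^1\to W\to\mbb{Z}\to 1$ gives
$$R\Gamma(X;M)\simeq R\Gamma\bigl(B\mbb{Z};R\Gamma(BW^1;M)\bigr)=\on{fib}\bigl(R\Gamma(BW^1;M)\overset{1-\gamma}{\longrightarrow}R\Gamma(BW^1;M)\bigr),$$
where $\gamma$ is a lift of the generator of $\mbb{Z}$. So perfectness of $R\Gamma(BW^1;M)$ implies perfectness of $R\Gamma(X;M)$. The converse fails in general, so for 2 $\Rightarrow$ 1 I will choose $M$ so that $R\Gamma(X;M)$ is essentially $R\Gamma(BI;\mbb{F}_p)$.

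For 1 $\Rightarrow$ 2, I will show $R\Gamma(BW^1;M)\in\on{Perf}(R)$. As in the proof of Proposition \ref{pfinite}, $M$ is a discrete coefficient system, so the $W$-action on $M$ factors through an action of a discrete anima with $\pi_1$ a quotient of $W$ by an open normal subgroup $N$. Its restriction to $W^1$ therefore factors through a finite quotient $W^1/(N\cap W^1)$. The profinite group $W^1$ has finite cohomological dimension, being at most $2$ by Proposition \ref{ztype}, so $R\Gamma(BW^1;-)$ commutes with colimits of discrete coefficients. The reduction of Proposition \ref{pfinite} via Lemmas \ref{checkperfmodp} and \ref{reducefiniteness}, with $X$ replaced by $BW^1$, then reduces the claim to $R\Gamma(BI;\mbb{F}_p)\in\on{Perf}(\mbb{F}_p)$ for the open subgroups $I\supseteq N\cap W^1$. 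That is hypothesis 1. Finally $R\Gamma(X;M)$ is the fiber of an endomorphism of a perfect complex, hence perfect.

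For 2 $\Rightarrow$ 1, fix an open subgroup $I\subset W^1$ and take $H\subset W$ open of finite index with $H\cap W^1\subseteq I$. Shrinking $I$ only strengthens the statement, since Lemma \ref{reducefiniteness} recovers the larger groups, so I may assume $I=H\cap W^1$. Replacing $W$ by $H$ and using Shapiro, I may take $I=W^1$ itself. The plan is to apply 2 with
$$R=\mbb{F}_p[t^{\pm 1}],\qquad M=R,$$
with $W$ acting through $W\to\mbb{Z}$, the generator acting by multiplication by $t$. This $M$ is perfect over $R$ but does not descend to $\wh{W}$. Then
$$R\Gamma(X;M)=\on{fib}\bigl(R\Gamma(BW^1;\mbb{F}_p)\otimes_{\mbb{F}_p}\mbb{F}_p[t^{\pm1}]\overset{1-\gamma t}{\longrightarrow}R\Gamma(BW^1;\mbb{F}_p)\otimes_{\mbb{F}_p}\mbb{F}_p[t^{\pm1}]\bigr),$$
which is the $\mbb{F}_p[t^{\pm1}]$-module $H^\ast(BW^1;\mbb{F}_p)$ with $t$ acting through $\gamma^{-1}$, up to a shift. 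By 2 this module is perfect over $\mbb{F}_p[t^{\pm1}]$, so it is finitely generated over $\mbb{F}_p[t^{\pm1}]$ in each degree.

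The main obstacle is deducing finiteness over $\mbb{F}_p$ from finite generation over $\mbb{F}_p[t^{\pm1}]$. The point is that each $H^i(BW^1;\mbb{F}_p)$ is a colimit of the groups $H^i(BI';\mbb{F}_p)$ over open $I'$. The $\gamma$-action is therefore locally finite: every element generates a finite-dimensional $\mbb{F}_p[\gamma]$-submodule, since $\gamma$ permutes finitely many open subgroups of $W^1$ of a given index in a cycle. A finitely generated $\mbb{F}_p[t^{\pm1}]$-module on which $t$ acts locally finitely is torsion, hence finite-dimensional over $\mbb{F}_p$. This gives $H^i(BW^1;\mbb{F}_p)$ finite for all $i$, and the cohomological dimension bound makes it vanish for $i>2$, so $R\Gamma(BW^1;\mbb{F}_p)\in\on{Perf}(\mbb{F}_p)$. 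The step I would check most carefully is that perfectness of this particular $R\Gamma(X;M)$ forces finite generation of each cohomology group rather than only of the total complex; since $\mbb{F}_p[t^{\pm1}]$ is a PID, perfect complexes have finitely generated homology, so this should hold.
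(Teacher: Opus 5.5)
Your argument follows the paper's proof. For $1\Rightarrow 2$ you use perfectness of $R\Gamma(BW^1;M)$ (Proposition \ref{pfinite} applied to $BW^1$) and then take the fiber of $1-\gamma$. For $2\Rightarrow 1$ you apply 2 to the test module $\mbb{F}_p[W/I]$ over a Laurent polynomial ring, identify $R\Gamma(X;M)$ with $R\Gamma(BI;\mbb{F}_p)[-1]$, and observe that the resulting module is torsion. Your Shapiro reduction to $I=W^1$, where the test module is just $\mbb{F}_p[t^{\pm1}]$, is a harmless substitute for the paper's choice of a splitting of $W/I\to\mbb{Z}$ over $d\mbb{Z}$.

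However, your justification for local finiteness of the $\gamma$-action is wrong, and this is the crux of $2\Rightarrow 1$.
\begin{itemize}
\item $W^1$ need not have finitely many open subgroups of a given index. For a local field of residue characteristic $\ell$, the inertia group has infinitely many open normal subgroups of index $\ell$. For $\ell\neq p$ this happens even though both conditions of the lemma hold.
\item $H^i(BW^1;\mbb{F}_p)$ is the colimit of the $H^i(W^1/I';\mbb{F}_p)$ under inflation. It is not the colimit of the $H^i(BI';\mbb{F}_p)$ under restriction; that colimit is $0$ for $i>0$.
\end{itemize}
The correct reason is the paper's observation that the action factors through the profinite completion. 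The subgroups $V\cap W^1$, with $V\subset G$ open normal, are cofinal among open subgroups of $W^1$ and are normal in $W$. So every class lies in the image of one of the finite, $\gamma$-stable groups $H^i(W^1/(V\cap W^1);\mbb{F}_p)$. Equivalently, conjugation makes $H^i(BW^1;\mbb{F}_p)$ a discrete $G/W^1=\wh{\mbb{Z}}$-module, so each class is fixed by some $\gamma^n$ with $n\geq 1$.

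With this fix the rest of your argument goes through: a finitely generated $\mbb{F}_p[t^{\pm1}]$-module with locally finite $t$-action is finite-dimensional over $\mbb{F}_p$, and $\on{cd}(W^1)\leq 2$.

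Separately, your shrinking step does not work as justified: Lemma \ref{reducefiniteness} needs perfectness over the group ring, and you only know perfectness over $\mbb{F}_p$. The step is also unnecessary. If $V\cap W^1\subseteq I$, then the preimage $H$ of $IV$ in $W$ satisfies $H\cap W^1=I$.
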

\begin{proof}

Assume 1.  Then by Proposition \ref{pfinite} applied to $X=BW^1$ we have $R\Gamma(BW^1;M)\in \on{Perf}(R)$ for all $M$ and $R$ as in 2.  On the other hand  $
R\Gamma(X;M)$ is the $\mbb{Z}$-fixed points of $R\Gamma(BW^1;M)$, hence sits in a fiber sequence with two copies of $R\Gamma(BW^1;M)$, whence 2.

Conversely, assume 2.  To show 1 it suffices to show it for $I\subset W^1$ open and normal in $W$, as every open subgroup of $W^1$ contains one of these.  The surjection $W/I \to \mbb{Z}$ splits over $d\mbb{Z}$ for some $d>0$; if we choose such a splitting, then we can view $\mbb{F}_p[W/I] \in\on{Perf}(\mbb{F}_p[d\mbb{Z}])$.  On the other hand it has the $W$-action commuting with this structure, so from 2 we get
$$R\Gamma(X;\mbb{F}_p[W/I])\in\on{Perf}(\mbb{F}_p[d\mbb{Z}]).$$
On the other hand
$$R\Gamma(X;\mbb{F}_p[W/I])\simeq R\Gamma(BI;\mbb{F}_p)[-1],$$
giving $R\Gamma(BI;\mbb{F}_p)\in \on{Perf}(\mbb{F}_p[d\mbb{Z}])$, where the $d\mbb{Z}$-action comes from restricting the canonical $W/I$-action on $BI$ to our chosen splitting.

However, note that $I$ can equally well be viewed as a normal subgroup of $G=\wh{W}$, so the action of $W/I$ on $BI$ factors through the profinite completion.  We deduce that the action of $d\mbb{Z}$ on $BI$ likewise factors through the profinite completion.  In other words, if we view $\on{Spec}(\mbb{F}_p[d\mbb{Z}])$ as $\mbb{G}_m$, then $R\Gamma(I;\mbb{F}_p)$ is supported at the torsion.  Thus, if perfect, it must be perfect over $\mbb{F}_p$, showing that 2 implies 1.
\end{proof}

\begin{example}
Let $p$ be a prime.  If $X$ is the Weil-Moore anima attached to a nonarchimedean local field of residue characteristic $\neq p$, or attached to a function field of characteristic $\neq p$ with restricted ramification at a finite nonempty set of places, then the corresponding $BW^1$ also has $p$-finiteness.  Thus, by the above, Proposition \ref{pfinite} part 3 holds for all $M$ and $R$ without the technical hypothesis that $M$ descend to $\wh{X}$.

On the other hand, if $X$ is the Weil-Moore anima attached to a finite extension of $\mbb{Q}_p$, then although $X$ satisfies the $p$-finiteness properties of Proposition \ref{pfinite}, the corresponding $BW^1$ does not, and therefore the technical hypothesis on $M$ is necessary in 3 of Proposition \ref{pfinite}.

We will see in Remark \ref{globaldescends} that if $M$ comes from a global Weil-Moore repesentation, then $M$ automatically descends to $\wh{X}$, so from a global perspective this technicality is not so important.  From a purely local perspective, however, it indicates that in this context ($p$-adic local fields with $p$-adic coefficients) it is better to replace Weil-Moore representations and Weil-Moore cohomology by their $(\varphi,\Gamma)$-analogs, with cohomology given by the Herr complex, as in \cite{emertongee}.  The same remark will also apply to our discussion of Poitou-Tate duality.
\end{example}

Following Tate, whenever this $p$-finiteness holds we can ask about the Euler characteristic.  More precisely, given a $p$-power torsion $E_1$-ring $R$ and an $M\in\on{Perf}(R)$ with $\Omega X$-action (assumed to descend to $\wh{X}$ if $X$ is $\mbb{Z}$-type), we can ask to describe the class
$$[R\Gamma(X;M)] \in K_0(R).$$

The magic of Tate's formulas is that in number-theoretic situations, this ``Euler characteristic" only depends on the class $[M]\in K_0(R)$, forgetting the $\Omega X$-action.  (Rather, in the original setting of Galois cohomology, this is only true up to archimedean corrections; but passing to Weil-Moore anima fixes this.)  In topological situations, say for a local system on a finite CW-complex, this kind of Euler characteristic formula is simple to explain, because we can break our space up into contractible pieces.  But for, e.g.\ a general profinite group with $p$-finiteness it can easily fail: see the treatment by Serre in his article on Euler-Poincar\'{e} measures, \cite{serreep}, which analyzes this phenomenon.  Here we rephrase Serre's arguments, but avoiding the scaffolding of Euler-Poincar\'{e} measures.

\begin{theorem}\label{chi}
Let $X$ be a Weil-Moore anima with underlying class formation either of $\mbb{Z}$-type, $\mbb{R}$-type, or compact type, and let $W=\pi_1X$ denote ``the" associated Weil group and $A$ the induced class formation (a sheaf on $\on{fEt}_X$).

Let $p$ be a prime number, and assume the $p$-finiteness properties of cohomology of $X$ from Proposition \ref{pfinite} are satisfied.  Then for any integer $\chi \in\mbb{Z}$, the following properties are equivalent:
\begin{enumerate}
\item For every $G$-Galois cover $Y\to X$, the class of
$$A(Y)_{\wh{p}}[\frac{1}{p}]$$
(derived $p$-completion followed by inverting $p$) in $K_0(\on{Perf}(\mbb{Q}_p[G]))$ is equal to
$$[\mbb{Q}_p] - \chi\cdot[\mbb{Q}_p[G]]$$
(trivial representation minus $\chi$ times the regular representation).
\item For every $G$-Galois cover $Y\to X$, the class of $R\Gamma(Y;\mbb{Q}_p)\in \on{Perf}(\mbb{Q}_p[G])$ in $K_0(\mbb{Q}_p[G])$ is equal to $\chi\cdot [\mbb{Q}_p[G]]$.
\item For every $G$-Galois cover $Y\to X$, the class of $R\Gamma(Y;\mbb{F}_p)\in \on{Perf}(\mbb{F}_p[G])$ in $K_0(\mbb{F}_p[G])$ is equal to $\chi\cdot [\mbb{F}_p[G]]$.
\item For every $p$-power torsion $E_1$-ring $R$ and every $M\in\on{Perf}(R)$ with $\Omega X$-action,
$$[R\Gamma(X;M)] = \chi\cdot [M] \in K_0(R),$$
where if $X$ is of $\mbb{Z}$-type we assume $M$ comes from a $\wh{W}$-action.
\end{enumerate}
\end{theorem}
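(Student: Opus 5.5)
The plan is to prove the chain of equivalences $1\Leftrightarrow 2\Leftrightarrow 3\Leftrightarrow 4$, using Pontryagin duality for the first link, Brauer/decomposition-map theory for the second, and the Morita-type reduction from the proof of Proposition \ref{pfinite} for the third.

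\textbf{Step $1\Leftrightarrow 2$.} By Lemma \ref{mooreisweilmoore} the cosheaf of the refined class formation is $F(Y)=R\Gamma(Y;\mbb{R}/\mbb{Z})^\vee$, with $H_0F(Y)=\mbb{Z}$ and $H_1F(Y)=A(Y)$. Taking derived $p$-completion and using $(\mbb{R}/\mbb{Z})^\vee=\mbb{Z}$ identifies $F(Y)_{\wh{p}}$ with the Pontryagin/$\mbb{Z}_p$-linear dual of $R\Gamma(Y;\mbb{Q}_p/\mbb{Z}_p)$. Equivalently, after inverting $p$ it is the $\mbb{Q}_p$-dual of $R\Gamma(Y;\mbb{Q}_p)$ placed in the appropriate degrees. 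This is $G$-equivariantly an identity in $\on{Perf}(\mbb{Q}_p[G])$; finiteness comes from Proposition \ref{pfinite}. The class of $F(Y)_{\wh{p}}[1/p]$ equals $[\mbb{Q}_p]-[A(Y)_{\wh{p}}[1/p]]$, with the sign coming from the shift. Since $G$-representations over $\mbb{Q}_p$ are self-dual up to the classes involved (a representation and its dual have the same character up to $g\mapsto g^{-1}$, and the regular representation is self-dual), condition 1 translates into $[R\Gamma(Y;\mbb{Q}_p)]=\chi[\mbb{Q}_p[G]]$. I need to track signs and degrees carefully here, and check that the trivial-representation term cancels against $H_0=\mbb{Z}$.

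\textbf{Step $2\Leftrightarrow 3$.} I use the lattice $R\Gamma(Y;\mbb{Z}_p)\in\on{Perf}(\mbb{Z}_p[G])$ (perfectness follows from Lemma \ref{checkperfmodp} or Proposition \ref{pfinite}). Its reduction mod $p$ is $R\Gamma(Y;\mbb{F}_p)$ and its generic fiber is $R\Gamma(Y;\mbb{Q}_p)$. The maps $K_0(\mbb{Z}_p[G])\to K_0(\mbb{F}_p[G])$ and $K_0(\mbb{Z}_p[G])\to K_0(\mbb{Q}_p[G])$ are respectively an isomorphism (idempotent lifting) and injective (the $cde$ triangle: the Cartan map $e$ is injective). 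Both send $[\mbb{Z}_p[G]]$ to the regular class. Hence both conditions 2 and 3 are equivalent to $[R\Gamma(Y;\mbb{Z}_p)]=\chi[\mbb{Z}_p[G]]$ in $K_0(\mbb{Z}_p[G])$.

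\textbf{Step $3\Leftrightarrow 4$.} The implication $4\Rightarrow 3$ is the case $R=\mbb{F}_p[G]$, $M=\mbb{F}_p[G]$ with its $W$-action, since $R\Gamma(X;\mbb{F}_p[G])=R\Gamma(Y;\mbb{F}_p)$. For $3\Rightarrow 4$, I follow the proof of Proposition \ref{pfinite}. The module $M$ descends to a discrete $\pi$-finite-on-$\pi_1$ anima $A$ with $\pi_1A=W/N$, and then
\[
R\Gamma(X;M)\simeq M\otimes_{\mbb{S}[\Omega A]_{\wh{p}}}R\Gamma(X;\mbb{S}[\Omega A]_{\wh{p}}),
\]
so it suffices to show
\[
[R\Gamma(X;\mbb{S}[\Omega A]_{\wh{p}})]=\chi[\mbb{S}[\Omega A]_{\wh{p}}]\quad\text{in } K_0(\mbb{S}[\Omega A]_{\wh{p}}),
\]
because $K_0$ is functorial under the bimodule tensor product.

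\textbf{Main obstacle.} The hard part is this last claim. I would use that $K_0$ of a connective $p$-complete ring agrees with $K_0$ of $\pi_0/p$: for $\mbb{S}[\Omega A]_{\wh{p}}$ this ring is $\mbb{F}_p[W/N]$, because $\pi_0$ reduction is a nilpotent-type extension on idempotents, following \cite{clausen} Lemma 13.16. The base change of the cohomology to $\mbb{F}_p[W/N]$ is $R\Gamma(Y_N;\mbb{F}_p)$ with $Y_N$ the Galois cover for $N$, and its class is $\chi[\mbb{F}_p[W/N]]$ by condition 3. The technical hypothesis in the $\mbb{Z}$-type case enters only to invoke Proposition \ref{pfinite}.
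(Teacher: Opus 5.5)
Your proposal is correct and follows the paper's proof essentially step for step. The paper also proves $1\Leftrightarrow 2$ from the fact that $F(Y)=R\Gamma(Y;\mbb{R}/\mbb{Z})^\vee$ differs from $A(Y)[1]$ by $\mbb{Z}$ in degree $0$, together with self-duality of the regular representation. It proves $2\Leftrightarrow 3$ from $K_0(\mbb{Z}_p[G])\simeq K_0(\mbb{F}_p[G])$ and $K_0(\mbb{Z}_p[G])\hookrightarrow K_0(\mbb{Q}_p[G])$. It proves $3\Leftrightarrow 4$ by descending $M$ to a discrete $A$ with $\pi_1A=W/N$, writing $R\Gamma(X;M)$ as a tensor product over $\mbb{S}[\Omega A]_{\wh{p}}$, and using idempotent lifting to identify its $K_0$ with that of $\mbb{F}_p[W/N]$.

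One small imprecision: in the $\mbb{Z}$-type case, the assumption that $M$ comes from a $\wh{W}$-action is what lets you descend $M$ to a discrete anima with finite $\pi_1$ via Remark \ref{prodiscrete}. It is not merely an input to Proposition \ref{pfinite}.
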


\begin{proof}
First we check that 1 and 2 make sense.  In 1, note that the $p$-finiteness property (the derived $A(Y)/p$ is finite) implies that $A(Y)_{\wh{p}}\in \on{Perf}(\mbb{Z}_p)$ and hence $A(Y)_{\wh{p}}[1/p]\in \on{Perf}(\mbb{Q}_p)$ and hence $A(Y)_{\wh{p}}[1/p]\in \on{Perf}(\mbb{Q}_p[G])$ (as representation theory of finite groups is semisimple in characteristic zero), so the statement makes sense.  Similarly, by commutation of cohomology with colimits we have
$$R\Gamma(Y;\mbb{Q}_p) = R\Gamma(Y;\mbb{Z}_p)[1/p]$$
so that $R\Gamma(Y;\mbb{Q}_p)\in\on{Perf}(\mbb{Q}_p[G
])$ and so the statement of 2 makes sense.

To see 1 $\Leftrightarrow$ 2, note that the regular representation is self-dual, so 2 means that that the class of $F(Y)_{\wh{p}}[1/p]$ is equal to the class of $\chi$ times the regular representation.  As $F(Y)$ and $A(Y)[1]$ differ exactly by $\mbb{Z}$ in degree $0$, this gives 1 $\Leftrightarrow$ 2.

That 2 $\Leftrightarrow$ 3 follows from the following results in modular representation theory, see \cite{serrerepthy}: first,
$$K_0(\mbb{Z}_p[G])\overset{\sim}{\rightarrow} K_0(\mbb{F}_p[G]);$$
and second
$$K_0(\mbb{Z}_p[G])\hookrightarrow K_0(\mbb{Q}_p[G]).$$
(The first fact is rather formal and holds for any $\mbb{Z}_p$-algebra which is finitely generated as a $\mbb{Z}_p$-module, but the second fact is not.  Ofer Gabber communicated the following example showing that the second fact can fail if $\mbb{Z}_p[G]$ is replaced by an arbitrary finite free $\mbb{Z}_p$-algebra $A$: take $A$ to be the subring of 2x2-matrices over $\mbb{Z}_p$ consisting of those matrices whose bottom-left entry is divisible by $p$.  The source is $\mbb{Z}\oplus\mbb{Z}$ and the target is $\mbb{Z}$.)

It remains to show that 3 and 4 are equivalent.  Note that 3 is the special case of 4 where $R=\mbb{F}_p[W/N]$ and $M=R$ for arbitrary open normal subgroups $N\subset W$ of finite index.  For the converse, assume 3 holds.  As in the proof of Proposition \ref{pfinite} we can find $X\to A$ with $A$ discrete, $\pi_0A=\ast$, and $\pi_1A=W/N$ such that $M$ comes from $A$; then $M$ becomes an $R$-$\mbb{S}[\Omega A]_{\wh{p}}$-bimodule and we have
$$R\Gamma(X;M) \simeq M\otimes_{\mbb{S}[\Omega A]_{\wh{p}}}R\Gamma(X;\mbb{S}[\Omega A]_{\wh{p}}).$$
Thus it suffices to show that
$$[R\Gamma(X;\mbb{S}[\Omega A]_{\wh{p}})] = \chi\cdot [\mbb{S}[\Omega A]_{\wh{p}}] \in K_0(\mbb{S}[\Omega A]_{\wh{p}}).$$
But isomorphism classes of finitely generated projective modules over $\mbb{S}[\Omega A]_{\wh{p}}$ and $\mbb{F}_p[W/N]$ (which is its $\pi_0$ mod $p$) are the same for elementary reasons of lifting idempotents, hence they have the same $K_0$, so we can check this on base-change to $\mbb{F}_p[W/N]$ where it amounts to 3, as desired.
\end{proof}

Now we investigate the examples from number theory to see that in each case the Weil-Moore anima has a well-defined Euler characteristic $\chi\in\mbb{Z}$ (in the sense that the above equivalent conditions hold).

\begin{example}
\begin{enumerate}
\item Let $K$ be a nonarchimedean local field of characteristic $0$. To investigate condition 1 for the Weil-Moore anima $X_K$, let $L$ be a finite Galois extension of $K$; we need to understand the class in the Grothendieck group of $\mbb{Q}_p$-linear $\on{Gal}(L/K)$-representations of
$$V:= (L^\times)_{\wh{p}}[1/p]$$
where we recall that $(-)_{\wh{p}}$ denotes derived $p$-completion.  For these purposes we can clearly work modulo finite groups and uniquely $p$-divisible groups, and we can also split short exact sequences.  This gives
$$[L^\times] = [\mbb{Z}] + [\mathcal{O}^\times_L] = [\mbb{Z}] + [\mc{O}_L]$$
where in the last step we use $\on{exp}$.  Applying $p$-completion then inverting $p$, we get
$$[V] = [\mbb{Q}_p] + [L]$$
if $F$ has residue characteristic $p$ and
$$[V] = [\mbb{Q}_p]$$
if $F$ has residue characteristic $\neq p$.  By the normal basis theorem, $L$ gives the regular representation as an $K$-vector space, hence gives $[K:\mbb{Q}_p]$ times the regular representation as a $\mbb{Q}_p$-vector space.  Thus 1 holds with $\chi=-[K:\mbb{Q}_p]$ when the residue characteristic of $K$ is $p$ and 1 holds with $\chi=0$ when the residue characteristic of $K$ is $\neq p$.  For simplicity we assumed $K$ has characteristic zero in the above analysis, but it's obvious that we get the same conclusion also when $K$ has nonzero characteristic different from $p$.

Thus the Euler characteristic of $X_K$ for nonarchimedean local $K$ is $-[K:\mbb{Q}_p]$ if $K$ is a finite extension of $\mbb{Q}_p$, undefined if $K$ has characteristic $p$, and $0$ otherwise.

\item Let $K$ be an archimedean local field.  If $K\simeq\mbb{C}$, then the Galois group is trivial, but the Euler characteristic of $(\mbb{C}^\times)_{\wh{p}}[1/p]\simeq\mbb{Q}_p[1]$ is $-1$.  We get that 1 holds with $\chi=2$ and thus the Euler characteristic of $X_\mbb{C}$ is $2$.  If $K=\mbb{R}$, we have to do the above calculation equivariantly with respect to the Galois group.  Here is one way to express it: for any algebraic closure $\mathbf{C}$ of $\mathbb{R}$, we have the short exact sequence
$$0 \to \mbb{Z}(1)(\mathbf{C}) \to \mathbf{C}\overset{\on{exp}}{\rightarrow} \mathbf{C}^\times \to 0.$$
The middle term is uniquely divisible and the kernel term gives the sign representation.  Thus $\mathbf{C}^\times$ gives negative the sign representation.  This means that 5 holds with $\chi=1$, so the Euler characteristic of $X_{\mbb{R}}$ is $1$.

Geometrically, of course, these are just the usual Euler characteristics of $S^2$ and $\mbb{R}\mbb{P}^2$, and this archimedean version of Tate's Euler characteristic formula follows from the fact that these spaces admit, say, triangulations (which gives a good explanation for why the Euler characteristic of a local system only depends on the dimension).

Uniformly, we can say that if $K$ is an archimedean local field, then the Euler characteristic of $X_K$ is $[K:\mbb{R}]$ (exactly the opposite sign from the case of finite extensions of $\mbb{Q}_p$!).
\item Now suppose $K$ is a number field and $S$ is a nonempty finite set of places of $K$.  To get a nice result we have to assume that $S$ contains all archimedean places and all places above $p$.  Again we have to take a Galois extension $L/K$ unramified outside $S$, say with group $G$, and study the class group
$$A_{L,S} = L^\times\backslash \mathbb{A}_L^\times/\prod_{\mu\not\in S}\mathcal{O}_{L_\mu}^\times$$
after derived $p$-completing then inverting $p$, as a $G$-representation over $\mbb{Q}_p$. We can work up to finite groups and uniquely $p$-divisible groups.  In particular we are free to remove all places from $S_L$ except those above $p$ and $\infty$, and we can ignore Picard groups.  Thus $[A_{L,S}]$ may as well be
$$[(L\otimes\mbb{R})^\times] + [(L\otimes\mbb{Q}_p)^\times] - [\mc{O}_{L,S_\infty\cup S_p}^\times] = [(L\otimes\mbb{R})^\times] + [(\mc{O}_L\otimes\mbb{Z}_p)^\times] - [\mc{O}_L^\times].$$

We analyze $\mathcal{O}_L^\times$ using Dirichlet's unit theorem.  Actually it's more convenient to use the following extended form, described by Tate in \cite{TateStark}.  Let
$$\widetilde{\mathcal{O}_L^\times} = \{(u\in\mc{O}_L^\times,x\in L\otimes\mbb{R}): \on{exp}(x) = u\text{ in } (L\otimes\mbb{R})^\times\}.$$
This related to $\mc{O}_L^\times$ as follows: the map $\widetilde{\mathcal{O}_L^\times}\to\mathcal{O}_L^\times$ has image the group of units which are positive at every real place, hence its cokernel is finite; on the other hand its kernel is
$$\mbb{Z}(1)(L\otimes\mbb{R})$$
where $\mbb{Z}(1)$ is the sheaf on finite etale $\mbb{R}$-algebras given by the kernel of the exponential map.  Thus
$$[\mc{O}_L^\times] = [\widetilde{\mc{O}_L^\times}] - [\mbb{Z}(1)(L\otimes\mbb{R})].$$

On the other hand, there is the following form of Dirichlet's unit theorem.  The tautological map
$$\widetilde{\mc{O}_L^\times} \to L\otimes \mbb{R}$$
sending $(u,x) \mapsto x$ identifies $\widetilde{\mc{O}_L^
\times}$ with a full lattice inside $L^0\otimes\mbb{R}$, where $L^0=\on{ker}(\on{tr}_{L/\mbb{Q}}:L\to\mbb{Q})$.  It follows that the base-change to $\mbb{R}$ of the $\mbb{Q}$-representation $(\widetilde{\mc{O}_L^\times})_{\mbb{Q}}$ identifies with the base-change to $\mbb{R}$ of the $\mbb{Q}$-representation $L^0$.  From general representation theory we deduce that the two $\mbb{Q}$-representations are isomorphic. In particular the associated $\mbb{Q}_p$-representations are also isomorphic.  Thus 
$$[\widetilde{\mc{O}_L^\times}] = [\mc{O}_L]-[\mbb{Z}],$$
so that
$$[\mc{O}_L^\times] = [\mc{O}_L] - [\mbb{Z}] - [\mbb{Z}(1)(L\otimes\mbb{R})],$$
or equivalently
$$[\mc{O}_L^\times] = [\mc{O}_L] - [\mbb{Z}] + [(L\otimes\mbb{R})^\times].$$
On the other hand $[\mc{O}_L] = [\mc{O}_L\otimes\mbb{Z}_p] = [(\mc{O}_L\otimes\mbb{Z}_p)^\times]$ again by $p$-adic logarithm.  Thus we get miraculous cancellation yielding
$$[A_{L,S}] = [\mbb{Z}],$$
so that 5 holds with $\chi=0$.  The conclusion is that $X_{K,S}$ has Euler characteristic $0$.
\end{enumerate}
\end{example}

\begin{remark}
Note the similarity of the above calculations to the calculations of \emph{Herbrand quotients} as in \cite{artintate}, which constitute one of the crucial inputs to the verification of the class formation axioms.

This is not a coincidence, as we have the following general fact.  If $G$ is a cyclic group of order $p$ and $A$ is a $G$-module such that $A[p]$ and $A/pA$ are finite, then $\wh{H}^0(G;A)$ and $\wh{H}^{1}(G;A)$ are also finite, hence so is the Herbrand quotient
$$h_A=\# \wh{H}^0(G;A)/ \# \wh{H}^{1}(G;A);$$
and it it is determined by the class of $V=A_{\wh{p}}[1/p]$ in the Grothendieck group of $\mbb{Q}_p$-representations of $G$. Namely,
$$h_A = p^{\chi_V(g)},$$
where $\chi_V:G\to\mbb{Q}_p$ is the character of $V$ (it will necessarily take values in $\mbb{Z}$) and $g$ is any non-identity element of $G$.

In this way, the calculation $[A_{L,S}]=[\mbb{Z}]$ above implies that the Herbrand quotient of $A_{L,S}$ is $p$.
\end{remark}

\begin{remark}
We have just seem that Tate's global Euler characteristic formula, in our setting of Weil-Mooere anima, becomes the statement that $X_{K,S}$, the Weil-Moore anima with restricted ramification, has Euler characteristic $0$.  This is the same answer as one gets in the function field case, but there we have a much nicer explanation, given by Kato (\cite{kato}): if $C$ denotes the associated affine curve over $\mbb{F}_p$, then given (for example) a lisse $\mbb{Q}_\ell$-sheaf $V$ on $C$, not only is its cohomology on $C$ finite dimensional, but it is also finite dimensional on $\overline{C}=C\times_{\mbb{F}_p}\overline{\mbb{F}_p}$.  Then the cofiber sequence
$$R\Gamma(C;V) \to R\Gamma(\overline{C};V) \overset{1-\varphi}{\longrightarrow} R\Gamma(\overline{C};V)$$
immediately gives vanishing of the Euler characteristic over $C$.

Actually, it gives more, namely it gives a canonical trivialization of $R\Gamma(C;V)$ as a point in $K(\mbb{Q}_\ell)$.  On the other hand, if it happens to be the case that $R\Gamma(C,V)=0$, then we clearly get another completely obvious trivialization.  Comparing the two trivializations gives a well-defined element in $K_1(\mbb{Q}_\ell)=(\mbb{Q}_\ell)^\times$.  Up to sign conventions, this identifies with the value of the associated L-function at $s=0$.  Other L-values can be obtained by twisting by powers of the cyclotomic character.  All but finitely many of such twists will have vanishing $R\Gamma$, and in this way one gets a ``meromorphic function on the weight space'', and this is just the usual L-function of $V$ as defined in this context by Grothendieck.

Conjecturally, according to Kato, there should similarly be a more refined version of the calculations proving Tate's global Euler characteristic formula, yielding not just a proof that it vanishes in K-theory, but a specific trivialization in K-theory (or at least, in the $\tau_{\leq 1}$-truncation of K-theory); moreover, restricting to situations where it is possible to vary in a weight space which includes points where the cohomology vanishes, this should explain the existence of $p$-adic L-functions.  In other words, in the language of Beilinson (\cite{beilinson}), Kato's Iwasawa Main Conjecture predicts an ``animation'' of Tate's Euler characteristic formula, and says that a $p$-adic L-function ``is" such an animation.
\end{remark}

\begin{remark}
In the $\mbb{Z}$-case we again made the annoying assumption that the coefficient system $M$ comes from a $\wh{W}$-action.  In Remark \ref{whenignorewhat} we gave a criterion for when such an assumption can be ignored for the purposes of getting the conclusion $R\Gamma(X;M)\in\on{Perf}(R)$: namely the criterion says that the ``inertia group"
$$W^1 = \on{ker}(W\to\mbb{Z})$$
should also have the $p$-finiteness property.

When this is the case, we get to ask the Euler characteristic question also for such $M$.  But actually, we see directly from the argument in Remark \ref{whenignorewhat} that under such a hypothesis we have
$$[R\Gamma(X;M)] = 0 \in K_0(R),$$
because $R\Gamma(X;M)$ can be written as the fiber of a map between two copies of the same perfect $R$-module.

In particular, the $p$-finiteness property for $W^1$ can only be satisfied in situations of zero Euler characteristic for $W$.  This is in complete accordance with our discussion of number-theoretic examples above.
\end{remark}

If $X$ is a Weil-Moore anima, then by construction, the composite map
$$X \to BW \to BG$$
realizes $G$ (the Galois group of the Galois category defining the class formation underlying $X$) as the profinite completion of $\pi_1X=W$ (the Weil group).  Given a coefficient system on $BG$, we can ask to compare its cohomology on $BG$ (that is, the usual Galois cohomology) with the cohomology of its pullback to $X$ (that is, the Weil-Moore cohomology).  We will now prove results describing such a comparison in number-theoretic examples.

We start with a simple but important lemma expressing the difference between the cohomology of $X_{K,S}$ and $X_{K,S'}$ when $S$ and $S'$ differ by a single place.

\begin{lemma}\label{fillinpuncture}
Let $K$ be a number field, $S$ a nonempty set of places of $K$, and $\nu$ a place of $K$ not in $S$.  Let $S'=S\cup\{\nu\}$.  Fix a commutative diagram of Weil-Moore anima
$$\xymatrix{
X_{K_\nu} \ar[r]\ar[d] & X_{K,S'}\ar[d] \\
X_{\mc{O}_\nu}\ar[r] & X_{K,S}
}$$
inducing the commutative square of refined class formations as discussed in Example \ref{compatiblewm}, where $X_{\mc{O}_\nu}\simeq B\mbb{Z}$ is the Weil-Moore anima for the residue field if $\nu$ is nonarchimedean and $X_{\mc{O}_\nu} = B\mbb{R}_{>0}$ if $\nu$ is archimedean.  (In either case, $X_{\mc{O}_\nu}=B(K_\nu^\times/(K_\nu^\times)^1)$.)

Then for any discrete coefficient system $M$ on $X_{K,S}$, the above diagram gives a pullback on $M$-cohomology.
\end{lemma}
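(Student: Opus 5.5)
The plan is to reduce to a comparison of cofibers, and then to check that comparison after restricting to finite \'etale covers of $X_{K,S}$.

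Set $Q:=\on{cofib}(R\Gamma(X_{K,S};M)\to R\Gamma(X_{K,S'};M))$ and $Q_\nu:=\on{cofib}(R\Gamma(X_{\mc{O}_\nu};M)\to R\Gamma(X_{K_\nu};M))$. Being a pullback on $M$-cohomology means exactly that the induced map $Q\to Q_\nu$ is an isomorphism. Both sides commute with filtered colimits in $M$, by Proposition \ref{cohachaus} together with the finite cohomological dimension of Theorem \ref{strictcohdim}. Every discrete coefficient system is a filtered colimit of finitely generated ones pulled back from some finite quotient $W/N$, via Remark \ref{prodiscrete}.

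So, exactly as in the proofs of Proposition \ref{pfinite} and Theorem \ref{chi}, it suffices to treat the universal coefficient system $\mbb{Z}[W/N]$. This amounts to proving the statement for $M=\mbb{Z}$, uniformly over all connected finite \'etale $Y\to X_{K,S}$. Here all the squares obtained by base change along $Y\to X_{K,S}$ are of the same shape, with $K$ replaced by the corresponding field $L$, and $X_{K_\nu}$, $X_{\mc{O}_\nu}$ replaced by disjoint unions over the places above $\nu$. Since $Y\times_{X_{K,S}}X_{\mc{O}_\nu}$ splits as a union of copies of $X_{\mc{O}_\mu}$ over places $\mu\mid\nu$, this is compatible with the diagram.

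Next I reduce from $\mbb{Z}$ to $\mbb{R}/\mbb{Z}$ coefficients using $\mbb{Z}\to\mbb{R}\to\mbb{R}/\mbb{Z}$. For $\mbb{R}$-coefficients the four cohomologies are computed by Proposition \ref{cohachaus}(2). Each has $H^0=\mbb{R}$, and $H^1$ is the dual of the $\mbb{R}$-quotient of the relevant class group, identified via the norm. The maps in the square are isomorphisms on these invariants: adding the place $\nu$ does not change the $\mbb{R}$-part of the global class group, and the local one maps isomorphically to $X_{\mc{O}_\nu}$. So the $\mbb{R}$-square is a pullback, with both horizontal cofibers zero.

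For $\mbb{R}/\mbb{Z}$, Lemma \ref{mooreisweilmoore} gives the key input: the Pontryagin duals of the four cohomologies are the refined class formation values $F$. Hence by derived Pontryagin duality (Corollary \ref{dpd}) the claim becomes that the square of two-term complexes
$$F_{K_\nu}\to F_{K,S'},\qquad F_{\mc{O}_\nu}\to F_{K,S}$$
is a pushout in $\on{D}(\on{CondAb})$.

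On $H_0$ everything is $\mbb{Z}$, so the question reduces to $H_1$. There I need an exact sequence
$$0\to (K_\nu^\times)^1\to A_{K,S'}\to A_{K,S}\to 0$$
together with the local analogue $0\to (K_\nu^\times)^1\to K_\nu^\times\to K_\nu^\times/(K_\nu^\times)^1\to 0$. The global sequence follows directly from the definition $A_{L,S}=L^\times\backslash\mbb{A}_L^\times/\prod_{\mu\notin S}(L_\mu^\times)^1$. The injectivity of the first map uses $S\neq\emptyset$, as in Example \ref{exrcf}.

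I expect the main obstacle to be promoting this $H_1$-level pushout to a genuine pushout of the complexes. This requires that the maps in the square are those coming from Example \ref{newrcf}(3), where $F_{K,S}=\on{cofib}(U[1]\to F_{K,S'})$, and similarly for the local square $X_{K_\nu}\to X_{\mc{O}_\nu}$. That compatibility is precisely how the square of refined class formations was constructed in Remark \ref{inducedlocalglobal} and Example \ref{compatiblewm}. Granting this, both cofibers are $(K_\nu^\times)^1[1]$ and the induced map between them is the identity, so the square is a pushout.

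One remaining subtlety is that the maps of Weil-Moore anima are only determined up to an $H^2(-;\pi_2)$ ambiguity. However, $\mbb{R}/\mbb{Z}$-cohomology in degrees $\le1$, and therefore the induced maps on $F$, depend only on the $\tau_{\le1}$ data. So the commutative square given in the statement does induce the pushout square above.
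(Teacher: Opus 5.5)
Your proposal is correct and follows the paper's proof. It uses the same d\'evissage to $M=\mbb{Z}$ on finite \'etale covers and the same sequence $\mbb{Z}\to\mbb{R}\to\mbb{R}/\mbb{Z}$ with the $\mbb{R}$-case immediate from Proposition~\ref{cohachaus}, and Pontryagin duality likewise turns the $\mbb{R}/\mbb{Z}$-case into a pushout of class groups; the ``main obstacle'' you flag is in fact automatic, because the vertical fibers are both concentrated in a single degree (they are $(K_\nu^\times)^1[1]$ --- fibers, not cofibers), so the map between them is pinned down on $H_1$ by the class-group maps and is an isomorphism whatever homotopies are chosen.
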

\begin{proof}
By devissage we reduce to the case $M=\mbb{Z}[G_{K,S}/N]$ for an open normal subgroup $N\subset G_{K,S}$ of finite index.  Then replacing $K$ by the Galois cover corresponding to $N$ we can further reduce to the case $M=\mbb{Z}$.  Since the claim with $\mbb{R}$-coefficients is obvious from Proposition \ref{cohachaus}, we are reduced to showing that the $\mbb{R}/\mbb{Z}$-cohomology gives a pullback.  But by definition this is Pontryagin dual to the claim that the class groups sit in a (derived) pushout square, which follows from the definitions.
\end{proof}

\begin{remark}
On the level of fundamental groups, we get a pushout of condensed groups, because $W_{K,S} = W_{K,S'}/I_\nu$ where $I_\nu$ is the inertia group.  Combining with the above cohomology result and an induction on the Postnikov tower, we deduce that the above commutative square behaves like a pushout square with respect to maps to discrete anima.  It also behaves like a pushout square with respect to maps to condensed anima with compact Hausdorff homotopy, by an analogous argument.

It seems that, even though the above square is (presumably?) not literally a pushout of condensed anima, it at least behaves like one with respect to maps to most targets.  Intuitively, $X_{K,S}$ is obtained from $X_{K,S'}$ by ``filling in the puncture at $\nu$".
\end{remark}

\begin{remark}\label{fillinpuncturegal}
The analog of the above result also holds for Galois cohomology, in a suitable context: suppose $p$ is a prime, $K$ is a number field, $S$ a finite set of places of $K$ containing all archimedean places and all places above $p$, and let $M$ be a $p$-primary torsion discrete $G_{K,S}$-module.  If $\nu$ is a place of $K$ not in $S$ and we set $S' = S\cup\{\nu\}$, then we have a pullback diagram

$$
\xymatrix{
R\Gamma(BG_{K,S};M)\ar[r]\ar[d] & R\Gamma(BG_{\kappa_\nu};M)\ar[d] \\
R\Gamma(BG_{K,S'};M)\ar[r] & R\Gamma(BG_{K_\nu};M) 
},$$ 
where $\kappa_\nu$ denotes the residue field.  Indeed, if we take etale cohomology instead of Galois cohomology everywhere, this is a straightforward consequence of standard gluing triangles (plus the fact that the etale site of $K_\nu$ agrees with that of its henselian analog, which is a consequence of Krasner's lemma).  On the other hand etale cohomology and Galois cohomology agree in the given situation by \cite{milneduality} Proposition 2.9.
\end{remark}

Now we can compare Galois cohomology to Weil-Moore cohomology.  We will see that the difference is concentrated at the archimedean places.

The local case is simple.

\begin{lemma}\label{nonarchcomp}
For any $\mbb{Z}$-type Weil-Moore anima $X$ with $G=\wh{\pi_1X}$, the comparison map
$$R\Gamma(BG;M) \to R\Gamma(X;M)$$
is an isomorphism for all discrete torsion $G$-modules $M$.
\end{lemma}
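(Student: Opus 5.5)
For $\mbb{Z}$-type the Weil group $W=\pi_1X$ is totally disconnected, so Example \ref{tdmoore} gives $X\simeq BW$. By Proposition \ref{ztype}, $W=G\times_{\wh{\mbb{Z}}}\mbb{Z}$ with $G$ profinite of strict cohomological dimension $2$ (this profinite $G$ is the completion $\wh{W}$). The plan is to compare $R\Gamma(BG;M)$ with $R\Gamma(BW;M)$ using the common kernel $W^1=\on{ker}(G\to\wh{\mbb{Z}})=\on{ker}(W\to\mbb{Z})$. This gives two fibrations, $BW^1\to BW\to B\mbb{Z}$ and $BW^1\to BG\to B\wh{\mbb{Z}}$, together with a map from the first to the second which is the identity on fibres.

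By descent, $R\Gamma(BW;M)$ is the $B\mbb{Z}$-cohomology of $N:=R\Gamma(BW^1;M)$, and $R\Gamma(BG;M)$ is the $B\wh{\mbb{Z}}$-cohomology of the same $N$. The comparison map is induced by pulling back along $B\mbb{Z}\to B\wh{\mbb{Z}}$. Here $N$ is computed in the condensed setting, and by Lemma \ref{compareexts} and Theorem \ref{cohchaus} it agrees with the usual profinite group cohomology, so it is discrete. So the statement reduces to this: for a discrete torsion $\wh{\mbb{Z}}$-module (or complex) $N$ on which the $\wh{\mbb{Z}}$-action is continuous, the map
$$R\Gamma(B\wh{\mbb{Z}};N)\to R\Gamma(B\mbb{Z};N)=[N\overset{1-\gamma}{\longrightarrow}N]$$
is an isomorphism.

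To prove that, write $N$ as a filtered colimit of finite $\wh{\mbb{Z}}$-modules; both sides commute with filtered colimits (for $B\mbb{Z}$ this is obvious, and for $B\wh{\mbb{Z}}$ it is the usual profinite statement). For a finite module the claim is the classical computation of the cohomology of $\wh{\mbb{Z}}$ with torsion coefficients: it is given by the two-term complex $[N\overset{1-\gamma}{\longrightarrow}N]$, since $\wh{\mbb{Z}}$ has $\on{cd}=1$ on torsion modules and $H^1(\wh{\mbb{Z}};N)=N_\gamma$. Since $N=R\Gamma(BW^1;M)$ is a bounded complex of discrete torsion modules, devissage over its cohomology groups finishes the reduction. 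Torsion is used exactly here; with coefficients $\mbb{Z}$ the statement fails, because $H^1(\wh{\mbb{Z}};\mbb{Z})=0$ while $H^1(\mbb{Z};\mbb{Z})=\mbb{Z}$.

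The step I expect to need the most care is the identification of the two descent spectral sequences, or equivalently checking that the comparison map is compatible with the fibrations. One must check that the $W$-action on $N$ through $\mbb{Z}$ is the restriction of a continuous $\wh{\mbb{Z}}$-action. This holds because $M$ is a discrete $G$-module and $W^1$ is normal in $G$, so the $G/W^1$-action on $R\Gamma(BW^1;M)$ is smooth in each degree. Writing $BW=BW^1/\mbb{Z}$ and $BG=BW^1/\wh{\mbb{Z}}$, as in the proof of Lemma \ref{whenignorewhat}, makes this compatibility transparent.
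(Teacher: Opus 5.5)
Your proposal is correct and follows essentially the same route as the paper. The paper's proof also uses $X\simeq BW$ with $W=G\times_{\wh{\mbb{Z}}}\mbb{Z}$ and reduces to the classical fact that $B\mbb{Z}\to B\wh{\mbb{Z}}$ induces an isomorphism on cohomology with discrete torsion coefficients; you have simply spelled out the base-change step through the common fibre $BW^1$, which the paper leaves implicit.
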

\begin{proof}
Recall that in this case, $X=BW$, and
$$W = G\times_{\wh{\mbb{Z}}}\mbb{Z}$$
for some surjection $G\twoheadrightarrow \wh{\mbb{Z}}$.  Thus we reduce to the familiar claim that the map $B\mbb{Z} \to B\wh{\mbb{Z}}$ induces an isomorphism on cohomology for all discrete torsion coefficient systems.
\end{proof}

\begin{remark}
Note that this also shows that $\wh{BW} = B\wh{W}$ in this setting.
\end{remark}

Now we handle the global case.

\begin{theorem}\label{comparetogal}
Let $p$ be a prime, $K$ a number field, and $S$ a set of places of $K$ containing all archimedean places and all places above $p$.  Fix choices for global Weil-Moore anima $X_{K,S}$ and local Weil-Moore anima $X_{K_\nu}$ for $\nu|\infty$ together with the local-global maps
$$X_{K_\nu} \to X_{K,S}$$
as discussed in Example \ref{compatiblewm}.  Note that by construction this map lives over the analogous natural map

$$BG_{K_\nu} \to BG_{K,S}$$

\noindent of Galois groupoids.

Then for any $p$-primary torsion discrete $G_{K,S}$-module $M$, the resulting commutative square
$$
\xymatrix{
R\Gamma(BG_{K,S};M)\ar[r]\ar[d] & \prod_{\nu\mid\infty} R\Gamma(BG_{K_\nu};M)\ar[d] \\
R\Gamma(X_{K,S};M)\ar[r] & \prod_{\nu\mid\infty} R\Gamma(X_{K_\nu};M) 
}$$ 
is a pullback.

\end{theorem}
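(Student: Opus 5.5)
The plan is to reduce to the case $M=\mbb{Z}/p$ with trivial action, and then to compare both sides through the class formation. First, both rows are exact functors of $M$ that commute with filtered colimits (Galois cohomology obviously, Weil-Moore cohomology by Proposition \ref{cohachaus}). Hence, by devissage, the square is a pullback for all $p$-primary torsion discrete $M$ once it is one for all finite $p$-torsion $M$. Any such $M$ embeds into and is resolved by sums of induced modules $\mbb{F}_p[G_{K,S}/N]$. By Shapiro's lemma on both rows, replacing $K$ by the corresponding finite extension unramified outside $S$ (the local-global maps pull back compatibly, by the construction in Example \ref{compatiblewm}), we reduce to $M=\mbb{F}_p$. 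Equivalently, we need to show that the total fibers of the two horizontal maps agree, i.e.\ that the map on compactly-supported-at-infinity fibers
$$\on{Fib}\Big(R\Gamma(BG_{K,S};\mbb{F}_p)\to\prod_{\nu\mid\infty}R\Gamma(BG_{K_\nu};\mbb{F}_p)\Big)\to\on{Fib}\Big(R\Gamma(X_{K,S};\mbb{F}_p)\to\prod_{\nu\mid\infty}R\Gamma(X_{K_\nu};\mbb{F}_p)\Big)$$
is an isomorphism.

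The cleanest way to compute these fibers is via the Galois-theoretic gluing analogous to Lemma \ref{fillinpuncture}: shrinking $S$ is awkward here since $S$ must contain $p$ and $\infty$, so I would instead use the structure of $S$-class groups directly. On the Weil-Moore side, $R\Gamma(X_{K,S};\mbb{R}/\mbb{Z})$ is Pontryagin dual to $F_{K,S}$ by Lemma \ref{mooreisweilmoore}, so $R\Gamma(X_{K,S};\mbb{F}_p)$ is dual to $F_{K,S}/p$, which depends only on $A_{K,S}/p$ (derived) plus the $\mbb{Z}$ in degree $0$; similarly for $X_{K_\nu}$ with $K_\nu^\times$. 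On the Galois side, I would use the Tate-Poitou description: $R\Gamma(BG_{K,S};\mbb{F}_p)$ in degrees $\le 2$ is governed by the class formation $(\mc{C}_{K,S},A_{K,S})$ via Tate-Nakayama cup product with the fundamental class. The defect is that this class formation identification holds only in Tate cohomology, which is precisely what breaks at real places. Concretely, I expect both fibers to be computed as the dual of $\on{cofib}\big(\bigoplus_{\nu\mid\infty}(F_{K_\nu}/p)\to F_{K,S}/p\big)$ in the Weil-Moore case, and the Galois-side fiber to agree with this because the Galois cohomology of $G_{K,S}$ with $p$-torsion coefficients agrees with ``Weil-Moore cohomology'' away from the archimedean corrections, which are exactly cancelled in the fiber.

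To make this rigorous, I would compare both sides after further restricting to sufficiently large finite covers $L/K$ (inside $\mc{C}_{K,S}$, e.g.\ containing $\mu_p$ and totally imaginary). For totally imaginary $L$ with $p$ odd or not, the archimedean Galois cohomology $R\Gamma(BG_{L_\mu};\mbb{F}_p)=\mbb{F}_p$ is trivial, while $R\Gamma(X_{L_\mu};\mbb{F}_p)=R\Gamma(S^2;\mbb{F}_p)$. Then I would use the known Galois-side facts (scd $2$ for totally imaginary $L$, Tate's Euler characteristic, Poitou-Tate) to match with Theorem \ref{strictcohdim}, Theorem \ref{chi} and the class formation on the Weil-Moore side degree by degree. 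Since both fibers are sheaves on $\mc{C}_{K,S}$ with values in bounded complexes, Lemma \ref{uniformbound}-type Tate-cohomology arguments descend the comparison from $L$ back to $K$.

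\textbf{Main obstacle.} I expect the main obstacle to be showing that $H^2$ (and the $H^3$ on the Galois side at real places, for $p=2$) match under the comparison map, i.e.\ identifying the Galois $H^2(G_{K,S};\mu_p)$ via the Brauer group sequence with the Weil-Moore $H^2$, where the archimedean invariants $\tfrac12\mbb{Z}/\mbb{Z}$ versus $\mbb{Q}/\mbb{Z}$ differ. I would first try to handle this by proving the comparison with $\mu_{p^\infty}$ coefficients through the Kummer sequence on both sides, using the class group sheaf, where the difference between $\on{Br}(\mbb{R})$ and $H^2(X_{\mbb{R}};\mbb{Q}/\mbb{Z}(1))=\mbb{Q}/\mbb{Z}$ appears visibly and exactly in the archimedean terms of the square.
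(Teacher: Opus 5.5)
Your reduction to $M=\mbb{F}_p$ over a field all of whose archimedean places are complex is the same as the paper's. After that, the proposal is a plan rather than an argument, and the step that carries the content is missing. Take such an $L$, with $r_2$ complex places and $S$ finite. The comparison is an isomorphism in degrees $\le 1$. Since $H^3(X_{L,S};\mbb{F}_p)=0$ by Theorem \ref{strictcohdim} and $\on{cd}_pG_{L,S}\le 2$, the theorem is equivalent to exactness of
\[0\to H^2(G_{L,S};\mbb{F}_p)\to H^2(X_{L,S};\mbb{F}_p)\to\bigoplus_{\mu\mid\infty}H^2(X_{L_\mu};\mbb{F}_p)\to 0.\]
The Euler characteristics ($-r_2$ versus $0$) only show that the dimensions are compatible. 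Surjectivity on the right is easy: it is dual to injectivity of $\bigoplus_{\mu\mid\infty}\mu_p(L_\mu)\to A_{L,S}[p]$. So everything comes down to injectivity of the comparison map on $H^2$, and you give no argument for it.

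Compatibility with the local maps and Lemma \ref{nonarchcomp} only confine the kernel to $\ker\bigl(H^2(G_{L,S};\mbb{F}_p)\to\prod_{v\in S}H^2(G_{L_v};\mbb{F}_p)\bigr)$. That group need not vanish: for $\mu_p\subset L$ it is dual to $\on{Hom}(\on{Pic}(\mc{O}_{L,S}),\mbb{Z}/p)$. Saying that Galois and Weil--Moore cohomology agree ``away from the archimedean corrections'' just restates the theorem. Your Kummer/Brauer fallback has the same problem: you do not say how $H^2(X_{K,S};\mu_{p^\infty})$ would be identified with a Brauer-type group, or why the comparison map respects that identification. Also, Weil--Moore Poitou--Tate (Theorem \ref{globaltate}) cannot be used here, because its proof relies on the present theorem.

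The missing idea is to move $S$ on the Weil--Moore side until the comparison becomes a stalk computation.
\begin{enumerate}
\item Removing complex places does not change $\mc{C}_{K,S}$, and $R\Gamma(B\mbb{R}_{>0};\mbb{F}_p)=\mbb{F}_p$. So Lemma \ref{fillinpuncture} at the complex places rewrites the claim as $R\Gamma(BG_{K,S};\mbb{F}_p)\simeq R\Gamma(X_{K,S_f};\mbb{F}_p)$.
\item Instead of shrinking $S$, enlarge it. Lemma \ref{fillinpuncture}, Remark \ref{fillinpuncturegal} and Lemma \ref{nonarchcomp} show that the fiber of the comparison map does not change when a finite place is added. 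So one may take $S$ to be all places.
\item Both sides are now hypercohomology on $\on{Spec}(K)_{et}$: of the constant sheaf $\mbb{F}_p$, and of $(F/p)^\vee$ by Lemma \ref{mooreisweilmoore}. The fiber is $(A/p)^\vee$, so it suffices that $\varprojlim_L A(L)/p=0$, derived and along norms.
\item The cokernel of $p$ dies by the existence theorem.
\item For the $p$-torsion, use the sequence $0\to L^\times\to\bigoplus_{\mu\mid\infty}\mbb{R}_{>0}\times\mbb{A}_{L,f}^\times\to A(L)\to 0$ and the snake lemma. Grunwald--Wang says an element of $L^\times$ that is a $p$-th power at every finite place is a $p$-th power. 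Norms kill $\mu_p(L_\mu)$ along extensions of local degree divisible by $p$, and these exist in the $p$-cyclotomic tower.
\end{enumerate}
This route never requires identifying the comparison map on $H^2$ at a finite level.
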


\begin{proof}
By devissage and use of filtered colimits we can reduce to the case $M=\mbb{F}_p[G_{K,S}/N]$ for open normal $N\subset G_{K,S}$.  Replacing $K$ by the Galois extension corresponding to $N$ it suffices to treat $M=\mbb{F}_p$.  Moreover, shrinking $N$ if necessary we can always assume that all infinite places of $K$ are complex.  Thus we need to show that if $p$ is a prime and $K$ is a number field with no real places, then

$$R\Gamma(BG_{K,S};\mbb{F}_p)\overset{\sim}{\rightarrow}R\Gamma(X_{K,S};\mbb{F}_p)\times_{\prod_{\nu|\infty}R\Gamma(X_{K_\nu};\mbb{F}_p)}\prod_{\nu|\infty}\mbb{F}_p.$$

\noindent If we compare with the case of Lemma \ref{fillinpuncture} where $\nu$ is complex, we find that this is also equivalent to showing that 
$$R\Gamma(BG_{K,S};\mbb{F}_p)\overset{\sim}{\rightarrow} R\Gamma(X_{K,S_f};\mbb{F}_p),$$
where $S_f\subset S$ denotes the subset of nonarchimedean places.

Next let us reduce to the case where $S$ is the set of all places of $K$.  For this, note that if we compare Lemma \ref{fillinpuncture} with Remark \ref{fillinpuncturegal} and Lemma \ref{nonarchcomp}, we see that for $S$ finite, the statements for $S$ and $S\cup\{\nu\}$ are equivalent for any $\nu$ not in $S$.  It follows readily that the statements for any two finite sets $S$ and $S'$ are equivalent (meaning that the homotopy fibers of the comparison maps are isomorphic via the obvious natural map when $S\subset S'$).  Passing to the colimit we deduce the same also for infinite sets, whence the desired reduction.

Thus we can assume $S$ is the set of all places.  The cohomology $R\Gamma(X_{K,S_f};\mbb{F}_p)$ is given by global sections of the derived sheaf
$$(F/p)^\vee$$
on $\on{Spec}(K)_{et}$, where $F$ is the cosheaf giving the refined class formation, $(-)/p$ means derived modding out by $p$, and $(-)^\vee$ means Pontryagin duality.  This receives a map from
$$\mbb{F}_p = (\mbb{Z}/p)^\vee$$
via the augmentation $\alpha:F\to\mbb{Z}$, and on hypercohomology this map implements the comparison map we're trying to prove is an isomorphism.  Thus, it suffices to show that the sheafification of the fiber of the map of presheaves $\mbb{F}_p\to (F/p)^\vee$ vanishes.  But this fiber is
$$(A/p)^\vee,$$
where $A$ is the copresheaf giving the class groups.  Thus, to prove the result, what we need to show is that when we take the colimit of these values $(A/p)^\vee$ along all extensions $L$ of $K$ inside a fixed algebraic closure, we get $0$.

Recall that these class groups are given by
$$A(L) = (L^\times\backslash\mathbb{A}_L^\times)/\prod_{\mu\mid\infty}(L_\mu^\times)^1.$$
The cokernel of multiplication by $p$ on $A(L)$ is profinite (on the level of $S$-class groups for finite $S$ it is finite as we saw above, and then we take the inverse limit).  Thus, we deduce from the existence theorem that the inverse limit of $A(L)/pA(L)$ gives $0$.  We are therefore reduced to checking that the inverse limit of the $p$-torsion in $A(L)$ gives $0$.  For this, consider the short exact sequence

$$0 \to L^\times \to \oplus_{\mu | \infty} \mbb{R}_{>0} \times \mbb{A}_{L,f}^\times \to A(L) \to 0.$$
By the snake lemma we deduce that suffices to show the following two claims:
\begin{enumerate}
\item $\varprojlim_L\on{ker}\left(L^\times\otimes \mbb{F}_p \to \prod_{\mu\in S_f}L_\mu^\times\otimes\mbb{F}_p\right)=0$.
\item $\varprojlim_L \left(\prod_{\mu\in S_f}\mu_p(L_\mu)\right)=0$.

\end{enumerate}

For 1, in fact the kernel vanishes for fixed $L$, without needing to pass to the inverse limit, by the Grunwald-Wang theorem.  For 2, note that if $E'/E$ is any field extension of degree divisible by $p$, then the norm map $\mu_p(E')\to\mu_p(E)$ is $0$.  Thus it suffices to show (up to replacing $L$ by an extension) that for all $\mu\in S_f$ there is an extension $L'/L$ whose local degree at $\mu$ is divisible by $p$.  This can be arranged by taking a sufficiently large $p$-cyclotomic extension.
\end{proof}

\begin{remark}
In the above pullback square, we took the product only over the archimedean places in $S$.  But actually we can also throw in as many non-archimedean places as we like  and the statement will still hold.  This is immediate from Lemma \ref{nonarchcomp}.  In particular, we can interpret Theorem \ref{comparetogal} as saying that the compactly supported Galois cohomology is the same as the compactly supported Weil-Moore cohomology.

More precisely, if $p$ is a prime, $K$ is a number field, and $S$ is a finite set of places containing all archimedean and $p$-adic places, then
$$R\Gamma_c(BG_{K,S};M)\overset{\sim}{\rightarrow} R\Gamma_c(X_{K,S};M)$$
for all $p$-primary torsion $G_{K,S}$-modules $M$, where

$$R\Gamma_c(BG_{K,S};M) = \on{Fib}\left(R\Gamma(BG_{K,S};M)\to\prod_{\nu\in S}R\Gamma(BG_{K_\nu};M)\right)$$
and
$$R\Gamma_c(X_{K,S};M) = \on{Fib}\left(R\Gamma(X_{K,S};M)\to\prod_{\nu\in S}R\Gamma(X_{K_\nu};M)\right).$$
\end{remark}

\begin{remark}
There is an obvious $\pi_{\leq 1}$-level analog of this statement: for any discrete groupoid $Y$, we have
$$\on{Map}(BG_{K,S},Y) \overset{\sim}{\rightarrow} \on{Map}(X_{K,S},Y)\times_{\prod_\nu\on{Map}(X_{K_\nu},Y)}\prod_\nu\on{Map}(BG_{K_\nu},Y).$$
Inducting on the Postnikov tower, we deduce the same statement for $Y$ any discrete anima whose higher homotopy groups are $p$-primary torsion.  In this sense we can say that $BG_{K,S}$ (which is the usual etale homotopy type of $\on{Spec}(\mc{O}_{K,S})$, at least as far as $p$-primary coefficients are concerned) is obtained from $X_{K,S}$ by ``filling in the archimedean punctures'' (by coning off $S^2$ to a point for complex places, and coning off $\mbb{R}\mbb{P}^2$ to the orbifold point $BC_2$ at real places).  I thank Jacob Lurie for suggesting this interpretation.
\end{remark}

The last result we want to discuss is Poitou-Tate duality.  But it will help to set up a bit of formalism of coefficient objects first.  We will work in the $\mbb{Z}$-linear context, and we take the following definition.

\begin{definition}
Let $X$ be a Weil-Moore anima, or more generally a condensed anima with $\pi_0X=\ast$.
\begin{enumerate}
\item Define a \emph{condensed coefficient system} on $X$ to be an object $M$ in the $\infty$-category $\on{D}(\on{CondAn}_{/X})$ of unbounded derived abelian group objects in the slice topos $\on{CondAn}_{/X}$; that is, $\on{D}(\on{CondAn}_{/X})$ is $\infty$-category of small $\on{D}(\mbb{Z})$-valued sheaves on the site $\on{Extr}_{/X}$ of extremally disconnected profinite sets with a map to $X$.
\item Define a \emph{discrete coefficient system} to be a condensed coefficient system such that the object on $\on{D}(\on{CondAb})$ obtained by pulling back under (any choice of) $\ast \to X$ is discrete, i.e.\ lies in $\on{D}(\mbb{Z})\subset\on{D}(\on{CondAb})$.
\end{enumerate}
Denote the full subcategory of discrete coefficient systems by
$$\on{D}_{et}(X;\mbb{Z}) \subset \on{D}(\on{CondAn}_{/X}).$$
\end{definition}

\begin{remark}
If we fix $\ast \to X$, giving the condensed group $\Omega X$, then $\on{D}(\on{CondAn}_{/X})$ is the same as the $\infty$-category of $\Omega X$-equivariant derived condensed abelian groups
$$\on{D}_{\Omega X}(\on{CondAb})$$
which we considered in Section \ref{condback}.  Thus, informally, $\on{D}_{et}(X;\mbb{Z})$ is the $\infty$-category of discrete derived abelian groups with continuous $\Omega X$-action.
\end{remark}

\begin{remark} By general topos theory, $\on{D}(\on{CondAn}_{/X})$ has a natural t-structure, has all colimits, and has a natural tensor product; moreover all of these features are preserved under pullback, in particular under the map $\ast \to X$.  Note that, over $\ast$, all these operations preserve the full subcategory $\on{D}(\mbb{Z})\subset\on{D}(\on{CondAn})$.  It follows that $\on{D}_{et}(X;\mbb{Z})$ is closed under colimits, tensor products, and t-truncations inside $\on{D}(\on{CondAn}_{/X})$, and that all these operations commute with pullback along $\ast \to X$ viewed as a conservative functor
$$\on{D}_{et}(X;\mbb{Z})\to\on{D}(\mbb{Z}).$$
The heart of $\on{D}_{et}(X;\mbb{Z})$ identifies with the abelian category of discrete abelian groups with continuous $\pi_1X = W$-action.
\end{remark}

\begin{remark}
If we write $X$ as a (small) colimit of extremally disconnected profinite sets, we can view $\on{D}_{et}(X;\mbb{Z})$ as the corresponding limit of the usual (unbounded derived) sheaf categories on profinite sets.  We will not use this remark, except perhaps to note that $\on{D}_{et}(X;\mbb{Z})$ is a presentable $\infty$-category.
\end{remark}

\begin{remark}
Suppose $R$ is an $E_1$-$\mbb{Z}$-algebra and $M\in\on{D}(R)$.  By definition, to give an $N\in\on{Mod}_R(\on{D}_{et}(X;\mbb{Z}))$ whose pullback along $\ast \to X$ gives $M$ is the same as to give a pointed map of condensed anima
$$X \to B\on{Aut}_R(M)$$
where $\on{Aut}_R(M)$ is the \emph{condensed} automorphism object of $M$ as an $R$-module.  If $M\in\on{Perf}(R)$, however, then $\on{Aut}_R(M)$ is discrete and we are in the realm of maps from $X$ to discrete anima.  For example, if $R$ is an animated commutative ring, then local systems of $R$-modules on $X$ of rank $d$ can either be understood as objects in $\on{Mod}_R(\on{D}_{et}(X;\mbb{Z}))$ whose underlying object in $\on{D}(R)$ lies in $\on{Vect}_d(R)$ or as being classified by maps of condensed anima
$$X \to B\on{GL}_d(R)$$
where the target is discrete.
\end{remark}

We have the following basic results allowing to work with these coefficient systems; compare with the set-up in \cite{locsysres} Appendix E.

\begin{lemma}\label{lisse}
Let $X$ be a Weil-Moore anima, with underlying class formation of $\mbb{Z}$-type, $\mbb{R}$-type, or compact type, with $W=\pi_1X$.  Then:

\begin{enumerate}
\item The global sections functor $\on{D}_{et}(X;\mbb{Z}) \to \on{D}(\on{CondAb})$ lands in $\on{D}(\mbb{Z})$ and preserves colimits.
\item For every connected finite etale $Y\to X$, the object $\mbb{Z}[h_Y]\in \on{D}_{et}(X;\mbb{Z})$ corepresenting sections over $Y$ is a compact object.  If $Y\to X$ corresponds to the open subgroup $H\subset W=\pi_1X$ of finite index, then $\mbb{Z}[h_Y]$ lives in degree $0$ and corresponds to $\mbb{Z}[W/H]$ as a $W$-module.
\item Denote by $\on{Lisse}(X)\subset\on{D}_{et}(X;\mbb{Z})$ the thick subcategory generated by the objects $\mbb{Z}[h_Y]$ as above.  Then $M\in\on{Lisse}(X)$ if and only if $H_iM=0$ for all but finitely many $i$, and $H_iM$ corresponds to a finitely generated discrete abelian group with continuous $\wh{W}$-action for all $i$.

\item The induced colimit-preserving functor
$$\on{Ind}(\on{Lisse}(X))\to \on{D}_{et}(X;\mbb{Z})$$
is fully faithful.
\item $\on{Ind}(\on{Lisse}(X))$ inherits the t-structure from $\on{D}_{et}(X;\mbb{Z})$, i.e.\ the essential image of the functor in 4 is closed under t-truncation.
\item Suppose $X$ is either compact-type or $\mbb{R}$-type.  Then for all $n\in\mbb{Z}$, the full subcategories of $t_{\leq n}$-truncated objects in $\on{Ind}(\on{Lisse}(X))$ and $\on{D}_{et}(X;\mbb{Z})$ are identified via the above fully faithful functor, and (hence) $\on{D}_{et}(X;\mbb{Z})$ is the left-completion of $\on{Ind}(\on{Lisse}(X))$.
\item Suppose $X$ is of profinite type (meaning, the class groups in the underlying class formation are profinite abelian groups, or equivalently $X\simeq BW$ where $W$ is profinite).  Then $\on{Ind}(\on{Lisse}(X))=\on{D}_{et}(X;\mbb{Z})$.
\end{enumerate}

\end{lemma}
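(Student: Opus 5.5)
I will work throughout via the identification $\on{D}_{et}(X;\mbb{Z})\simeq$ discrete derived abelian groups with continuous $W$-action, with global sections computed as $R\Gamma(X;-)$. The seven claims then come from three earlier inputs:
\begin{enumerate}
\item the finite cohomological dimension in Theorem \ref{strictcohdim};
\item the commutation of cohomology with filtered colimits in Proposition \ref{cohachaus};
\item the discreteness of discrete-coefficient cohomology, also from Proposition \ref{cohachaus}.
\end{enumerate}

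For 1, I first treat bounded objects. Proposition \ref{cohachaus} part 3 gives discreteness of $H^i(X;D)$ for $D$ in the heart. A Postnikov induction then extends this to bounded objects. For unbounded objects, Theorem \ref{strictcohdim} bounds the cohomological amplitude uniformly by $3$ (using $H^{>3}=0$). This makes $R\Gamma(X;M)$ in each degree depend only on a bounded truncation of $M$, so the result lands in $\on{D}(\mbb{Z})$. Colimit preservation follows the same way: filtered colimits in the heart commute with $H^i$ by Proposition \ref{cohachaus} part 1, and uniform finite amplitude upgrades this to arbitrary colimits of unbounded objects.

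For 2, sections over $Y$ are $R\Gamma(Y;-)$, which equals $R\Gamma(X;\on{Ind}_H^W(-))$, and this preserves colimits by 1. Hence $\mbb{Z}[h_Y]$ is compact. To identify it, I observe that the representable sheaf $h_Y$ over $X$ is discrete with fiber $W/H$. Its free abelian group is therefore $\mbb{Z}[W/H]$ in degree $0$.

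For 3, one direction is immediate: the thick subcategory generated by the $\mbb{Z}[W/H]$ consists of bounded objects with finitely generated homology, and continuity of the action forces it through the finite quotients, so through $\wh{W}$. Conversely, given $M$ bounded with such homology, a Postnikov induction reduces to a single finitely generated module $N$ with a $\wh W$-action. Such $N$ has a resolution by modules $\mbb{Z}[W/H]^n$; this resolution is infinite, so I must truncate it. The plan is to use that the cohomological dimension bound makes $\on{Ext}^i(N,-)$ vanish for $i\gg0$ on the relevant objects, which should split $N$ off a finite stage of the resolution as a retract. I expect this truncation, i.e.\ showing that $N$ is a retract of a finite complex of permutation modules, to be the main obstacle. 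If the vanishing argument does not go through, I would try instead to invoke the finite-group fact that modules of finite projective dimension relative to permutation modules are controlled by Tate cohomology.

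For 4, fully faithfulness reduces to checking that compact generators stay compact in the target, which is claim 2, together with the fact that $\on{Hom}$ between lisse objects is computed correctly. For 5, the heart of $\on{Ind}(\on{Lisse}(X))$ consists of filtered colimits of finitely generated $\wh W$-modules, i.e.\ all discrete $\wh{W}$-modules. Truncations commute with filtered colimits, so the essential image is closed under truncation.

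For 6, in compact or $\mbb{R}$-type every discrete $W$-module factors through $\wh W=W/W^\circ$, so the two hearts agree. Comparing the bounded-above pieces via Postnikov towers, together with 4 and 5, identifies the $t_{\leq n}$ pieces, and left-completeness of $\on{D}_{et}(X;\mbb{Z})$ gives the completion statement. For 7, in profinite type $X=BW$ has finite cohomological dimension for the relevant coefficients by Proposition \ref{ztype} / Brumer. Then $\on{Ind}(\on{Lisse}(X))$ is already left-complete, and 6 upgrades to an equivalence.
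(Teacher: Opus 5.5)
Parts 1, 2, 4, 5 and 6 essentially track the paper's proof, but the argument for part 7 has a genuine gap. You claim that finite cohomological dimension makes $\on{Ind}(\on{Lisse}(X))$ left-complete, so that 6 upgrades to an equivalence. Finite cohomological dimension does not give this.

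Testing against the compact generators $\mbb{Z}[h_Y]$, whose mapping spectra $R\Gamma(Y;-)$ have bounded amplitude, only shows that each object of $\on{Ind}(\on{Lisse}(X))$ is the limit of its own Postnikov tower. It says nothing about whether an arbitrary compatible tower of truncated objects has a limit with the right truncations. Limits in $\on{Ind}(\on{Lisse}(X))$ are computed by applying the right adjoint of the inclusion to the limit in $\on{D}_{et}(X;\mbb{Z})$, and that right adjoint can kill objects.

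Your reasoning never uses that $W$ is profinite, so it would apply verbatim to $X_{\mbb{C}}$. That anima is of $\mbb{R}$-type and has finite cohomological dimension by Theorem \ref{strictcohdim}. Together with 6, your argument would give $\on{Ind}(\on{Lisse}(X_{\mbb{C}}))\simeq \on{D}_{et}(X_{\mbb{C}};\mbb{Z})$, contradicting the remark right after the lemma. Concretely, take a local system on $S^2$ on which the degree-one generator of $C_\ast(\Omega S^2;\mbb{Z})$ acts invertibly. It is nonzero with vanishing cohomology. Its Postnikov tower lies in $\on{Ind}(\on{Lisse}(X_{\mbb{C}}))$ by 6, but its limit there is $0$.

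The missing idea is the one the paper uses. The functor in 4 is fully faithful and colimit-preserving between presentable categories. So it is an equivalence once $\Gamma(Y;M)=0$ for all finite etale $Y\to X$ forces $M=0$. When $X=BW$ with $W$ profinite, the finite etale covers with compatible basepoints form a cofiltered system with limit $\ast$. Cohomology with discrete coefficients converts this limit into a filtered colimit, so the fiber of $M$ at the basepoint is $\varinjlim_Y\Gamma(Y;M)=0$. For $X_{\mbb{C}}$ the only connected cover is $X$ itself, which is exactly why this fails there.

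On part 3, your Ext-vanishing plan does go through, so no fallback is needed. The obstruction to splitting $N$ off the stupid truncation $\sigma_{\leq n}P_\bullet$ is a class in $\on{Ext}^{n+1}(N,K_n)$, where $K_n$ is the $n$-th syzygy, a finitely generated free abelian group. Since the underlying complex of $N$ is perfect, $N$ is dualizable in $\on{D}_{et}(X;\mbb{Z})$. Hence $\on{Ext}^{i}(N,K_n)=H^i(X;N^\vee\otimes K_n)$, with $N^\vee\otimes K_n$ of cohomological amplitude $[0,1]$, and this vanishes for $i>4$. The paper uses the same dualizability more directly: dualizable plus compact unit (part 1) makes $N$ compact, hence a retract of some $\sigma_{\leq n}P_\bullet$.

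One caveat, shared with the paper's proof: the claim that continuity forces the action through finite quotients holds only in compact and $\mbb{R}$-type. In $\mbb{Z}$-type the open subgroup $W^1$ has infinite index. Consider the non-split extension of $\mbb{Z}$ by $\mbb{Z}$ classified by $W\twoheadrightarrow\mbb{Z}$ in $H^1(X;\mbb{Z})=\on{Hom}(W,\mbb{Z})$. It lies in $\on{Lisse}(X)$ but has infinite-order monodromy, so the ``only if'' direction of 3 needs that restriction.
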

\begin{proof}
For 1, let us first check the claim that
$$\Gamma(X;M)\in\on{D}(\mbb{Z})$$
for all $M\in\on{D}_{et}(X;\mbb{Z})$.  When $M$ is in degree $0$, this is given by Lemma \ref{cohachaus}.  The claim follows by devissage when $M$ lives in finitely many degrees.  Next suppose $M$ is connective.  We have
$$\Gamma(X;M)\overset{\sim}{\rightarrow} \varprojlim_n \Gamma(X;\tau_{\leq n}M).$$
By the cohomological dimension estimate in Theorem \ref{strictcohdim}, this limit stabilizes in any given degree, so we deduce the claim for such $M$. (If $X$ is $\mbb{Z}$-type but does not satisfy the technical hypothesis in Theorem \ref{strictcohdim}, we still get finite cohomological dimension, just with bound worse by one.)  Finally, if $M$ is arbitrary, then
$$\Gamma(X;\tau_{\geq d}M)\to\Gamma(X;M)$$
is an isomorphism in degrees $\geq d$ so we reduce to the connective case by varying $d$ (and shifting).

Next we similarly show that $\Gamma(X;-)$ preserves colimits on discrete coefficient systems.  It suffices to consider direct sums.   By means of truncation and by the finite cohomological dimension property, we reduce to the case where all the objects are in degree $0$, in which case we get the claim from Lemma \ref{cohachaus}.

For 2, the claim that $\mbb{Z}[h_Y]$ is compact is immediate from 1 (applied to $Y$, which is also a Weil-Moore anima).  For the claim that $\mbb{Z}[h_Y]$ lives in degree $0$ and is described as claimed, we can even show this for the object corepresenting global sections over $Y$ for all condensed coefficient systems on $X$: it will a fortiori live in the discrete full subcategory.  Namely, this $\mbb{Z}[h_Y]$ is given by the value on the constant sheaf $\mbb{Z}$ of the left adjoint to pullback from $X$ to $Y$.  Such left adjoints satisfy base-change by topos nonsense, so the pullback to $\ast \to X$ identifies with the analogous object for the pullback $Y\times_X\ast \to \ast$, which is just the finite set $W/H$.  The claim follows.

For 3, the collection of finitely generated discrete $\wh{W}$-modules in degree $0$ is an abelian subcategory closed under extensions (inside all continuous discrete $W$-modules, which is the heart of $\on{D}_{et}(X;\mbb{Z})$), which shows that the claimed collection of $M$ is a thick subcategory.  As by 2 each $\mbb{Z}[h_Y]$ lies in the claimed collection, we deduce one containement.  For the other containment, by devissage it suffices to show that every finitely generated discrete $\wh{W}$-module $M$ lies in the thick subcategory generated by the $\mbb{Z}[W/H]$ (inside $\on{D}_{et}(X;\mbb{Z})$).  By finite generation, there is an open normal subgroup $N\subset W$ of finite index which acts trivially on $M$.  Then we can make a (possibly infinite-length) free resolution of $M$ by finitely generated free $\mbb{Z}[W/N]$-modules.  In $\on{D}_{et}(X;\mbb{Z})$ this identifies $M$ with the filtered colimit of the stupid truncations of this resolution.  However, we claim that $M$ is a compact object, which will imply that $M$ is a retract of one of these truncations and hence lies in the claimed thick subcategory.  To prove the compactness, note that the object of $\on{D}(\mbb{Z})$ obtained by pulling $M$ back along $\ast \to X$ lies in $\on{Perf}(\mbb{Z})$; it follows that $M$ is dualizable under the tensor product, and therefore compactness of $M$ follows from compactness of the unit, given by 1.

Claim 4 is a formal consequence: compact objects form a thick subcategory, hence every object in $\on{Lisse}(X)$ is compact, giving the fully faithful embedding as claimed.

For 5, we need to see that if $M\in\on{Ind}(\on{Lisse}(X))$, then $\tau_{[n,m]}M\in\on{Ind}(\on{Lisse}(X))$ for all $n\leq m$ in $\mbb{Z}$.  Because the t-structure on $\on{D}_{et}(X;\mbb{Z})$ is compatible with filtered colimits, it suffices to show that $\on{Lisse}(X)$ is closed under truncations.  But this follows from 3.

For 6, by compatibility with t-truncation, it suffices to show that every object in the heart of $\on{D}_{et}(\mbb{Z})$ lies in $\on{Ind}(\on{Lisse}(X))$.  But this follows because the connected component $W^\circ$ acts trivially on every discrete module, and $W/W^\circ\simeq \wh{W}$ in the $\mbb{R}$-typpe or compact-type settings.

For 7, it suffices to show that if $M\in\on{D}_{et}(X;\mbb{Z})$ satisfies that $\Gamma(Y;M)=0$ for all finite etale $Y\to X$, then $M=0$.  But because $X=BW$ with $W$ profinite, the the inverse limit over all such $Y$ with a compatible map from $\ast \to X$ gives $\ast$, so on taking $M$-cohomology we deduce that the pullback of $M$ to $\ast$ is the filtered colimit of the $\Gamma(Y;M)$ hence is $0$ as desired. 
\end{proof}

\begin{remark}
The above result expresses that $\on{Ind}(\on{Lisse}(X))$ is quite close to $\on{D}_{et}(X;\mbb{Z})$.  However, outside the profinite case (or the $\mbb{R}$-type case with profinite maximal compact subgroup, which is equivalent with respect to discrete coefficients) it is unlikely that
$$\on{Ind}(\on{Lisse}(X)) \to \on{D}_{et}(X;\mbb{Z})$$
can be an isomorphism.  For example, if $X$ is the Weil-Moore anima of $\mbb{C}$, then this is equivalent to the question of whether the full subcategory of the derived category of sheaves on $S^2$ generated under colimits and shifts by the constant sheaf $\mbb{Z}$ identifies with the full subcategory spanned by those derived sheaves with constant homology sheaves.  The  answer is negative as is explained by a simple Koszul duality argument in \cite{locsysres} E.2.6.
\end{remark}

Recall from Proposition \ref{profcomp} the notion of profinite completion of a condensed anima.  We now analyze the difference between $D_{et}(-;\mbb{Z})$ for $X$ and for its profinite completion $\wh{X}$.  The relation is particularly simple for torsion coefficients.

\begin{proposition}
Let $X$ be a Weil-Moore anima of $\mbb{Z}$-type, $\mbb{R}$-type, or compact type, and let $\wh{X}$ be its profinite completion and $W=\pi_1X$.  For a presentable stable $\infty$-category $\mc{C}$, let $\mc{C}_{tors}$ denote the torsion full subcategory, consisting of those $X\in\mc{C}$ such that $X\otimes\mbb{Q}=0$.  Then the pullback
$$\on{D}_{et}(\wh{X};\mbb{Z})_{tors} \to \on{D}_{et}(X;\mbb{Z})_{tors}$$
is fully faithful.  Moreover, the essential image consists of those $M\in\on{D}_{et}(X;\mbb{Z})_{tors}$ such that $H_nM$, which is a discrete torsion $W$-module, comes from a $\wh{W}$-module, for all $n\in\mbb{Z}$.  In particular, the above pullback functor is an equivalence if $X$ is compact-type or $\mbb{R}$-type.
\end{proposition}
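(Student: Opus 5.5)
The plan is to reduce full faithfulness to a cohomology comparison, and then to identify the essential image by d\'evissage on the t-structure.

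\textbf{Reduction of full faithfulness.} Write $c:X\to\wh{X}$ for the completion map. By adjunction, full faithfulness of $c^\ast$ on torsion objects amounts to showing that the unit $N\to c_\ast c^\ast N$ is an isomorphism for every torsion $N\in\on{D}_{et}(\wh{X};\mbb{Z})$. Equivalently, for all such $N$ and $N'$, the map
$$\on{RHom}_{\wh{X}}(N',N)\to\on{RHom}_X(c^\ast N',c^\ast N)$$
should be an isomorphism. I will first reduce to $N'$ ranging over compact generators $\mbb{Z}[h_Y]/n$, where $Y\to\wh{X}$ is connected finite etale. Finite etale covers of $\wh{X}$ and of $X$ agree, since $\pi_1\wh{X}=\wh{W}$. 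Because $\mbb{Z}[h_Y]/n$ is compact in both categories (Lemma \ref{lisse} part 2, and the corresponding statement for $\wh{X}$ as a limit of $\pi$-finite anima), both sides commute with colimits in $N$. After pulling back to $Y$, whose completion is the corresponding cover of $\wh{X}$, the claim becomes
$$R\Gamma(\wh{X};N)\overset{\sim}{\rightarrow}R\Gamma(X;c^\ast N)$$
for torsion $N$.

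\textbf{Cohomology comparison.} Both sides have finite cohomological dimension: for $X$ by Theorem \ref{strictcohdim}, and for $\wh{X}$ via its presentation as $\varprojlim_i\wh{|C_i|}$. Using this together with commutation with filtered colimits (Proposition \ref{cohachaus} part 1), I will reduce to $N$ in degree $0$ and then, as in the proof of Proposition \ref{pfinite}, to $N=\mbb{F}_p[\wh{W}/U]$. After passing to the cover, it suffices to treat $N=\mbb{F}_p$ with trivial action. Here $X$ is a limit of $C_i\in\on{CLieAn}_{<\infty}$ (Theorem \ref{proclie}), and by Lemma \ref{lietoan} $\mbb{F}_p$-cohomology of $C_i$ agrees with that of $|C_i|$. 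For the anima $|C_i|$, which has finite $\pi_1$ and finitely generated higher homotopy, the map $|C_i|\to\wh{|C_i|}$ is an $\mbb{F}_p$-cohomology isomorphism by the classical Sullivan/Artin--Mazur result. Passing to colimits via Lemma \ref{invlimchaus} then gives the comparison.

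\textbf{Essential image.} The identified image is closed under colimits and extensions. Both $\on{D}_{et}(\wh{X};\mbb{Z})_{tors}$ and the target subcategory are generated under colimits and Postnikov limits by torsion modules in degree $0$. A torsion $W$-module that comes from $\wh{W}$ is a filtered colimit of finite ones pulled back from $B(\wh{W}/U)$, so it lies in the image. The image is closed under the relevant limits because $c^\ast$ is t-exact and the Postnikov towers converge; the finite cohomological dimension ensures that left-completeness is preserved. Finally, in the compact and $\mbb{R}$-type cases every discrete $W$-module factors through $W/W^\circ=\wh{W}$, so the extra condition is automatic and $c^\ast$ is an equivalence.

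The step I expect to be hardest is the passage through unbounded objects, namely the left-completeness of $\on{D}_{et}(\wh{X})$ and the commutation of $c_\ast$ with Postnikov limits. My first approach will be to handle it via the uniform cohomological dimension bound on both sides.
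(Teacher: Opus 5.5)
\textbf{The gap.} Your reduction of full faithfulness to the test objects $N'=\mbb{Z}[h_Y]/n$ is not valid. These objects are compact, but they do not generate $\on{D}_{et}(\wh X;\mbb{Z})_{tors}$, and in fact they do not detect isomorphisms there. That category is only the \emph{left t-completion} of $\on{Ind}(\on{Lisse}_{tors}(\wh X))$.

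The remark following Lemma \ref{lisse} notes that for $X=X_{\mbb{C}}$ the embedding $\on{Ind}(\on{Lisse}(X))\to\on{D}_{et}(X;\mbb{Z})$ is not an equivalence. For a colimit-preserving fully faithful embedding of a compactly generated category, this says precisely that the compact generators fail to detect zero. Concretely, here $\wh W=1$, so your only test objects are the $\mbb{Z}/n$. The local system on $S^2$ given by the $C_*(\Omega S^2;\mbb{F}_p)$-module $\mbb{F}_p[x^{\pm1}]$ (with $|x|=1$) is nonzero but has $R\Gamma=0$, and the same happens after pulling back to $X_{\mbb{C}}$ and on its profinite completion.

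So knowing $R\Gamma(\wh Y;N)\simeq R\Gamma(Y;c^\ast N)$ for all $Y$ and all torsion $N$ does not give $N\simeq c_\ast c^\ast N$. The uniform cohomological dimension bound you propose cannot repair this, since $X_{\mbb{C}}$ already has cohomological dimension $2$.

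\textbf{The missing step.} You must pass through bounded-above objects first. There are two ways to do this.
\begin{enumerate}
\item The paper's route. Using the finite cohomological dimension of $\wh X$, which is itself deduced from the heart-level comparison, show that bounded-above torsion objects on $\wh X$ lie in $\on{Ind}(\on{Lisse}_{tors}(\wh X))$. Then full faithfulness on $\on{Lisse}_{tors}$ reduces, by dualizability, to Homs out of the unit, i.e.\ only to the comparison for torsion $\wh W$-modules in degree $0$. It extends to $\on{Ind}$ by compactness, and then to the left completions because $c^\ast$ is t-exact and both categories are left complete.
\item Keeping your unit formulation. The functor $c_\ast$ is left t-exact, being right adjoint to a t-exact functor. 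So for bounded-above $N$, the cofiber $Q$ of $N\to c_\ast c^\ast N$ is bounded above. On such $Q$ your test objects \emph{do} detect zero: in the top degree $m$ one has $H_m\Gamma(\wh Y;Q)=H^0(\wh Y;H_mQ)$, which is nonzero for $\wh Y$ small enough. Then handle general $N=\varprojlim\tau_{\le n}N$ using left completeness of both sides, t-exactness of $c^\ast$, and the fact that $c_\ast$ preserves limits.
\end{enumerate}

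The rest of your plan matches the paper: the heart-level comparison via filtered colimits and Galois covers, and the essential image via t-exactness.

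One smaller point: the constant-coefficient comparison is immediate from the universal property of $\wh X$ (Proposition \ref{profcomp}, with target $K(\mbb{F}_p,n)$). Your detour through Theorem \ref{proclie} and Sullivan--Artin--Mazur does not literally cover the $\mbb{Z}$-type case, where $\pi_1X$ has quotient $\mbb{Z}$; Remark \ref{prodiscrete} only allows an $\mbb{R}$-quotient.
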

\begin{proof}
First we show that if $A$ is a torsion abelian group with continuous $\wh{W}=\pi_1\wh{X}$-action, then
$$R\Gamma(\wh{X};A)\overset{\sim}{\rightarrow} R\Gamma(X;A).$$
Cohomology commutes with filtered colimits on both $X$ and $\wh{X}$ by Lemma \ref{cohachaus}, so we can reduce to the case where $A$ is finite and is trivial on some finite Galois cover of $\wh{X}$.  Finite Galois covers of $X$ and $\wh{X}$ correspond via $Y\mapsto \wh{Y}$, and if $G$ is the (finite) Galois group then $Y/G=X$ and $\wh{Y}/G=\wh{X}$.  Thus the claim reduces to the case of constant finite coefficients, which follows from the definition of profinite completion.

From this fact, we deduce in particular that $\wh{X}$ has finite cohomological dimension with torsion coefficients (as we know the same for $X$).  The same holds for any finite etale cover, because it is an instance of the same fact.  Thus by the same argument as in Lemma \ref{lisse}, we get
$$\on{Ind}(\on{Lisse}_{tors}(\wh{X}))\hookrightarrow \on{D}_{et}(\wh{X};\mbb{Z})_{tors}$$
where $\on{Lisse}_{tors}(\wh{X})$ is the thick subcategory generated by the $\mbb{F}_p[h_Y]$ for $Y\to\wh{X}$ finite etale and varying primes $p$.  We also get that this fully faithful inclusion identifies $\on{D}_{et}(\wh{X};
\mbb{Z})_{tors}$ with the left t-completion of $\on{Ind}(\on{Lisse}_{tors}(\wh{X}))$ (which is closed under t-truncation).  Comparing with $\on{Ind}(\on{Lisse}(X))\hookrightarrow\on{D}_{et}(X;\mbb{Z})$, we see that to prove the fully faithfulness it suffices to show that Homs over $\wh{X}$ and $X$ match between objects in $\on{Lisse}_{tors}(\wh{X})$.  By dualizability we reduce to Homs from the unit, where the claim amounts to the cohomology comparison already given above.

For the claim about the essential image, note that the fully faithful functor
$$\on{D}_{et}(\wh{X};\mbb{Z})_{tors}\hookrightarrow\on{D}_{et}(X;\mbb{Z})_{tors},$$
being induced by a pullback map, is t-exact.  It follows that an object lies in the essential image if and only if each of its homology objects does, and the claim then follows readily.
\end{proof}

\begin{corollary}
Suppose $X$ and $Y$ are Weil-Moore anima such that $Y$ is of $\mbb{R}$-type or compact type.  Then for every map $f:X\to Y$ and every $M\in\on{D}_{et}(Y;\mbb{Z})_{tors}$, the pullback $f^\ast M$ uniquely descends to an object in $\on{D}_{et}(\wh{X};\mbb{Z})_{tors}$.
\end{corollary}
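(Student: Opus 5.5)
The plan is to reduce the statement to the preceding Proposition applied to $X$, and to show that the homology objects of $f^\ast M$ have $W_X$-action factoring through the profinite completion. By that Proposition, the pullback $\on{D}_{et}(\wh{X};\mbb{Z})_{tors}\to\on{D}_{et}(X;\mbb{Z})_{tors}$ is fully faithful. This already gives uniqueness of the descent, up to contractible choice. For existence, the same Proposition says it suffices to check that each $H_n(f^\ast M)$, a discrete torsion $\pi_1X$-module, comes from a $\wh{\pi_1X}$-module.

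First, since pullback along $f$ is t-exact (it is pullback in the slice topos, compatible with $\ast\to X$), we have $H_n(f^\ast M)=f^\ast H_nM$. Here $H_nM$ is a discrete torsion $\pi_1Y$-module. Because $Y$ is of $\mbb{R}$-type or compact type, the equivalence clause of the preceding Proposition (equivalently, $\pi_1Y/(\pi_1Y)^\circ\simeq\wh{\pi_1Y}$) shows that the action of $\pi_1Y$ on $H_nM$ factors through a continuous action of $\wh{\pi_1Y}$. In other words, it is a filtered union of submodules on which the action factors through finite quotients of $\pi_1Y$.

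Next, the action of $\pi_1X$ on $f^\ast H_nM$ is obtained by restricting along $\pi_1f:\pi_1X\to\pi_1Y$. Any continuous homomorphism to a profinite group factors through profinite completion, so the composite $\pi_1X\to\pi_1Y\to\wh{\pi_1Y}$ factors through $\wh{\pi_1X}$. Concretely, the preimage of an open finite-index normal subgroup is open of finite index. Hence every element of $f^\ast H_nM$ is fixed by an open finite-index subgroup of $\pi_1X$, so $f^\ast H_nM$ comes from a $\wh{\pi_1X}$-module. The essential-image criterion of the Proposition then applies and gives the descent.

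The only delicate point is the t-exactness of $f^\ast$ together with the compatibility $H_n(f^\ast M)=f^\ast H_nM$ as $\pi_1X$-modules via $\pi_1 f$. This is formal topos theory: pullback preserves truncations and commutes with pullback to a point. I therefore expect no serious obstacle; the content lies in the preceding Proposition.
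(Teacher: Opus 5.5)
Your proof is correct, but it uses a different part of the preceding Proposition than the paper does.

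\textbf{The paper's route.} It applies the Proposition on the $Y$ side. Write $\eta_X:X\to\wh{X}$ and $\eta_Y:Y\to\wh{Y}$ for the completion maps. Since $Y$ is of $\mbb{R}$-type or compact type, $M\simeq\eta_Y^\ast\wh{M}$ for a unique $\wh{M}\in\on{D}_{et}(\wh{Y};\mbb{Z})_{tors}$. The universal property of profinite completion (Proposition \ref{profcomp}) supplies $\wh{f}:\wh{X}\to\wh{Y}$ with $\wh{f}\circ\eta_X\simeq\eta_Y\circ f$. Hence $f^\ast M\simeq\eta_X^\ast\wh{f}^\ast\wh{M}$, and uniqueness follows, as in your argument, from full faithfulness on the $X$ side.

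\textbf{Your route.} You stay on the $X$ side and verify the essential-image criterion degree by degree. From $Y$ you use only two facts: discrete $\pi_1Y$-modules are automatically $\wh{\pi_1Y}$-modules, and $\pi_1f$ is continuous. (You should say in one line why $f^\ast M$ is torsion; this is immediate since $f^\ast$ commutes with $-\otimes\mbb{Q}$.)

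\textbf{Comparison.} Your argument is more elementary and shows exactly where the hypothesis on $Y$ enters. It also never needs to construct $\wh{f}$. The paper's argument gives something extra: it identifies the descent with the functor $\wh{f}^\ast$. Thus on torsion coefficients $f^\ast$ factors as $\eta_X^\ast\circ\wh{f}^\ast\circ(\eta_Y^\ast)^{-1}$. This factorization is exactly what the next corollary combines with Remark \ref{okonprof} to show that $f^\ast$ depends only on the induced map of refined class formations. Your objectwise descent would still have to be matched with $\wh{f}^\ast$ to reach that conclusion.
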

\begin{proof}
By the above, $M$ uniquely descends to $\wh{Y}$, and then the claim follows from functoriality.
\end{proof}

\begin{remark}\label{globaldescends}
In particular, this applies when $Y$ is the Weil-Moore anima of a number field and $X$ is the Weil-Moore anima of one of its nonarchimedean local fields.  The conclusion is that a global Weil-Moore representation with discrete torsion (possibly derived) coefficients always naturally restricts to a local \emph{Galois} representation, not just a local Weil group representation.  This claim clearly does not hold with $\mbb{C}$-coefficients.
\end{remark}

\begin{corollary}
Suppose $X$ and $Y$ are Weil-Moore anima such that $Y$ is of compact type or $\mbb{R}$-type.  Then for any map $f:X\to Y$, the pullback functor
$$f^\ast:\on{D}_{et}(Y;\mbb{Z})_{tors} \to \on{D}_{et}(X;\mbb{Z})_{tors},$$
up to homotopy, only depends on the map of condensed refined class formations induced by $f$.
\end{corollary}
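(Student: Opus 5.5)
The plan is to reduce everything to the profinite completion $\wh{Y}$ and use Remark \ref{okonprof}. By the preceding proposition, since $Y$ is of compact type or $\mbb{R}$-type, pullback gives an equivalence $\on{D}_{et}(\wh{Y};\mbb{Z})_{tors}\simeq \on{D}_{et}(Y;\mbb{Z})_{tors}$. So $f^\ast$ on torsion objects is identified, up to natural equivalence, with the composite
$$\on{D}_{et}(\wh{Y};\mbb{Z})_{tors}\overset{(\wh{Y}\leftarrow X)^\ast}{\longrightarrow}\on{D}_{et}(X;\mbb{Z})_{tors},$$
where the first functor is pullback along $X\overset{f}{\to}Y\to\wh{Y}$. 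Homotopic maps of condensed anima induce naturally equivalent pullback functors. Therefore it suffices to show that the homotopy class of the composite $X\to Y\to\wh{Y}$ depends only on the map of condensed refined class formations induced by $f$.

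That map of refined class formations is the same as the map $\tau_{\leq 1}X\to\tau_{\leq 1}Y$ (Corollary \ref{weildeterminesrcf}). Composing gives $X\to\tau_{\leq 1}X\to\tau_{\leq 1}Y\to\tau_{\leq 1}\wh{Y}$. The argument in Remark \ref{okonprof} shows that $\wh{Y}\to\wh{\tau_{\leq 1}Y}$ is an isomorphism on $\tau_{\leq 2}$, because $\pi_2Y$ is connected by Lemma \ref{pi2}. So a map $X\to\tau_{\leq 2}\wh{Y}$ is determined up to homotopy by this $\tau_{\leq 1}$-level data. More precisely, lifts from $\tau_{\leq 1}\wh{Y}$ to $\tau_{\leq 2}\wh{Y}$ are governed by $H^2$ and $H^3$ of $X$ with coefficients in $\pi_2\wh{Y}$, and these groups agree with the corresponding groups for $\tau_{\leq 1}$ via the $\tau_{\leq 2}$-isomorphism.

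Further lifts along the Postnikov tower of $\wh{Y}$ have existence and uniqueness obstructions in $H^{i}(X;\pi_{i-1}\wh{Y})$ and $H^{i}(X;\pi_i\wh{Y})$ for $i\geq 3$. The coefficients $\pi_n\wh{Y}$ are profinite, hence compact Hausdorff, so these groups vanish by Remark \ref{vanishingchauscoh}. Passing to the Postnikov limit (condensed anima are Postnikov complete) shows that the homotopy class of $X\to\wh{Y}$ is determined by the induced map of refined class formations. This concludes the argument.

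The main obstacle is bookkeeping rather than content. First, one has to make sure that the identification of $f^\ast$ with pullback along $X\to\wh{Y}$ is natural in $f$, which holds because $Y\to\wh{Y}$ is fixed independently of $f$. Second, Remark \ref{vanishingchauscoh} in the $\mbb{Z}$-type case needs the extra technical hypothesis of Proposition \ref{extvanishing}. If $X$ is of $\mbb{Z}$-type without that hypothesis, I would try instead to use that $X=BW$ is $1$-truncated. Then maps $X\to\wh{Y}$ factor through $\tau_{\leq 1}$-data up to the vanishing of $H^{\geq 3}$ of $BW$ with profinite coefficients. By Lemma \ref{nonarchcomp}, that cohomology reduces to that of the strict-cohomological-dimension-$2$ group $\wh{W}$.
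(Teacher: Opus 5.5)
Your outline is the paper's: identify $f^\ast$ on torsion objects with pullback along $X\to Y\to\wh{Y}$ via the preceding proposition, then apply Remark \ref{okonprof}. However, the step at the $\tau_{\leq 2}$ stage is justified incorrectly. Lifts of a map $X\to\tau_{\leq 1}\wh{Y}$ to $\tau_{\leq 2}\wh{Y}$ are not unique. They form a torsor under $H^2(X;\pi_2\wh{Y})$, and Remark \ref{vanishingchauscoh} does not kill this group, since it only handles degrees $>2$. For instance, for $X=Y=X_{\mbb{C}}$ one has $\tau_{\leq 1}\wh{Y}=\ast$, while $H^2(X;\pi_2\wh{Y})\simeq\wh{\mbb{Z}}$ (this is the cohomology of $S^2$). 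The analogous group $H^2(X;\pi_2Y)$ is exactly what makes Weil-Moore maps non-unique in the first place. Observing that the relevant groups for $\wh{Y}$ and for $\wh{\tau_{\leq 1}Y}$ coincide does not tell you which lift $f$ produces. As written, your argument at this stage would ``prove'' a false uniqueness statement.

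What is needed is a naturality argument, and this is where the $\tau_{\leq 2}$-isomorphism actually enters. Apply functoriality of profinite completion to $Y\to\tau_{\leq 1}Y$. Then the composite $X\overset{f}{\to}Y\to\wh{Y}\to\wh{\tau_{\leq 1}Y}$ is homotopic to $X\to\tau_{\leq 1}X\overset{\tau_{\leq 1}f}{\longrightarrow}\tau_{\leq 1}Y\to\wh{\tau_{\leq 1}Y}$, which depends only on $\tau_{\leq 1}f$. Since $\tau_{\leq 2}\wh{Y}\overset{\sim}{\to}\tau_{\leq 2}\wh{\tau_{\leq 1}Y}$, the map $X\to\tau_{\leq 2}\wh{Y}$ induced by $f$ is therefore determined by $\tau_{\leq 1}f$. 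So your chain should end in $\wh{\tau_{\leq 1}Y}$, which has nontrivial $\pi_2$, not in $\tau_{\leq 1}\wh{Y}$. From there, your obstruction theory in degrees $\geq 3$ is what the paper does.

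Your caveat about $\mbb{Z}$-type $X$ is fair, since the paper's argument also routes through Remark \ref{vanishingchauscoh}. But your proposed fix is incomplete as stated. Lemma \ref{nonarchcomp} concerns discrete torsion coefficients, whereas $\pi_n\wh{Y}$ is profinite. You would still have to pass from finite to profinite coefficients, using that the $W$-action on $\pi_n\wh{Y}$ factors through $\wh{W}$.
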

\begin{proof}
By the above, $f^\ast$ factors through the profinite completion, so this follows from Remark \ref{okonprof}.
\end{proof}

This means that for the purposes of discrete torsion coefficients, we can ignore the possible non-uniqueness up to homotopy of Weil-Moore maps.  (We recall that in number-theoretic examples, whenever $Y$ is of $\mbb{Z}$-type then so is $X$, and hence we are anyway able to avoid the nonuniqueness of Weil-Moore maps, even before passing to profinite completion.)

Finally, we turn to duality.  We aim to prove a version of Poitou-Tate duality for Weil-Moore anima.  We start with local fields, where in the nonarchimedean case the result is of course just equivalent to usual local Tate duality by Lemma \ref{nonarchcomp}.  We will give a proof from the Weil-Moore perspective, as a warm-up for the global case.  The key thing to understand turns out to be the following: if $F$ is the cosheaf giving the refined class formation, then there are two natural ways to build a sheaf from $F$, namely taking by applying the covariant operation $(-)^{tr}$ or by applying the contravariant operation $(-)^\vee$, and at the level of linear algebra local Tate duality will amount to the assertion that mod $p$, the two resulting sheaves are isomorphic up to a twist and shift.  But we will set things up in a more standard way.

\begin{theorem}\label{localtate}
Let $p$ be a prime and $K$ a local field of characteristic $\neq p$.  Let $X$ be a Weil-Moore anima for $K$, with $W=\pi_1X$ the Weil group.  Then:
\begin{enumerate}
\item There is a natural map
$$[X]:R\Gamma(X;\left(\mbb{Q}_p/\mbb{Z}_p\right)(1)[2])\to \mbb{Q}_p/\mbb{Z}_p$$
which is an isomorphism in degree $0$, the bottom (homological) degree.
\item For every $p$-power torsion $E_1$-$\mbb{Z}$-algebra $R$ and every $M\in\on{Perf}(R)$ with $\Omega X$-action, assumed to come from a $\wh{W}$-action if $K$ is nonarchimedean, the $R$-$R$-bimodule map
$$R\Gamma(X;M(1)[2])\otimes_\mbb{Z} R\Gamma(X;M^\vee)\to R\Gamma(X;R(1)[2])\overset{[X]\otimes_{\mbb{Z}}R[-1]}{\longrightarrow} R$$
induces an isomorphism
$$R\Gamma(X;M^\vee)\simeq R\Gamma(X;M(1)[2])^\vee$$
in $\on{Perf}(R^{op})$, where $(-)^\vee$ is the functor of left $R$-linear maps to $R$, viewed as landing in $\on{Perf}(R^{op})$ via the right $R$-module structure on $R$.

\end{enumerate}
\end{theorem}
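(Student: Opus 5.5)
The plan is to build the fundamental class from the refined class formation, then deduce duality by reducing to the single coefficient system $\mbb{F}_p[W/N]$, where it becomes a comparison between the two sheaves attached to the cosheaf $F$ giving the refined class formation, namely $F^{tr}$ and $F^\vee$.

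\textbf{Step 1: the fundamental class.} Write $F$ for the cosheaf on $\on{fEt}_X$ with $F(Y)=R\Gamma(Y;\mbb{R}/\mbb{Z})^\vee$ (Lemma \ref{mooreisweilmoore}), and $A=H_1F^{tr}=\mbb{G}_m$. The Kummer sequence $\mu_{p^n}\to\mbb{G}_m\overset{p^n}{\to}\mbb{G}_m$ gives a map $\mbb{Z}/p^n(1)[1]\to A/p^n$ of sheaves on $\on{fEt}_X$. Composing with $A[1]\to F^{tr}$ and then with the transfer $F^{tr}(X)\to F(X)$, which is the identity on the terminal object, gives
$$R\Gamma(X;\mbb{Z}/p^n(1)[1])\to F(X)/p^n=R\Gamma(X;\mbb{Z}/p^n)^\vee .$$
Pairing with $1\in H^0(X;\mbb{Z}/p^n)$ and passing to the colimit over $n$ yields $[X]$. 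To see that $[X]$ is an isomorphism in degree $0$, I would check that $H^2(X;\mbb{Q}_p/\mbb{Z}_p(1))$ is the $p$-part of the local Brauer group. This holds because Weil-Moore cohomology vanishes in degrees $\geq 3$ (Theorem \ref{strictcohdim}), so the Kummer sequence identifies it with $H^2$ of $\mbb{G}_m$, which is the full $\mbb{Q}/\mbb{Z}$ via $\on{inv}$ for the class formation, including at archimedean places. The key computation here is $H^2(X;\mbb{G}_m)\simeq H^2(\mc{C};A)$, using $H^1(X;\mbb{G}_m)$ and the Moore property.

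\textbf{Step 2: reduction to $M=R=\mbb{F}_p[W/N]$.} Both sides of the duality map are exact in $M$ and commute with the relevant colimits (Proposition \ref{pfinite}, Lemma \ref{cohachaus}). As in the proof of Proposition \ref{pfinite}, I descend $M$ to a discrete $A$ with $\pi_1A=W/N$ and write
$$R\Gamma(X;M)=M\otimes_{\mbb{S}[\Omega A]_{\wh p}}R\Gamma(X;\mbb{S}[\Omega A]_{\wh p}).$$
This reduces the claim to the universal bimodule. Checking perfect-complex isomorphisms mod $p$ (Lemma \ref{checkperfmodp}) then reduces it to $M=\mbb{F}_p[W/N]$ over $R=\mbb{F}_p[W/N]$. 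By Shapiro this is duality for $\mbb{F}_p$-coefficients on every finite etale $Y$, compatible with transfers and restrictions. In other words, I need the map of sheaves
$$\mbb{F}_p(1)[2]\text{-cohomology}\to (\mbb{F}_p\text{-cohomology})^\vee,$$
which is $F^{tr}/p\otimes\mu_p\to (F/p)^{\vee\vee}$ up to shift, to be an isomorphism on each $Y$.

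\textbf{Step 3: the mod $p$ comparison, which I expect to be the main obstacle.} Since $F(Y)$ and $A(Y)[1]$ differ by $\mbb{Z}$ in degree $0$, the Kummer sequence identifies $R\Gamma(Y;\mu_p)$ with the fibre of $\mbb{Z}/p$-cohomology related to $A(Y)/p$. Meanwhile $R\Gamma(Y;\mbb{F}_p)^\vee=F(Y)/p$, whose $H_1$ and $H_2$ are $A(Y)/p$ and $A(Y)[p]$.

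So the theorem comes down to checking that the natural map is compatible with these identifications. This is a bookkeeping issue: the two sheaf structures ($tr$ versus $\vee$) agree on objects, and a comparison map that is an isomorphism on values on objects is automatically an isomorphism. The difficulty is only in showing the map built in Step 1 induces exactly the tautological identification of the pieces of $F(Y)/p$, together with the degree-$0$ term $\mbb{Z}/p$ versus $H^2(Y;\mu_p)=\mbb{Z}/p$, which is handled by Step 1.

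For archimedean $K$, I would sanity-check the result against Poincaré duality for $S^2$ and $\mbb{R}\mbb{P}^2$ with the orientation twist $\mu_p$ (Example \ref{wmex}).
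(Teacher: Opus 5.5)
\textbf{Step 1 fails for archimedean $K$.} Taking global sections of the Kummer map $\mu_{p^n}[2]\to F^{tr}/p^n$ of sheaves on $\on{fEt}_X$ gives a functional on the cohomology of the Galois category, $R\Gamma(\on{fEt}_X;\mu_{p^n}[2])=R\Gamma(B\wh{W};\mu_{p^n}[2])$. It does not give a functional on the Weil--Moore cohomology $R\Gamma(X;\mu_{p^n}[2])$. The two agree for nonarchimedean $K$ (Lemma \ref{nonarchcomp}), but not otherwise. For $K=\mbb{C}$ the Galois $H^2$ vanishes, while $H^2(X_{\mbb{C}};\mu_{p^n})\simeq\mbb{Z}/p^n$; your functional cannot see this class.

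Your degree-$0$ argument has the same defect. $H^2(\mc{C};A)$ is $\on{Br}(\mbb{R})\simeq\mbb{Z}/2$ or $\on{Br}(\mbb{C})=0$, not ``the full $\mbb{Q}/\mbb{Z}$ at archimedean places''. Correspondingly, $H^2(X;(\mbb{Q}_p/\mbb{Z}_p)(1))\simeq\mbb{Q}_p/\mbb{Z}_p$ is not a Brauer group there.

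The paper produces a functional on Weil--Moore cohomology directly. By Lemma \ref{alttr}, the sheaf $\mc{F}=(R\Gamma(-;(\mbb{Q}_p/\mbb{Z}_p)(1)[2])^\vee)^{tr}$ equals $(F^{tr})_{\wh p}(-1)[-2]$. Since $H_1F^{tr}=\mbb{G}_m$, the top homology of $\mc{F}$ is $H_0\mc{F}=\mbb{Z}_p$. The section $1$ is therefore an element of $H_0\mc{F}(X)=H^2(X;(\mbb{Q}_p/\mbb{Z}_p)(1))^\vee$, and the degree-$0$ claim is just Pontryagin duality.

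\textbf{Step 3 is the real gap.} It is the actual content of the theorem, and you treat it as bookkeeping. Knowing that $R\Gamma(Y;\mbb{F}_p)^\vee$ and $R\Gamma(Y;\mu_p[2])$ have abstractly the same homology ($\mbb{Z}/p$, $A(Y)/p$, $A(Y)[p]$) says nothing about whether the specific cup-product map is an isomorphism. Saying ``a map that is an isomorphism on values is an isomorphism'' assumes what is to be shown. Classically this step is the compatibility of Kummer theory and cup product with the reciprocity map, which is a genuine theorem.

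The missing idea is to regard the comparison as a map of sheaves on $\on{fEt}_X$,
\[
R\Gamma(-;\mbb{F}_p)=(F/p)^\vee\to F^{tr}/p(-1)[-2],
\]
and to test it on stalks, where everything collapses.
\begin{itemize}
\item On the source side, by the existence theorem the class-group contribution to the stalks of $(F/p)^\vee$ dies except for the connected component.
\item On the target side, the sheafified $H_0F^{tr}$ is $\mbb{Q}$ in the nonarchimedean case, and $\mbb{G}_m$ is $p$-divisible on stalks with $p$-torsion $\mu_p$.
\item For nonarchimedean $K$, both stalks are therefore $\mbb{F}_p$ in degree $0$. A map that is nonzero on degree-$0$ global sections must then be an isomorphism.
\item For archimedean $K$, one reduces on stalks to $\mbb{C}$. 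There the map is cup product on $H^\ast(S^2;\mbb{F}_p)$ followed by a nonzero functional on $H^2$, which is visibly perfect.
\end{itemize}
Your sanity check against $S^2$ and $\mbb{R}\mbb{P}^2$ points in this direction but does not replace the argument.

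\textbf{Passage to general coefficients.} The paper passes from $\mbb{F}_p$ to arbitrary $(R,M)$ in three steps. It shows $\Gamma^!(M)\simeq M(1)[2]$ for the right adjoint $\Gamma^!$ of global sections. It proves $\Gamma^!$ preserves colimits using $p$-finiteness (Proposition \ref{pfinite}). It then applies the adjunction with $R$-module coefficients. Your Step 2 reduction goes through the non-torsion ring $\mbb{S}[\Omega A]_{\wh p}$. That route would additionally need a $\mbb{Z}_p(1)$-level fundamental class and a duality statement for that ring, neither of which you have set up.
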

\begin{proof}
Consider the sheaf on $\on{fEt}_X$ given by
$$\mathcal{F}:= \left(R\Gamma(-;(\mbb{Q}_p/\mbb{Z}_p)(1)[2])^\vee\right)^{tr},$$
so that, in terms of the fundamental cosheaf $F$ from the refined class formation, we have
$$\mc{F} = (F^{tr})_{\wh{p}}(-1)[-2].$$
By construction of the class formation the (unsheafified) $H_0(F^{tr})$ is $\mbb{Z}^{tr}$ and $H_1(F^{tr})$ is $\mbb{G}_m$ and all the other $H_i$ are $0$.  Thus, $(F^{tr})_{\wh{p}}$ has top nonvanishing homology group given by $H_2 = \mbb{Z}_p(1)$.  Tate untwisting and shifting down by two, we deduce that
$$H_0\mc{F} = \mbb{Z}_p$$
and that this is the top nonvanishing homology group sheaf of $\mc{F}$.  In particular, $1\in\mbb{Z}_p$ corresponds to a canonical map

$$[X]: R\Gamma(X;(\mbb{Q}_p/\mbb{Z}_p)(1)[2])\to\mbb{Q}_p/\mbb{Z}_p$$
which is an isomorphism in bottom degree $0$ by Pontryagin duality.  This proves 1.

For 2, in the non-archimedean case let us replace $X$ by $\wh{X}$, so that in any case we are either $\mbb{R}$-type or compact type. Let
$$\Gamma^!:\on{D}(\mbb{Z})[p^\infty] \to \on{D}_{et}(X;\mbb{Z})[p^\infty]$$
be the right adjoint to the global sections functor in the opposite direction (on $p$-primary torsion discrete derived coefficient systems).  We claim that the map
$$(\mbb{Q}_p/\mbb{Z}_p)(1)[2] \to \Gamma^!(\mbb{Q}_p/\mbb{Z}_p)$$
adjoint to the map of 1 is an isomorphism, or equivalently (checking mod $p$ and twisting) that the induced map
$$\mbb{F}_p \to (\Gamma^!\mbb{F}_p)(-1)[-2]$$
is an isomorphism.

To prove this, note that $\Gamma^!\mbb{F}_p$ is bounded above in the t-structure by the cohomological dimension estimate in Lemma \ref{strictcohdim}.  Thus, by Lemma \ref{lisse} the above map is an isomorphism if and only if induces an isomorphism of sheaves on $\on{fEt}_X$ when we map in from $\mbb{F}_p[h_Y]$ for arbitrary $Y\to X$ finite etale.  When we implement this, by Lemma \ref{alttr} the object $\Gamma^!\mbb{F}_p(-1)[-2]$ gives rise to the sheaf on $\on{fEt}_X$ given by
$$Y\mapsto (R\Gamma(Y;\mbb{F}_p(1)[2])^\vee)^{tr},$$
or in other words, in terms of the fundamental cosheaf $F$ coming from the refined class formation,
$$Y\mapsto F^{tr}(Y)/p(-1)[-2],$$\
which is exactly the mod $p$ version of the $\mc{F}$ we analyzed above.  Moreover, the map $\mbb{F}_p\to (\Gamma^!\mbb{F}_p)(-1)[-2]$ is we are trying to show is an isomorphism is just selecting a global section of this sheaf on $\on{fEt}_X$, and this global section is the one we wrote down above in the proof of 1.

On the other hand, the source $\mbb{F}_p$ gives rise to the sheaf on $\on{fEt}_X$ given by
$$Y\mapsto R\Gamma(Y;\mbb{F}_p),$$
which in terms of $F$ is
$$Y\mapsto (F/p)^\vee.$$
By construction, this map of sheaves on $\on{fEt}_X$
$$R\Gamma(-;\mbb{F}_p) \to (R\Gamma(-;\mbb{F}_p(1)[2])^\vee)^{tr}$$
is an isomorphism in degree $0$ on global sections, and we want to prove it's an isomorphism.  We will do this by seeing that it's an isomorphism on stalks.

Consider the natural map
$$\mbb{F}_p \to (F/p)^\vee$$
induced by $\alpha:F\to\mbb{Z}$, whose fiber is given by the (derived) $p$-torsion in the Pontryagin dual to the class group.  By the existence theorem, when we pass to stalks we can replace the class group by its connected component.  Thus in the nonarchimedean case $\mbb{F}_p \to (F/p)^\vee$ is an isomorphism on stalks, while in the archimedean case we get that the stalks of $(F/p)^\vee$ are given by $\mbb{F}_p$ in degree $0$ and $\mbb{F}_p(-1)$ in degree $-2$.  We can likewise analyze $F^{tr}/p(-1)[-2]$.  When $F$ is nonarchimedean the sheafified $H_0F^{tr}$ is $\mbb{Q}$ so doesn't contribute mod $p$; moreover multiplication by $p$ on $H_1F=\mbb{G}_m$ is surjective on stalks because $\on{char}(F)\neq p$.  Thus we get that for the stalks of $F^{tr}/p(-1)[-2]$ there is only the $\mbb{F}_p$ in degree $0$ we already discussed at the beginning of this proof.  But when $F$ is archimedean we additionally get an $\mbb{F}_p(-1)$ in degree $-2$ (if we normalize so that $H_0F^{tr}=\mbb{Z}$, and not $\frac{1}{2}\mbb{Z}$, on the complex numbers).

In the nonarchimedean case, we are done: a map from $\mbb{F}_p$ to itself is either $0$ or an isomorphism, but our map was nonzero on global sections by construction.  Thus it must be an isomorphism.

In the archimedean case, because we are checking on stalks we reduce to the case $K=\mbb{C}$.  Then the Tate twist and the $(-)^{tr}$ operation can be ignored, and our map is simply a map
$$R\Gamma(X;\mbb{F}_p) \to R\Gamma(X;\mbb{F}_p[2])^\vee$$
induced by cup product followed by a nonzero linear form on $H^2$.  Because of the simple structure of cohomology (nonzero except in degrees $0$ and $2$, where it's one-dimensional) this forces the map to be an isomorphism, as desired.

To sum up, we have shown that the map in 1 induces an isomorphism
$$(\mbb{Q}_p/\mbb{Z}_p)(1)[2] \overset{\sim}{\rightarrow} \Gamma^!(\mbb{Q}_p/\mbb{Z}_p).$$
Next we claim that\
$$\Gamma^!:\on{D}(\mbb{Z})[p^\infty] \to \on{D}_{et}(X;\mbb{Z})[p^\infty]$$
preserves all colimits, or equivalently that the natural map
$$M\otimes_\mbb{Z} \Gamma^!(\mbb{Q}_p/\mbb{Z}_p[-1]) \to \Gamma^!(M)$$
is an isomorphism for all $M\in\on{D}(\mbb{Z})[p^\infty]$.  We can again test mod $p$ and thus reduce to showing
$$M\otimes_{\mbb{F}_p} \Gamma^!(\mbb{F}_p)\overset{\sim}{\rightarrow} \Gamma^!(M)$$
for all $M\in\on{D}(\mbb{F}_p)$.  Because $\Gamma^!(\mbb{F}_p)$ (as calculated above) is just a shift and a twist, the functor on the left commutes with the inverse limit of the Postnikov tower of $M$; the functor on the right evidently does as well because $\Gamma^!$ is a right adjoint.  Thus we can reduce to the case of $M$ bounded above, when the left-hand side is also bounded above.  Then again by Lemma \ref{lisse} we can test on maps from objects of the form $\mbb{F}_p[h_Y]$ for $Y\to X$ finite etale, in which case the claim follows from the mod $p$ finiteness property of $Y$ from Proposition \ref{pfinite}.

To sum up once more, we have shown that\
$$\Gamma^!(M) \simeq M(1)[2]$$
for all $M\in\on{D}(\mbb{Z})[p^\infty]$.  As both $\Gamma$ and $\Gamma^!$ preserve colimits, the adjunction between them passes to $R$-module objects for all $E_1$-algebras $R$ in $\on{D}(\mbb{Z})[p^\infty]$, i.e.\ all $p$-primary torsion $E_1$-$\mbb{Z}$-algebras $R$.  Now, let $M$ be as in 2: an $R$-linear coefficient system on $X$ whose fiber at $\ast \to X$ lies in $\on{Perf}(R)$.  Then we can calculate the $R^{op}$-module of $R$-linear maps
$$ M(1)[2] \to \Gamma^!(R)$$
 in two ways:
\begin{enumerate}
\item By the adjunction between $\Gamma^!$ and $\Gamma$, they are the same as $R$-linear maps
$$ R\Gamma(X;M(1)[2]) \to R,$$
so we get
$$R\Gamma(X;M(1)[2])^\vee.$$
\item By the above calculation $\Gamma^!(R)\simeq R(1)[2]$, we get
$$R\Gamma(X;M^\vee).$$
\end{enumerate}
This gives an isomorphism
$$R\Gamma(X;M^\vee)\simeq R\Gamma(X;M(1)[2])^\vee,$$
which by a simple unwinding is induced by the pairing described in 2.
\end{proof}

\begin{remark}
Of course, in the archimedean cases we have a better explanation for this duality: it follows from Poincare duality for the manifolds $\mbb{R}\mbb{P}^2$ and $S^2$.  (One also easily sees, with either proof, that the duality holds with $\mbb{Z}$-coefficients, not just $p$-torsion coefficients.)  For a similar geometric proof in the $p$-adic case, see \cite{farguescft}, where the geometric model is the Fargues-Fontaine curve.
\end{remark}

Now we go the number field case with restricted ramification, which is different in that the cohomology is not self-dual (up to twist), but rather is dual to a compactly-supported variant.
 
\begin{theorem}\label{globaltate}
Let $p$ be a prime, $K$ be a number field, and $S$ a finite set of places of $K$ containing all $p$-adic and archimedean places.  Fix Weil-Moore data as in Example \ref{compatiblewm}.  For $M$ a $p$-primary torsion discrete coefficient system on $X:=X_{K,S}$, set

$$R\Gamma_c(X;M):=\on{Fib}\left(R\Gamma(X;M)\to\oplus_{\nu\in S}R\Gamma(X_{K_\nu};M)\right).$$

Then:
\begin{enumerate}
\item There is a unique map
$$[X]:R\Gamma_c(X;(\mbb{Q}_p/\mbb{Z}_p)(1)[3]) \to \mbb{Q}_p/\mbb{Z}_p$$
such that for all (or just one) $\nu\in S$, the composition
$$R\Gamma(X_{K_\nu};(\mbb{Q}_p/\mbb{Z}_p)(1)[2])\overset{\partial_\nu} {\longrightarrow} R\Gamma_c(X;(\mbb{Q}_p/\mbb{Z}_p)(1)[3])\overset{[X]}{\longrightarrow} \mbb{Q}_p/\mbb{Z}_p$$
identifies with the local fundamental class $[X_{K_\nu}]$ from Theorem \ref{localtate}, where $\partial_\nu$ is the $\nu$-component of the boundary map arising from the definition of $R\Gamma_c$.
\item For all $p$-power torsion $E_1$-$\mbb{Z}$-algebras $R$ and all perfect $R$-modules $M$ with $\Omega X$-action, the pairing induced by cup product
$$ R\Gamma_c(X;M(1)[3]) \otimes_{\mbb{Z}} R\Gamma(X;M^\vee) \to R\Gamma_c(X;R(1)[3])\overset{[X]}{\longrightarrow} R$$
induces an isomorphism $$R\Gamma(X;M^\vee)\simeq R\Gamma_c(X;M(1)[3])^\vee,$$
in $\on{Perf}(R^{op})$, where $(-)^\vee$ denotes the duality functor $\on{Perf}(R)^{op} \to \on{Perf}(R^{op})$.
\end{enumerate}
\end{theorem}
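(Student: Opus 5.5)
Both parts will be reduced to the Galois-cohomological Poitou--Tate theorem via the comparison already established. For part 1, I first note that $R\Gamma_c(X;M)$ as defined uses all $\nu\in S$. By the remark following Theorem \ref{comparetogal} (compactly supported Galois and Weil--Moore cohomology agree for $p$-primary torsion $M$, using Lemma \ref{nonarchcomp} at nonarchimedean places), there is a natural isomorphism $R\Gamma_c(BG_{K,S};M)\simeq R\Gamma_c(X;M)$. This holds for $M=(\mbb{Q}_p/\mbb{Z}_p)(1)$ by filtered colimits. Here $R\Gamma_c(BG_{K,S};-)$ is the naive (non-Tate) version.

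\emph{Existence and uniqueness in part 1.} I analyze the top degree through the long exact sequence
$$\oplus_{\nu\in S}H^2(X_{K_\nu};\mbb{Q}_p/\mbb{Z}_p(1))\to H^3_c(X;\mbb{Q}_p/\mbb{Z}_p(1))\to H^3(X;\mbb{Q}_p/\mbb{Z}_p(1)).$$
The last term vanishes by Theorem \ref{strictcohdim}. For the preceding terms I use $H^2(X;\mbb{Q}_p/\mbb{Z}_p(1))$, which is Pontryagin dual to the $p$-part of the class formation. By the global reciprocity law (the sum of invariants vanishes), its image in $\oplus_\nu\mbb{Q}_p/\mbb{Z}_p$ is the diagonal-sum kernel. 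Hence $H^3_c\simeq\mbb{Q}_p/\mbb{Z}_p$ via the sum map, and $H^4_c=0$ follows because the local groups vanish above degree $2$. Since $R\Gamma_c(X;\mbb{Q}_p/\mbb{Z}_p(1)[3])$ is then coconnective with $H_0=\mbb{Q}_p/\mbb{Z}_p$, maps to $\mbb{Q}_p/\mbb{Z}_p$ are $\on{Hom}(H_0,\mbb{Q}_p/\mbb{Z}_p)$. The map is therefore determined by its composite with any single $\partial_\nu$, and the required normalization against the local $[X_{K_\nu}]$ of Theorem \ref{localtate} pins it down.

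\emph{Part 2.} I argue as in Theorem \ref{localtate}. Define $\Gamma_c$ and its right adjoint $\Gamma_c^!$ on $p$-primary discrete coefficient systems, using $\wh{X}$ where needed. Global coefficients automatically descend, by the Corollary preceding Remark \ref{globaldescends}. The goal is to show that the map $(\mbb{Q}_p/\mbb{Z}_p)(1)[3]\to\Gamma_c^!(\mbb{Q}_p/\mbb{Z}_p)$ adjoint to $[X]$ is an isomorphism and that $\Gamma_c^!$ commutes with colimits. Given those two facts, the same adjunction bookkeeping over $R$ gives $R\Gamma(X;M^\vee)\simeq R\Gamma_c(X;M(1)[3])^\vee$, with perfectness coming from Proposition \ref{pfinite} and the local finiteness.

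To check the isomorphism I reduce mod $p$ and to $M=\mbb{F}_p[h_Y]$ via Lemma \ref{lisse}. After replacing $K$ by finite extensions, this amounts to a perfect duality of finite complexes
$$R\Gamma(Y;\mbb{F}_p)\simeq R\Gamma_c(Y;\mbb{F}_p(1)[3])^\vee.$$
I expect this step to be the main obstacle. I would handle it by transporting through the comparison: it is classical Poitou--Tate for $\mbb{F}_p$ on $G_{L,S}$, with $R\Gamma_c$ taken in the naive rather than Tate-modified sense. Theorem \ref{comparetogal} identifies $R\Gamma(Y;\mbb{F}_p)$ with the pullback of Galois cohomology and the archimedean Weil--Moore terms, and local Weil--Moore duality holds at archimedean places (Theorem \ref{localtate}). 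Combining these, I can then do a five-lemma argument on the fiber sequences defining $R\Gamma_c$, matching the classical statement with Tate cohomology at $\infty$ against the archimedean correction terms.

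If that bookkeeping gets messy, the fallback is a direct route mimicking the proof of Theorem \ref{localtate}. I would compare the two sheaves $(F^{tr})/p$ and $(F_c/p)^\vee$ stalkwise, where $F_c$ is the cosheaf fiber of local-to-global class formation maps. The stalk computation would use the Grunwald--Wang and norm-vanishing arguments from the proof of Theorem \ref{comparetogal}.
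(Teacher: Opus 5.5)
The gap is the step ``by the global reciprocity law''. Part 1 needs that every $x\in H^2(X;(\mbb{Q}_p/\mbb{Z}_p)(1))$ satisfies $\sum_{\nu\in S}[X_{K_\nu}](x_\nu)=0$, where at archimedean $\nu$ the invariant is the Weil--Moore one on $H^2(X_{K_\nu};(\mbb{Q}_p/\mbb{Z}_p)(1))\simeq\mbb{Q}_p/\mbb{Z}_p$. This is not the classical reciprocity law, since the archimedean Galois groups $H^2(BG_{K_\nu};(\mbb{Q}_p/\mbb{Z}_p)(1))$ are only $0$ or $\frac12\mbb{Z}/\mbb{Z}$. You also cannot extract it from classical theory via Theorem \ref{comparetogal}. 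For $K$ totally imaginary, the comparison square together with $\on{cd}_pG_{K,S}\le 2$ shows that classes in $H^2(X;\mu_{p^n})$ with arbitrary prescribed archimedean components exist, and are unique up to classes pulled back from $BG_{K,S}$. Classical reciprocity then only tells you that the sum of their nonarchimedean invariants is a well-defined homomorphism $\phi$ of the archimedean components. Nothing on the Galois side identifies $\phi$ with $-\sum_{\nu\mid\infty}[X_{K_\nu}]$, or even shows that $\phi$ is nondegenerate. That information lives in the fixed archimedean local-global maps of refined class formations, and it is exactly what the existence of $[X]$ (well-definedness of your ``sum map'') requires. Deligne's Weil-group Brauer computation quoted in the introduction is the right kind of statement, but proving its analogue for $X_{K,S}$ with the paper's $[X_{K_\nu}]$ is essentially the content of Part 1. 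Separately, $H^2(X;(\mbb{Q}_p/\mbb{Z}_p)(1))$ is not Pontryagin dual to the class formation; the formula $H^2(Y;\mbb{Z}/p^n)\simeq A(Y)[p^n]^\vee$ holds only for untwisted coefficients. The paper gets the needed compatibility by building $[X]$ from $F_{Kott}=\on{fib}\bigl(\oplus_{\nu\in S}(f_\nu)_\ast F_\nu\to F\bigr)$. Its $(-)^{tr}$ has top homology $H_1=\mbb{G}_m$ on the finite \'etale site of $\mc{O}_{K,S}$, so $\mc{F}_c=(F_{Kott})^{tr}_{\wh{p}}(-1)[-2]$ has top homology $H_0\mc{F}_c=\mbb{Z}_p$, and $[X]$ is the section $1$. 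Compatibility with every $[X_{K_\nu}]$, archimedean ones included, then holds because $H_1F_{Kott}\to(f_\nu)_\ast\mbb{G}_m$ is restriction of units.

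The same missing input sits inside your Part 2. Deducing Weil--Moore duality from classical Poitou--Tate, Theorem \ref{comparetogal}, and a five-lemma is a legitimate alternative to the paper's route. The paper never invokes classical Poitou--Tate; it instead shows that $\mbb{F}_p(1)[2]\to F_{Kott}^{tr}/p\to\oplus_{\nu\mid\infty}(f_\nu)_\ast\mbb{F}_p^{tr}$ is a fiber sequence after sheafification, using Kummer theory, the principal ideal theorem and the $p$-cyclotomic tower. Two cautions apply to your version. First, the classical input must be the Tate-modified form (for totally imaginary $K$, compact support at finite places only). Naive compactly supported Galois cohomology is dual to Weil--Moore cohomology, which is what you are proving, not something classical. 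Second, in your five-lemma the induced map on the archimedean correction terms $\oplus_{\nu\mid\infty}\mbb{F}_p[-2]$ sends $a_\nu$ to $\zeta\mapsto[X](\partial_\nu(\zeta\cup a_\nu))$. That map is an isomorphism only if $[X]\circ\partial_\nu$ is a unit multiple of $[X_{K_\nu}]$ for each archimedean $\nu$, which is again the Weil--Moore reciprocity above. Once Part 1 is properly established (for example via $F_{Kott}$), your transport argument for Part 2 goes through. You still need to check compatibility of cup products and fundamental classes along $R\Gamma_c(X;-)\simeq R\Gamma_c(BG_{K,S};-)$.
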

\begin{proof}
We proceed similarly to the proof of local Poitou-Tate duality.  For a $p$-primary torsion discrete coefficient system $M$ on $X$, we make the following definition of $R\Gamma_c(-;M)$ as a sheaf on $\on{fEt}_X$: for $Y\to X$ finite etale, set
$$R\Gamma_c(Y;M) := \on{Fib}\left(R\Gamma(Y;M) \to \oplus_{\nu\in S} R\Gamma(Y\times_XX_{K_\nu};M)\right).$$

\noindent From this we define a sheaf on $\on{fEt}_X$ by
$$\mc{F}_c := \left(R\Gamma_c(-;(\mbb{Q}_p/\mbb{Z}_p)(1)[3])^\vee\right)^{tr}.$$
If $F$ denotes the cosheaf giving the condensed refined class formation for $(K,S)$, and ditto $F_\nu$ for the local field $K_\nu$, then setting
$$F_{Kott}=\on{fib}(\oplus_{\nu\in S} (f_\nu)_\ast F_\nu \to F)$$
where $f_\nu$ is the local-to-global pushforward map on finite etale sites, we deduce that
$$\mc{F}_c = (F_{Kott})^{tr}_{\wh{p}}(-1)[-2].$$

By definition of the class groups, $(F_{Kott})^{tr}$ has top nonzero homology group given by $H_1 = \mbb{G}_m$, as a sheaf on the finite etale site of $\mc{O}_{K,S}$ (identified with $\on{fEt}_X$).  Thus its $p$-completion has top nonzero homology group given by $H_2
=\mbb{Z}_p(1)$.  We deduce that $\mc{F}_c$ has top nonzero homology group given by
$$H_0 \mc{F}_c = \mbb{Z}_p.$$
In particular $1\in \mbb{Z}_p$ provides a canonical

$$[X]: R\Gamma_c(X;(\mbb{Q}_p/\mbb{Z}_p)(1)[3])\to\mbb{Q}_p/\mbb{Z}_p.$$
To prove 1, it suffices to note that the map $\mbb{G}_m = H_1 F_{Kott} \to H_1((f_\nu)_\ast F_\nu) = (f_\nu)_\ast \mbb{G}_m$ is by definition the natural map of restricting global units to local units.

Now we turn to 2.  As in the proof of \ref{localtate}, let us consider the functor
$$\Gamma_c^!:\on{D}(\mbb{Z})[p^\infty] \to \on{D}_{et}(X;\mbb{Z})[p^\infty]$$
which is right adjoint to $R\Gamma_c$.  Part 1 gives us a natural map
$$(\mbb{Q}_p/\mbb{Z}_p)(1)[3] \to \Gamma_c^!(\mbb{Q}_p/\mbb{Z}_p),$$
and we claim it is an isomorphism.

Again as in \ref{localtate} it suffices to show
$$\mbb{F}_p \to \Gamma_c^!(\mbb{F}_p)(-1)[-3]$$
is an isomorphism, and for this it suffices to show the induced map of sheaves on $\on{fEt}_X$ is an isomorphism.  By Lemma \ref{alttr} this map of sheaves on $\on{fEt}_X$ takes the form
$$R\Gamma(-;\mbb{F}_p) \to \left(R\Gamma_c(-;\mbb{F}_p(1)[3])^\vee\right)^{tr}.$$
We also have the following additional information from the local-global compatibility property in 1: for all places $\nu$ of $K$, the above map sits in a commutative square
$$\xymatrix{
R\Gamma(-;\mbb{F}_p)\ar[r]\ar[d] & \left(R\Gamma_c(-;\mbb{F}_p(1)[3])^\vee\right)^{tr}\ar[d] \\
R\Gamma((-)\times_X X_{K_\nu};\mbb{F}_p)\ar[r] &  \left(R\Gamma((-)\times_X X_{K_\nu};\mbb{F}_p(1)[2])^\vee\right)^{tr}}$$
where the left arrow is the restriction map, the right arrow is dual to $\partial_v$, and the bottom map is the comparison map from local Poitou-Tate duality.  Translating over to the $F_{Kott}$ and $F_\nu$ as above, we can rewrite this as

$$\xymatrix{
R\Gamma(-;\mbb{F}_p)\ar[r]\ar[d] & F_{Kott}^{tr}/p(-1)[-2]\ar[d] \\
R\Gamma((-)\times_X X_{K_\nu};\mbb{F}_p)\ar[r] &  (f_\nu)_\ast (F_\nu)^{tr}/p(-1)[-2]}.$$

We will also also bring the comparison map of presheaves
$$(f_\nu)_\ast \mbb{F}_p \to R\Gamma((-)\times_X X_{K_\nu};\mbb{F}_p),$$
induced by $\mbb{F}_p\to R\Gamma(-;\mbb{F}_p)$, into play.  Let us restrict only to archimedean $\nu$, and take the direct sum over all $\nu|\infty$ in the bottom row of this commutative square. By Theorem \ref{comparetogal}, the natural map from $\mbb{F}_p$ to the fiber product

$$\oplus_{\nu|\infty}(f_\nu)_\ast\mbb{F}_p\times_{\oplus_{\nu|\infty} R\Gamma( (-)\times_XX_{K_\nu};\mbb{F}_p)} R\Gamma(-;\mbb{F}_p)$$
is an isomorphism on derived sheafification.  On the other hand, the other fiber product
$$\oplus_{\nu|\infty}(f_\nu)_\ast\mbb{F}_p\times_{\oplus_{\nu|\infty}(f_\nu)_\ast (F_\nu)^{tr}/p(-1)[-2]} F_{Kott}^{tr}/p(-1)[-2]$$
also admits a natural map from $\mbb{F}_p$, namely corresponding to the canonical class we found above in $F_{Kott}^{tr}/p(-1)[-2]$ on the one hand, and to the obvious diagonal map $\mbb{F}_p\to \oplus_{\nu|\infty}(f_\nu)_\ast\mbb{F}_p$ on the other hand,  where these agree in $(f_\nu)_\ast (F_\nu)^{tr}/p(-1)[-2]$ because the global fundamental class restricts to the local one at all $\nu$.  Then we claim that this map from $\mbb{F}_p$ to the above fiber product is also an isomorphism on derived sheafification.  

Assuming this claim for now, we know from local Poitou-Tate duality that our archimedean comparison map is an isomorphism.  Thus our global comparison map is an isomorphism if and only if this induced map on fiber product sheaves
$$\mbb{F}_p \to\mbb{F}_p$$
is an isomorphism.  However by construction the source and target compatibly map to $\oplus_{\nu|\infty} (f_\nu)_\ast\mbb{F}_p$ by diagonal embedding.  Thus the map is the identity, finishing the proof.

Thus we are left with showing that

$$\mbb{F}_p \to \oplus_{\nu|\infty}(f_\nu)_\ast\mbb{F}_p\times_{\oplus_{\nu|\infty}(f_\nu)_\ast (F_\nu)^{tr}/p(-1)[-2]} F_{Kott}^{tr}/p(-1)[-2]$$
is an isomorphism on stalks, or equivalently, that the following null-composite sequence is a fiber sequence on sheafification:
$$\mbb{F}_p(1)[2]\to F^{tr}_{Kott}/p \to \oplus_{\nu|\infty}(f_\nu)_\ast\mbb{F}_p^{tr},$$
where the first map corresponds to our fundamental class, coming as above from the identification $H_1F^{tr}_{Kott}=\mbb{G}_m$, and the second map comes from the tautological
$$F_{Kott}\to \oplus_\nu (f_\nu)_\ast F_\nu\to\oplus_\nu (f_\nu)_\ast H_0F_\nu=\oplus_\nu (f_\nu)_\ast\mbb{Z}$$
by applying $(-)^{tr}/p$ (which is exact and commutes with pushforwards).

Multiplication by $p$ on $H_1F^{tr}_{Kott}=\mbb{G}_m$ is surjective on the sheaf level because $p$-Kummer extensions are unramified outside $p$, and obviously the kernel of multiplication by $p$ is $\mbb{F}_p(1)$.  Thus it suffices to show that 
$$H_0F^{tr}_{Kott}\to\oplus_{\nu|\infty}(f_\nu)_\ast\mbb{Z}^{tr}$$
is an isomorphism on stalks modulo uniquely $p$-divisible groups.

From the definitions we have an exact sequence of presheaves on the finite etale site of $\mc{O}_{K,S}$
$$0\to\on{Pic}(-)\to H_0F^{tr}_{Kott}\to \oplus_{\nu\in S}(f_\nu)_\ast\mbb{Z}^{tr}\to \mbb{Z}^{tr} \to 0.$$
Now, $\on{Pic}$ gives $0$ on stalks by the principal ideal theorem, and $\mbb{Z}^{tr}$ is uniquely $p$-divisible on stalks because there are always more and more extensions unramified outside $p$ of degree divisible by $p$, say using the $p$-cyclotomic tower.  Thus it suffices to show that for nonarchimedean $\nu\in S$, the stalks of $(f_\nu)_\ast\mbb{Z}^{tr}$ are uniquely $p$-divisible.  For that, up to replacing $K$ by an aribrary finite extension (unramified outside $S$), we need to show that for any $K,S,\nu$, there is an extension $L$ of $K$ unramified outside $S$ whose local degree at $\nu$ is divisible by $p$.  This can also be arranged using the $p$-cyclotomic tower.

Thus we have proven that
$$(\mbb{Q}_p/\mbb{Z}_p)(1)[3] \overset{\sim}{\rightarrow} \Gamma_c^!(\mbb{Q}_p/\mbb{Z}_p).$$
Next we claim that
$$M\otimes_{\mbb{Z}} \Gamma_c^!(\mbb{Q}_p/\mbb{Z}_p[-1])\overset{\sim}{\rightarrow}\Gamma_c^!(M)$$
for all $M\in\on{D}(\mbb{Z})[p^\infty]$.  Exactly as in the proof of \ref{localtate}, this reduces to bounded above $M$, then to the $p$-finiteness property of \ref{pfinite}.  Again as in \ref{localtate} we deduce that the adjunction between $R\Gamma_c$ and $\Gamma_c^!$ passes to $R$-module coefficients, and then 2 follows by calculating $R$-linear maps
$$M(1)[3] \to \Gamma_c^!(R)\simeq R(1)[3]$$
in two different ways.
\end{proof}

\begin{remark}
We chose the above formulation of Poitou-Tate duality, in the setting of a finite set $S$ of places, to highlight the analogy with 3-manifolds.  But the duality becomes picturesque in a slightly different manner if we pass to the situation where $S$ consists of all places, so we are interested in the Weil-Moore anima $X_K$ of the number field itself.  In this situation, given $R$ and $M$ as above, the analog of the ``boundary cohomology" $\oplus_{\nu\in S}R\Gamma(X_{K_\nu};M)$ is given by the adelic construction
$$R\Gamma(X_{\mbb{A}_K};M) := \varinjlim_{S_0} \prod_{\nu\in S_0}R\Gamma(X_{K_\nu};M)\times \prod_{\nu\not\in S_0} R\Gamma(X_{\mc{O}_\nu};M),$$
where $S_0$ runs over finite subsets of places (of the form considered in Example \ref{compatiblewm}, say).  The object on the right can be canonically viewed as a filtered system of Pro-objects in $\on{Perf}(R)$, such that each transition map in the filtered colimit has cofiber lying in $\on{Perf}(R)\subset \on{Pro}(\on{Perf}(R))$; as such it has the canonical structure of a \emph{Tate $R$-module}, or rather a Tate object in $\on{Perf}(R)$ in the sense of \cite{hennion}.  We recall that:

\begin{enumerate}
\item $\on{Tate}(R)$ contains both $\on{Ind}(\on{Perf}(R))$ and $\on{Pro}(\on{Perf}(R))$ as full subcategories, and is generated by these;
\item Maps $X\to Y$ where $X$ and $Y$ each live in one of these full subcategories (but not necessarily the same one) are the ``obvious'' formal ones;
\item The intersection of these two subcategories identifies with $\on{Perf}(R)$ (sitting inside each in the obvious natural way);
\item The $R$-linear duality $\on{Perf}(R)^{op}\simeq \on{Perf}(R^{op})$ naturally extends to $\on{Tate}(R)^{op}\simeq \on{Tate}(R^{op})$ and interchanges $\on{Ind}(\on{Perf}(-))$ and $\on{Pro}(\on{Perf}(-))$.
\end{enumerate}

On the other hand, $R\Gamma(X_K;M)$ can be naturally viewed as lying in $\on{Ind}(\on{Perf}(R))$ hence also in $\on{Tate}(R)$, by writing $X_K=\varprojlim_{S_0} X_{K,S_0}$ which gives a colimit on $R\Gamma$.  By the same token there is a natural restriction map $$R\Gamma(X_K;M) \to R\Gamma(X_{\mbb{A}_K};M)$$
in $\on{Tate}(R)$.

Then the statement of Poitou-Tate duality, at this level, is that the fiber $R\Gamma_c(X_K;M)$ of above map lies in $\on{Pro}(\on{Perf}(R))\subset \on{Tate}_R$, and the resulting cofiber sequence

$$R\Gamma(X_K;M) \to R\Gamma(X_{\mbb{A}_K};M) \to R\Gamma_c(X_K;M)[1]$$
has the following self-duality: the $R$-linear dual of this cofiber sequence  for $M$ identifies with the same cofiber sequence but for $M^\vee(1)[2]$.  On the middle term this follows from local Tate duality, and for the outer terms it follows from global Tate duality (in the form proved above).

From this statement one can, following Nekovar (\cite{nekovar}) extract dualities in $\on{Perf}(R)$ by a formal procedure.  Suppose given a ``Selmer structure'' $\mc{S}$ for $M$, meaning a cofiber sequence
$$ C \to R\Gamma(X_{\mbb{A}_K};M) \to D$$
where $C\in\on{Pro}(\on{Perf}(R))$ and $D\in\on{Ind}(\on{Perf}(R))$ (so, opposite to the situation of $R\Gamma(X_K;M)$ mapping to $R\Gamma(X_{\mbb{A}_K};M)$, where something discrete maps with compact quotient).  Then applying $R$-linear duality and local Tate duality we get a ``dual Selmer structure'' $\mc{S}^\ast$ on $M^\vee (1)[2]$ given by

$$ D^\vee \to R\Gamma(X_{\mbb{A}_K};M^\vee (1)[2]) \to C^\vee.$$
If we define the Selmer cohomology to be
$$R\Gamma_{\mc{S}}(X_K;M) := C\times_{R\Gamma(X_{\mbb{A}_K};M)}R\Gamma(X_K;M),$$
then it formally follows that $R\Gamma_{\mc{S}}(X_K;M)\in \on{Perf}(R)$, and we have the following duality:
$$R\Gamma_{\mc{S}}(X_K;M)^\vee = R\Gamma_{\mc{S}^\ast}(X_K;M^\vee(1)[2]).$$
\end{remark}

In the above proofs of Poitou-Tate duality we used a bit of abstract formalism that we now should justify.  The situation is the following: let $\mc{C}$ be a Galois category and $\mc{F}$ a $\on{D}(\mbb{Z})$-valued sheaf on $\mc{C}$.  Consider the functor
$$M\mapsto R\Gamma(\mc{C};M\otimes\mc{F})$$
from $\on{Sh}(\mc{C};\on{D}(\mbb{Z}))$ to $\on{D}(\mbb{Z})$, where $\otimes$  means sheafified derived tensor product over $\mbb{Z}$.  Composing with the Yoneda embedding, we get a co-presheaf
$$U \mapsto R\Gamma(\mc{C};\mbb{Z}[h_U]\otimes\mc{F})$$
on $\mc{C}$ with values in $\on{D}(\mbb{Z})$.  On the other hand, as we described in Definition \ref{trfunctor}, the formalism of transfer maps also naturally produces a copresheaf $\mc{F}^{tr}$ from $\mc{F}$, taking the same values as $\mc{F}$.  The claim is that these two copresheaves are naturally isomorphic.

\begin{lemma}\label{alttr}
Let $\mc{C}$ be a Galois category, $\mc{D}$ a $\mbb{Z}$-linear presentable $\infty$-category, and $\mc{F}\in\on{Sh}(\mc{C};\mc{D})$.  Then there is a natural isomorphism between the following two functors $\mc{C}\to\mc{D}$:
\begin{enumerate}
\item The functor $U\mapsto \Gamma(\mc{C};\mbb{Z}[h_U]\otimes\mc{F})$;
\item The functor $\mc{F}^{tr}$ from Definition \ref{trfunctor}.
\end{enumerate}
\end{lemma}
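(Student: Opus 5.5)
The plan is to promote both functors to functors on the span category $\on{Span}(\mc{C})$ whose restriction along $\mc{C}^{op}\to\on{Span}(\mc{C})$ is the sheaf $\mc{F}$, and then invoke the uniqueness in Lemma \ref{transferlemma}. Part 2 of that lemma says such extensions (functors on $\on{Span}(\mc{C})$ whose contravariant restriction is a sheaf) are determined up to contractible choice by that restriction; restricting along $\mc{C}\to\on{Span}(\mc{C})$ then gives the claimed isomorphism of copresheaves. By construction $\mc{F}^{tr}$ is obtained exactly this way, at least once we note that the involutivity in Proposition \ref{trissheaf} identifies the span extension of $\mc{F}$ obtained from part 2 with the one defining $\mc{F}^{tr}$; or we can simply define $\mc{F}^{tr}$ for sheaves via part 2. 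So all the work is in the functor $U\mapsto\Gamma(\mc{C};\mbb{Z}[h_U]\otimes\mc{F})$.

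First I identify its values. Since $\mbb{Z}[h_U]$ is the left adjoint $u_!\mbb{Z}$ for $u:\mc{C}_{/U}\to\mc{C}$, the projection formula gives
$$\mbb{Z}[h_U]\otimes\mc{F}\simeq u_!u^\ast\mc{F}.$$
On a Galois category with finite étale $u$, the functor $u_!$ agrees with $u_\ast$: both are computed by finite sums over fibers, and sheaves on $\mc{C}(G)$ are $G$-objects by Lemma \ref{sheafbg}, where induction equals coinduction in the additive $\mc{D}$. Hence
$$\Gamma(\mc{C};\mbb{Z}[h_U]\otimes\mc{F})\simeq\Gamma(\mc{C}_{/U};\mc{F})=\mc{F}(U),$$
naturally in $U$ with respect to the contravariant functoriality coming from $u_\ast$ and restriction. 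Thus I will define a functor on $\on{Span}(\mc{C})$ by $U\mapsto\Gamma(\mc{C};\mbb{Z}[h_U]\otimes\mc{F})$. A span $U\leftarrow M\to V$ is sent to the composite of the map induced by $\mbb{Z}[h_M]\to\mbb{Z}[h_V]$ (covariant Yoneda) with the "wrong-way" map $\mbb{Z}[h_U]\to\mbb{Z}[h_M]$, which is the unit of $u_!=u_\ast$ for finite étale maps. Equivalently, this is the functor $\on{Span}(\mc{C})\to\on{Sh}(\mc{C};\on{D}(\mbb{Z}))$, $U\mapsto\mbb{Z}[h_U]$, composed with $\Gamma(\mc{C};-\otimes\mc{F})$.

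The main obstacle is making this $\mbb{Z}[h_{(-)}]$ into an honest functor on the $(2,1)$-category $\on{Span}(\mc{C})$, with coherent base change. I would avoid building coherences by hand. Reducing as in the proof of Lemma \ref{transferlemma} to $\mc{C}=\mc{C}(G)$ with $G$ finite, Lemma \ref{sheafbg} identifies $\on{Sh}(\mc{C};\on{D}(\mbb{Z}))$ with $\on{Fun}(BG,\on{D}(\mbb{Z}))$, and $U\mapsto\mbb{Z}[h_U]$ with $U\mapsto\mbb{Z}[U]$. Since $\on{D}(\mbb{Z})$ is semiadditive, the free functor from finite $G$-sets extends canonically to spans, by the universal property of spans (the same input from \cite{cm2}/\cite{gepner} used in Lemma \ref{transferlemma}). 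Two checks then finish the argument. The restriction along $\mc{C}^{op}$ is the sheaf $\mc{F}$, by the values computation above. The restriction along $\mc{C}$ is the functor in 1 by construction. Uniqueness in Lemma \ref{transferlemma}(2) then yields the natural isomorphism with $\mc{F}^{tr}$, and naturality in $\mc{F}$ follows because every construction is functorial in $\mc{F}$. The remaining case is general $\mc{C}$, which we handle by passing to the filtered colimit over the finite subcategories $\mc{C}_N$.
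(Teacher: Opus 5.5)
Your proposal is correct and follows essentially the paper's route. You extend $U\mapsto\mbb{Z}[h_U]$ to $\on{Span}(\mc{C})$, where the paper simply applies Lemma \ref{transferlemma}(1) to this cosheaf with values in $\on{Sh}(\mc{C};\on{D}(\mbb{Z}))$, so your reduction to finite $G$ is not needed. You then compose with $\Gamma(\mc{C};-\otimes\mc{F})$ and conclude by the uniqueness in Lemma \ref{transferlemma}(2). That uniqueness is literally how $\mc{F}^{tr}$ is defined for sheaves, so no appeal to involutivity is needed.

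The step you state too quickly is that the contravariant restriction agrees with $\mc{F}$ as a functor, not just objectwise. The paper handles it by noting that $(U\mapsto\mbb{Z}[h_U])^{tr}$ lives in the heart and is identified with $U\mapsto\mbb{Z}[h_U]^\vee$ via the standard self-duality of $\mbb{Z}[S]$ for finite $S$. After that, $\Gamma(\mc{C};\mbb{Z}[h_U]^\vee\otimes\mc{F})\simeq\on{Hom}(\mbb{Z}[h_U],\mc{F})=\mc{F}(U)$ is natural in $U$ by Yoneda.
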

\begin{proof}
By definition of $(-)^{tr}$, it suffices to show that the first functor extends to a functor
$$\on{Span}(\mc{C})\to\mc{D}$$
whose restriction along $\mc{C}^{op} \to \on{Span}(\mc{C})$ identifies with $\mc{F}$.

Such an extension comes as follows: we apply the transfers construction to the cosheaf $U\mapsto \mbb{Z}[h_U]$ to get a functor $\on{Span}(\mc{C}) \to \on{Sh}(\mc{C};\on{D}(\mbb{Z}))$, which one can then compose with $R\Gamma(\mc{C};-\otimes\mc{F})$.  To identify the restiction along $\mc{C}^{op} \to \on{Span}(\mc{C})$ with $\mc{F}$, it suffices to show that that the presheaf given by $(U\mapsto \mbb{Z}[h_U])^{tr}$ identifes with the presheaf given by $U\mapsto \mbb{Z}[h_U]^\vee$ (meaning the $\mbb{Z}$-linear dual).  However both these objects live in degree $0$ so this is elementary to check: one shows that the canonical self-duality of the finite free $\mbb{Z}$-module on a finite set implements the desired identification.
\end{proof}

\begin{remark}
Although $\on{D}(\on{CondAb})$ is not presentable for technical reasons (there is no small set of generators), the conclusion still holds for $\mc{D}=\on{D}(\on{CondAb})$: for example one can reduce to the case $\mc{D}=\on{D}(\mbb{Z})$ by checking on $S$-valued points for all extremally disconnected $S$.  (Recall that taking $S$-valued points preserves all limits and colimits.)
\end{remark}

\begin{remark}
We restricted to the $\mbb{Z}$-linear context for familiarity, but the same claim holds in the more general semi-additive context.  One can still check by hand because the base-case involves free $E_\infty$-spaces on finite sets, which are still just 1-truncated.
\end{remark}

\begin{remark}\label{poitoulemma}
Suppose, in the situation of Lemma \ref{alttr}, that $\mc{F}^{tr}$ is actually a cosheaf.  For example, this happens if $\mc{D}=\on{D}(\mbb{Z})$ and $\mc{F}$ takes values in some bounded range of degrees, by Proposition \ref{trissheaf}.  It follows from left Kan extension that $\mc{F}^{tr}$ extends uniquely along Yoneda to a colimit-preserving $\mbb{Z}$-linear functor
$$\on{Sh}(\mc{C};\on{D}(\mbb{Z}))\to \mc{D},$$
which we will continue to denote
$$\mc{G} \mapsto \mc{F}^{tr}(\mc{G}).$$
Left Kan extension from Lemma \ref{alttr} then provides a natural transformation
$$\mc{F}^{tr}(-) \to R\Gamma(\mc{C};-\otimes\mc{F})$$
which is an isomorphism on the $\mbb{Z}[h_U]$.

We claim that, at least in the situation where $\mc{D}=\on{D}(\mbb{Z})$ and $\mc{F}$ takes values in a bounded range of degrees, the above natural transformation is actually an isomorphism on \emph{all} $\mc{G}\in\on{Sh}(\mc{C};\on{D}(\mbb{Z}))$.  As in the previous remark, we can deduce the analogous claim for $\mc{D}=\on{D}(\on{CondAb})$ by evaluating on extremally disconnecteds.

To prove this, by the above it suffices to show that $R\Gamma(\mc{C};-\otimes\mc{F})$ preserves colimits.  By the criterion as in \cite{cm1} Prop.\ 4.12, for that it suffices to show that for all $X\in\mc{C}$ connected, all subgroups $G\subset \on{Aut}(X)$, and all $M\in \on{D}(\mbb{Z})$ with $G$-action, the transfer map
$$(M\otimes \mc{F}(X))_G\to (M\otimes \mc{F}(X))^G$$
is an isomorphism.  (For then it follows that one iteration of the Cech construction suffices to sheafify the presheaf tensor product, and moreover it also follows that the Cech construction preserves colimits sectionwise, and combing we get the desired claim.)  By Lemma \ref{transferlemma} we have that $\mc{F}(X)$ has vanishing $H$-Tate cohomology for any subgroup $H\subset G$; let us show more generally that the desired conclusion holds with $\mc{F}(X)$ replaced by any $N\in\on{D}(\mbb{Z}[G])$ living in bounded degrees satisfying this vanishing hypothesis in Tate cohomology.

By the usual transfer tricks we reduce to the case of $G$ a $p$-group for some prime $p$.   Resolving $N$ by injective $\mbb{Z}[G]$-modules, we can reduce to the case where $N$ lives in degree $0$.  Then by the Nakayama-Rim theorem (\cite{corpslocaux} Chapter IX), $N$ has a two-term projective resolution, so we reduce to the case when $N$ is a free $\mbb{Z}[G]$-module, which is trivial.
\end{remark}

To finish, let us use this to give a somewhat different perspective on Poitou-Tate duality, by explaining how Poitou's general duality theorem for class formations (\cite{poitou}) looks from the perspective of Weil-Moore anima.

\begin{theorem}
Let $X$ be a Weil-Moore anima with underlying class formation either of $\mbb{Z}$-type, $\mbb{R}$-type, or compact type.  Let $F$ denote the cosheaf on $\mc{C}=\on{fEt}_X$ giving the condensed refined class formation, and let $F^{tr}$ denote the sheaf obtained by passing to transfer maps.

Then for a discrete abelian group $M$ with $\wh{\pi_1X}$-action, corresponding on the one hand to a coefficient system on $X$ and on the other hand to a sheaf of abelian groups on $\mc{C}$, there is a natural map
$$R\Gamma(\mc{C};M\otimes F^{tr})\to R\Gamma(X;M^\vee)^\vee.$$
Here $(-)^\vee$ means Pontryagin duality, and on the left we have derived sheafified tensor product over $\mbb{Z}$.

Moreover, this map is an isomorphism whenever $M$ is finitely generated (as an abelian group).
\end{theorem}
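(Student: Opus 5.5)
The natural map comes from Remark \ref{poitoulemma} applied in the condensed setting. The sheaf $\mc{F}:=R\Gamma(-;\mbb{R}/\mbb{Z})^\vee{}^{tr}$, or more precisely $F^{tr}$, takes values in degrees $[0,1]$. This holds because $F$ is the refined class formation and Lemma \ref{mooreisweilmoore} identifies $F$ with $R\Gamma(-;\mbb{R}/\mbb{Z})^\vee$ with no truncation. The argument then proceeds in four steps.

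First, I will use Lemma \ref{alttr} to identify the copresheaf $U\mapsto R\Gamma(\mc{C};\mbb{Z}[h_U]\otimes F^{tr})$ with $(F^{tr})^{tr}=F$ (Proposition \ref{trissheaf}). This gives $F(U)=R\Gamma(U;\mbb{R}/\mbb{Z})^\vee$.

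Second, I will compare with the right-hand side on the generators $M=\mbb{Z}[h_U]=\mbb{Z}[W/H]$. Here $M^\vee$ is the coinduced $\mbb{R}/\mbb{Z}$, so $R\Gamma(X;M^\vee)=R\Gamma(U;\mbb{R}/\mbb{Z})$ by Shapiro. Hence $R\Gamma(X;M^\vee)^\vee=F(U)$ as well, and both sides agree naturally on generators. To produce the map for general $M$, I will write both sides as functors of $M$. The left side is colimit-preserving in $M$: Remark \ref{poitoulemma} shows $R\Gamma(\mc{C};-\otimes F^{tr})$ preserves colimits, since $F^{tr}$ is bounded. The left side is therefore the left Kan extension of its restriction to the $\mbb{Z}[h_U]$, which gives a natural transformation to any functor agreeing on those objects. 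Concretely, the map should be the adjoint of the cup-product pairing $M\otimes M^\vee\to\mbb{R}/\mbb{Z}$ combined with the evaluation $R\Gamma(X;\mbb{R}/\mbb{Z})\otimes F(\ast)\to\mbb{R}/\mbb{Z}$. Writing it this way makes naturality evident, and Lemma \ref{alttr} shows it restricts to the identification above on generators.

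Third, I will prove the isomorphism for finitely generated $M$. Such an $M$ lies in $\on{Lisse}(X)$ by Lemma \ref{lisse}(3), i.e.\ in the thick subcategory generated by the $\mbb{Z}[h_U]$. Both sides are exact functors of $M$, so the class of $M$ for which the map is an isomorphism is closed under cones, shifts and retracts, and contains the generators by the second step. The step I expect to be the main obstacle is making the right-hand side exact and well behaved on this thick subcategory, where $M^\vee$ is compact rather than discrete. For this I will use Lemma \ref{cohlchauscoeff} and Lemma \ref{cohchauscoeff}, which express $R\Gamma(X;M^\vee)$ as $\on{RHom}_{W/W^\circ}(M,R\Gamma(X^\circ;\mbb{R}/\mbb{Z}))$. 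This description is manifestly exact in $M$, and derived Pontryagin duality (Corollary \ref{equidpd}) keeps the outer $(-)^\vee$ exact.

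Fourth, I will check compatibility of this identification with the left Kan extension description. I will first reduce to $M=\mbb{Z}[W/H]$ via a finite free $\mbb{Z}[W/N]$-resolution truncated, using the compactness argument from Lemma \ref{lisse}(3). Finally, in the $\mbb{Z}$-type case I will use that $M$ comes from $\wh{\pi_1X}$, so that Lemma \ref{lisse} applies.
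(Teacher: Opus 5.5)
\textbf{There is a gap in Steps 3--4.} Steps 1--2 construct the map the same way the paper does: both sides equal $F(U)=R\Gamma(U;\mbb{R}/\mbb{Z})^\vee$ on the generators $\mbb{Z}[h_U]$, one left Kan extends, and Remark \ref{poitoulemma} identifies the colimit-preserving extension with $R\Gamma(\mc{C};-\otimes F^{tr})$.

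The problem is where the generation happens. Your transformation is between functors on $\on{Sh}(\mc{C};\on{D}(\mbb{Z}))$, since the left-hand side is only defined there. So the objects on which it is an isomorphism form a thick subcategory of $\on{Sh}(\mc{C};\on{D}(\mbb{Z}))$.

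Lemma \ref{lisse}(3), by contrast, is a statement inside $\on{D}_{et}(X;\mbb{Z})$. Its proof uses that $M$ is compact there, and that compactness rests on the finite cohomological dimension of $X$ (Theorem \ref{strictcohdim}). Neither fact transfers to the Galois side:
\begin{itemize}
\item In $\on{Sh}(\mc{C};\on{D}(\mbb{Z}))$ the object $\mbb{Z}[h_U]$ corepresents continuous hypercohomology of the corresponding open subgroup of $\wh{W}$. So $\mbb{Z}=\mbb{Z}[h_\ast]$ is not compact once $\wh{W}$ has infinite cohomological dimension, e.g.\ $\wh{W}=G_{K,S}$ with $K$ having a real place.
\item The functor $\pi^\ast:\on{Sh}(\mc{C};\on{D}(\mbb{Z}))\to\on{D}_{et}(X;\mbb{Z})$ is not fully faithful, because Galois and Weil--Moore cohomology differ at the archimedean places (cf.\ Theorem \ref{comparetogal}). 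Hence the cones and retractions exhibiting $\pi^\ast M\in\on{Lisse}(X)$ need not come from morphisms in the category where your transformation lives.
\end{itemize}
The same objection applies to the ``truncated resolution plus compactness'' reduction in Step 4. Whether a finitely generated $M$ lies in the thick subcategory of $\on{Sh}(\mc{C};\on{D}(\mbb{Z}))$ generated by the $\mbb{Z}[h_U]$ is a separate, nontrivial question about permutation modules of finite groups over $\mbb{Z}$. Lemma \ref{lisse} says nothing about it. Also, the point you flag as the main obstacle, exactness of the right-hand side, is automatic.

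\textbf{What is missing is a convergence argument, which is how the paper concludes.}
\begin{enumerate}
\item Take a generally infinite co-free resolution $M\to C^0\to C^1\to\cdots$. Each $C^i$ is a finite sum of copies of $\mbb{Z}[h_U]$ and $\mbb{Z}/N\mbb{Z}[h_U]$, with $U$ the Galois cover trivializing $M$.
\item The map is an isomorphism on each $C^i$, hence on each stupid truncation $\sigma^{\leq n}C^\bullet$, and $M\simeq\varprojlim_n\sigma^{\leq n}C^\bullet$.
\item The fibre of $M\to\sigma^{\leq n}C^\bullet$ is a discrete module placed in cohomological degree $n+1$.
\item On the left, tensoring with $F^{tr}$ (amplitude $[0,1]$) and applying $R\Gamma(\mc{C};-)$ keeps this fibre in cohomological degrees $\geq n-1$.
\item On the right, its Pontryagin dual is a compact coefficient system in homological degree $n+1$. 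By Remark \ref{vanishingchauscoh}, $R\Gamma(X;-)$ has bounded amplitude on such systems. So after dualizing once more the fibre lies in cohomological degrees $\geq n-c$, with $c$ independent of $n$.
\end{enumerate}
Hence both sides commute with $\varprojlim_n$, and the isomorphism passes to $M$. Replacing your Steps 3--4 with this argument closes the gap.
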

\begin{proof}
We have a tautological pullback functor $\pi^{-1}$ from $\on{fEt_X}$ to condensed anima over $X$, which therefore induces a pullback on $\on{D}(\mbb{Z})$-valued sheaves.  We follow this with Pontryagin duality, then global sections, then internal Hom to $\mbb{R}/\mbb{Z}$ again, getting the $\mbb{Z}$-linear functor
$$\mc{F} \mapsto R\Gamma(X;(\pi^\ast\mc{F})^\vee)^\vee$$
from $\on{Sh}(\mc{C};\on{D}(\mbb{Z}))$ to $\on{D}(\on{CondAb})$.  When we specialize to $\mc{F}=\mbb{Z}[h_U]$ for $U\in\on{fEt}_X=\mc{C}$, we get the value
$$R\Gamma(U;(\mbb{Z})^\vee)^\vee = R\Gamma(U;\mbb{R}/\mbb{Z})^\vee = F(U)$$
Thus, the co-presheaf underlying this functor is $F$, which happens to be a cosheaf.  We deduce by left Kan extension a natural transformation
$$F(-) \to R\Gamma(X;(\pi^\ast-)^\vee)^\vee$$
of $\mbb{Z}$-linear functors $\on{Sh}(\mc{C};\on{D}(\mbb{Z}))\to\on{D}(\on{CondAb})$ which is an isomorphism on all $\mbb{Z}[h_U]$.  Applying Remark \ref{poitoulemma}, we can rewrite this as a natural transformation
$$R\Gamma(\mc{C};(-)\otimes F^{tr}) \to R\Gamma(X;(\pi^\ast-)^\vee)^\vee$$
which is an isomorphism on all $\mbb{Z}[h_U]$.  An arbitrary finitely generated $M$ admits a co-free resolution where each term is finite direct sum of copies of $\mbb{Z}[h_U]$ or $\mbb{Z}/N\mbb{Z}[h_U]$ (for $N\in\mbb{Z}$), so we deduce the natural transformation is an isomorphism on all finitely generated $M$ as well.
\end{proof}

For example, when we take $X=X_{K,S}$ for a number field $K$ and a finite set of places $S$ containing all archimedean places and all places above some fixed prime $p$, then from this it is simple to deduce the following version of Poitou-Tate duality
$$R\Gamma(X_{K,S};M^\vee)\overset{\sim}{\rightarrow} R\Gamma_c(BG_{K,S};M(1)[3])^\vee$$
relating Weil-Moore cohomology to Kato-style compactly supported Galois cohomology, for $p$-primary torsion coefficient systems on $BG_{K,S}$.  This is of course also a consequence of Theorem \ref{globaltate} and Lemma \ref{comparetogal}; Theorem \ref{globaltate} is stronger because it applies to all coefficient systems on $X_{K,S}$, not just those coming via pullback from $BG_{K,S}$.

\printbibliography

@article{weil,
  title={Sur la th{\'e}orie du corps de classes},
  author={Weil, Andr{\'e}},
  journal={Journal of the Mathematical Society of Japan},
  volume={3},
  number={1},
  pages={1--35},
  year={1951},
  publisher={The Mathematical Society of Japan}
}

@inproceedings{tatentb,
  title={Number theoretic background},
  author={Tate, John},
  booktitle={Proc. Sympos. Pure Math.},
  volume={33},
  pages={3--26},
  year={1979}
}

@book{artintate,
  title={Class field theory},
  author={Artin, Emil and Tate, John Torrence},
  volume={366},
  year={1968},
  publisher={American Mathematical Soc.}
}

@article{serrethesis,
  title={Homologie singuli{\`e}re des espaces fibr{\'e}s},
  author={Serre, Jean-Pierre},
  journal={Annals of Mathematics},
  volume={54},
  number={3},
  pages={425--505},
  year={1951},
  publisher={JSTOR}
}

@article{farguesscholze,
  title={Geometrization of the local Langlands correspondence},
  author={Fargues, Laurent and Scholze, Peter},
  journal={arXiv preprint arXiv:2102.13459},
  year={2021}
}

@book{morishita,
  title={Knots and primes},
  author={Morishita, Masanori},
  year={2012},
  publisher={Springer}
}

@inproceedings{tateduality,
  title={Duality theorems in Galois cohomology over number fields},
  author={Tate, John},
  booktitle={Proc. Internat. Congr. Mathematicians (Stockholm, 1962)},
  pages={288--295},
  year={1962}
}

@book{milneduality,
  author    = {J. S. Milne},
  title     = {Arithmetic Duality Theorems},
  edition   = {2},
  publisher = {BookSurge Publishers},
  year      = {2006},
}

@article{deligne,
  title={Les constantes locales de l'{\'e}quation fonctionnelle de la fonction L d'Artin d'une repr{\'e}sentation orthogonale},
  author={Deligne, Pierre},
  journal={Inventiones mathematicae},
  volume={35},
  number={1},
  pages={299--316},
  year={1976},
  publisher={Springer-Verlag Berlin/Heidelberg}
}

@article{simpson,
  title={Mixed twistor structures},
  author={Simpson, Carlos},
  journal={arXiv preprint alg-geom/9705006},
  year={1997}
}

@inproceedings{farguescft,
  title={From local class field to the curve and vice versa},
  author={Fargues, Laurent},
  booktitle={AMS Summer Institute in Algebraic Geometry},
  year={2015}
}

@article{lichtenbaum,
  title={The Weil-{\'e}tale topology for number rings},
  author={Lichtenbaum, Stephen},
  journal={Annals of mathematics},
  pages={657--683},
  year={2009},
  publisher={JSTOR}
}

@article{flach,
  title={Cohomology of topological groups with applications to the Weil group},
  author={Flach, Mathias},
  journal={Compositio Mathematica},
  volume={144},
  number={3},
  pages={633--656},
  year={2008},
  publisher={London Mathematical Society}
}

@incollection{kato,
  title={Lectures on the approach to Iwasawa theory for Hasse-Weil L-functions via BdR. Part I},
  author={Kato, Kazuya},
  booktitle={Arithmetic Algebraic Geometry: Lectures given at the 2nd Session of the Centro Internazionale Matematico Estivo (CIME) held in Trento, Italy, June 24--July 2, 1991},
  pages={50--163},
  year={2006},
  publisher={Springer}
}

@inproceedings{langlands,
  title={Automorphic representations, Shimura varieties, and motives. Ein M{\"a}rchen},
  author={Langlands, Robert P},
  booktitle={Automorphic forms, representations and L-functions (Proc. Sympos. Pure Math., Oregon State Univ., Corvallis, Ore., 1977), Part},
  volume={2},
  pages={205--246},
  year={1979}
}

@article{arthur,
  title={A note on the automorphic Langlands group},
  author={Arthur, James},
  journal={Canadian Mathematical Bulletin},
  volume={45},
  number={4},
  pages={466--482},
  year={2002},
  publisher={Cambridge University Press}
}

@article{carlwe,
  title={Algebraic families of Galois representations and potentially semi-stable pseudodeformation rings},
  author={Wang-Erickson, Carl},
  journal={Mathematische Annalen},
  volume={371},
  number={3},
  pages={1615--1681},
  year={2018},
  publisher={Springer}
}

@article{egh,
  title={An introduction to the categorical p-adic Langlands program},
  author={Emerton, Matthew and Gee, Toby and Hellmann, Eugen},
  journal={arXiv preprint arXiv:2210.01404},
  year={2022}
}

@article{zhu,
  title={Coherent sheaves on the stack of Langlands parameters},
  author={Zhu, Xinwen},
  journal={arXiv preprint arXiv:2008.02998},
  year={2020}
}

@article{gaitsgoryshtukas,
  title={From geometric to function-theoretic Langlands (or how to invent shtukas)},
  author={Gaitsgory, Dennis},
  journal={arXiv preprint arXiv:1606.09608},
  year={2016}
}

@misc{scholzerll,
  title={Geometrization of the real local Langlands correspondence (draft version, used for ARGOS seminar)},
  author={Scholze, Peter},
  year={2024}
}

@article{bznbetti,
  title={Betti geometric Langlands},
  author={Ben-Zvi, David and Nadler, David},
  journal={arXiv preprint arXiv:1606.08523},
  year={2016}
}

@article{buzzardgee,
  title={The conjectural connections between automorphic representations and Galois representations},
  author={Buzzard, Kevin and Gee, Toby},
  journal={Automorphic forms and Galois representations},
  volume={1},
  pages={135--187},
  year={2014}
}

@article{bernsteinsign,
  title={Hidden sign in Langlands' correspondence},
  author={Bernstein, Joseph},
  journal={arXiv preprint arXiv:2004.10487},
  year={2020}
}

@article{bzsv,
  title={Relative Langlands duality},
  author={Ben-Zvi, David and Sakellaridis, Yiannis and Venkatesh, Akshay},
  journal={arXiv preprint arXiv:2409.04677},
  year={2024}
}

@article{gaitsgoryraskin,
  title={Proof of the geometric Langlands conjecture V: The multiplicity one theorem},
  author={Gaitsgory, Dennis and Raskin, Sam},
  journal={arXiv preprint arXiv:2409.09856},
  year={2024}
}

@article{locsysres,
  title={The stack of local systems with restricted variation and geometric Langlands theory with nilpotent singular support},
  author={Arinkin, Dima and Gaitsgory, Dennis and Kazhdan, David and Raskin, Sam and Rozenblyum, Nick and Varshavsky, Yasha},
  journal={arXiv preprint arXiv:2010.01906},
  year={2020}
}

@article{beilinson,
  title={Topological $\epsilon$-factors},
  author={Beilinson, Alexander},
  journal={Pure and Applied Mathematics Quarterly},
  volume={3},
  number={1},
  pages={357--391},
  year={2007},
  publisher={International Press of Boston, Inc. Somerville, MA 02143, USA}
}

@article{slh,
  title={Courbes et fibr{\'e}s vectoriels en th{\'e}orie de Hodge $ z $-adique globale},
  author={Li-Huerta, Siyan Daniel},
  journal={arXiv preprint arXiv:2602.04978},
  year={2026}
}

@inproceedings{scholzeicm,
  title={p-adic Geometry},
  author={Scholze, Peter},
  booktitle={Proceedings of the International Congress of Mathematicians: Rio de Janeiro 2018},
  pages={899--933},
  year={2018},
  organization={World Scientific}
}

@incollection{faltings,
  author    = {Gerd Faltings},
  title     = {Does There Exist an Arithmetic Kodaira--Spencer Class?},
  booktitle = {Algebraic Geometry: Hirzebruch 70},
  series    = {Contemporary Mathematics},
  volume    = {241},
  pages     = {141--146},
  publisher = {American Mathematical Society},
  year      = {1999},
}

@article{fontaine,
  title={Formes diff{\'e}rentielles et modules de Tate des vari{\'e}t{\'e}s ab{\'e}liennes sur les corps locaux},
  author={Fontaine, Jean-Marc},
  journal={Inventiones mathematicae},
  volume={65},
  number={3},
  pages={379--409},
  year={1982},
  publisher={Springer-Verlag Berlin/Heidelberg}
}

@article{scholzeberkeley,
  title={Berkeley Lectures on p-adic Geometry:(AMS-207)},
  author={Scholze, Peter and Weinstein, Jared},
  year={2020},
  publisher={Princeton University Press}
}

@book{htt,
  title={Higher topos theory},
  author={Lurie, Jacob},
  year={2009},
  publisher={Princeton University Press}
}

@article{condensed,
      title={Lectures on Condensed Mathematics}, 
      author={Peter Scholze},
      year={2026},
      journal={arXiv preprint 2605.03658},
}

@inproceedings{hofmann,
  title={Equidimensional immersions of locally compact groups},
  author={Hofmann, Karl H and Wu, Ta-Sun and Yang, Jeoung S},
  booktitle={Mathematical Proceedings of the Cambridge Philosophical Society},
  volume={105},
  number={2},
  pages={253--261},
  year={1989},
  organization={Cambridge University Press}
}

@article{iwasawa,
  title={On some types of topological groups},
  author={Iwasawa, Kenkichi},
  journal={Annals of mathematics},
  volume={50},
  number={3},
  pages={507--558},
  year={1949},
  publisher={JSTOR}
}

@article{hochmost,
  title={Cohomology of Lie groups},
  author={Hochschild, Gerhard and Mostow, George Daniel},
  journal={Illinois Journal of Mathematics},
  volume={6},
  number={3},
  pages={367--401},
  year={1962},
  publisher={Duke University Press}
}

@article{kyed,
  title={Topologizing Lie algebra cohomology},
  author={Kyed, David},
  journal={Differential Geometry and its Applications},
  volume={49},
  pages={208--226},
  year={2016},
  publisher={Elsevier}
}

@article{clausen,
  title={Duality and linearization for p-adic lie groups},
  author={Clausen, Dustin},
  journal={arXiv preprint arXiv:2506.18174},
  year={2025}
}

@article{scholzeanalytic,
      title={Lectures on Analytic Geometry}, 
      author={Peter Scholze},
     
      journal={arXiv preprint 2605.03655},
       year={2026},
}

@article{cscx,
      title={Condensed Mathematics and Complex Geometry}, 
      author={Dustin Clausen and Peter Scholze},
      year={2026},
      journal={arXiv preprint 2605.11731}
}

@article{cautis,
      title={Ind-geometric stacks}, 
      author={Sabin Cautis and Harold Williams},
      year={2024},
      journal={arXiv preprint 2306.03043}
}

@article{dasgupta,
      title={The Brumer--Stark Conjecture over Z}, 
      author={Samit Dasgupta and Mahesh Kakde and Jesse Silliman and Jiuya Wang},
    journal={arXiv preprint arXiv:2409.09856},
      year={2023}, 
}

@inproceedings{smith,
  title={Equivariant Moore spaces},
  author={Smith, Justin R},
  booktitle={Algebraic and Geometric Topology: Proceedings of a Conference held at Rutgers University, New Brunswick, USA July 6--13, 1983},
  pages={238--270},
  year={2006},
  organization={Springer}
}

@book{corpslocaux,
  title={Local fields},
  author={Serre, Jean-Pierre},
  year={2013},
  publisher={Springer Science \& Business Media}
}

@book{SGA1,
  title={Rev{\^e}tements {\'e}tales et groupe fondamental: S{\'e}minaire de G{\'e}ometrie Alg{\'e}brique du Bois Marie 1960-61, SGA1},
  author={Grothendieck, Alexander and Raynaud, Michele},
  year={2003},
  publisher={Springer}
}

@article{cm1,
  title={Hyperdescent and {\'e}tale K-theory},
  author={Clausen, Dustin and Mathew, Akhil},
  journal={Inventiones mathematicae},
  volume={225},
  number={3},
  pages={981--1076},
  year={2021},
  publisher={Springer}
}

@article{cm2,
  title={Descent and vanishing in chromatic algebraic $ K $-theory via group actions},
  author={Clausen, Dustin and Mathew, Akhil and Naumann, Niko and Noel, Justin},
  journal={arXiv preprint arXiv:2011.08233},
  year={2020}
}

@article{gepner,
  title={Universality of multiplicative infinite loop space machines},
  author={Gepner, David and Groth, Moritz and Nikolaus, Thomas},
  journal={Algebraic \& Geometric Topology},
  volume={15},
  number={6},
  pages={3107--3153},
  year={2016},
  publisher={Mathematical Sciences Publishers}
}

@book{galcoh,
  title={Cohomologie galoisienne},
  author={Serre, Jean-Pierre},
  volume={5},
  year={1994},
  publisher={Springer Science \& Business Media}
}

@book {TateStark,
    AUTHOR = {Tate, John},
     TITLE = {Les conjectures de {S}tark sur les fonctions {$L$} d'{A}rtin
              en {$s=0$}},
    SERIES = {Progress in Mathematics},
    VOLUME = {47},
    EDITOR = {Bernardi, Dominique and Schappacher, Norbert},
 PUBLISHER = {Birkh\"auser Boston, Inc., Boston, MA},
      YEAR = {1984}
}

@inproceedings{mazur,
  title={Deforming Galois Representations},
  author={Mazur, Barry},
  booktitle={Galois Groups over $\mbb{Q}$: Proceedings of a Workshop Held March 23--27, 1987},
  pages={385--437},
  year={1989},
  organization={Springer}
}

@book{tannakian,
  title={Cat{\'e}gories tannakiennes},
  author={Rivano, N Saavedra},
  year={2006},
  publisher={Springer}
}

@article{kottwitz,
  title={B(G) for all local and global fields},
  author={Kottwitz, Robert},
  journal={arXiv preprint arXiv:1401.5728},
  year={2014}
}

@phdthesis{iakovenko,
  title={Representations of the Kottwitz gerbes},
  author={Iakovenko, Sergei},
  year={2021},
  school={Rheinische Friedrich-Wilhelms-Universitaet Bonn (Germany)}
}

@article{tatetori,
  title={The cohomology groups of tori in finite Galois extensions of number fields},
  author={Tate, John},
  journal={Nagoya Mathematical Journal},
  volume={27},
  number={2},
  pages={709--719},
  year={1966},
  publisher={Cambridge University Press}
}

@article{taylorcocycles,
  title={Cocycles for Kottwitz cohomology},
  author={Sempliner, Jack and Taylor, Richard},
  journal={arXiv preprint arXiv:2407.06031},
  year={2024}
}

@article{clausenartin,
  title={A K-theoretic approach to Artin maps},
  author={Clausen, Dustin},
  journal={arXiv preprint arXiv:1703.07842},
  year={2017}
}

@article{braunling,
  title={Local compactness as the K(1)-local dual of finite generation},
  author={Braunling, Oliver},
  journal={arXiv preprint arXiv:2301.05943},
  year={2023}
}

@article{brumer,
     author = {Brumer, Armand},
     title = {Pseudocompact algebras, profinite groups and class formations},
     journal = {Bull. Amer. Math. Soc.},
     volume = {72},
     number = {4},
     year = {1966},
     pages = { 321-324},
     language = {en},
}

@article{martinez,
  title={Cohomological dimension of discrete modules over profinite groups},
  author={Mart{\'\i}nez, Juan Jos{\'e}},
  journal={Pacific Journal of Mathematics},
  volume={49},
  number={1},
  pages={185--189},
  year={1973},
  publisher={Mathematical Sciences Publishers}
}

@book{nsw,
  title={Cohomology of number fields},
  author={Neukirch, J{\"u}rgen and Schmidt, Alexander and Wingberg, Kay},
  volume={323},
  year={2013},
  publisher={Springer Science \& Business Media}
}

@article{serreep,
  title={La distribution d'Euler-Poincar{\'e} d'un groupe profini},
  author={Serre, Jean-Pierre},
  journal={Galois representations in arithmetic algebraic geometry},
  volume={254},
  pages={461-495},
  year={1998},
  publisher={Cambridge University Press}
}

@book{serrerepthy,
  title={Linear representations of finite groups},
  author={Serre, Jean-Pierre},
  volume={42},
  year={1977},
  publisher={Springer}
}

@book{poitou,
  title={Cohomologie galoisienne des modules finis: S{\'e}minaire de l'Institut de math{\'e}matiques de Lille [1962-1963]},
  author={Poitou, Georges},
  number={13},
  year={1967},
  publisher={Dunod}
}

@book{emertongee,
  title={Moduli stacks of {\'e}tale ($\varphi$, $\Gamma$)-modules and the existence of crystalline lifts},
  author={Emerton, Matthew and Gee, Toby},
  number={215},
  year={2022},
  publisher={Princeton University Press}
}

@article{hennion,
  title={Tate objects in stable $(\infty, 1) $-categories},
  author={Hennion, Benjamin},
  journal={arXiv preprint arXiv:1606.05527},
  year={2016}
}

@book{nekovar,
  title={Selmer complexes},
  author={Nekov{\'a}r, Jan},
  volume={310},
  year={2006},
  publisher={Soci{\'e}t{\'e} math{\'e}matique de France}
}

@article{serrewt1,
  title={Modular forms of weight one and Galois representations},
  author={Serre, Jean-Pierre},
  journal={Springer Collected Works in Mathematics},
  pages={292--367},
  year={2003},
  publisher={Springer Berlin Heidelberg}
}

@article{galatius,
   title={Derived Galois deformation rings},
   volume={327},
   ISSN={0001-8708},
   journal={Advances in Mathematics},
   publisher={Elsevier BV},
   author={Galatius, S. and Venkatesh, A.},
   year={2018},
   month=mar, pages={470–623} }

@article{lafforgue,
  title={Chtoucas pour les groupes r\'eductifs et param\'etrisation de Langlands globale},
  author={Lafforgue, Vincent},
  journal={arXiv preprint arXiv:1209.5352},
  year={2012}
}

@article{scholzebourbaki,
  title={GEOMETRIC LANGLANDS [after Gaitsgory, Raskin,...]},
  author={Scholze, Peter},
  journal={S{\'e}minaire BOURBAKI},
  pages={78e},
  year={2026}
}

\end{document}